\colorlet{lightgray}{black!15}
\tikzset{->-/.style={decoration={
  markings,
  mark=at position .5 with {\arrow{>}}},postaction={decorate}}}
\tikzset{midarrow/.style={decoration={
    markings,
    mark=at position {#1} with {\arrow{>}}},postaction={decorate}}}
\newtheorem{theorem}{Theorem}[section]
\newtheorem{prop}[theorem]{Proposition}
\newtheorem{lemma}[theorem]{Lemma}
\newtheorem{cor}[theorem]{Corollary}
\theoremstyle{definition}
\newtheorem{definition}[theorem]{Definition}
\newtheorem{observation}[theorem]{Observation}
\newtheorem{terminology}[theorem]{Terminology}
\newtheorem{remark}[theorem]{Remark}
\newtheorem{example}[theorem]{Example}
\newtheorem{q}[theorem]{Question}
\newtheorem{notation}[theorem]{Notation}
\newtheorem{l.notation}[theorem]{Local Notation}
\newtheorem{convention}[theorem]{Convention}
\theoremstyle{remark}
\definecolor{orange}{rgb}{.95,0.5,0}
\definecolor{light-gray}{gray}{0.75}
\definecolor{brown}{cmyk}{0, 0.8, 1, 0.6}
\definecolor{plum}{rgb}{.5,0,1}
\DeclareMathOperator{\Link}{\sf Link}
\DeclareMathOperator{\Fin}{\sf Fin}
\DeclareMathOperator{\Vect}{\cV{\sf ect}}
\DeclareMathOperator{\pr}{\mathsf{pr}}
\DeclareMathOperator{\ev}{\mathsf{ev}}
\DeclareMathOperator{\Alg}{\sf Alg}
\DeclareMathOperator{\TwAr}{\sf TwAr}
\DeclareMathOperator{\TwAro}{{\sf TwAr}^\circ}
\DeclareMathOperator{\Aut}{\sf Aut}
\DeclareMathOperator{\colim}{{\sf colim}}
\DeclareMathOperator{\limit}{{\sf lim}}
\DeclareMathOperator{\Hom}{\sf Hom}
\DeclareMathOperator{\End}{\sf End}
\DeclareMathOperator{\Fun}{{\sf Fun}}
\DeclareMathOperator{\Iso}{\sf Iso}
\DeclareMathOperator{\Map}{{\sf Map}}
\DeclareMathOperator{\Ker}{\sf Ker}
\DeclareMathOperator{\exit}{\sf Exit}
\DeclareMathOperator{\Cat}{{\sf Cat}}
\DeclareMathOperator{\fCat}{{\sf fCat}}
\DeclareMathOperator{\Ar}{{\sf Ar}}
\DeclareMathOperator{\Diff}{{\sf Diff}}
\DeclareMathOperator{\op}{\mathsf{op}}
\DeclareMathOperator{\Bun}{\cB\mathsf{un}}
\DeclareMathOperator{\sk}{\mathsf{sk}}
\DeclareMathOperator{\cls}{\mathsf{cls}}
\DeclareMathOperator{\act}{\mathsf{act}}
\DeclareMathOperator{\emb}{\mathsf{emb}}
\DeclareMathOperator{\Cylo}{\mathsf{Cylo}}
\DeclareMathOperator{\Cylr}{\mathsf{Cylr}}
\DeclareMathOperator{\pcbl}{\mathsf{p.cbl}}
\DeclareMathOperator{\Top}{\mathsf{Top}}
\DeclareMathOperator{\Mfd}{{\cM}\mathsf{fd}}
\DeclareMathOperator{\cMfd}{{\sf c}{\cM}\mathsf{fd}}
\DeclareMathOperator{\Mfld}{\mathsf{Mfld}}
\DeclareMathOperator{\Emb}{\mathsf{Emb}}
\DeclareMathOperator{\Imm}{\mathsf{Imm}}
\DeclareMathOperator{\fImm}{\mathsf{fImm}}
\DeclareMathOperator{\EmbImm}{\mathsf{Emb-Imm}}
\DeclareMathOperator{\EmbfImm}{\mathsf{Emb-fImm}}
\DeclareMathOperator{\ImmImm}{\mathsf{Imm-Imm}}
\DeclareMathOperator{\ImmfImm}{\mathsf{Imm-fImm}}
\DeclareMathOperator{\Strat}{\cS\mathsf{trat}}
\DeclareMathOperator{\spaces}{\mathsf{Spaces}}
\DeclareMathOperator{\Spaces}{\spaces}
\DeclareMathOperator{\Disk}{{\mathsf{Disk}}}
\DeclareMathOperator{\Adj}{\mathsf{Adj}}
\DeclareMathOperator{\fr}{\sf fr}
\DeclareMathOperator{\sfr}{\sf sfr}
\DeclareMathOperator{\Bord}{\sf Bord}
\DeclareMathOperator{\Tang}{\sf Tang}
\DeclareMathOperator{\Ind}{\sf Ind}
\DeclareMathOperator{\BO}{\sf BO}
\DeclareMathOperator{\RP}{\RR\PP}
\def\ot{\otimes}
\DeclareMathOperator{\oo}{\infty}
\DeclareMathOperator{\disk}{\sf D}
\DeclareMathOperator{\tr}{\triangleright}
\DeclareMathOperator{\tl}{\triangleleft}
\newcommand{\lag}{\langle}
\newcommand{\rag}{\rangle}
\newcommand{\w}{\widetilde}
\newcommand{\un}{\underline}
\newcommand{\ov}{\overline}
\newcommand{\ra}{\rightarrow}
\newcommand{\la}{\leftarrow}
\newcommand{\xra}{\xrightarrow}
\newcommand{\xla}{\xleftarrow}
\def\cA{\mathcal A}\def\cB{\mathcal B}\def\cC{\mathcal C}\def\cD{\mathcal D}
\def\cE{\mathcal E}\def\cF{\mathcal F}\def\cG{\mathcal G}
\def\cI{\mathcal I}\def\cJ{\mathcal J}\def\cK{\mathcal K}
\def\cM{\mathcal M}\def\cP{\mathcal P}
\def\cR{\mathcal R}\def\cS{\mathcal S}\def\cT{\mathcal T}
\def\cU{\mathcal U}\def\cV{\mathcal V}\def\cW{\mathcal W}\def\cX{\mathcal X}
\def\CC{\mathbb C}\def\DD{\mathbb D}
\def\FF{\mathbb F}
\def\II{\mathbb I}
\def\NN{\mathbb N}\def\PP{\mathbb P}
\def\RR{\mathbb R}
\def\ZZ{\mathbb Z}
\def\sB{\mathsf B}\def\sC{\mathsf C}\def\sD{\mathsf D}
\def\sH{\mathsf H}
\def\sL{\mathsf L}
\def\sO{\mathsf O}
\def\sR{\mathsf R}\def\sT{\mathsf T}
\def\sV{\mathsf V}
\def\bDelta{\mathbf\Delta}
\def\fB{\frak B}\def\fC{\frak C}
\def\fU{\frak U}\def\fX{\frak X}
\def\fZ{\frak Z}
\def\bcD{\boldsymbol{\mathcal D}}
\DeclareMathOperator{\bTheta}{\boldsymbol{\Theta}}
\DeclareMathOperator{\Obj}{\mathsf{obj}}
\DeclareMathOperator{\obj}{\mathsf{obj}}
\DeclareMathOperator{\PShv}{\mathsf{PShv}}
\DeclareMathOperator{\uno}{\mathbbm{1}}
\DeclareMathOperator{\id}{\sf id}
\DeclareMathOperator{\Conf}{\sf Conf}
\DeclareMathOperator{\Gr}{\sf Gr}
\DeclareMathOperator{\Mor}{\sf mor}
\DeclareMathOperator{\mor}{\sf mor}
\DeclareMathOperator{\Dual}{\sf Dual}
\DeclareMathOperator{\Pic}{\sf Pic}
\DeclareMathOperator{\Free}{{\sf Free}}
\DeclareMathOperator{\cShv}{\sf cShv}
\DeclareMathOperator{\Morita}{\cM{\sf orita}}
\DeclareMathOperator{\Un}{\sf Un}
\newcommand{\bit}[1]{\textbf{\textit{#1}}}
\DeclareMathOperator{\lacts}{\curvearrowright}
\DeclareMathOperator{\racts}{\curvearrowleft}
\DeclareMathOperator{\n1}{\mathit{n}\text{-}1}
\DeclareMathOperator{\nk}{\mathit{n}\text{-}\mathit{k}}
\DeclareMathOperator{\Seg}{\sf Seg}
\DeclareMathOperator{\Opens}{\sf Opens}
\def\oC{\ov{\sC}}
\begin{document}
\dedicatory{To Pele Vega and Jack Aristarchus}

\title{The Tangle Hypothesis: Dimension 1}

\author{David Ayala \& John Francis}

\address{Department of Mathematics\\Montana State University\\Bozeman, MT 59717}
\email{david.ayala@montana.edu}
\address{Department of Mathematics\\Northwestern University\\Evanston, IL 60208}
\email{jnkf@northwestern.edu}
\thanks{DA was supported by the National Science Foundation under awards 1812055 and 1945639. JF was supported by the National Science Foundation under award 1812057. This material is based upon work supported by the National Science Foundation under Grant No. DMS-1440140, while the authors were in residence at the Mathematical Sciences Research Institute in Berkeley, California, during the Spring 2020 semester.}

\begin{abstract}
We introduce an $(\oo,1)$-category ${\sf Bord}_1^{\sf fr}(\mathbb{R}^n)$, the morphisms in which are framed tangles in $\RR^n\times \DD^1$. We prove that ${\sf Bord}_1^{\sf fr}(\mathbb{R}^n)$ has the universal mapping out property of the  1-dimensional Tangle Hypothesis of Baez--Dolan and Hopkins--Lurie: it is the rigid $\mathcal{E}_n$-monoidal $(\oo,1)$-category freely generated by a single object. Applying this theorem to a dualizable object of a braided monoidal $(\oo,1)$-category gives link invariants, generalizing the Reshetikhin--Turaev invariants.
\end{abstract}

\keywords{Tangle Hypothesis, Cobordism Hypothesis, Reshetikhin--Turaev invariants, Link invariants, Jones Polynomial, embedding spaces, topological quantum field theory.}

\subjclass[2020]{Primary 57R56. Secondary 57R90, 18B30, 18D10.}

\maketitle

\tableofcontents

\section*{Introduction}

The construction of knot invariants from representations of braid groups dates to at least to the 1980s and the Jones polynomial: Vaughan Jones's original work used braid group representations of Hecke algebra and von Neumann representations~\cite{jones1} and~\cite{jones2} to define polynomial link invariants. Building on these ideas, Turaev abstracted the role of Yang--Baxter equations~\cite{turaev}.  Then, in 1990, Reshetikhin and Turaev \cite{RT} constructed an invariant of framed links in $\RR^3$ given a finite-dimensional representation of a quantum group. This construction works even more generally, as in the work of Freyd--Yetter~\cite{freyd.yetter} and Shum~\cite{shum}, so as to construct from a dualizable object $V$ in a braided monoidal category $C$ a function
\[
\xymatrix{
\Bigl\{{\rm framed \ links}\Bigr\}_{\big/{\rm istpy}}
 \ar[r]^-{Z_V}& \Hom_C(\uno,\uno)}
\]
from the set of framed isotopy classes of links to the set of endomorphisms of the unit of the monoidal structure of $C$. In the case where $C$ is representations of the quantum group $U_\hbar({\sf sl}_2(\CC))$, this invariant specializes to the 2-variable Jones--Conway polynomial, after reparametrization.

\medskip

The 1-dimensional Tangle Hypothesis is a topological enhancement of this theorem of Shum, first put forward by Baez--Dolan \cite{baezdolan}, and given a rigorous $(\oo,1)$-categorically formulation by Hopkins–Lurie \cite{cobordism}. In this formulation, the set of framed isotopy classes is the set of components of a topological moduli space of framed tangles $\Tang_1^{\fr}(\RR^3)$, which has the homotopy type
\[
\Tang_1^{\fr}(\RR^3)\simeq\coprod_k \Emb^{\fr}\Bigl(
\underset{k}\amalg S^1, \RR^3
\Bigr)_{/\Diff\bigl(\underset{k}\amalg S^1\bigr)}
\]
as in Definition~\ref{def.tang.top.space}. This space of framed tangles has a very interesting topology: See \cite{budney}, \cite{sinha1}, \cite{sinha2}, \cite{vassiliev}, \cite{hatcher.knot}, and \cite{kontsevich}, among many other works. One can then ask for a refinement of Reshetikhin--Turaev's function: Given a dualizable object $V$ of a braided monoidal $(\oo,1)$-category $\cR$, one could ask for a map of spaces
\[
\xymatrix{
\Tang_1^{\fr}(\RR^3)\ar[r]^-{Z_V}&\Map_\cR(\uno,\uno)~.\\
}
\]
For $\cR$ enriched in chain complexes, such an assignment is equivalent to a morphism of chain complexes
\[
\xymatrix{
\sC_\ast\bigl(\Tang_1^{\fr}(\RR^3)\bigr)\ar[r]^-{Z_V}&\underline{\End}_\cR(\uno)\\
}
\]
where $\underline{\End}_\cR(\uno)$ is the chain complex of endomorphisms of the unit of $\cR$. Dualizing would give a map
\[
\xymatrix{
\underline{\End}_\cR(\uno)^\vee\ar[r]& \sC^\ast\bigl(\Tang_1^{\fr}(\RR^3)\bigr)~.\\
}
\]
and thus a characteristic class $\sigma_v\in \sH^k\bigl(\Tang_1^{\fr}(\RR^3)\bigr)$ for tangles for each class in the homology of the dual, $v \in \sH_{-k}\bigl(\underline{\End}_\cR(\uno)^\vee\bigr)$.

\medskip

The Tangle Hypothesis in ambient dimension 3 asserts that such a construction indeed exists and that, further, if one incorporates compatibility with boundary conditions on links, then this construction is essentially unique for each choice of $V\in \cR$. 

In ambient dimension $n>3$, any two links with the same number of connected components are isotopic.  
However, the topology of each connected component of $\Tang_1^{\fr}(\RR^n)$ is extremely nontrivial: See \cite{LTV}.  
Through considerations similar to those above, the Tangle Hypothesis in ambient dimension $n$ supplies a map
\[
\sH_{-k}\bigl(\underline{\End}_\cR(\uno)^\vee\bigr)
\longrightarrow
\sH^k\bigl(\Tang_1^{\fr}(\RR^n)\bigr)
\]
for each object $V\in \cR$ in a rigid $\cE_{\n1}$-monoidal $(\infty,1)$-category.

The following is the main result of this paper, the Tangle Hypothesis for 1-dimensional tangles in arbitrary ambient dimension. This result reduces to the identification of the free rigid braided monoidal $(1,1)$-category given by Freyd--Yetter~\cite{freyd.yetter} and Shum~\cite{shum} in the case where the target $\cR$ is a $(1,1)$-category.
\begin{theorem}[Tangle Hypothesis]\label{t4}
Let $\cR$ be a rigid $\cE_{\n1}$-monoidal $(\infty,1)$-category, with $n\geq 2$. Let $\Bord_1^{\fr}(\RR^{\n1})$ be the $\cE_{\n1}$-monoidal flagged $(\oo,1)$-category of framed tangles of Definition~\ref{def.bord}.
Evaluation at the object $\ast\in \Bord_1^{\fr}(\RR^{\n1})$ defines an equivalence between spaces
\[
\ev_\ast
\colon
\Map_{\Alg_{\n1}(\fCat_{(\oo,1)})} \bigl(
\Bord_1^{\fr}(\RR^{\n1})
,
\cR
\bigr)
\xra{~\simeq~}
\Obj(\cR)
~,\qquad
Z
\longmapsto
Z(\ast)
~.
\]
between the space of objects of $\cR$ and the space of $\cR$-valued $\cE_{\n1}$-monoidal functors on $\Bord_1^{\fr}(\RR^{\n1})$. Consequently, the univalent-completion
\[
\Bord_1^{\fr}(\RR^{\n1})^{\wedge}_{\sf unv}
\]
is the rigid $\cE_{\n1}$-monoidal $(\oo,1)$-category freely generated by a single object.
\end{theorem}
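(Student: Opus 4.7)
The plan is to produce an explicit inverse to $\ev_\ast$ at the level of mapping spaces, sending $V\in\Obj(\cR)$ to an $\cE_{\n1}$-monoidal functor $Z_V$ with $Z_V(\ast)=V$, and to verify that the two compositions are homotopy-identities. Two ingredients are required: a generating-and-relations presentation of $\Bord_1^{\fr}(\RR^{\n1})$ extracted from the explicit construction of Definition~\ref{def.bord}; and the rigidity of $\cR$, which supplies for each $V\in\Obj(\cR)$ a dual $V^{\vee}$ with coherent evaluation $V\otimes V^{\vee}\to \uno$ and coevaluation $\uno\to V^{\vee}\otimes V$, all as $(\infty,1)$-categorical data.

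First I would identify the elementary generators of $\Bord_1^{\fr}(\RR^{\n1})$: identity strands, elementary cups and caps, juxtaposed via the $\cE_{\n1}$-monoidal structure that records configurations of disks in $\RR^{\n1}$. The assignment $\ast\mapsto V$ then extends to these generators by sending identity strands to identities, cups to evaluations, caps to coevaluations, and respecting the $\cE_{\n1}$-monoidal structure. Well-definedness reduces to showing that isotopies among framed tangles are sent to equalities in $\cR$: planar zigzag isotopies are accounted for by rigidity; isotopies exploiting the ambient $\RR^{\n1}$-directions (braidings and their higher analogues) are accounted for by the $\cE_{\n1}$-monoidal coherences. For the converse composition, any $Z\in\Map_{\Alg_{\n1}(\fCat_{(\oo,1)})}\bigl(\Bord_1^{\fr}(\RR^{\n1}),\cR\bigr)$ must coincide, up to contractible choice, with $Z_{Z(\ast)}$, since $Z$ preserves the generating morphisms and the rigid $\cE_{\n1}$-monoidal structure. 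The passage through the flagged rather than univalent world is essential: $Z_V$ must make definite choices of $V^{\vee}$ and its coherence data, and univalent completion at the end absorbs these choices into the stated equivalence.

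The main obstacle is the presentation result itself: proving that the full space of framed tangles — not merely its set of isotopy classes — is the universal example of a rigid $\cE_{\n1}$-monoidal structure on a single point. For $n=2$ this recovers the Freyd--Yetter/Shum identification when the target is $(1,1)$-truncated. For $n\geq 3$ I would expect the argument to proceed by induction on $n$, exploiting the structural relationship between $\Emb^{\fr}(\amalg S^1,\RR^n)$ and $\Emb^{\fr}(\amalg S^1,\RR^{n+1})$ together with Dunn additivity $\cE_{\n1}\simeq\cE_1\otimes\cE_{n-2}$ to reduce the $\cE_{\n1}$ coherence problem to an iterated $\cE_1$ problem, organized via factorization homology and verified by comparing both sides against a Morse-theoretic stratification of $\Bord_1^{\fr}(\RR^{\n1})$ by the combinatorics of critical points of projection to $\DD^1$.
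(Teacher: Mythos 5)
Your outline correctly guesses the global architecture of the paper's argument — a base case in ambient dimension $2$ built from duality data, followed by an induction on $n$ that trades the last coordinate direction of $\RR^{\n1}$ for an extra $\cE_1$-factor via Dunn additivity — but it does not contain the ideas that make either step work, and you concede as much when you write that "the main obstacle is the presentation result itself." That obstacle \emph{is} the theorem. Concretely: the generators-and-relations strategy ("send cups to evaluations, caps to coevaluations, check that isotopies go to equalities") is the classical $(1,1)$-categorical argument of Freyd--Yetter and Shum, and it only controls $\pi_0$ of mapping spaces. The statement to be proved is an equivalence of \emph{spaces} $\Map_{\Alg_{\n1}(\fCat_{(\oo,1)})}(\Bord_1^{\fr}(\RR^{\n1}),\cR)\simeq \Obj(\cR)$, and for $n\geq 4$ the morphism spaces of $\Bord_1^{\fr}(\RR^{\n1})$ carry the full (highly nontrivial) homotopy type of spaces of long knots. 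Verifying relations among generators gives no handle on the higher homotopy of the space of monoidal functors, and the uniqueness direction ("any $Z$ must coincide with $Z_{Z(\ast)}$ since it preserves generators") is circular absent the presentation result. Likewise, there is no "structural relationship between $\Emb^{\fr}(\amalg S^1,\RR^n)$ and $\Emb^{\fr}(\amalg S^1,\RR^{n+1})$" that one can simply exploit; and the stratification by critical values of the projection to $\DD^1$ governs composability (the Segal condition), which is a different axis from the projection $\RR^n\to\RR^{\n1}$ that governs the passage from $\cE_{\n1}$ to $\cE_n$ — your sketch conflates the two.

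For contrast, the paper executes the base case by identifying $\Bord^{[0,1]}$ with the walking object-with-right-dual via the walking adjunction $\Adj$ (Proposition~\ref{tDualBord}, which requires a genuinely geometric argument decomposing each $2$-cell of $\fB\Bord^{[0,1]}$ along a system of separating arcs into alternating pieces), and then realizes the addition of further duals as explicit pushouts of monoidal $(\infty,1)$-categories computed by a Segal-completion functor $\Seg$ together with hypercover arguments over twisted-arrow categories of posets of intervals (Lemma~\ref{lemma.main.basecase.n2}). The inductive step requires three separate technical inputs you would need to supply: a topological model for operadic induction $\Ind_{\n1}^n$ computed by a hypercover of labeled configuration spaces (Lemma~\ref{lemma.II.is.Ind}, Corollary~\ref{cor1}), identifying $\Ind_{\n1}^n$ of the tangle spaces with slice-wise projection-embedding tangles; the computation that Segal-completing these yields projection-immersion tangles (Lemma~\ref{lemma.proj.emb.to.proj.imm}); and a relative h-principle for embedding-immersions (Corollary~\ref{cor.hprinciple.framed.embimm}) showing that univalent completion erases the projection-immersion condition (Lemma~\ref{lemma.univ.cplt.of.proj.imm}). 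Your proposal gestures toward the first and third of these with "factorization homology" and "Morse-theoretic stratification," but without these constructions the induction does not get off the ground.
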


Taking the limit $n\mapsto\oo$ gives the 1-dimensional Cobordism Hypothesis, proved by Hopkins--Lurie \cite{cobordism}.

\begin{cor}[1-dimensional Cobordism Hypothesis]
Let $\cR$ be a rigid symmetric monoidal $(\infty,1)$-category, and let
\[
\Bord_1^{\fr}:= \varinjlim \Bord_1^{\fr}(\RR^{\n1})
\]
be the 1-dimensional cobordism category.
Evaluation at the object $\ast\in \Bord_1^{\fr}$ defines an equivalence between spaces
\[
\ev_\ast
\colon
\Map_{\Alg_{\sf Com}(\Cat_{(\oo,1)})} \bigl(
\Bord_1^{\fr}
,
\cR
\bigr)
\xra{~\simeq~}
\Obj(\cR)
~,\qquad
Z
\longmapsto
Z(\ast)
~.
\]
between the space of objects of $\cR$ and the space of $\cR$-valued symmetric monoidal functors on $\Bord_1^{\fr}$.
\end{cor}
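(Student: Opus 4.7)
The plan is to derive the corollary from Theorem~\ref{t4} by passing to the limit $n\to\oo$. First I would interpret the definition $\Bord_1^{\fr} := \varinjlim_n \Bord_1^{\fr}(\RR^{\n1})$ as a sequential colimit along the stabilization maps $\Bord_1^{\fr}(\RR^{\n1}) \to \Bord_1^{\fr}(\RR^n)$ induced by the linear inclusions $\RR^{\n1}\subset \RR^n$. Since $\varinjlim_n \cE_n \simeq \sf Com$ at the level of $\oo$-operads, and rigidity is preserved under this stabilization, the colimit inherits the structure of a rigid symmetric monoidal $(\oo,1)$-category with distinguished object $\ast$ coming from the basepoints in each term.

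Next, combining Dunn-additivity in the form of the stabilization equivalence $\Alg_{\sf Com}(\Cat_{(\oo,1)}) \simeq \varprojlim_n \Alg_n(\Cat_{(\oo,1)})$ with the universal property of the sequential colimit, I would obtain a natural equivalence
\[
\Map_{\Alg_{\sf Com}(\Cat_{(\oo,1)})}\bigl(\Bord_1^{\fr},\cR\bigr)
\;\simeq\;
\varprojlim_n \Map_{\Alg_{\n1}(\Cat_{(\oo,1)})}\bigl(\Bord_1^{\fr}(\RR^{\n1}),\cR\bigr).
\]
Theorem~\ref{t4}, applied to the univalent completion in each degree $n\geq 2$, identifies each term on the right with $\Obj(\cR)$ via $\ev_\ast$. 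The transition maps in the tower restrict the $\cE_n$-monoidal structure to the $\cE_{\n1}$-monoidal structure along $\cE_{\n1}\to\cE_n$ and pre-compose with the stabilization $\Bord_1^{\fr}(\RR^{\n1})\to\Bord_1^{\fr}(\RR^n)$; under the $\ev_\ast$ equivalences these become the identity on $\Obj(\cR)$, because the basepoint $\ast$ is preserved under stabilization. The limit of a constant tower on $\Obj(\cR)$ is $\Obj(\cR)$, yielding the asserted equivalence $\ev_\ast\colon \Map_{\Alg_{\sf Com}(\Cat_{(\oo,1)})}(\Bord_1^{\fr},\cR) \xra{\simeq} \Obj(\cR)$.

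The main obstacle I expect is the stabilization/Dunn-additivity identification above: one must verify both that the forgetful functor from symmetric monoidal to $\cE_n$-monoidal $(\oo,1)$-categories exhibits $\Alg_{\sf Com}(\Cat_{(\oo,1)})$ as the inverse limit of the $\Alg_n(\Cat_{(\oo,1)})$, and that the colimit $\varinjlim_n \Bord_1^{\fr}(\RR^{\n1})$ computed via this equivalence carries the expected symmetric monoidal universal property. Both are standard consequences of $\cE_n$-additivity for $(\oo,1)$-categorical algebra but warrant careful setup; once they are in hand, the remainder of the argument is a formal manipulation of limits of mapping spaces each of which has already been identified by Theorem~\ref{t4}.
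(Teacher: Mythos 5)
Your proposal is correct and follows essentially the same route as the paper: the paper's proof also identifies $\Map_{\Alg_{\sf Com}}(\Bord_1^{\fr},\cR)$ with the inverse limit of the spaces $\Map_{\Alg_{k}}(\Bord_1^{\fr}(\RR^{k}),\cR)$, applies the Tangle Hypothesis termwise, and observes the tower is constant with value $\Obj(\cR)$. The colimit/limit interchange you flag as the main obstacle is precisely what the paper's Lemma~\ref{spectra} establishes (realizing $\Alg_{\sf Com}(\cV)$ as a reflective localization of sections of the Cartesian fibration $\Alg_\bullet(\cV)\to\NN$, with the left adjoint given by the sequential colimit), so your outline matches the paper's argument in both structure and the point requiring care.
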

As a consequence of the Tangle Hypothesis, we calculate the classifying spaces of $\Bord_1^{\fr}(\RR^{\n1})$ and $
\Bord_1^{\fr}$.

\begin{cor}
\label{t.inv.tqft}
For each $n\geq 2$, there is a canonical identification between $\cE_{\n1}$-spaces:
\[
\sB \Bord_1^{\fr}(\RR^{\n1})
~\simeq~
\Omega^{\n1} S^{\n1}
~.
\]
In the limit $n\mapsto \oo$, there is a canonical identification between $\cE_\infty$-spaces:
\[
\sB \Bord_1^{\fr}
~\simeq~
\Omega^{\infty} \Sigma^\infty  S^0
~.
\]

\end{cor}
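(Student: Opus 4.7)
The plan is to deduce Corollary \ref{t.inv.tqft} from Theorem \ref{t4} by testing against grouplike $\cE_{\n1}$-spaces and then invoking May's recognition principle for iterated loop spaces.

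For any grouplike $\cE_{\n1}$-space $Y$, the inclusion of $\infty$-groupoids into $(\infty,1)$-categories makes $Y$ into an $\cE_{\n1}$-monoidal $(\infty,1)$-category. This monoidal structure is \emph{rigid}: inside an $\infty$-groupoid, every morphism is an equivalence, so an object $y$ admits a dual $y^\vee$ with unit $\uno \simeq y \otimes y^\vee$ and counit $y^\vee \otimes y \simeq \uno$ precisely when $[y] \in \pi_0(Y)$ is invertible, and the grouplike hypothesis guarantees this for every $y$. Theorem \ref{t4} therefore supplies equivalences
\[
\Map_{\Alg_{\n1}(\fCat_{(\oo,1)})}\bigl(\Bord_1^{\fr}(\RR^{\n1}), Y\bigr) \;\simeq\; \Obj(Y) \;\simeq\; Y~.
\]
The classifying-space functor $\sB$ is left adjoint to the inclusion $\spaces \hookrightarrow \fCat_{(\oo,1)}$ and is cartesian-monoidal, so it lifts to an adjunction on $\cE_{\n1}$-algebras. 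Since the target $Y$ already lies in $\spaces$, the adjunction rewrites the above as
\[
\Map_{\Alg_{\n1}(\spaces)}\bigl(\sB \Bord_1^{\fr}(\RR^{\n1}), Y\bigr) \;\simeq\; Y~.
\]

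This identification, functorial in $Y$ ranging over grouplike $\cE_{\n1}$-spaces, shows that $\sB \Bord_1^{\fr}(\RR^{\n1})$ corepresents the forgetful functor from grouplike $\cE_{\n1}$-spaces to spaces, evaluated at the image of $\ast\in \Bord_1^{\fr}(\RR^{\n1})$. In other words, $\sB \Bord_1^{\fr}(\RR^{\n1})$ is the free grouplike $\cE_{\n1}$-space on a single point; by May's recognition theorem, this free object is $\Omega^{\n1} \Sigma^{\n1} S^0 \simeq \Omega^{\n1} S^{\n1}$. For the colimit $n \to \infty$, the stabilization maps $\RR^{\n1} \hookrightarrow \RR^n$ assemble the $\cE_{\n1}$-structures into an $\cE_\infty$-structure on $\sB \Bord_1^{\fr}$, and running the analogous argument with grouplike $\cE_\infty$-targets identifies the result with $\Omega^\infty \Sigma^\infty S^0$.

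The principal subtle point is the equivalence between rigidity of an $\cE_{\n1}$-monoidal structure on an $\infty$-groupoid and the grouplike condition on the underlying $\cE_{\n1}$-space, together with the compatibility of $\sB$ with univalent-completion for the purpose of mapping into $\infty$-groupoid targets — the latter following because univalent-completion is a left adjoint whose essential image contains all univalent $(\infty,1)$-categories, and $\infty$-groupoids are automatically univalent.
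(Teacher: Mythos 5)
Your proposal is correct and follows essentially the same route as the paper's own proof: regard a grouplike $\cE_{\n1}$-space as a rigid $\cE_{\n1}$-monoidal $(\infty,1)$-category, apply the Tangle Hypothesis together with the adjunction defining $\sB$ to show $\sB\Bord_1^{\fr}(\RR^{\n1})$ corepresents the forgetful functor from grouplike $\cE_{\n1}$-spaces, and identify the corepresenting object with the free grouplike $\cE_{\n1}$-space on a point, namely $\Omega^{\n1}S^{\n1}$. Your added justification that rigidity of a monoidal $\infty$-groupoid is equivalent to the grouplike condition is a detail the paper asserts without proof, and it is correct.
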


\begin{remark}
The flagged $(\oo,1)$-categories $\Bord^{\fr}_1(\RR^{\n1})$ exhibit special behavior in those dimensions in which unknots in $\RR^{\n1}\times\DD[1]$ can be straightened up to a contractible choice. For $\Bord^{\fr}_1(\RR^1)$, this is Proposition~\ref{prop.Bord.discrete.gaunt}. For $\Bord^{\fr}_1(\RR^2)$, this is a consequence of a theorem of Hatcher~\cite{hatcher}. In higher dimensions, however, the space of long knots has a complicated homotopy type: The space of objects of $\Bord_1^{\fr}(\RR^{\n1})$ is not equivalent to the space of isomorphisms (i.e., the space of framed h-cobordism tangles, or long unknots), and so the functor to the univalent-completion $\Bord_1^{\fr}(\RR^{\n1}) \ra \Bord_1^{\fr}(\RR^{\n1})^{\wedge}_{\sf unv}$ is not an equivalence. (This is similar to, but distinct from, the phenomenon in which $\Bord_k^{\fr}$ is not univalent-complete for $k>3$. This is due to the existence of nontrivial h-cobordisms.) As $n$ tends to $\oo$, however, the space of long unknots becomes contractible, and for $n=\oo$ then $\Bord_1^{\fr} \ra (\Bord_1^{\fr})^{\wedge}_{\sf unv}$ is an equivalence as in the cases of $n=2$ and $n=3$.
\end{remark}

The interesting topology of spaces of long knots in $\RR^n$ for $3<n<\oo$ leads to the following question:

\begin{q}
Is there a direct construction of the $(\oo,1)$-categories $\Bord_1^{\fr}(\RR^{\n1})^{\wedge}_{\sf unv}$, for $3<n<\oo$? That is, can one construct $\Bord_1^{\fr}(\RR^{\n1})^{\wedge}_{\sf unv}$ without first constructing some other flagged $(\oo,1)$-category (namely, $\Bord_1^{\fr}(\RR^{\n1})$) and then taking its univalent-completion?
\end{q}

\subsection*{Acknowledgements} 
We thank Mike Hopkins for inspiring mathematics, personal support, and a lot of fun.

\subsection*{Outline of the proof}
Let $\Free_k^{\sf rig}(\cK)$ be the rigid $\cE_k$-monoidal $(\oo,1)$-category freely generated by an $(\oo,1)$-category $\cK$. The functor
\[
\Free_n^{\sf rig}: \Cat_{(\oo,1)} \ra \Alg_n^{\sf rig}(\Cat_{(\oo,1)})
\]
is equivalent to
\[
\Free_n^{\sf rig} \simeq \Ind_{\n1}^n\Free_{\n1}^{\sf rig}
\]
where $\Ind_{\n1}^n:\Alg_{\n1}(\Cat_{(\oo,1)})\ra \Alg_n(\Cat_{(\oo,1)})$ is the left adjoint to the functor defined by restriction along $\cE_{\n1}\ra \cE_n$. The 1-dimensional Tangle Hypothesis is equivalent to the assertion $\Free^{\sf rig}_n(\ast) \simeq \Bord_1^{\fr}(\RR^n)^{\wedge}_{\sf unv}$. Our proof divides into two subsidiary assertions:
\begin{enumerate}
\item  Base Case: $\Free^{\sf rig}_1(\ast)\simeq  \Bord_1^{\fr}(\RR^1)$; and
\item  Inductive Case: $\Ind_{\n1}^n\Bigl(\Bord_1^{\fr}(\RR^{\n1})\Bigr)^{\wedge}_{\sf unv} \simeq  \Bord_1^{\fr}(\RR^n)^{\wedge}_{\sf unv}$.
\end{enumerate}
To establish both the Base Case and the Inductive Case, we use the following technique: 
\begin{itemize}
\item Let us say we want to evaluate a functor $\cF$ on an $(\oo,1)$-category $\cC$, where the output $\cF$ is again an $(\oo,1)$-category. Let $\cC_\bullet$ denote the underlying simplicial space of $\cC$. There is a canonical functor
\[
\Bigl(\Seg\bigl(\cF(\cC_\bullet)\bigr)\Bigr)_{\sf unv}^\wedge
\longrightarrow
\cF(\cC)
\]
where
\begin{itemize}
\item $\cF(\cC_\bullet)$ is the simplicial object obtained by evaluating $\cF$ on each $\cC_p$, regarded as an $\oo$-groupoid;
\item $\Seg$ is an endofunctor on simplicial objects, the output of which often satisfies the Segal condition;
\item $(-)_{\sf unv}^\wedge$ is univalent-completion, which converts a Segal space into a univalent-complete Segal space (i.e., an $(\oo,1)$-category).
\end{itemize}
\end{itemize}

We give a formula for this $\Seg$ functor in Definition~\ref{def.Seg}, and prove in Lemma~\ref{lemma.Seg.universal} that this canonical functor is an equivalence so long as the output indeed satisfies the Segal condition (which it will not in general).

\medskip

To establish the base case, we start with a description of the free monoidal category generated by a single object with a right-dual, in Proposition~\ref{tDualBord}. Using our $\Seg$ functor, we then analyze the pushouts which define the process of adding left-duals and right-duals, resulting in Lemma~\ref{lemma.main.basecase.n2}.

\medskip

To establish the inductive step, we give a topological model for induction from $\cE_{\n1}$-algebras to $\cE_n$-algebras, in Lemma~\ref{lemma.II.is.Ind}. We then apply the same strategy of Segaling and univalent-completing to analyze the induction functor from $\cE_{\n1}$-monoidal flagged $(\oo,1)$-categories to $\cE_n$-monoidal flagged $(\oo,1)$-categories via the composite of steps in the following diagram:
\[
\xymatrix{
\Alg_{\n1}(\fCat_{(\oo,1)})\ar@{^{(}->}[d]_-{\rm forget}\ar[rrrr]^-{\Ind_{\n1}^n}&&&&\Alg_n(\fCat_{(\oo,1)})\\
\Alg_{\n1}(\Fun(\bDelta^{\op},\Spaces))\ar[rr]^{\rm Step \ (1)}&&\Alg_n(\Fun(\bDelta^{\op},\Spaces))\ar[rr]^-{\rm Step \ (2)}&&\Alg_n(\fCat_{(\oo,1)})\ar[u]_-{\rm Step \ (3)}}
\]
The proof is then completed by composing these three steps:

\begin{enumerate}
\item In Corollary~\ref{cor1}, we show
\[
\Ind_{\n1}^n\bigl(\Bord_1^{\fr}(\RR^{\n1})[p]\bigr)
\simeq \Tang_1^{\fr,\pitchfork[p]}(\RR^n\times\DD[p])^{\sf s.pr.emb}~,
\]
the induction on the space of framed tangles in $\RR^{\n1}\times \DD[p]$ is the space of slice-wise projection-embedding framed tangles in $\RR^n\times\DD[p]$, i.e., the tangles which can be sliced in such a way that they then project to embeddings in $\RR^{\n1}\times\DD[p]$.
\item In Lemma~\ref{lemma.proj.emb.to.proj.imm}, we show
\[
\Seg
\Bigl(
\Tang_1^{\fr}\bigl(\RR^n \times\DD[\bullet]\bigr)^{\sf s.pr.emb}_{\pitchfork[\bullet]}
\Bigr)[p]
\simeq
\Tang_1^{\fr}(\RR^n\times\DD[p])_{\pitchfork[p]}^{\sf pr.imm}~,
\]
the Segaling of the simplicial space of slice-wise projection-embedding framed tangles is the space of projection-immersion framed tangles, i.e., the tangles in $\RR^n\times\DD[p]$ which project to immersions in $\RR^{\n1}\times\DD[p]$.
\item
In Lemma~\ref{lemma.univ.cplt.of.proj.imm}, we show
\[
\Bigl(\Bord_1^{\fr}(\RR^n)^{\sf pr.imm}\Bigr)^\wedge_{\sf unv}
\simeq
\Bigl( \Bord_1^{\fr}(\RR^n) \Bigr)^\wedge_{\sf unv}~,\]
the univalent-completion of the Segal space of projection-immersion framed tangles is the $(\oo,1)$-category of framed tangles.
\end{enumerate}

\begin{remark}
These steps give a parametrized generalization of the familiar assertion that a framed knot or links admits a presentation in terms of braids. Combining these steps, there is an $(\oo,1)$-category which presents the space of framed links in $\RR^n$: The classifying space of this $(\oo,1)$-category is equivalent to the space of framed links in $\RR^n$, and an object of this $(\oo,1)$-category is a presentation of a framed link in $\RR^n$ as a stack of concatenated framed tangles in $\RR^{\n1}$. (Here, the stacking is the $\cE_{n}$-monoidal structure of $\Bord_1^{\fr}(\RR^{n})$, and the concatenation is the composition of $\Bord_1^{\fr}(\RR^{n})$.) Applying this in the case $n=3$, we find that the space of framed links in $\RR^3$ is a classifying space of an $(\oo,1)$-category whose objects are presentations and whose morphisms are relations.
\end{remark}

\subsection*{Linear overview of the paper}

We now give a sectional overview of the contents of this work.

{\bf Section 1} defines the moduli space of framed tangles.

{\bf Section 2} defines the flagged $(\oo,1)$-category $\Bord_1^{\fr}(\RR^{\n1})$, and shows that its space of morphisms are framed tangles in $\RR^{\n1}\times \DD^1$. It defines the $(\oo,1)$-category $\bcD^{\sfr}_{[\n1,n]}$ and shows that cosheaves on this category define rigid $\cE_{\n1}$-monoidal $(\oo,1)$-categories after restricting along a functor $\rho$ and univalent-completion.

{\bf Section 3} constructs a functor ${\sf Seg}$ on simplicial spaces. In opportune situations, $\Seg$ agrees with the left adjoint of the inclusion of Segal spaces into simplicial spaces. 

{\bf Section 4} shows that the twisted-arrow category of a poset of intervals localizes onto the twisted-arrow category of the simplex category.

{\bf Section 5} proves the 1-dimensional Tangle Hypothesis in ambient dimension 2: $\Bord_1^{\fr}(\RR^1)$ is the rigid monoidal $(\oo,1)$-category freely generated by a single object.

{\bf Section 6} establishes a topological model for induction from $\cE_{\n1}$-algebras to $\cE_n$-algebras.

{\bf Section 7} proves that $\Bord_1^{\fr}(\RR^n)$ is the $\cE_n$-monoidal flagged $(\oo,1)$-category obtained by applying the induction functor $\Ind_1^n:\Alg(\fCat_{(\oo,1)})\ra \Alg_{n}(\fCat_{(\oo,1)})$ to $\Bord_1^{\fr}(\RR^1)$.

{\bf Section 8} establishes a relative h-principle for embedding-immersions, used to evaluate the univalent-completion in Section 7.

{\bf Section 9} proves the 1-dimensional Cobordism Hypothesis as the $n\mapsto\oo$ limit of the Tangle Hypothesis.

{\bf Section 10} calculates the classifying spaces of the cobordism and tangle categories.

\subsection{Notation}
\label{sec.notation}

\begin{itemize}
\item $\Bord_1^{\fr}(\RR^{\n1})$ is the $\cE_{\n1}$-monoidal flagged $(\oo,1)$-category whose objects are $n$-framed points in $\RR^{\n1}$ and whose morphisms are framed tangles in $\RR^{\n1}\times\DD[1]$.
\item $\Bord_1^{\fr}(\RR^{\n1})_{\sf unv}^\wedge$ is the $\cE_{\n1}$-monoidal $(\oo,1)$-category which is the univalent-completion of $\Bord_1^{\fr}(\RR^{\n1})$.
\item For $p\geq 1$, $\DD[p]=(\{0,\ldots,p\}\subset[0,p])$ is the 1-disk stratified with $p-1$ points in the interior.
\item  $\Tang^{\fr}_1(M)$ is the space of framed 1-tangles in $M$. $\Tang^{\fr}_1(\RR^{\n1}\times\DD[p])$ is the  space of framed 1-tangles in $\RR^{\n1}\times\DD[p]$ which are transverse to $\RR^{\n1}\times \{0,\ldots,p\}$. See~\S\ref{sec.top.mod.tangles}.
\item $\Strat$ is the $(\oo,1)$-category of stratified spaces, whose morphisms are stratum-preserving maps. See~\cite{aft1}. 
\item $\Bun$ is the $(\oo,1)$-category of constructible bundles. See~\S\ref{sec.tang.bun}.
\item $\Mfd_{n}^{\sfr}$ is the $(\oo,1)$-category of solidly $n$-framed stratified spaces. See~\S\ref{sec.tang.bun}.
\item $\bcD^{\sfr}_{[\n1,n]}$ is a full $\oo$-subcategory of $\Mfd_{n}^{\sfr}$ of stratified spaces whose strata are diffeomorphic to $\RR^{\n1}$ or $\RR^n$. See \S\ref{sec.tang.bun}.
\item $\Disk^{\fr}_k$ is the $(\oo,1)$-category of finite disjoint unions of framed $k$-dimensional Euclidean spaces, whose morphisms are framed embeddings. See \S\ref{sec.n.ind}.
\item
The $(\oo,1)$-category of spaces, or $\infty$-groupoids, or $(\infty,0)$-categories, is
\[
\Spaces
~=:~
{\sf Gpds}_{\infty}
~=:~
\Cat_{(\infty,0)}
~.
\]
\item $\Cat_{(\oo,1)}$ is the $(\oo,1)$-category of small $(\oo,1)$-categories.
\item $\fCat_{(\oo,1)}$ is the $(\oo,1)$-category of small flagged $(\oo,1)$-categories, which is equivalent to the $(\oo,1)$-category $\Fun^{\Seg}(\bDelta^{\op}, \Spaces)$  of Segal presheaves on $\bDelta$. See also \cite{flagged}.
\item $\TwAr(\cK)$ is the twisted-arrow $(\oo,1)$-category of $\cK$. It is the unstraightening of the functor $\Map: \cK^{\op}\times\cK\ra \Spaces$. Limits indexed by $\TwAr(\cK)$ are ends.
\item $\TwAro(\cK)$ is the opposite of the twisted-arrow $(\oo,1)$-category of $\cK$. Colimits indexed by $\TwAro(\cK)$ are coends.
\item $\Seg$ is an endofunctor on simplicial objects. See~Definition~\ref{def.Seg}.
\item $\Alg_k(\cV)$ is the $(\oo,1)$-category of $\cE_k$-algebras in a symmetric monoidal $(\oo,1)$-category $\cV$.
\item $\Alg^{\sf rig}_k(\Cat_{(\oo,1)})$ is the $(\oo,1)$-category of rigid $\cE_k$-monoidal $(\oo,1)$-categories.
\item $\FF_k(V)$ is the $\cE_k$-algebra freely generated by $V$, an object of a symmetric monoidal $(\oo,1)$-category $\cV$.
\item 
$\sB$ is the classifying space functor: for $\cC$ a flagged $(\infty,1)$-category, $\sB \cC$ is its classifying space, which is the geometric realization of the simplicial space presenting $\cC$.
\item $\fB$ is the deloop functor, from monoidal $(\oo,1)$-categories to $(\oo,2)$-categories. See \S\ref{sec.BBord1}.
\end{itemize}
\begin{convention}
Unless otherwise specified, all stratified spaces in this work are assumed to be conically smooth in the sense of~\cite{aft1}. All stratified maps are, likewise, also assumed to be conically smooth.
\end{convention}

\section{Topological moduli spaces of framed tangles}

\subsection{$[\n1,n]$-manifolds}
The following class of stratified spaces will be used throughout this paper.

\begin{definition}
\label{d.new.manifold}
An \bit{$[\n1,n]$-manifold} is a pair $(M^{(\n1)} \subset M)$ consisting of:
\begin{itemize}
\item
a smooth manifold $M$ with boundary, such that each connected component is either a smooth $(\n1)$-manifold without boundary or a smooth $n$-manifold with boundary; 
and

\item
a properly embedded smooth $(\n1)$-dimensional submanifold $M^{(\n1)} \subset M$ that contains both the boundary $\partial M \subset M^{(\n1)}$ and each $(\n1)$-dimensional component of $M$.

\end{itemize}
When there is no ambiguity, we denote an $[\n1,n]$-manifold $(M^{(\n1)} \subset M)$ simply as the ambient manifold $M$.

\end{definition}

\begin{example}

\begin{enumerate}
\item[]

\item
A smooth $(\n1)$-manifold is an $[\n1,n]$-manifold.
In particular, $\RR^{\n1}$ is an $[\n1,n]$-manifold.

\item
We regard a smooth $n$-manifold $M$ with boundary as an $[\n1,n]$-manifold, where $M^{(\n1)} = \partial M$.
In particular, we regard $\RR^{\n1} \times \DD^1$ as an $[\n1,n]$-manifold.

\item
More generally, for each $p\geq 0$, 
we have
$\RR^{\n1}\times \DD[p]$ as an $[\n1,n]$-manifold, where $(\RR^{\n1}\times \DD[p])^{(\n1)} = \RR^{\n1}\times \{0,\dots,p\}$.

\end{enumerate}

\end{example}

\begin{remark}
\label{r1}
An $[\n1,n]$-manifold $M$ is a conically smooth stratified space that satisfies the following condition.
\begin{itemize}
\item[]
Each stratum is either $(\n1)$-dimensional or $n$-dimensional, and each $(\n1)$-dimensional stratum is contained in the closure of at most two $n$-dimensional strata.
\end{itemize}
This condition is equivalent to requiring that for each $x\in M$, there is a basic neighborhood about $x$ isomorphic with $\RR^n$, or $\RR^{\n1}\times \sC(\ast)$, or $\RR^{\n1}\times \sC(S^0)$. 

\end{remark}

\begin{observation}
Let $M$ be an $[\n1,n]$-manifold.
Its ambient smooth manifold admits a tangent vector bundle $\tau_M$, which is classified by map between spaces
\[
\tau_M
\colon
M
\longrightarrow
\BO(\n1)
\coprod
\BO(n)
~,\qquad
x
\longmapsto 
\sT_x M
~,
\]
in which the $(\n1)$-dimensional components of $M$ are carried to the first cofactor and the $n$-dimensional components of $M$ are carried to the second.

\end{observation}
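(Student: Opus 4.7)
The plan is to unpack the definition of an $[\n1,n]$-manifold and apply the standard classification of tangent bundles on each connected component. First, I would decompose $M$ according to Definition~\ref{d.new.manifold}: the ambient smooth manifold is a disjoint union $M = M_{(\n1)} \sqcup M_{(n)}$, where $M_{(\n1)}$ is the union of those connected components that are smooth $(\n1)$-manifolds without boundary, and $M_{(n)}$ is the union of those connected components that are smooth $n$-manifolds with boundary. Because dimension is locally constant and boundary/non-boundary is determined componentwise, this decomposition is canonical and exhausts $M$.

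Next, I would invoke the standard fact that any smooth manifold with boundary of pure dimension $d$ admits a tangent vector bundle of rank $d$, classified by a map into $\BO(d)$ well-defined up to contractible choice. Applied separately to each cofactor, this gives a rank-$(\n1)$ vector bundle on $M_{(\n1)}$ and a rank-$n$ vector bundle on $M_{(n)}$, classified respectively by
\[
\tau_{M_{(\n1)}} \colon M_{(\n1)} \longrightarrow \BO(\n1),
\qquad
\tau_{M_{(n)}} \colon M_{(n)} \longrightarrow \BO(n).
\]

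Finally, I would assemble these into a single map out of the coproduct. By the universal property of the coproduct in spaces, the pair of maps above is the same datum as a single map
\[
\tau_M \colon M = M_{(\n1)} \sqcup M_{(n)} \longrightarrow \BO(\n1) \coprod \BO(n),
\]
and by construction this map sends $x \in M_{(\n1)}$ to the class of $\sT_x M$ in the first cofactor and $x \in M_{(n)}$ to the class of $\sT_x M$ in the second. This is precisely the asserted classifying map, so the observation is established.

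There is essentially no obstacle: the statement is a direct consequence of the componentwise dimensional structure built into Definition~\ref{d.new.manifold} combined with the classical classification of real vector bundles over smooth manifolds with boundary. The only subtlety worth flagging is the use of manifolds with boundary (for the $n$-dimensional components), but since the tangent bundle of a manifold with boundary is defined in the usual way and classified by $\BO(n)$ without modification, no additional argument is required.
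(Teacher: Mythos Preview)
Your proposal is correct. The paper states this as an unproved Observation, treating it as immediate from the definition; your argument simply spells out the implicit reasoning---decompose $M$ into its $(\n1)$- and $n$-dimensional components and classify each tangent bundle separately---so there is nothing to compare.
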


\begin{definition}[Remark 2.37 of \cite{fact1}]
Let $M$ be an $[\n1,n]$-manifold.
A \bit{solid $n$-framing} on $M$ is a homotopy-commutative lift
\[
\xymatrix{
&&
S^{\n1}
\coprod 
\ast
\ar[d]
\\
M
\ar[rr]_-{\tau_M}
\ar@{-->}[urr]^-{\varphi}
&&
\BO(\n1) \coprod \BO(n)
.
}
\]
Here, the downward map is the coproduct of the standard map $S^{\n1} = \frac{\sO(n)}{\sO(\n1)} \to \BO(\n1)$ with the basepoint $\ast = \frac{\sO(n)}{\sO(n)} \to \BO(n)$.

\end{definition}

\begin{remark}
\label{r10}
Note the identifications $S^{\n1} \simeq {\sf Inj}(\RR^{\n1},\RR^n)_{/\sO(n)}$ and $\ast \simeq {\sf Inj}(\RR^n,\RR^n)_{/\sO(n)}$.
Consequently, a solid $n$-framing of an $[\n1,n]$-manifold $M$ is a vector bundle injection $\tau_M \hookrightarrow \epsilon^n_M$ into a trivial rank-$n$ vector bundle over $M$.
Indeed, 
Such an injection restricts over each $n$-dimensional connected component of $M$ as a trivialization of its tangent bundle.

\end{remark}

\subsection{Topological moduli of tangles in $M$}\label{sec.top.mod.tangles}

\begin{definition}[Tangles]
\label{d.tangle}
Let $M = (M^{(\n1)} \subset M)$ be an $[\n1,n]$-manifold in the sense of Definition~\ref{d.new.manifold}.
A \bit{$1$-tangle in $M$} is a compact smooth codimension-$(\n1)$ submanifold $W\subset M$ with boundary that is transverse to $M^{(\n1)} \subset M$ and $\partial W = W \cap \partial M$.
\end{definition}

\begin{terminology}
If the dimension is unspecified, a tangle $W\subset M$ refers to a $1$-tangle.
\end{terminology}

\begin{remark}
If the $[\n1,n]$-manifold $M$ is simply a smooth $(\n1)$-manifold, then a 1-tangle in $M$ is a compact 0-dimensional submanifold of $M$.
\end{remark}

\begin{remark}
Let $M$ be a smooth $n$-manifold.
Let $\partial \subset M$ be a codimension-1 properly embedded smooth submanifold that separates $M$.
Denote by $M'$ and $M''$ the closures of each separation: Each is a smooth $n$-manifold with boundary $\partial$.
Regard $M = (\partial \subset M)$ as an $[\n1,n]$-manifold.
Then a tangle $W\subset M$ is exactly a tangle in each of $W'\subset M'$ and $W''\subset M''$ which agree in $\partial$: $W' \cap \partial = W'' \cap \partial$.
\end{remark}

Let $M = (M^{(\n1)} \subset M)$ be an $[\n1,n]$-manifold.
Let $W$ be a compact $[0,1]$-dimensional manifold.
There is a topological space
\[
\Emb(W, M)
\]
whose underlying set consists of smooth embeddings $W \xra{g} M$ that are transverse to $M^{(\n1)}\subset M$ and that carry $0$-dimensional connected components of $W$ to $(\n1)$-dimensional connected components of $M$.
The topology on this set is the compact-open smooth topology (also known as the weak Whitney topology). 
There is a natural function
\[
\Emb(W, M) \longrightarrow \{W'\subset M| \ W' \text{ a tangle in } M\}
\]
which assigns to an embedding $g:W\hookrightarrow M$ the image $g(W)\subset M$.

\begin{definition}
Let $M$ be an $[\n1,n]$-manifold.
The topological space of tangles in $M$ consists of the set $\Tang_1(M) =\{W\subset M| \ W \text{ a  tangle in }M\}$ of tangles, endowed with the finest topology for which the function
\[
\Emb(W, M) \longrightarrow \Tang_1(M)
\]
is continuous for every $[0,1]$-dimensional manifold $W$.
In the case that $M$ is $(\n1)$-dimensional, a tangle in $M$ is a 0-dimensional submanifold therein, and we use the more suggestive notation
\[
\Tang_0(M)
\]
for the space of tangles in $M$.
\end{definition}

\begin{observation}
For each $W$, there is a natural action of $\Aut(W)$ on $\Emb(W,M)$. Letting $[W]$ range over all isomorphism classes of $[0,1]$-dimensional manifolds $W$,
there is a homeomorphism
\[
\coprod_{[W]}\Emb(W,M)_{/\Aut(W)} \overset{\cong}\longrightarrow \Tang_1(M)
\]
between the disjoint union of the quotient of embeddings spaces, and the topological space of tangles.
\end{observation}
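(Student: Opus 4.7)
The plan is to verify the claim by combining a set-theoretic bijection with an identification of universal properties. Concretely, I would proceed in two steps: first show the underlying map of sets is a bijection, and then observe that both sides are topologized by the same universal property, so the continuous bijection is automatically a homeomorphism.

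For the set-theoretic bijection, the key input is that each tangle $T \subset M$ determines an abstract $[0,1]$-manifold (namely $T$ itself) up to diffeomorphism, and any two embeddings with image $T$ differ by an automorphism of this abstract manifold. Surjectivity is immediate: given a tangle $T \subset M$, take $W := T$ and $g \colon W \hookrightarrow M$ the defining inclusion; by Definition~\ref{d.tangle}, $g$ is transverse to $M^{(\n1)}$ and carries $0$-dimensional components of $W$ into $(\n1)$-dimensional components of $M$, so $g \in \Emb(W,M)$. For injectivity, suppose $g \in \Emb(W,M)$ and $g' \in \Emb(W',M)$ both have image $T$. Then $W \cong T \cong W'$ as abstract $[0,1]$-manifolds, so $[W] = [W']$; after an identification $W' \cong W$, the composite $g^{-1}\circ g' \in \Aut(W)$ witnesses that $g$ and $g'$ lie in the same $\Aut(W)$-orbit.

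For the topological identification, recall that $\Tang_1(M)$ carries by definition the finest topology making each $\Emb(W,M) \to \Tang_1(M)$ continuous. On the other side, the disjoint union of quotients $\coprod_{[W]} \Emb(W,M)_{/\Aut(W)}$ carries the coproduct of the quotient topologies, which is equivalently characterized as the finest topology for which each composite $\Emb(W,M) \to \Emb(W,M)_{/\Aut(W)} \hookrightarrow \coprod_{[W]} \Emb(W,M)_{/\Aut(W)}$ is continuous. Under the bijection of the previous step, these two universal properties agree verbatim, forcing the topologies to coincide.

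The main obstacle, though minor, is the bookkeeping in the injectivity argument: one must verify that whenever two embeddings $g, g'$ into $M$ have the same image tangle $T$, the inverse map $g^{-1}\circ g'$ genuinely lies in $\Aut(W)$ in the smooth category. This follows because $g$ and $g'$ are smooth embeddings onto the same submanifold, so $g^{-1}\circ g'$ is a composition of a diffeomorphism onto $T$ with the inverse of another such diffeomorphism. Once this is secured, the equivalence relation ``same image'' on $\coprod_{[W]} \Emb(W,M)$ is exactly the $\Aut(W)$-orbit relation within each summand, and the remainder of the argument is formal manipulation of the universal properties of the final topology, the disjoint union topology, and the quotient topology.
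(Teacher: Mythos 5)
Your argument is correct, and the paper itself offers no proof of this Observation — it is treated as immediate from the definitions, and your two steps (the set-level orbit/image bijection plus the matching "finest topology" universal properties on both sides) are exactly the standard justification one would supply. The only point you gloss over, which is harmless, is that the coproduct ranges over one chosen representative per isomorphism class while the final topology on $\Tang_1(M)$ quantifies over all compact $[0,1]$-manifolds $W$; these agree because precomposition with a diffeomorphism $W \cong W'$ induces a homeomorphism $\Emb(W,M) \cong \Emb(W',M)$ over $\Tang_1(M)$.
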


\subsection{Topological spaces of framed tangles}

In this section, we fix a space $B \to \Gr_1(n) = \RR\PP^{\n1}$ over the Grassmannian of 1-planes in $\RR^n$.  
We fix an $[\n1,n]$-manifold $M$, together with a solid $n$-framing $\varphi$ on $M$.

The solid $n$-framing $\varphi$ on $M$ determines, for each tangle $W\subset M$, a Gauss map
\[
W
\xra{~{\sf Gauss}_{W\subset (M,\varphi)}~}
\Gr_1(n)
~,\qquad
w
\longmapsto
\begin{cases}
\Bigl(
\varphi\left( \sT_w W \right)
\subset
\RR^n 
\Bigr)
~,
&
{\sf dim}_w(W) = 1
\\
\Bigl(
\varphi(\sT_w M)^{\perp}
\subset
\RR^n
\Bigr)
~,
&
{\sf dim}_w(W) = 0
\end{cases}
\]
whose value on $w\in W$ depends on the dimension ${\sf dim}_w(W)$ of the connected component of $W$ containing $w$.

\begin{definition}\label{def.framed.tangle}
A $B$-framed tangle in $M$ is a tangle $W\subset M$ together with a lift of the Gauss map
\[
\xymatrix{
&&&
B
\ar[d]
\\
W
\ar[rrr]_-{{\sf Gauss}_{W\subset (M,\varphi)}}
\ar@{-->}[urrr]^-{\psi}
&&&
\Gr_1(n)~
}
\]
up to a specified homotopy. 
For $W$ a compact $[0,1]$-dimensional manifold, 
the topological space of $B$-framed embeddings of $W$ into $M$ is the homotopy pullback:
\[
\xymatrix{
\Emb^B(W,M)\ar[r]\ar[d]&\Map(W,B)\ar[d]\\
\Emb(W,M)\ar[r]&\Map(W,\Gr_1(n)).}
\]
\end{definition}

This facilitates the following definition of the topological space of $B$-framed tangles.
\begin{definition}
The topological space $\Tang_1^B(M)$ of \bit{$B$-framed 1-tangles in $M$} is the quotient
\[
\Tang_1^B(M) 
~:=~
\coprod_{[W]} \Emb^B(W,M)_{/\Aut(W)}
~,
\]
indexed by the set of isomorphism classes of compact $[0,1]$-dimensional manifolds $W$.

\end{definition}

In particular, a point of $\Tang_1^B(M)$ is classified by a triple: a tangle $W\subset M$; a map $W\ra B$; and a homotopy between ${\sf Gauss}_{W\subset (M,\varphi)}$ and the composite $W\ra B \ra \Gr_1(n)$.

We will be particularly concerned with the case of framed tangles.
\begin{definition}\label{def.tang.top.space}
The topological space of framed tangles $\Tang^{\fr}_1(M)$ is the space $\Tang_1^B(M)$ in which $B$ is a point and the map $\ast \ra \Gr_1(n)$ selects the line $\RR^{\{n\}}\subset \RR^n$ given by the last coordinate.

\end{definition}

In particular, a point of $\Tang_1^{\fr}(M)$ is classified by a pair: a tangle $W\subset M$; and a nullhomotopy of the Gauss map ${\sf Gauss}_{W\subset (M,\varphi)}:W \ra \Gr_1(n)$.

\section{The $(\oo,1)$-category of tangles in terms of $\Bun$}
In this section, we define the $\cE_{\n1}$-monoidal flagged $(\oo,1)$-category $\Bord_1^{\fr}(\RR^{\n1})$. Heuristically, an object in $\Bord_1^{\fr}(\RR^{\n1})$ consists of:
\begin{itemize}
\item a finite subset $S\subset\RR^{\n1}$;
\item a nullhomotopy of the constant map $S \xra{{\sf const}_{\RR^{\{n\}}}} \Gr_1(n)$.
\end{itemize}
Likewise, to define a morphism in $\Bord_1^{\fr}(\RR^{\n1})$, we endow $ \RR^{\n1}\times \DD^1$ with its standard $n$-framing: $\sT(\RR^{\n1}\times\DD^1) \cong\epsilon^{\n1} \oplus\epsilon^1 \cong\epsilon^n$. 
A morphism of $\Bord_1^{\fr}(\RR^{\n1})$ is then heuristically given by:
\begin{itemize}
\item a 1-tangle $W\subset \RR^{\n1}\times\DD^1$;
\item a nullhomotopy of the resulting Gauss map $W \ra \Gr_1(n)$.
\end{itemize}
This heuristic suggests that, whatever the technical definition of $\Bord_1^{\fr}(\RR^{\n1})$, its space of objects should have the homotopy type of $\Tang_0^{\fr}(\RR^{\n1})$ and its space of morphisms should have the homotopy type $\Tang_1^{\fr}(\RR^{\n1}\times \DD^1)$.
In particular, for any pair of 0-tangles $S_-,S_+\in \Tang_0^{\fr}(\RR^{\n1})$, there should result a homotopy equivalence between the space of morphisms of $\Bord_1^{\fr}(\RR^{\n1})$
\[
\Map_{\Bord_1^{\fr}(\RR^{\n1})}(S_-, S_+)
\]
and the fiber of the restriction map
\[
\Tang_1^{\fr}(\RR^{\n1}\times\DD^1) \longrightarrow \Tang_0^{\fr}(\RR^{\n1}\times\partial \DD^1) 
\]
over the point $S_- \sqcup S_+ \in \Tang_0^{\fr}(\RR^{\n1}\times\partial \DD^1)$.

\begin{remark}\label{rem.framings.zero.mflds}
Observe that for a given $S\subset\RR^{\n1}$, the space of nullhomotopies of $S \ra \Gr_1(n)$ is $\Omega(\Gr_1(n))^S$. Consequently, the set of equivalences classes of nullhomotopies is $\pi_1\Gr_1(n)$. 
For $n=2$, we see that there are $\ZZ$-many objects of $\Bord_1^{\fr}(\RR^1)$ whose underlying tangle is a singleton $\ast \subset \RR^1$. For $n\geq 3$, we see that there are $\ZZ/2$-many isomorphism classes of objects of $\Bord_1^{\fr}(\RR^{\n1})$ with underlying tangle a singleton $\ast \subset \RR^{\n1}$.
\end{remark}

\subsection{The $(\oo,1)$-categories $\bcD_{[k]}^{\sfr}$ and $\bcD_{[\n1,n]}^{\sfr}$}\label{sec.tang.bun}

Recall the $(\infty,1)$-category $\Mfd_n^{\sfr}$ from~\cite{fact1}.
An object is a conically smooth stratified space $M$ equipped with a solid $n$-framing. 
A morphism in $\Mfd_n^{\sfr}$ from $M$ to $M'$ consists of:
\begin{itemize}
\item a constructible bundle $X\xra{\pi} \Delta^1$ over the standardly stratified 1-simplex;
\item a solid $n$-framing on $\pi$, which is an injection $\sT_{X|\Delta^1}\hookrightarrow \epsilon^n_X$ of the vertical tangent constructible bundle into the trivial rank-$n$ bundle;
\item  isomorphisms
\[
X_{|\Delta^{\{0\}}}\cong M~, {\rm \ and} \qquad X_{|\Delta^{\{1\}}}\cong M'\
\] between $M$ and the special fiber of $X\ra \Delta^1$, and $M'$ and the generic fiber of $X\ra \Delta^1$.
\end{itemize}
There is a natural forgetful functor $\Mfd_n^{\sfr}\ra \Bun$ to the $(\infty,1)$-category $\Bun$ classifying constructible bundles, given by forgetting the solid $n$-framing. We think of $\Bun$ as parametrizing the deformation theory of stratified spaces: A morphism in $\Bun$ is a 1-parameter deformation of a stratified space, encoded as a constructible bundle over the 1-simplex. More generally, for $K$ a stratified space,
a $K$-point of $\Bun$ is $K$-parameter family of stratified spaces, encoded as constructible bundle over $K$. See \cite{strat}.

We recall the following special classes of morphisms in $\Bun$ and, thereafter, in $\Mfd_n^{\sfr}$. See \S6.5--6.6 of~\cite{strat} and \S1.4 of~\cite{fact1}.
\begin{itemize}
\item Closed-creation morphisms: The reversed mapping cylinder defines a monomorphism
\[
\Cylr:\Strat^{\pcbl}\hookrightarrow \Bun^{\op}~,
\]
where $\Strat^{\pcbl}$ is the $(\infty,1)$-category whose objects are stratified spaces and whose morphisms are proper constructible bundles. 
The $\Cylr$ functor assigns to a morphism $X_1 \ra X_0$ of $\Strat^{\pcbl}$ the morphism of $\Bun$ given by the constructible bundle
\[
\Cylr(X_1\ra X_0) := X_0\underset{X_1\times\Delta^{\{0\}}}\amalg X_1 \times \Delta^1\longrightarrow \Delta^1~.
\]
The image of $\Cylr$ consists of \bit{ closed-creation} morphisms, $\Bun^{\sf cls.crt}$. If the proper constructible bundle $X_1\ra X_0$ is, furthermore, an injection (i.e., $X_1 \to X_0$ is the inclusion of a closed union of strata of $X_0$), then the associated morphism of $\Bun$ is \bit{ closed}, and the essential image of closed embeddings is $\Bun^{\cls}$. 
If $X_1\ra X_0$ is surjective, the associated morphism of $\Bun$ is \bit{ creation}, and the essential image of creation morphisms is $\Bun^{\sf crt}$.

\item Refinement-embedding morphisms: The open mapping cylinder defines a monomorphism
\[
\Cylo:\Strat^{\sf open}\hookrightarrow \Bun~,
\]
where $\Strat^{\sf open}$ is the $(\infty,1)$-category whose objects are stratified spaces and whose morphisms are stratified maps $X_0\hookrightarrow X_1$ which are open embeddings of underlying topological spaces. The $\Cylo$ functor assigns to a morphism $X_0 \hookrightarrow X_1$ of $\Strat^{\sf open}$ the morphism of $\Bun$ given by the constructible bundle
\[
\Cylo(X_0\ra X_1) := X_0\times\Delta^1\underset{X_0\times\Delta^1\smallsetminus\Delta^{\{0\}}}\amalg X_1 \times \Delta^1\smallsetminus\Delta^{\{0\}}\longrightarrow \Delta^1~.
\]
The image of $\Cylr$ consists of \bit{refinement-embedding} morphisms, $\Bun^{\sf ref.emb}$. There are further $\oo$-subcategories of refinements and embeddings: $\Bun^{\sf ref}$ is the essential image of stratified maps $X_0\ra X_1$ which are isomorphisms of underlying topological spaces; $\Bun^{\sf emb}$ is the essential image of stratified maps $X_0\ra X_1$ which are isomorphisms onto its image as stratified spaces.

\item The $(\oo,1)$-category $\Bun^{\sf act}$ of \bit{ active} morphisms of $\Bun$ is the smallest $\oo$-subcategory containing both the refinement-embedding morphisms and the creation morphisms, $\Bun^{\sf ref.emb}$ and $\Bun^{\sf crt}$.

\end{itemize}
Every morphism in $\Bun$ admits a factorization as a closed-creation morphism followed by a refinement-embedding morphism.
Also, $\Bun$ admits a factorization system: Every morphism admits a unique factorization as a closed morphism followed by an active morphism.
A morphism of $\Mfd_n^{\sfr}$ is closed, creation, refinement, embedding, and active if the underlying morphism of $\Bun$ is so. 
This defines corresponding $\oo$-subcategories $\Mfd_n^{\sfr}$, as well a factorization system:
every morphism of $\Mfd_n^{\sfr}$ can be uniquely factored as closed morphism followed by an active morphism.

To rigorously construct $\Bord_1^{\fr}(\RR^{\n1})$ as an $\cE_{\n1}$-monoidal flagged $(\oo,1)$-category, we will define two functors
\[
 \bcD^{\sfr}_{[\n1]}\times\bDelta^{\op}\longrightarrow \bcD^{\sfr}_{[\n1,n]}
 \longrightarrow
 \Spaces~.
\]
\begin{definition}
The $(\oo,1)$-category $\bcD^{\sfr}_{[k]}$ is the full $\oo$-subcategory of $\Mfd_{k}^{\sfr}$ whose objects are solidly $k$-framed finite disjoint unions of $\RR^k$.
The $(\oo,1)$-category $\bcD^{\sfr}_{[\n1,n]}$ is the full $\oo$-subcategory of $\Mfd_{n}^{\sfr}$ whose objects are solidly $n$-framed $[\n1,n]$-manifolds with the following properties:
\begin{itemize}
\item
each stratum is diffeomorphic to either $\RR^{\n1}$ or $\RR^n$;

\item
the closure of each stratum is diffeomorphic with an open subspace of a closed disk.  

\end{itemize}
\end{definition}

\begin{example}
\label{eg.disk}
Here we explain how objects in $\bcD^{\sfr}_{[\n1,n]}$ arise as Poincar\'e duals of graphs embedded in framed $n$-manifolds.
Let $M$ be a framed $n$-manifold with boundary.
Let $\Gamma \subset M$ be a compact embedded graph that meets the boundary transversely.
Assume $\Gamma$ has no self-loops.
Let $D \subset M$ be a regular open neighborhood of $\Gamma$ -- it is an $n$-manifold with boundary $\partial D = D \cap \partial M$.
Being a regular neighborhood, there is a deformation retraction $D \xra{r} \Gamma$.
For each edge $E \subset \Gamma$, choose a point $e\in E$ in its interior.  
Then $r^{-1}(e) \subset D$ is a properly embedded smooth $(\n1)$-submanifold which is diffeomorphic with $\RR^{\n1}$ -- informally, $r^{-1}(e)$ slices $D$ transverse to $E$.
The pair $(\partial D \sqcup \underset{e} \bigsqcup r^{-1}\{e\} \subset D)$ is an $[\n1,n]$-manifold.  
The framing of $M$ determines the structure of a solid $n$-framing on this $[\n1,n]$-manifold.
In this way, $\left( \partial D \sqcup \underset{e} \bigsqcup r^{-1} \{e\} \subset D \right)$ may be regarded as an object in $\bcD^{\sfr}_{[\n1,n]}$.
See Figure~\ref{fig11}.

\end{example}

\begin{remark}
Every object in $\bcD^{\sfr}_{[\n1,n]}$ is a finite disjoint union of $\RR^{\n1}$, equipped with a solid $n$-framing, and objects constructed in Example~\ref{eg.disk}.

\end{remark}

\begin{figure}
[H]
  \includegraphics[width=\linewidth, trim={0 {.4in} 0 {.4in}}, clip]{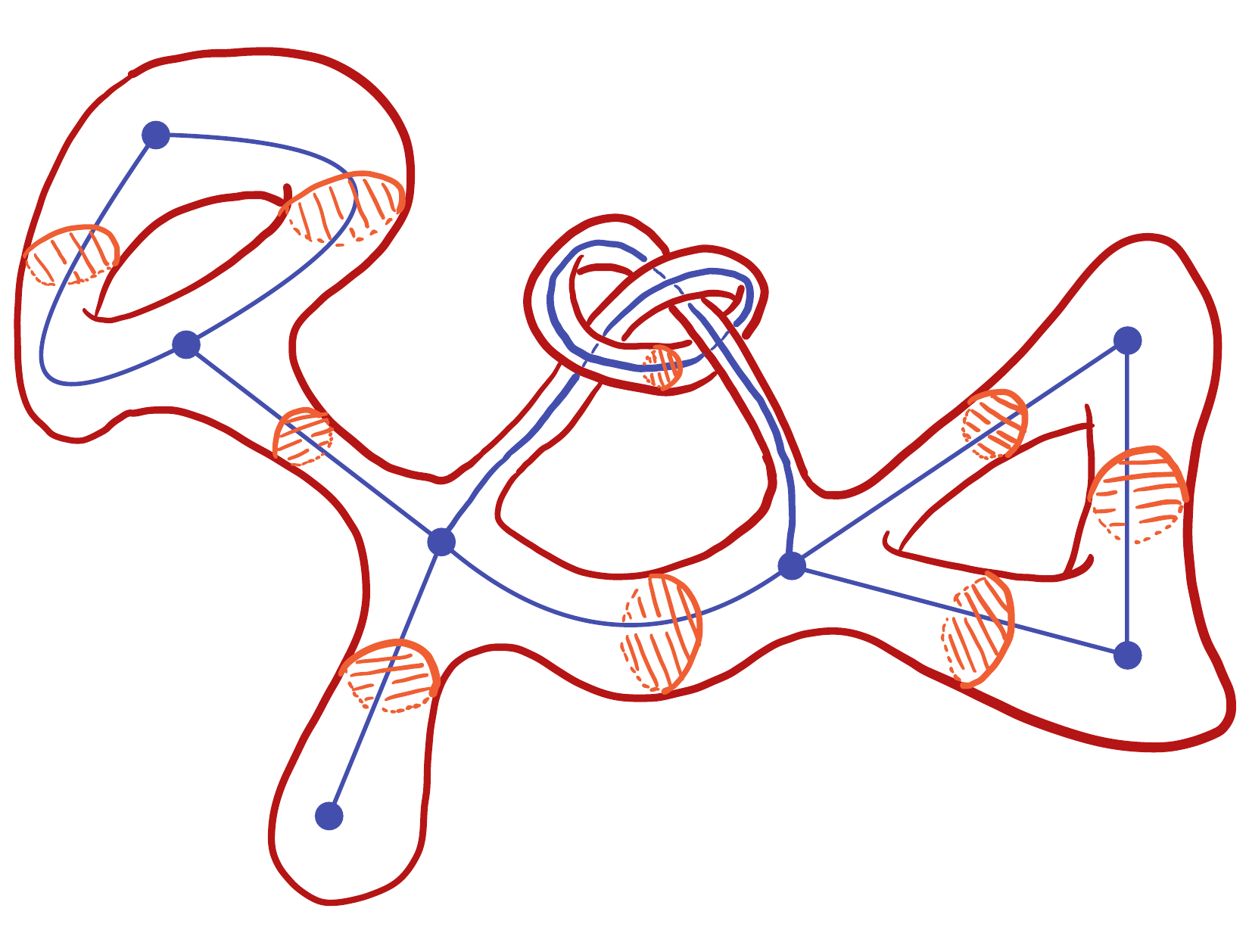}
  \caption{
  A Poincar\'e dual of an embedded graph in $\RR^3$.
  Here (in blue) is a compact embedded graph in $\RR^3$ with no self-loops. 
  Engulfing this graph is an (red) open regular neighborhood of it.
  Transverse to each edge is an (orange) open 2-disk that is properly embedded in the open regular neighborhood.  
  The open regular neighborhood, equipped with the open 2-disks therein and equipped with the solid 3-framing inherited from the standard framing of $\RR^3$, is an object in $\bcD^{\sfr}_{[2,3]}$.
  }
  \label{fig11}
\end{figure}

Let $k\geq 0$.
Recall the topological operad $\cE_k$ of little $k$-disks, which we regard as an $\infty$-operad: $\cE_k \ra \Fin_\ast$.  
Recall the notion of closed covers from \S3.5 of~\cite{fact1}, which are pullback diagrams in $\Bun$ of the form
\[
\xymatrix{
X
\ar[rr]
\ar[d]
&&
A
\ar[d]
\\
B
\ar[rr]
&&
A \cap B
,
}
\]
all of whose morphisms are closed.
Such a closed cover is equivalent to a pair $A,B \subset X$ of closed subspaces, each of which is a union of strata, such that $A \cup B = X$.
In particular, disjoint unions are closed covers, and disjoint unions constitute all of the closed covers in the particularly simple case of $\bcD_{[k]}^{\sfr}$.
\begin{notation}
\label{d.nAlg}
For $k \geq 0$ and $\cV$ be a symmetric monoidal $(\infty,1)$-category, then
 $\Alg_k(\cV)$ is the $(\oo,1)$-category of $\cE_k$-algebras in $\cV$:
\[
\Alg_k(\cV)
~:=~
\Fun^{\ot}(\cE_k,\cV)
~.
\]
For $k=1$, we further abbreviate $\Alg(\cV) := \Alg_1(\cV)$.

\end{notation}

\begin{remark}
Informally, an $\cE_k$-algebra is an object with a $(k\text{-}1)$-sphere family of binary multiplication rules.  
Remarkably, Dunn--Lurie additivity \cite{HA} states that an $\cE_k$-algebra in $\cV$ is precisely an object $A\in \cV$ with $k$ compatible associative algebra structures.
\end{remark}

\begin{observation}
\label{t14}
\begin{enumerate}
\item[]

\item
Each linear injection $\RR \to \RR^{\n1}$ determines a morphism between topological operads $\cE_1 \to \cE_{\n1}$ of little disks, and therefore maps of spaces
\begin{equation}
\label{e24}
S^{\mathit{n}\text{-}2}
\overset{\simeq}\longrightarrow
{\sf Hom}^{\sf lin.inj}(\RR , \RR^{\n1})
\longrightarrow
\Hom_{{\sf Operads}_{\infty}}(\cE_1 , \cE_{\n1})
~.
\end{equation}

\item
Let $\cV$ be a symmetric monoidal $(\infty,1)$-category.
The map~(\ref{e24}) corepresents a conservative functor between $(\infty,1)$-categories,
\begin{equation}
\label{e25}
S^{\mathit{n}\text{-}2}
\times
\Alg_{\n1}(\cV)
\longrightarrow
\Alg(\cV)
~,\qquad
(v,A)
\mapsto (A , \overset{v} \ot)
~,
\end{equation}
which is evidently functorial in the symmetric monoidal $(\infty,1)$-category $\cV$.

\end{enumerate}

\end{observation}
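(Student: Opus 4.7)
The plan is to verify the two parts of the observation in sequence; each follows by unwinding definitions.

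For part (1), the space $\Hom^{\sf lin.inj}(\RR,\RR^{\n1})$ is identified with $\RR^{\n1}\smallsetminus\{0\}$ by evaluating a linear injection at $1\in\RR$, and this punctured Euclidean space deformation retracts radially onto the unit sphere $S^{\mathit{n}\text{-}2}$. To construct the map from linear injections into operad morphisms, I will use the little balls model of the little disks operads, in which $\cE_k(j)$ is the space of $j$-tuples of pairwise disjoint open balls in $\RR^k$. A linear injection $\iota: \RR \hookrightarrow \RR^{\n1}$ sends a configuration of intervals $\{(a_i - r_i, a_i + r_i)\}$ in $\RR$ to the configuration of $(\n1)$-balls $\{B(\iota(a_i),\, r_i|\iota(1)|)\}$ in $\RR^{\n1}$. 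Disjointness is preserved because $\iota$ scales all distances by $|\iota(1)|$; the construction respects operadic composition and varies continuously in $\iota$, giving the second map in~(\ref{e24}).

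For part (2), the functor~(\ref{e25}) is operadic restriction along~(\ref{e24}): given $(v,A) \in S^{\mathit{n}\text{-}2}\times\Alg_{\n1}(\cV)$, restrict the $\cE_{\n1}$-algebra structure of $A$ along the operad map $\cE_1 \to \cE_{\n1}$ classified by $v$, producing the $\cE_1$-algebra $(A,\overset{v}\otimes)$. Functoriality in $\cV$ is automatic from the functoriality of operadic restriction along a fixed operad map. For conservativity, the composite of~(\ref{e25}) with the forgetful functor $\Alg(\cV) \to \cV$ agrees with the composite $S^{\mathit{n}\text{-}2}\times\Alg_{\n1}(\cV) \to \Alg_{\n1}(\cV) \to \cV$ which projects onto the second factor and then forgets. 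Since the forgetful functor $\Alg_{\n1}(\cV) \to \cV$ is conservative, any morphism $(v_0,A_0)\to(v_1,A_1)$ in the source whose image is an equivalence in $\Alg(\cV)$ induces an equivalence on underlying objects in $\cV$, and hence an equivalence of $\cE_{\n1}$-algebras; the path $v_0 \to v_1$ is automatically an equivalence in the $\infty$-groupoid $S^{\mathit{n}\text{-}2}$.

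The observation presents no substantial obstacle. The only technical care is in passing from the topological-operad construction of~(\ref{e24}) to the desired map of $\infty$-operads, which is handled by the standard functor from topological operads to $\infty$-operads preserving composition data.
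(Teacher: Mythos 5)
Your argument is correct, and in fact the paper offers no proof at all here --- Observation~\ref{t14} is stated as evident from definitions, so your write-up supplies details the authors omit. Both halves of your part (2) (restriction along the classified operad map, conservativity via the commutation with forgetful functors to $\cV$ and the fact that every path in $S^{\mathit{n}\text{-}2}$ is invertible) are exactly the intended reading. The one place to be careful is your claim in part (1) that the little-balls construction ``respects operadic composition'': with your formula $B(\iota(a_i), r_i|\iota(1)|)$, composing two little intervals and then applying $\iota$ yields radius $r_1r_2|\iota(1)|$, while applying $\iota$ and then composing yields $r_1r_2|\iota(1)|^2$, so the assignment is only strictly operadic when $|\iota(1)|=1$. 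This is harmless --- the composite map out of $S^{\mathit{n}\text{-}2}$ in~(\ref{e24}) only uses unit vectors, and for general $\iota$ the rescaling discrepancy is a contractible ambiguity once one passes to $\infty$-operads (equivalently, one can first retract onto the unit sphere, or choose a complement to ${\sf image}(\iota)$ and use Dunn--Lurie additivity $\cE_1\otimes\cE_{\mathit{n}\text{-}2}\simeq\cE_{\n1}$ together with the unit of $\cE_{\mathit{n}\text{-}2}$) --- but as literally stated that step needs the qualification.
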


Algebras over little $n$-disks operads appear in the theory of functors on $(\oo,1)$-categories such as $\Mfd_n^{\sfr}$ as a consequence of the following proposition.

\begin{prop}
\label{t8}
Let $k \geq 0$.
Let $\cX$ be an $(\oo,1)$-category with finite products, regarded as a symmetric monoidal structure via its Cartesian monoidal structure.
There is a canonical equivalence between $(\oo,1)$-categories:
\[
\cShv_{\cX}(\bcD_{[k]}^{\sfr})
~\simeq~
\Alg_k(\cX)
~.
\]
\end{prop}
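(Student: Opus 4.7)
The plan is to exhibit $\bcD^{\sfr}_{[k]}$ as the symmetric monoidal $(\oo,1)$-category that corepresents $\cE_k$-algebras in Cartesian targets.

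First, I would analyze the morphism structure of $\bcD^{\sfr}_{[k]}$ via the closed--active factorization system inherited from $\Mfd_k^{\sfr}$. Since every object is a pure $k$-dimensional disjoint union of solidly $k$-framed copies of $\RR^k$, the constructible-bundle morphisms simplify drastically: refinement-embedding morphisms reduce to framed open embeddings $\sqcup_j \RR^k \hookrightarrow \RR^k$; creation morphisms reduce to splittings $\RR^k \to \sqcup_j \RR^k$, since proper surjective constructible bundles between disjoint unions of $\RR^k$'s are forced by simple-connectivity of $\RR^k$ to be finite-to-one diffeomorphisms on each component; and closed morphisms give projections $\sqcup_J \RR^k \to \sqcup_I \RR^k$ for $I \subset J$, arising from $\Cylr$ of closed summand inclusions.

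Second, I would identify the symmetric monoidal structure. Disjoint union makes $\bcD^{\sfr}_{[k]}$ symmetric monoidal, and the closed cover topology on $\bcD^{\sfr}_{[k]}$ is generated by disjoint union decompositions. A cosheaf $F \colon \bcD^{\sfr}_{[k]} \to \cX$ valued in the Cartesian target $\cX$ is then equivalently a symmetric monoidal functor $\bcD^{\sfr}_{[k],\sqcup} \to \cX^\times$ converting disjoint unions to Cartesian products. Under this correspondence, the splitting (creation) morphisms of $\bcD^{\sfr}_{[k]}$ map to the Cartesian diagonals in $\cX$, and the closed projections map to the Cartesian component projections, so these parts of the structure carry no additional data beyond what is forced by the Cartesian structure of $\cX$. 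All of the nontrivial algebra data is thus encoded by the refinement-embedding (active) morphisms.

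Third, I would observe that the active mapping space $\Map^{\act}_{\bcD^{\sfr}_{[k]}}(\sqcup_j \RR^k, \RR^k)$ is homotopy equivalent to $\cE_k(j) \simeq \Conf_j(\RR^k)$, with operadic composition matching composition in $\bcD^{\sfr}_{[k]}$. This identifies $\bcD^{\sfr}_{[k]}$ with the symmetric monoidal envelope of the $\infty$-operad $\cE_k$, and by the universal property of the envelope, symmetric monoidal functors to $\cX^\times$ correspond precisely to $\cE_k$-algebras in $\cX$. The main obstacle is making the identification in this third step fully rigorous: one must compare the active morphism spaces of $\bcD^{\sfr}_{[k]}$ -- defined via constructible bundles over $\Delta^1$ -- with the classical framed embedding spaces defining $\cE_k$. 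This is essentially a recollection from the authors' earlier work on factorization homology of stratified spaces, which systematically compares the deformation-theoretic and embedding-theoretic descriptions of disk categories.
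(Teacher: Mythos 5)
Your overall strategy---reduce a closed sheaf on $\bcD^{\sfr}_{[k]}$ to a symmetric monoidal functor out of a model for $\cE_k$---is the right one, and your analysis of the three classes of morphisms (closed, creation, refinement-embedding) is accurate. But there is a genuine gap in Step 3: $\bcD^{\sfr}_{[k]}$ is \emph{not} the symmetric monoidal envelope of $\cE_k$. The envelope has only the active morphisms (configurations of framed embeddings $\bigsqcup_J \RR^k \hookrightarrow \bigsqcup_I \RR^k$ over maps $J \to I$), whereas $\bcD^{\sfr}_{[k]}$ additionally contains the closed projections and the creation splittings, which do not exist in the envelope. So you cannot simply invoke the universal property of the envelope. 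The actual content of the proposition is precisely the assertion you dispose of in one sentence in Step 2---that the closed and creation morphisms ``carry no additional data.'' Checking that their \emph{values} are forced (projections and diagonals, respectively) is an easy diagram chase with closed covers, but that only gives uniqueness at the level of individual morphisms; you still must produce, coherently, an extension of an arbitrary symmetric monoidal functor on the embedding part to a product-preserving functor on all of $\bcD^{\sfr}_{[k]}$, and show that restriction and extension are mutually inverse. That is an existence-and-coherence statement, not a formality.

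The paper closes exactly this gap by a Kan extension argument. It does not restrict to the active subcategory but to the pointed disk category $\Disk^{\fr}_{k,+} = \Fin_\ast \times_{{\sf Corr}(\Fin)} \bcD^{\sfr}_{[k]}$ (morphisms are pointed maps $U_+ \to V_+$ restricting to framed embeddings), where a prior result identifies $\Alg_k(\cX)$ with the reduced Segal functors. The inverse to restriction is right Kan extension, and the key computation---using the closed--active factorization system to show the closed undercategory $(\bcD^{\sfr}_{[k]})^{U/^{\cls}}$ is initial in $(\bcD^{\sfr}_{[k]})^{U/}$ and is equivalent to the poset $\cP(\pi_0 U)^{\op}$---shows the Kan extension of a reduced Segal functor recovers the product over $\pi_0 U$, hence lands in product-preserving functors. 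If you want to salvage your route through the envelope, you would need to run an analogous initiality/Kan-extension argument for the inclusion of the refinement-embedding subcategory; as written, your sketch asserts the conclusion of that argument rather than proving it. (Your Step 3 comparison of active mapping spaces with configuration spaces is fine and is indeed a citation to the authors' earlier work, but it is not where the difficulty lies.)
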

\begin{proof}
Taking path-components defines a functor $\bcD^{\sfr}_{[k]} \xra{\pi_0} {\sf Corr}(\Fin)$ to the category of spans among finite sets.
There is a standard monomorphism $\Fin_\ast \xra{I_+ \mapsto I} {\sf Corr}(\Fin)$ between categories, whose value on a morphism $(I_+ \xra{f} J_+)$ is the span $(I \hookleftarrow f^{-1}(J) \xra{f_|} J)$.  
Consider the $\infty$-subcategory $\Disk^{\fr}_{k,+}:= \Fin_\ast \underset{{\sf Corr}(\Fin)} \times \bcD^{\sfr}_{[k]}$.
This $(\infty,1)$-category $\Disk^{\fr}_{k,+}$ agrees with that of Definition 2.1.5 of \cite{zp}, and admits the following explicit description:
\begin{itemize}
\item an object $U_+$ is a space with a disjoint basepoint and where $U\in\Disk_k^{{\fr}}$ is a finite disjoint union of framed $k$-dimensional Euclidean spaces;
\item a morphism from $U_+$ to $V_+$ is a pointed map $g: U_+ \ra V_+$ such that the restriction
\[
g_|:U_{|V}\ra V
\]
is a framed embedding.
\end{itemize}
As Proposition~2.1.10 of \cite{zp}, we proved that for $\cX$ a Cartesian symmetric monoidal $(\infty,1)$-category, there is a fully-faithful embedding
\[
\Alg_k(\cX)
~\hookrightarrow~
\Fun(\Disk^{{\fr}}_{k,+},\cX)
~;
\]
the essential image consists of functors $F$ satisfying the following \emph{reduced Segal} condition: $F(\ast)\ra \uno_\cX$ is an equivalence, and the canonical morphisms
\[
F(U_+)\longrightarrow \prod_{i\in \pi_0U} F(U^i_+)
\]
are equivalences for all $U_+$, where $U^i\subset U$ is the connected component corresponding to the element $i\in \pi_0U$.

Restriction and right Kan extension give an adjunction
\[
\iota^\ast: \Fun(\bcD^{\sfr}_{[k]},\cX)\leftrightarrows \Fun(\Disk_{k,+}^{\fr},\cX): \iota_\ast
~,
\]
which we now show restricts as an equivalence between the full $\oo$-subcategory $\Alg_k(\cX)\subset\Fun(\Disk^{\fr}_{k,+},\cX)$ and the full $\oo$-subcategory of functors $F:\bcD^{\sfr}_{[k]}\ra \cX$ which preserve products. 
Let $F:\bcD^{\sfr}_{[k]}\ra \cX$ be a product-preserving functor.
We show that the unit $F \ra \iota_\ast\iota^\ast F$ is an equivalence. The proof that the counit is an equivalence is identical. For $U\in \bcD_{[k]}^{\sfr}$, the value of $\iota_\ast\iota^\ast F$ on $U$ is given by the limit of the composite functor
\[
(\iota_\ast\iota^\ast F)(U) =\limit\Bigl(\Disk_{k,+}^{\fr}\underset{ \bcD_{[k]}^{\sfr}}\times  (\bcD_{[k]}^{\sfr})^{U/}\ra \Disk_{k,+}^{\fr}\xra{\iota^\ast F}\cX\Bigr)~.
\]
By the closed-active factorization system \cite{fact1} on $\Bun$, and hence on $\bcD_{[k]}^{\sfr}$, there is a left adjoint to the inclusion
\[
\Disk_{k,+}^{\fr}\underset{ \bcD_{[k]}^{\sfr}}\times  (\bcD_{[k]}^{\sfr})^{U/^{\cls}}
\hookrightarrow
\Disk_{k,+}^{\fr}\underset{ \bcD_{[k]}^{\sfr}}\times  (\bcD_{[k]}^{\sfr})^{U/}
\]
of the full $\infty$-subcategory consisting of those $U \to V$ that are closed morphisms in $\bcD_{[k]}^{\sfr}$.
The adjoint sends a morphism $U\ra E$ to $U\ra E'$, the first factor in the closed-active factorization $U\xra{\cls} E'\xra{\sf act} E$ of the original morphism. Consequently, the inclusion is an initial functor, and therefore the right Kan extension can be computed after restricting to the $\infty$-undercategory. 

Lastly, we observe that the closed $\infty$-undercategory $\Disk_{k,+}^{\fr}\underset{ \bcD_{[k]}^{\sfr}}\times  (\bcD_{[k]}^{\sfr})^{U/^{\cls}}$ is equivalent to $\cP(\pi_0U)^{\op}$, the opposite of the poset of subsets of $\pi_0 U$. 
Since $F$ preserves products, this gives that the value of the limit indexed by $\cP(\pi_0U)^{\op}$ is exactly the product of the values of $F$ over the elements of $\pi_0U$, which is $F(U)$. 
\end{proof}

\begin{remark}
Compare Proposition~\ref{t8} with Proposition~C.1 of \cite{normed}, which is the $k\mapsto \oo$ limit of this result. 
The proofs of these two results are essentially identical.
\end{remark}

\subsection{Rigid $\cE_{\n1}$-monoidal categories from copresheaves on $\bcD_{[\n1,n]}^{\sfr}$}

Recall from~\cite{flagged} the pair of $(\infty,1)$-categories $\Cat_{(\oo,n)}\subset \fCat_{(\infty,n)}$ of $(\infty,2)$-categories and of flagged $(\infty,2)$-categories.  
Recall the full $\infty$-subcategory $\bTheta_n \subset \Cat_{(\oo,n)}$ from~\cite{rezk-n}, where $\bTheta_1 = \bDelta$.
Each object in $\bTheta_n$ is an $n$-category associated to a certain class of $n$-dimensional pasting diagram.
After the work~\cite{rezk-n}, it is proved in~\cite{flagged} that the full $\infty$-subcategory $\bTheta_n \subset \Cat_{(\oo,n)}$ strongly generates.  Specifically, the restricted Yoneda functor 
\[
\fCat_{(\infty,n)}
\longrightarrow
\PShv(\bTheta_n)
~,\qquad
\cC
\longmapsto
\bigl(
T
\mapsto 
\Hom_{\fCat_{(\infty,n)}} ( T , \cC )
\bigr)
~,
\]
is fully-faithful with essential image those presheaves on $\bTheta_n$ that satisfy the Segal condition.

\begin{definition}
Let $n \geq 2$.
The respective $(\infty,1)$-categories of \bit{$\cE_{\n1}$-monoidal $(\infty,1)$-categories} and of \bit{$\cE_{\n1}$-monoidal flagged $(\infty,1)$-categories} are
\[
\Alg_{\n1}(\Cat_{(\oo,1)})
~\subset~
\Alg_{\n1}(\fCat_{(\oo,1)})
~,
\]
those of $\cE_{\n1}$-algebras in $\Cat_{(\oo,1)}$ and in $\fCat_{(\oo,1)}$.  
A \bit{monoidal (flagged) $(\infty,1)$-category} is an $\cE_1$-algebra in (flagged) $(\infty,1)$-categories.
\end{definition}

\begin{definition}\label{def.duals}
\begin{enumerate}

\item[]

\item
Let $\cR\in \Alg(\Cat_{(\oo,1)})$ be a monoidal $(\infty,1)$-category.
An object $V \in \cR$ is \bit{right-dualizable} if left-tensoring with $V$
\[
\cR\xra{V\ot -}\cR
\]
has a right adjoint
\[
\cR\xla{-\ot V^R}\cR
\]
represented by right-tensoring with an object $V^R\in \cR$.
The monoidal $(\infty,1)$-category $\cR$ has \bit{right-duals} if every object of $\cR$ has a right-dual.

\item
An $\cE_{\n1}$-monoidal $(\infty,1)$-category $\cR\in \Alg_{\n1}(\Cat_{(\infty,1)})$ is \bit{rigid} if, for each $v\in S^{\mathit{n}\text{-}2}$, the monoidal $(\infty,1)$-category $(\cR , \overset{v}\ot)$ (see the notation of Observation~\ref{t14}(2)), has right-duals.  
The $(\infty,1)$-category of \bit{rigid $\cE_{\n1}$-monoidal $(\infty,1)$-categories} is the full $\infty$-subcategory
\[
\Alg^{\sf rig}_{\n1}( \Cat_{(\infty,1)} )
~\subset~
\Alg_{\n1}( \Cat_{(\infty,1)} )
\]
consisting of those $\cE_{\n1}$-monoidal $(\infty,1)$-categories $\cR$ that are rigid.

\end{enumerate}

\end{definition}

\begin{observation}
\label{t15}
A monoidal $(\infty,1)$-category $\cR$ is rigid if and only if each object in $\cR$ has both a right-dual and a left-dual.

\end{observation}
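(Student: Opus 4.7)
The plan is to unpack Definition~\ref{def.duals}(2) in the smallest nontrivial case, namely $n=2$, where $\cE_{n\text{-}1} = \cE_1$ and a rigid $\cE_{n\text{-}1}$-monoidal $(\infty,1)$-category is a rigid monoidal $(\infty,1)$-category. In this case the indexing sphere appearing in Definition~\ref{def.duals}(2) is $S^{n\text{-}2} = S^0$, a two-point space whose points correspond, via the map~(\ref{e24}), to the two linear injections $\RR \hookrightarrow \RR^1$. So rigidity amounts to the assertion that, for each of these two points $v\in S^0$, the monoidal $(\infty,1)$-category $(\cR,\overset{v}\otimes)$ has right-duals in the sense of Definition~\ref{def.duals}(1).

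First I would identify the two monoidal structures $(\cR, \overset{v}\otimes)$ produced by Observation~\ref{t14}(2) with the original monoidal structure $(\cR,\otimes)$ and its reverse $(\cR,\otimes^{\op})$, where by definition $V \otimes^{\op} W := W\otimes V$. This is a direct consequence of the functoriality of~(\ref{e25}) in the linear injection: the two injections $\pm \id\colon \RR\hookrightarrow \RR^1$ induce the two possible orderings on a pair of disjoint little $1$-disks, hence the two possible binary products on $\cR$. No information is lost in passing between the $\infty$-operad $\cE_1$ and the associative $\infty$-operad here because $\pi_0 \cE_1(2)= \ast$ is already detected at the level of little-interval embeddings.

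Next I would observe that, by the very definition of a right-dual in $(\cR,\otimes^{\op})$, an object $V^L\in\cR$ is a right-dual to $V$ with respect to $\otimes^{\op}$ if and only if the functor $V\otimes^{\op}(-) = (-)\otimes V$ admits a right adjoint represented by $(-) \otimes^{\op} V^L = V^L \otimes (-)$. That is exactly the statement that $V^L$ is a left-dual of $V$ in the original monoidal structure $(\cR,\otimes)$. Combining this equivalence with the one for $(\cR,\otimes)$ itself gives that rigidity of $\cR$ is equivalent to the existence of both a right- and a left-dual for every object, which is precisely the content of Observation~\ref{t15}.

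The only subtle point, really the sole thing to verify carefully, is the identification in the first step: that the two components of $S^0 \simeq \Hom^{\sf lin.inj}(\RR,\RR)$ yield the two distinct monoidal structures $\otimes$ and $\otimes^{\op}$, rather than two copies of the same structure. I would handle this by tracking the induced map on the operadic spaces $\cE_1(2)$: an orientation-reversing linear injection $\RR\hookrightarrow\RR$ swaps the two labelings of an ordered pair of little intervals, and hence produces the opposite multiplication on any $\cE_1$-algebra.
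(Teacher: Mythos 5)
Your argument is correct and is exactly the unpacking that the paper leaves implicit (the statement is recorded as an Observation with no written proof): for a monoidal, i.e.\ $\cE_1$-monoidal, category one has $n=2$, the two components of $S^0\simeq \Hom^{\sf lin.inj}(\RR,\RR)$ yield $(\cR,\otimes)$ and $(\cR,\otimes^{\op})$, and, with the paper's adjoint-functor formulation of duals, a right-dual for $\otimes^{\op}$ is precisely a left-dual for $\otimes$. One small slip in your aside: $\pi_0\cE_1(2)$ is not a point but $\Sigma_2$ (the space $\cE_1(2)$ has two contractible components, one for each ordering of the little intervals); the fact you actually need is that each component is contractible, which is what makes the identification of the two induced multiplications with $\otimes$ and $\otimes^{\op}$ a purely combinatorial check.
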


\begin{observation}
\label{t22}
The condition that an $\cE_{\n1}$-monoidal $(\infty,1)$-category is rigid can be detected on any of its underlying monoidal $(\infty,1)$-categories.
More precisely,
for each $v\in S^{\mathit{n}\text{-}2}$, the evident diagram among $(\infty,1)$-categories is a pullback:
\[
\xymatrix{
\Alg^{\sf rig}_{\n1}( \Cat_{(\infty,1)} )
\ar[rr]^-{\rm fully~faithful}
\ar[d]_-{(\ref{e25})_v}
&&
\Alg_{\n1}( \Cat_{(\infty,1)} )
\ar[d]^-{(\ref{e25})_v}
\\
\Alg^{\sf rig}( \Cat_{(\infty,1)} )
\ar[rr]^-{\rm fully~faithful}
&&
\Alg( \Cat_{(\infty,1)} )
.
}
\]

\end{observation}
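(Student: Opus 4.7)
Both horizontal functors in the diagram are fully faithful inclusions of full $\infty$-subcategories, so the asserted pullback square is equivalent to the following claim: for every $\cR \in \Alg_{\n1}(\Cat_{(\infty,1)})$, $\cR$ is rigid as an $\cE_{\n1}$-monoidal $(\infty,1)$-category if and only if the underlying monoidal $(\infty,1)$-category $(\cR,\overset{v}{\ot})$ is rigid. Thus the problem reduces entirely to checking this property-level equivalence for the fixed $v\in S^{\mathit{n}\text{-}2}$.

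For the forward direction, assume that $\cR$ is rigid in the sense of Definition~\ref{def.duals}(2). Then, by definition, $(\cR,\overset{v}{\ot})$ has right-duals; applying the same definition at the antipode $-v\in S^{\mathit{n}\text{-}2}$, the monoidal $(\cR,\overset{-v}{\ot})$ also has right-duals. Since the antipodal linear injection reverses the standard identification $\RR \hookrightarrow \RR^{\n1}$, the $\cE_1$-algebra structure $\overset{-v}{\ot}$ is the opposite of $\overset{v}{\ot}$; therefore right-duals for $\overset{-v}{\ot}$ are precisely left-duals for $\overset{v}{\ot}$. Hence $(\cR,\overset{v}{\ot})$ has both left- and right-duals, and by Observation~\ref{t15} it is rigid monoidal.

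For the reverse direction, assume $(\cR,\overset{v}{\ot})$ has both right- and left-duals; we must verify that $(\cR,\overset{w}{\ot})$ has right-duals for every $w\in S^{\mathit{n}\text{-}2}$. The functor in~(\ref{e25}) is functorial in $v$, so each path from $v$ to $w$ in $S^{\mathit{n}\text{-}2}$ exhibits an equivalence of $\cE_1$-algebras between $(\cR,\overset{v}{\ot})$ and $(\cR,\overset{w}{\ot})$; existence of right- (and left-) duals is invariant under such equivalences. For $n\geq 3$, the sphere $S^{\mathit{n}\text{-}2}$ is path-connected, which immediately finishes the argument. For $n=2$, the only case not covered by continuity is $w=-v$, which is handled exactly by the opposite-algebra observation from the previous paragraph together with the assumed existence of left-duals in $(\cR,\overset{v}{\ot})$.

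The main subtle point is the identification of $\overset{-v}{\ot}$ with the opposite of $\overset{v}{\ot}$ as $\cE_1$-algebras, which both initiates the forward direction and completes the $n=2$ case of the reverse direction; this is a direct unwinding of the definition of the map $S^{\mathit{n}\text{-}2}\to \Hom_{\sf Operads_\infty}(\cE_1,\cE_{\n1})$ from Observation~\ref{t14}(1). Once this identification is in hand, the proof is just the combination of Observation~\ref{t15} with the connectedness of $S^{\mathit{n}\text{-}2}$ for $n\geq 3$.
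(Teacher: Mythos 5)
The paper states this as an \emph{Observation} with no written proof, and your argument supplies exactly the intended justification: reduce the pullback condition (both horizontal functors being fully faithful inclusions of property-defined full subcategories) to the object-level equivalence of ``rigid'' with ``rigid at one $v$,'' then use the identification of $\overset{-v}{\ot}$ with the reverse of $\overset{v}{\ot}$ together with Observation~\ref{t15} and the connectivity of $S^{\mathit{n}\text{-}2}$ for $n\geq 3$. Your proof is correct and consistent with the paper's (implicit) reasoning; no gaps.
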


Recall from~\cite{fact1} the \bit{cellular realization} functor, $\bTheta_n^{\op} \xra{\lag-\rag} \bcD_n^{\sfr}$.  In this paper, we only use this functor in the case that $n=1$.  In that case, each value of the cellular realization functor is a 0-disk, or a stratified closed 1-disk.  
We hereafter use the following suggestive notation.
\begin{notation}
\label{d100}
\[
\bDelta^{\op}
\xra{~\lag - \rag~}
\bcD_1^{\sfr}
~,\qquad
[p]
\longmapsto
\DD[p]
~.
\]
\end{notation}
Now, taking products of stratified spaces, and products of framings, defines a functor
\begin{equation}
\label{e2}
\rho
\colon
\bcD_{[\n1]}^{\sfr}
\times
\bDelta^{\op}
\xra{\id\times\langle-\rangle}
\bcD_{[\n1]}^{\sfr}
\times
\bcD_1^{\sfr}
\longrightarrow
\bcD_{[\n1,n]}^{\sfr}
~,\qquad
\bigl(
 U,[p]
\bigr)
\longmapsto
U \times \DD[p]
~.
\end{equation}

\begin{observation}
\label{t.closed.closed}
The functor $\rho$ of~(\ref{e2}) carries disjoint unions in the first variable to disjoint unions, and closed covers in the second variable to closed covers.
By Proposition~\ref{t8}, restriction along~(\ref{e2}) therefore defines a functor
\begin{equation}
\label{e3}
\cShv(\bcD_{[\n1,n]}^{\sfr} )
\xra{~\rho^\ast~}
\Alg_{\n1}( \fCat_{(\oo,1)})
~.
\end{equation}
\end{observation}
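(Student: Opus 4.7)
The plan is to verify the two structure-preservation claims about $\rho$ explicitly and then invoke Proposition~\ref{t8} to produce the claimed functor. Preservation of disjoint unions in the first variable is immediate from the identification $(U \sqcup U') \times \DD[p] \cong (U \times \DD[p]) \sqcup (U' \times \DD[p])$, together with the compatibility of product solid $n$-framings. For closed covers in the second variable, I would first identify which closed covers of $\DD[p]$ in $\bcD_1^{\sfr}$ arise from $\langle - \rangle$: since $\DD[p]$ is connected, these are exactly the Segal interval decompositions
\[
\DD[p] = \DD[\{0,\ldots,i\}] \cup_{\DD[0]} \DD[\{i,\ldots,p\}], \qquad 0 \leq i \leq p,
\]
whose overlap is the marked point at position $i$, assembled as the pullback of closed morphisms in $\Bun$ coming from these inclusions. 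Because multiplication by a fixed $U \in \bcD_{[\n1]}^{\sfr}$ preserves pullbacks of closed morphisms, $\rho(U,-)$ sends this to the closed cover $U \times \DD[p] = (U \times \DD[\{0,\ldots,i\}]) \cup_{U} (U \times \DD[\{i,\ldots,p\}])$ in $\bcD_{[\n1,n]}^{\sfr}$.

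With these two preservation properties in hand, I would analyze $\rho^\ast F$, for a cosheaf $F \in \cShv(\bcD_{[\n1,n]}^{\sfr})$, through the exponential equivalence
\[
\Fun\bigl(\bcD_{[\n1]}^{\sfr} \times \bDelta^{\op}, \Spaces\bigr) \simeq \Fun\bigl(\bcD_{[\n1]}^{\sfr}, \Fun(\bDelta^{\op}, \Spaces)\bigr),
\]
viewing it as a functor $\bcD_{[\n1]}^{\sfr} \to \Fun(\bDelta^{\op}, \Spaces)$. For each fixed $U$, the cosheaf condition on $F$ combined with second-variable preservation forces the simplicial space $[p] \mapsto F(U \times \DD[p])$ to satisfy the Segal condition, so $\rho^\ast F$ factors through the full $\infty$-subcategory $\fCat_{(\oo,1)} = \Fun^{\Seg}(\bDelta^{\op}, \Spaces)$. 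For each fixed $[p]$, the cosheaf condition on $F$ combined with first-variable preservation shows that the functor $U \mapsto F(U \times \DD[p])$ carries disjoint unions to Cartesian products. Applying Proposition~\ref{t8} with $k = \n1$ to the Cartesian symmetric monoidal $(\infty,1)$-category $\cX = \fCat_{(\oo,1)}$ then identifies this product-preserving functor with an object of $\Alg_{\n1}(\fCat_{(\oo,1)})$, and functoriality in $F$ is immediate.

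The only subtlety I anticipate is the verification that the Segal interval decomposition really defines a closed cover in $\bcD_1^{\sfr}$ in the precise sense of \S3.5 of~\cite{fact1}: one must check that $\DD[\{0,\ldots,i\}]$ and $\DD[\{i,\ldots,p\}]$ are each closed unions of strata of $\DD[p]$ whose intersection is the singleton stratum at position $i$, yielding the required pullback of closed morphisms in $\Bun$. Once this combinatorial identification is confirmed, the equivalence between the second-variable cosheaf descent and the Segal condition on $[p] \mapsto F(U \times \DD[p])$ follows formally, and the remainder of the argument reduces to Proposition~\ref{t8}.
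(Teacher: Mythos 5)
Your argument is correct and is exactly the unwinding the paper intends for this (unproved) Observation: the Segal interval decompositions of $\DD[p]$ are closed covers, taking products with a fixed $U$ preserves them, and Proposition~\ref{t8} applied to the Cartesian $(\infty,1)$-category $\cX = \fCat_{(\oo,1)}$ converts first-variable product-preservation into the $\cE_{\n1}$-algebra structure. One cosmetic point: the objects of $\cShv(\bcD^{\sfr}_{[\n1,n]})$ are \emph{closed sheaves} (covariant functors carrying closed covers to limits), not cosheaves, though your computations do use the correct limit-descent condition throughout.
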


Consider the reflexive localization between $(\oo,1)$-categories:
\begin{equation}
\label{f109}
(-)^{\wedge}_{\sf unv}
\colon
\fCat_{(\infty,1)} 
~
\leftrightarrows
~
\Cat
\colon
{\rm inclusion}
~,
\end{equation}
in which the left adjoint is the forgetful functor, given by taking univalent-completion.
This left adjoint preserves finite products.
Therefore, the reflexive localization~(\ref{f109}) lifts to a reflexive localization between $(\oo,1)$-categories:
\begin{equation}
\label{f115}
(-)^{\wedge}_{\sf unv}
\colon
\Alg_{\n1}( \fCat_{(\infty,1)} )
~\rightleftarrows~
\Alg_{\n1}( \Cat_{(\infty,1)} )
\colon
{\rm inclusion}
~.
\end{equation}

We state the main result of this subsection here, and postpone its proof to the end of this subsection.
\begin{lemma}
\label{t7}
Let $n\geq 2$.
The univalent-completion of each value of the functor $\rho^\ast$ is an $\cE_{\n1}$-monoidal $(\infty,1)$-category that is rigid. That is, there is a unique dashed functor making the following diagram commute:
\[
\xymatrix{
\cShv(\bcD_{[\n1,n]}^{\sfr} )
\ar[d]_-{\rho^\ast}
\ar@{-->}[rr]
&&
\Alg_{\n1}^{\sf rig}( \Cat_{(\oo,1)})
\ar[d]^-{\rm fully~faithful}
\\
\Alg_{\n1}( \fCat_{(\oo,1)})
\ar[rr]^-{(-)^{\wedge}_{\sf unv}}
&&
\Alg_{\n1}( \Cat_{(\oo,1)})
.
}
\]
\end{lemma}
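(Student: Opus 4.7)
By Observation~\ref{t22}, rigidity of $(\rho^\ast\cF)^\wedge_{\sf unv}$ as an $\cE_{\n1}$-monoidal $(\oo,1)$-category can be detected on any one of its underlying monoidal $(\oo,1)$-categories, and by Observation~\ref{t15} it suffices to show that this underlying monoidal category has right-duals. The plan is to choose $v\in S^{\mathit{n}\text{-}2}$ corresponding to a linear embedding $\RR\hookrightarrow \RR^{\n1}$, and produce right-duals, evaluations, and coevaluations geometrically inside $\bcD^{\sfr}_{[\n1,n]}$, then push them through $\cF$ and univalent-completion. For this $v$, the underlying monoidal flagged $(\oo,1)$-category is presented by the simplicial space $[p]\mapsto \cF(\RR^{\n1}\times\DD[p])$ with monoidal product coming from side-by-side concatenation along the $v$-direction of $\RR^{\n1}$.

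For each object $x$ of $\bigl((\rho^\ast\cF)_v\bigr)^\wedge_{\sf unv}$ represented by a solidly $n$-framed $0$-tangle $S \subset \RR^{\n1}$, the candidate right-dual $x^R$ is obtained by reflecting the solid $n$-framing across the last coordinate of $\RR^n$. The coevaluation $\uno\to x\ot x^R$ and the evaluation $x^R\ot x\to \uno$ are produced as the images under $\cF$ of the standard cup and cap $[\n1,n]$-manifolds embedded in $\RR^{\n1}\times\DD[1]$: these are the Poincar\'e duals, in the sense of Example~\ref{eg.disk}, of $\cup$-shaped and $\cap$-shaped embedded graphs in $\RR^{\n1}\times\DD[1]$, equipped with the solid $n$-framing inherited from the standard framing of $\RR^n$. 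The point is that these configurations visibly belong to $\bcD^{\sfr}_{[\n1,n]}$.

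The crux is the triangle identities. The composite $(\id_x\ot \ev_x)\circ(\operatorname{coev}_x\ot \id_x)$ is presented in $\bcD^{\sfr}_{[\n1,n]}$ by a zigzag-shaped $[\n1,n]$-manifold inside $\RR^{\n1}\times\DD[1]$. For $n\geq 2$, there is enough ambient room to isotope this zigzag to the trivial cylinder on $S$ through a path of solidly $n$-framed $[\n1,n]$-manifolds; this isotopy is realized by an isomorphism in $\bcD^{\sfr}_{[\n1,n]}$. Since $\cF$ is a cosheaf valued in spaces, it carries this isomorphism to an equivalence in the simplicial space presenting $(\rho^\ast\cF)_v$. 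Univalent-completion then promotes this simplicial-level equivalence to an identification between the two morphisms of $\bigl((\rho^\ast\cF)_v\bigr)^\wedge_{\sf unv}$, establishing the first triangle identity. The second identity is symmetric. Having both dualities for this one $v$ yields rigidity of the $\cE_{\n1}$-structure by Observation~\ref{t22}, so the dashed factorization exists; it is unique because $\Alg_{\n1}^{\sf rig}(\Cat_{(\oo,1)}) \hookrightarrow \Alg_{\n1}(\Cat_{(\oo,1)})$ is fully faithful.

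The main obstacle I expect is the careful bookkeeping of solid $n$-framings across the cup, cap, and zigzag configurations: one must arrange that the straightening isotopy of the zigzag is genuinely compatible with the $n$-framings, with framings at the endpoints matching those prescribed by $x$ and $x^R$. This is precisely where the hypothesis $n\geq 2$ is used; for $n=1$, there is no room in $\RR^0\times\DD[1]$ to resolve a zigzag, and the framing obstructions in $\pi_1\Gr_1(n)$ noted in Remark~\ref{rem.framings.zero.mflds} must be tracked. Once the framing compatibility is in place, the rest is formal: the cosheaf condition converts isomorphisms of $[\n1,n]$-manifolds to equivalences of spaces, and univalent-completion converts these into equivalences of morphisms in the target $\cE_{\n1}$-monoidal $(\oo,1)$-category.
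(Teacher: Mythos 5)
Your proposal is correct and follows essentially the same route as the paper: the duality data is produced geometrically in $\bcD^{\sfr}_{[\n1,n]}$ as framed cups, caps, and zigzags, transported through the closed sheaf $\cF$, with the zigzag identities reduced to a framed-isotopy verification and Observation~\ref{t22} used to detect rigidity on a single underlying monoidal category. The paper packages this slightly differently --- it first reduces to $n=2$, isolates the required data as a criterion (Lemma~\ref{t105}) stated globally at the level of the spaces $\obj(\cR)$ and $\mor(\cR)$ rather than object-by-object, and realizes each triangle identity as a commutative diagram in $\bcD^{\sfr}_{[1,2]}$ (Observation~\ref{t121}) whose long face $\partial_{\{0<2\}}$ is already the trivial cylinder, absorbing your straightening isotopy into the construction of the $2$-simplex --- and note that by Observation~\ref{t15} one must supply left-duals as well as right-duals, which the mirror-image cups and caps ($\eta_L,\epsilon_L$ versus $\eta_R,\epsilon_R$) provide.
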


We prove Lemma~\ref{t7} by reducing to the case in which $n=2$, then invoking the following diagrammatic criteria for rigidity.

\begin{lemma}
\label{t105}
The univalent-completion of a monoidal flagged $(\infty,1)$-category $\cR$ is rigid provided the existence of the following data:
\begin{enumerate}
\item
maps between spaces
\[
\xymatrix{
\Obj(\cR) 
\ar@{-->}[rr]^-{L}
&&
\Obj(\cR)
&
\text{ and }
&
\Obj(\cR) 
\ar@{-->}[rr]^-{R}
&&
\Obj(\cR)
~;
}
\]

\item
fillers in diagrams among spaces
\[
\xymatrix{
\ast
\ar[d]_-{\lag \uno \rag}
&
\Obj(\cR)
\ar@{-->}[d]_-{\eta_L}
\ar[l]_-{!}
\ar[r]^-{(\id , L)}
&
\Obj(\cR) \times \Obj(\cR)
\ar[d]^-{\ot}
&&
\Obj(\cR)\times \Obj(\cR)
\ar[d]_-{\ot}
&
\Obj(\cR)
\ar@{-->}[d]_-{\epsilon_L}
\ar[r]^-{!}
\ar[l]_-{(L , \id )}
&
\ast
\ar[d]^-{\lag \uno \rag}
\\
\Obj(\cR)
&
\Mor(\cR)
\ar[l]_-{\ev_s}
\ar[r]^-{\ev_t}
&
\Obj(\cR)
&
,
&
\Obj(\cR)
&
\Mor(\cR)
\ar[l]_-{\ev_s}
\ar[r]^-{\ev_t}
&
\Obj(\cR)
}
\]
and
\[
\xymatrix{
\ast
\ar[d]_-{\lag \uno \rag}
&
\Obj(\cR)
\ar@{-->}[d]_-{\eta_R}
\ar[l]_-{!}
\ar[r]^-{(R , \id)}
&
\Obj(\cR) \times \Obj(\cR)
\ar[d]^-{\ot}
&&
\Obj(\cR)\times \Obj(\cR)
\ar[d]_-{\ot}
&
\Obj(\cR)
\ar@{-->}[d]_-{\epsilon_R}
\ar[r]^-{!}
\ar[l]_-{(\id , R)}
&
\ast
\ar[d]^-{\lag \uno \rag}
\\
\Obj(\cR)
&
\Mor(\cR)
\ar[l]_-{\ev_s}
\ar[r]^-{\ev_t}
&
\Obj(\cR)
&
,
&
\Obj(\cR)
&
\Mor(\cR)
\ar[l]_-{\ev_s}
\ar[r]^-{\ev_t}
&
\Obj(\cR)
~;
}
\]

\item
fillers in the diagrams among spaces
\[\begin{tikzcd}
	{\mor(\cR) \times \mor(\cR)} && {\obj(\cR)} && {\mor(\cR) \times \mor(\cR)} \\
	{\mor(\cR)} & {\mor(\cR) \underset{\obj(\cR)} \times \mor(\cR)} &&& {\mor(\cR)} \\
	&& {\mor(\cR)}
	\arrow["\otimes"', from=1-1, to=2-1]
	\arrow["{( \eta_L , \lag \id \rag )}"', from=1-3, to=1-1]
	\arrow["{( \lag \id \rag , \epsilon_L )}", from=1-3, to=1-5]
	\arrow["{\alpha_L}"'{pos=0.6}, dashed, from=1-3, to=2-2]
	\arrow["{\lag \id \rag}"{pos=0.8}, from=1-3, to=3-3]
	\arrow["\otimes", from=1-5, to=2-5]
	\arrow["{\pr_1}"', from=2-2, to=2-1]
	\arrow["{\pr_2}", from=2-2, to=2-5]
	\arrow["\circ"', from=2-2, to=3-3]
\end{tikzcd}
~,\]
\[\begin{tikzcd}
	{\mor(\cR) \times \mor(\cR)} & {\obj(\cR) \times \obj(\cR)} & {\obj(\cR)} & {\obj(\cR) \times \obj(\cR)} & {\mor(\cR) \times \mor(\cR)} \\
	{\mor(\cR)} & {\mor(\cR) \underset{\obj(\cR)} \times \mor(\cR)} &&& {\mor(\cR)} \\
	&& {\mor(\cR)}
	\arrow["\otimes"', from=1-1, to=2-1]
	\arrow["{\lag \id \rag \times \eta_L}"', from=1-2, to=1-1]
	\arrow["{( L , \id )}"', from=1-3, to=1-2]
	\arrow["{(\id , L)}", from=1-3, to=1-4]
	\arrow["{\beta_L}"'{pos=0.6}, dashed, from=1-3, to=2-2]
	\arrow["{\lag \id \rag}"{pos=0.8}, from=1-3, to=3-3]
	\arrow["{\epsilon_L \times \lag \id \rag}", from=1-4, to=1-5]
	\arrow["\otimes", from=1-5, to=2-5]
	\arrow["{\pr_1}"', from=2-2, to=2-1]
	\arrow["{\pr_2}", from=2-2, to=2-5]
	\arrow["\circ"', from=2-2, to=3-3]
\end{tikzcd}
~,
\]
and
\[\begin{tikzcd}
	{\mor(\cR) \times \mor(\cR)} & {\obj(\cR) \times \obj(\cR)} & {\obj(\cR)} & {\obj(\cR) \times \obj(\cR)} & {\mor(\cR) \times \mor(\cR)} \\
	{\mor(\cR)} & {\mor(\cR) \underset{\obj(\cR)} \times \mor(\cR)} &&& {\mor(\cR)} \\
	&& {\mor(\cR)}
	\arrow["\otimes"', from=1-1, to=2-1]
	\arrow["{\eta_R \times \lag \id \rag}"', from=1-2, to=1-1]
	\arrow["{( \id , R )}"', from=1-3, to=1-2]
	\arrow["{(\id , L)}", from=1-3, to=1-4]
	\arrow["{\alpha_R}"'{pos=0.6}, dashed, from=1-3, to=2-2]
	\arrow["{\lag \id \rag}"{pos=0.8}, from=1-3, to=3-3]
	\arrow["{\lag \id \rag \times \epsilon_R}", from=1-4, to=1-5]
	\arrow["\otimes", from=1-5, to=2-5]
	\arrow["{\pr_1}"', from=2-2, to=2-1]
	\arrow["{\pr_2}", from=2-2, to=2-5]
	\arrow["\circ"', from=2-2, to=3-3]
\end{tikzcd}
~,
\]
\[\begin{tikzcd}
	{\mor(\cR) \times \mor(\cR)} && {\obj(\cR)} && {\mor(\cR) \times \mor(\cR)} \\
	{\mor(\cR)} & {\mor(\cR) \underset{\obj(\cR)} \times \mor(\cR)} &&& {\mor(\cR)} \\
	&& {\mor(\cR)}
	\arrow["\otimes"', from=1-1, to=2-1]
	\arrow["{( \lag \id \rag , \eta_R )}"', from=1-3, to=1-1]
	\arrow["{( \epsilon_R , \lag \id \rag )}", from=1-3, to=1-5]
	\arrow["{\beta_R}"'{pos=0.6}, dashed, from=1-3, to=2-2]
	\arrow["{\lag \id \rag}"{pos=0.8}, from=1-3, to=3-3]
	\arrow["\otimes", from=1-5, to=2-5]
	\arrow["{\pr_1}"', from=2-2, to=2-1]
	\arrow["{\pr_2}", from=2-2, to=2-5]
	\arrow["\circ"', from=2-2, to=3-3]
\end{tikzcd}
~.
\]

\end{enumerate}

\end{lemma}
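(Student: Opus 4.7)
The plan is to show that the univalent-completion $\cR^\wedge_{\sf unv}$ has both right-duals and left-duals for every object, which by Observation~\ref{t15} is equivalent to rigidity. The data in (1)–(3) splits into symmetric $L$-parts (exhibiting right-duals) and $R$-parts (exhibiting left-duals); I will describe only the former, as the other case is entirely analogous.

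First, I would observe that the unit $\cR \to \cR^\wedge_{\sf unv}$ is essentially surjective and fully faithful on morphism spaces, so it suffices to exhibit, for each $V \in \Obj(\cR)$, a right-dual in $\cR^\wedge_{\sf unv}$. Set $V^R := L(V)$. From the fillers in (2), extract morphisms
\[
\eta_L(V) \colon \uno \longrightarrow V \otimes L(V)
\qquad\text{and}\qquad
\epsilon_L(V) \colon L(V) \otimes V \longrightarrow \uno
~;
\]
these serve as the candidate unit and counit of a duality between $V$ and $L(V)$.

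Next, I would unwind the diagrams in (3). The filler $\alpha_L$, by factoring through the composable-pair fiber product $\Mor(\cR) \underset{\Obj(\cR)} \times \Mor(\cR)$, exhibits the composability of $\eta_L(V) \otimes \id_V$ and $\id_V \otimes \epsilon_L(V)$, and witnesses a homotopy between their composite $V \to V \otimes L(V) \otimes V \to V$ and $\id_V$. Analogously, $\beta_L$ witnesses a homotopy between $(\epsilon_L(V) \otimes \id_{L(V)}) \circ (\id_{L(V)} \otimes \eta_L(V)) \colon L(V) \to L(V) \otimes V \otimes L(V) \to L(V)$ and $\id_{L(V)}$. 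These are precisely the two triangle identities (zig-zag equations) asserting that $L(V)$ is right-dual to $V$.

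The main obstacle is to conclude from this finite, low-dimensional data that $V$ genuinely has a right-dual in the $(\infty,1)$-category $\cR^\wedge_{\sf unv}$, since adjunctions in $(\infty,1)$-categories in principle require an infinite tower of higher coherences. To overcome this, I would invoke the $(\infty,1)$-categorical coherence theorem for adjunctions (cf.~\cite{HA}), which asserts that, in a genuine $(\infty,1)$-category, the space of adjunction data extending a given unit, counit, and pair of triangle identity witnesses is contractible; in particular the higher coherences exist and are essentially unique. Since $\cR^\wedge_{\sf unv}$ is univalent-complete and hence a genuine $(\infty,1)$-category, this coherence result upgrades the data $(L(V), \eta_L(V), \epsilon_L(V), \alpha_L(V), \beta_L(V))$ to a duality datum exhibiting $L(V)$ as the right-dual of $V$. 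The compatibility of the data with univalent-completion is automatic, because $(-)^\wedge_{\sf unv}$ is a left adjoint preserving finite products and therefore preserves the tensor product, identity morphisms, and composition used throughout (1)–(3).
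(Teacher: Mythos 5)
Your proposal is correct and follows essentially the same route as the paper: lift each object of $\cR^{\wedge}_{\sf unv}$ along the surjection from $\Obj(\cR)$, read off the unit, counit, and triangle-identity $2$-cells from the data in (1)--(3), and push them forward along the monoidal functor $\cR \to \cR^{\wedge}_{\sf unv}$ to witness $L(V)$ and $R(V)$ as duals. The only difference is presentational: you make explicit the coherence fact that a unit and counit satisfying the zig-zag identities up to homotopy suffice to determine a duality in an $(\infty,1)$-category, which the paper leaves implicit in its closing appeal to the definition of left- and right-duals.
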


\begin{proof}
Denote the unit morphism of the adjunction~(\ref{f115}):
\[
\ov{(-)}
\colon
\cR 
\longrightarrow 
\cR^{\wedge}_{\sf unv}
~,\qquad
V
\longmapsto \ov{V}
~,
\]
which is a morphism between monoidal flagged $(\infty,1)$-categories.

Let $\ov{V} \in \Obj(\cR^{\wedge}_{\sf unv})$.
Using that the map $\Obj(\cR) \xra{\Obj\bigl( \ov{(-)} \bigr)} \Obj(\cR^{\wedge}_{\sf unv})$ is surjective, choose a lift $V \in \Obj(\cR)$ of $\ov{V} \in \Obj(\cR^{\wedge}_{\sf unv})$.
Consider the four 1-cells in $\cR$:
\[
\uno \xra{~\eta_L~} V \ot L(V)
~,~
L(V) \ot V \xra{~\epsilon_L(V)~} \uno
\qquad\text{ and }\qquad
\uno \xra{~\eta_R(V)~} R(V) \ot V
~,~
V \ot R(V) \xra{~\epsilon_R~} \uno
~.
\]
Consider the invertible 2-cells in $\cR$:
\[
\xymatrix{
V 
\ar[dr]_-{\eta_L(V) \ot \id_V }
\ar@(u,u)[rr]^-{\id_V}
&
\alpha_L(V)
&
V
&&
L(V) \ot V \ot L(V)
\ar[dr]^-{ \epsilon_L(V) \ot \id_{L(V)} }
&
\\
&
V \ot L(V) \ot V
\ar[ur]_-{\id_V \ot \epsilon_L(V)}
&
,
&
L(V)
\ar[ur]^-{ \id_{L(V)} \ot \eta_L(V) }
\ar@(d,d)[rr]_-{\id_{L(V)}}
&
\beta_L(V)
&
L(V)
}
\]
and
\[
\xymatrix{
R(V) 
\ar[dr]_-{\eta_R(V) \ot \id_{R(V)} }
\ar@(u,u)[rr]^-{\id_{R(V)}}
&
\alpha_R(V)
&
R(V)
&&
V \ot R(V) \ot V
\ar[dr]^-{\epsilon_R(V) \ot \id_V }
&
\\
&
R(V) \ot V\ot R(V)
\ar[ur]_-{\id_{R(V)} \ot \epsilon_R(V)}
&
,
&
V
\ar[ur]^-{ \id_V \ot \eta_R(V) }
\ar@(d,d)[rr]_-{\id_{V}}
&
\beta_R(V)
&
V
.
}
\]

Now, these diagrams in $\cR$ are carried by the morphism $\ov{(-)}$ between monoidal flagged $(\infty,1)$-categories to corresponding diagrams in $\cR^{\wedge}_{\sf unv}$.
By definition of left- and right-duals in a monoidal $(\infty,1)$-category, these data witness the values $\ov{L(V)}\in \cR^{\wedge}_{\sf unv}$ and $\ov{R(V)}\in \cR^{\wedge}_{\sf unv}$ as, respectively, left- and right-duals of $\ov{V}\in \cR^{\wedge}_{\sf unv}$.

\end{proof}

We prove Lemma~\ref{t7} by constructing specific diagrams in $\bcD_{[1,2]}^{\sfr}$ which any closed sheaf $\bcD_{[1,2]}^{\sfr} \xra{ \cF } \Spaces$ carries to the diagrams of Observation~\ref{t105}.
Constructing these diagrams in $\bcD_{[1,2]}^{\sfr}$ takes some explanation.
The hasty reader might simply inspect Figures~\ref{fig1}--\ref{fig8} for depictions of such diagrams in $\bcD_{[1,2]}^{\sfr}$.

Let $M$ be a $[1,2]$-manifold. 
Consider the space of solid 2-framings on $M$:
\[
\sfr_2(M)
~:=~
\Map_{/\BO(1) \amalg \BO(2)}\left( M , S^1 \amalg \ast \right)
~.
\]
Via Remark~\ref{r10}, the standard action $\sO(2) \lacts \RR^2$ determines an action 
\begin{equation}
\label{f106}
\sO(2)
~\lacts~ 
\sfr_2(M)
~.
\end{equation}
Should $M \subset \RR^2$ be given as subspace of a Euclidean 2-space, there is a natural solid 2-framing:
\[
\varphi_\natural^{M}
~\in~
\sfr_2(M)
~.
\]
In particular, there is a natural solid 2-framing on $\RR^1$ and on $\RR^1 \times \DD^1$ and, more generally, on $\RR^1 \times \DD[p]$ for each $p\geq 0$ (see Notation~\ref{d100}).
Generally, for $(D,\varphi)\in \bcD_{[1,2]}^{\sfr}$ an object, if $D \subset \RR^2$ and $\varphi= \varphi_\natural^D$, we simply denote the object $D := (D,\varphi) \in \bcD_{[1,2]}^{\sfr}$.
For each $p \geq 0$, there is a canonical $\sO(2)$-equivariant identification,
\[
S^1
\simeq
\sO(2)_{/\sO(1)}
~\simeq~
\sfr_2\bigl( \RR^1 \times \DD[p] \bigr)
\qquad
\text{ (if $p=0$) }
\qquad
\text{ and }
\qquad
\sO(2)
~\simeq~
\sfr_2\bigl( \RR^1 \times \DD[p] \bigr)
\qquad
\text{ (if $p>0$) }
~,
\]
that associates the identity element with the natural solid 2-framing: $\uno
\mapsto
\varphi^{\RR^1 \times \DD[p]}_{\natural}$.
In particular, for each $p\geq 0$, the space of solid 2-framings on $\RR^1\times \DD[p]$ is a 1-type;
for $\varphi , \psi \in \sfr_2\bigl( \RR^1 \times \DD[p] \bigr)$, 
there is a canonical $\ZZ$-torsor of identifications $\varphi \simeq \psi$.  

Denote the object
\[
\RR^1_{\sf rev}
~\in~
\bcD_{[1,2]}^{\sfr}
\]
which is $\RR^1$ with the solid 2-framing that is the action by $-\uno\in \sO(2)$ on the standard solid 2-framing on $\RR^1$.
Consider the continuous map between topological spaces:
\[
{\sf Rot}
\colon
\RR
\longrightarrow
\sO(2)
~,\qquad
t
\longmapsto
\begin{bmatrix} \cos(\pi t) & -\sin(\pi t) \\ \sin(\pi t) & \cos(\pi t)  \end{bmatrix}
~.
\]
Consider the composite continuous maps
\begin{equation}
\label{f107}
\Delta^2
\xra{(t_0,t_1,t_2)\mapsto t_1}
\RR
\xra{~\sf Rot~}
\sO(2)
\qquad\text{ and }\qquad
\Delta^2
\xra{(t_0,t_1,t_2)\mapsto -t_1}
\RR
\xra{~\sf Rot~}
\sO(2)
~.
\end{equation}
Once and for all, choose a framed diffeomorphism $f \colon \RR^1 \xra{\cong} (-\frac{1}{2},+\frac{1}{2})$ to an open interval.  
Consider the framed open embedding
\[
\mu
\colon
\RR^1 \sqcup \RR^1
~=~
\RR^1 \times \{\pm 1\}
\longrightarrow
\RR^1
~,\qquad
(v,r)
\longmapsto
f(v) + r
~.
\]
Consider the open embeddings
\begin{equation}
\label{f111}
\RR^1 \times \DD^1
\xra{~\eta'_L~}
\RR^1 \times \DD^1
~,\qquad
(v,r)
\longmapsto
\Bigl(
~
(1+f(v)) \sin(-\frac{\pi}{2} r)
~,~
(1+f(v)) \cos(-\frac{\pi}{2} r)-1
~
\Bigr)
~,
\end{equation}
and
\begin{equation}
\label{f113}
\RR^1 \times \DD^1
\xra{~\epsilon'_L~}
\RR^1 \times \DD^1
~,\qquad
(v,r)
\longmapsto
\Bigl(
~
(1-f(v)) \sin(\frac{\pi}{2} r)
~,~
1-(1-f(v)) \cos(\frac{\pi}{2} r)
~
\Bigr)
~,
\end{equation}
and
\begin{equation}
\label{f112}
\RR^1 \times \DD^1
\xra{~\eta'_R~}
\RR^1 \times \DD^1
~,\qquad
(v,r)
\longmapsto
\Bigl(
~
(1-f(v)) \sin(\frac{\pi}{2} r)
~,~
(1-f(v)) \cos(\frac{\pi}{2} r)-1
~
\Bigr)
~,
\end{equation}
and
\begin{equation}
\label{f114}
\RR^1 \times \DD^1
\xra{~\epsilon'_R~}
\RR^1 \times \DD^1
~,\qquad
(v,r)
\longmapsto
\Bigl(
~
(1+f(v)) \sin(-\frac{\pi}{2} r)
~,~
1-(1+f(v)) \cos(-\frac{\pi}{2} r)
~
\Bigr)
~.
\end{equation}
See Figures~\ref{fig1}--\ref{fig4}.
\begin{figure}[H]
  \includegraphics[width=\linewidth, trim={0 {3.6in} 0 {.5in}}, clip]{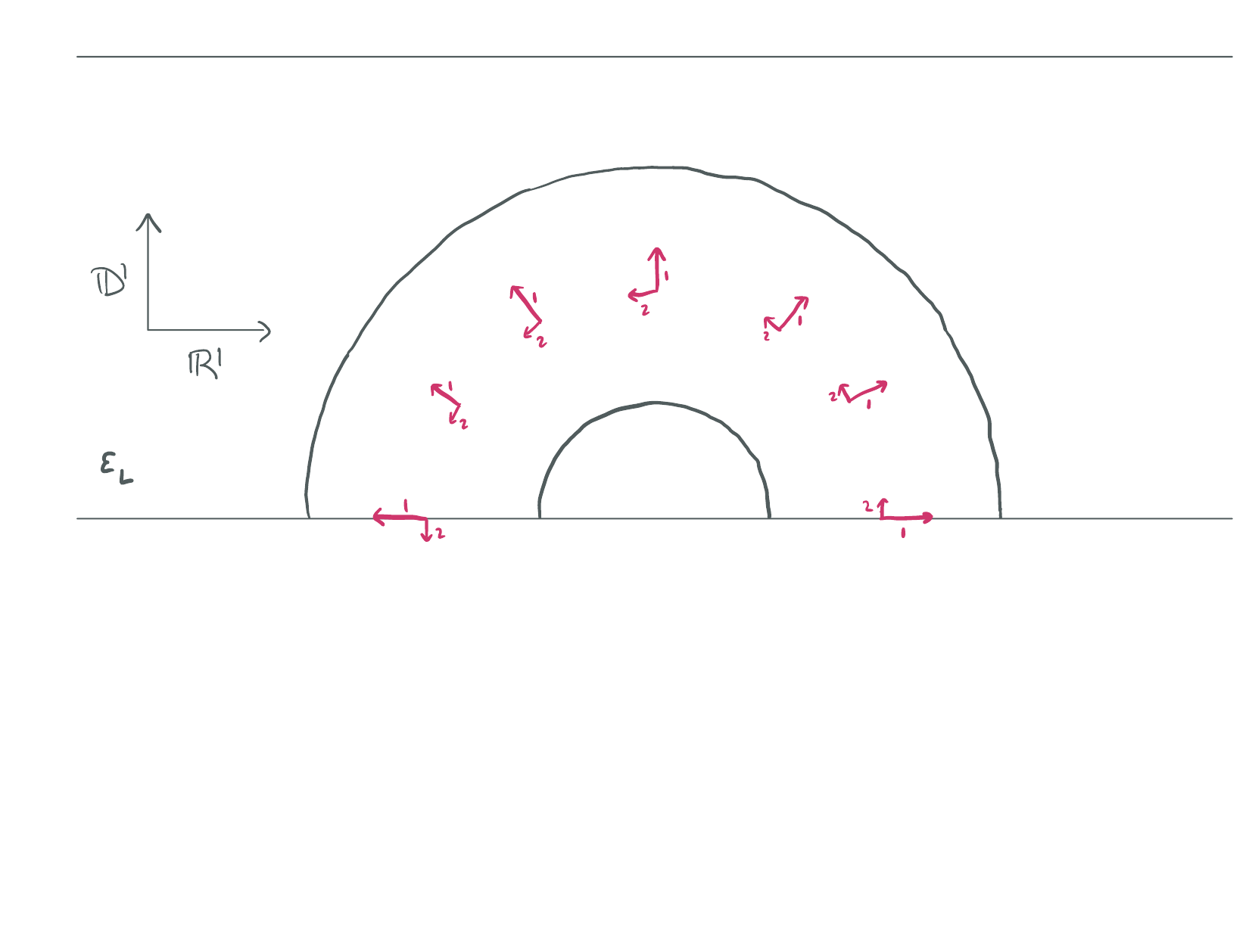}
  \caption{The open embedding $\epsilon'_L$.  
  It, together with the indicated framing, defines the morphism $\RR^1 \xra{\epsilon_L} \RR^1 \times \DD^1$ in $\bcD^{\sfr}_{[1,2]}$.}
  \label{fig1}
\end{figure}
\begin{figure}[H]
  \includegraphics[width=\linewidth, trim={0 {3.6in} 0 {.5in}}, clip]{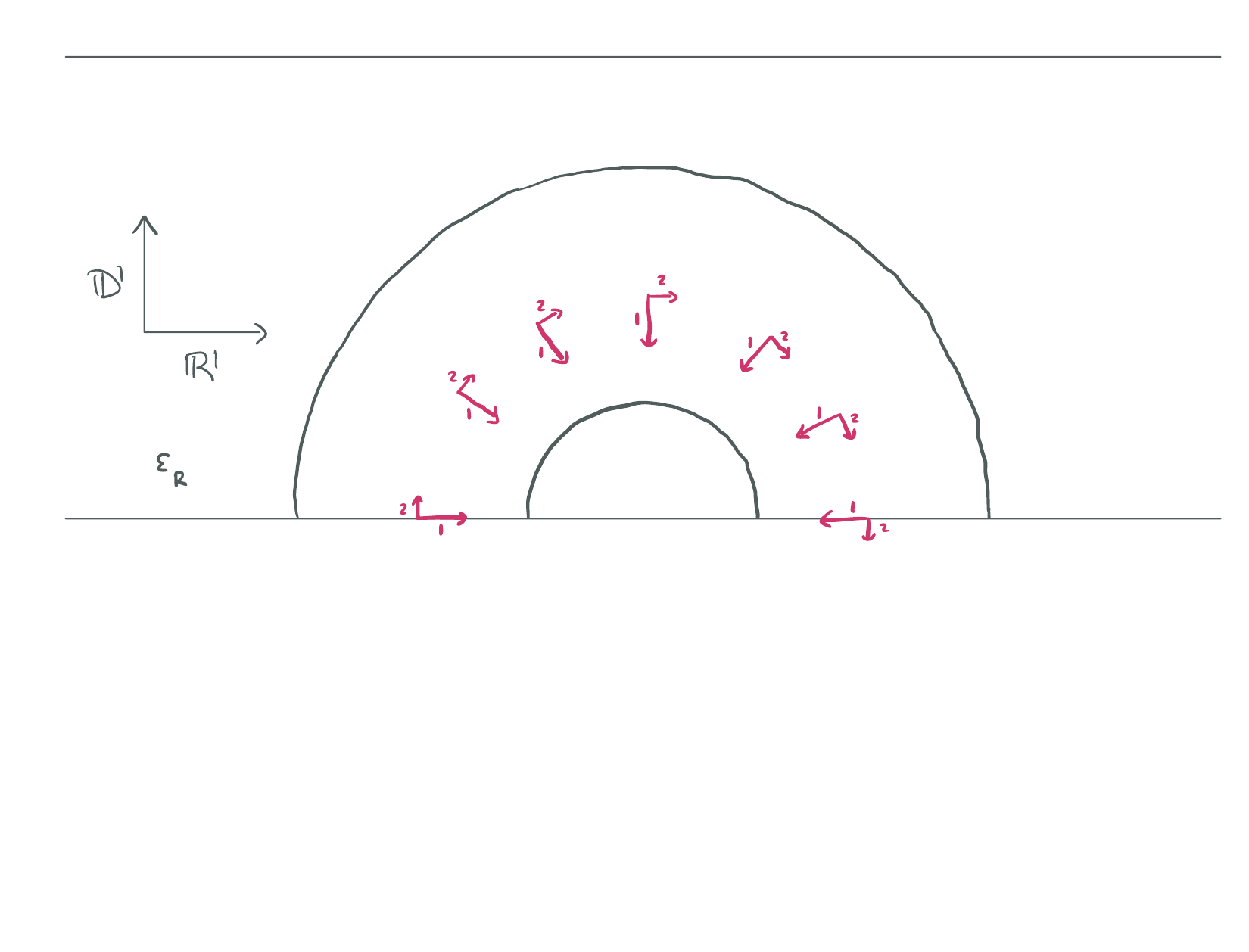}
  \caption{The open embedding $\epsilon'_R$.  
  It, together with the indicated framing, defines the morphism $\RR^1 \xra{\epsilon_R} \RR^1 \times \DD^1$ in $\bcD^{\sfr}_{[1,2]}$.}
  \label{fig2}
\end{figure}
\begin{figure}[H]
  \includegraphics[width=\linewidth, trim={0 {3.8in} 0 {.25in}}, clip]{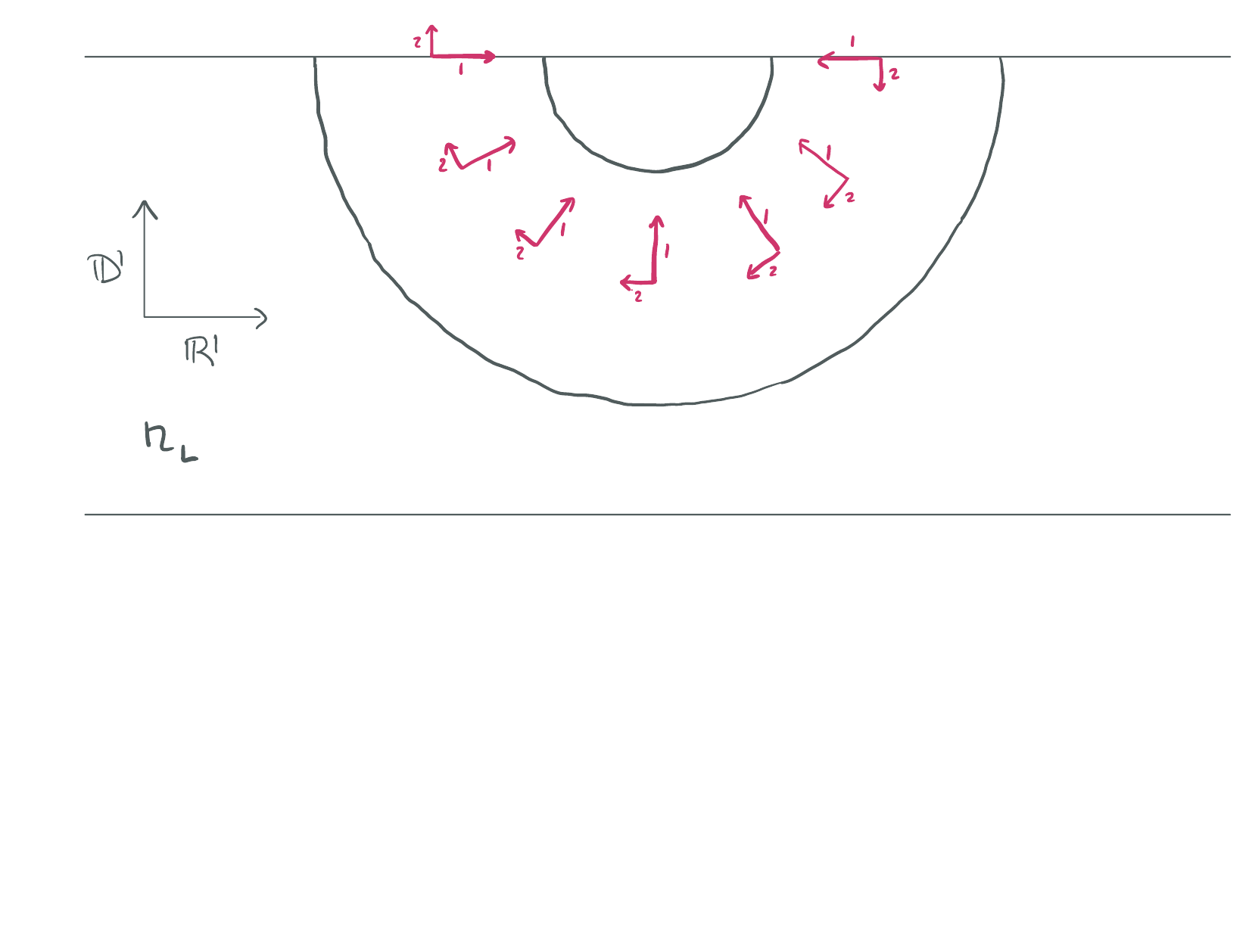}
  \caption{The open embedding $\eta'_L$.  
  It, together with the indicated framing, defines the morphism $\RR^1 \xra{\eta_L} \RR^1 \times \DD^1$ in $\bcD^{\sfr}_{[1,2]}$.}
  \label{fig3}
\end{figure}
\begin{figure}[H]
  \includegraphics[width=\linewidth, trim={0 {3.8in} 0 {.3in}}, clip]{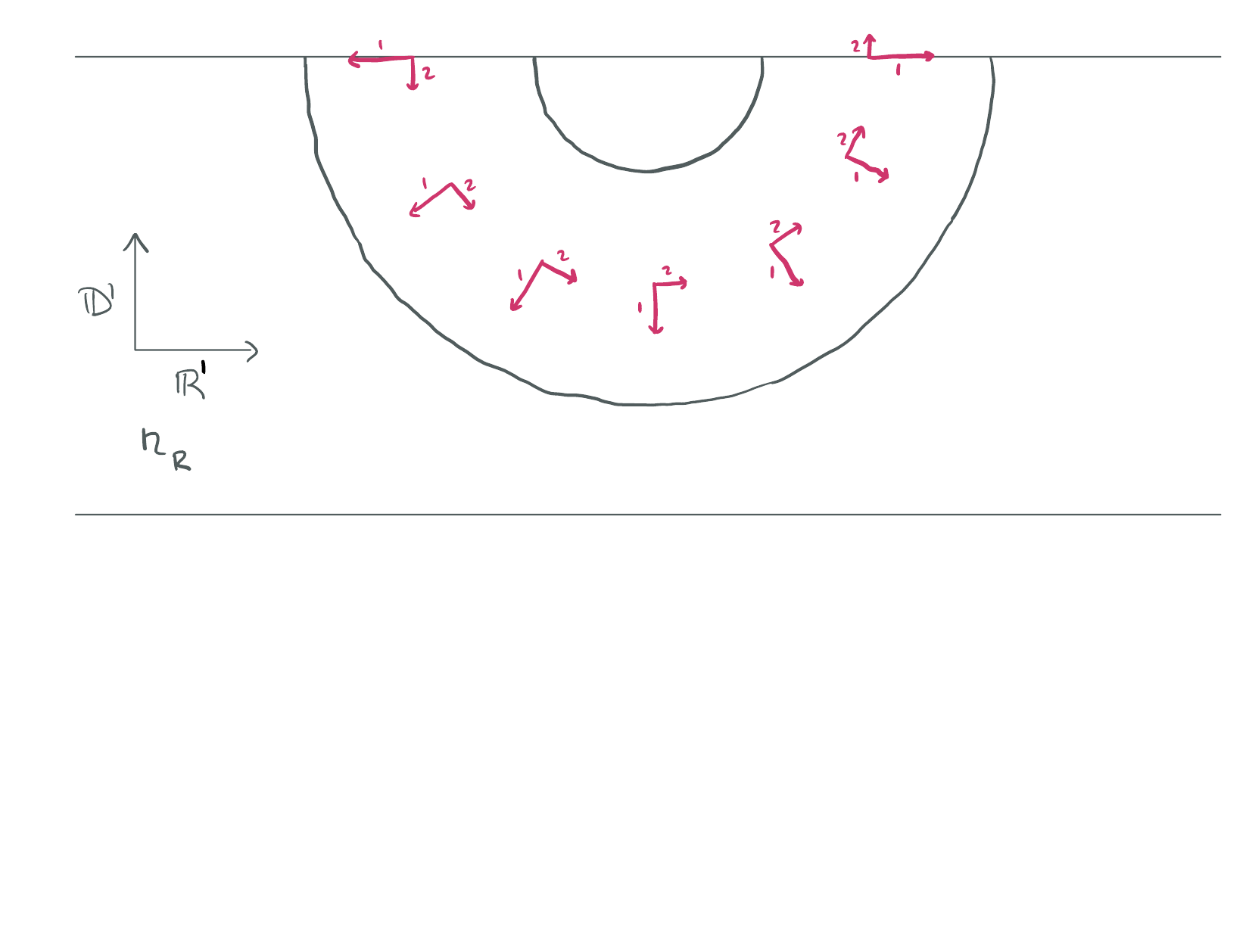}
  \caption{The open embedding $\eta'_R$.  
  It, together with the indicated framing, defines the morphism $\RR^1 \xra{\eta_R} \RR^1 \times \DD^1$ in $\bcD^{\sfr}_{[1,2]}$.}
  \label{fig4}
\end{figure}

Pulling back the standard solid 2-framing on $\RR^1 \times \DD^1$ along each of these open embeddings determines solid 2-framings on each domain $\RR^1 \times \DD^1$:
\[
(\eta'_L)^\ast \varphi_\natural^{\RR^1 \times \DD^1}
~,~
(\epsilon'_L)^\ast \varphi_\natural^{\RR^1 \times \DD^1}
~,~
(\eta'_R)^\ast \varphi_\natural^{\RR^1 \times \DD^1}
~,~
(\epsilon'_R)^\ast \varphi_\natural^{\RR^1 \times \DD^1}
~.
\]
Consider the two proper constructible embeddings:
\[
\RR^1 \xra{~\partial_\pm~}\RR^1 \times \DD^1 
~,\qquad
x \longmapsto (x,\pm 1)
~.
\]
These morphisms correspond to closed morphisms in $\bcD_{[1,2]}^{\sfr}$:
\[
\RR^1 \times \DD^1 
\xra{~\partial_\pm~}
\RR^1
~.
\]
Note that each of these solid 2-framings restricts along $\RR^1 \xra{\partial_\pm} \RR^1 \times \DD^1$ as, respectively, the standard solid 2-framing on $\RR^1$ and the reversed solid 2-framing on $\RR^1$:
\[
\varphi_{\natural}^{\RR^1}
~=~
\partial_-^\ast (\eta'_L)^\ast \varphi_\natural^{\RR^1 \times \DD^1}
~=~
\partial_-^\ast (\epsilon'_L)^\ast \varphi_\natural^{\RR^1 \times \DD^1}
~=~
\partial_-^\ast (\eta'_R)^\ast \varphi_\natural^{\RR^1 \times \DD^1}
~=~
\partial_-^\ast (\epsilon'_R)^\ast \varphi_\natural^{\RR^1 \times \DD^1}
~;
\]
and
\[
(-\uno)\cdot \varphi_{\natural}^{\RR^1}
~=~
\partial_+^\ast (\eta'_L)^\ast \varphi_\natural^{\RR^1 \times \DD^1}
~=~
\partial_+^\ast (\eta'_R)^\ast \varphi_\natural^{\RR^1 \times \DD^1}
~=~
\partial_+^\ast (\epsilon'_L)^\ast \varphi_\natural^{\RR^1 \times \DD^1}
~=~
\partial_+^\ast (\epsilon'_R)^\ast \varphi_\natural^{\RR^1 \times \DD^1}
~.
\]
Furthermore, via the action~(\ref{f106}), the 2-simplices~(\ref{f107}) supply paths in the fiber over $\varphi_\natural$ of the restriction map $\sfr_2(\RR^1 \times \DD^1) \xra{\partial_-^\ast} \sfr_2(\RR^1)$:
\begin{equation}
\label{f108}
\varphi_{\natural}^{\RR^1 \times \DD^1}
~\simeq~
(\eta'_L)^\ast \varphi_\natural^{\RR^1 \times \DD^1}
~,~
\varphi_{\natural}^{\RR^1 \times \DD^1}
~\simeq~
(\epsilon'_L)^\ast \varphi_\natural^{\RR^1 \times \DD^1}
~,~
\varphi_{\natural}^{\RR^1 \times \DD^1}
~\simeq~
(\eta'_R)^\ast \varphi_\natural^{\RR^1 \times \DD^1}
~,~
\varphi_{\natural}^{\RR^1 \times \DD^1}
~\simeq~
(\epsilon'_R)^\ast \varphi_\natural^{\RR^1 \times \DD^1}
~.
\end{equation}
These identifications~(\ref{f108}) endow each of the open embeddings~(\ref{f111}),~(\ref{f113}),~(\ref{f112}),~(\ref{f114}) 
with the structure of a solidly 2-framed open embedding.
In particular, each of the open embeddings~(\ref{f111}),~(\ref{f113}),~(\ref{f112}),~(\ref{f114})  
defines a morphism in $\bcD_{[1,2]}^{\sfr}$.
Precomposing each of these resulting morphisms in $\bcD_{[1,2]}^{\sfr}$ by the creation morphism $\RR^1 \to \RR^1 \times \DD^1$ results in the following four morphisms in $\bcD_{[1,2]}^{\sfr}$:
\begin{equation}
\label{f120}
\eta_L
~,~
\epsilon_L
~,~
\eta_R
~,~
\epsilon_R
~\in~
\Hom_{\bcD_{[1,2]}^{\sfr}}( \RR^1
,
\RR^1 \times \DD^1
)
~.
\end{equation}
See, again, Figures~\ref{fig1}--\ref{fig4}.  
Inspection of those figures readily verifies the following.
\begin{observation}
\label{t120}
The following diagrams in $\bcD^{\sfr}_{[1,2]}$ canonically commute:
\begin{equation}
\label{f101}
\xymatrix{
\emptyset
\ar[d]_-{!}
&
\RR^1
\ar[d]_-{\eta_L}
\ar[l]_-{!}
\ar[r]^-{(\id , L)}
&
\RR^1 \sqcup \RR^1
\ar[d]^-{\mu}
&&
\RR^1 \sqcup \RR^1
\ar[d]_-{\mu}
&
\RR^1
\ar[d]_-{\epsilon_L}
\ar[r]^-{!}
\ar[l]_-{(L , \id )}
&
\emptyset
\ar[d]^-{!}
\\
\RR^1
&
\RR^1 \times \DD^1
\ar[l]_-{\partial_-}
\ar[r]^-{\partial_+}
&
\RR^1
&
,
&
\RR^1
&
\RR^1 \times \DD^1
\ar[l]_-{\partial_-}
\ar[r]^-{\partial_+}
&
\RR^1
}
\end{equation}
and
\begin{equation}
\label{f102}
\xymatrix{
\emptyset
\ar[d]_-{!}
&
\RR^1
\ar[d]_-{\eta_R}
\ar[l]_-{!}
\ar[r]^-{(R , \id)}
&
\RR^1 \sqcup \RR^1
\ar[d]^-{\mu}
&&
\RR^1 \sqcup \RR^1
\ar[d]_-{\mu}
&
\RR^1
\ar[d]_-{\epsilon_R}
\ar[r]^-{!}
\ar[l]_-{(\id , R)}
&
\emptyset
\ar[d]^-{!}
\\
\RR^1
&
\RR^1 \times \DD^1
\ar[l]_-{\partial_-}
\ar[r]^-{\partial_+}
&
\RR^1
&
,
&
\RR^1
&
\RR^1 \times \DD^1
\ar[l]_-{\partial_-}
\ar[r]^-{\partial_+}
&
\RR^1
.
}
\end{equation}

\end{observation}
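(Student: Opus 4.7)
The plan is to verify each of the four commutative squares in~(\ref{f101}) and~(\ref{f102}) directly from the defining formulas~(\ref{f111})--(\ref{f114}) of the open embeddings $\eta'_L, \epsilon'_L, \eta'_R, \epsilon'_R$, plus the choice of identifications~(\ref{f108}) of solid 2-framings. Since each of $\eta_L, \epsilon_L, \eta_R, \epsilon_R$ is defined as the composite of the creation morphism $\RR^1 \to \RR^1 \times \DD^1$ (collapsing the $\DD^1$-direction) with one of the primed open embeddings, and since that creation morphism is canonically self-dual under restriction along both $\partial_\pm$, the problem reduces to understanding the two boundary restrictions of each primed embedding together with their framing data.

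First I would check the underlying stratified spaces. Plugging $r = -1$ and $r = +1$ into~(\ref{f111})--(\ref{f114}) one sees that each of $\eta'_L, \epsilon'_L, \eta'_R, \epsilon'_R$ sends $\RR^1 \times \{-1\}$ and $\RR^1 \times \{+1\}$ into the two halves of a horizontal line in $\RR^1 \times \{\pm 1\}$, and that the combined restriction over $\partial_\pm$ recovers, up to translation in the $\RR^1$-factor, the open embedding $\mu \colon \RR^1 \sqcup \RR^1 \to \RR^1$ (after identifying the two halves of $\RR^1$ with two copies via $v \mapsto f(v) \pm 1$). The compositions $(\id,L)$ and $(L,\id)$, respectively $(\id,R)$ and $(R,\id)$, simply re-index which half comes from which copy, and the asserted commutativity at the level of underlying stratified morphisms of $\bcD^{\sfr}_{[1,2]}$ then follows by inspection of Figures~\ref{fig1}--\ref{fig4}.

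Next I would track the solid 2-framings through these restrictions. By construction, the paths~(\ref{f107}) in $\sO(2)$ defining the identifications~(\ref{f108}) interpolate between $\varphi^{\RR^1 \times \DD^1}_\natural$ and the pullback framing via a rotation by $\pm \pi/2$ at parameter $t_1$, and hence restrict along $\partial_-$ to the constant path at $\varphi^{\RR^1}_\natural$ and along $\partial_+$ to a path in $\sO(2)$ ending at the reversed framing $(-\uno)\cdot \varphi^{\RR^1}_\natural$. This is exactly the framing data carried by the objects $\RR^1$ and $\RR^1_{\sf rev}$ occupying the lower row of the diagrams, so the framing components of the two sides of each square agree canonically. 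The data of $L$ and $R$ on objects are then encoded by this same reversal, matching how the upper row labels the two boundary points.

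The main obstacle, if any, is bookkeeping the $\ZZ$-torsor of identifications of framings on $\RR^1 \times \DD[p]$ --- one must check that the particular 2-simplex representatives in~(\ref{f107}) produce coherent choices on both $\partial_-$ and $\partial_+$ for all four morphisms simultaneously, so that the four squares admit compatible fillers and not merely fillers up to a $\ZZ$-ambiguity. Since $\sfr_2(\RR^1\times\DD[p])$ is a 1-type and the rotations in~(\ref{f107}) travel the minimal arc in $\sO(2)$, the relevant torsor choices are pinned down by the sign conventions in~(\ref{f111})--(\ref{f114}). All other verifications are mechanical: comparison of images in $\RR^1 \times \DD^1$, identification of the boundary with $\mu$ up to the translation $v \mapsto f(v) \pm 1$, and the trivial matching of the leftmost columns (both sides factor through the empty set).
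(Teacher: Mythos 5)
Your proposal is correct and follows essentially the same route as the paper, which verifies the observation by direct inspection of the open embeddings~(\ref{f111})--(\ref{f114}) (Figures~\ref{fig1}--\ref{fig4}) together with the boundary restrictions of the framings along $\partial_\pm$ and the identifications~(\ref{f108}) established immediately beforehand. Your additional care about the $\ZZ$-torsor of framing identifications being pinned down by the minimal-arc rotations in~(\ref{f107}) is consistent with, and makes explicit, what the paper leaves to inspection.
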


Again, recall the Notation~\ref{d100} for $\DD I$ for $I\in \bDelta$ a finite non-empty linearly ordered set.
For each $0 < i\leq 2$, consider the proper constructible embedding
\[
\partial_{\{i-1<i\}}
\colon
\RR^1
\times
\DD^1
~=~
\RR^1
\times
\DD\{i-1<i\}
~\hookrightarrow~
\RR^1
\times
\DD[2]
~. 
\]
Consider the proper constructible bundles
\[
u
\colon
\RR^1
\times
\DD^1
\xra{~\pr~}
\RR^1
\qquad
\text{ and }
\qquad
u_4
\colon
\RR^1
\times
\DD[4]
\xra{~\pr~}
\RR^1
~.
\]
All of these proper constructible bundles are canonically solidly 2-framed, and therefore define morphisms in $\bcD_{[1,2]}^{\sfr}$:
\[
\RR^1
\times
\DD[2]
\xra{~\partial_{\{i<j\}}~}
\RR^1
\times
\DD^1
\qquad
\text{ and }
\qquad
\RR^1
\xra{~u~}
\RR^1
\times
\DD^1
\qquad
\text{ and }
\qquad
\RR^1
\xra{~u_4~}
\RR^1
\times
\DD[4]
~.
\]
Also, the refinement $\RR^1 \times \DD[2] \xra{\partial_{\{0<2\}}} \RR^1 \times \DD\{0<2\}$ is canonically solidly 2-framed, and therefore defines a morphism in 
$\bcD_{[1,2]}^{\sfr}$:
\[
\RR^1
\times
\DD[2]
\xra{~\partial_{\{0<2\}}~}
\RR^1
\times
\DD^1
~.
\]

Consider the solidly 2-framed open embeddings
\begin{equation}
\label{f130}
\alpha'_L~,~\beta'_L~,~\alpha'_R~,~\beta'_R
~\colon~
\RR^1 \times \DD[4]
\longrightarrow
\RR^1 \times \DD[2]
\end{equation}
depicted in Figures~\ref{fig5}--\ref{fig8}.
Regard these as morphisms in $\bcD_{[1,2]}^{\sfr}$.
Precomposing each of the morphisms~(\ref{f130}) with $u_4$ determines morphisms in 
$\bcD_{[1,2]}^{\sfr}$:
\[
\alpha_L
~:=~
\alpha'_L \circ u_4
~,~
\beta_L
~:=~
\beta'_L \circ u_4
~,~
\alpha_R
~:=~
\alpha'_R \circ u_4
~,~
\beta_R
~:=~
\beta'_R \circ u_4
~\in~
\Hom_{\bcD_{[1,2]}^{\sfr}}\bigl( \RR^1 , \RR^1 \times \DD[2]  \bigr)
~.
\]
See, again, Figures~\ref{fig5}--\ref{fig8}.  
\begin{figure}[H]
  \includegraphics[width=\linewidth, trim={0 {3.7in} {2in} {.4in}}, clip]{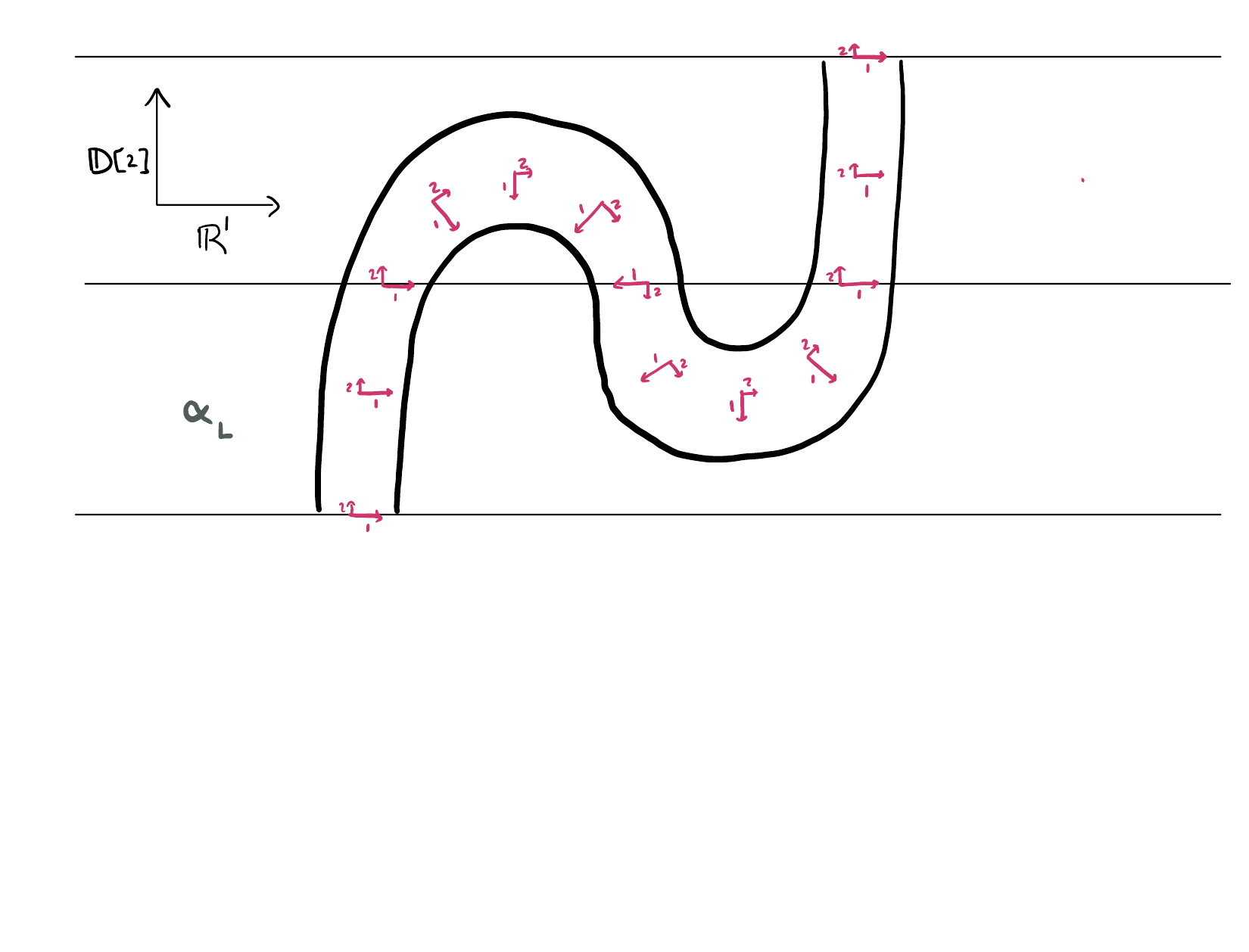}
  \caption{The open embedding $\alpha'_L$.  
  It, together with the indicated framing, defines the morphism $\RR^1 \xra{\alpha_L} \RR^1 \times \DD[2]$ in $\bcD^{\sfr}_{[1,2]}$.}
  \label{fig5}
\end{figure}
\begin{figure}[H]
  \includegraphics[width=\linewidth, trim={0 {3.65in} {2in} {.3in}}, clip]{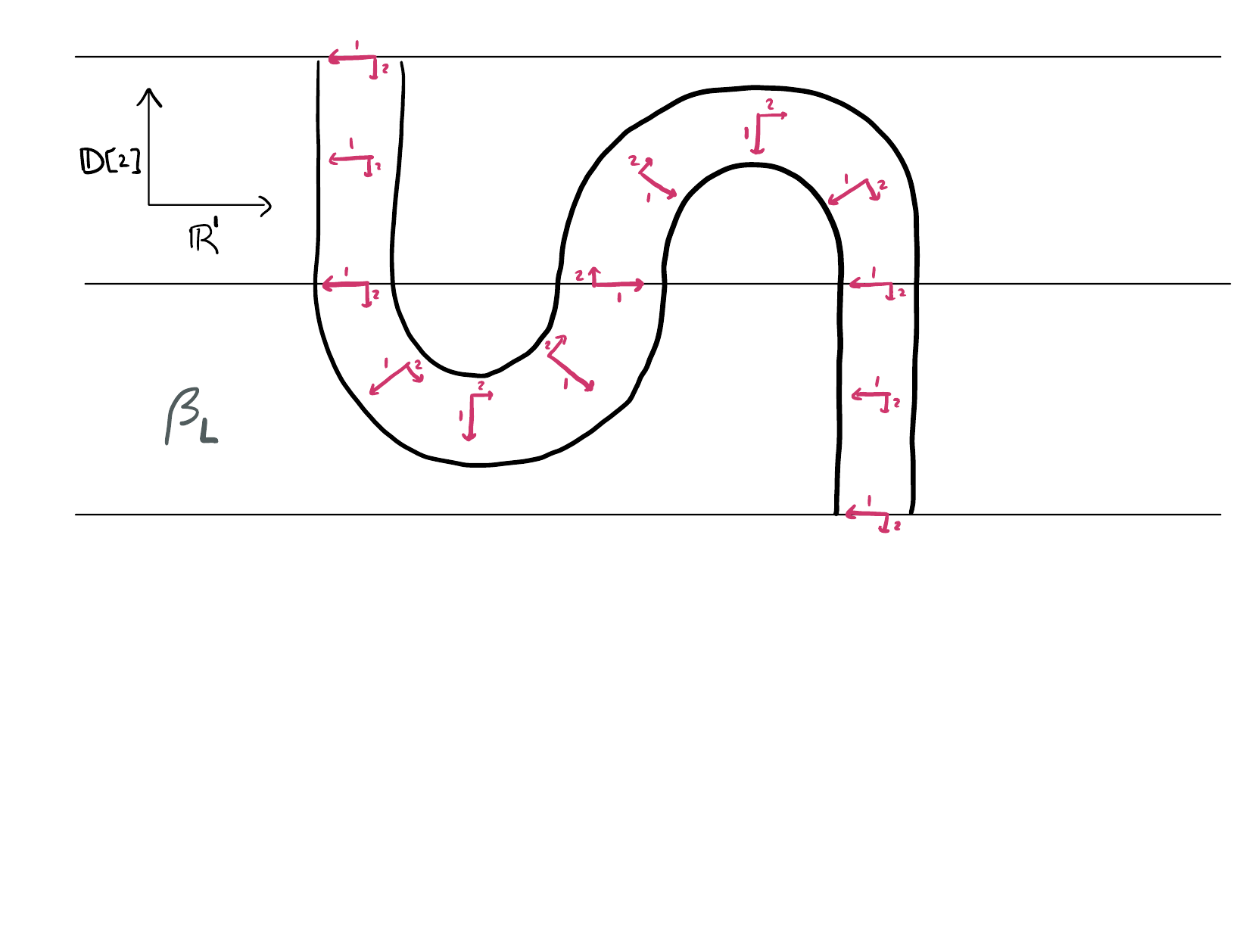}
  \caption{The open embedding $\beta'_L$.  
  It, together with the indicated framing, defines the morphism $\RR^1 \xra{\beta_L} \RR^1 \times \DD[2]$ in $\bcD^{\sfr}_{[1,2]}$.}
  \label{fig8}
\end{figure}
\begin{figure}[H]
  \includegraphics[width=\linewidth, trim={0 {3.6in} {2in} {.35in}}, clip]{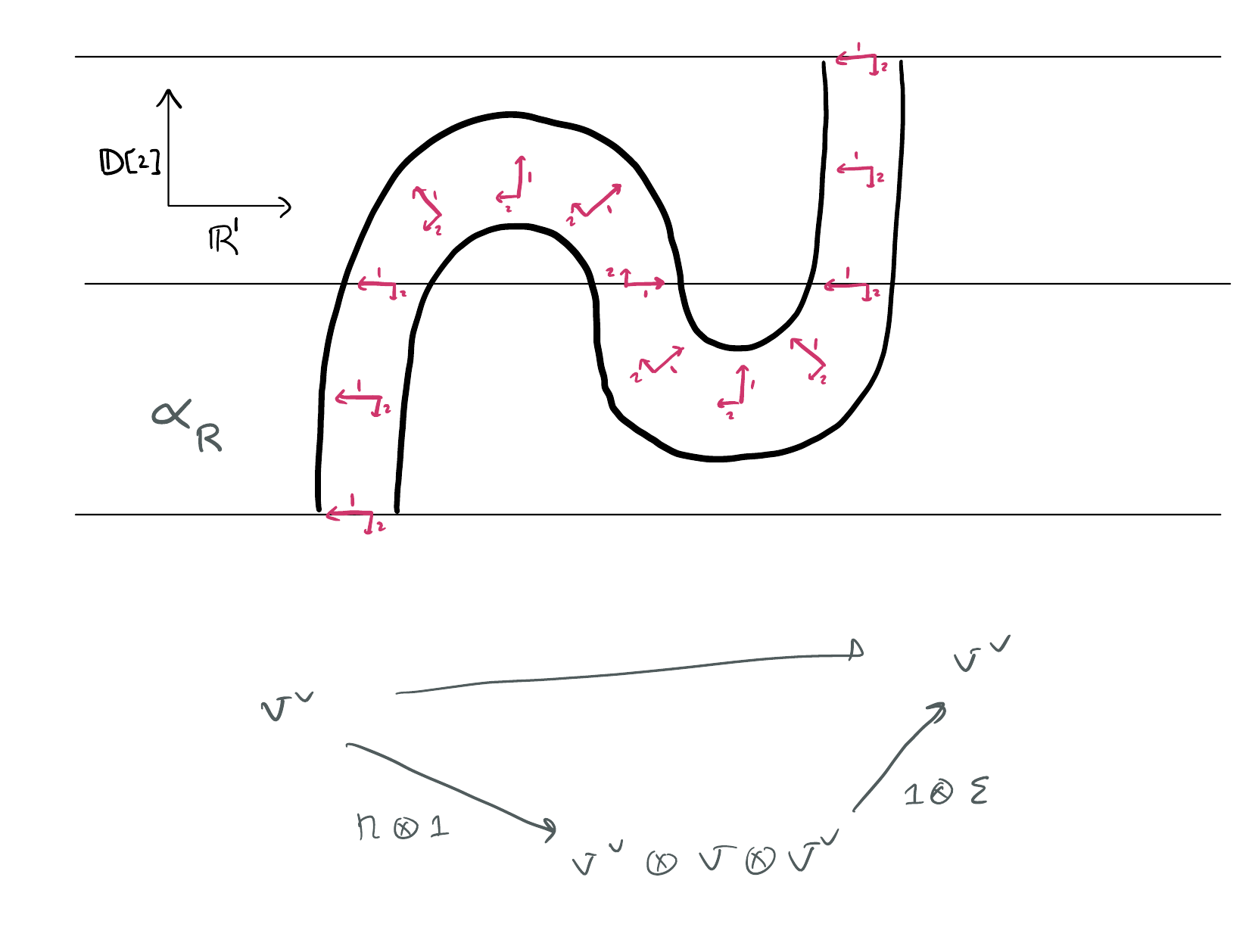}
  \caption{The open embedding $\alpha'_R$.  
  It, together with the indicated framing, defines the morphism $\RR^1 \xra{\alpha_R} \RR^1 \times \DD[2]$ in $\bcD^{\sfr}_{[1,2]}$.}
  \label{fig6}
\end{figure}
\begin{figure}[H]
  \includegraphics[width=\linewidth, trim={0 {3.7in} {2in} {.35in}}, clip]{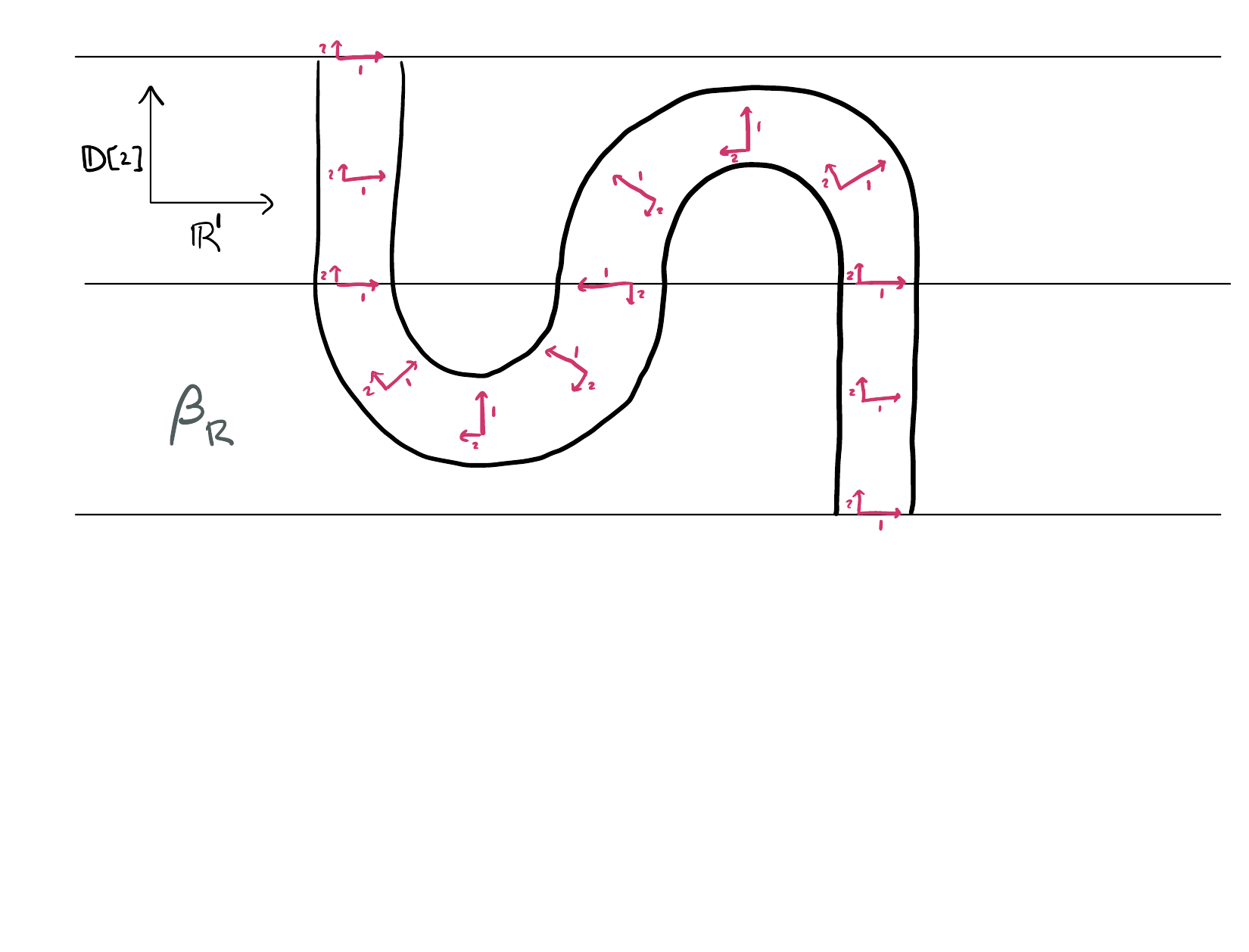}
  \caption{The open embedding $\beta'_R$.  
  It, together with the indicated framing, defines the morphism $\RR^1 \xra{\beta_R} \RR^1 \times \DD[2]$ in $\bcD^{\sfr}_{[1,2]}$.}
  \label{fig7}
\end{figure}
Inspecting Figures~\ref{fig5}--\ref{fig8} readily verifies the following.

\begin{observation}
\label{t121}
The morphisms constructed above organize as commutative diagrams in $\bcD_{[1,2]}^{\sfr}$:
\[
\xymatrix{
\RR^1 \times \DD^1
\sqcup
\RR^1 \times \DD^1
\ar[d]_-{\mu}
&
&
\RR^1
\ar[ll]_-{( \eta_L , u )}
\ar[rr]^-{( u , \epsilon_L)}
\ar[dl]_-{\alpha_L}
\ar[dd]_(.3){u}
&
&
\RR^1 \times \DD^1
\sqcup
\RR^1 \times \DD^1
\ar[d]^-{\mu}
\\
\RR^1 \times \DD^1
&
\RR^1 \times \DD[2]
\ar[l]_-{\partial_{\{0<1\}}}
\ar[rrr]^-{\partial_{\{1<2\}}}
\ar[dr]_-{\partial_{\{0<2\}}}
&
&
&
\RR^1 \times \DD^1
\\
&
&
\RR^1 \times \DD^1
&
&
,
}
\]
\[
\xymatrix{
\RR^1 \times \DD^1
\sqcup
\RR^1 \times \DD^1
\ar[d]_-{\mu}
&
\RR^1
\sqcup
\RR^1
\ar[l]_-{ u \sqcup \eta_L }
&
\RR^1
\ar[l]_-{( L ,  \id )}
\ar[r]^-{( \id , L )}
\ar[dl]_-{\beta_L}
\ar[dd]_(.3){u}
&
\RR^1
\sqcup
\RR^1
\ar[r]^-{ \epsilon_L \sqcup u }
&
\RR^1 \times \DD^1
\sqcup
\RR^1 \times \DD^1
\ar[d]^-{\mu}
\\
\RR^1 \times \DD^1
&
\RR^1 \times \DD[2]
\ar[l]_-{\partial_{\{0<1\}}}
\ar[rrr]^-{\partial_{\{1<2\}}}
\ar[dr]_-{\partial_{\{0<2\}}}
&
&
&
\RR^1 \times \DD^1
\\
&
&
\RR^1 \times \DD^1
&
&
,
}
\]
and
\[
\xymatrix{
\RR^1 \times \DD^1
\sqcup
\RR^1 \times \DD^1
\ar[d]_-{\mu}
&
\RR^1
\sqcup
\RR^1
\ar[l]_-{ \eta_R \sqcup u }
&
\RR^1
\ar[l]_-{( \id , R )}
\ar[r]^-{(R , \id  )}
\ar[dl]_-{\alpha_R}
\ar[dd]_(.3){u}
&
\RR^1
\sqcup
\RR^1
\ar[r]^-{u \sqcup \epsilon_R  }
&
\RR^1 \times \DD^1
\sqcup
\RR^1 \times \DD^1
\ar[d]^-{\mu}
\\
\RR^1 \times \DD^1
&
\RR^1 \times \DD[2]
\ar[l]_-{\partial_{\{0<1\}}}
\ar[rrr]^-{\partial_{\{1<2\}}}
\ar[dr]_-{\partial_{\{0<2\}}}
&
&
&
\RR^1 \times \DD^1
\\
&
&
\RR^1 \times \DD^1
&
&
,
}
\]
\[
\xymatrix{
\RR^1 \times \DD^1
\sqcup
\RR^1 \times \DD^1
\ar[d]_-{\mu}
&
&
\RR^1
\ar[ll]_-{( u, \eta_R )}
\ar[rr]^-{( \epsilon_R , u )}
\ar[dl]_-{\beta_R}
\ar[dd]_(.3){u}
&
&
\RR^1 \times \DD^1
\sqcup
\RR^1 \times \DD^1
\ar[d]^-{\mu}
\\
\RR^1 \times \DD^1
&
\RR^1 \times \DD[2]
\ar[l]_-{\partial_{\{0<1\}}}
\ar[rrr]^-{\partial_{\{1<2\}}}
\ar[dr]_-{\partial_{\{0<2\}}}
&
&
&
\RR^1 \times \DD^1
\\
&
&
\RR^1 \times \DD^1
&
&
.
}
\]

\end{observation}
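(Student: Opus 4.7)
The plan is to verify each of the four commutative diagrams by reducing it to two logically independent subclaims: (a) commutativity of the underlying diagram in $\Bun$, obtained by forgetting the solid 2-framings, and (b) compatibility of the solid 2-framings pulled back along each of the two paths around the square. Because a morphism in $\bcD^{\sfr}_{[1,2]}$ is an underlying morphism in $\Bun$ together with the datum of a compatible solid 2-framing, and because the constructible bundles in question are all either open embeddings or their composites with the proper constructible bundles $u$, $u_4$, $\partial_{\{i<j\}}$, both subclaims are accessible from the explicit coordinate descriptions of $\alpha'_L, \beta'_L, \alpha'_R, \beta'_R$ depicted in Figures~\ref{fig5}--\ref{fig8}, together with formulas~(\ref{f111})--(\ref{f114}) for $\eta'_L, \epsilon'_L, \eta'_R, \epsilon'_R$.

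For subclaim~(a), I would read off from Figures~\ref{fig5}--\ref{fig8} the piecewise formulas for $\alpha'_L, \beta'_L, \alpha'_R, \beta'_R \colon \RR^1 \times \DD[4] \to \RR^1 \times \DD[2]$, then verify by direct computation that, in each case: the restriction $\partial_{\{0<1\}}^\ast \circ \alpha'_L$ (precomposed with $u_4$) agrees with the appropriate composite of $\eta'_L, \epsilon'_L, u$ determined by $\mu$, and similarly for the $\{1<2\}$-face; while the $\{0<2\}$-refinement of $\alpha'_L$ recovers $u \colon \RR^1 \to \RR^1 \times \DD^1$. The analogous identities hold for $\beta_L, \alpha_R, \beta_R$, and each is essentially a matter of checking that the pieces of the piecewise-defined open embedding match the corresponding half-twists of $\eta', \epsilon'$ on the relevant sub-intervals of $\DD[4]$. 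The closed-refinement factorization in $\Bun$ and the formulas from Observation~\ref{t120} then give commutativity of the underlying unframed diagrams.

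For subclaim~(b), I would exploit the fact, recorded in the discussion preceding Observation~\ref{t120}, that $\sfr_2(\RR^1 \times \DD[p])$ is a $1$-type with a canonical $\sO(2)$-equivariant identification to either $S^1$ or $\sO(2)$, so that the $\ZZ$-torsor of paths between any two framings is determined by a single winding number. The framings on $\alpha'_L, \beta'_L, \alpha'_R, \beta'_R$ are pullbacks of $\varphi_\natural^{\RR^1 \times \DD[2]}$, and the $2$-simplex paths in $\sO(2)$ coming from~(\ref{f107}) provide canonical identifications with $\varphi_\natural^{\RR^1 \times \DD[4]}$; the same paths, suitably restricted, furnish the framings on $\eta_L, \epsilon_L, \eta_R, \epsilon_R$ via~(\ref{f108}). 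The main obstacle is the bookkeeping: one must verify, for each square, that the winding numbers picked up when traversing the square one way around agree with those picked up the other way, which amounts to confirming that the $\sO(2)$-valued paths obtained by restricting ${\sf Rot}(\pm t_1)$ to the faces $\{0<1\}, \{1<2\}, \{0<2\}$ of $\Delta^2$ concatenate to a nullhomotopic loop. This is ultimately a $\pi_1(\sO(2))$-computation that can be read directly from the figures: the half-twist conventions for $\eta'_L$ versus $\epsilon'_L$ (and their $R$-analogues) are chosen precisely so that these winding numbers cancel, which is what the figures are drawn to make manifest.
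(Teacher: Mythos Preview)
Your proposal is correct and considerably more detailed than the paper's own justification, which consists of a single sentence: ``Inspecting Figures~\ref{fig5}--\ref{fig8} readily verifies the following.'' The paper treats Observation~\ref{t121} as a visual fact about the depicted open embeddings and framings, leaving the reader to confirm by eye that the images of $\alpha'_L, \beta'_L, \alpha'_R, \beta'_R$ restrict correctly over the faces $\{0<1\}, \{1<2\}, \{0<2\}$ of $\DD[2]$ and that the indicated framings match.

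What you have written is essentially a rigorous unpacking of that visual inspection. Your decomposition into (a) the underlying $\Bun$-diagram and (b) the framing compatibility is exactly the right structure, and your observation that (b) reduces to a $\pi_1(\sO(2))$-bookkeeping exercise via the $1$-type structure of $\sfr_2(\RR^1 \times \DD[p])$ is the key point that makes the framing check tractable. The paper's approach buys brevity at the cost of leaving all of this implicit in the pictures; your approach makes explicit the two logically independent checks and identifies why each is straightforward, which is what a careful reader would have to reconstruct anyway.
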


\begin{proof}[Proof of Lemma~\ref{t7}]
Taking products with $\RR^{\mathit{n}\text{-}2}$ defines downward functors in a commutative square among $(\oo,1)$-categories:
\[
\xymatrix{
\bcD_{[1]}^{\sfr}
\times
\bDelta^{\op}
\ar[d]_-{(\RR^{\mathit{n}\text{-}2} \times -)}
\ar[rr]^-{\rho}
&&
\bcD_{[\n1,n]}^{\sfr}
\ar[d]^-{(\RR^{\mathit{n}\text{-}2} \times -)}
\\
\bcD_{[\n1]}^{\sfr}
\times
\bDelta^{\op}
\ar[rr]^-{\rho}
&&
\bcD_{[\n1,n]}^{\sfr}
.
}
\]
Notice that the downward functors carry closed covers to closed covers.
By Observation~\ref{t.closed.closed}, all of the functors in this diagram carry closed covers to closed covers.  
Therefore, applying $\Fun(-,\Spaces)$ to this commutative square results in a commutative square on the left in this diagram among $(\oo,1)$-categories:
\[
\xymatrix{
\cShv(\bcD_{[\n1,n]}^{\sfr} )
\ar[d]_-{(\RR^{\mathit{n}\text{-}2} \times -)^\ast}
\ar[rr]^-{\rho^\ast}
&&
\Alg_{\n1}( \fCat_{(\oo,1)})
\ar[d]^-{(\RR^{\mathit{n}\text{-}2} \times -)^\ast}
\ar[rr]^-{(-)^{\wedge}_{\sf unv}}
&&
\Alg_{\n1}( \Cat_{(\oo,1)})
\ar[d]^-{(\RR^{\mathit{n}\text{-}2} \times -)^\ast}
\\
\cShv(\bcD_{[\n1,n]}^{\sfr} )
\ar[rr]^-{\rho^\ast}
&&
\Alg( \fCat_{(\oo,1)})
\ar[rr]^-{(-)^{\wedge}_{\sf unv}}
&&
\Alg( \Cat_{(\oo,1)})
.
}
\]
The universal property of univalent-completions supplies a 2-cell in the right square witnessing it as lax-commuting.
Because univalent-completion $\fCat_{(\infty,1)} \to \Cat_{(\oo,1)}$ preserves products, this 2-cell is invertible, and therefore the right square canonically commutes.
Observation~\ref{t22} implies the diagram among $(\oo,1)$-categories
\[
\xymatrix{
\Alg^{\sf rig}_{\n1}( \Cat_{(\oo,1)})
\ar[d]_-{(\RR^{\mathit{n}\text{-}2} \times -)^\ast}
\ar[rr]^-{\rm inclusion}
&&
\Alg_{\n1}( \Cat_{(\oo,1)})
\ar[d]^-{(\RR^{\mathit{n}\text{-}2} \times -)^\ast}
\\
\Alg^{\sf rig}( \Cat_{(\oo,1)})
\ar[rr]^-{\rm inclusion}
&&
\Alg( \Cat_{(\oo,1)})
}
\]
is a pullback.  
Therefore, the statement of the lemma is true if and only if it is true for $n=2$.  
We are therefore reduced to the case in which $n=2$.

Next, let $(\bcD_{[1,2]}^{\sfr} \xra{\cF} \Spaces) \in \cShv(\bcD_{[1,2]}^{\sfr})$.
Denote the monoidal flagged $(\infty,1)$-category $\cR:= \rho^\ast \cF$.
Observe that $\cF$ carries 
\begin{enumerate}
\item
the morphisms in $\bcD_{[1,2]}^{\sfr}$,
\[
\RR^1 \xra{~L~} \RR^1_{\sf rev}
\qquad\text{ and }\qquad
\RR^1 \xra{~R~} \RR^1_{\sf rev}
~,
\]
to the maps between spaces
\[
\Obj(\cR) \xra{~L~} \Obj(\cR)
\qquad
\text{ and }
\qquad
\Obj(\cR) \xra{~R~} \Obj(\cR)
~,
\]
which are the data of Lemma~\ref{t120}(1);

\item
the diagrams in $\bcD_{[1,2]}^{\sfr}$ of Observation~\ref{t120} to the diagrams of Lemma~\ref{t105}(2);

\item
the diagrams in $\bcD_{[1,2]}^{\sfr}$ of Observation~\ref{t121} to the diagrams of Lemma~\ref{t105}(3).

\end{enumerate}
Therefore, by Lemma~\ref{t105}, the univalent-completion of the monoidal flagged $(\infty,1)$-category $\cR:=\rho^\ast \cF$ is rigid.

\end{proof}

\subsection{Tangle categories}

We now define $\Bord^{\fr}_1(\RR^{\n1})$, the $\cE_{\n1}$-monoidal flagged $(\oo,1)$-category of principal interest in this paper.
We do this through Lemma~\ref{t7}, by defining a functor
\[
\bcD^{\sfr}_{[\n1]}\times\bDelta^{\op}
\longrightarrow \Spaces
\]
that carries closed covers to pullbacks.
In particular, its space of objects $\obj(\Bord^{\fr}_1(\RR^{\n1}))$ is the value of this functor on $(\RR^{\n1},[0])$, and its space of morphisms $\mor(\Bord^{\fr}_1(\RR^{\n1}))$ is the value on $(\RR^{\n1}, [1])$.

\begin{definition}
\label{def.bord}
$\Bord_1^{\fr}(\RR^{\n1})$ is the composite functor
\[
\xymatrix{
\Bord_1^{\fr}(\RR^{\n1})
\colon 
\bcD^{\sfr}_{[\n1]}\times\bDelta^{\op}\ar[r]^-\rho & \bcD^{\sfr}_{[\n1,n]}\ar[rrr]^-{\Map_{ \bcD_{[\n1,n]}^{\sfr} }(\RR^{\n1},-)}&&&\Spaces
 ~.
}
\] 
\end{definition}

We have the following corollary of Lemma~\ref{t7}.

\begin{cor}\label{cor.Bord.has.duals}
$ \Bord_1^{\fr}(\RR^{\n1})$ is an $\cE_{\n1}$-monoidal flagged $(\oo,1)$-category. The univalent-completion $ \Bord_1^{\fr}(\RR^{\n1})^{\wedge}_{\sf unv}$ is a rigid $\cE_{\n1}$-monoidal $(\oo,1)$-category.
\end{cor}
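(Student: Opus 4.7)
The plan is to reduce the corollary to Lemma~\ref{t7}. By Definition~\ref{def.bord}, $\Bord_1^{\fr}(\RR^{\n1}) = \rho^\ast\cF$, where
\[
\cF \;:=\; \Map_{\bcD_{[\n1,n]}^{\sfr}}(\RR^{\n1},-) \colon \bcD_{[\n1,n]}^{\sfr}\longrightarrow \Spaces.
\]
Once I verify that $\cF$ belongs to $\cShv(\bcD_{[\n1,n]}^{\sfr})$, the first assertion follows by Observation~\ref{t.closed.closed}: the functor~(\ref{e3}) places $\rho^\ast\cF$ inside $\Alg_{\n1}(\fCat_{(\oo,1)})$. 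The second assertion follows by applying Lemma~\ref{t7} to $\cF$: the lemma produces a dashed functor into $\Alg_{\n1}^{\sf rig}(\Cat_{(\oo,1)})$ whose value on $\cF$ is the univalent-completion of $\rho^\ast\cF = \Bord_1^{\fr}(\RR^{\n1})$, identified there as a rigid $\cE_{\n1}$-monoidal $(\oo,1)$-category.

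The one substantive step is thus to verify that $\cF$ is a closed sheaf: for every closed cover $X = A \cup B$ in $\bcD_{[\n1,n]}^{\sfr}$, the square formed by the four restriction maps among $\Map_{\bcD_{[\n1,n]}^{\sfr}}(\RR^{\n1}, X)$, $\Map_{\bcD_{[\n1,n]}^{\sfr}}(\RR^{\n1}, A)$, $\Map_{\bcD_{[\n1,n]}^{\sfr}}(\RR^{\n1}, B)$, and $\Map_{\bcD_{[\n1,n]}^{\sfr}}(\RR^{\n1}, A \cap B)$ must be a pullback of spaces. I would verify this directly from the description of morphisms in $\Mfd_n^{\sfr}$: such a morphism $\RR^{\n1} \to X$ is a solidly $n$-framed constructible bundle $Y \to \Delta^1$ with special fiber $\RR^{\n1}$ and generic fiber $X$, and the closed cover $X = A\cup B$ pulls back along $Y \to \Delta^1$ to a closed cover of $Y$ presenting it as the gluing of the corresponding morphisms $\RR^{\n1} \to A$ and $\RR^{\n1} \to B$ along their common restriction to $A\cap B$. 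Uniqueness of this gluing, together with the pointwise characterization of solid $n$-framings in Remark~\ref{r10}, then yields the pullback identification.

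I expect this gluing argument to be the only genuine obstacle: it requires careful handling of the constructible-bundle formalism of \cite{strat} and of the compatibility of solid $n$-framings across the cover. All of the heavy categorical lifting --- packaging the resulting data as an $\cE_{\n1}$-algebra in flagged $(\oo,1)$-categories (via Proposition~\ref{t8} and Observation~\ref{t.closed.closed}), and verifying rigidity after univalent completion (via the diagrammatic criteria of Lemma~\ref{t105} applied within Lemma~\ref{t7}) --- has already been carried out above.
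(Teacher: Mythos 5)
Your proposal is correct and follows the paper's route exactly: identify $\Bord_1^{\fr}(\RR^{\n1})$ as $\rho^\ast\cF$ for the corepresented functor $\cF = \Map_{\bcD_{[\n1,n]}^{\sfr}}(\RR^{\n1},-)$, check that $\cF$ is a closed sheaf, and invoke Lemma~\ref{t7}. The only difference is that the ``one substantive step'' you flag has a one-line proof you are overlooking: closed covers are by construction pullback (hence limit) diagrams in $\bcD_{[\n1,n]}^{\sfr}$ (as recalled from \S3.5 of \cite{fact1}), and any corepresentable functor $\Map(c,-)$ carries limits to limits, so the hands-on gluing of constructible bundles and framings you propose is unnecessary.
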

\begin{proof}
Recall that closed covers are, in particular, limit diagrams in $\bcD^{\sfr}_{[\n1,n]}$.
Therefore, the second functor in the definition of $\Bord_1^{\fr}(\RR^{\n1})$ is a closed sheaf on $\bcD^{\sfr}_{[\n1,n]}$.
The result follows from Lemma~\ref{t7}.

\end{proof}

\begin{lemma}\label{lemma.corep.tang}
Let $X$ be a solidly $n$-framed $[\n1,n]$-manifold.
The space of morphisms in $\Mfd_n^{\sfr}$ from $\RR^{\n1}$ to $X$ is the moduli space of framed tangles in $X$. 
In particular, for $X = \RR^{\n1}\times\DD[p]$ and $p\geq 1$, and for $X = \RR^{\n1}$, we have equivalences
\[
\Map_{ \Mfd_n^{\sfr} }\bigl( \RR^{\n1} , \RR^{\n1}\times\DD[p] \bigr)
\simeq
\Tang_1^{\fr}(\RR^{\n1}\times\DD[p])
\]
and
\[
\Map_{ \Mfd_n^{\sfr} }\bigl( \RR^{\n1} , \RR^{\n1} \bigr)
\simeq
\Tang_0^{\fr}(\RR^{\n1})~.
\]
\end{lemma}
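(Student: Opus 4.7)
The plan is to exhibit the equivalence by constructing maps in both directions directly from the definitions and matching them in parametrized families. Unpacking definitions, a morphism $\RR^{\n1}\to X$ in $\Mfd_n^{\sfr}$ is a constructible bundle $F\to\Delta^1$ with special fiber $\RR^{\n1}$, generic fiber $X$, together with a solid $n$-framing of the vertical tangent bundle. A framed tangle is a compact properly embedded $1$-submanifold $W \subset X$ transverse to $X^{(\n1)}$, together with a chosen nullhomotopy of its Gauss map.

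To construct the map $\Tang_1^{\fr}(X)\to\Map_{\Mfd_n^{\sfr}}(\RR^{\n1},X)$, I would send a framed tangle $W\subset X = \RR^{\n1}\times\DD[p]$ to the following morphism. Form the refined stratified space $X_W$ obtained by adjoining $W$ as $1$-dimensional strata; this lies in $\Mfd_n^{\sfr}$ (though not in $\bcD^{\sfr}_{[\n1,n]}$, since its strata include $1$-dimensional ones). The composite $X_W\hookrightarrow X \xra{\pr}\RR^{\n1}$ is a proper constructible bundle, using compactness of $W$ together with the transversality $W\pitchfork X^{(\n1)}$ and $\partial W = W\cap\partial X$. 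By the $\Cylr$ construction this defines a closed-creation morphism $\RR^{\n1}\to X_W$ in $\Bun$. Composing with the refinement morphism $X_W \to X$ furnished by $\Cylo$ yields a morphism $\RR^{\n1}\to X$ in $\Bun$. The standard solid $n$-framing of $X$ combined with the Gauss nullhomotopy on $W$ assembles into a solid $n$-framing on the total constructible bundle, producing a morphism in $\Mfd_n^{\sfr}$.

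To invert this, given a morphism $\RR^{\n1}\to X$ in $\Mfd_n^{\sfr}$, I would apply the closed-active factorization system on $\Bun$. Since $\RR^{\n1}$ consists of a single stratum, the closed factor is forced to be trivial, so every such morphism is active and further factors as a creation $\RR^{\n1}\to Y$ followed by a refinement-embedding $Y\to X$. The creation part identifies $Y$ as a proper constructible family over $\RR^{\n1}$, and the refinement-embedding presents $Y$ as a refinement of an open stratified subspace of $X$. The $1$-dimensional strata of $Y$, after embedding into $X$, recover the tangle $W$; the solid $n$-framing restricts to the required nullhomotopy of the Gauss map of $W$. Uniqueness of the closed-active factorization shows this is inverse to the previous construction on points.

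The main obstacle is lifting this pointwise construction to an equivalence of moduli spaces. I would verify this by working in families: a $K$-point of $\Map_{\Mfd_n^{\sfr}}(\RR^{\n1},X)$ is a solidly $n$-framed constructible bundle over $K\times\Delta^1$ with prescribed fibers, and the same closed-active factorization produces a $K$-family of framed tangles. Since $\Cylr$ and $\Cylo$ are functorial in the base, and since the compact-open topology on tangle embeddings matches the moduli topology on such parametrized bundles, the bijection on points extends to a homotopy equivalence of spaces. The second identification for $X = \RR^{\n1}$ follows by specializing to $p=0$, in which case $1$-tangles are $0$-dimensional submanifolds and $\Tang_1^{\fr}(\RR^{\n1}) = \Tang_0^{\fr}(\RR^{\n1})$.
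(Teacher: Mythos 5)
Your overall strategy --- decode a morphism $\RR^{\n1}\to X$ through a closed-creation/embedding factorization and match it against a tangle --- is the same as the paper's, but the forward construction is based on a false claim and the step carrying most of the weight is only asserted.

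The forward map is wrong as stated: the projection $X_W\to\RR^{\n1}$, where $X_W$ refines $X=\RR^{\n1}\times\DD[p]$ along $W$, is proper but is \emph{not} a constructible bundle. Over the single stratum $\RR^{\n1}$ of the base the fibers are copies of $\DD[p]$ with the finite set $W\cap(\{x\}\times\DD[p])$ adjoined as strata, and this set is not locally constant in $x$ (it is empty for $x$ outside the image of $W$ and nonempty otherwise), so the restriction over that stratum is not a stratified fiber bundle and $\Cylr$ does not apply. The correct encoding --- the one the paper's proof produces --- is a closed-creation morphism $\RR^{\n1}\to E$ given by a proper fiber bundle $E\to\RR^{\n1}$ with compact $1$-dimensional fiber $W$, i.e.\ a framed tubular neighborhood $E\cong\RR^{\n1}\times W$ of the tangle, followed by the \emph{open embedding} $E\hookrightarrow X$; the complement of the tubular neighborhood is discarded by the embedding rather than retained through a refinement of all of $X$. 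This is why both sides end up identified with $\coprod_{[W]}\Emb^{\fr}(\RR^{\n1}\times W,X)_{/\Diff^{\fr}(W)}$, fibered over the moduli of framed compact $1$-manifolds.

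For the inverse you invoke ``uniqueness of the closed-active factorization,'' but that is not the factorization you need. Closed-active is a genuine factorization system on $\Bun$; the factorization as closed-creation followed by refinement-embedding merely \emph{exists} for every morphism, and the actual content of the lemma (the paper's assertion $(\dagger)$) is that it is \emph{unique}, functorially in $K$-parametrized families, when the source is a trivially stratified smooth manifold. Establishing this occupies most of the paper's proof: one forms the link $\Link_{X_0}(X)$ of the special fiber, uses triviality of the stratification of $X_0=\RR^{\n1}$ to see that this link is simultaneously a fiber bundle over $X_0$ and an open embedding into the generic fiber, assembles the factorization from fiberwise reversed and open mapping cylinders, and then shows by an iterated-link argument that any other factorization agrees with this one. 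Your closing appeal to ``working in families'' and the ``functoriality of $\Cylr$ and $\Cylo$'' names exactly the part that still has to be proved.
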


\begin{proof}
We denote by $\Mfd_{[\n1,n]}^{\sfr}\subset \Mfd_n^{\sfr}$ the full $(\oo,1)$-subcategory of $[\n1,n]$-manifolds. Composition in $\Mfd_{[\n1,n]}^{\sfr}$ defines a functor over $\Mfd_{[\n1,n]}^{\sfr}\times \Mfd_{[\n1,n]}^{\sfr}$:
\[
\xymatrix{
\Ar^{\sf cls.crt}(\Mfd_{[\n1,n]}^{\sfr})     \underset{\Mfd_{[\n1,n]}^{\sfr}} \times \Ar^{\sf emb}(\Mfd_{[\n1,n]}^{\sfr})     \ar[rd]_-{\ev_s\times \ev_t}   \ar[rr]^-{\sf comp}
&&
\Ar(\Mfd_{[\n1,n]}^{\sfr})      \ar[dl]^-{\ev_s\times \ev_t}
\\
&
\Mfd_{[\n1,n]}^{\sfr}\times \Mfd_{[\n1,n]}^{\sfr}
&
.
}
\]
where $\Ar^{\sf cls.crt}(\Mfd_{[\n1,n]}^{\sfr})$ is the full $\oo$-subcategory of $\Ar(\Mfd_{[\n1,n]}^{\sfr})$ whose objects are the closed-creation morphisms, and $\Ar^{\sf emb}(\Mfd_{[\n1,n]}^{\sfr})$ is the full $\oo$-subcategory of whose objects are the embeddings.
For each object $M\in \Mfd_{[\n1,n]}^{\sfr}$, the base change of this diagram along
\[
\Mfd_{[\n1,n]}^{\sfr} \xra{(\{M\} , {\sf id})} \Mfd_{[\n1,n]}^{\sfr} \times \Mfd_{[\n1,n]}^{\sfr}
\]
gives a functor over $\Mfd_{[\n1,n]}^{\sfr}$:
\begin{equation}\label{3}
\xymatrix{
(\Mfd_{[\n1,n]}^{\sfr})^{M/^{\sf cls.crt}}    \underset{\Mfd_{[\n1,n]}^{\sfr}} \times \Ar^{\sf emb}(\Mfd_{[\n1,n]}^{\sfr})    \ar[rd]_-{\ev_t}   \ar[rr]^-{{\sf comp}}
&&
(\Mfd_{[\n1,n]}^{\sfr})^{M/}    \ar[dl]^-{ \ev_t}
\\
&
\Mfd_{[\n1,n]}^{\sfr}
&
.
}
\end{equation}

The statement of the present lemma is implied by the following stronger assertion:
\begin{itemize}
\item[$\dagger$]
The composition functor in~(\ref{3}) is an equivalence between $(\oo,1)$-categories if $M$ is a smooth manifold equipped with a solid $n$-framing.
In particular, each morphism $M\to X$ in $\Mfd_{[\n1,n]}^{\sfr}$ admits a unique factorization
\[
M\xra{~\sf cls.crt~} E \xra{~\emb~} X
\]
as the reversed mapping cylinder on a proper constructible bundle followed by a stratified open embedding morphism.  
\end{itemize}
We first show that this assertion applied to the case $M=\RR^{\n1}$ gives the desired result. 
By intersecting tangles with closed unions of strata, the assignment $X \mapsto \Tang_1^{\fr}(X)$ is functorial with respect to closed morphisms in the argument $X$.
Consequently, this assignment carries closed covers to limits.
Meanwhile, because closed covers are, in particular, limit diagrams in $\Mfd_{[\n1,n]}^{\sfr}$, the functor $X\mapsto \Map_{\Mfd_{[\n1,n]}^{\sfr}}(\RR^{\n1},X)$ also carries closed covers to limits.  
Now, by definition of $\Mfd_{[\n1,n]}^{\sfr}$, every object therein admits a closed cover by $n$-manifold with boundary and $(\n1)$-manifolds without boundary.
We now prove the case where $X$ is a solidly $n$-framed $n$-manifold with boundary; the case of $X$ an $(\n1)$-manifold is nearly identical, but easier, and we leave the details to the reader.

Consider first the case in which $X$ is an $n$-dimensional manifold with boundary. 
The assertion then gives an equivalence
\[
\Map_{\Mfd_{[\n1,n]}^{\sfr}}(\RR^{\n1},X)
\simeq
\obj\Bigl(\Mfd_{[\n1,n]}^{\sfr})^{\RR^{\n1}/^{\sf cls.crt}}    \underset{\Mfd_{[\n1,n]}^{\sfr}} \times \bcD^{\sfr}_{[\n1,n]/^{\sf emb}X} 
\Bigr)~,
\]
the latter space being the fiber over $X$ of the functor 
\[
(\Mfd_{[\n1,n]}^{\sfr})^{\RR^{\n1}/^{\sf cls.crt}}    \underset{\Mfd_{[\n1,n]}^{\sfr}} \times \Ar^{\sf emb}(\Mfd_{[\n1,n]}^{\sfr})
\xra{\ev_t}
 \Mfd_{[\n1,n]}^{\sfr}~.
\]
We can now observe that there is a forgetful map
\begin{equation}
\label{reportW}
\obj\Bigl((\Mfd_{[\n1,n]}^{\sfr})^{\RR^{\n1}/^{\sf cls.crt}}    \underset{\Mfd_{[\n1,n]}^{\sfr}} \times \bcD^{\sfr}_{[\n1,n]/^{\sf emb}X} 
\Bigr)
\longrightarrow
\obj\Bigl(\Mfld_1^{\partial,\fr}\Bigr)
\end{equation}
to the moduli space of framed 1-manifolds with boundary. 
To see this, let $\RR^{\n1}\xra{\sf cls.crt} E \xra{\sf emb} X$ be a closed-creation morphism followed by an embedding morphism.
The first morphism is, by definition of closed-creation morphisms in $\Mfd_{[\n1,n]}^{\sfr}$, the datum of a proper fiber bundle $E \to \RR^{\n1}$ that respects solid $n$-framings.
The second morphism is, by definition of embedding morphisms in $\Mfd_{[\n1,n]}^{\sfr}$, an open embedding between manifolds with boundary. 
Denote by $W$ the fiber over $0\in \RR^{\n1}$ of this proper fiber bundle -- necessarily it is a compact manifold with boundary, and it has dimension-1 because $X$ is $n$-dimensional.
Using that the fiber bundle $E \to \RR^{\n1}$ respects solid $n$-framings, $W$ inherits the structure of a framing as a compact 1-manifold with boundary.
The map~(\ref{reportW}) assigns this compact framed 1-manifold $W$ to $\RR^{\n1}\xra{\sf cls.crt} E \xra{\sf emb} X$.

Let $W$ be a compact framed 1-manifold with boundary.
From the definition of~(\ref{reportW}), we identify its fiber over $W$ as a space of framed embeddings: 
\[
\xymatrix{
\obj\Bigl((\Mfd_{[\n1,n]}^{\sfr})^{\RR^{\n1}/^{\sf cls.crt}}    \underset{\Mfd_{[\n1,n]}^{\sfr}} \times \bcD^{\sfr}_{[\n1,n]/^{\sf emb}X} 
\Bigr)
\ar[d]_-{(\ref{reportW})}
&
\Emb^{\fr}(\RR^{\n1}\times W, X)\ar[d]\ar[l]\\
\obj\Bigl(\Mfld_1^{\partial,\fr}\Bigr)
&\ar[l]
\{W\}~.
}
\]
The associated action of the loop space $\Omega_{\{W\}}\bigl(\obj(\Mfld_1^{\partial,\fr})\bigl)$ on the fiber is then the natural action of the framed diffeomorphisms of $W$ on the space of embeddings:
\[
\Omega_{\{W\}}\bigl(\obj(\Mfld_1^{\partial,\fr})\bigl)\simeq \Diff^{\fr}(W)
\circlearrowright
\Emb^{\fr}(\RR^{\n1}\times W, X)
~.\]
This implies the equivalence
\[
\Map_{\Mfd_{[\n1,n]}^{\sfr}}(\RR^{\n1},X)
\simeq
\coprod_{[W]}\Emb^{\fr}(\RR^{\n1}\times W, X)_{/\Diff^{\fr}(W)}
\]
where $[W]$ ranges over isomorphism classes of framed 1-manifolds with boundary.

We now give a matching identification of $\Tang^{\fr}_1(X)$. There is likewise a forgetful map
\[
\Tang^{\fr}_1(X)
\longrightarrow 
\obj\bigl(\Mfld_1^{\partial,\fr}\bigr)
~,\qquad
(W \subset X)
\longmapsto 
W
~,
\]
since a trivialization of the Gauss map associated to a tangle $W\subset X$ defines a framing of the 1-manifold $W$. The fiber of this forgetful map is equivalent to the homotopy fiber of the (oriented lift of the) Gauss map
\[
\Emb(W, X)
\longrightarrow
\Map(W, \sV_1(n))
~,\qquad
(W \xra{f} X) \longmapsto (W \xra{{\sf Gauss}_f} \sV_1(n))
~,
\]
which is equivalent to the space of framed embeddings
\[
\Emb^{\fr}(\RR^{\n1}\times W, X)\simeq {\sf fiber}_\gamma\Bigl(\Emb(W, X)\ra
\Map(W, V_1(n))\Bigr)
\]
where $\gamma$ is the framing of $W$. The loop space $\Omega_{\{W\}}\obj\bigl(\Mfld_1^{\partial,\fr}\bigr) \simeq \Diff^{\fr}(W)$ again acts by the canonical action. This gives an identification
\[
\Tang^{\fr}_1(X)
\simeq
\coprod_{[W]}\Emb^{\fr}(\RR^{\n1}\times W, X)_{/\Diff^{\fr}(W)}
\]
and thus proves the equivalence
\[
\Tang^{\fr}_1(X)
\simeq
\Map_{\Mfd_{[\n1,n]}^{\sfr}}(\RR^{\n1},X)
\]
in the case of $X\in\Mfd_{[\n1,n]}^{\sfr}$ an $n$-dimensional manifold with boundary. 

The second case, of $X$ an $(\n1)$-manifold without boundary, follows mutatis mutandis, replacing the target of the forgetful maps with the moduli space of compact 0-dimensional manifolds. 
We leave the details to the reader.

We now prove the assertion $(\dagger)$, which will then complete the proof of the present lemma.

The composition functor~(\ref{3}) is an equivalence if and only if, for each $[p]\in \bDelta$ the map between the fibers of the map,
\[
\Map\left( [p] ,
(\Mfd_{[\n1,n]}^{\sfr})^{M/^{\sf cls.crt}} \underset{\Mfd_{[\n1,n]}^{\sfr}}\times \Ar^{\emb}(\Mfd_{[\n1,n]}^{\sfr})
\right)
\longrightarrow
\Map\left([p] , (\Mfd_{[\n1,n]}^{\sfr})^{M/} \right)~,
\]
is a weak homotopy equivalence.
That is, it suffices to show that for each choice of basepoint in the domain of this map, and each $q\geq 0$, 
the map between sets of connected components of based maps from the $q$-sphere,
\[
\Map_{\ast}\Bigl(  S^q ,
\Map\bigl( [p] ,
(\Mfd_{[\n1,n]}^{\sfr})^{M/^{\sf cls.crt}} \underset{\Mfd_{[\n1,n]}^{\sfr}}\times \Ar^{\emb}(\Mfd_{[\n1,n]}^{\sfr})
\bigr)
\Bigr)
\longrightarrow
\Map_{\ast}\Bigl(  S^q ,
\Map\bigl([p] , (\Mfd_{[\n1,n]}^{\sfr})^{M/} \bigr)
\Bigr)~,
\]
is bijective on $\pi_0$.
We show below that the map between connected components of non-based maps
\begin{equation}\label{333}
\Map\Bigl(  S^q ,
\Map\bigl( [p] ,
(\Mfd_{[\n1,n]}^{\sfr})^{M/^{\sf cls.crt}} \underset{\Mfd_{[\n1,n]}^{\sfr}}\times \Ar^{\emb}(\Mfd_{[\n1,n]}^{\sfr})
\bigr)
\Bigr)
\longrightarrow
\Map\Bigl(  S^q ,
\Map\bigl([p] , (\Mfd_{[\n1,n]}^{\sfr})^{M/} \bigr)
\Bigr)
~,
\end{equation}
is bijective on $\pi_0$.
The case of based maps follows nearly identically, and we leave the modification to the reader.
By adjunction, $\pi_0$-bijectivity of~(\ref{333}) is implied by bijectivity of the map between sets of connected components of spaces of functors,
\begin{equation}\label{334}
\Map\Bigl( \exit(K) ,
(\Mfd_{[\n1,n]}^{\sfr})^{M/^{\sf cls.crt}} \underset{\Mfd_{[\n1,n]}^{\sfr}}\times \Ar^{\emb}(\Mfd_{[\n1,n]}^{\sfr})
\Bigr)
\longrightarrow
\Map\Bigl( \exit(K) , (\Mfd_{[\n1,n]}^{\sfr})^{M/} \Bigr)~,
\end{equation}
for each compact stratified space of the form $K = \Delta^p \times S^q$.
Here $\Delta^p$ is the standardly stratified $p$-simplex, and $S^q$ is the unstratified $q$-sphere.
In this way, $\pi_0$-bijectivity of the map~(\ref{333}) is implied by $\pi_0$-bijectivity of the map~(\ref{334}) for each compact stratified space $K$.
This is to equivalent to showing that for each functor $\exit(K) \ra (\Mfd_{[\n1,n]}^{\sfr})^{M/}$, there exists a lift
\begin{equation}\label{lift}
\xymatrix{
&(\Mfd_{[\n1,n]}^{\sfr})^{M/^{\sf cls.crt}} \underset{\Mfd_{[\n1,n]}^{\sfr}}\times \Ar^{\emb}(\Mfd_{[\n1,n]}^{\sfr})\ar[d]\\
\exit(K)\ar@{-->}[ur]\ar[r]&(\Mfd_{[\n1,n]}^{\sfr})^{M/}}
\end{equation}
that is unique up to homotopy.

By adjunction, a functor $\exit(K)\ra (\Mfd_{[\n1,n]}^{\sfr})^{M/}$ is equivalent to a functor
\[
\exit(K)^{\tl}~\simeq ~\exit\bigl(\ov\sC(K)\bigr)\longrightarrow \Mfd_{[\n1,n]}^{\sfr}
\]
together with an identification of the value on the cone-point as $M$. 
Here, $\oC(K) := \{0\}   \underset{K\times \{0\}}\amalg  K\times [0,1]$ is the closed cone on $K$, regarded as a stratified space via the stratification $[0,1] \xra{t\mapsto \lceil t \rceil} [1]$.
The assumed compactness of $K$ implies compactness of $\ov{\sC}(K)$.
The latter classifies the following data:
\begin{itemize}
\item
a constructible bundle $X \xra{\pi} \oC(K)$;

\item
an $\sfr_n$-structure on $\pi$, by which we mean a lift 
$\sT_\pi \colon \exit(X) \to \Vect^{\sf inj}_{/\RR^n} \to \Vect^{\sf inj}$ of the fiberwise tangent classifier (see Remark~2.37 of~\cite{fact1});

\item
an equivalence $M \simeq X_0$ over $\Vect^{\sf inj}_{/\RR^n}$ between $M$ and $X_0$, the fiber of $X$ over $\{0\}\subset \oC(K)$.

\end{itemize}

So fix such data.  
A lift in~(\ref{lift}) is equivalent to the following data:
\begin{itemize}
\item a constructible bundle $\widetilde{X}\ra \oC(K\times \Delta^1)$ over the closed cone;

\item an $\sfr_n$-structure on the constructible bundle $\widetilde{X}\ra \oC(K\times \Delta^1)$;

\item an identification 
over $\Vect^{\sf inj}_{/\RR^n}$ of its restriction:
\[
\xymatrix{
X       \ar[d]      \ar@{=}[rr]
&&
\widetilde{X}_{|\oC(K\times \Delta^{\{1\}})}   \ar[d]    
\\
\oC(K)   \ar@{=}[rr]
&& 
\oC(K\times \Delta^{\{1\}}) ;
}
\]
\end{itemize}
and subject to the conditions that:
\begin{itemize}
\item 
for each point $k\in K$, the pulled-back constructible bundle 
\[
\widetilde{X}_{|\oC(\{k\}\times\Delta^{\{0\}})} 
\longrightarrow 
\oC\bigl(\{k\}\times\Delta^{\{0\}}\bigr) = \oC\bigl(\{k\}\bigr) \cong \Delta^1
\]
is classified by a closed-creation morphism in $\Mfd_{[\n1,n]}^{\sfr}$;
\item 
for each point $k\in K$, the pulled-back constructible bundle 
\[
\widetilde{X}_{|\{k\}\times \Delta^1} \longrightarrow \{k\}\times \Delta^1\times \{1\} = \Delta^1
\]
is classified by an embedding morphism in $\Mfd_{[\n1,n]}^{\sfr}$.
\end{itemize}
We first construct such a lift~(\ref{lift}). 
Take the stratified space $\Link_{X_0}(X)$, 
which is the link of the closed constructible subspace $X_0\subset X$. This link lies in a commutative diagram among stratified spaces:
\[
\xymatrix{
M=X_0 \ar[d]&\ar[d]\ar[l]_-{\sf p.cbl} \Link_{X_0}(X)\ar[r]^-{\sf open} &X_{|K}\ar[d]\\
\{0\}&\ar[l]
\Link_{\{0\}}\bigl(\oC(K)\bigr)   \ar[r]^-{\cong} 
& 
K .
}
\]
From the definition of links, the fact that $X_0=M$ is trivially stratified results in two further simplifications.
\begin{enumerate}
\item
The proper constructible bundle $\Link_{X_0}(X)\ra X_{|K\times\Delta^1}$ is, further, a {\it fiber bundle} of stratified spaces.
(Heuristically, the fibers vary continuously.)

\item
The open map $\Link_{X_0}(X)\ra X_{|K\times\Delta^1}$ is, further, an {\it open stratified embedding}.
(The refinement map onto its image is an isomorphism.)

\end{enumerate}
This allows us to now make use of the reversed and open mapping cylinders, as in Theorem~6.6.15 of \cite{strat}. The map $\Link_{X_0}(X)\ra M$ is proper constructible fiberwise over $K$, and taking the reversed mapping cylinder fiberwise over $K$ gives a constructible bundle
\[
{\sf Cylr}^{\sf fib}_K\Bigl(M \la   \Link_{X_0}(X) \Bigr) 
\longrightarrow
\oC(K)~.
\]
The map  $\Link_{X_0}(X)\ra X_{|K}$ is open fiberwise over $K$, and taking the open mapping cylinder fiberwise over $K$ gives a constructible bundle
\[
{\sf Cylo}^{\sf fib}_K\Bigl( \Link_{X_0}(X) \ra  X_{|K} \Bigr) 
\longrightarrow
K\times \Delta^1~ .
\]
These constructible bundles base change over $K = K\times \Delta^{\{0\}}$ identically.
Using that $\Bun$ is an $(\oo,1)$-category, this pair of concatenating constructible bundles extends to a constructible bundle 
\[
\w{X} \longrightarrow \oC(K\times \Delta^1)\supset \oC(K)\underset{K} \amalg K\times \Delta^1~.
\]
With point (2) above, 
the construction of this constructible bundle supplies the sought data of a lift~(\ref{lift}), sans $\sfr_n$-structures.
We now extend the given $\sfr_n$-structure on the constructible bundle $X\to \oC(K)$ to one on $\w{X}\to \oC(K\times \Delta^1)$.

Since the $\sfr_n$ tangential structure pulls back along open maps, point~(2) just above grants that there is a unique  $\sfr_n$-structure on the constructible bundle $\Link_{X_0}(X) \to K$ for which the open embedding of point~(2) is one between $\sfr_n$-structured constructible bundles.
Furthermore, the $\sfr_n$-structure on $X \to \oC(K)$ determines a $\sfr_n$-structure on ${\sf Cylr}\bigl(X_0 \la \Link_{X_0}(X)\bigr)  \to \oC(K)$ that restricts to the  $\sfr_n$-structure on $\Link_{X_0}(X) \to K$.
Composition in the $(\oo,1)$-category $\Mfd^{\sfr}_{n}$ endows this extension $\w{X} \to \oC(K\times \Delta^1)$ with a  $\sfr_n$-structure extending the given one over $\oC(K\times \Delta^{\{1\}})$.
We have exhibited a sought lift~(\ref{lift}).

We lastly show that every factorization is equivalent to that constructed above. 
Let $F\ra \oC(K\times\Delta^1)$ be a constructible bundle together a  $\sfr_n$-structure on it and identifications over $\oC(K)$, defining another such lift~(\ref{lift}).
This is classified by a diagram in $\Fun\bigl(\exit(K),\Mfd_{[\n1,n]}^{\sfr}\bigr)$,
\[
\xymatrix{
&&F_{|K\times\Delta^{\{0\}}}\ar[d]^{\emb}\\
X_0\ar[urr]^-{\sf cls.crt}\ar[rr]_-{\widetilde{X}}&&X_{|K} ,
}
\]
in which $X_0$ is understood
as the constant functor $\exit(K)\ra \Mfd_{[\n1,n]}^{\sfr}$ valued at $X_0$. 
By taking iterated links, we obtain a diagram in $\Fun\bigl(\exit(K),\Bun\bigr)$:
\[
\xymatrix{
&&\ar[d]_-{(3)}^-{\sf cls.crt}\Link_{X_0}(F_{|K\times\Delta^{\{0\}}})\ar[r]_-{(1)}^-{\sf open}&F_{|K\times\Delta^{\{0\}}}\ar[dd]^-{\emb}\\
X_0 \ar[drr]_-{\sf cls.crt}\ar[urr]^-{\sf cls.crt}&&\ar[d]_-{(2)}^-{\sf open}\Link_{\Link_{X_0}(F_{|K\times\Delta^{\{0\}}})}\bigl(\Link_{X_0}(F_{|K\times\Delta^1})\bigr)
\\
&&
\Link_{X_0}(X_{|K})      \ar[r]_-{(1)}^-{\sf open}
&
X_{|K}    .
}
\]
As argued above, this diagram admits a unique lift to one in $\Fun\bigl(\exit(K),\Mfd_{[\n1,n]}^{\sfr}\bigr)$, extending the given lift of $c_1 \to \Fun\bigl(\exit(K),\Mfd_{[\n1,n]}^{\sfr}\bigr)$ classifying $X\to \oC(K)$.
Also as argued above, using that each stratum of $M=X_0$ is both open and closed, the construction of links gives that the horizontal open morphisms labeled as (1) must be embedding morphisms.
Furthermore, the assumption that the morphism $F_{|K\times \Delta^{\{0\}}} \to X_{|K}$ in $\Fun\bigl(\exit(K),\Mfd_{[\n1,n]}^{\sfr})$ is by embedding morphisms gives that the top horizontal morphism is in fact an equivalence. 
Since the morphism (2) factors a closed-creation through a closed-creation, it too must be an equivalence. 
This implies the closed-creation morphism (3) must be an equivalence.
We conclude an equivalence in $\Fun\bigl(\exit(K),\Mfd_{[\n1,n]}^{\sfr}\bigr)$,
\[
\Link_{X_0}(X_{|K})~\simeq~F_{|K\times \Delta^{\{0\}}}~,
\]
under the constant functor at $X_0$ and over the restriction of the given functor classifying $X_{|K}$.

\end{proof}

We now have the following explicit description of $\Bord_1^{\fr}(\RR^{\n1})$. 
Recall the notation $\FF_{\n1}(V)$ for the free $\cE_{\n1}$-algebra on an object $V$.

\begin{cor}\label{cor.obj.mor.Bord.free}
The object and morphism spaces of the $\cE_{\n1}$-monoidal flagged $(\oo,1)$-category $\Bord_1^{\fr}(\RR^{\n1})$ are
\label{cor.bord.is.tang}
\[
\Obj\bigl( \Bord_1^{\fr}(\RR^{\n1}) \bigr) \simeq \Tang_0^{\fr}(\RR^{\n1})\simeq \FF_{\n1}(\Omega \RR\PP^{\n1})
\]
and
\[
\Mor\bigl(  \Bord_1^{\fr}(\RR^{\n1}) \bigr) \simeq \Tang^{\fr}_1(\RR^{\n1}\times\DD[1])~.
\]

\end{cor}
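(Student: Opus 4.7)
The plan is to read off both identifications directly from Definition~\ref{def.bord} and Lemma~\ref{lemma.corep.tang}, and then perform one combinatorial unpacking to match $\Tang_0^{\fr}(\RR^{\n1})$ with the standard configuration-space model of $\FF_{\n1}(\Omega\RR\PP^{\n1})$.

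First, by Definition~\ref{def.bord}, the simplicial space presenting $\Bord_1^{\fr}(\RR^{\n1})$ has $p$-simplices $\Map_{\bcD_{[\n1,n]}^{\sfr}}(\RR^{\n1},\RR^{\n1}\times\DD[p])$. Since $\bcD_{[\n1,n]}^{\sfr}\subset\Mfd_n^{\sfr}$ is a full $\oo$-subcategory, these coincide with mapping spaces in $\Mfd_n^{\sfr}$. Specializing to $p=0$ (using $\RR^{\n1}\times\DD[0]\cong\RR^{\n1}$) and $p=1$, Lemma~\ref{lemma.corep.tang} yields
\[
\Obj(\Bord_1^{\fr}(\RR^{\n1}))\simeq\Tang_0^{\fr}(\RR^{\n1})
\quad\text{and}\quad
\Mor(\Bord_1^{\fr}(\RR^{\n1}))\simeq\Tang_1^{\fr}(\RR^{\n1}\times\DD[1])~,
\]
which accounts for the first and third equivalences in the statement.

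The remaining identification $\Tang_0^{\fr}(\RR^{\n1})\simeq\FF_{\n1}(\Omega\RR\PP^{\n1})$ I would obtain by unpacking Definition~\ref{def.framed.tangle} with $W=S$ a finite set and $M=\RR^{\n1}$ carrying its standard solid $n$-framing $\varphi_\natural$. For $S\subset\RR^{\n1}$ a $0$-dimensional submanifold, the Gauss map sends each $w\in S$ to $\varphi_\natural(\sT_w \RR^{\n1})^\perp=\RR^{\{n\}}\subset\RR^n$; hence it is the constant map at the basepoint $[\RR^{\{n\}}]\in\RR\PP^{\n1}=\Gr_1(n)$. A $\fr$-framing structure is therefore a nullhomotopy of a constant map, i.e. a map $S\to\Omega_{[\RR^{\{n\}}]}\RR\PP^{\n1}\simeq\Omega\RR\PP^{\n1}$. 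Summing over isomorphism classes of $0$-manifolds and quotienting by $\Aut(S)=\Sigma_k$ yields
\[
\Tang_0^{\fr}(\RR^{\n1})\simeq\coprod_{k\geq 0}\Conf_k(\RR^{\n1})\underset{\Sigma_k}\times(\Omega\RR\PP^{\n1})^k~,
\]
which is the classical May--Milgram presentation of the free $\cE_{\n1}$-algebra on the space $\Omega\RR\PP^{\n1}$, hence equivalent to $\FF_{\n1}(\Omega\RR\PP^{\n1})$. The $\cE_{\n1}$-structures on both sides are induced from the little $(\n1)$-disks operad acting on configurations in $\RR^{\n1}$, so the equivalence is one of $\cE_{\n1}$-algebras.

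No step in this argument presents a genuine obstacle: the corollary is a compilation of Lemma~\ref{lemma.corep.tang}, the explicit definition of $\fr$-framed tangles from \S1, and the standard configuration-space model of free little-disk algebras. The only mildly delicate point is keeping the Gauss-map bookkeeping straight --- one must remember to use the $\perp$ branch of the piecewise formula, since $S$ is $0$-dimensional inside an $(\n1)$-manifold.
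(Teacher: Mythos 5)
Your proposal is correct and follows essentially the same route as the paper: the first and third equivalences are read off from Definition~\ref{def.bord} together with Lemma~\ref{lemma.corep.tang}, and the identification $\Tang_0^{\fr}(\RR^{\n1})\simeq\FF_{\n1}(\Omega\RR\PP^{\n1})$ is obtained exactly as in the paper by observing (cf.\ Remark~\ref{rem.framings.zero.mflds}) that a framing of a finite subset $S\subset\RR^{\n1}$ is a point of $(\Omega\RR\PP^{\n1})^S$ and invoking the configuration-space model of the free $\cE_{\n1}$-algebra. Your unpacking of the Gauss map, including the use of the $\perp$ branch for $0$-dimensional strata, is the correct bookkeeping and merely makes explicit what the paper delegates to that remark.
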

\begin{proof}
Combining Definition~\ref{def.bord} and Lemma~\ref{lemma.corep.tang}, we have the equivalences
\[
\Obj\bigl( \Bord_1^{\fr}(\RR^{\n1}) \bigr)
:=
\Map_{ \Mfd_{n}^{\sfr} }\bigl( \RR^{\n1} , \RR^{\n1} \bigr)
\simeq
\Tang_0^{\fr}(\RR^{\n1})
\]
and
\[
\Mor\bigl(  \Bord_1^{\fr}(\RR^{\n1}) \bigr)
:=
\Map_{ \Mfd_{n}^{\sfr} }\bigl( \RR^{\n1} , \RR^{\n1}\times\DD[p] \bigr)
\simeq
\Tang_1^{\fr}(\RR^{\n1}\times\DD[p])~.
\]
It only remains to establish the equivalence
\[
\Tang_0^{\fr}(\RR^{\n1})
\simeq
\FF_{\n1}(\Omega \RR\PP^{\n1})~.
\]
To see this, recall from Remark~\ref{rem.framings.zero.mflds} that the space of $n$-framings on a finite subset $S \subset \RR^{\n1}$ is 
$(\Omega\Gr_1(n))^S = (\Omega\RR\PP^{\n1})^S$. 
Consequently, there is a canonical map $\FF_{\n1}(\Omega \RR\PP^{\n1})\ra \Tang_0^{\fr}(\RR^{\n1})$ which is immediately seen to be an equivalence.
\end{proof}

\subsection{Tangles in 2-dimensional space, and the categorical delooping $\fB\Bord_1^{\fr}(\RR^1)$}\label{sec.BBord1}

The case of the monoidal $(\oo,1)$-category $\Bord_1^{\fr}(\RR^1)$ is of particular importance in our argument, because the Tangle Hypothesis in 2-dimensional ambient space forms the base case of the induction on the ambient dimension in \S5.

\begin{prop}\label{prop.Bord.discrete.gaunt}
$\Bord_1^{\fr}(\RR^1)$ is a monoidal category. 
That is, the monoidal flagged $(\oo,1)$-category $\Bord_1^{\fr}(\RR^1)$ has discrete mapping spaces, and the natural map from its space of objects to its space of isomorphisms
\[
\obj\bigl(\Bord_1^{\fr}(\RR^1)\bigr)
\longrightarrow
\Bord_1^{\fr}(\RR^1)^\sim
\]
is an equivalence.
\end{prop}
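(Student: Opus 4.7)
The plan is to establish two things concretely about $\Bord_1^{\fr}(\RR^1)$: that its morphism spaces are discrete, and that every equivalence is an identity. I would begin by unpacking via Corollary~\ref{cor.bord.is.tang}: the space of morphisms between objects $S_-, S_+$ is the fiber of the restriction $\Tang^{\fr}_1(\RR^1 \times \DD^1) \to \Tang^{\fr}_0(\RR^1 \times \partial\DD^1)$ over $S_- \sqcup S_+$, and
\[
\Tang^{\fr}_1(\RR^1 \times \DD^1)
~\simeq~
\coprod_{[W]} \Emb^{\fr}(W, \RR^1 \times \DD^1)_{/\Diff^{\fr}(W)}
~.
\]
It therefore suffices to show that each summand on the right is homotopy discrete.

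The key geometric input is a planar straightening argument. For each compact 1-manifold with boundary $W$, I would choose the generic height function $\pr_2 \colon \RR^1 \times \DD^1 \to \DD^1$ and stratify any embedding $W \hookrightarrow \RR^1 \times \DD^1$ by its critical levels; this decomposes $W$ into straight strands, cups, and caps. The space of framed embeddings realizing a fixed such planar diagram deformation retracts to a point by convex combination of critical-point positions and regular fibers, using Smale's theorem that $\Diff(\DD^2\text{ rel }\partial)$ is contractible together with the contractibility of spaces of properly embedded arcs in a 2-disk. The framing is captured entirely by discrete winding invariants in $\pi_1(\RP^1) = \ZZ$, one per strand. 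Passing to the $\Diff^{\fr}(W)$-quotient and taking the fiber over a fixed $S_- \sqcup S_+$ therefore yields a discrete set, proving discreteness of mapping spaces.

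For univalence, I would identify the invertible morphisms of $\Bord_1^{\fr}(\RR^1)$ directly. Planarity in $\RR^1 \times \DD^1$ forbids crossings, so no nontrivial permutation of strands is realizable; combining this with the Euler-characteristic obstruction --- any cup-cap pair, closed loop, or nontrivial winding persists under composition and cannot be cancelled --- forces an invertible tangle to be a disjoint union of strictly monotone arcs realizing the identity order-preserving matching, with trivial framing winding on each arc. Consequently, the source and target of any equivalence coincide as framed objects (tuples of integers under the identification $\obj\bigl(\Bord_1^{\fr}(\RR^1)\bigr) \simeq \FF_1(\ZZ)$), and the space of invertible endomorphisms of each object is the contractible space of identity-type tangles. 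This identifies $\obj\bigl(\Bord_1^{\fr}(\RR^1)\bigr) \xra{\sim} \Bord_1^{\fr}(\RR^1)^\sim$.

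The principal technical obstacle is the planar straightening claim for the framed embedding spaces. This is a genuinely 2-dimensional input --- the analogue in our setting of the theorem of Hatcher alluded to in the remark preceding the proposition --- and it fails badly in higher ambient dimensions, where the long knot spaces have rich homotopy type. It is exactly this 2-dimensional rigidity that prevents the failure of univalent-completeness for $\Bord_1^{\fr}(\RR^{\n1})$ seen for $n \geq 3$.
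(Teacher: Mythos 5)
Your proposal is correct and follows essentially the same route as the paper: both reduce discreteness to the Smale-type statement that $\Emb(\DD^1,\RR^1\times\DD^1)$ is homotopy discrete together with the discreteness of the framing spaces via $\Omega\RR\PP^1\simeq\ZZ$, and both identify the invertible morphisms as the straight-line untangles. The only point worth making explicit in your discreteness step is the closed-component case: for $W\cong S^1$ the framed embedding space $\Emb^{\fr}(S^1,\RR^1\times\DD^1)$ is empty, since the Gauss map of an embedded circle is twice the generator of $\pi_1(\Gr_1(2))\cong\ZZ$ and hence admits no trivialization.
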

\begin{proof}
Given 2-framed configurations $S_0\subset \RR^1$ and $S_1\subset \RR^1$, the space of morphisms $\Map_{\Bord_1^{\fr}(\RR^1)}(S_0, S_1)$ is the fiber over $S_0\amalg S_1\subset \RR^1\times \partial \DD[1]$ of the map
\[
\Tang_1^{\fr}(\RR^1\times\DD[1])\ra \Tang_0^{\fr}(\RR^1\times\partial\DD[1])~.
\]
To show that this fiber is discrete, it suffices to show that each of these spaces is discrete. This, in turn, follows from showing that both the spaces $\Emb^{\fr}(W, \RR^1\times\DD[1])_{/\Diff^{\fr}(W)}$ and $\Emb^{\fr}(S, \RR^1\times\partial\DD[1])_{/\Diff^{\fr}(S)}$ are discrete for all 2-framed 1-dimensional and 0-dimensional manifolds $W$ and $S$, respectively. Note first that for $W\cong S^1$, the space $\Emb^{\fr}(S^1, \RR^1\times\DD[1])$ is empty: The Gauss map for any embedding $S^1\hookrightarrow \RR^1\times\DD[1]$ is not nullhomotopic (being twice the generator of $\pi_1\Gr_1(2)\cong\ZZ$), so there does not exist a trivialization of this Gauss map. For $W\cong \DD^1$, it is a classical result 
(implied by a theorem of Smale~\cite{smale.2.sphere})
that the space $\Emb(\DD^1, \RR^1\times\DD[1])$ is discrete. The spaces of framings are likewise discrete, by the equivalence $\Omega \RR\PP^1\simeq \ZZ$.

We lastly show univalent-completeness. The inclusion of identity morphisms into isomorphisms is given by the map
\[
\Tang_0^{\fr}(\RR^1) \longrightarrow \Tang_1^{\fr}(\RR^1\times\DD[1])
\]
assigning a set $S\subset \RR^1$ to the straight-line tangles $S\times\DD[1]\subset \RR^1\times\DD[1]$. Again by discreteness of the space $\Emb(\DD^1, \RR^1\times\DD[1])$, it follows that this inclusion is an monomorphism and the essential image consists of untangles---which are the isomorphisms in $\Bord_1^{\fr}(\RR^1)$.
\end{proof}

Recall that given a monoidal $(\oo,1)$-category $\cC$, there exists a pointed $(\oo,2)$-category $\ast\in\fB\cC$, the categorical deloop of $\cC$. 
It is presented by the simplicial $(\infty,1)$-category
\[
{\sf Bar}_\bullet \cC
\colon
\bDelta^{\op}
\longrightarrow
\Cat_{(\infty,1)}
~.
\]
Delooping determines a functor between $(\infty,1)$-categories
\begin{equation}
\label{e.deloop}
\Alg(\Cat_{(\oo,1)}) \xra{~\fB~} \Cat_{(\infty,2)}^{\ast/}
\end{equation}
from monoidal $(\infty,1)$-categories to pointed $(\infty,2)$-categories. This functor is a fully-faithful with image consisting of those pointed $(\infty,2)$-categories $\ast \in \cC$ for which the morphism $\ast \to \cC$ is surjective on objects.
Furthermore, the deloop $\fB\cC$ is uniquely characterized by the following:
\begin{itemize}
\item the datum of an equivalence $\cC \simeq \Omega\fB\cC$ of monoidal $(\oo,1)$-categories, where $\Omega\fB\cC$ is the limit of the diagram $\ast\ra \fB\cC\la \ast$ with monoidal structure given by composition of loops;
\item the condition that the underlying $\oo$-groupoid category of $\fB\cC$ is connected.
\end{itemize}

We now give a convenient geometric description of $\fB\Bord_1^{\fr}(\RR^1)$, which we identify with the categorical deloop in Lemma~\ref{t.deloop}.
In~\S3.8 of~\cite{fact1}, we constructed a functor 
\[
\bTheta_n^{\op}
\xra{~\langle -\rangle~}
\Mfd^{\sfr}_n
\]
whose value on $T$ is the stratified space $\langle T\rangle$ depicting the pasting diagram of $T$ -- it is a naturally subspace of $\RR^n$, from which it inherits a solid n-framing. This allows for the following definition. Recall first that $\Mfd_n^{\sfr}$ is a pointed $(\oo,1)$-category, the zero-object of which is given by the empty set $\emptyset$.
\begin{definition}\label{def.BBord}
$\fB^{\n1}\Bord_1^{\fr}(\RR^{\n1})$ is the pointed presheaf on $\bTheta_n$ given by the composite functor
\[
\bTheta_n^{\op}
\xra{~\langle -\rangle~}
\Mfd^{\sfr}_n
\xrightarrow{~\Map_{\Mfd_n^{\sfr}}( \RR^{\n1} , -)~}
\Spaces_\ast
\]
whose distinguished point $\emptyset\in\fB^{\n1}\Bord_1^{\fr}(\RR^{\n1})(T)$ is given by the empty tangle, which is the composite $\RR^{\n1}\ra \emptyset\ra \langle T\rangle$ in $\Mfd_n^{\sfr}$.
\end{definition}

\begin{observation}\label{rem.T.pt.BBord}
By Lemma~\ref{lemma.corep.tang}, for $T\in \bTheta_n$, the space
\[
\fB^{\n1}\Bord_1^{\sf fr}(\RR^{\n1})(T)
~\simeq~
\left\{
~\left(~
W \subset \langle T\rangle ~,~ \varphi
~\right)~
\right\}
\]
is a moduli space of codimension-$(\n1)$ properly embedded tangles $W \subset \langle T\rangle$ equipped with a trivialization $\varphi$ of the Gauss map $W \xrightarrow{\tau_W} {\sf Gr}_1(n)$ defined by the solid $n$-framing of $\langle T\rangle\subset\RR^n$.

\end{observation}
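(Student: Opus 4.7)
The plan is to invoke Lemma~\ref{lemma.corep.tang} and unpack definitions. By Definition~\ref{def.BBord}, we have $\fB^{\n1}\Bord_1^{\fr}(\RR^{\n1})(T) \simeq \Map_{\Mfd_n^{\sfr}}(\RR^{\n1}, \langle T\rangle)$, where $\langle T\rangle \subset \RR^n$ carries the solid $n$-framing inherited from the standard framing of ambient Euclidean space.

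First I would check that Lemma~\ref{lemma.corep.tang} applies in this slightly more general setting. Although $\langle T \rangle$ is a pasting diagram and not literally an $[\n1,n]$-manifold when $T$ involves cells of intermediate dimensions, the proof of the lemma goes through without essential modification: any morphism from the purely $(\n1)$-dimensional source $\RR^{\n1}$ into $\langle T\rangle$ must have active part an embedding landing in the union of the $(\n1)$- and $n$-dimensional strata of $\langle T\rangle$, and this union carries the structure of a solidly $n$-framed $[\n1,n]$-manifold (or admits a closed cover by such). Consequently, the closed-creation-followed-by-embedding factorization argument $(\dagger)$ of Lemma~\ref{lemma.corep.tang} produces an identification
\[
\Map_{\Mfd_n^{\sfr}}(\RR^{\n1}, \langle T\rangle)
~\simeq~
\coprod_{[W]}\Emb^{\fr}(\RR^{\n1}\times W, \langle T\rangle)_{/\Diff^{\fr}(W)}
~,
\]
indexed by isomorphism classes of compact framed $[0,1]$-dimensional manifolds $W$.

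Second, I would recognize the right-hand side as the claimed moduli of pairs. Exactly as in the closing paragraphs of the proof of Lemma~\ref{lemma.corep.tang}, the above coproduct is tautologically the space $\Tang_1^{\fr}(\langle T\rangle)$ of Definition~\ref{def.tang.top.space}, whose points are by Definition~\ref{def.framed.tangle} pairs $(W \subset \langle T\rangle, \varphi)$ consisting of a codimension-$(\n1)$ properly embedded tangle $W\subset \langle T\rangle$ together with a trivialization $\varphi$ of the Gauss map $W \to \Gr_1(n)$ determined by the solid $n$-framing on $\langle T\rangle$. Under this identification, the distinguished point, which by Definition~\ref{def.BBord} is the composite $\RR^{\n1}\to \emptyset\to \langle T\rangle$, corresponds to the empty tangle $W = \emptyset$.

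The only step demanding any care is the extension of Lemma~\ref{lemma.corep.tang} from $[\n1,n]$-manifold targets to pasting-diagram targets of the form $\langle T\rangle$. This is a minor technicality rather than a substantive obstacle, since the relevant links and closed-active factorizations in $\Mfd_n^{\sfr}$ are constructed locally and the source $\RR^{\n1}$ forces the image of the active part to be supported in codimensions $0$ and $1$ of $\langle T\rangle$.
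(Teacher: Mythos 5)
Your proposal is correct and follows the same route the paper intends: the Observation is stated without proof beyond citing Lemma~\ref{lemma.corep.tang}, and your unpacking of Definition~\ref{def.BBord} together with the closed-creation/embedding factorization is exactly the content being invoked. Your care about $\langle T\rangle$ not literally being an $[\n1,n]$-manifold is well placed and consistent with how the paper itself handles this point later (in the proof of Lemma~\ref{t.deloop}, where it notes that tangles necessarily avoid the lower-dimensional strata, so one may pass to $\langle T\rangle^{\geq \n1}$ without changing the tangle space).
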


\begin{prop}
$\fB\Bord_1^{\fr}(\RR^{1})$ is a pointed $(\oo,2)$-category. 
That is, the $\bTheta_2$-presheaf $\fB \Bord_1^{\fr}(\RR^{1})$ belongs to the full $\oo$-subcategory $\Cat_{(\infty,2)}^{\ast/}\subset \Fun(\bTheta_2^{\op},\Spaces_\ast)$
\end{prop}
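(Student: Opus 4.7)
The plan is to verify the $\bTheta_2$-Segal condition recalled in the excerpt, which characterizes $(\infty,2)$-categories among presheaves on $\bTheta_2$. For a generic object $T = [p]([q_1], \ldots, [q_p]) \in \bTheta_2$, this amounts to an outer Segal equivalence
\[
\fB\Bord_1^{\fr}(\RR^1)(T) \xrightarrow{\ \simeq\ } \fB\Bord_1^{\fr}(\RR^1)([1]([q_1])) \underset{\fB\Bord_1^{\fr}(\RR^1)([0])}{\times} \cdots \underset{\fB\Bord_1^{\fr}(\RR^1)([0])}{\times} \fB\Bord_1^{\fr}(\RR^1)([1]([q_p]))
\]
together with inner Segal equivalences
\[
\fB\Bord_1^{\fr}(\RR^1)([1]([q])) \xrightarrow{\ \simeq\ } \fB\Bord_1^{\fr}(\RR^1)([1]([1])) \underset{\fB\Bord_1^{\fr}(\RR^1)([1]([0]))}{\times} \cdots \underset{\fB\Bord_1^{\fr}(\RR^1)([1]([0]))}{\times} \fB\Bord_1^{\fr}(\RR^1)([1]([1]))
\]
exhibiting each hom-space at $[1]([q])$ as a $q$-fold iterated pullback.

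The key geometric input is that the cellular realization $\langle - \rangle \colon \bTheta_2^{\op} \to \Mfd_2^{\sfr}$ intertwines each such spine decomposition in $\bTheta_2$ with a corresponding iterated closed cover of the pasting diagram $\langle T \rangle$. Concretely, $\langle [p]([q_1], \ldots, [q_p]) \rangle \subset \RR^2$ realizes as the horizontal concatenation of $p$ stratified 2-cells $\langle [1]([q_i])\rangle$ meeting pairwise along vertical separating 0-strata $\langle [0]\rangle$; this exhibits $\langle T \rangle$ as an iterated closed cover, in the sense of the excerpt, by the subspaces $\langle [1]([q_i])\rangle$ over copies of $\langle [0]\rangle$. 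Similarly, $\langle [1]([q])\rangle$ is a stratified open 2-disk with $q$ horizontal internal 1-strata, and is iteratively closed-covered by $q$ copies of $\langle [1]([1])\rangle$ meeting pairwise in copies of $\langle [1]([0])\rangle$. Each such closed cover is a limit diagram in $\Mfd_2^{\sfr}$, as it is a pullback square in $\Bun$ along closed morphisms whose solid $n$-framing structure lifts canonically.

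Applying the corepresentable functor $\Map_{\Mfd_2^{\sfr}}(\RR^1, -)$, which preserves all limits, converts these closed covers into the claimed iterated fiber products of spaces, establishing both Segal conditions. For the pointed structure, the basepoint $\emptyset \in \fB\Bord_1^{\fr}(\RR^1)(T)$ from Definition~\ref{def.BBord}, given by the empty tangle realized as the composite $\RR^1 \to \emptyset \to \langle T \rangle$ through the zero object of $\Mfd_2^{\sfr}$, is preserved by all functoriality in $T$, so every Segal projection is a map of pointed spaces and the fiber products may be computed in $\Spaces_\ast$.

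The main obstacle is the explicit identification of the cellular realizations $\langle T \rangle$ as iterated closed covers in $\Mfd_2^{\sfr}$, that is, matching the combinatorial spine colimits in $\bTheta_2$ with the closed-cover geometry of stratified pasting diagrams from~\cite{fact1}. Once this dictionary is in place---so that spine colimits in $\bTheta_2$ are intertwined under $\langle - \rangle$ with pullback squares of closed morphisms in $\Bun$---the Segal condition and the pointedness follow formally from representability of $\Map_{\Mfd_2^{\sfr}}(\RR^1, -)$.
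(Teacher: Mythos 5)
Your proposal is correct and follows essentially the same route as the paper, which simply observes that the cellular realization $\langle - \rangle$ carries Segal covers to closed covers and that $\Map_{\Mfd_2^{\sfr}}(\RR^1,-)$ is a closed sheaf, so the composite satisfies the Segal condition. Your write-up is an expanded, explicit version of exactly that two-line argument, with the spine decompositions and the pointedness spelled out.
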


\begin{proof}
The functor $\langle -\rangle$ sends Segal covers to closed covers, and $\Map_{\Mfd_2^{\sfr}}( \RR^{1} , -)$ is a closed sheaf. Consequently, the composite functor satisfies the Segal condition and therefore presents an $(\oo,2)$-category.
\end{proof}

\begin{lemma}
\label{t.deloop}
The pointed $(\oo,2)$-category $\fB\Bord_1^{\fr}(\RR^1)$ is the categorical deloop of the monoidal $(\oo,1)$-category $\Bord_1^{\fr}(\RR^1)$.
\end{lemma}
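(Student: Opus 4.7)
The plan is to verify the two characterizing properties of the categorical deloop recalled above: (a) the underlying $\oo$-groupoid of $\fB\Bord_1^{\fr}(\RR^1)$ is connected, and (b) its based loop space is equivalent to $\Bord_1^{\fr}(\RR^1)$ as a monoidal $(\oo,1)$-category.

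Condition (a) is nearly immediate. The space of objects is $\fB\Bord_1^{\fr}(\RR^1)([0]) = \Map_{\Mfd_2^{\sfr}}(\RR^1, \langle [0]\rangle)$, and $\langle [0]\rangle \in \Mfd_2^{\sfr}$ is a single 0-cell, namely the cellular realization of the terminal 2-category. By Lemma~\ref{lemma.corep.tang} this mapping space computes the moduli of framed 1-tangles in a point, which consists only of the empty tangle; it is therefore contractible and a fortiori connected.

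For (b), I extract the simplicial presentation of $\Omega\fB\Bord_1^{\fr}(\RR^1)$ as a Segal space. Its value at $[p]\in\bDelta$ is the fiber over the (contractible) basepoint pair of the evaluation $\fB\Bord_1^{\fr}(\RR^1)([1]([p])) \to \fB\Bord_1^{\fr}(\RR^1)([0])^{\times 2}$, which therefore equals $\Map_{\Mfd_2^{\sfr}}(\RR^1, \langle [1]([p])\rangle)$. The cellular realization $\langle [1]([p])\rangle \in \Mfd_2^{\sfr}$ from \S3.8 of~\cite{fact1} is canonically isomorphic to $\rho(\RR^1, [p]) = \RR^1\times\DD[p]$, since $[1]([p])$ is the 2-dimensional pasting diagram of a single 2-cell stratified by $p$ parallel interior 1-cells. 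Thus Definition~\ref{def.bord} yields a natural equivalence
\[
\Omega\fB\Bord_1^{\fr}(\RR^1)([p]) ~\simeq~ \Map_{\Mfd_2^{\sfr}}(\RR^1, \RR^1\times\DD[p]) ~=~ \Bord_1^{\fr}(\RR^1)([p])
\]
of underlying simplicial spaces.

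The main remaining step is promoting this level-wise identification to an equivalence of monoidal $(\oo,1)$-categories. The monoidal structure on $\Omega\fB\Bord_1^{\fr}(\RR^1)$ is loop composition, encoded by the Segal maps in $\bTheta_2$ indexed by the objects $[q]([p_1],\ldots,[p_q])$; under $\langle-\rangle$ these correspond to iterated closed covers in $\Mfd_2^{\sfr}$ assembling the $\RR^1\times\DD[p_i]$'s along their shared $\RR^1$-boundary components. The monoidal structure on $\Bord_1^{\fr}(\RR^1)$ from Definition~\ref{def.bord} comes from disjoint unions in $\bcD_{[1]}^{\sfr}$, which $\rho$ converts into analogous closed covers in $\bcD_{[1,2]}^{\sfr}$. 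Because the closed-sheaf property of Observation~\ref{t.closed.closed} forces each $\cE_1$-monoidal structure to be determined by its underlying closed-cover system in $\Mfd_2^{\sfr}$, and these two closed-cover systems agree by the construction of $\langle-\rangle$ from~\cite{fact1}, the underlying equivalence promotes to the sought monoidal equivalence.
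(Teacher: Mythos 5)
Your route differs from the paper's: you verify the two characterizing properties of the deloop (connected underlying groupoid, and $\Omega\fB\Bord_1^{\fr}(\RR^1)\simeq\Bord_1^{\fr}(\RR^1)$ as monoidal $(\oo,1)$-categories), whereas the paper instead restricts $\fB\Bord_1^{\fr}(\RR^1)$ along the wreath functor $\bDelta^{\op}\times\bDelta^{\op}\xra{\wr}\bTheta_2^{\op}$ and identifies the resulting bisimplicial object with ${\sf Bar}_\bullet\Bord_1^{\fr}(\RR^1)$ in one stroke. Your step (a) is fine. But there are two problems with (b).

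First, a repairable error: $\langle [1]([p])\rangle$ is \emph{not} canonically isomorphic to $\RR^1\times\DD[p]$. The cellular realization of $[1]([p])$ is a compact, hemispherically stratified ``globe'' in $\RR^2$ with two $0$-dimensional strata (the two objects of the pasting diagram), while $\RR^1\times\DD[p]$ is noncompact and has no $0$-dimensional strata. What is true is that $\langle[1]([p])\rangle\smallsetminus\sk_0\bigl(\langle[1]([p])\rangle\bigr)\cong\RR^1\times\DD[p]$, and that the open embedding of this complement induces an equivalence on $\Map_{\Mfd_2^{\sfr}}(\RR^1,-)$ because the empty set is the only $1$-tangle in a $0$-dimensional stratum, so every tangle avoids $\sk_0$. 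This is exactly the role of the functor $(-)^{\geq 1}$ and the natural transformation $(-)^{\geq 1}\to\id$ in the paper's proof; without it your level-wise identification has no justification.

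Second, and more seriously, the final paragraph does not establish the monoidal equivalence. An equivalence of monoidal $(\oo,1)$-categories $\Bord_1^{\fr}(\RR^1)\simeq\Omega\fB\Bord_1^{\fr}(\RR^1)$ requires a coherent identification of the full $\bDelta^{\op}\times\bDelta^{\op}$-indexed (equivalently, $\bTheta_2^{\op}$-restricted) diagram of spaces, with one simplicial direction recording composition and the other recording the tensor/loop-concatenation; matching the two ``closed-cover systems'' object-by-object does not produce the required natural (in both variables) equivalence, nor its compatibility with the Segal maps in the second variable that encode associativity of the tensor. The paper supplies precisely this coherence: the natural transformation $(-)^{\geq 1}\to\id$ yields a lax-commutative square over $\Mfd_2^{\sfr}$, which becomes genuinely commutative after applying $\Map_{\Mfd_2^{\sfr}}(\RR^1,-)$ by the tangles-avoid-$\sk_0$ observation, and the bottom composite is by inspection the bar construction. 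To complete your argument you would need to produce an analogous functorial comparison rather than assert that the closed covers ``agree.''
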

\begin{proof}
Consider the functor $\bcD^{\sfr}_1
\xra{~(-)^{\geq 1}~}
\bcD^{\sfr}_{[1]}$ given by $D\mapsto D^{\geq 1}:= D \smallsetminus \sk_0(D)$, deletion of the 0-dimensional strata.
The open embedding $D^{\geq 1} \hookrightarrow D$ for each object $D\in \bcD^{\sfr}_1$ determines a natural transformation $(-)^{\geq 1} \to \id$.
Consider the wreath functor $\bDelta \times \bDelta \xra{\wr} \bTheta_2$, which is given by $\left( [p] , [q] \right) \mapsto [p] \wr [q] := [p]\left( [q] , \dots , [q] \right)$.

Observe that the natural transformation $(-)^{\geq 1} \to \id$ above determines a lax-commutative diagram:
\[
\xymatrix{
\bDelta^{\op}\times\bDelta^{\op}\ar[r]^-\wr\ar[d]_-{\langle-\rangle^{\geq 1}\times\id}&\bTheta^{\op}_2\ar[d]^{\langle-\rangle}\\
\bcD_{[1]}^{\sfr}\times\bDelta^{\op}\ar[r]_-{\id\times\langle-\rangle}\ar@{=>}[ur]&\Mfd_2^{\sfr}
.
}
\]

Now, the emptyset is the only 1-tangle in a 0-dimensional manifold.
Consequently, for $X$ a solidly 2-framed stratified space, each tangle $W\subset X$ necessarily avoids the 0-dimensional strata of $X$:
\[
W
~\subset~
X^{\geq 1}
~:=~
X \smallsetminus \sk_0 (X)
~.
\]
The open embedding $X^{\geq 1} \hookrightarrow X$ thereby determines an equivalence between spaces of framed tangles:
\[
\Tang_1^{\fr}(X^{\geq 1}) \simeq \Map_{\Mfd_2^{\sfr}}(\RR^1, X^{\geq 1}) 
\xra{~\simeq~} 
\Map_{\Mfd_2^{\sfr}}(\RR^1, X)\simeq \Tang_1^{\fr}(X)
~.
\]
In particular, the canonical map
\[
\Bord_1^{\fr}(\RR^1)(\langle[p]\rangle^{\geq 1}\times\langle [q]\rangle)
\xra{~\simeq~}
\fB\Bord_1^{\fr}(\RR^1)(\langle[p]\wr [q]\rangle)
\]
is an equivalence.
In this way, we achieve commutativity of the outer diagram of space-valued functors on $\bDelta^{\op}\times\bDelta^{\op}$
\[
\xymatrix{
&\bTheta^{\op}_2\ar[dr]^{\langle-\rangle}\ar@/^1pc/[drrrr]^-{\fB\Bord_1^{\fr}(\RR^1)}\\
\bDelta^{\op}\times\bDelta^{\op}\ar[ur]^-\wr\ar[dr]_-{\langle-\rangle^{\geq 1}\times\id}&\Uparrow&\Mfd_2^{\sfr}\ar[rrr]^-{\Map_{\Mfd_2^{\sfr}}(\RR^1,-)}&&&\Spaces\\
&\bcD_{[1]}^{\sfr}\times\bDelta^{\op}\ar[ur]_-{\id\times\langle-\rangle}\ar@/_1pc/[urrrr]_-{\Bord_1^{\fr}(\RR^1)}
.
}
\]
Finally, we note that the bottom composite
\[
\Bord_1^{\fr}(\RR^1)\circ (\langle-\rangle^{\geq 1}\times\id) = {\sf Bar}_\bullet\Bord_1^{\fr}(\RR^1)
\]
is the bar construction of the monoidal $(\oo,1)$-category $\Bord_1^{\fr}(\RR^1)$. This identifies the top composite as the categorical deloop of $\Bord_1^{\fr}(\RR^1)$.
\end{proof}

Recall from \cite{barwick.schommer.pries} that an $(\oo, 2)$-category $\cC$ is gaunt if, for each $0 \leq k \leq 2$, the space of $k$-morphisms $\cC(c_k)$ is discrete.

\begin{prop}\label{prop.BBord.gaunt}
$\fB\Bord_1^{\fr}(\RR^1)$ is a gaunt $(\oo,2)$-category.
\end{prop}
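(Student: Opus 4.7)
The plan is to verify discreteness of $\fB\Bord_1^{\fr}(\RR^1)(c_k)$ for each $k=0,1,2$, which is the stated definition of gauntness. All three of these cell-values will be identified, via Lemma~\ref{t.deloop}, with features of the underlying monoidal $(\oo,1)$-category $\Bord_1^{\fr}(\RR^1)$, after which each is shown to be discrete by invoking results already established in this section.

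First, I would note $\fB\Bord_1^{\fr}(\RR^1)(c_0)\simeq \ast$, since the categorical deloop of any monoidal $(\oo,1)$-category has a single object. Next, using the wreath decompositions $c_1 = [1]\wr [0]$ and $c_2 = [1]\wr [1]$ together with the identification from the proof of Lemma~\ref{t.deloop} (which rewrites $\fB\Bord_1^{\fr}(\RR^1)([1]\wr [q])$ as the value of the Bar construction on $[q]$), I would obtain canonical equivalences $\fB\Bord_1^{\fr}(\RR^1)(c_1)\simeq \Obj\bigl(\Bord_1^{\fr}(\RR^1)\bigr)$ and $\fB\Bord_1^{\fr}(\RR^1)(c_2)\simeq \Mor\bigl(\Bord_1^{\fr}(\RR^1)\bigr)$.

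To finish, I would cite Corollary~\ref{cor.obj.mor.Bord.free}, which presents $\Obj\bigl(\Bord_1^{\fr}(\RR^1)\bigr)\simeq \FF_1(\Omega \RR\PP^1)$; since $\RR\PP^1\simeq S^1$, the loop space $\Omega\RR\PP^1\simeq \ZZ$ is discrete, and the free $\cE_1$-algebra in spaces on a discrete space is itself discrete (the set of finite words in $\ZZ$). For the morphism space, Proposition~\ref{prop.Bord.discrete.gaunt} asserts discreteness of every mapping space of $\Bord_1^{\fr}(\RR^1)$, and then $\Mor\bigl(\Bord_1^{\fr}(\RR^1)\bigr)$ decomposes as a coproduct of these mapping spaces over the (discrete) set of ordered pairs of objects, hence is itself discrete. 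No significant obstacle is anticipated: the geometric content of the claim — that long unknots in $\RR^1\times\DD^1$ have no moduli (via Smale's theorem) and that framings of points in $\RR^1$ form a discrete space — is already packaged in Proposition~\ref{prop.Bord.discrete.gaunt} and Corollary~\ref{cor.obj.mor.Bord.free}; the only bookkeeping is the correct identification of cell-values via Lemma~\ref{t.deloop}.
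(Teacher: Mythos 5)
Your proposal is correct, but it follows a different route from the paper's. The paper does not pass through Lemma~\ref{t.deloop} at all: it invokes Observation~\ref{rem.T.pt.BBord} to write $\fB\Bord_1^{\fr}(\RR^1)(c_k)$ directly as a moduli space of framed codimension-$1$ tangles in the hemispherically stratified $\langle c_k\rangle = \DD^k$, i.e.\ as $\coprod_{[W]}{\sf fiber}_\ast\bigl(\Emb(W,\DD^k)\to\Map(W,\Gr_1(2))\bigr)_{/\Diff(W)}$, and then re-runs the geometric analysis for each $k\leq 2$: the framing condition rules out $W\cong S^1$, Smale's theorem gives discreteness of $\Emb(W,\DD^k)$, $\Map(W,\Gr_1(2))$ is a $1$-type, and $\Diff(W)$ is discrete and acts freely on the fiber. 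Your argument instead uses the identification $\fB\Bord_1^{\fr}(\RR^1)([1]\wr[q])\simeq\Bord_1^{\fr}(\RR^1)[q]$ established in the proof of Lemma~\ref{t.deloop} to reduce the $c_1$- and $c_2$-values to $\Obj$ and $\Mor$ of the underlying monoidal $(\oo,1)$-category, and then cites Corollary~\ref{cor.obj.mor.Bord.free} and Proposition~\ref{prop.Bord.discrete.gaunt}; the decomposition of $\Mor$ as a coproduct of mapping spaces over the discrete set $\Obj\times\Obj$ is legitimate (indeed the proof of Proposition~\ref{prop.Bord.discrete.gaunt} already shows $\Tang_1^{\fr}(\RR^1\times\DD[1])$ itself is discrete, so you could cite that directly). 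Your route is shorter and avoids duplicating the Smale-theorem analysis, at the cost of leaning on the deloop identification; the paper's route is self-contained at the level of cells of $\bTheta_2$ and treats $c_0$, $c_1$, $c_2$ uniformly without reference to the Bar construction. Both ultimately rest on the same geometric inputs, and I see no gap in yours.
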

\begin{proof}
We prove that $\fB \Bord_1^{\sf fr}(\RR^1)(c_k)$ is a discrete space for $0\leq k\leq 2$. 
By Observationn~\ref{rem.T.pt.BBord}, the space of $k$-morphisms of $\fB \Bord_1^{\sf fr}(\RR^1)$ can be expressed as a coproduct over diffeomorphism classes of $({\mathit{k}\text{-}1})$-manifolds with boundary:
\[
\fB \Bord_1^{\sf fr}(\RR^1)(c_k) \simeq
\coprod_{[W]} \Emb^{\fr}(W, \DD^k)_{/\Diff(W)}\simeq
\coprod_{[W]} {\sf fiber}_\ast\Bigl(\Emb(W, \DD^k)\ra
\Map(W, \Gr_1(2))\Bigr)_{/\Diff(W)}~.
\]
For the $[W]$-component to be nonempty, there must exist a trivialization of the Gauss map of an embedding $W\subset \DD^k$, which disallows the case of $W\cong S^1\subset \DD^2$. Consequently, $W$ is diffeomorphic to a finite disjoint union of disks $\DD^{{\mathit{k}\text{-}1}}$. 
Since $k\leq 2$, the space $\Emb(W,\DD^k)$ is discrete, and the space $\Map(W,\Gr_1(2))$ is a 1-type. Consequently, the fiber is discrete. Again since $k\leq 2$, the diffeomorphism group $\Diff(W)$ is equivalent to a symmetric group if $W$ is 0-dimensional, or a product of the symmetric group on $\pi_0W$ and $(\ZZ/2)^{\pi_0W}$ if $W$ is 1-dimensional. In particular, it is discrete. Further, by inspection the diffeomorphism group $\Diff(W) \simeq \Sigma_{\pi_0W}\times(\ZZ/2)^{\pi_0W}$ acts freely on the above fiber, which implies that ${\sf fiber}_\ast\bigl(\Emb(W, \DD^k)\ra
\Map(W, \Gr_1(2))\bigr)_{/\Diff(W)}$ is again discrete. This implies that $\fB \Bord_1^{\sf fr}(\RR^1)(c_k)$ is discrete, completing the proof.
\end{proof}

\section{Segaling}

In this section, we construct a functor $\Seg: \Fun(\bDelta^{\op},\cX) \ra \Fun(\bDelta^{\op},\cX)$ for $\cX$ a presentable $(\oo,1)$-category in which colimits are universal -- see Definition~\ref{def.Seg}. The main result of this section is Lemma~\ref{lemma.Seg.universal}, which states that if $\cC\in \Fun(\bDelta^{\op},\cX)$ is such that $\Seg(\cC)$ is Segal, then $\Seg(\cC)$ is the Segal completion of $\cC$.

In this section, we use the following.
\begin{notation}
\label{d.circ}
For $a \in \cA$ an object in an $(\infty,1)$-category, 
$a^\circ \in \cA^{\op}$ is the corresponding object in the opposite $(\infty,1)$-category. 

\end{notation}

\begin{definition}
The category $\Ar^{\cls}(\bDelta^{\op})$ is the full subcategory of $\Ar(\bDelta^{\op})$ whose objects are closed morphisms $A^\circ \ra C^\circ$, i.e., those morphisms such that $C\ra A$ is a convex inclusion.
\end{definition}

We forthwith regard $\Ar^{\cls}(\bDelta^{\op})$ as a category over $\bDelta^{\op}$ via the source-evaluation functor $\ev_s$.

\begin{lemma}\label{lemma.Arcls.coCart}
$\Ar^{\cls}(\bDelta^{\op})\xra{\ev_s}\bDelta^{\op}$ is a Cartesian fibration, with monodromy given by convex-hull of image. In particular, $\ev_s$ is an exponentiable fibration.
\end{lemma}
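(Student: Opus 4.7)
The plan is to verify directly that $\ev_s$ is a Cartesian fibration in the 1-categorical sense, after which exponentiability is automatic. First I would unpack the data. An object of $\Ar^{\cls}(\bDelta^{\op})$ is a convex inclusion $\iota_C\colon C \hookrightarrow A$ in $\bDelta$; a morphism to another such $\iota_D\colon D \hookrightarrow B$ is a pair of order-preserving maps $f\colon B\to A$ and $g\colon D\to C$ in $\bDelta$ with $\iota_C \circ g = f \circ \iota_D$, and the source-evaluation sends this to $f^\circ\colon A^\circ\to B^\circ$ in $\bDelta^{\op}$. The fiber over $A^\circ$ is then the poset of convex subsets of $A$, opposite-ordered by inclusion.

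To construct a Cartesian lift of an arbitrary $f^\circ\colon A^\circ \to B^\circ$ at the object $(\iota_D\colon D\hookrightarrow B)$, I would take $C := \langle f(D)\rangle \subseteq A$, the convex hull of the image $f(D)\subseteq A$. By construction $\iota_C\colon C\hookrightarrow A$ is a convex inclusion, and the composite $f\circ \iota_D\colon D \to A$ factors as $\iota_C\circ f|_D$ for a unique $f|_D\colon D \to C$. The resulting commuting square is the candidate Cartesian lift, sitting over $f^\circ$ under $\ev_s$.

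To verify the universal property I would consider any competing morphism in $\Ar^{\cls}(\bDelta^{\op})$ from $(\iota_{C'}\colon C'\hookrightarrow A')$ to $(\iota_D\colon D\hookrightarrow B)$, specified by $g\colon B\to A'$ and $h\colon D\to C'$, together with a factorization $g = k\circ f$ of its source through $f$ (for some $k\colon A\to A'$ in $\bDelta$). The required lift over $k^\circ$ is a map $\ell\colon C\to C'$ with $\iota_{C'}\circ \ell = k\circ \iota_C$; uniqueness is forced since $\iota_{C'}$ is a monomorphism, so existence is the only content, and it reduces to showing $k(C) \subseteq C'$. This is the technical heart: the hypotheses give $k(f(D)) = g(D) \subseteq C'$, and because $k$ is order-preserving while $C'\subseteq A'$ is convex, $k$ must carry the convex hull $C = \langle f(D)\rangle$ into $C'$, which produces $\ell$. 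This proves $\ev_s$ is Cartesian with monodromy $f^{\circ *}(D\hookrightarrow B) = (\langle f(D)\rangle \hookrightarrow A)$, as asserted. The final sentence is then automatic: every Cartesian fibration between $(\infty,1)$-categories is exponentiable, so no further argument is needed. The only step requiring any care is tracking the $(-)^\circ$ convention; there is no genuinely hard step, because the key monotonicity argument is elementary in $\bDelta$.
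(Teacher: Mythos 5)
Your proof is correct, and it establishes exactly what the paper's proof establishes; the difference is one of packaging rather than substance. The paper first builds the classifying functor $\bDelta_{/^{\cls}-}\colon \bDelta \to \Cat_{(\oo,1)}$, sending $A$ to the poset of convex subsets of $A$ and a map $f$ to $C \mapsto {\sf Hull}(f(C))$, notes that its unstraightening is a coCartesian fibration over $\bDelta$ (hence its opposite is Cartesian over $\bDelta^{\op}$), and then identifies $\Ar^{\cls}(\bDelta)$ with that unstraightening via the closed/fiberwise factorization of morphisms in an unstraightening. You instead verify the Cartesian lifting property directly. In both arguments the decisive point is the same elementary fact you isolate: a morphism of $\Ar^{\cls}(\bDelta^{\op})$ over a fixed base map exists if and only if the image of one convex subset is contained in the other, it is then unique (the fibers are posets and the ambient category is a $1$-category, so the $1$-categorical check suffices), and $\langle f(D)\rangle$ is the universal choice because an order-preserving map carries a convex hull into any convex set containing the image of its generators. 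Your route spares you the unstraightening formalism at the cost of writing out the universal property; the paper's route produces the straightening functor explicitly, which it then reuses (e.g., the ${\sf Hull}(f)$ monodromy appears again in Observation~\ref{obs.S.description}). Your appeal to the fact that Cartesian fibrations are exponentiable for the final sentence matches the paper's (unargued) assertion and is standard.
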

\begin{proof}
We define a functor $\bDelta_{/^{\cls}-}:\bDelta \ra \Cat_{(\oo,1)}$. This sends $A\in \bDelta$ to the category $\bDelta_{/^{\cls}A}$ of closed morphisms $C\subset A$. To a map $f:A \ra B$, we assign the covariant functor ${\sf Hull}(f):\bDelta_{/^{\cls}A} \ra \bDelta_{/^{\cls}B}$ which sends $C\subset A$ to
\[
{\sf Hull}(f)(C\subset A) := \bigl({\sf Hull}(f(C))\subset B\bigr)
\]
the convex hull of the image $f(C)\subset B$. 
The unstraightening of this functor, $\Un(\bDelta_{/^{\cls}-})\ra \bDelta$, is a coCartesian fibration, and its opposite $\Un(\bDelta_{/^{\cls}-})^{\op}\ra \bDelta^{\op}$ is a Cartesian fibration. We claim there is an equivalence $\Ar^{\cls}(\bDelta)\simeq \Un(\bDelta_{/^{\cls}-})$, which will prove the result after further noting the equivalence $\Ar^{\cls}(\bDelta)^{\op} \cong \Ar^{\cls}(\bDelta^{\op})$. By the factorization system on morphisms in an unstraightening, a morphism in $\Un(\bDelta_{/^{\cls}-})$ from $(C_1\subset A_1)$ to $(C_0\subset A_0)$ consists of a morphism $f:A_1 \ra A_0$ in $\bDelta$ and a morphism ${\sf Hull}(f)(C_1\subset A_1) \ra C_0$ in $\bDelta_{/^{\cls}A_0}$. However, there exists such a morphism ${\sf Hull}(f)(C_1\subset A_1) \ra C_0$ if and only if the image $f(C_1)\subset A_0$ is contained in the image $C_0\subset A_0$. Since $\bDelta_{/^{\cls}A_0}$ is equivalent to a poset, this morphism ${\sf Hull}(f)(C_1\subset A_1) \ra C_0$ is unique if it exists. Consequently, a morphism from $(C_1\subset A_1)$ to $(C_0\subset A_0)$ in  $\Un(\bDelta_{/^{\cls}-})$ is uniquely determined by a commutative diagram
\[
\xymatrix{
C_1\ar[r]\ar[d]_-{\cls}&C_0\ar[d]^-{\cls}\\
A_1\ar[r]&A_0}
\]
in $\bDelta$. This is exactly a morphism in $\Ar^{\cls}(\bDelta)$ from $(C_1\subset A_1)$ to $(C_0\subset A_0)$. 
\end{proof}

For $\cK$ an $(\infty,1)$-category, recall from~\S\ref{sec.notation}
the $(\infty,1)$-category $\TwAro(\cK)$.
Source-evaluation and target-evaluation define functors
\[
\cA
\xla{~\ev_s~}
\TwAro(\cA)
\xra{~\ev_t~}
\cA^{\op}
~.
\]
Consider the fiber product
\[
\TwAro(\bDelta^{\op})\underset{\bDelta^{\op}}\times \Ar^{\cls}(\bDelta^{\op})~.
\]
Note that an object of the fiber product consists of a pair $(A^\circ\ra B^\circ)\in \TwAro(\bDelta^{\op})$ and $(C\subset A)^\circ\in \Ar^{\cls}(\bDelta^{\op})$, where $C\subset A$ is a convex inclusion (or equivalently, $A^\circ \ra C^\circ$ is a closed morphism). A morphism
\[
\bigl(A_0^\circ\xra{\varphi_0} B_0^\circ, (C_0\subset A_0)^\circ\bigr) \longrightarrow \bigl(A_1^\circ\xra{\varphi_1} B_1^\circ, (C_1\subset A_1)^\circ\bigr)
\]
consists of a morphism in $\TwAro(\bDelta^{\op})$ given by a commutative diagram in $\bDelta^{\op}$
\[
\xymatrix{
A_0^\circ\ar[d]_-{\varphi_0}\ar[r]^-{f^\circ}&A_1^\circ\ar[d]^-{\varphi_1}\\
B_0^\circ&\ar[l]_-{g^\circ}B_1^\circ}
\]
and a morphism in $\Ar^{\cls}(\bDelta^{\op})$ given by a commutative diagram in $\bDelta^{\op}$
\[
\xymatrix{
A_0^\circ\ar[d]_-{\cls}\ar[r]^-{f^\circ}&A_1^\circ\ar[d]^-{\cls}\\
C_0^\circ\ar[r]&C_1^\circ}
\]
which is the same as a morphism $f:A_1\ra A_0$ in $\bDelta$ for which the image $f(C_1)$ is contained in the image $C_0\subset A_0$.

\begin{lemma}\label{lemma.S}
There exists a functor
\[
\TwAro(\bDelta^{\op})\underset{\bDelta^{\op}}\times \Ar^{\cls}(\bDelta^{\op})\overset{S}\longrightarrow \bDelta^{\op}
\]
uniquely determined by the following values:
\begin{itemize}
\item On objects, $S$ sends $\bigl(A^\circ\xra{\varphi} B^\circ, (C\subset A)^\circ\bigr)$ to the convex subset of $A$:
\[
C^\varphi:=\Bigl[{\sf sup}\bigl\{\varphi(b)\big| \ \varphi(b)\leq {\sf min}(C)\bigr\},{\sf inf}\bigl\{\varphi(b)\big| \ {\sf max}(C)\leq \varphi(b)\bigr\}\Bigr]~.
\]
Here we use the convention ${\sf sup}(\emptyset) := {\sf min}(A)$ and ${\sf inf}(\emptyset) := {\sf max}(A)$.
\item
On morphisms, $S$ sends a morphism given by the opposite of
\[
\xymatrix{
C_0\ar[d]_-{\cls}&\ar[l]C_1\ar[d]^-{\cls}\\
A_0&\ar[l]_-{f}A_1\\
B_0\ar[u]^-{\varphi_0}\ar[r]^-{g}&B_1\ar[u]_-{\varphi_1}}
\]
to (the opposite of) the map in $\bDelta$ given by the restriction of $f$:
\[
\xymatrix{
C_1^{\varphi_1}:=\Bigl[{\sf sup}\bigl\{\varphi_1(b_1)\big| \ \varphi_1(b_1)\leq {\sf min}(C_1)\bigr\}, {\sf inf}\bigl\{\varphi_1(b_1)\big| \ {\sf max}(C_1)\leq \varphi_1(b_1)\bigr\}\Bigr]
\ar[d]^-{f_|}\\
C_0^{\varphi_0}:=\Bigl[{\sf sup}\bigl\{\varphi_0(b_0)\big| \ \varphi_0(b_0)\leq {\sf min}(C_0)\bigr\}, {\sf inf}\bigl\{\varphi_0(b_0)\big| \ {\sf max}(C_0)\leq \varphi_0(b_0)\bigr\}\Bigr]}
\]
\end{itemize}
\end{lemma}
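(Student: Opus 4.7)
My plan is to prove the lemma by directly verifying that the prescribed assignment extends to a functor $S$, checking in turn that it is well-defined on objects, well-defined on morphisms, and preserves composition.

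Well-definedness on objects is immediate: by construction $\min(C^\varphi) \leq \min(C) \leq \max(C) \leq \max(C^\varphi)$, and the conventions $\sup \emptyset := \min(A)$ and $\inf \emptyset := \max(A)$ handle the edge cases where the indexing sets are empty. Hence $C^\varphi$ is a nonempty convex subset of $A$ and defines an object of $\bDelta$.

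For morphisms, one must verify that for each arrow of the fiber product the underlying map $f \colon A_1 \to A_0$ carries $C_1^{\varphi_1}$ into $C_0^{\varphi_0}$. Since $f$ is monotone, this reduces to the two endpoint inequalities $f(\min(C_1^{\varphi_1})) \geq \min(C_0^{\varphi_0})$ and $f(\max(C_1^{\varphi_1})) \leq \max(C_0^{\varphi_0})$. I will prove the first by case analysis on $\min(C_1^{\varphi_1})$. If $\min(C_1^{\varphi_1}) = \varphi_1(b_1^*)$ for some $b_1^* \in B_1$, then the commutativity identity $\varphi_0 = f \circ \varphi_1 \circ g$ together with monotonicity of $\varphi_1$ shows that each $b_0 \in B_0$ witnessing $\min(C_0^{\varphi_0})$ satisfies $\varphi_1(g(b_0)) \leq \varphi_1(b_1^*)$, whence $\varphi_0(b_0) = f(\varphi_1(g(b_0))) \leq f(\varphi_1(b_1^*))$. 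In the alternative case $\min(C_1^{\varphi_1}) = \min(A_1)$, the same commutativity, combined with $f(C_1) \subseteq C_0$, forces $\min(C_0^{\varphi_0}) = \min(A_0)$; then $f(\min(A_1)) \in A_0$ suffices. The $\max$ inequality is symmetric. Preservation of identities and composition then follows straightforwardly, since both the restriction of $f$ and the convex interval assignment $C^\varphi$ are manifestly associative under composition of compatible morphisms.

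The main obstacle I anticipate is the case analysis above. Monotonicity of $f$ does not entail injectivity, so $f$ may collapse intervals, and one must carefully track how the identity $\varphi_0 = f \varphi_1 g$ constrains the $\varphi_0$-preimages in terms of the $\varphi_1$-preimages through the possibly non-injective $f$. All the content is combinatorial, concerning monotone functions between finite linearly ordered sets, but the nested quantifiers and the interplay between the sup/inf formulas and the $\TwAro$ commutativity demand care.
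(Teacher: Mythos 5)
Your strategy coincides with the paper's: reduce well-definedness on morphisms to the two endpoint inequalities $f\bigl(\min(C_1^{\varphi_1})\bigr)\geq \min(C_0^{\varphi_0})$ and $f\bigl(\max(C_1^{\varphi_1})\bigr)\leq \max(C_0^{\varphi_0})$, and verify them by comparing the defining suprema and infima. The gap is in your Case A (and, for the same reason, Case B). You assert that every $b_0$ witnessing $\min(C_0^{\varphi_0})$ satisfies $\varphi_1(g(b_0))\leq\varphi_1(b_1^\ast)$. Unwinding, this needs the implication that $\varphi_0(b_0)=f\varphi_1 g(b_0)\leq\min(C_0)$ forces $\varphi_1(g(b_0))\leq\min(C_1)$. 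But from $\min(C_0)\leq f(\min(C_1))$ you only obtain $f\bigl(\varphi_1 g(b_0)\bigr)\leq f\bigl(\min(C_1)\bigr)$, and a non-injective $f$ does not allow you to cancel it: this is exactly the collapsing phenomenon you flag at the end of your proposal, and the sketch never overcomes it. Concretely, take $A_1=[3]$, $A_0=[2]$, $f(0)=0$, $f(1)=1$, $f(2)=f(3)=2$; $B_0=B_1=[1]$, $g=\id$, $\varphi_1(0)=0$, $\varphi_1(1)=3$, hence $\varphi_0=f\varphi_1 g$ has $\varphi_0(0)=0$, $\varphi_0(1)=2$; and $C_1=C_0=\{2\}$, so $f(C_1)\subseteq C_0$ and all the data of a morphism in the fiber product are present. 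Then $C_1^{\varphi_1}=[0,3]$ while $C_0^{\varphi_0}=\{2\}$, and $f\bigl(\min(C_1^{\varphi_1})\bigr)=f(0)=0<2=\min(C_0^{\varphi_0})$; the witness $b_0=1$ has $\varphi_1(g(b_0))=3>0=\varphi_1(b_1^\ast)$. Case B fails for the same reason: a $b_0$ with $f\varphi_1 g(b_0)\leq\min(C_0)$ can exist even when no $\varphi_1(b_1)$ is $\leq\min(C_1)$.

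Note that this configuration does not merely defeat your argument; it violates the asserted containment $f(C_1^{\varphi_1})\subseteq C_0^{\varphi_0}$ outright, so no sharpening of your case analysis can close the gap without revisiting the hypotheses. For comparison, the paper's own verification runs through the chain beginning with $f\bigl(\sup\{\varphi_1(b_1)\mid \varphi_1(b_1)\leq\min(C_1)\}\bigr)\geq \sup\{f\varphi_1(b_1)\mid f\varphi_1(b_1)\leq \min(f(C_1))\}$, whose first step compares suprema over two \emph{different} index sets and is strained by exactly the same configuration (a $b_1$ with $\varphi_1(b_1)>\min(C_1)$ but $f\varphi_1(b_1)=f(\min(C_1))$). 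So the difficulty you anticipated is not a bookkeeping technicality but the crux: a complete argument must either rule out such $b_1$ (for instance by restricting the class of morphisms under consideration) or handle them explicitly, and neither your Case A/Case B dichotomy nor a direct appeal to monotonicity of $f$ does so.
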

\begin{proof}
To show that $S$ is well-defined, it suffices to verify the inequalities
\[
f\Bigl({\sf sup}\bigl\{\varphi_1(b_1)\big| \ \varphi_1(b_1)\leq {\sf min}(C_1)\bigr\}\Bigr)
\geq
{\sf sup}\bigl\{\varphi_0(b_0)\big| \ \varphi_0(b_0)\leq {\sf min}(C_0)\bigr\}
\]
and
\[
f\Bigl({\sf inf}\bigl\{\varphi_1(b_1)\big| \ {\sf max}(C_1)\leq \varphi_1(b_1)\bigr\}\Bigr)
\leq
{\sf inf}\bigl\{\varphi_0(b_0)\big| \ {\sf max}(C_0)\leq \varphi_0(b_0)\bigr\}
~.
\]
We only verify the first inequality; the verification of the second is identical.
Note the inequalities
\[
f\Bigl({\sf sup}\bigl\{\varphi_1(b_1)\big| \ \varphi_1(b_1)\leq {\sf min}(C_1)\bigr\}\Bigr)
\geq
{\sf sup}\bigl\{f\varphi_1(b_1)\big| \ f\varphi_1(b_1)\leq {\sf min}(f(C_1))\bigr\}
\geq
{\sf sup}\bigl\{f\varphi_1(b_1)\big| \ f\varphi_1(b_1)\leq {\sf min}(C_0)\bigr\}
\]
where the first follows from $f{\sf sup}\geq {\sf sup}f$, and the second follows from the inequality ${\sf min}(C_0)\leq f({\sf min}(C_1))$.
We have the further inequality
\[
{\sf sup}\bigl\{f\varphi_1(b_1)\big| \ f\varphi_1(b_1)\leq {\sf min}(C_0)\bigr\}
\geq
{\sf sup}\bigl\{f\varphi_1g(b_0)\big| \ f\varphi_1g(b_0)\leq {\sf min}(C_0)\bigr\}
=
{\sf sup}\bigl\{\varphi_0(b_0)\big| \ \varphi_0(b_0)\leq {\sf min}(C_0)\bigr\}
\]
since $\varphi_0(b_0) = f\varphi_1g(b_0)$. Composing inequalities gives the result.
\end{proof}

\begin{remark}\label{remark.phi}
Note that the construction of $S$ determines, for each map $B\xra{\varphi} A$ in $\bDelta$, an endofunctor
\[
\bDelta_{/^{\cls}A}\xra{\varphi} \bDelta_{/^{\cls}A}
\]
sending $C\subset A$ to $C^\varphi \subset A$. Note the natural containment $C\subset C^\varphi \subset A$, for all $C\in \bDelta_{/^{\cls}A}$.

\end{remark}

\begin{observation}\label{obs.key}
We give an explicit presentation of the values of the functor $\varphi$ of Remark~\ref{remark.phi}. Given $\varphi^\circ:[a]^\circ\ra [b]^\circ$ in $\bDelta^{\op}$ and $0<i\leq a$, we have
\[
  \{i-1<i\}^\varphi =
    \begin{cases}
     [0,\varphi(0)] & \text{if $i\leq \varphi(0)$}\\
     [\varphi(b),a] & \text{if $\varphi(b)\leq i-1$}\\
     [\varphi(j-1),\varphi(j)] & \text{if $\varphi(j-1)\leq i-1<i\leq \varphi(j)$}
    \end{cases}       
\]
and
\[
  \{i\}^\varphi =
    \begin{cases}
     [0,\varphi(0)] & \text{if $i< \varphi(0)$}\\
     [\varphi(b),a] & \text{if $\varphi(b)< i$}\\
     [\varphi(j-1),\varphi(j)] & \text{if $\varphi(j-1)<i< \varphi(j)$}\\
     \{\varphi(j)\} & \text{if $i=\varphi(j)$}
    \end{cases}      
\]
\end{observation}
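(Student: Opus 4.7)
The plan is to unwind the definition of $C^\varphi$ given in Lemma~\ref{lemma.S}, namely
\[
C^\varphi = \Bigl[{\sf sup}\bigl\{\varphi(j)\bigm|\varphi(j)\leq {\sf min}(C)\bigr\},\ {\sf inf}\bigl\{\varphi(j)\bigm|{\sf max}(C)\leq \varphi(j)\bigr\}\Bigr]
\]
(with the conventions ${\sf sup}(\emptyset)=0$ and ${\sf inf}(\emptyset)=a$), and compute both endpoints directly in each case. Here $\varphi$ denotes the monotone map $[b]\to[a]$ underlying the morphism $\varphi^\circ\colon[a]^\circ\to[b]^\circ$ of $\bDelta^{\op}$.

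First I would treat $C=\{i-1<i\}$, for which ${\sf min}(C)=i-1$ and ${\sf max}(C)=i$. If $i\leq\varphi(0)$, then every $\varphi(j)\geq\varphi(0)\geq i>i-1$, so the left set is empty (endpoint $0$) and the right set is all of $\varphi([b])$ with infimum $\varphi(0)$; this yields $[0,\varphi(0)]$. If $\varphi(b)\leq i-1$, then every $\varphi(j)\leq\varphi(b)<i$, so the right set is empty (endpoint $a$) and the left set has supremum $\varphi(b)$; this yields $[\varphi(b),a]$. In the remaining case, monotonicity of $\varphi$ guarantees a (necessarily unique) index $j$ with $\varphi(j-1)\leq i-1<i\leq\varphi(j)$: then $\{\varphi(k)\mid k\leq j-1\}$ realizes the sup as $\varphi(j-1)$ and $\{\varphi(k)\mid k\geq j\}$ realizes the inf as $\varphi(j)$, producing $[\varphi(j-1),\varphi(j)]$.

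The argument for $C=\{i\}$ proceeds identically, but with ${\sf min}(C)={\sf max}(C)=i$, so the weak inequalities in the defining sets become $\varphi(j)\leq i$ and $\varphi(j)\geq i$. The three analogues of the cases above give the first three formulas, with strict inequalities $i<\varphi(0)$ and $\varphi(b)<i$ replacing the weak ones because the fourth case $i=\varphi(j)$ must be carved out separately: when $i\in\varphi([b])$, both defining sets contain $\varphi(j)=i$, forcing the sup and inf to agree at $i$, and $C^\varphi=\{\varphi(j)\}$.

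The only substantive point requiring care is exhaustiveness and uniqueness in the third case of each list: one must check that, outside the two boundary cases, monotonicity of $\varphi$ produces exactly one valid index $j$. Uniqueness follows because the intervals $[\varphi(j-1)+1,\varphi(j)]$ (respectively $(\varphi(j-1),\varphi(j))$) are pairwise disjoint as $j$ varies, and together with the boundary cases they partition the range of admissible $i$. There are no real obstacles here; the statement is a bookkeeping unfolding of a monotone-map computation, and the verification is fully elementary.
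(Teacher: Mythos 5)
Your computation is correct and is exactly the intended argument: the paper states this as an Observation with no written proof, since it is a direct unwinding of the formula for $C^\varphi$ in Lemma~\ref{lemma.S} together with the conventions ${\sf sup}(\emptyset)={\sf min}(A)$ and ${\sf inf}(\emptyset)={\sf max}(A)$. Your case analysis, including the exhaustiveness and uniqueness of the index $j$ via monotonicity, matches what the paper leaves implicit.
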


Since $\ev_s:\Ar^{\cls}(\bDelta^{\op})\ra \bDelta^{\op}$ is an exponentiable fibration by Lemma~\ref{lemma.Arcls.coCart}, this guarantees existence of the relative functor $(\oo,1)$-category $\Fun^{\sf rel}_{\bDelta^{\op}}( \Ar^{\cls}(\bDelta^{\op}), \un{\cX})$.  
See Definition 1.7 of~\cite{fibrations}.
Recall that $\Fun^{\sf rel}_{\bDelta^{\op}}( \Ar^{\cls}(\bDelta^{\op}), \un{\cX})$ is an $(\infty,1)$-category over $\bDelta^{\op}$ characterized as follows:
for each $(\infty,1)$-category $\cK \to \bDelta^{\op}$ over $\bDelta^{\op}$, a lift
\[
\xymatrix{
&&
\Fun^{\sf rel}_{\bDelta^{\op}}( \Ar^{\cls}(\bDelta^{\op}), \un{\cX})
\ar[d]
\\
\cK
\ar[rr]
\ar@{-->}[urr]
&&
\bDelta^{\op}
}
\]
is the datum of a functor
\[
\cK
\underset{\bDelta^{\op}}
\times
\Ar^{\cls}(\bDelta^{\op})
\dashrightarrow
\cX
~.
\]

\begin{observation}
Let $\cE \xra{\pi} \cB$ be an exponentiable fibration, and let $\cX$ be an $(\oo,1)$-category. An object $\Fun^{\sf rel}_\cB(\cE,\un{\cX})$ is a pair $(b\in \cB, \cE_{|b}\xra{\cF}\cX)$. For $(b_0, \cF_0)$ and $(b_1, \cF_1)$ objects of $\Fun^{\sf rel}_\cB(\cE,\un{\cX})$, a morphism $(b_0, \cF_0)\ra (b_1, \cF_1)$ consists of a morphism $f:b_0\ra b_1$ in $\cB$ together with a filler
\[
\xymatrix{
\cE_{|b_0}\ar[d]\ar[rrd]\\
\cE_{|f}\ar@{-->}[rr]&&\cX\\
\cE_{|b_1}\ar[u]\ar[urr]}
\]
where $\cE_{|f} = \cE\underset{\cB}\times[1]$ is the pullback of $\cE$ along the functor $[1]\xra{\langle f\rangle}\cB$ classifying the morphism $f:b_0\ra b_1$.
\end{observation}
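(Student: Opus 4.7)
The plan is to prove the observation by directly unwinding the defining universal property of the relative functor $\infty$-category $\Fun^{\sf rel}_\cB(\cE,\un{\cX})$ recalled immediately before the statement. Recall that this universal property says that, for each $(\infty,1)$-category $\cK \to \cB$ over $\cB$, lifts of the structure map to $\Fun^{\sf rel}_\cB(\cE,\un{\cX})$ are in natural bijection with functors $\cK \underset{\cB}\times \cE \to \cX$. Testing this property on the terminal $(\infty,1)$-categories $[0]$ and $[1]$ over $\cB$ will yield, respectively, the descriptions of objects and of morphisms.

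First I would identify objects. Given $b \in \cB$, take $\cK = [0]$ mapping to $\cB$ via the functor $\langle b \rangle \colon [0] \to \cB$. Then $\cK \underset{\cB}\times \cE$ is, by definition, the fiber $\cE_{|b}$. The universal property therefore identifies the space of objects of $\Fun^{\sf rel}_\cB(\cE,\un{\cX})$ over $b$ with the space of functors $\cE_{|b} \to \cX$, so an object over $b$ is precisely a pair $(b,\cF)$ as claimed.

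Next I would identify morphisms. Given $f\colon b_0 \to b_1$ in $\cB$, apply the universal property to $\cK = [1]$ with $\langle f\rangle \colon [1] \to \cB$. Then $\cK \underset{\cB}\times \cE = \cE_{|f}$, which contains the endpoint fibers $\cE_{|b_0}$ and $\cE_{|b_1}$ as the base changes along $\{0\},\{1\} \hookrightarrow [1]$. A $\cB$-functor $[1] \to \Fun^{\sf rel}_\cB(\cE,\un{\cX})$ thus corresponds to a functor $\cE_{|f} \to \cX$. Pulling back along the two endpoint inclusions, which is exactly source- and target-evaluation in $\Fun^{\sf rel}_\cB(\cE,\un{\cX})$, recovers the functors $\cF_0 \colon \cE_{|b_0} \to \cX$ and $\cF_1 \colon \cE_{|b_1} \to \cX$. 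Consequently a morphism $(b_0,\cF_0)\to(b_1,\cF_1)$ lying over $f$ is precisely the data of a filler in the displayed diagram extending $\cF_0 \sqcup \cF_1$ along $\cE_{|b_0} \sqcup \cE_{|b_1} \hookrightarrow \cE_{|f}$, which is what is asserted.

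There is no genuine obstacle here: both parts are formal consequences of the representability characterization of $\Fun^{\sf rel}_\cB(\cE,\un{\cX})$. The only point that deserves a brief line of justification is the compatibility of the endpoint restriction of $\cE_{|f}\to \cX$ with source- and target-evaluation on $\Fun^{\sf rel}_\cB(\cE,\un{\cX})$; this follows by naturality of the universal property applied to the inclusions $\{0\},\{1\} \hookrightarrow [1]$ together with the equivalences $\{i\}\underset{\cB}\times\cE \simeq \cE_{|b_i}$ for $i\in\{0,1\}$.
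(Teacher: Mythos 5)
Your proposal is correct and is exactly the unwinding the paper has in mind: the Observation is stated without proof precisely because it follows immediately from the universal property of $\Fun^{\sf rel}_\cB(\cE,\un{\cX})$ recalled just beforehand, tested against $[0]\to\cB$ for objects and $[1]\xra{\langle f\rangle}\cB$ for morphisms. Your added remark on compatibility of endpoint restriction with source- and target-evaluation is the right (and only) point needing a word of justification.
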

\begin{observation}\label{obs.morphisms.relFun}
If $\cE\ra \cB$ is a Cartesian fibration, then there is an equivalence over $[1]$
\[
\xymatrix{
\Cylr(\cE_{|b_0}\xla{f^\ast}\cE_{|b_1})
\ar[rr]^-{\simeq}\ar[dr]&&\ar[dl]
\cE_{|f}
\\
&[1]}
\]
between the pullback of $\cE$ along $[1]\xra{\langle f\rangle}\cB$, and the right-cylinder of the Cartesian monodromy functor $f^\ast$. Consequently a morphism $(b_0, \cF_0)\ra (b_1, \cF_1)$ is equivalent to a natural transformation $\Phi$:
\[
\xymatrix{
\cE_{|b_0}\ar[drr]^-{\cF_0}\\
&\Phi\Downarrow&\cX\\
\ar[uu]^-{f^\ast}\cE_{|b_1}\ar[rru]_-{\cF_1}}
\]
\end{observation}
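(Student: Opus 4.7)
The plan is to deduce both statements of the observation by invoking the standard correspondence, for a Cartesian fibration over $[1]$, between its total space and the right-cylinder on its Cartesian monodromy. I will first establish the equivalence of $(\oo,1)$-categories over $[1]$
\[
\Cylr(\cE_{|b_0} \xla{f^\ast} \cE_{|b_1}) ~\simeq~ \cE_{|f}~,
\]
and then deduce the description of morphisms in $\Fun^{\sf rel}_{\cB}(\cE,\un{\cX})$ from the universal property of the right-cylinder.

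For the first step, since Cartesian fibrations are stable under base change, the pullback $\cE_{|f} \to [1]$ of the Cartesian fibration $\cE \to \cB$ along $\langle f\rangle\colon [1] \to \cB$ is again a Cartesian fibration. By definition, the Cartesian monodromy of $\cE_{|f} \to [1]$ is the functor $f^\ast\colon \cE_{|b_1} \to \cE_{|b_0}$. On the other hand, the right-cylinder $\Cylr(f^\ast)$ furnishes an explicit presentation of the unstraightening of $f^\ast$: it is a Cartesian fibration over $[1]$ with fiber $\cE_{|b_0}$ over $0\in[1]$ and fiber $\cE_{|b_1}$ over $1\in [1]$, with Cartesian morphism out of each object $e\in \cE_{|b_1}$ landing on the object $f^\ast(e) \in \cE_{|b_0}$. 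By the unique characterization of Cartesian fibrations over $[1]$ by their Cartesian monodromy, there is a canonical equivalence $\Cylr(f^\ast) \simeq \cE_{|f}$ over $[1]$.

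For the second step, recall from the preceding observation that a morphism $(b_0,\cF_0) \to (b_1,\cF_1)$ in $\Fun^{\sf rel}_{\cB}(\cE,\un{\cX})$ is the data of a morphism $f\colon b_0 \to b_1$ in $\cB$ together with a filler $\cE_{|f} \to \cX$ restricting to $\cF_0$ over $0$ and to $\cF_1$ over $1$. Using the equivalence of the previous step, such a filler is equivalent to a functor $\Cylr(f^\ast) \to \cX$ whose restrictions to the two endpoint-fibers are $\cF_0$ and $\cF_1$. By the universal property of the right-cylinder construction, functors $\Cylr(f^\ast) \to \cX$ extending $\cF_0$ on $\cE_{|b_0}$ and $\cF_1$ on $\cE_{|b_1}$ are precisely natural transformations $\Phi\colon \cF_1 \Rightarrow \cF_0 \circ f^\ast$, yielding the asserted description.

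The technical step in this argument is the identification of the Cartesian monodromy of $\cE_{|f} \to [1]$ with the functor $f^\ast$, and the concomitant identification of the right-cylinder as the explicit unstraightening; I expect this to be the only nontrivial point, and it can be reduced to standard facts about Cartesian fibrations over $[1]$ (compare the exposition in HTT, Chapter 3). Everything else is formal manipulation of the universal properties of pullbacks, cylinders, and relative functor $(\oo,1)$-categories.
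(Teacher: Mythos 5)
Your argument is the intended one: the paper states this as an unproved Observation, and the two steps you give---identifying $\cE_{|f}$ with $\Cylr(f^\ast)$ via the classification of Cartesian fibrations over $[1]$ by their monodromy, then reading off functors out of the cylinder from its pushout description---are exactly the standard justification. One small correction: with the paper's convention $\Cylr(X_1\to X_0):=X_0\underset{X_1\times\Delta^{\{0\}}}\amalg X_1\times\Delta^1$, the Cartesian edge ending at $e\in\cE_{|b_1}$ runs from $f^\ast(e)$ to $e$, so a functor out of the cylinder extending $\cF_0$ and $\cF_1$ is a natural transformation $\Phi\colon \cF_0\circ f^\ast\Rightarrow\cF_1$, not $\cF_1\Rightarrow\cF_0\circ f^\ast$ as you wrote; this matches the direction of the $2$-cell in the paper's diagram. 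The rest is fine.
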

\begin{observation}
If $\cE\ra \cB$ is a Cartesian fibration, then $\Fun^{\sf rel}_\cB(\cE,\un{\cX})\ra \cB$ is a coCartesian fibration. Furthermore, the coCartesian monodromy functor over $f:b_0\ra b_1$ is given by precomposition with the Cartesian monodromy functor $f^\ast$ associated to $\cE\ra \cB$:
\[
\Fun(\cE_{|b_0},\cX)\xra{(-\circ f^\ast)}\Fun(\cE_{|b_1},\cX)~.
\]
\end{observation}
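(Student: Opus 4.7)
The plan is to exhibit explicit coCartesian lifts and verify the universal property using the description of morphisms in $\Fun^{\sf rel}_\cB(\cE,\un{\cX})$ supplied by Observation~\ref{obs.morphisms.relFun}.

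Fix an object $(b_0, \cF_0)$ and a morphism $f\colon b_0\to b_1$ in $\cB$; because $\cE\to \cB$ is Cartesian, let $f^*\colon \cE_{|b_1}\to \cE_{|b_0}$ denote the associated monodromy functor. I propose as candidate coCartesian lift the morphism
\[
(b_0,\cF_0) \longrightarrow (b_1, \cF_0\circ f^*),
\]
classified under the correspondence of the preceding observation by the identity natural transformation of $\cF_0\circ f^*$.

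To verify this lift is coCartesian, I would take an arbitrary further morphism $g\colon b_1\to b_1'$ and an arbitrary morphism $(b_0,\cF_0)\to (b_1',\cF_1')$ lying over the composite $gf$. By the preceding observation, such a morphism is classified by a natural transformation $\cF_0\circ (gf)^*\Rightarrow \cF_1'$. The Cartesian fibration property yields a canonical equivalence $(gf)^*\simeq f^*\circ g^*$, under which this transformation is equivalent to one of the form $(\cF_0\circ f^*)\circ g^*\Rightarrow \cF_1'$ --- precisely the classifying data of a morphism $(b_1,\cF_0\circ f^*)\to (b_1',\cF_1')$ over $g$. The composition law in the total category then identifies the original morphism as the composite of this morphism over $g$ with the candidate lift (whose classifying 2-cell is an identity). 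Essential uniqueness of this factorization is automatic, establishing the coCartesian property.

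Reading off the target of the coCartesian lift identifies the coCartesian monodromy over $f$ with the precomposition functor $(-\circ f^*)\colon \Fun(\cE_{|b_0},\cX)\to \Fun(\cE_{|b_1},\cX)$, as claimed. The main subtlety in a full write-up will be making coherent the identification $(gf)^*\simeq f^*\circ g^*$ together with the 2-functoriality of the assignment $b\mapsto \Fun(\cE_{|b},\cX)$; in practice, I would invoke straightening for Cartesian fibrations to package these compatibilities cleanly rather than handling them by hand.
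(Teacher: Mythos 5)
Your argument is correct and is essentially the justification the paper leaves implicit (the statement appears there as an unproved Observation): the explicit lift $(b_0,\cF_0)\to(b_1,\cF_0\circ f^\ast)$ classified by the identity $2$-cell, together with the description of morphisms from Observation~\ref{obs.morphisms.relFun} and the equivalence $(gf)^\ast\simeq f^\ast\circ g^\ast$, gives exactly the mapping-space equivalences needed for coCartesian-ness. You are also right that the coherence of $b\mapsto\Fun(\cE_{|b},\cX)$ is the only real subtlety, and that it is most cleanly handled by straightening $\cE\to\cB$ and unstraightening the composite with $\Fun(-,\cX)$.
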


\begin{definition}\label{def.S}
The functor
\[
\xymatrix{
\TwAro(\bDelta^{\op}) \ar[rr]^-S\ar[dr]_-{\ev_s}&&\Fun^{\sf rel}_{\bDelta^{\op}}( \Ar^{\cls}(\bDelta^{\op}), \un{\bDelta^{\op}})\ar[dl]\\
&\bDelta^{\op}}
\]
is adjoint to the functor
\[
\TwAro(\bDelta^{\op})\underset{\bDelta^{\op}}\times \Ar^{\cls}(\bDelta^{\op})\xra{S} \bDelta^{\op}
\]
constructed in Lemma~\ref{lemma.S}.
\end{definition}

\begin{observation}\label{obs.S.description}
On objects, the functor $S: \TwAro(\bDelta^{\op})\ra\Fun^{\sf rel}_{\bDelta^{\op}}( \Ar^{\cls}(\bDelta^{\op}), \un{\bDelta^{\op}})$ sends $(A^\circ\xra{\varphi^\circ} B^\circ)\in  \TwAro(\bDelta^{\op})$ to the pair $\bigl(A^\circ, (\bDelta_{/^{\cls}A}\bigr)^{\op}\xra{\varphi}\bDelta^{\op}\bigr)$, where $\varphi$ is the functor defined as the composite
\[
\xymatrix{
\bigl(\bDelta_{/^{\cls}A}\bigr)^{\op}\ar[r]^-\varphi&\bigl(\bDelta_{/^{\cls}A}\bigr)^{\op}\ar[r]^-{\rm forget}&\bDelta^{\op}}
\]
sending $(C\subset A)^\circ$ to $C^\varphi$ as in Remark~\ref{remark.phi}. To a morphism $(f,g): (A_0^\circ \xra{\varphi_0}B_0^\circ)\ra(A_1^\circ \xra{\varphi_1}B_1^\circ)$, defined by a commutative diagram in $\bDelta$
\[
\xymatrix{
A_1\ar[r]^-{f}&A_0\\
\ar[u]_-{\varphi_0}B_1&\ar[l]^-{g}\ar[u]^-{\varphi_1}B_0}
\]
the functor $S$ assigns the morphism in $\Fun^{\sf rel}_{\bDelta^{\op}}( \Ar^{\cls}(\bDelta^{\op}), \un{\bDelta^{\op}})$ (see Observation~\ref{obs.morphisms.relFun}) defined by the natural transformation
\[
\xymatrix{
\bigl(\bDelta_{/^{\cls}A_0}\bigr)^{\op}\ar[drr]^-{\varphi_0}\\
&\Downarrow&\bDelta^{\op}\\
\ar[uu]^-{{\sf Hull}(f)}\bigl(\bDelta_{/^{\cls}A_1}\bigr)^{\op}\ar[rru]_-{\varphi_1}}
\]
which to $C_1\subset A_1$ assigns the morphism in $\bDelta^{\op}$ given by the opposite of the map
\[
f_{|}:C_1^{\varphi_1}\ra {\sf Hull}(f(C_1))^{\varphi_0}~.\] This is well-defined, by Lemma~\ref{lemma.S}.
\end{observation}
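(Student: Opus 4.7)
The plan is to unpack the adjunction underlying Definition~\ref{def.S}. By the universal property of the relative functor category $\Fun^{\sf rel}_{\bDelta^{\op}}(-, \un{\bDelta^{\op}})$, the functor $S\colon \TwAro(\bDelta^{\op}) \to \Fun^{\sf rel}_{\bDelta^{\op}}(\Ar^{\cls}(\bDelta^{\op}), \un{\bDelta^{\op}})$ is uniquely determined over $\bDelta^{\op}$ by the partner functor
\[
\TwAro(\bDelta^{\op}) \underset{\bDelta^{\op}}\times \Ar^{\cls}(\bDelta^{\op}) \xra{~S~} \bDelta^{\op}
\]
of Lemma~\ref{lemma.S}. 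The assertion of Observation~\ref{obs.S.description} is then a direct translation of the formulas of Lemma~\ref{lemma.S} through this adjunction.

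For the object-level statement, I would first apply Lemma~\ref{lemma.Arcls.coCart} to identify the fiber of $\ev_s\colon \Ar^{\cls}(\bDelta^{\op}) \to \bDelta^{\op}$ over $A^\circ$ with $(\bDelta_{/^{\cls}A})^{\op}$. An object of $\Fun^{\sf rel}_{\bDelta^{\op}}(\Ar^{\cls}(\bDelta^{\op}), \un{\bDelta^{\op}})$ over $A^\circ$ is, by the universal property, exactly a functor from this fiber to $\bDelta^{\op}$. Restricting the partner $S$ along the inclusion of this fiber, and applying its object-level formula $(\varphi, C\subset A) \mapsto C^\varphi$, produces the claimed functor $\varphi\colon (\bDelta_{/^{\cls}A})^{\op} \to \bDelta^{\op}$ sending $(C\subset A)^\circ$ to $(C^\varphi)^\circ$. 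Its identification as the composite of the self-functor $\varphi\colon \bDelta_{/^{\cls}A}\to \bDelta_{/^{\cls}A}$ of Remark~\ref{remark.phi} with the forgetful functor to $\bDelta$ is immediate from the definition of $C^\varphi$.

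For the morphism-level statement, I would invoke Lemma~\ref{lemma.Arcls.coCart} a second time to identify the Cartesian monodromy of $\ev_s$ over a morphism $f^\circ\colon A_0^\circ \to A_1^\circ$ in $\bDelta^{\op}$, equivalently $f\colon A_1\to A_0$ in $\bDelta$, with $({\sf Hull}(f))^{\op}\colon (\bDelta_{/^{\cls}A_1})^{\op} \to (\bDelta_{/^{\cls}A_0})^{\op}$. By Observation~\ref{obs.morphisms.relFun}, a morphism in $\Fun^{\sf rel}_{\bDelta^{\op}}(\Ar^{\cls}(\bDelta^{\op}), \un{\bDelta^{\op}})$ over $f^\circ$ from $\varphi_0$ to $\varphi_1$ is then the same data as a natural transformation $\varphi_0 \circ ({\sf Hull}(f))^{\op} \Rightarrow \varphi_1$, exactly the $\Downarrow$ appearing in the Observation's diagram. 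Unraveling the adjunction, its component at $(C_1\subset A_1)^\circ$ is the image under the partner $S$ of the Cartesian lift of the $\TwAro$-morphism over $f^\circ$ ending at $(C_1\subset A_1)^\circ$; by Lemma~\ref{lemma.Arcls.coCart} this Cartesian lift starts at $({\sf Hull}(f(C_1))\subset A_0)^\circ$, and by the morphism-level formula of Lemma~\ref{lemma.S} its image under $S$ is the opposite of the restriction $f_|\colon C_1^{\varphi_1} \to {\sf Hull}(f(C_1))^{\varphi_0}$, as claimed.

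The main task is careful bookkeeping of the direction-reversals between $\bDelta$ and $\bDelta^{\op}$ and between $\TwAr$ and $\TwAro$; no genuine mathematical obstacle is expected. Well-definedness of the resulting natural transformation --- naturality in $(C_1\subset A_1)$ and compatibility with composition in $\TwAro(\bDelta^{\op})$ --- reduces to the functoriality of $S$ already established in Lemma~\ref{lemma.S}, with the containment $f(C_1)\subset {\sf Hull}(f(C_1))$ ensuring that each $f_|$ lands in the correct convex subset.
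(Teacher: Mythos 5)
Your proposal is correct and follows the same route the paper implicitly takes: Observation~\ref{obs.S.description} is a direct unpacking of the adjunction in Definition~\ref{def.S}, using Lemma~\ref{lemma.Arcls.coCart} to identify the fibers and Cartesian monodromy of $\ev_s$, Observation~\ref{obs.morphisms.relFun} to identify morphisms in the relative functor category with natural transformations against ${\sf Hull}(f)$, and the object- and morphism-level formulas of Lemma~\ref{lemma.S} for the components. Your attribution of well-definedness to Lemma~\ref{lemma.S} matches the paper's own closing remark.
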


\begin{definition}\label{def.LIM}
The functor
\[
\Fun^{\sf rel}_{\bDelta^{\op}}( \Ar^{\cls}(\bDelta^{\op}), \un{\cX})
\xra{\sf LIM}
\cX
\]
assigns to a $\cK$-point $\cF$ of $\Fun^{\sf rel}_{\bDelta^{\op}}( \Ar^{\cls}(\bDelta^{\op}), \un{\cX})$, given by
\[
\cK\xra{\bigl\langle 
\cK 
\underset{\bDelta^{\op}} \times
\Ar^{\cls}(\bDelta^{\op})
\xra{\cF}\cX\bigr\rangle}\Fun^{\sf rel}_{\bDelta^{\op}}( \Ar^{\cls}(\bDelta^{\op}), \un{\cX})~,
\]
the $\cK$-point of $\cX$ defined by the composite functor
\[
\cK\longrightarrow
\cK 
\underset{\bDelta^{\op}} \times
\Ar^{\cls}(\bDelta^{\op})
\overset{\cF}\longrightarrow
\cX
\]
where $\cK\longrightarrow
\cK 
\underset{\bDelta^{\op}} \times
\Ar^{\cls}(\bDelta^{\op})$
is the pullback along $\cK\ra\bDelta^{\op}$ of the functor $\bDelta^{\op} \ra \Ar^{\cls}(\bDelta^{\op})$ given by identity morphisms: i.e., $A^\circ$ is sent to identity morphism $A^\circ \ra A^\circ$.
\end{definition}

\begin{lemma}\label{lemma.leftKan.id}
The left Kan extension
\[
\xymatrix{
\ar[d]_-{\ev_s}\TwAro(\bDelta^{\op})\ar[r]^-S&\Fun^{\sf rel}_{\bDelta^{\op}}( \Ar^{\cls}(\bDelta^{\op}), \un{\bDelta^{\op}})\ar[r]^-{\sf LIM}&\bDelta^{\op}\\
\bDelta^{\op}\ar@{-->}[urr]_-{(\ev_s)_!({\sf LIM}\circ S)}&}
\]
is equivalent to the identity functor: $(\ev_s)_!({\sf LIM}\circ S) \simeq {\sf id}_{\bDelta^{\op}}$.
Furthermore, this diagram commutes.  
\end{lemma}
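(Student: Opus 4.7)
The plan is to split the proof into two parts: first, verifying that the displayed triangle commutes, namely that ${\sf LIM} \circ S = \ev_s$ as functors $\TwAro(\bDelta^{\op}) \to \bDelta^{\op}$; second, showing that $\id_{\bDelta^{\op}}$ satisfies the universal property of the left Kan extension of $\ev_s$ along itself. The two parts together give the lemma.

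For the commutation, the strategy is to unwind Definitions~\ref{def.S} and~\ref{def.LIM} together with Observation~\ref{obs.S.description}. For an object $e = (A^\circ \xra{\varphi^\circ} B^\circ)$, the composite ${\sf LIM} \circ S$ evaluates the fiber functor $\varphi \colon (\bDelta_{/A})^{\op} \to \bDelta^{\op}$ of Remark~\ref{remark.phi} at the terminal element $(A \subset A)$. Feeding $C = A$ into the formula of Lemma~\ref{lemma.S}, together with the conventions ${\sf sup}(\emptyset) := {\sf min}(A)$ and ${\sf inf}(\emptyset) := {\sf max}(A)$, yields $A^\varphi = A = \ev_s(e)$. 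For a morphism $(f,g)$, Observation~\ref{obs.S.description} exhibits the natural transformation component of $S(f,g)$; evaluating it at $C_1 = A_1$ gives (the opposite of) the restriction of $f$ to $A_1^{\varphi_1} = A_1 \to A_0^{\varphi_0} = A_0$, which is just $f^\circ = \ev_s(f,g)$.

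For the universal property, let $i \colon \bDelta^{\op} \to \TwAro(\bDelta^{\op})$ be the identity section $A^\circ \mapsto (A^\circ \xra{\id} A^\circ)$, so that $\ev_s \circ i = \id_{\bDelta^{\op}}$. Given any $G \colon \bDelta^{\op} \to \bDelta^{\op}$ and any natural transformation $\alpha \colon \ev_s \Rightarrow G \circ \ev_s$, define a candidate $\beta \colon \id \Rightarrow G$ by $\beta_{A^\circ} := \alpha_{i(A^\circ)}$. The key step is the observation that for every object $e = (A^\circ \xra{\varphi^\circ} B^\circ)$ there is a canonical morphism $e \to i(A^\circ)$ in $\TwAro(\bDelta^{\op})$ lying over $\id_{A^\circ}$, obtained by taking $(f',g') = (\id, \varphi)$ in the notation of Lemma~\ref{lemma.S}; and for each $f^\circ \colon A^\circ \to A'^\circ$, the object $(A^\circ \xra{f^\circ} A'^\circ)$ admits a canonical morphism to $i(A'^\circ)$ lying over $f^\circ$, obtained by taking $(f',g') = (f, \id)$. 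Naturality of $\alpha$ along the first morphism gives $\alpha_e = \beta_{A^\circ}$ whenever $\ev_s(e) = A^\circ$, and naturality along the second gives $G(f^\circ) \circ \beta_{A^\circ} = \beta_{A'^\circ} \circ f^\circ$, which is precisely naturality of $\beta$. This exhibits the natural bijection between transformations $\id \Rightarrow G$ and $\ev_s \Rightarrow G \circ \ev_s$, so $\id_{\bDelta^{\op}}$ has the universal property of $(\ev_s)_! \ev_s$; combined with Part~1, this gives $(\ev_s)_!({\sf LIM} \circ S) \simeq \id$.

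The main obstacle is purely notational: tracking the opposite conventions of $\TwAro(\bDelta^{\op})$ against $\bDelta^{\op}$, and identifying the correct direction of the natural transformation component of morphisms in $\Fun^{\sf rel}$ per Observation~\ref{obs.morphisms.relFun} so that Observation~\ref{obs.S.description} is evaluated with the right variance. Once these directions are pinned down and the two canonical morphisms into identity sections are in hand, both parts of the argument reduce to direct unwinding.
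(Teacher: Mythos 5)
Your proposal is correct, and the first half (the identity ${\sf LIM}\circ S = \ev_s$, verified on objects via $A^\varphi = A$ and on morphisms by unwinding Observation~\ref{obs.S.description}) is exactly the paper's computation; the only imprecision there is that the component of $S(f,g)$ at $C_1=A_1$ lands in ${\sf Hull}(f(A_1))^{\varphi_0}$ rather than in $A_0$ itself, and one must still compose with the inclusion ${\sf Hull}(f(A_1))^{\varphi_0}\subset A_0$ coming from ${\sf LIM}$'s evaluation at the identity arrow $A_0 \subset A_0$ --- the net result is still $f^\circ = \ev_s(f,g)$, so nothing is lost. Where you genuinely diverge is in identifying the left Kan extension: the paper invokes the pointwise formula and computes the colimit over the comma category $\TwAro(\bDelta^{\op})\times_{\bDelta^{\op}}\bDelta^{\op}_{/A^\circ}$, using that this category is connected and $\bDelta^{\op}$ is discrete (morally, that $\ev_s$ is a localization, as is made explicit in the corollary that follows), whereas you verify the corepresentability $\Map(\id,G)\cong\Map(\ev_s,G\circ\ev_s)$ directly. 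Your route avoids discussing colimits in $\bDelta^{\op}$ altogether and is, if anything, more self-contained; since all categories in sight are gaunt $1$-categories, checking a natural bijection of hom-sets does suffice for the $\infty$-categorical universal property.

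One caveat you should repair in the write-up: the ``identity section'' $i\colon A^\circ \mapsto (A^\circ\xra{\id}A^\circ)$ is \emph{not} a functor. A morphism $\id_{A_0^\circ}\to\id_{A_1^\circ}$ in $\TwAro(\bDelta^{\op})$ over $f^\circ\colon A_0^\circ\to A_1^\circ$ would require a retraction $g^\circ$ of $f^\circ$, which does not exist in general, so the equation $\ev_s\circ i=\id$ is meaningless as stated. Fortunately your argument never uses functoriality of $i$: you only use $i$ on objects to define the components $\beta_{A^\circ}:=\alpha_{i(A^\circ)}$, and naturality of $\beta$ is then \emph{derived} from naturality of $\alpha$ along your second family of canonical morphisms $(A^\circ\xra{f^\circ}A'^\circ)\to i(A'^\circ)$ (which do exist, with $g^\circ=\id$), rather than inherited from $i$. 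With that phrasing corrected, the proof stands.
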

\begin{proof}
Let $\varphi^\circ: A^\circ \ra B^\circ$ be an object of $\TwAro(\bDelta^{\op})$. We claim that the value of ${\sf LIM}\circ S$ on this object is
\[
({\sf LIM}\circ S) (A^\circ \xra{\varphi^\circ} B^\circ) = A^\circ~.
\]
To see this, recall first that the value $S( A^\circ \xra{\varphi^\circ} B^\circ) \in \Fun^{\sf rel}_{\bDelta^{\op}}( \Ar^{\cls}(\bDelta^{\op}), \un{\bDelta^{\op}})$ is given by the pair $\bigl(A^\circ, (\bDelta_{/^{\cls}A}\bigr)^{\op}\xra{\varphi}\bDelta^{\op}\bigr)$. Consequently, the object $({\sf LIM}\circ S) (A^\circ \xra{\varphi^\circ} B^\circ)$ is given by the composite
\[
\ast\longrightarrow (\bDelta_{/^{\cls}A}\bigr)^{\op}\xra{\varphi}\bDelta^{\op}
\]
where $\ast \ra (\bDelta_{/^{\cls}A}\bigr)^{\op}$ selects the identity morphism $A^\circ \ra A^\circ$. Consequently, we obtain
\[
({\sf LIM}\circ S) (A^\circ \xra{\varphi^\circ} B^\circ) = (A^\varphi )^{\circ}
\]
and (recall Lemma~\ref{lemma.S}) we have the further simplification:
\[
A^\varphi=\Bigl[{\sf sup}\bigl\{\varphi(b)\big| \ \varphi(b)\leq {\sf min}(A)\bigr\},{\sf inf}\bigl\{\varphi(b)\big| \ {\sf max}(A)\leq \varphi(b)\bigr\}\Bigr] = [{\sf min}(A), {\sf max}(A)] = A~.
\]

Consequently, the value $(\ev_s)_!({\sf LIM}\circ S) (A^\circ)$ is the colimit of the constant functor
\[
\TwAro(\bDelta^{\op})\underset{\bDelta^{\op}}\times \bDelta^{\op}_{/A^\circ} \longrightarrow \bDelta^{\op}
\]
taking constant value $A^\circ$ on every object $(A^\circ \ra B^\circ)$. Since $\TwAro(\bDelta^{\op})\underset{\bDelta^{\op}}\times \bDelta^{\op}_{/A^\circ}$ is a connected category and $\bDelta^{\op}$ is discrete, the value of this colimit is $A^\circ$.

To finish, inspection of the functor ${\sf LIM}$ reveals that the diagram commutes.
\end{proof}

\begin{cor} For $\cC:\bDelta^{\op}\ra \cX$, there is a canonical commutative diagram
\[
\xymatrix{
\ar[d]_-{\ev_s}\TwAro(\bDelta^{\op})\ar[r]^-S&\Fun^{\sf rel}_{\bDelta^{\op}}( \Ar^{\cls}(\bDelta^{\op}), \un{\bDelta^{\op}})\ar[r]^-\cC&\Fun^{\sf rel}_{\bDelta^{\op}}( \Ar^{\cls}(\bDelta^{\op}), \un{\cX})\ar[r]^-{\sf LIM}&\cX\\
\bDelta^{\op}\ar[urrr]_-{\cC}&}
\]
which witnesses $\cC$ as the left Kan extension $(\ev_s)_!({\sf LIM}\circ\cC\circ S)$
\end{cor}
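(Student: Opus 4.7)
The plan is to derive the corollary as a formal consequence of Lemma~\ref{lemma.leftKan.id} by postcomposing everything with the functor $\cC$. The key observation is that ${\sf LIM}$ is, by Definition~\ref{def.LIM}, simply restriction of relative functors along the identity section $\bDelta^{\op} \hookrightarrow \Ar^{\cls}(\bDelta^{\op})$, and is therefore manifestly natural in the target $(\infty,1)$-category. Postcomposing with $\cC \colon \bDelta^{\op} \to \cX$ thus yields an equivalence ${\sf LIM} \circ \cC \circ S \simeq \cC \circ {\sf LIM} \circ S$; invoking the commuting-diagram half of Lemma~\ref{lemma.leftKan.id} to replace ${\sf LIM}\circ S$ with $\ev_s$ then identifies the upper composite of the corollary's diagram with $\cC \circ \ev_s = \ev_s^{\ast}\cC$.

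The second step is to verify that $(\ev_s)_!(\ev_s^{\ast}\cC) \simeq \cC$, equivalently, that the counit of the adjunction $((\ev_s)_!, \ev_s^{\ast})$ is an equivalence at $\cC$. I will argue this along the same lines as the proof of Lemma~\ref{lemma.leftKan.id}: the functor $\ev_s \colon \TwAro(\bDelta^{\op}) \to \bDelta^{\op}$ is a Cartesian fibration (being the composite of the right fibration $\TwAro(\bDelta^{\op}) \to \bDelta^{\op} \times \bDelta$ with the product projection $\bDelta^{\op} \times \bDelta \to \bDelta^{\op}$); its fiber $\ev_s^{-1}(A^{\circ})$ is canonically equivalent to the overcategory $\bDelta_{/A}$, which is weakly contractible by virtue of the terminal object $\id_{A}$; and the fiber inclusion into the slice $\ev_s \downarrow A^{\circ}$ is cofinal, since for a Cartesian fibration it admits a right adjoint given by taking Cartesian lifts. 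Because $\cC \circ \ev_s$ restricts on the fiber to the constant functor with value $\cC(A^{\circ})$, the pointwise formula for the left Kan extension computes $(\ev_s)_!(\cC \circ \ev_s)(A^{\circ}) \simeq \cC(A^{\circ})$, and naturality in $A^{\circ}$ assembles these pointwise equivalences into the sought natural equivalence in $\Fun(\bDelta^{\op},\cX)$.

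I do not anticipate a substantial obstacle, since the entire argument is essentially a transcription of the lemma's proof after postcomposition with $\cC$; the crucial feature that survives intact is constancy on the fiber, where $\ev_s$ takes the value $A^{\circ}$ identically and hence $\cC \circ \ev_s$ takes the value $\cC(A^{\circ})$ identically. The only point warranting slight care is to ensure that the commutativity of the diagram in the statement (and not merely the abstract existence of an equivalence $\cC \simeq (\ev_s)_!({\sf LIM}\circ \cC\circ S)$) is produced functorially; this is achieved by composing the natural equivalence ${\sf LIM} \circ \cC \circ S \simeq \cC \circ \ev_s$ from the first step with the unit $\cC \circ \ev_s \to \ev_s^\ast (\ev_s)_!(\cC \circ \ev_s)$ of the adjunction, which is invertible by the second step.
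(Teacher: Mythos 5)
Your first step — commuting ${\sf LIM}$ past postcomposition with $\cC$ and then invoking the commutativity half of Lemma~\ref{lemma.leftKan.id} to identify the long composite with $\cC\circ\ev_s$ — is correct and is exactly how the paper obtains commutativity of the diagram. The gap is in your second step, in the finality claim. For the \emph{left} Kan extension, the pointwise formula at $A^\circ$ is a colimit over the over-comma category $\TwAro(\bDelta^{\op})\underset{\bDelta^{\op}}\times(\bDelta^{\op})_{/A^\circ}$, whose objects are pairs $\bigl((A_1^\circ\xra{\varphi_1}B_1^\circ),\,f^\circ\colon A_1^\circ\to A^\circ\bigr)$. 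Cartesian lifts furnish a right adjoint to the fiber inclusion into the \emph{under}-comma category $(A^\circ\!\downarrow\!\ev_s)$ — the one relevant for right Kan extensions — and even there, admitting a right adjoint makes a functor initial, not final. For the over-comma category the fiber inclusion is genuinely not final: applying Quillen's Theorem~A at the object with $A=[0]$, $A_1=[1]$, $f\colon[0]\to[1]$ hitting $0$, $B_1=[0]$, and $\varphi_1\colon[0]\to[1]$ hitting $1$, the relevant comma category consists of factorizations of $\varphi_1$ as $B_1\to B'\to A\xra{f}A_1$, and it is empty since every such composite lands in the image of $f$. So the restriction of the colimit to the fiber is not justified, and your pointwise computation does not go through as written.

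The conclusion is of course still true, and there are two ways to repair the argument. The paper's route avoids the pointwise formula entirely: $\ev_s$ is a Cartesian fibration with weakly contractible fibers, hence a localization (Corollary~3.20 of \cite{fibrations}), hence $\ev_s^\ast$ is fully faithful and the counit $(\ev_s)_!\ev_s^\ast\cC\to\cC$ is automatically an equivalence; more generally, any commutative triangle through a localization witnesses a left Kan extension. If you want to keep the pointwise computation, replace the fiber inclusion by the projection $\TwAro(\bDelta^{\op})\underset{\bDelta^{\op}}\times(\bDelta^{\op})_{/A^\circ}\to(\bDelta^{\op})_{/A^\circ}$: this is a base change of $\ev_s$, hence again a Cartesian fibration with weakly contractible fibers, hence final, and the target has the terminal object $\id_{A^\circ}$, which gives $\cC(A^\circ)$ as the value of the colimit.
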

\begin{proof}
By Lemma~\ref{lemma.leftKan.id} and the construction of ${\sf LIM}$ in Definition~\ref{def.LIM}, the diagram commutes. Also, $\ev_s$ is a Cartesian fibration whose fibers have contractible classifying spaces, therefore it is a localization (by Corollary~3.20 of \cite{fibrations}, using that Cartesian fibrations are right-initial). Lastly, we observe that a commutative diagram
\[
\xymatrix{
\cT\ar[rr]^-F\ar[d]_-G&&\cX\\
\cD\ar[urr]_-C}
\]
automatically witnesses $C$ as the left Kan extension $G_!F$ if $G$ is a localization. The result follows.
\end{proof}

\begin{definition}
Let $\cX$ be an $(\oo,1)$-category. The Morita $(\oo,1)$-category of $\cX$
\[
\Morita[\cX]\subset \Fun^{\sf rel}_{\bDelta^{\op}}( \Ar^{\cls}(\bDelta^{\op}), \un{\cX})
\]
is the full $\oo$-subcategory of the relative functors consisting of those objects $(\bDelta_{/^{\cls}A})^{\op}\xra{\cF}\cX$ which send Segal covers to limits. \end{definition}

\begin{remark}
If $\cX$ has finite limits, then the $(\oo,1)$-category $\Morita[\cX]$ is the unstraightening of a functor $\bDelta^{\op}\xra{{\sf Morita}[\cX]}\Cat_{(\oo,1)}$, 
similar to the developments in~\cite{rune.morita1} and~\cite{rune.morita2}.
\end{remark}

\begin{observation}\label{observation3}
The functor $S$ of Definition~\ref{def.S} factors through $\Morita[\bDelta^{\op}]$:
\[
\xymatrix{
&\Morita[\bDelta^{\op}]\ar[d]\\
\TwAro(\bDelta^{\op})\ar[r]^-S\ar@{-->}[ur]&\Fun^{\sf rel}_{\bDelta^{\op}}( \Ar^{\cls}(\bDelta^{\op}), \un{\bDelta^{\op}})
.
}
\]
\end{observation}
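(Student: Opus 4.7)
My plan is to verify the factorization at the level of objects, since $\Morita[\bDelta^{\op}]$ is a full $\oo$-subcategory of $\Fun^{\sf rel}_{\bDelta^{\op}}( \Ar^{\cls}(\bDelta^{\op}), \un{\bDelta^{\op}})$. By Observation~\ref{obs.S.description}, $S$ sends an object $(A^\circ \xra{\varphi^\circ} B^\circ)$ to the pair $\bigl( A^\circ, \varphi \colon (\bDelta_{/^{\cls}A})^{\op} \to \bDelta^{\op} \bigr)$, where $\varphi$ sends $C^\circ \mapsto (C^\varphi)^\circ$ with $C^\varphi$ as in Lemma~\ref{lemma.S}. The proof thus reduces to showing that, for each fixed $\varphi \colon B \to A$ in $\bDelta$, this functor $\varphi$ carries Segal covers in $(\bDelta_{/^{\cls}A})^{\op}$ to limits in $\bDelta^{\op}$.

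A Segal cover in $(\bDelta_{/^{\cls}A})^{\op}$ is (the opposite of) a decomposition of a convex subset $C = [c_-, c_+] \subset A$ as a pushout $C = [c_-, c_\mathrm{mid}] \cup_{\{c_\mathrm{mid}\}} [c_\mathrm{mid}, c_+]$ for some $c_- \leq c_\mathrm{mid} \leq c_+$. Sending this to a limit in $\bDelta^{\op}$ is equivalent to the identity
\[
C^\varphi ~\simeq~ [c_-, c_\mathrm{mid}]^\varphi ~\underset{\{c_\mathrm{mid}\}^\varphi}\cup~ [c_\mathrm{mid}, c_+]^\varphi
\]
as a pushout in $\bDelta$, where the gluing maps are the convex inclusions induced from the containments $\{c_\mathrm{mid}\} \subset [c_-, c_\mathrm{mid}]$ and $\{c_\mathrm{mid}\} \subset [c_\mathrm{mid}, c_+]$. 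That these induced maps are genuine convex inclusions follows directly from the monotonicity of $C \mapsto C^\varphi$ in $C$, which itself is immediate from the defining formulas.

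I would then verify the pushout identity by case analysis on the location of $c_\mathrm{mid}$ relative to the image of $\varphi$, using the explicit formulas of Observation~\ref{obs.key}. The main case is when $c_\mathrm{mid}$ falls strictly between consecutive image values, $\varphi(\ell-1) < c_\mathrm{mid} < \varphi(\ell)$ for some $\ell$: then $\{c_\mathrm{mid}\}^\varphi = [\varphi(\ell-1), \varphi(\ell)]$, the interval $[c_-, c_\mathrm{mid}]^\varphi$ ends exactly at $\varphi(\ell)$, and $[c_\mathrm{mid}, c_+]^\varphi$ begins exactly at $\varphi(\ell-1)$. The pushout then glues these two intervals along their overlapping segment $[\varphi(\ell-1), \varphi(\ell)]$, producing precisely $C^\varphi$. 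When $c_\mathrm{mid}$ coincides with some value $\varphi(\ell)$, the pushout degenerates to concatenation of two intervals along the single vertex $\{c_\mathrm{mid}\}$. In the remaining boundary cases, when $c_\mathrm{mid}$ lies below the minimum $\varphi$-value or above the maximum $\varphi$-value, one of the two gluing maps is already an isomorphism and the pushout trivially reduces to the other term. Each case is a direct bookkeeping check against the formulas of Observation~\ref{obs.key}, and I do not anticipate a substantive obstacle beyond keeping track of the boundary cases.
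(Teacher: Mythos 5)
Your proposal is correct. The paper states this as an Observation and supplies no proof; your object-level verification (which suffices because $\Morita[\bDelta^{\op}]$ is a full $\infty$-subcategory) via the explicit formulas of Observation~\ref{obs.key} is precisely the check the paper leaves implicit, and your case analysis — including the boundary cases handled by the conventions ${\sf sup}(\emptyset)={\sf min}(A)$ and ${\sf inf}(\emptyset)={\sf max}(A)$ — correctly establishes that $C^\varphi$ is the colimit in $\bDelta$ of the Segal decomposition, i.e.\ the required limit in $\bDelta^{\op}$.
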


\begin{observation}\label{observation4}
Given a functor $\cC:\bDelta^{\op}\ra \cX$ such that the restriction $\bDelta^{\cls,\op} \ra \bDelta^{\op}\xra{\cC}\cX$ carry Segal covers to limits, there is a factorization
\[
\xymatrix{
\Morita[\bDelta^{\op}]\ar[d]\ar@{-->}[r]&\Morita[\cX]\ar[d]\\
\Fun^{\sf rel}_{\bDelta^{\op}}( \Ar^{\cls}(\bDelta^{\op}), \un{\bDelta^{\op}})\ar[r]^-\cC&\Fun^{\sf rel}_{\bDelta^{\op}}( \Ar^{\cls}(\bDelta^{\op}), \un{\cX})
.
}
\]
\end{observation}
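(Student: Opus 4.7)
The plan is to verify that postcomposition with $\cC$ carries $\Morita[\bDelta^{\op}]$ into $\Morita[\cX]$, which is exactly the content of the desired dashed factorization. Concretely, for each object $(A^\circ, \cF)$ of $\Morita[\bDelta^{\op}]$---so $\cF : (\bDelta_{/^{\cls}A})^{\op} \to \bDelta^{\op}$ sends Segal covers to limits---I would show that the composite $\cC \circ \cF : (\bDelta_{/^{\cls}A})^{\op} \to \cX$ likewise sends Segal covers to limits.

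Fix $A$ and a Segal cover of $C \in \bDelta_{/^{\cls}A}$: this is the diagram $D_C^\sigma$ determined by a subdivision $C = C_1 \cup \cdots \cup C_n$ together with the successive overlaps $C_{i,i+1} = C_i \cap C_{i+1}$, connected by closed inclusions in $\bDelta_{/^{\cls}A}$. By hypothesis on $\cF$, the image $\cF(D_C^\sigma)$ is a limit cone in $\bDelta^{\op}$. The key observation to establish is that this limit cone automatically factors through the inclusion $\bDelta^{\cls,\op} \hookrightarrow \bDelta^{\op}$, so that all the structure morphisms $\cF(C_i^\circ) \to \cF(C_{i,i+1}^\circ)$ are closed. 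The reason is combinatorial: $\bDelta$ admits a pushout of the form $[m] \amalg_{[0]} [n]$ only when the two maps $[0] \to [m]$ and $[0] \to [n]$ are endpoint inclusions, in which case the pushout is the Segal composite $[m+n]$. Dually, a limit in $\bDelta^{\op}$ of Segal shape (the zigzag $\bullet \to \bullet \leftarrow \bullet \to \cdots$) exists only when the structure morphisms are closed. Thus the existence of the limit cone $\cF(D_C^\sigma)$ in $\bDelta^{\op}$ forces its factorization through $\bDelta^{\cls,\op}$, and the limit computed in the latter coincides with the limit in the former.

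Given this factorization, I would then invoke the hypothesis on $\cC$. The restriction $\cC|_{\bDelta^{\cls,\op}}$ sends Segal covers to limits, hence preserves the Segal-shape limit $\cF(C^\circ) \simeq \lim \cF(D_C^\sigma \smallsetminus \{C^\circ\})$ which (by the previous paragraph) takes place in $\bDelta^{\cls,\op}$. It follows that $\cC\cF(C^\circ) \simeq \lim \cC\cF(D_C^\sigma \smallsetminus \{C^\circ\})$ in $\cX$, so $\cC \circ \cF$ lies in $\Morita[\cX]$. The main obstacle is the second step---establishing that Segal-shape limits in $\bDelta^{\op}$ must have closed structure maps---which ultimately reduces to the explicit characterization of the pushouts in the simplex category, combined with the observation that the indexing diagram of a Segal cover in $\bDelta_{/^{\cls}A}$ consists entirely of closed inclusions.
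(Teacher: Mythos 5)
The paper offers no proof of this Observation, so the only question is whether your argument stands on its own; it does not, because the combinatorial claim underpinning your Step 2 is false. You assert that a limit of Segal shape in $\bDelta^{\op}$ --- equivalently, a pushout $[m]\amalg_{[0]}[n]$ in $\bDelta$ --- exists only when the structure maps are endpoint (closed) inclusions. But $\bDelta$ also has pushouts along codegeneracies: for the span $[1]\xla{\sigma}[2]\xra{\sigma'}[1]$ with $\sigma=(0,0,1)$ and $\sigma'=(0,1,1)$ the two surjections, any cocone $f,g\colon[1]\to[q]$ with $f\sigma=g\sigma'$ forces $f(0)=f(1)=g(0)=g(1)$, so the pushout exists and equals $[0]$. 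Consequently there are legitimate objects $(A^\circ,\cF)$ of $\Morita[\bDelta^{\op}]$ --- e.g.\ $A=[2]$, $\cF(\{i\}^\circ)=[2]^\circ$, $\cF(\{i,i{+}1\}^\circ)=[1]^\circ$ with the surjections as structure maps, and $\cF(A^\circ)=[0]^\circ$ --- whose Segal-cover limit cones do \emph{not} factor through $\bDelta^{\cls,\op}$. For such $\cF$ your argument gives no way to invoke the hypothesis on $\cC$, which only concerns its restriction to closed morphisms; whether $\cC$ preserves these extra limits of $\bDelta^{\op}$ is a genuinely separate question not settled by the Segal condition in any obvious way.

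There is a second, smaller gap even where the structure maps are closed: a zigzag limit diagram in $\bDelta^{\cls,\op}$ need not literally be a Segal cover (the pieces need not be consecutive length-one subintervals, and an ``overlap'' may coincide with an adjacent piece), so the hypothesis that $\cC$ carries Segal covers to limits does not apply verbatim; one must first reduce the given diagram to a Segal cover. Both issues disappear if you restrict to the objects for which the Observation is actually used, namely the image of the functor $S$ (Observation~\ref{observation3}): by Observation~\ref{obs.key} the value $\cF(C^\circ)=(C^\varphi)^\circ$ is covered by the consecutive intervals $[\varphi(j\text{-}1),\varphi(j)]$ glued along their shared endpoints, all the maps are convex inclusions, and the resulting (possibly degenerate) Segal cover is carried to a limit by any Segal $\cC$ after cancelling the identity legs. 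As written for all of $\Morita[\bDelta^{\op}]$, however, your proof does not close.
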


\begin{notation}
For $A^\circ \in \bDelta^{\op}$, 
\[
b_A
\colon
(\bDelta^{\leq 1}_{/^{\cls}A})^{\op}  \hookrightarrow (\bDelta_{/^{\cls}A})^{\op}
\]
is the inclusion of the full subcategory consisting of those $A^\circ\ra [p]^\circ$ with $p\leq 1$.
\end{notation}

\begin{lemma}\label{lemma.def.corr}
For $(A^\circ,\cF) \in \Fun^{\sf rel}_{\bDelta^{\op}}( \Ar^{\cls}(\bDelta^{\op}), \un{\cX})$, then $(A^\circ,\cF)$ belongs to the $\oo$-subcategory $\Morita[\cX]$ if and only if the natural map
\[
\cF \ra b_{A\ast}b_A^\ast \cF
\]
is an equivalence.
\end{lemma}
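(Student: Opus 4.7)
The plan is to compute the value $(b_{A*}b_A^\ast \cF)(C^\circ)$ as a limit over the comma category $C^\circ \downarrow b_A$, identify this comma category with the indexing poset for the Segal cover of $C$, and thereby convert the right Kan extension condition directly into the Segal condition. Pointwise, the right Kan extension formula gives
\[
(b_{A*}b_A^\ast \cF)(C^\circ)
~\simeq~
\lim_{(D^\circ,\alpha) \in C^\circ \downarrow b_A} \cF(D^\circ)~,
\]
so the lemma reduces to identifying this limit with the Segal limit attached to $C$.

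The key step is the identification of $C^\circ \downarrow b_A$. A morphism $C^\circ \to D^\circ$ in $(\bDelta_{/^{\cls}A})^{\op}$ corresponds, by the description of $\bDelta_{/^{\cls}A}$ from the proof of Lemma~\ref{lemma.Arcls.coCart}, to a convex inclusion $D \subset C$; and $D$ is automatically convex in $A$ once it is convex in $C$, since $C \subset A$ is convex. Hence, writing $C = \{a_0 < a_1 < \cdots < a_p\}$, the objects of $C^\circ \downarrow b_A$ are exactly the singletons $\{a_i\}$ (for $0 \leq i \leq p$) and the consecutive pairs $\{a_{i-1}<a_i\}$ (for $0 < i \leq p$), with nontrivial morphisms $\{a_{i-1}<a_i\} \to \{a_{i-1}\}$ and $\{a_{i-1}<a_i\} \to \{a_i\}$. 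This is precisely the poset indexing the Segal cover of $C$, so
\[
(b_{A*}b_A^\ast \cF)(C^\circ)
~\simeq~
\cF(\{a_0<a_1\}^\circ) \underset{\cF(\{a_1\}^\circ)}\times \cdots \underset{\cF(\{a_{p-1}\}^\circ)}\times \cF(\{a_{p-1}<a_p\}^\circ)~.
\]

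Armed with this identification, both directions follow at once. If $|C|\leq 2$, then $(C^\circ,\id)$ is an initial object of $C^\circ \downarrow b_A$, so the limit collapses to $\cF(C^\circ)$ and both the Segal condition at $C$ and the Kan extension map are automatic. If $|C|\geq 3$, the Segal covers of $C$ in $\bDelta_{/^{\cls}A}$ are exactly the decompositions into consecutive pairs displayed above, so $\cF$ satisfies the Segal condition at $C$ if and only if the map $\cF(C^\circ) \to (b_{A*}b_A^\ast\cF)(C^\circ)$ is an equivalence. Running this pointwise over all $C^\circ$ yields the lemma.

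The main technical delicacy is keeping track of the variance: the inclusion $b_A$ is of $\op$-categories, and a map $C^\circ \to D^\circ$ dualizes to a convex inclusion $D \hookrightarrow C$, so ``morphisms in $C^\circ \downarrow b_A$'' correspond to \emph{reverse} containments of $D$'s. Once one has correctly set up this variance and verified that convexity in $C$ forces convexity in $A$, the remainder of the argument is a direct reading of the limit in terms of the Segal pullback.
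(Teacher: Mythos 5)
Your proposal is correct and follows essentially the same route as the paper: the paper likewise computes $(b_{A\ast}b_A^\ast\cF)(C^\circ)$ as the limit of $b_A^\ast\cF$ over $(\bDelta^{\leq 1}_{/^{\cls}C})^{\op}$, identifies that limit with the iterated fiber product over singletons and consecutive pairs, and reads off the equivalence with the Segal condition. Your extra care with the comma-category variance and the degenerate case $|C|\leq 2$ is fine but does not change the argument.
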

\begin{proof}
Consider a convex inclusion $[p]\subset A$ defining an object of $\bDelta_{/^{\cls}A}$. We prove that $\cF[p]\ra (b_{A\ast}b_A^\ast \cF)[p]$ is an equivalence. The right Kan extension $b_{A\ast}$ is computed by the limit of the functor $b_A^\ast \cF: (\bDelta^{\leq 1}_{/^{\cls}[p]})^{\op}\ra \cX$, which is exactly the iterated fiber product $\cF(\{0<1\})\times_{\cF(\{0\})}\ldots \times_{\cF(\{p-1\})}\cF(\{p-1<p\})$. We thus see that $\cF$ is Segal if and only if it computed by the right Kan extension from closed morphisms.
\end{proof}

\begin{definition}\label{def.CX}
The $\oo$-subcategory $\cM[\cX]\subset \Fun^{\sf rel}_{\bDelta^{\op}}( \Ar^{\cls}(\bDelta^{\op}), \un{\cX})$ consists of those morphisms $(A^\circ\xra{\varphi^\circ}B^\circ, \eta)$
\[
\xymatrix{
\bigl(\bDelta_{/^{\cls}A}\bigr)^{\op}\ar[rr]^-{\cF_A}&&\cX\\
\bigl(\bDelta_{/^{\cls}B}\bigr)^{\op}\ar[u]^-{{\sf Hull}(\varphi)}\ar[urr]^-{ \Downarrow\eta}_{\cF_B}}
\]
such that the canonical morphism
\[
b_B^\ast{\sf Hull}(\varphi)^\ast\cF_A \longrightarrow b_B^\ast{\sf Hull}(\varphi)^\ast b_{A\ast}b_A^\ast\cF_A
\]
is an equivalence.
\end{definition}

\begin{lemma}
There is an inclusion $\Morita[\cX]\subset \cM[\cX]$ of $\oo$-subcategories of $\Fun^{\sf rel}_{\bDelta^{\op}}( \Ar^{\cls}(\bDelta^{\op}), \un{\cX})$.
\end{lemma}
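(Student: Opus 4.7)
The plan is to reduce the morphism-level condition defining $\cM[\cX]$ to the object-level Segal condition characterizing $\Morita[\cX]$, via Lemma~\ref{lemma.def.corr}. First I would observe that the inclusion must be understood on both objects and morphisms: $\Morita[\cX]$ is by construction the full $\oo$-subcategory on those $(A^\circ,\cF_A)$ satisfying the Segal condition, and $\cM[\cX]$ (while phrased as a condition on morphisms) contains all identity morphisms, since for $\varphi = \id_A$ the morphism ${\sf Hull}(\varphi)^\ast$ is an equivalence and the unit $\cG \to b_{A\ast}b_A^\ast \cG$ becomes an equivalence after applying $b_A^\ast$ by the standard adjunction identity (using that $b_A$ is fully faithful). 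Thus $\cM[\cX]$ contains every object of $\Fun^{\sf rel}_{\bDelta^{\op}}(\Ar^{\cls}(\bDelta^{\op}), \un{\cX})$, so in particular every object of $\Morita[\cX]$, and the object-level inclusion is automatic.

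For the morphism-level inclusion, I would take an arbitrary morphism $(\varphi^\circ,\eta)\colon (A^\circ,\cF_A) \to (B^\circ,\cF_B)$ in $\Morita[\cX]$, so that by hypothesis $\cF_A$ lies in $\Morita[\cX]$. By Lemma~\ref{lemma.def.corr}, the unit map
\[
\cF_A \longrightarrow b_{A\ast}b_A^\ast \cF_A
\]
is an equivalence in $\Fun\bigl((\bDelta_{/^{\cls}A})^{\op},\cX\bigr)$. The restriction functors ${\sf Hull}(\varphi)^\ast$ and $b_B^\ast$ preserve equivalences pointwise, so applying $b_B^\ast \circ {\sf Hull}(\varphi)^\ast$ to this unit yields an equivalence
\[
b_B^\ast{\sf Hull}(\varphi)^\ast\cF_A \xra{~\simeq~} b_B^\ast{\sf Hull}(\varphi)^\ast b_{A\ast}b_A^\ast\cF_A~,
\]
which is precisely the condition defining a morphism in $\cM[\cX]$ as in Definition~\ref{def.CX}.

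I do not anticipate any genuine obstacle: the content is entirely bookkeeping about pullback functors and the adjunction unit. The only point requiring care is verifying that $\cM[\cX]$ is closed under composition and contains identities, so that the prescribed morphism-level condition indeed cuts out an $\oo$-subcategory; closure under identities was handled above, and closure under composition follows since a composite of equivalences is an equivalence, applied to the cocycle governing composition of the natural transformations $\eta$.
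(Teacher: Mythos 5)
Your proof is correct and takes essentially the same route as the paper: by Lemma~\ref{lemma.def.corr} membership in $\Morita[\cX]$ means the unit $\cF_A \to b_{A\ast}b_A^\ast\cF_A$ is an equivalence, and applying $b_B^\ast{\sf Hull}(\varphi)^\ast$ preserves this, giving exactly the defining condition of $\cM[\cX]$. The extra bookkeeping you supply (identities via the triangle identity for the fully faithful $b_A$, closure of the conditions under composition) is left implicit in the paper but is consistent with it.
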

\begin{proof}
By Lemma~\ref{lemma.def.corr}, an object $(A^\circ, \cF_A)\in \Fun^{\sf rel}_{\bDelta^{\op}}( \Ar^{\cls}(\bDelta^{\op}), \un{\cX})$ belongs to $\Morita[\cX]$ if and only $\cF_A \ra b_{A\ast}b_A^\ast \cF_A$ is an equivalence, from which the result immediately follows.
\end{proof}

\begin{lemma}
For $\cC:\bDelta^{\op}\ra \cX$ a simplicial object in $\cX$, the essential image of the composite functor $\cC\circ S$ lies in $\cM[\cX]$. If $\cC$ is Segal, then the essential image of $\cC\circ S$ lies in $\Morita[\cX]$. That is, there are unique factorizations:
\[
\xymatrix{
&& \Morita[\cX]\ar[d]\\
&&\cM[\cX]\ar[d]\\
\TwAro(\bDelta^{\op})\ar[r]_-S\ar@{-->}[urr]\ar@{-->}[uurr]^{\exists ~ {\rm if}~\cC \ {\rm Segal}}&\Fun^{\sf rel}_{\bDelta^{\op}}( \Ar^{\cls}(\bDelta^{\op}), \un{\bDelta^{\op}})\ar[r]_-\cC&\Fun^{\sf rel}_{\bDelta^{\op}}( \Ar^{\cls}(\bDelta^{\op}), \un{\cX})}
\]
\end{lemma}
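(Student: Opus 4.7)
The plan is to verify both factorizations directly, using the explicit formula for $S$ from Observation~\ref{obs.S.description} together with the combinatorial description of $C^\varphi$ in Observation~\ref{obs.key}.

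First, I would tackle the factorization through $\cM[\cX]$. A morphism in $\TwAro(\bDelta^{\op})$ is given by a commutative square in $\bDelta$
\[
\xymatrix{
A_1 \ar[r]^{f} & A_0 \\
B_1 \ar[r]^{g}\ar[u]^{\varphi_1} & B_0\ar[u]_{\varphi_0}
}
\]
from which the identity $\varphi_0(b_0) = f(\varphi_1(g(b_0)))$ holds for all $b_0 \in B_0$. By Lemma~\ref{lemma.def.corr}, the defining condition of $\cM[\cX]$ on the induced morphism reduces, for each closed edge $\{i-1,i\} \subset A_1$, to the claim that the iterated fiber product
\[
\cC(\{f(i-1), f(i-1)+1\}^{\varphi_0}) \times_{\cC(\{f(i-1)+1\}^{\varphi_0})} \cdots \times_{\cC(\{f(i)-1\}^{\varphi_0})} \cC(\{f(i)-1, f(i)\}^{\varphi_0})
\]
collapses to $\cC([f(i-1), f(i)]^{\varphi_0})$. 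From Observation~\ref{obs.key}, this happens as soon as no $\varphi_0$-image lies strictly between $f(i-1)$ and $f(i)$ in $A_0$: then every sub-edge $\{j-1,j\}^{\varphi_0}$ and interior sub-vertex $\{j\}^{\varphi_0}$ along $[f(i-1), f(i)]$ coincides with the common interval $[f(i-1), f(i)]^{\varphi_0}$, and all connecting maps are identities. The absence of such images follows at once from the commutativity identity above: any $\varphi_0(b_0) \in (f(i-1), f(i))$ would force $\varphi_1(g(b_0))$ into the empty integer interval $(i-1, i)$ by monotonicity of $f$.

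Second, for the factorization through $\Morita[\cX]$ under the Segal hypothesis on $\cC$: since $\Morita[\cX] \subset \cM[\cX]$ is the full $\oo$-subcategory characterized by the object-level Segal condition, it suffices to check that each value $(A^\circ, \cC \circ \varphi)$ satisfies the Segal condition, i.e., that $\cC \circ \varphi \colon (\bDelta_{/^{\cls}A})^{\op} \to \cX$ sends Segal covers to limits. For a closed $C = \{c_0 < \cdots < c_k\} \subset A$, I would verify the identities
\[
C^\varphi = \{c_0, c_1\}^\varphi \cup \cdots \cup \{c_{k-1}, c_k\}^\varphi, \qquad
\{c_{i-1},c_i\}^\varphi \cap \{c_i, c_{i+1}\}^\varphi = \{c_i\}^\varphi,
\]
so that this convex cover presents $C^\varphi$ as an iterated pushout in $\bDelta$ along the intersections. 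These identities follow from a direct computation with Observation~\ref{obs.key}: writing $P := \{\varphi(b) : b\in B\} \cup \{0, \max A\}$ for the $\varphi$-breakpoints (with endpoints appended), both sides of each equation reduce to an interval between consecutive elements of $P$ determined by $c_i$. Iterated application of the Segal condition on $\cC$ then supplies the desired equivalence
\[
\cC(C^\varphi) \simeq \cC(\{c_0, c_1\}^\varphi) \times_{\cC(\{c_1\}^\varphi)} \cdots \times_{\cC(\{c_{k-1}\}^\varphi)} \cC(\{c_{k-1}, c_k\}^\varphi),
\]
which is exactly the Segal condition for $\cC \circ \varphi$ on $C$.

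The main point requiring care will be the bookkeeping of the intersection identity, which bifurcates according to whether $c_i \in P$ (so $\{c_i\}^\varphi = \{c_i\}$) or $c_i \notin P$ (so $\{c_i\}^\varphi$ is a strictly larger interval bounded by $P$-breakpoints). Both branches ultimately yield the same interval matching $\{c_{i-1},c_i\}^\varphi \cap \{c_i, c_{i+1}\}^\varphi$, but tracking the endpoints across these cases, together with the boundary situations $c_0$ and $c_k$, is the central combinatorial input.
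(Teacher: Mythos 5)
Your proposal is correct and follows essentially the same route as the paper: for the $\cM[\cX]$ condition the paper also reduces to the collapse of the iterated fiber product over ${\sf Hull}(f(C_1))$, using the containment $\varphi_0(B_0)\subset f(A_1)$ (your monotonicity observation) to see that all interior vertices and all edges have the same $(-)^{\varphi_0}$-value, the only difference being that the paper packages the endpoint bookkeeping via an explicit localization $Q\colon\bDelta^{\leq 1}_{/^{\cls}{\sf Hull}(f(C_1))}\to\bDelta^{\leq 1}_{/^{\cls}f(C_1)}$ and an initiality argument. For the Segal case, your interval identities are exactly the content of Observations~\ref{observation3} and~\ref{observation4}, which is how the paper disposes of that half.
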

\begin{proof}
The factorization for $\cC$ Segal follows from Observation~\ref{observation3} and Observation~\ref{observation4}. It remains to prove the first factorization, that for any simplicial object $\cC$ the composite $\cC\circ S$ belongs to $\cM[\cX]$. That is, we show that for any morphism in $\TwAro(\bDelta^{\op})$, the image under $\cC\circ S$ satisfies the condition of Definition~\ref{def.CX}. Let $f=(f,g)$ be such a morphism, consisting of a commutative diagram
\[
\xymatrix{
A_0^\circ\ar[d]_-{\varphi_0^\circ}\ar[r]^-{f^\circ}&A_1^\circ\ar[d]^-{\varphi_1^\circ}\\
B_0^\circ&\ar[l]^-{g^\circ}B_1^\circ}
\]
in $\bDelta^{\op}$. The functor $\cC\circ S$ carries this morphism to the natural transformation $\Phi_f$
\[
\xymatrix{
\bigl(\bDelta_{/^{\cls}A_0}\bigr)^{\op}\ar[r]^-{\varphi_0}&\bigl(\bDelta_{/^{\cls}A_0}\bigr)^{\op}\ar[r]^-{\rm forget}&\bDelta^{\op}\ar[dr]^-\cC\\
&\Phi_f\Downarrow&&\cX\\
\bigl(\bDelta_{/^{\cls}A_1}\bigr)^{\op}\ar[r]^-{\varphi_1}\ar[uu]^-{{\sf Hull}(f)}&\bigl(\bDelta_{/^{\cls}A_1}\bigr)^{\op}\ar[r]^-{\rm forget}&\bDelta^{\op}\ar[ur]_-\cC}
\] 
where $\Phi_f$ takes the value, for each $C_1\subset A_1$ a convex subset, as the morphism in $\cX$:
\[
\xymatrix{
\cC\Bigl({\sf Hull}(f(C_1))^{\varphi_0}\Bigr)\ar@{=}[r]&\cC\Bigl(
\bigl[
{\sf sup}\{\varphi_0(b_0)|\varphi_0(b_0)\leq f({\sf min} (C_1))\},
{\sf inf}\{\varphi_0(b_0)| \varphi_0(b_0\geq f({\sf max} (C_1))\}
\bigr]^\circ
\Bigr)
\ar[d]^-{\cC(f_|^\circ)}\\
\cC(C_1^{\varphi_1})\ar@{=}[r]&\cC\Bigl(
\bigl[
{\sf sup}\{\varphi_1(b_1)|\varphi_1(b_1)\leq {\sf min} (C_1)\},
{\sf inf}\{\varphi_1(b_1)| \varphi_1(b_1)\geq {\sf max}(C_1)\}
\bigr]^\circ
\Bigr)
}
\]
Denote by $\cF$ the top horizontal composite: $\cF :=\cC\circ {\rm forget}\circ\varphi_0$. Let $C_1\subset A_1$ be an object of $\bDelta^{\leq 1}_{/^{\cls}A_1}$. Then
\[
b_{A_1}^\ast {\sf Hull}(f)^\ast \cF (C_1^\circ) 
\simeq
\cC\Bigl(
\bigl[
{\sf sup}\{\varphi_0|\varphi_1(b_0)\leq f(C_1)\},
{\sf inf}\{\varphi_0(b_0)|\varphi_0(b_0)\geq f(C_1)\}
\bigr]^\circ
\Bigr)
\]
while
\[
b_{A_1}^\ast {\sf Hull}(f)^\ast b_{A_0\ast} b_{A_0}^\ast\cF (C_1^\circ) 
\simeq
b_{A_0\ast} b_{A_0}^\ast\cF\bigl({\sf Hull}(f(C_1))^\circ\bigr)
\]
\[
\simeq
{\sf lim}\Bigl(
\bigl(\bDelta^{\leq 1}_{/^{\cls}{\sf Hull}(f(C_1))}\bigr)^{\op}
\hookrightarrow
\bigl(\bDelta_{/^{\cls}A_0}\bigr)^{\op}
\xra{\varphi_0}
\bigl(\bDelta_{/^{\cls}A_0}\bigr)^{\op}
\xra{\rm forget}
\bDelta^{\op}
\xra{\cC}
\cX
\Bigr)
\]
is the limit of the composite functor which is given by
\[
(C_0 \subset{\sf Hull}(f(C_1)))^\circ \mapsto
\cC\Bigl(
\bigl[
{\sf sup}\{
\varphi_0(b_0)|\varphi_0(b_0)\leq {\sf min}(C_0)\},
{\sf inf}\{
\varphi_0(b_0)|\varphi_0(b_0)\geq {\sf max}(C_0)\}
\bigr]
\Bigr)~.
\]
We show that the canonical morphism in $\cX$ defined by the unit of the adjunction $(b_{A_0}^\ast \dashv b_{A_0\ast})$
\[
b_{A_1}^\ast{\sf Hull}(f)^\ast\cF(C_1^\circ)
\xra{\sf unit}
b_{A_1}^\ast{\sf Hull}(f)^\ast b_{A_0\ast}b_{A_0}^\ast\cF(C_1^\circ)
\]
is an equivalence, again for $(C_1\subset A_1) \in\bDelta^{\leq 1}_{/^{\cls}A_1}$. To establish this, we claim that there is a localization functor
\[
\bDelta^{\leq 1}_{/^{\cls}{\sf Hull}(f(C_1))}\overset{Q}\longrightarrow \bDelta^{\leq 1}_{/^{\cls}f(C_1)}
\]
together with a factorization
\[
\xymatrix{
\ar@{-->}[d]_-{Q^{\op}}\bigl(\bDelta^{\leq 1}_{/^{\cls}{\sf Hull}(f(C_1))}\bigr)^{\op} \ar[r]&\bigl(\bDelta_{/^{\cls}A_0}\bigr)^{\op}\ar[r]^-{\varphi_0}
&\bigl(\bDelta_{/^{\cls}A_0}\bigr)^{\op}
\\
\bigl(\bDelta^{\leq 1}_{/^{\cls}f(C_1)}\bigr)^{\op}\ar[r]&\bigl(\bDelta_{/^{\cls}A_0}\bigr)^{\op}\ar[ur]_-{\varphi_0}~.
}
\]
Here, the functor $Q$ is defined to take values on $I \in \bDelta^{\leq 1}_{/^{\cls}{\sf Hull}(f(C_1))}$
\[
  Q(I) =
    \begin{cases}
     \{ {\sf min}(f(C_1))\} & \text{if $I = \{ {\sf min}(f(C_1))\} $}\\
     \{ {\sf max}(f(C_1))\} & \text{if $I = \{ {\sf max}(f(C_1)) \} $}\\
      f(C_1) & \text{otherwise}
    \end{cases}       
    ~.
\]

Let $I\subset {\sf Hull}(f(C_1))$ be an object of $\bDelta^{\leq 1}_{/^{\cls}{\sf Hull}(f(C_1))}$. Observe the following containments of subsets of $A_0$:
\[
\{\varphi_0(b_0)\in \varphi_0(B_0)|{\sf min}(f(C_1))<\varphi_0(b_0)\leq {\sf min}(I)\}
\overset{(1)}\subset
\Bigl({\sf Hull}(f(C_1))\smallsetminus \{{\sf min}(f(C_1))\}\Bigr)\cap \varphi_0(B_0)
\]
\[
\overset{(2)}\subset
\Bigl({\sf Hull}(f(C_1))\smallsetminus \{{\sf min}(f(C_1))\}\Bigr)\cap f(A_1)
\overset{(3)}\subset
\Bigl(f(C_1))\smallsetminus \{{\sf min}(f(C_1))\}\Bigr)
\overset{(4)}\subset
\{{\sf max}(f(C_1))\}
\]
where containment (1) is immediate from the definition of the hull; (2) follows from the containment $\varphi_0(B_0)\subset f(A_1)$; (3) follows from the equality ${\sf Hull}(f(S))\cap f(A_1) = f(S)$, for any subset $S\subset A_1$; and (4) follows from the assumption on the cardinality $1\leq |C_1|\leq 2$.

Therefore:
\[
  {\sf sup}\{\varphi_0(b_0)|\varphi_0(b_0)\leq {\sf min}(I)\} =
    \begin{cases}
      {\sf max}(C_1) & \text{${\sf max}(C_1)\in \varphi_0(B_0)$ and $I=\{{\sf max}(f(C_1)) \}$ }\\
      {\sf sup}\{\varphi_0(b_0)\in\varphi_0(B_0)|\varphi_0(b_0)\leq {\sf min}(f(C_1))\}  & \text{otherwise}
    \end{cases}       
\]
and similarly
\[
  {\sf inf}\{\varphi_0(b_0)|\varphi_0(b_0)\geq {\sf max}(I)\} =
    \begin{cases}
      {\sf min}(C_1) & \text{${\sf min}(C_1)\in \varphi_0(B_0)$ and $I=\{{\sf min}(f(C_1)) \}$}\\
        {\sf inf}\{\varphi_0(b_0)\in\varphi_0(B_0)|\varphi_0(b_0)\geq {\sf max}(f(C_1))\}  & \text{otherwise}
    \end{cases}       
\]
Therefore,
\[
  I^{\varphi_0}=
    \begin{cases}
         \{ {\sf max}(f(C_1))\} & \text{if ${\sf max}(C_1)\in \varphi_0(B_0)$ and $I=\{{\sf max}(f(C_1)) \}$ }\\
     \{ {\sf min}(f(C_1))\} & \text{if ${\sf min}(C_1)\in \varphi_0(B_0)$ and $I=\{{\sf min}(f(C_1)) \}$ }\\
      f(C_1)^{\varphi_0} & \text{otherwise}
    \end{cases}    
    ~.   
\]

This shows the factorization through $Q:\bDelta^{\leq 1}_{/^{\cls}{\sf Hull}(f(C_1))}\ra \bDelta^{\leq 1}_{/^{\cls}f(C_1)}
$. We now show that $Q$ is a localization. 
The source $\bDelta^{\leq 1}_{/^{\cls}{\sf Hull}(f(C_1))}$ is equivalent to the category 
\[
\Bigl\{ \{h_0\}\ra \{h_0 < h_1\} \la \{h_1\}\ra \{h_1< h_2\}\la  \ldots \ra \{h_{q-1}< h_q\}\la \{h_q\}\Bigr\}
\]
where $h_0 = {\sf min}(f(C_1))$ and $h_q = {\sf max}(f(C_1))$. The functor $Q$ is then given by:
\[
  Q(\{h_{i-1}<h_i\}) = f(C_1)\\
  \]
  \[
  Q(\{h_i\}) =
    \begin{cases}
     \{ {\sf min}(f(C_1))\} & \text{if $k=0$}\\
     \{ {\sf max}(f(C_1))\} & \text{if $k=q$}\\
      f(C_1) & \text{otherwise}
    \end{cases}       
\]
And so $Q$ witnesses a localization with respect to the set of morphisms whose source is not $\{h_0\}$ or $\{h_q\}$. Since localizations are final and initial, we obtain in particular that the functor $Q^{\op}$ is initial. We thus obtain that the canonical morphism
\[
\xymatrix{
\cC(f(C_1)^{\varphi_0})\simeq
{\sf lim}\Bigl(
\bigl(\bDelta^{\leq 1}_{/^{\cls}f(C_1)}\bigr)^{\op}
\xra{\varphi_0}
\bigl(\bDelta_{/^{\cls}A_0}\bigr)^{\op}
\xra{\cC}
\cX
\Bigr)\ar[d]\\
{\sf lim}\Bigl(
\bigl(\bDelta^{\leq 1}_{/^{\cls}{\sf Hull}(f(C_1))}\bigr)^{\op}
\xra{\varphi_0}
\bigl(\bDelta_{/^{\cls}A_0}\bigr)^{\op}
\xra{\cC}
\cX
\Bigr)\simeq b_{A_0\ast} b_{A_0}^\ast\cF\bigl({\sf Hull}(f(C_1))^\circ\bigr)
}
\]
is an equivalence. That is, the unit morphism
\[
\cC(f(C_1)^{\varphi_0})\simeq b_{A_1}^\ast{\sf Hull}(f)^\ast\cF(C_1^\circ)
\ra b_{A_1}^\ast{\sf Hull}(f)^\ast b_{A_0\ast}b_{A_0}^\ast\cF(C_1^\circ)\simeq \cC(f(C_1)^{\varphi_0})
\] is an equivalence, which completes the proof.
\end{proof}

\begin{lemma}
If $\cX$ has finite limits, the inclusion $\Morita[\cX]\ra \cM[\cX]$ admits a left adjoint $\sL$, which commutes with projection to $\bDelta^{\op}$.
\end{lemma}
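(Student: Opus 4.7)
The plan is to define the left adjoint $\sL$ on objects by fiberwise Segalification: $\sL(A^\circ, \cF_A) := (A^\circ, b_{A\ast} b_A^\ast \cF_A)$. By Lemma~\ref{lemma.def.corr}, together with full-faithfulness of $b_A$ (which supplies $b_A^\ast b_{A\ast} \simeq \id$), this pair lies in $\Morita[\cX]$, so $\sL$ has the claimed target on objects. The unit of the intended adjunction is the morphism $(A^\circ, \cF_A) \to \sL(A^\circ, \cF_A)$ lying over $\id_{A^\circ}$ and determined by the unit $\cF_A \to b_{A\ast} b_A^\ast \cF_A$ of the $(b_A^\ast \dashv b_{A\ast})$-adjunction; this morphism lies in $\cM[\cX]$ because, with $\varphi = \id$ and ${\sf Hull}(\varphi) = \id$, the condition of Definition~\ref{def.CX} reduces to a triangle identity: $b_A^\ast \cF_A \to b_A^\ast b_{A\ast} b_A^\ast \cF_A \simeq b_A^\ast \cF_A$ is the identity.

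For the universal property, given any morphism $(\varphi, \eta)\colon (A^\circ, \cF_A) \to (B^\circ, \cF_B)$ in $\cM[\cX]$ whose target belongs to $\Morita[\cX]$, I would construct a unique factorization through the unit as follows. Since $\cF_B \simeq b_{B\ast} b_B^\ast \cF_B$, the $(b_B^\ast \dashv b_{B\ast})$-adjunction identifies $\eta$ with its restriction $b_B^\ast {\sf Hull}(\varphi)^\ast \cF_A \to b_B^\ast \cF_B$. The defining condition of $\cM[\cX]$ supplies an equivalence $b_B^\ast {\sf Hull}(\varphi)^\ast \cF_A \simeq b_B^\ast {\sf Hull}(\varphi)^\ast b_{A\ast} b_A^\ast \cF_A$, yielding an equivalent map $b_B^\ast {\sf Hull}(\varphi)^\ast b_{A\ast} b_A^\ast \cF_A \to b_B^\ast \cF_B$, which adjointly transposes back to $\tilde\eta \colon {\sf Hull}(\varphi)^\ast b_{A\ast} b_A^\ast \cF_A \to b_{B\ast} b_B^\ast \cF_B \simeq \cF_B$. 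The pair $(\varphi, \tilde\eta)$ is a morphism $\sL(A^\circ, \cF_A) \to (B^\circ, \cF_B)$ in $\Morita[\cX]$, and the chain of equivalences above exhibits this correspondence as a natural equivalence of mapping spaces. Compatibility with $\ev_s$ is manifest: $\sL$ preserves the $\bDelta^{\op}$-component $A^\circ$ by construction, and the unit lives over identity morphisms.

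The main obstacle is not the adjunction argument itself but organizing these identifications into a coherent functor at the $(\oo,1)$-categorical level. A clean way to handle this is to first note that the assignment $\cF \mapsto b_{A\ast} b_A^\ast \cF$ is a fiberwise reflective localization of the coCartesian fibration $\Fun^{\sf rel}_{\bDelta^{\op}}(\Ar^{\cls}(\bDelta^{\op}), \un{\cX}) \to \bDelta^{\op}$ (which is where the finite-limits hypothesis on $\cX$ enters, to form each $b_{A\ast}$), and then to recognize $\cM[\cX]$ as precisely the $\oo$-subcategory carved out so that the pointwise units assemble into a natural transformation over $\bDelta^{\op}$. Once this reformulation is in place, the existence of $\sL$ and the adjunction $(\sL \dashv \text{incl})$ over $\bDelta^{\op}$ follow from the relative-functor analogue of the standard fact that a localization in each fiber, which is natural in the base, gives rise to a reflective localization on the total category.
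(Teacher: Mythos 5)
Your proposal is correct and follows essentially the same route as the paper: you define $\sL(A^\circ,\cF) := (A^\circ, b_{A\ast}b_A^\ast\cF)$, verify it lands in $\Morita[\cX]$ via $b_A^\ast b_{A\ast}\simeq \id$, and establish the universal property by exactly the same interplay of the $\Morita[\cX]$-condition on the target and the $\cM[\cX]$-condition on the given morphism. The only (cosmetic) difference is in how functoriality is packaged — the paper simply invokes the pointwise criterion that each undercategory $\Morita[\cX]^{(A^\circ,\cF)/}$ has an initial object, whereas you sketch a fiberwise-localization framework; both resolve the same coherence issue.
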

\begin{proof}
Let $(A^\circ, (\bDelta_{/^{\cls}A})^{\op}\xra\cF\cX)$ be an object of $\cM[\cX]$. 
By Lemma~2.17 of~\cite{fibrations}, it suffices to show $\Morita[\cX]^{(A^\circ,\cF)/}$ has an initial object, $\sL(A^\circ, \cF)$.
Define $\sL(A^\circ, \cF) := (A^\circ, (\bDelta_{/^{\cls}A})^{\op}\xra{b_{A\ast}b_A^\ast\cF}\cX)$. This object exists, since $\cX$ has finite limits. By the equivalence $b_{A\ast}b_A^\ast \simeq {\sf id}_{(\bDelta_{/^{\cls}A})^{\op}}$, we have 
\[
b_{A\ast}b_A^\ast \cF \simeq b_{A\ast}b_A^\ast b_{A\ast}b_A^\ast \cF~.
\]
Therefore $\sL(A^\circ, \cF) $ is an object of $\Morita[\cX]$. By construction, there is a canonical morphism $(A^\circ, \cF)\ra \sL(A^\circ,\cF)$ over ${\sf id}_{A^\circ}$ in $\bDelta^{\op}$. Therefore, we have defined an object $\sL(A^\circ,\cF)\in \Morita[\cX]^{(A^\circ,\cF)/}$.

We now show that the object $\sL(A^\circ,\cF)$ is initial in $ \Morita[\cX]^{(A^\circ,\cF)/}$. So let
\[
(A^\circ\xra{\varphi^\circ}B^\circ,  (\bDelta_{/^{\cls}B})^{\op}\xra\cG\cX, \Phi)
\]
be an object of $\Morita[\cX]^{(A^\circ,\cF)/}$, where $\Phi$ is a natural transformation
\[
\xymatrix{
\bigl(\bDelta_{/^{\cls}A}\bigr)^{\op}\ar[rr]^-{\cF}&&\cX\\
\bigl(\bDelta_{/^{\cls}B}\bigr)^{\op}\ar[u]^-{{\sf Hull}(\varphi)}\ar[urr]^-{ \Downarrow\Phi}_{\cG}}~.
\]
We must show that there is a unique factorization in $\Fun((\bDelta_{/^{\cls}B})^{\op},\cX)$
\[
\xymatrix{
&{\sf Hull}(\varphi)^\ast b_{A\ast}b_A^\ast\cF\ar@{-->}[dr]^-{\exists!}\\
{\sf Hull}(\varphi)^\ast\cF\ar[ur]\ar[rr]^-\Phi&&\cG~.}
\]
Consider the solid diagram in $\Fun((\bDelta_{/^{\cls}B})^{\op},\cX)$:
\[
\xymatrix{
\ar[dd]_-\Phi{\sf Hull}(\varphi)^\ast\cF\ar[dr]^-{{\sf Hull}(\varphi)^\ast{\sf unit}}\ar[rr]^-{\sf unit}
&& 
b_{B\ast}b_B^\ast{\sf Hull}(\varphi)^\ast\cF\ar[dr]^-{b_{B\ast}b_B^\ast{\sf Hull}(\varphi)^\ast{\sf unit}}\ar[dd]^(.3){b_{B\ast}b_B^\ast\Phi}
\\
&
{\sf Hull}(\varphi)^\ast b_{A\ast}b_A^\ast\cF
\ar@{-->}[dl]^-{\exists!}\ar[rr]^(.3){\sf unit}
&&
b_{B\ast}b_B^\ast{\sf Hull}(\varphi)^\ast b_{A\ast}b_A^\ast\cF\\
\cG\ar[rr]^{\sf unit}
&&
b_{B\ast}b_B^\ast\cG}
.
\]
By definition of $\Morita[\cX]$, the unit morphism $\cG\ra b_{B\ast}b_B^\ast\cG$ is an equivalence. By definition of $\cM[\cX]$, the top left morphism $b_{B\ast}b_B^\ast{\sf Hull}(\varphi)^\ast{\sf unit}$ is an equivalence. The existence and uniqueness of the dashed filler  then follows.
\end{proof}

\begin{definition}\label{def.Seg}
Let $\cX$ be a presentable $(\oo,1)$-category in which colimits are universal. The endofunctor $\Seg:\Fun(\bDelta^{\op}, \cX)\ra\Fun(\bDelta^{\op}, \cX)$ sends a simplicial object $\cC$ to the left Kan extension
\[
\xymatrix{
\ar[d]_-{\ev_s}\TwAro(\bDelta^{\op})\ar[r]^-S&\Morita[\bDelta^{\op}]\ar[r]^-\cC&\cM[\cX]\ar[r]^-\sL&\Morita[\cX]\ar[r]^-{\sf LIM}&\cX\\
\bDelta^{\op}\ar@{-->}[urrrr]_-{\Seg(\cC)=(\ev_{s})_!({\sf LIM}\circ\sL\circ\cC\circ S)}}
\]
\end{definition}
\begin{observation}\label{obs.Segal.basecase.contact}
Given $\cC: \bDelta^{\op}\ra \cX$, the value
\[
\xymatrix{
\TwAro(\bDelta^{\op})\ar[r]^-S&\Morita[\bDelta^{\op}]\ar[r]^-\cC&\cM[\cX]\ar[r]^-\sL&\Morita[\cX]\ar[r]^-{\sf LIM}&\cX}
\]
is given by
\begin{itemize}
\item $(\cC\circ S)([a]^\circ\xra{\varphi^\circ}[b]^\circ): (\bDelta^{\op}_{/^{\cls}[a]})^{\op}\ra \cX$ is the functor sending $(C\subset[a])^\circ$ to $\cC((C^\varphi)^\circ)$.
\item $(\sL\circ \cC\circ S)([a]^\circ\xra{\varphi^\circ}[b]^\circ): (\bDelta^{\op}_{/^{\cls}[a]})^{\op}\ra \cX$ is the functor sending $(C\subset[a])^\circ$ to 
\[
\cC((\{c_0<c_1\}^\varphi)^\circ)\underset{\cC((\{c_1\}^\varphi)^\circ)}\times\ldots\underset{\cC((\{c_{q\text{-}1}\}^\varphi)^\circ)}\times\cC((\{c_{q\text{-}1}<c_q\}^\varphi)^\circ)
\]
where $C= \{c_0<\ldots < c_q\}$.
\item The composite $({\sf LIM}\circ\sL\circ \cC\circ S)([a]^\circ\xra{\varphi^\circ}[b]^\circ)\in \cX$ is then the value
\[
\cC((\{0<1\}^\varphi)^\circ)\underset{\cC((\{1\}^\varphi)^\circ)}\times\ldots\underset{\cC((\{a\text{-}1\}^\varphi)^\circ)}\times\cC((\{a\text{-}1<a\}^\varphi)^\circ)
\]
\[
\simeq \cC([0,\varphi(0)]^\circ)\underset{\simeq \cC(\{\varphi(0)\}^\circ)}\times \cC([\varphi(0),\varphi(1)]^\circ)
\underset{\cC(\{\varphi(1)\}^\circ)}\times\ldots\underset{\cC(\{\varphi(b\text{-}1)\}^\circ)}\times
\cC([\varphi(b\text{-}1),\varphi(b)]^\circ)
\underset{\cC(\{\varphi(b)\}^\circ)}\times
\cC([\varphi(b),a]^\circ)
\]
by Observation~\ref{obs.key}.
\end{itemize}
\end{observation}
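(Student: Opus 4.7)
The plan is to establish the three bullets in sequence, each by unpacking definitions and invoking one prior result.

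The first bullet is immediate from Observation~\ref{obs.S.description}: that description identifies $S$ on objects as sending $([a]^\circ \xra{\varphi^\circ} [b]^\circ)$ to the pair $\bigl([a]^\circ, \varphi\bigr)$, where $\varphi : (\bDelta_{/^{\cls}[a]})^{\op} \to \bDelta^{\op}$ carries $(C \subset [a])^\circ$ to $(C^\varphi)^\circ$. Post-composing in the second coordinate with $\cC$ (which is how morphisms in the fiber of $\Fun^{\sf rel}_{\bDelta^{\op}}(\Ar^{\cls}(\bDelta^{\op}),\un{-})$ compose with functors between the targets) then produces the asserted formula $(C \subset [a])^\circ \mapsto \cC((C^\varphi)^\circ)$.

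For the second bullet, I would appeal to the formula $\sL(A^\circ, \cF) = (A^\circ, b_{A\ast}b_A^\ast \cF)$ from the lemma constructing $\sL$ as a left adjoint to the inclusion $\Morita[\cX] \hookrightarrow \cM[\cX]$. The right Kan extension $b_{A\ast}b_A^\ast \cF$ evaluated at $(C \subset A)^\circ$ with $C = \{c_0 < \ldots < c_q\}$ is the limit of $b_A^\ast \cF$ over $(\bDelta^{\leq 1}_{/^{\cls}C})^{\op}$. Since $\bDelta^{\leq 1}_{/^{\cls}C}$ has the Segal diagram shape $\{c_0\} \subset \{c_0 < c_1\} \supset \{c_1\} \subset \cdots \supset \{c_q\}$, the limit of this diagram in $\cX$ is the iterated pullback displayed in the statement. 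Taking $\cF = \cC \circ S([a]^\circ \xra{\varphi^\circ} [b]^\circ)$, whose value on $(D \subset [a])^\circ$ is $\cC((D^\varphi)^\circ)$ by the first bullet, gives the asserted formula.

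For the third bullet, I would first note that by Definition~\ref{def.LIM} the functor ${\sf LIM}$ evaluates its argument on the terminal identity object $(A \subset A)$ of $\bDelta_{/^{\cls}A}$. Substituting $C = [a]$ in the second-bullet formula immediately produces the first display. To identify this with the second display, I would apply Observation~\ref{obs.key} to compute each $\{i\}^\varphi$ and $\{i-1<i\}^\varphi$ case by case. The crucial combinatorial observation is that whenever $\varphi(j-1) < i < \varphi(j)$, one has $\{i\}^\varphi = \{i-1<i\}^\varphi = \{i<i+1\}^\varphi = [\varphi(j-1),\varphi(j)]$, and under the $\varphi$ operator the structure inclusions $\{i\} \subset \{i-1<i\}$ and $\{i\} \subset \{i<i+1\}$ become identities on $[\varphi(j-1),\varphi(j)]$. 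Consecutive factors in the iterated pullback sharing this common value therefore collapse via identities to a single factor $\cC([\varphi(j-1),\varphi(j)]^\circ)$. A parallel analysis handles the boundary cases $i \leq \varphi(0)$ and $\varphi(b) \leq i-1$. The bonding objects that survive this collapse are precisely those $\{i\}^\varphi$ with $i = \varphi(j)$ for some $j$, which by Observation~\ref{obs.key} equal the singletons $\{\varphi(j)\}$. Reindexing the resulting pullback by $0 \leq j \leq b$ yields the second displayed formula. The only step requiring care is this final collapse, but it is a routine case analysis directly dictated by Observation~\ref{obs.key} and poses no serious obstacle.
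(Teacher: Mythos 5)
Your proposal is correct and follows exactly the route the paper intends: the statement is presented as an unproved Observation, and your argument simply unpacks Observation~\ref{obs.S.description}, the formula $\sL(A^\circ,\cF)=(A^\circ, b_{A\ast}b_A^\ast\cF)$, Definition~\ref{def.LIM}, and Observation~\ref{obs.key} in sequence, with the collapse of consecutive factors sharing the value $[\varphi(j\text{-}1),\varphi(j)]$ being the only (routine) computation. No gaps.
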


\begin{observation} 
The unit of the adjunction $\cM[\cX]\leftrightarrows \Morita[\cX]$ determines a canonical morphism $\eta_\cC:\cC \ra\Seg(\cC)$ for any $\cC \in \Fun(\bDelta^{\op},\cX)$. 
Using that colimits in $\cX$ are universal, if $\cC$ is Segal, then $\eta_\cC$ is an equivalence. 
\end{observation}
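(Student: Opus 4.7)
The plan is to extract $\eta_\cC$ directly from the unit of the reflective localization $\sL : \cM[\cX] \rightleftarrows \Morita[\cX] : \iota$, and then to verify the equivalence assertion by reducing it to the factorization property of $\cC \circ S$ that has already been established for Segal $\cC$.

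To construct $\eta_\cC$, I would start from the lemma preceding Definition~\ref{def.CX}, which asserts that for every simplicial object $\cC$ the composite $\cC \circ S : \TwAro(\bDelta^{\op}) \to \cM[\cX]$ is well-defined. Applying the unit $\mathsf{id}_{\cM[\cX]} \to \iota\sL$ pointwise yields a natural transformation
\[
\cC \circ S \;\longrightarrow\; \iota\sL(\cC \circ S)
\]
of functors $\TwAro(\bDelta^{\op}) \to \cM[\cX]$. Post-composing with ${\sf LIM}$ and then applying the left Kan extension functor $(\ev_s)_!$ produces a map
\[
(\ev_s)_!\bigl({\sf LIM}\circ \cC \circ S\bigr) \;\longrightarrow\; (\ev_s)_!\bigl({\sf LIM}\circ \sL\circ \cC \circ S\bigr) \;=\; \Seg(\cC),
\]
whose source is canonically identified with $\cC$ by the corollary to Lemma~\ref{lemma.leftKan.id}. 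This is the desired $\eta_\cC$, and it is visibly natural in $\cC$ because the unit, ${\sf LIM}$, and $(\ev_s)_!$ are all natural.

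To show $\eta_\cC$ is an equivalence when $\cC$ is Segal, I would invoke the second factorization statement in the same lemma, which guarantees that in the Segal case, $\cC \circ S$ lifts through the fully faithful inclusion $\iota : \Morita[\cX] \hookrightarrow \cM[\cX]$. Because the unit of a reflective localization is an equivalence on objects in the image of the fully faithful right adjoint, the natural transformation $\cC \circ S \to \iota\sL(\cC \circ S)$ is then a pointwise equivalence in $\cM[\cX]$. Post-composing with ${\sf LIM}$ preserves this pointwise equivalence, and $(\ev_s)_!$ is a left adjoint, hence preserves equivalences. Therefore $\eta_\cC$ is an equivalence.

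I do not foresee a substantive obstacle, as the argument is essentially a formal unwinding of the adjunction against the definition of $\Seg$. The only real care is in tracking the roles played by the universal-colimits hypothesis on $\cX$: this assumption is what ensures that the Kan-extension formula computing $\Seg(\cC)([a]^{\circ})$ commutes with the fiber products produced by $\sL$ internal to ${\sf LIM}$, so that the pointwise equivalence is preserved under $(\ev_s)_!$; a cross-check against the explicit formula in Observation~\ref{obs.Segal.basecase.contact}, where the telescoping iterated fiber product collapses to $\cC([a]^{\circ})$ precisely when $\cC$ satisfies the Segal condition, confirms that $\eta_\cC$ is an equivalence as claimed.
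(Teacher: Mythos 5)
Your construction and verification are correct and are exactly the intended unwinding: the paper states this as an observation without proof, and your argument---apply the unit of $\sL\dashv\iota$ pointwise to $\cC\circ S$, postcompose with ${\sf LIM}$, apply $(\ev_s)_!$, and use the factorization of $\cC\circ S$ through $\Morita[\cX]$ in the Segal case together with the corollary to Lemma~\ref{lemma.leftKan.id}---is the right one. The only inaccuracy is in your closing remark about where universality of colimits enters: $(\ev_s)_!$ preserves pointwise equivalences of functors unconditionally, so no commutation of colimits with fiber products is needed at that step; the hypothesis is part of the standing assumptions under which $\Seg$ is defined rather than a load-bearing ingredient of this particular verification (and note the factorization lemma you cite follows, rather than precedes, Definition~\ref{def.CX}).
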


\begin{definition}
A monad $T$ on an $(\oo,1)$-category $\cV$ is idempotent if the structure morphisms
\[
\xymatrix{
T\circ {\sf id}_\cV\ar[rd]^-{{\sf id}_T \circ {\sf unit}}\\
&T\circ T\ar[r]^-{{\sf mult}_T}&T\\
{\sf id}_\cV \circ T\ar[ur]^-{{\sf unit}\circ {\sf id}_T}}
\]
are equivalences. The $(\oo,1)$-category ${\sf Monad}^{\sf idemp}(\cV)$ is the full $\oo$-subcategory of ${\sf Monad}(\cV):=\Alg(\Fun(\cV,\cV))$ consisting of the idempotent monads.
\end{definition}
\begin{observation}
Since the diagram
\[
\xymatrix{
T\circ {\sf id}_\cV\ar[rd]_-{{\sf id}_T \circ {\sf unit}}\ar[drrr]^-{{\sf id}_T}\\
&T\circ T\ar[rr]^-{{\sf mult}_T}&&T\\
{\sf id}_\cV \circ T\ar[ur]^-{{\sf unit}\circ {\sf id}_T}\ar[urrr]_-{{\sf id}_T}}
\]
commutes, $T$ is idempotent if any one of the three structure morphisms are equivalences.
\end{observation}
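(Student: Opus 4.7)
The claim is a simple 2-out-of-3 style observation that follows purely from the monad unit laws, reinterpreted as commuting triangles. My plan is the following.

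First, I would observe that the two outer triangles of the displayed diagram are precisely the two unit laws of the monad $T$, namely ${\sf mult}_T \circ (T \circ {\sf unit}) \simeq {\sf id}_T$ and ${\sf mult}_T \circ ({\sf unit} \circ T) \simeq {\sf id}_T$. Thus each of the three morphisms under consideration (the two whiskerings of the unit and the multiplication) is a factor in an identity morphism, so each one is a section of ${\sf mult}_T$ and ${\sf mult}_T$ is a retraction of each.

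Next, I would invoke the standard fact about $(\infty,1)$-categories: if a composite $g\circ f$ is an equivalence and either of $f,g$ is an equivalence, then so is the other. Applying this to the two triangles ${\sf mult}_T \circ (T\circ {\sf unit}) \simeq {\sf id}_T$ and ${\sf mult}_T \circ ({\sf unit}\circ T) \simeq {\sf id}_T$ gives three implications: (a) if ${\sf mult}_T$ is an equivalence, then both $T\circ {\sf unit}$ and ${\sf unit}\circ T$ are equivalences as one-sided inverses of an equivalence; (b) if $T\circ {\sf unit}$ is an equivalence, then from the first triangle ${\sf mult}_T$ is an equivalence (as its left inverse is), and then from (a) it follows that ${\sf unit}\circ T$ is an equivalence; (c) symmetrically, if ${\sf unit}\circ T$ is an equivalence, then ${\sf mult}_T$ is an equivalence by the second triangle, and (a) delivers the third.

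Combining these three implications shows that the three morphisms ${\sf mult}_T$, $T\circ {\sf unit}$, and ${\sf unit}\circ T$ are simultaneously equivalences or simultaneously not, which is exactly the asserted equivalence of conditions. There is no real obstacle here: once the triangles are read as one-sided inverses, everything reduces to the 2-out-of-3 property for equivalences in an $(\infty,1)$-category, which is automatic.
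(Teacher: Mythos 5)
Your proof is correct and is exactly the argument the paper intends: the observation's only stated justification is the commutativity of the displayed diagram, whose outer triangles are the monad unit laws, and your spelling out of the one-sided-inverse/two-out-of-three bookkeeping is the implicit content. Nothing to add.
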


\begin{definition}
Let $\cV$ be an $(\oo,1)$-category. The full $\oo$-subcategory ${\sf RFull}(\cV)\subset \Cat_{(\infty,1)/\cV}$ has objects those functors $R: \cV_0 \ra \cV$ which are fully-faithful right-adjoints. That is, $R:\cV_0\ra \cV$ belongs to ${\sf RFull}(\cV)$ if and only if there exists a left adjoint $L:\cV \ra \cV_0$ for which the counit of the adjunction $LR \ra {\sf id}_{\cV_0}$ is an equivalence.

\end{definition}

\begin{lemma}\label{lemma.idemp.monads}
Let $\cV$ be an $(\oo,1)$-category. There is a natural equivalence of spaces
\[
\obj\Bigl({\sf RFull}(\cV)\Bigr)
\simeq
\obj\Bigl({\sf Monad}^{\sf idemp}(\cV)\Bigr)
\]
between the space of fully-faithful right-adjoints to $\cV$, and the space of idempotent monads on $\cV$.
\end{lemma}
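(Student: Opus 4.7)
The plan is to exhibit inverse constructions between the two spaces using that both parametrize reflective localizations of $\cV$, and then to invoke the standard $(\oo,1)$-categorical identification to upgrade from a bijection on isomorphism classes to an equivalence of spaces.

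Forward direction: given a fully-faithful right-adjoint $R \colon \cV_0 \ra \cV$ with left adjoint $L \colon \cV \ra \cV_0$, unit $\eta \colon \id_\cV \ra RL$, and counit $\epsilon \colon LR \ra \id_{\cV_0}$, I would form the endofunctor $T_R := R L \colon \cV \ra \cV$. The adjunction unit gives a monad unit on $T_R$, while the 2-cell $R \epsilon L \colon R L R L \ra R L$ supplies a multiplication; associativity and unit axioms follow from the triangle identities of the adjunction. Full-faithfulness of $R$ forces the counit $\epsilon$ to be an equivalence, and hence $R \epsilon L$ is an equivalence, so $T_R$ is idempotent.

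Backward direction: given an idempotent monad $T$ on $\cV$ with unit $\eta$, let $\cV^T \subset \cV$ denote the full $\oo$-subcategory spanned by the $T$-local objects, namely those $X$ for which $\eta_X \colon X \ra T X$ is an equivalence. Idempotence of $T$ (i.e., invertibility of the multiplication) means $T Y$ is $T$-local for every $Y \in \cV$, so $T$ factors canonically as $\cV \xra{L_T} \cV^T \hookrightarrow \cV$, and the monad unit $\eta$ exhibits $L_T$ as left adjoint to the inclusion $R_T \colon \cV^T \hookrightarrow \cV$, with $R_T$ fully-faithful by construction. I would then check that the two round-trips are identities: starting from $(R,L)$ and forming $T_R = R L$, an object $X \in \cV$ is $T_R$-local iff $\eta_X \colon X \ra R L X$ is an equivalence iff $X$ lies in the essential image of $R$, yielding a canonical equivalence $\cV^{T_R} \simeq \cV_0$ compatible with the left adjoints; starting from $T$, the composite $R_T L_T$ is canonically equivalent to $T$ as a monad on $\cV$.

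The main obstacle is organizing these pointwise assignments into a genuine equivalence of spaces (not merely a bijection on $\pi_0$). This coherence is exactly the content of the $(\oo,1)$-categorical reflective localization theorem of Lurie (Proposition~5.2.7.4 of \emph{Higher Topos Theory}), which identifies the space of reflective subcategories of $\cV$ with the space of idempotent endofunctors on $\cV$; since the datum of a fully-faithful right-adjoint to $\cV$ is tautologically equivalent to that of a reflective subcategory of $\cV$, and the datum of an idempotent monad on $\cV$ is equivalent to that of an idempotent endofunctor equipped with its canonical (and essentially unique) monad structure, the claimed equivalence of spaces follows.
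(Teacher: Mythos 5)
Your constructions and round-trip checks coincide with the paper's own proof: it likewise sends a fully-faithful right adjoint $R$ to the monad $RL$, sends an idempotent monad $T$ to the reflective inclusion of its $T$-local objects $\cV_T \subset \cV$, and verifies that both composites are equivalent to the identity. The only divergence is at the final coherence step, where instead of citing HTT~5.2.7.4 (which characterizes localization functors but is not literally stated as an equivalence of spaces, and does not by itself give the essential uniqueness of the monad structure you invoke) the paper gets the space-level statement directly, by constructing the forward assignment as a functor out of $\Cat^{\sR}_{/\cV}$ and checking the two composites as self-maps of the respective object spaces.
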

\begin{proof}
Let $\Cat^\sR$ be the $(\oo,1)$-category of $(\oo,1)$-categories whose morphisms are right adjoints. There is a functor
\[
f: \Cat^\sR_{/\cV} \longrightarrow {\sf Monad}(\cV)
\]
sending a right adjoint $R:\cU\ra \cV$ to the monad $T: = RL$, where $L$ is the left adjoint of $R$. To a morphism in $\Cat^\sR_{/\cV}$, which is a right adjoint $R':\cU' \ra \cU$ over $\cV$ with left adjoint $L'$, this functor assigns the map of monads given by the unit of $L'\dashv R'$:
\[
T = RL \xra{{\sf unit}_{L'\dashv R'}}RR'L'L = T'~.
\]
Now assume the functor $R:\cU\ra \cV$ is fully-faithful, and thus the counit map $LR \ra {\sf id}_{\cU}$ is an equivalence. Then the multiplication morphism $T\circ T \ra T$ is an equivalence, since it given by the counit of $LR$: $T\circ T = R(LR)L \ra LR \simeq RL=T$. Consequently, this restricts to a functor ${\sf RFull}(\cV) \ra {\sf Monad}^{\sf idemp}(\cV)$.

We construct a map $g: \obj({\sf Monad}^{\sf idemp}(\cV))\ra \obj({\sf RFull}(\cV))$. To a monad $T$, we assign the $(\oo,1)$-category $\cV_T$ of $T$-local objects, the full $\oo$-subcategory of those objects $x$ of $\cV$ on which the unit of $T$ is an equivalence:
\[
\cV_T:=\{x\in\cV|\ x\xra{\sim} Tx\}\subset \cV~.
\]
The inclusion $\cV_T \hookrightarrow \cV$ has a left adjoint, which is $T$ itself: For any morphism $x\ra t$ where $t$ is $T$-local, there is a functorial commutative diagram
\[
\xymatrix{
x\ar[d]\ar[r]&t\ar[d]^-\sim\\
Tx\ar[r]\ar@{-->}[ur]&Tt}
\]
and thus a canonical factorization of the morphism $x\ra t$ through the unit morphism $x\ra Tx$. Consequently, $Tx$ is initial in $\cV_T^{x/}$, and the functor $T: \cV \ra \cV_T$ defines a left adjoint to the inclusion $\cV_T \hookrightarrow \cV$. 
By this construction of $g$, observe an equivalence ${\sf id} \simeq f\circ g$, of self-maps of $\obj({\sf Monad}^{\sf idemp}(\cV))$.

We lastly exhibit a natural equivalence ${\sf id} \simeq g\circ f$, of self-maps of $\obj({\sf RFull}(\cV))$. To a fully-faithful right-adjoint $R:\cU\hookrightarrow \cV$ with left adjoint $L$, we claim there is a unique factorization
\[
\xymatrix{
\cU \ar@{^{(}->}[dr]_-R\ar@{-->}[rr]&&\cV_{RL}\ar@{^{(}->}[dl]\\
&\cV}
\]
where $\cV_{RL}\subset \cV$ is the full $\oo$-subcategory of $x\in\cV$ for which $x\ra RL x$ is an equivalence. However, $x$ belongs to $\cV_{RL}$ if and only if $x$ is in the essential image of $R$: so a unique functor $\cU \ra \cV_{RL}$ exists over $\cV$, and is an equivalence.
\end{proof}

We now come to the culminating result of this section, which describes the Segal-completion functor in special cases.
\begin{lemma}\label{lemma.Seg.universal}
Let $\cX$ be a presentable $(\oo,1)$-category in which colimits are universal, and let $\cC\in\Fun(\bDelta^{\op},\cX)$ be a simplicial object in $\cX$. If $\Seg(\cC)$ is Segal, then $\eta_\cC:\cC \ra \Seg(\cC)$ exhibits $\Seg(\cC)$ as the universal Segal object under $\cC$.
\end{lemma}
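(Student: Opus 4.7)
The plan is to show that for every Segal simplicial object $\cD \in \Fun(\bDelta^{\op}, \cX)$, precomposition with $\eta_\cC$ induces an equivalence of mapping spaces
\[
\eta_\cC^{\ast} \colon \Map_{\Fun(\bDelta^{\op}, \cX)}(\Seg(\cC), \cD) \xra{~\simeq~} \Map_{\Fun(\bDelta^{\op}, \cX)}(\cC, \cD)~,
\]
which is the universal property characterizing the Segal reflection of $\cC$.

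The starting point is the observation preceding the statement of the lemma: for any Segal simplicial object $\cE$, the universality of colimits in $\cX$ implies that $\eta_\cE \colon \cE \ra \Seg(\cE)$ is an equivalence. Applied to $\cD$ and to $\Seg(\cC)$ (which is Segal by hypothesis), this yields that both $\eta_\cD$ and $\eta_{\Seg(\cC)}$ are equivalences. The candidate inverse map $\psi \colon \Map(\cC, \cD) \ra \Map(\Seg(\cC), \cD)$ will be defined by $\psi(f) := \eta_\cD^{-1} \circ \Seg(f)$.

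That $\eta_\cC^{\ast} \circ \psi \simeq \id$ is immediate from naturality of $\eta$ applied to $f \colon \cC \ra \cD$: the identity $\Seg(f) \circ \eta_\cC \simeq \eta_\cD \circ f$ gives $\psi(f) \circ \eta_\cC \simeq \eta_\cD^{-1} \circ \eta_\cD \circ f \simeq f$. The reverse identity $\psi \circ \eta_\cC^{\ast} \simeq \id$ reduces, by naturality of $\eta$ applied to an arbitrary morphism $g \colon \Seg(\cC) \ra \cD$, to establishing an equivalence of natural transformations
\[
\Seg(\eta_\cC) ~\simeq~ \eta_{\Seg(\cC)} \colon \Seg(\cC) \longrightarrow \Seg(\Seg(\cC))~.
\]

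The main obstacle, therefore, is this identification of the two natural maps $\Seg(\eta_\cC)$ and $\eta_{\Seg(\cC)}$. I expect to verify it by direct computation from the pointwise formula of Observation~\ref{obs.Segal.basecase.contact}. Both transformations, evaluated at simplicial degree $[a]$, arise from canonical maps between the colimit presentation of $\Seg(\Seg(\cC))[a]$ indexed by $\TwAro(\bDelta^{\op})$ restricted over $[a]^\circ$: one replaces $\cC$-level fiber products by $\Seg(\cC)$-level fiber products via the componentwise unit $\eta_\cC$, while the other arises from the identity section $\bDelta^{\op} \hookrightarrow \Ar^{\cls}(\bDelta^{\op})$ built into Definition~\ref{def.LIM}. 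Their equivalence is compelled by the commuting naturality square $\eta_{\Seg(\cC)} \circ \eta_\cC \simeq \Seg(\eta_\cC) \circ \eta_\cC$ together with the observation that on each summand $\cC([\varphi(k\text{-}1), \varphi(k)])$ of the fiber product presentation of $\Seg(\cC)[a]$, both candidate maps restrict to the same canonical unit morphism; compatibility across the colimit then propagates this equivalence globally. Once this is in hand, the two chains of equivalences above assemble to show $\psi$ is a two-sided inverse to $\eta_\cC^\ast$.
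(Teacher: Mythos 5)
Your overall strategy is sound and genuinely different from the paper's: you verify the universal property directly by exhibiting $\psi(f)=\eta_\cD^{-1}\circ\Seg(f)$ as a homotopy inverse to $\eta_\cC^\ast$, whereas the paper observes that $\Seg$ restricted to the subcategory $\cV=\Seg^{-1}\bigl(\Fun^{\Seg}(\bDelta^{\op},\cX)\bigr)$ underlies an idempotent monad and then invokes its Lemma on idempotent monads (the equivalence between the space of idempotent monads and the space of reflective subcategories) to identify $\Seg$ with the Segal reflection $RL$, since both have the Segal objects as their local objects. Your reduction is correct as far as it goes: the direction $\eta_\cC^\ast\circ\psi\simeq\id$ follows from naturality and the fact that $\eta_\cD$ is an equivalence for $\cD$ Segal, and the direction $\psi\circ\eta_\cC^\ast\simeq\id$ does reduce to the well-pointedness identification $\Seg(\eta_\cC)\simeq\eta_{\Seg(\cC)}$.

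The gap is that you do not prove this identification, and the argument you sketch for it does not work. The naturality square only gives $\Seg(\eta_\cC)\circ\eta_\cC\simeq\eta_{\Seg(\cC)}\circ\eta_\cC$, i.e., agreement \emph{after} precomposition with $\eta_\cC$; concluding agreement of the two maps themselves from this is circular, since it amounts to the monomorphicity of $\eta_\cC^\ast$ on mapping spaces, which is part of what you are trying to prove. The remaining claim, that ``on each summand both candidate maps restrict to the same canonical unit morphism'' and that this ``propagates across the colimit,'' is where all the content lies and is not a computation one can wave at: $\Seg(\Seg(\cC))[a]$ is a colimit over $\TwAro(\bDelta^{\op})_{/[a]^\circ}$ of limits whose terms are themselves such colimits, and the two maps arise from structurally different constructions --- $\Seg(\eta_\cC)$ from functoriality of the entire $\Seg$ formula in the argument $\cC$, and $\eta_{\Seg(\cC)}$ from the unit of the adjunction $\cM[\cX]\rightleftarrows\Morita[\cX]$ applied to the already-Segaled object --- so they are not given by a levelwise-identical map of diagrams, and a comparison requires at least a reindexing/cofinality argument that you have not supplied. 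Note that even a weaker route (citing the characterization of localization functors, which requires only that $\Seg(\eta_\cC)$ and $\eta_{\Seg(\cC)}$ each be equivalences, not homotopic) still leaves you needing to show $\Seg(\eta_\cC)$ is an equivalence, which is likewise not established by your argument. To close the gap you must either prove the well-pointedness from the explicit construction of $\Seg$ in Definition~\ref{def.Seg}, or follow the paper in establishing that $\Seg|_{\cV}$ is an idempotent monad and identifying it by its local objects.
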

\begin{proof}
The fully-faithful inclusion of Segal functors $R:\Fun^{\Seg}(\bDelta^{\op},\cX) \hookrightarrow \Fun(\bDelta^{\op},\cX)$ has a left adjoint $L$ by the adjoint functor theorem, since the inclusion preserves limits and filtered colimits. Let $\cV:= \Seg^{-1}\bigl(\Fun^{\Seg}(\bDelta^{\op},\cX)\bigr)\subset \Fun(\bDelta^{\op},\cX)$ be the full $\oo$-subcategory of all $\cC$ such that $\Seg(\cC)$ is Segal.

Observe that on $\cV$ the functor $\Seg$ has the structure of an idempotent monad. The functor $RL$ additionally has the structure of an idempotent monad on $\Fun(\bDelta^{\op},\cX)$, which restricts to an idempotent monad on $\cV$. Consequently, both $\Seg$ and $RL$ define idempotent monads on $\cV$, both of which correspond to fully-faithful right adjoints $\cV_{\Seg}\hookrightarrow \cV$ and $\cV_{RL}\hookrightarrow \cV$, defined by the $\Seg$-local and $RL$-local objects, respectively. 
Observe that these $(\oo,1)$-categories are both equivalent to $\Fun^{\Seg}(\bDelta^{\op},\cX)$, however. Consequently, by Lemma~\ref{lemma.idemp.monads}, we have an equivalence $\Seg\simeq L$ on $\cV$. That is, for any $\cC\in \cV$, then $\Seg(\cC)$ is the universal Segal object under $\cC$.
\end{proof}

\section{Twisted-arrow categories of 1-disks}

In this section, we prove Lemma~\ref{localize.D.Delta}, that the twisted arrow category of a poset of disks localizes onto the twisted arrow category of $\bDelta$. 

\begin{definition}
Let $I$ be a 1-manifold with boundary. The category $\disk_I$ is the full sub-poset of $\Opens(I)$ consisting of all $U\subset I$ such that:
\begin{itemize}
\item each component of $U$ is diffeomorphic to either $\RR$ or $\RR_{\geq 0}$;
\item $U$ contains the boundary $\partial I$;
\item $U$ has finitely many components.
\end{itemize}
\end{definition}

Recall the following result from, e.g., \cite{oldfact}.
\begin{lemma}
Let $I$ be an oriented closed interval. There is functor
\[
\disk_I \overset{\pi}\longrightarrow \bDelta^{\op}
\]
sending an open $U\subset I$ to the finite set $\pi U := \pi_0(I\smallsetminus U)$, ordered according to the orientation of $I$. This functor is a localization with respect to the set of morphisms $U \ra V$ which are isotopy equivalences or, equivalently, the morphisms which induce bijections $\pi U\simeq \pi V$.
\end{lemma}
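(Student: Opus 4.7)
The plan is to prove $\pi$ is a localization at isotopy equivalences by invoking the recognition criterion that a Cartesian fibration with weakly contractible fibers is a localization at the class of fiberwise morphisms---the same criterion cited in the paper (Corollary~3.20 of \cite{fibrations}). To begin, $\pi$ is well-defined: an inclusion $U \subseteq V$ in $\disk_I$ sends each component of $I \setminus V$ to the unique component of $I \setminus U$ containing it, yielding an order-preserving map $\pi V \to \pi U$ in $\bDelta$, hence a morphism $\pi U \to \pi V$ in $\bDelta^{\op}$. Such a map is the identity exactly when it is a bijection of finite totally ordered sets, which is the definition of an isotopy equivalence. Essential surjectivity is clear: any $[p]$ is realized by removing $p+1$ disjoint closed intervals from the interior of $I$.

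Next, I plan to verify that $\pi$, after passage to the localization $\disk_I[W^{-1}]$, is a Cartesian fibration. Given $V \in \disk_I$ with $\pi V = [q]$ and a morphism $g \colon [q] \to [p]$ in $\bDelta$, a Cartesian lift is an object $U \subseteq V$ with $\pi U = [p]$ whose component map is $g$. Such a $U$ is built by grouping the components of $I \setminus V$ according to the fibers of $g$ (replacing each group by its convex hull in $I$, which becomes a single component of $I \setminus U$) and inserting one additional closed interval in a gap of $V$ for each index $k \in [p] \setminus g([q])$. The ambiguity in this construction is parameterized by a configuration space of disjoint intervals, which is contractible, so the Cartesian lift becomes canonical after inverting $W$.

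The key remaining step is the contractibility of the fiber $\pi^{-1}([p])$. This fiber is the poset of configurations of $p+1$ disjoint closed intervals $c_0 < c_1 < \ldots < c_p$ in the open interior of $I$, ordered by $U \leq V$ if and only if $c_i^V \subseteq c_i^U$ for each $i$. Viewed as a subset of $\RR^{2p+2}$, the configuration set is the convex open region
\[
X_p = \bigl\{(a_0, b_0, \ldots, a_p, b_p) : 0 < a_0 \leq b_0 < a_1 \leq b_1 < \ldots < a_p \leq b_p < 1 \bigr\},
\]
which is contractible. I plan to show the classifying space of the fiber poset is weakly equivalent to the underlying space $X_p$, hence contractible, via a standard comparison argument---for instance, by identifying the poset with the exit-path category of an appropriate stratification of $X_p$, or by exhibiting a direct simplicial contraction. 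The main obstacle will be this comparison, because the fiber poset is not filtered (explicit configurations can be shown to admit no common refinement in the poset), so one must leverage the ``continuous realization'' perspective rather than a purely combinatorial filteredness argument.
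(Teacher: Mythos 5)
The paper itself only recalls this lemma from \cite{oldfact}, but measured against the standard argument (and against the paper's own proof of the analogous Lemma~\ref{localize.D.Delta}), your route has a genuine gap at its main step: the functor $\pi$ is \emph{not} a Cartesian fibration, so the criterion you invoke (a Cartesian fibration with weakly contractible fibers is a localization) does not apply to it. Concretely, let $V$ have complement a single closed interval $d_0$, and consider the morphism $[1]^\circ \to [0]^\circ = \pi V$ in $\bDelta^{\op}$ corresponding to $\{0\}\hookrightarrow [1]$. Any lift $U\subseteq V$ has complement $c_0\sqcup c_1$ with $d_0\subseteq c_0$ and $c_1$ a newly inserted closed interval; for any such $U$ there is another lift $U'$ whose inserted interval sits elsewhere, so that $U'\not\subseteq U$, and then the element $\bigl(U'\subseteq V,\ \id\bigr)$ of $\Hom(U',V)\times_{\Hom(\pi U',\pi V)}\Hom(\pi U',\pi U)$ has no preimage in $\Hom(U',U)$. (For surjective maps of $\bDelta$ the exact convex hulls do furnish Cartesian lifts, but the non-surjective maps already defeat the fibration property.) Your proposed repair --- pass to $\disk_I[W^{-1}]$ first and argue the lift becomes canonical there because the space of choices is contractible --- is circular as stated: verifying that $\disk_I[W^{-1}]\to\bDelta^{\op}$ is a Cartesian fibration requires control of mapping spaces in the localization, which is exactly what the lemma is supposed to deliver. ``The ambiguity is contractible'' is the right intuition, but it must be routed through an actual localization criterion rather than asserted to canonize the lift.

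Your fiber computation, by contrast, is the correct geometric input and is essentially right: $\sB P_p\simeq \ast$ can be proved by the Seifert--van Kampen theorem (Theorem~A.3.1 of \cite{HA}) applied to the functor $P_p\to\Opens(X_p)$ sending $(c_0,\dots,c_p)$ to the convex (hence contractible) set of configurations $(d_i)$ with each $c_i$ contained in the interior of $d_i$; for fixed $(d_i)$ the subposet of $(c_i)$ satisfying this condition is a product of cofiltered posets of closed intervals under reverse inclusion, hence has contractible classifying space. To complete the proof, replace the fibration criterion by the one this paper uses for Lemma~\ref{localize.D.Delta}, namely Theorem~3.8 of \cite{mazel.gee.localization}: show for every $p$ that $\Fun^{W}([p],\disk_I)\to\Hom([p],\bDelta^{\op})$ exhibits the discrete target as the classifying space of the source, where $W\subset\disk_I$ is the subcategory of isotopy equivalences (Notation~\ref{d.fun.w}). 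The case $p=0$ is exactly your fiber contractibility, since $\disk_I^{W}$ splits as $\coprod_q P_q$ (an order-preserving bijection of finite linear orders is unique); the cases $p\geq 1$ are where your contractible-space-of-lifts observation genuinely enters, packaged as contractibility of the categories of chains of inclusions with isotopy equivalences between them, and proved by induction on $p$ using that evaluation at an endpoint of the chain is a (co)Cartesian fibration of chain categories.
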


Our goal in this section, Lemma~\ref{localize.D.Delta}, is to prove the analogous localization after taking categories of twisted arrows. This will rely on the following two preparatory results, Lemma~\ref{lemma.I.final} and Lemma~\ref{lemma.points.twar.over}.

\begin{lemma}\label{lemma.I.final}
Let $I$ be a 1-manifold with boundary.  
Let $\cJ \subset \TwAr(\sD_I)$ be the subcategory consisting of all objects $U\hookrightarrow V$, and whose morphisms $(U\hookrightarrow V)\ra (U'\hookrightarrow V')$ are those commutative diagrams
\[
\xymatrix{
U\ar[d]&U'\ar[l]\ar[d]\\
V\ar[r]&V'}
\]
for which both $U\la U'$ and $V\ra V'$ are isotopy equivalences. 
Let $\cJ'\subset \cJ$ be the full subcategory consisting of those objects $U\hookrightarrow V$ for which $V$ contains the closure of $U$: $\overline{U}\subset V$.
Under these conditions, the inclusion
\[
\cJ' \hookrightarrow \cJ
\]
is a final functor.
\end{lemma}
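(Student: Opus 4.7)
The plan is to apply Quillen's Theorem A: it suffices to show that for each $j = (U \hookrightarrow V) \in \cJ$, the undercategory $\cK := (\cJ')^{j/}$ has contractible classifying space. An object of $\cK$ is a pair $(U', V')$ with $U' \subset U$ and $V \subset V'$ both isotopy equivalences in $\sD_I$ and $\overline{U'} \subset V'$; a morphism $(U_1', V_1') \to (U_2', V_2')$ consists of inclusions $U_2' \subset U_1'$ and $V_1' \subset V_2'$.

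The first step is to construct a canonical enlargement $V_0 \in \sD_I$ with $V \subset V_0$ an isotopy equivalence, $\overline{U} \subset V_0$, and each component $V_{0,(i)}$ contained in the ``territory'' $M_{(i)}$ of the corresponding $V$-component $V_{(i)}$, where $M_{(i)}$ denotes the connected component of $I \smallsetminus \overline{V \smallsetminus V_{(i)}}$ containing $V_{(i)}$. Such $V_0$ exists because $V_{(i)} \cup \overline{U \cap V_{(i)}}$ is a compact subset of the open set $M_{(i)}$. The key observation is that any isotopy equivalence $V \subset V'$ forces $V'_{(i)} \subset M_{(i)}$ as well (otherwise distinct $V$-components would merge), so the union $V' \cup V_0 = \bigsqcup_i \bigl( V'_{(i)} \cup V_{0,(i)} \bigr)$ remains an isotopy-equivalent thickening of $V$ in $\sD_I$. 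This lets us define an endofunctor $\Phi \colon \cK \to \cK$ by $\Phi(U', V') := (U', V' \cup V_0)$, with the closure condition $\overline{U'} \subset \overline{U} \subset V_0 \subset V' \cup V_0$ automatic; the inclusions $V' \subset V' \cup V_0$ assemble into a natural transformation $\eta \colon \id_\cK \Rightarrow \Phi$.

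The second step is to let $\cK_0 \subset \cK$ be the full subcategory of objects $(U', V')$ satisfying $V_0 \subset V'$. Since $\Phi$ lands in $\cK_0$ and restricts to the identity on $\cK_0$, the pair $(\Phi, \cK_0 \hookrightarrow \cK)$ forms an adjunction exhibiting $\cK_0$ as reflective in $\cK$, so $|\cK_0| \simeq |\cK|$. On $\cK_0$ the closure condition $\overline{U'} \subset V'$ is automatically satisfied by the chain $\overline{U'} \subset \overline{U} \subset V_0 \subset V'$, so $\cK_0 \cong \cK_1 \times \cK_2$ decomposes as a product of posets: $\cK_1$ is the poset of $U' \subset U$ giving isotopy equivalences, with morphism $U_1' \to U_2'$ iff $U_2' \subset U_1'$; and $\cK_2$ is the poset of $V' \supset V_0$ giving isotopy equivalences $V \subset V'$, with morphism $V_1' \to V_2'$ iff $V_1' \subset V_2'$. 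Each poset has an initial object ($U$ in $\cK_1$ and $V_0$ in $\cK_2$), hence a contractible classifying space, and therefore so does $\cK$.

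The main subtle point---and the reason for the territory containment---is verifying that $\Phi$ is well-defined: for an arbitrary isotopy-equivalent thickening $V' \supset V$, one must confirm that $V' \cup V_0$ does not inadvertently merge distinct $V$-components, which is precisely what the territory condition on $V_0$ (together with the analogous constraint forced on $V'$) guarantees. Once this component-bookkeeping in $\sD_I$ is secured, the deformation-retraction onto $\cK_0$ and the subsequent product decomposition are formal.
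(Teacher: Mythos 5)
Your overall architecture---Quillen's Theorem A, followed by a retraction of the comma category onto a subcategory where the closure condition $\overline{U'}\subset V'$ becomes automatic so that it splits as a product of two posets with initial objects---is sensible, and it is parallel in spirit to the paper's argument, which likewise contracts the comma category onto a subcategory where one of the two coordinates is frozen. However, Step~1 contains a genuine gap: the enlargement $V_0$ does not exist in general. Take $I=[0,2]$ and $U=V=[0,1)\sqcup(1,2]$, a legitimate object of $\sD_I$ (both components are half-open intervals and $\partial I\subset V$). Then $\overline{U}=[0,2]\ni 1$, while any $V_0\in\sD_I$ with $V\subset V_0$ either keeps its two components disjoint---which forces them to be exactly $[0,1)$ and $(1,2]$, so that $1\notin V_0$---or merges them, in which case $V\subset V_0$ is not an isotopy equivalence. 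So no $V_0$ with $\overline{U}\subset V_0$ exists. Your justification fails precisely because $\overline{U\cap V_{(i)}}$ can meet the frontier of the territory $M_{(i)}$: here $\overline{U\cap V_{(1)}}=[0,1]\not\subset M_{(1)}=[0,1)$. (Note also that $V_{(i)}\cup\overline{U\cap V_{(i)}}$ is not compact in general.) The same obstruction arises whenever some component of $U$ has a closure point on the common frontier of two adjacent components of $V$, so this is not a degenerate edge case.

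Without $V_0$, the reflection $\Phi$ and the subcategory $\cK_0$ are unavailable and the product decomposition collapses, so the proof does not go through as written. The viable direction is the opposite one, and it is the one the paper takes: rather than fattening $V'$ until it swallows $\overline{U}$ (impossible in the example above), contract the comma category onto the locus where $V'=V$ is frozen and only $U'$ varies. Shrinking a configuration $U'\subset U$ so that $\overline{U'}\subset V$ is always possible (pull each finite endpoint of each component slightly inward), whereas fattening $V$ is not. You would then need to exhibit that locus as a (co)reflective subcategory of $\cK$ and show that the resulting poset of shrinkings of $U$ has contractible classifying space; the territory bookkeeping from your Step~2, which is correct as far as it goes, would be replaced by the analogous bookkeeping for shrinkings of $U$.
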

\begin{proof}
By Quillen's Theorem A, it suffices to check that for any object $U\hookrightarrow V$, the undercategory
\[
\cJ'\underset{\cJ}\times\cJ^{U\ra V/}
\]
has contractible classifying space. There is a full subcategory $\cC \subset \cJ'\underset{\cJ}\times\cJ^{U\ra V/}$ consisting of those $U'\ra V'$ under $U\ra V$ for which $V\ra V'$ is an isomorphism.  The inclusion $\cC \subset \cJ'\underset{\cJ}\times\cJ^{U\ra V/}$ admits a right adjoint, sending $U'\ra V'$ to $U'\ra V$: the unit of the adjunction is given by
\[
\xymatrix{
U\ar[d]&U'\ar[l]\ar[d]&U'\ar[d]\ar[l]_-=\\
V\ar[r]^-=&V\ar[r]&V'}
\]
for each $U'\ra V'$ under $U\ra V$. Hence the classifying space of $\cJ'\underset{\cJ}\times\cJ^{U\ra V/}$ is equivalent to that of $\cC$. The category $\cC$ is clearly seen to be equivalent to $\sD_U^{\op}$, via evaluation at source. The category $\sD_U$ has an initial object, given by the empty inclusion $\emptyset\ra U$, hence its classifying space is contractible. 
\end{proof}

The next result uses the following.
\begin{notation}
\label{d.fun.w}
Let $\cK$ be an $(\infty,1)$-category; let $\cW \subset \cC$ be an $\infty$-subcategory of an $(\infty,1)$-category.
We denote the $\infty$-subcategory
\[
\Fun^{\cW}( \cK , \cC)
~\hookrightarrow~
\Fun(\cK,\cC)
\]
which is characterized by the following property.
\begin{itemize}
\item
A natural transformation $F \xra{\eta} G$ belongs to this $\infty$-subcategory if and only if, for each $x\in \cK$, the morphism $F(x) \xra{\eta(x)} G(x)$ in $\cC$ belongs to $\cW$.

\end{itemize}

\end{notation}

\begin{lemma}\label{lemma.points.twar.over}
Let $I$ be a closed oriented interval, and let $U\ra V$ be a morphism of $\sD_I$ for which $V$ contains the closure of $U$: i.e., $U\ra V$ is an object of $\cJ\subset \TwAr(\sD_I)$. For all $[p]\in\bDelta$, the natural functor
\[
\Fun^\cJ\Bigl([p], \TwAr\bigl(({\sD_I}_{/V})^{U\ra V/}\bigr)\Bigr)
\longrightarrow
\Hom\Bigl([p], \TwAr\bigl(({\bDelta^{\op}}_{/\pi V})^{\pi U\ra \pi V/}\bigr)\Bigr)
\]
induces an equivalence between the classifying space of $\Fun^\cJ\Bigl([p], \TwAr\bigl(({\sD_I}_{/V})^{U\ra V/}\bigr)\Bigr)$ and the set of $p$-simplices of $\TwAr\bigl(({\bDelta^{\op}}_{/\pi V})^{\pi U\ra \pi V/}\bigr)$.

\end{lemma}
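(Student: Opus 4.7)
The plan is to show that $\pi_*$ decomposes as a disjoint union of maps from contractible classifying spaces to points, indexed by elements of the discrete target. Write $\cA := ({\sD_I}_{/V})^{U\to V/}$ and $\cB := ({\bDelta^{\op}}_{/\pi V})^{\pi U\to\pi V/}$.

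First, I would observe that $\pi$ inverts isotopy equivalences, sending them to identities in $\bDelta^{\op}$. The morphisms in $\Fun^\cJ([p], \TwAr(\cA))$ are natural transformations whose components lie in $\cJ$, meaning both source- and target-components are isotopy equivalences in $\sD_I$. Hence $\pi_*$ carries every morphism in its source to an identity of the discrete target set $\Hom([p], \TwAr(\cB))$. The classifying space of $\Fun^\cJ([p], \TwAr(\cA))$ therefore decomposes as the disjoint union, over $\sigma \in \Hom([p], \TwAr(\cB))$, of the classifying spaces of the preimages $\pi_*^{-1}(\sigma)$. It suffices to show each preimage is non-empty with contractible classifying space.

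Each $\sigma$ is a functor $[p]^{\op} \star [p] \to \cB$, encoding for each vertex $v$ an object $A_v$ of $\cB$ (a chain $\pi U \to A_v \to \pi V$) and compatible morphisms between them. A lift $\tilde\sigma \in \pi_*^{-1}(\sigma)$ consists of coherent choices of $W_v \in \sD_I$ with $U \subset W_v \subset V$ and $\pi W_v = A_v$, together with compatible open inclusions realizing the prescribed morphisms in $\bDelta^{\op}$. Non-emptiness follows by a direct vertex-by-vertex geometric construction of such a lift, leveraging the essential surjectivity of $\pi$ together with the compatibility of $\pi$ with the under/over constraints.

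To show contractibility of each preimage, I would give an explicit contraction using the rigidity of the prescribed morphism data in $\sigma$ (since $\bDelta^{\op}$ is a $1$-category). The preimage may be identified with the nerve of a configuration-space-like category whose objects are coherent choices of disjoint closed intervals in $V \smallsetminus U$ encoding the $W_v$'s. The key geometric input is the elementary fact that the configuration space of $q$ ordered disjoint closed intervals in an open subset of $I$ is contractible, being homotopy equivalent to a convex subset of a Euclidean space. Combining this contractibility over all vertices of $[p]^{\op} \star [p]$, subject to the compatibility conditions imposed by the morphisms of $\sigma$, yields the contractibility of $\pi_*^{-1}(\sigma)$.

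The main obstacle will be the organization of the bookkeeping for the compatibility conditions across the diagram shape $[p]^{\op} \star [p]$ and the careful verification that these do not obstruct the iterated contractibility. However, the core geometric content reduces to the contractibility of convex subsets of Euclidean spaces, which should allow an explicit deformation retraction onto a contractible subcategory of ``standardized'' lifts.
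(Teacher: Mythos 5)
Your reformulation in the first two paragraphs is sound: the target $\Hom\bigl([p],\TwAr(\cB)\bigr)$ is a set (everything in sight is gaunt), every morphism of $\Fun^{\cJ}\bigl([p],\TwAr(\cA)\bigr)$ is carried to an identity, so the source splits as the disjoint union of the preimage subcategories and the claim is equivalent to each preimage being nonempty with contractible classifying space. This is also, implicitly, what the paper does, fiberwise along the coCartesian fibrations $\ev_t$ and $\ev_p$. The gap is in your contractibility argument. The preimage $\pi_\ast^{-1}(\sigma)$ is a discrete category: its objects are strings of opens in $\sD_I$ with prescribed images under $\pi$, and its morphisms are isotopy-equivalence inclusions. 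Identifying the classifying space of such a category with a topological configuration space of intervals is not the ``elementary fact'' you invoke --- it is precisely the content of Proposition~2.19 and Lemma~3.11 of \cite{oldfact}, which the paper's proof cites as its key external input. Nor is the preimage (co)filtered: intersections and unions of opens do not preserve the condition $\pi W = A$ (two opens with two complementary components each can intersect in an open with four or zero), so the cheap contractibility arguments that work elsewhere in the paper (e.g.\ in Lemma~\ref{lemma.claimB}) are unavailable here.

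The second, related, problem is the cross-vertex bookkeeping you flag as ``the main obstacle'' but do not resolve. A lift of $\sigma$ is a nested string $W_0 \subset \cdots \subset W_{2p+1}$, and a morphism in the preimage has mixed variance: its components point backwards on the source half of the twisted arrow and forwards on the target half. Contracting the choice at each vertex independently does not assemble into a contraction of the diagram category, and no candidate ``standardized lift'' is exhibited onto which one could deformation retract. The paper's proof is organized exactly to control this: an induction on $p$ via the coCartesian fibration $\ev_p$ (evaluation at the last vertex), which peels off one vertex at a time, combined with the finality of the inclusion $\cJ'\hookrightarrow\cJ$ (Lemma~\ref{lemma.I.final}, requiring $\overline{U}\subset V$) so that the fibers of $\ev_p$ have the form demanded by the inductive hypothesis. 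Without some substitute for this mechanism, your argument establishes the base case heuristic but not the lemma.
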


\begin{proof}
We prove this result by induction on $[p]$. First, note the equivalence $({\sD_I}_{/V})^{U\ra V/} \cong \sD_V^{U/}$.

{\bf Base Case}: We show that 
\[
\Fun^\cJ\Bigl([0], \TwAr\bigl( \sD_V^{U/}\bigr)\Bigr)
\longrightarrow
\obj\Bigl(\TwAr\bigl(({\bDelta^{\op}}_{/\pi V})^{\pi U\ra \pi V/}\bigr)\Bigr)
\]
induces an equivalence between the classifying space of the source and the set of objects of the target. We analyze the following commutative diagram, defined by evaluating twisted arrows at their target:
\[
\xymatrix{
\Fun^{\cJ'}\Bigl([0], \TwAr\bigl(\sD_V^{U/}\bigr)\Bigr)\ar[r]^-{\rm final}\ar[d]_-{\ev_t}^{\rm coCart}
&\Fun^\cJ\Bigl([0], \TwAr\bigl(\sD_V^{U/}\bigr)\Bigr)\ar[d]_-{\ev_t}^{\rm coCart}
\ar[r]&
\obj\Bigl(\TwAr\bigl(({\bDelta^{\op}}_{/\pi V})^{\pi U\ra \pi V/}\bigr)\Bigr)\ar[d]_-{\ev_t}^{\rm coCart}\\
\bigl(\sD_V^{U/}\bigr)^{\cJ'}\ar[r]^-{\rm final}&\bigl(\sD_V^{U/}\bigr)^\cJ\ar[r]&({\bDelta^{\op}}_{/\pi V})^{\pi U\ra \pi V/}}
\]
Here, $\bigl(\sD_V^{U/}\bigr)^\cJ \subset \sD_V^{U/}$ is the subcategory consisting of those morphisms $V' \subset V''$ that are isotopy equivalences; 
$\bigl(\sD_V^{U/}\bigr)^{\cJ'} \subset \bigl(\sD_V^{U/}\bigr)^\cJ $ is the full subcategory consisting of those objects $V'$ such that there is containment of the closure: $\ov{U} \subset V'$.
We claim that the inclusion
\[
\bigl(\sD_V^{U/}\bigr)^{\cJ'}\longrightarrow\bigl(\sD_V^{U/}\bigr)^\cJ
\]
is final, as indicated in the diagram. By Quillen's Theorem A, it suffices to show that for every $V' \in \bigl(\sD_V^{U/}\bigr)^{\cJ}$, the poset
\[
\Bigl(\bigl(\sD_V^{U/}\bigr)^{\cJ'}\Bigr)^{V'/}
\]
has contractible classifying space. 
Explicitly, this poset consists of those $D\subset V$ such that $D$ contains both $V'$ and the closure of $U$, and the inclusion $V'\ra D$ is an isotopy equivalence. 
Using that the ambient space $I$ is a closed interval, this poset has finite nonempty colimits: Given any finite nonempty diagram
\[
G: \cK \ra \Bigl(\bigl(\sD_V^{U/}\bigr)^{\cJ'}\Bigr)^{V'/}
\]
the union $D := \bigcup_{k\in \cK} G(k)$ is again an element of $\Bigl(\bigl(\sD_V^{U/}\bigr)^{\cJ'}\Bigr)^{V'/}$, and is the colimit of the diagram. This implies that the classifying space of $\Bigl(\bigl(\sD_V^{U/}\bigr)^{\cJ'}\Bigr)^{V'/}$ is contractible, as claimed.

We have thus established the finality of the functor $\bigl(\sD_V^{U/}\bigr)^{\cJ'}\longrightarrow\bigl(\sD_V^{U/}\bigr)^\cJ$. Since finality is preserved under pullbacks along coCartesian fibrations (e.g., Corollary~5.15 of \cite{fibrations}), to establish the base case, we have reduced to showing that
\[
\Fun^{\cJ'}\Bigl([0], \TwAr\bigl(\sD_V^{U/}\bigr)\Bigr)
\longrightarrow
\obj\Bigl(\TwAr\bigl(({\bDelta^{\op}}_{/\pi V})^{\pi U\ra \pi V/}\bigr)\Bigr)
\]
induces an equivalence on classifying spaces. 
To do so, it suffices to show first that the map on the bases of the $\ev_t$ coCartesian fibrations
\[
\bigl(\sD_V^{U/}\bigr)^\cJ\longrightarrow({\bDelta^{\op}}_{/\pi V})^{\pi U\ra \pi V/}
\]
is an equivalence on classifying spaces, and second that for every $\ev_t$-fibers over any object $(\ov{U}\subset V' \subset V) \in \bigl(\sD_V^{U/}\bigr)^{\cJ'}$, the map
\[
\ev_t^{-1}\{\ov{U}\subset V' \subset V\} \longrightarrow \ev_t^{-1}\{\pi U\subset \pi V' \subset \pi V\}
\]
is an equivalence on classifying spaces. For the first statement, this follows from the proofs of Proposition~2.19 and Lemma~3.11 from \cite{oldfact}: The only alteration is that we now have an undercategory, and the proofs carry over mutatis mutandis. For the second statement, we can identify the fibers:
\[
\ev_t^{-1}\{\ov{U}\subset V' \subset V\} \cong \bigl(\sD_{V'}^{U/}\bigr)^{\cJ, \op}
\]
and
\[
\ev_t^{-1}\{\pi U\subset \pi V' \subset \pi V\} \cong ({\bDelta^{\op}}_{/\pi V'})^{\pi U\ra \pi V'/,\op}~.
\]
We now have the equivalence on classifying spaces, since this statement is the opposite of that just established by Proposition~2.19 and Lemma~3.11 from \cite{oldfact}, and taking opposites does not change the classifying space.

{\bf Inductive Step}: We assume the equivalence on $[k]$-points for $k<p$. For each choice of an object $(U'\hookrightarrow V')\in \TwAr\bigl((\sD_{I/V})^{U\ra V/}\bigr)$, we have a pullback square:
\[
\xymatrix{
\Fun^\cJ\Bigl([p-1], \TwAr\bigl(({\sD_I}_{/V'})^{U'\ra V'/}\bigr)\Bigr)
\ar[d]\ar[r]&\Fun^\cJ\Bigl([p], \TwAr\bigl(({\sD_I}_{/V})^{U\ra V/}\bigr)\Bigr)
 \ar[d]^-{\ev_p}\\
\ast\ar[r]^-{\langle U'\ra V'\rangle}&\Fun^\cJ\Bigl(\{p\}, \TwAr\bigl(({\sD_I}_{/V})^{U\ra V/}\bigr)\Bigr)
}
\]
where $\ev_p$ is a coCartesian fibration. Note further the equivalences
\[
\TwAr(\sD_I)_{/U\ra V} \cong \TwAr\bigl(((\sD_I)_{/V})^{U\ra V/}\bigr) \cong \TwAr\bigl((\sD_V)^{U/}\bigr) 
\]
and thus the equivalence
\[
\Fun^\cJ([p-1], \TwAr(\sD_I)_{/U\ra V}) \cong \Fun^\cJ([p-1], \TwAr((\sD_V)^{U/}))~.
\]

Consider the diagram
\[
\xymatrix{
\Fun^\cJ\Bigl([p], \TwAr\bigl(({\sD_I}_{/V})^{U\ra V/}\bigr)\Bigr)'\ar[d]^-{\rm coCart}_-{\ev_p}\ar[r]^-{\rm final}&\Fun^\cJ\Bigl([p], \TwAr\bigl(({\sD_I}_{/V})^{U\ra V/}\bigr)\Bigr) \ar[d]_-{\ev_p}^-{\rm coCart}\ar[r]&\ar[d]_-{\ev_p}^-{\rm coCard}\Hom\Bigl([p], \TwAr\bigl(({\bDelta^{\op}}_{/\pi V})^{\pi U\ra \pi V/}\bigr)\Bigr)\\
\Fun^{\cJ'}\Bigl(\{p\}, \TwAr\bigl(({\sD_I}_{/V})^{U\ra V/}\bigr)\Bigr)\ar[r]^-{\rm Lemma~\ref{lemma.I.final}}_-{\rm final}&\Fun^\cJ\Bigl(\{p\}, \TwAr\bigl(({\sD_I}_{/V})^{U\ra V/}\bigr)\Bigr)\ar[r]&\obj(\TwAr(\bDelta^{\op}))}
\]
where the left square is defined as a pullback.
Finality of the lower left horizontal functor follows from Lemma~\ref{lemma.I.final}: Using Quillen's Theorem A, finality of the fully-faithful  inclusion $\cJ' \to \cJ$ (Lemma~\ref{lemma.I.final}) implies finality of the functor between any undercategories for any object in $\cJ$.
Again using that the pullback of a final functor along a coCartesian fibration is again final, it suffices to show that 
\[
\Fun^\cJ\Bigl([p], \TwAr\bigl(({\sD_I}_{/V})^{U\ra V/}\bigr)\Bigr)'\longrightarrow
\Hom\Bigl([p], \TwAr\bigl(({\bDelta^{\op}}_{/\pi V})^{\pi U\ra \pi V/}\bigr)\Bigr)\] induces an equivalence on the classifying space.

Since each $\ev_p$ is a coCartesian fibration and the 
the base case ensures the functor between base categories induces an equivalence between classifying spaces,
we reduce to the statement on the fibers, to showing that for $U\ra V$ belonging to $\cJ'$, then
\[
\Fun^\cJ\Bigl([p-1], \TwAr\bigl(({\sD_I}_{/V'})^{U'\ra V'/}\bigr)\Bigr)
\longrightarrow
\Map([p-1], \TwAr(\bDelta^{\op})_{/\pi V'}^{\pi V' \ra \pi U'/})
\]
induces an equivalence on the classifying space. This equivalence is exactly given by the inductive hypothesis.
\end{proof}

The following is the main result of this section.

\begin{lemma}\label{localize.D.Delta}
Let $I$ be a closed oriented interval. The functor
\[
\TwAr(\sD_I)\overset{\pi}\longrightarrow \TwAr(\bDelta^{\op})
\]
is a localization.
\end{lemma}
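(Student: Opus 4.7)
The plan is to apply the standard Rezk-style criterion characterizing $(\infty,1)$-localizations: a functor $F \colon \cC \to \cD$ between $(\infty,1)$-categories exhibits $\cD$ as the localization of $\cC$ at the class $W$ of morphisms it inverts if and only if, for every $p \geq 0$, the natural map of classifying spaces
\[
B\bigl(\Fun^{W}([p],\cC)\bigr) \longrightarrow B\bigl(\Fun^{\sim}([p],\cD)\bigr)
\]
is an equivalence. Here $\Fun^W$ denotes Notation~\ref{d.fun.w}. In our setting, since $\sD_I \to \bDelta^{\op}$ is a localization at isotopy equivalences (the reference recalled just before the statement), an easy verification shows that the morphisms in $\TwAr(\sD_I)$ inverted by $\pi$ are precisely the subcategory $\cJ$ of Lemma~\ref{lemma.I.final}: a diagram with two arrows becomes an equality in the discrete category $\TwAr(\bDelta^{\op})$ iff both arrows are isotopy equivalences. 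Since $\TwAr(\bDelta^{\op})$ is a $1$-category, the right-hand side of the criterion reduces to the discrete set $\Hom([p], \TwAr(\bDelta^{\op}))$.

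The bulk of the work is then to prove, for each $p \geq 0$, that
\[
B\bigl(\Fun^{\cJ}([p], \TwAr(\sD_I))\bigr) \xra{~\simeq~} \Hom\bigl([p], \TwAr(\bDelta^{\op})\bigr).
\]
To reduce this global statement to Lemma~\ref{lemma.points.twar.over}, I would evaluate at the initial vertex of $[p]$:
\[
\ev_0 \colon \Fun^{\cJ}\bigl([p], \TwAr(\sD_I)\bigr) \longrightarrow \Fun^{\cJ}\bigl(\{0\}, \TwAr(\sD_I)\bigr),
\]
which is naturally a coCartesian fibration whose fiber over an object $(U \hookrightarrow V)$ is identifiable with $\Fun^{\cJ}\bigl([p], \TwAr((\sD_I)_{/V})^{U\to V /}\bigr)$ after fixing the basepoint at the identity of $(U\to V)$. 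A parallel coCartesian fibration exists on the target side, where the classical localization $\sD_I \to \bDelta^{\op}$ identifies the $\cJ$-classifying space of objects with the set of objects of $\TwAr(\bDelta^{\op})$ (this is the $p=0$ base case, handled by Lemma~\ref{lemma.I.final} replacing $\cJ$ with its cofinal subcategory $\cJ'$ and running the argument of Lemma~\ref{lemma.points.twar.over} in the case $p=0$). Given the coCartesian description, classifying space commutes with taking fibers in the appropriate sense, and so the global equivalence follows by combining the base case with Lemma~\ref{lemma.points.twar.over} fiberwise.

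The main obstacle I anticipate is the careful identification of the fibers of $\ev_0$ with the undercategory-twisted-arrow objects $\TwAr((\sD_I)_{/V})^{U\to V/}$ appearing in Lemma~\ref{lemma.points.twar.over}, and the consequent reassembly via a Grothendieck-type argument. In particular, one must verify a compatible coCartesian fibration structure on both sides of the map, so that the fiberwise statement genuinely implies the total-space statement. This requires a cofinality/finality reduction (in the spirit of Lemma~\ref{lemma.I.final}) to replace $\cJ$ by $\cJ'$, where the undercategory constructions are strictly well-behaved (inclusions of closures hold), and then verifying that the resulting subcategory remains cofinal enough to compute the full classifying space. Once these fiber-level identifications are in hand, the verification of the $p$-simplex equivalence, and hence the localization property, proceeds formally.
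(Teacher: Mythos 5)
Your proposal follows essentially the same route as the paper: invoke the Rezk/Mazel-Gee criterion, reduce to showing that $B\bigl(\Fun^{\cJ}([p],\TwAr(\sD_I))\bigr)\to\Hom\bigl([p],\TwAr(\bDelta^{\op})\bigr)$ is an equivalence, and prove this by induction on $p$ using an evaluation coCartesian fibration, the finality of $\cJ'\hookrightarrow\cJ$ from Lemma~\ref{lemma.I.final}, and Lemma~\ref{lemma.points.twar.over} on the fibers. The one concrete slip is your choice of $\ev_0$: the fiber of evaluation at the \emph{initial} vertex over $(U\to V)$ is built from the undercategory $\TwAr(\sD_I)^{(U\to V)/}$, whereas Lemma~\ref{lemma.points.twar.over} concerns $\TwAr\bigl(((\sD_I)_{/V})^{U\to V/}\bigr)\simeq \TwAr(\sD_I)_{/(U\to V)}$, which is the fiber of evaluation at the \emph{final} vertex $p$ (with the paper's convention that a morphism of $\TwAr$ from $(x\to y)$ to $(x'\to y')$ is a factorization $x'\to x\to y\to y'$). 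You should therefore use $\ev_p$, and the fiber is then $\Fun^{\cJ}\bigl([p-1],-\bigr)$ rather than $\Fun^{\cJ}\bigl([p],-\bigr)$; with that correction the fiberwise application of Lemma~\ref{lemma.points.twar.over} and the base-case identification $B(\cJ)\simeq \obj\bigl(\TwAr(\bDelta^{\op})\bigr)$ assemble exactly as you describe.
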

\begin{proof}
We apply the following sufficiency condition (Theorem 3.8 of \cite{mazel.gee.localization})
for establishing when a functor $\pi:\cC \ra \cD$ is a localization:
\begin{itemize}\label{condition.bullet}
\item Let $\cJ= \pi^{-1}\cD^\sim$ be the $\oo$-subcategory of $\cC$ which maps to isomorphisms in $\cD$, and let $\Fun^\cJ(\cK,\cC)\subset\Fun(\cK,\cC)$ be the $\oo$-subcategory of functors in which the morphisms are natural transformations contained in $\cJ$. If for each $[p]\in \bDelta$, the natural map
\[
\Fun^\cJ([p],\cC) \longrightarrow \Map([p],\cD)
\]
exhibits the space of $p$-simplices of $\cD$ as the classifying space of $\Fun^\cJ([p],\cC)$, then $\pi:\cC\ra\cD$ is a localization.
\end{itemize}
We apply this condition to the functor $\TwAr(\sD_I)\longrightarrow \TwAr(\bDelta^{\op})$, and prove by induction on $p$ that the functor
\[
\Fun^\cJ([p], \TwAr(\sD_I)) \longrightarrow \Map([p], \TwAr(\bDelta^{\op}))
\]
induces an equivalence from the classifying space of $\Fun^\cJ([p], \TwAr(\sD_I))$.

Consider the following diagram:
\[
\xymatrix{
\Fun^{\cJ}([p], \TwAr(\sD_I))'\ar[r]\ar[d]_-{\ev_p}^-{\rm coCart}&\Fun^\cJ([p], \TwAr(\sD_I)) \ar[r]\ar[d]_-{\ev_p}^-{\rm coCart}& \Map([p], \TwAr(\bDelta^{\op}))\ar[d]_-{\ev_p}^-{\rm coCart}
\\
\cJ' \ar[r]^-{\rm final}&\cJ \ar[r]&\obj\bigl(\TwAr(\bDelta^{\op})\bigr)}
\] 
where $\Fun^{\cJ}([p], \TwAr(\sD_I))'$ is the pullback of $\Fun^{\cJ}([p], \TwAr(\sD_I))$ along $\cJ' \ra \cJ$. By Lemma~\ref{lemma.I.final}, the inclusion $\cJ'\ra \cJ$ is final. Since $\ev_p$ is a coCartesian fibration, and finality is preserved by pullbacks along coCartesian fibrations, then $\Fun^{\cJ}([p], \TwAr(\sD_I))' \ra \Fun^{\cJ}([p], \TwAr(\sD_I))$ is again final, and the classifying spaces of the two categories are equivalent. 

Consequently, it suffices to show that $\cJ \ra \obj\bigl(\TwAr(\bDelta^{\op})\bigr)$ induces an equivalence on classifying spaces, and that for every object $(U\ra V) \in \cJ'$, the functor on the $\ev_p$-fibers
\[
\Fun^\cJ\Bigl([p-1], \TwAr\bigl((\sD_I)_{/U\ra V}\bigr)\Bigr)
\longrightarrow
\Hom\Bigl([p-1], \TwAr\bigl((\bDelta^{\op})_{/\pi U\ra \pi V}\bigr)\Bigr)
\]
induces an equivalence on classifying spaces. This second assertion is given by Lemma~\ref{lemma.points.twar.over}. To show that $\cJ \ra \obj\bigl(\TwAr(\bDelta^{\op})\bigr)$ induces an equivalence on classifying spaces, we can use the coCartesian fibrations defined by $\ev_t$:
\[
\xymatrix{
\cJ\ar[d]^-{\ev_t}\ar[r]&\obj\bigl(\TwAr(\bDelta^{\op})\bigr)\ar[d]^-{\ev_t}\\
(\sD_I)^{\sf istpy}\ar[r]&\obj(\bDelta^{\op})}
\]
where $(\sD_I)^{\sf istpy}\subset \sD_I$ is the sub-poset with morphisms those inclusions which are isotopy equivalence. 
By Proposition~2.19 and Lemma~3.11 from \cite{oldfact}, we have that the functor on the base, $(\sD_I)^{\sf istpy}\ra\obj(\bDelta^{\op})$, induces an equivalence on classifying spaces.
Over $V\in (\sD_I)^{\sf istpy}$, the functor between fibers is identified as $(\sD_V)^{\sf istpy,\op}\ra\obj\bigl((\bDelta^{\op}_{/\pi V})^{\op}\bigr)$, which also induces an equivalence on classifying spaces.
Using that $\ev_t$ is a coCartesian fibration, we conclude that the set $\obj\bigl(\TwAr(\bDelta^{\op}))$ is the classifying space of $\cJ$, which completes the proof.
\end{proof}

\section{The Tangle Hypothesis in ambient dimension 2}

The main result of this section is Theorem~\ref{thm.n2}, that
\[
\Bord_1^{\fr}(\RR^{1})
\]
is the rigid monoidal $(\oo,1)$-category freely generated by a single object. This implies \Cref{t4} in the special case of $n=2$. We prove this result by induction on the number of duals of the distinguished object. The proof of the base case, in which case the object has a single right-dual, occupies the next section.

\subsection{Base case}

For $0< i \leq\oo$, we will say that an object $V$ of a monoidal $(\oo,1)$-category $\cR$ has $i$ right-duals (respectively, left-duals) if $V$ has a right-dual $V^R$, and $V^R$ has $i-1$ right-duals. For $i=0$, the condition is vacuously satisfied.

\begin{definition}
For $0\leq i,j\leq \oo$, $\Dual^{[i,j]} \in \Alg(\Cat_{(\oo,1)})$ corepresents the functor
\[
\Alg(\Cat_{(\oo,1)})\longrightarrow \spaces
\]
which sends a monoidal $(\oo,1)$-category $\cR$ to the subspace of $\Obj(\cR)$ consisting of those objects $V$ such that $V$ admits $i$ left-duals and $j$ right-duals. 
We denote the universal such object $D\in  \Dual^{[i,j]}$.
We denote
\[
\Dual := \Dual^{[0,1]}
~.
\]
\end{definition}

Recall Corollary~\ref{cor.obj.mor.Bord.free}, which in particular identifies the monoidal space of objects 
\[
\obj\bigl(\Bord_1^{\fr}(\RR^1)\bigr)
\simeq
\FF_1(\Omega\RP^1)
\]
with the free $\cE_1$-space generated by the set $\ZZ\simeq \Omega\RR\PP^1$. This allows the following definition.
For $0\leq i,j\leq \infty$, we denote the subset:
\[
[-i,j]
~:=~
\left\{
a\in \ZZ
\mid 
-i \leq a \leq j
\right\}
~\subset~
\ZZ
~.
\]

\begin{definition}\label{def.Bord.ij}
For $0\leq i,j\leq \oo$, the monoidal full $\infty$-subcategory
\[
\Bord^{[i,j]}
~\subset~
\Bord_1^{\fr}(\RR^1)
\]
is that whose space of objects is the subspace
\[
\coprod_{r\geq 0} \Conf_r(\RR^1) \underset{\Sigma_r}{\times}[-i,j]^r
\subset
\coprod_{r\geq 0} \Conf_r(\RR^1) \underset{\Sigma_r}{\times}\ZZ^r
\simeq
\FF_1(\Omega\RP^1)
\simeq
\obj\bigl(\Bord_1^{\fr}(\RR^1)\bigr)~.
\]
A \bit{$[i,j]$-framing} of a tangle $W\subset \RR^1 \times \DD^1$ is a framing of this tangle such that the resulting framed tangle is a morphism in $\Bord^{[i,j]}$.

\end{definition}
In other words, an object in $\Bord^{[i,j]}$ is a finite subset of $\RR^1$ labeled by integers within $[-i,j]$.

We single out two cases of this definition, which again follow from Corollary~\ref{cor.obj.mor.Bord.free}.

\begin{cor}\label{t.bord.obj}
There are canonical equivalences
\[
\FF_1(\ast)\simeq \Dual^{[0,0]}\simeq \Bord^{[0,0]}
\]
and
\[
\FF_1\left( \left\{0,1 \right\} \right)
\simeq
\Obj\left( \Bord^{[0,1]} \right)
\]
where $\FF_1(\ast)$ is the monoidal category freely generated by a single object $\ast$, and $\FF_1(\{0,1\})$ is the monoidal space freely generated by the 2-element set $\{0,1\}$.
\end{cor}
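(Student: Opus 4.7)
The second equivalence $\FF_1(\{0,1\}) \simeq \Obj(\Bord^{[0,1]})$ is essentially immediate. By Corollary~\ref{cor.obj.mor.Bord.free}, $\Obj(\Bord_1^{\fr}(\RR^1)) \simeq \FF_1(\Omega\RR\PP^1) \simeq \FF_1(\ZZ) \simeq \coprod_r \Conf_r(\RR^1)\underset{\Sigma_r}\times \ZZ^r$. Restricting along the subset inclusion $\{0,1\} \hookrightarrow \ZZ$ and using Definition~\ref{def.Bord.ij} identifies $\Obj(\Bord^{[0,1]})$ with $\coprod_r \Conf_r(\RR^1)\underset{\Sigma_r}\times \{0,1\}^r \simeq \FF_1(\{0,1\})$.

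The equivalence $\FF_1(\ast) \simeq \Dual^{[0,0]}$ is tautological: since the conditions of having $0$ left-duals and $0$ right-duals are both vacuous, $\Dual^{[0,0]}$ corepresents the functor $\cR \mapsto \Obj(\cR)$ on $\Alg(\Cat_{(\oo,1)})$, which is the defining universal property of the free monoidal $(\oo,1)$-category on one object.

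The main work is the equivalence $\Bord^{[0,0]} \simeq \FF_1(\ast)$. By Proposition~\ref{prop.Bord.discrete.gaunt}, $\Bord_1^{\fr}(\RR^1)$, and hence also $\Bord^{[0,0]}$, is a monoidal $1$-category. The object space $\Obj(\Bord^{[0,0]}) \simeq \coprod_n \Conf_n(\RR^1)/\Sigma_n \simeq \NN$ matches $\Obj(\FF_1(\ast)) \simeq \NN$. The essential task is to show that every morphism in $\Bord^{[0,0]}$ is an identity, which rests on three observations about framed tangles $W \subset \RR^1 \times \DD[1]$ all of whose endpoints are labeled $0$. First, $W$ cannot have an $S^1$ component, since the Gauss map of any embedding $S^1 \hookrightarrow \RR^1 \times \DD[1]$ is homotopic to twice the generator of $\pi_1 \Gr_1(2) \cong \ZZ$ and so admits no trivialization (compare the proof of Proposition~\ref{prop.Bord.discrete.gaunt}). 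Second, an arc whose two endpoints lie in the same boundary component $\RR^1 \times \{0\}$ or $\RR^1 \times \{1\}$ (a cup or cap) has Gauss map of winding number $\pm 1$ relative to the vertical basepoint in $\RR\PP^1$, so the integer labels at its two endpoints are forced to differ by $\pm 1$; with both labels $0$ such arcs are excluded. Third, for an arc from $\RR^1 \times \{0\}$ to $\RR^1 \times \{1\}$, embeddedness in $\RR^2$ forces the source and target $0$-manifolds to have equal cardinality and to be matched in order, reducing the underlying tangle to the straight-line identity; the space of framings of such a straight arc with $0$-labels at both endpoints is then the space of maps $\DD^1 \times I \to \Gr_1(2) \simeq S^1$ with prescribed constant boundary data, which is contractible since $\pi_2(S^1) = 0$.

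Combining these observations, the canonical monoidal functor $\FF_1(\ast) \to \Bord^{[0,0]}$ selecting the $0$-labeled point is essentially surjective and fully faithful at the level of morphism spaces, yielding the claimed equivalence. The main obstacle is the framing analysis in the second and third observations; this reduces to direct computations using Remark~\ref{rem.framings.zero.mflds} and the gauntness from Proposition~\ref{prop.Bord.discrete.gaunt}.
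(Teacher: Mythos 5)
Your proof is correct and follows the route the paper intends: the paper offers no written argument (the corollary is stated as an immediate consequence of Corollary~\ref{cor.obj.mor.Bord.free} and Definition~\ref{def.Bord.ij}), and your writeup simply makes explicit the unwinding of definitions together with the discreteness facts from Proposition~\ref{prop.Bord.discrete.gaunt} and Remark~\ref{rem.framings.zero.mflds}. In particular your three observations (no circles, no cups or caps since a cap's Gauss map forces adjacent labels to differ by $\pm 1$, and contractibility of the framing space of a vertical strand) correctly supply the only nontrivial content, namely that $\Bord^{[0,0]}$ has no non-identity morphisms.
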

\qed

\begin{lemma}
The object $\left( \{0\} , 0 \right) \in \Bord^{[i,j]}$ has $i$ left-duals and $j$ right-duals.
Equivalently, there is a unique monoidal functor
\begin{equation}
\label{eDualBord}
\Dual^{[i,j]}
\longrightarrow
\Bord^{[i,j]}
~,\qquad
D
\longmapsto
\left( \{0\} , 0 \right) 
~.
\end{equation}
\end{lemma}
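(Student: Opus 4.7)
The strategy is to read off the iterated duals of the object $(\{0\},0)$ from the explicit duality data constructed in the proof of Lemma~\ref{t7}, then conclude by the universal property of $\Dual^{[i,j]}$.

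By Proposition~\ref{prop.Bord.discrete.gaunt}, the monoidal flagged $(\oo,1)$-category $\Bord_1^{\fr}(\RR^1)$ is a monoidal $(1,1)$-category that coincides with its own univalent-completion, so Corollary~\ref{cor.Bord.has.duals} identifies it as a rigid monoidal category. The dualization maps $L,R\colon\Obj\to\Obj$ of Lemma~\ref{t105} are, by construction, the images under the closed sheaf $\cF=\Map_{\bcD^{\sfr}_{[1,2]}}(\RR^1,-)$ of the two morphisms $\RR^1\to\RR^1_{\sf rev}$ in $\bcD^{\sfr}_{[1,2]}$ that were built out of the rotations~(\ref{f107}). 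First I would verify that these maps act on the framing label of a single point by
\[
R\bigl(\{0\},m\bigr)\;=\;\bigl(\{0\},m+1\bigr)
\qquad\text{and}\qquad
L\bigl(\{0\},m\bigr)\;=\;\bigl(\{0\},m-1\bigr)
\]
for each $m\in\ZZ\simeq\pi_1\Gr_1(2)$. This is a direct check: restricting the solid 2-framings of~(\ref{f111})--(\ref{f114}) along $\partial_+\colon\RR^1\hookrightarrow\RR^1\times\DD^1$ via the identifications~(\ref{f108}) shows that the rotation by $\pm\pi$ in $\sf Rot$ realizes the generator of $\pi_1\Gr_1(2)\cong\ZZ$ on the framing of the outgoing boundary point, while leaving the underlying 0-tangle at $\{0\}\subset\RR^1$ unchanged.

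Iterating these identities, the $\ell$-th right-dual of $(\{0\},0)$ in $\Bord_1^{\fr}(\RR^1)$ is the object $(\{0\},\ell)$, and its $\ell$-th left-dual is $(\{0\},-\ell)$. For each $0\leq\ell\leq j$ the label $\ell$ lies in $[-i,j]$, so the full chain of right-duals of length $j$ remains inside $\Bord^{[i,j]}$; the analogous statement holds for the chain of left-duals of length $i$. Consequently $(\{0\},0)\in\Bord^{[i,j]}$ admits $i$ left-duals and $j$ right-duals, and the corepresenting universal property of $\Dual^{[i,j]}$ supplies the unique monoidal functor~(\ref{eDualBord}).

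The main obstacle is the framing computation in the second paragraph. Everything else is a formal consequence of rigidity and of the definitions of $\Dual^{[i,j]}$ and $\Bord^{[i,j]}$; all of the actual geometric content is concentrated in checking that the rotations appearing in the construction of $\eta_L,\epsilon_L,\eta_R,\epsilon_R$ shift the boundary framing by exactly the generator of $\pi_1\Gr_1(2)\cong\ZZ$, with $R$ and $L$ shifting in opposite directions so that iterated duals of $(\{0\},0)$ populate the full interval $[-i,j]$ of framing labels.
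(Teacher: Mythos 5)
Your proof is correct and follows essentially the same route as the paper's: invoke rigidity of $\Bord_1^{\fr}(\RR^1)$ (via Corollary~\ref{cor.Bord.has.duals} together with its univalent-completeness), observe that the iterated left- and right-duals of $(\{0\},0)$ shift the framing label by $\mp 1$ so that they lie in $\Bord^{[i,j]}$ exactly when the label stays in $[-i,j]$, and conclude by the corepresenting property of $\Dual^{[i,j]}$. The paper compresses your second paragraph into the phrase ``by construction,'' so your version merely makes explicit the framing computation the paper leaves implicit.
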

\begin{proof}
After Corollary~\ref{cor.Bord.has.duals}, 
$\left( \{0\} , 0 \right) \in \Bord_1^{\fr}(\RR^1)$ has $a$ left-duals and $b$ right-duals for all $0\leq a,b\leq \infty$.
By construction, the the $a^{\rm th}$ left-dual and $b^{\rm th}$ right-dual are contained in $\Bord^{[i,j]}$ if and only if $a\leq i$ and $b\leq j$.
Consequently, $\left( \{0\} , 0 \right) \in\Bord^{[i,j]}$ has $i$ left-duals and $j$ right-duals.
\end{proof}

Consider the \bit{walking adjunction}, which is the $(\infty,2)$-category $\Adj$ corepresenting an adjunction among $(\infty,2)$-categories.
This $(\infty,2)$-category is the subject of the work~\cite{riehl.verity} which establishes the following remarkable features of $\Adj$.
To state it, we recall the following (see Appendix~A of~\cite{trace} for further details).  
\begin{itemize}
\item
$\sO$ is the category in which an object is a finite linearly ordered set and a morphism is an order preserving map between such.
Join of linearly ordered sets defines a monoidal structure on $\sO$. 
This monoidal category is the \bit{walking monad}, which corepresents monads.
Its opposite $\sO^{\op}$ is the \bit{walking comonad}, which corepresents comonads.

\item
$\sO_+$ (respectively $\sO_-$) is the category in which an object is a non-empty finite linearly ordered set and a morphism is an order preserving map between such that preserves maxima (respectively minima).
Join of linearly ordered sets defines a left (respectively right) module structure $\sO \lacts \sO_+$ (respectively $\sO_- \racts \sO$).  

\item
For each $I^{\tr} \in \sO_+$, the category $\Fun^{+\infty/}(I^{\tr} , \{0<+\infty\})$ of maxima-preserving functors is a non-empty finite linearly ordered set, and the functor $\Fun^{+\infty/}(J^{\tr},\{0<+\infty\}) \xra{f^\ast} \Fun^{+\infty/}( I^{\tr} , \{0<+\infty\})$ associated to each morphism $I^{\tr} \xra{f} J^{\tr}$ in $\sO_+$ preserves minima.  
This organizes as a functor
\[
\sO_+^{\op}
\xra{~\Fun^{+\infty/}(-,\{0<+\infty\})~}
\sO_-
~,
\]
which can be checked to be an equivalence (an inverse is given by $\Fun_{/-\infty}( - , \{-\infty<0\})$.

\end{itemize}
\begin{theorem}[\cite{riehl.verity}, \cite{schanuel.street}]\label{t.Adj}
The $(\infty,2)$-category $\Adj$ admits the following description.
\begin{itemize}
\item
Its space of objects is canonically identified as
\[
\Obj(\Adj)
~\simeq~
\{-,+\}
~,
\]
a 0-type with two path-components.

\item
Its monoidal categories of endomorphisms of each of its two objects are canonically identified as the walking monad and the walking comonad:
\[
\un{\End}_{\Adj}(-)
\underset{\simeq}{\xla{~(\sR\circ \sL)^{\circ I} ~\mapsfrom~ I~}}
\sO
\qquad
\text{ and }
\qquad
\un{\End}_{\Adj}(+)
\underset{\simeq}{\xla{~(\sL\circ \sR)^{\circ I} ~\mapsfrom~ I~}}
\sO^{\op}
~.
\]

\item
Its $\bigl(\un{\End}_{\Adj}(-), \un{\End}_{\Adj}(+)\bigr)$-bimodule category of morphisms from $-$ to $+$, 
and its $\bigl(\un{\End}_{\Adj}(-), \un{\End}_{\Adj}(+)\bigr)$-bimodule category of morphisms from $+$ to $-$, are respectively
\[
\un{\Hom}_{\Adj}(-,+)
\underset{\simeq}{\xla{~\sL \circ (\sR \circ \sL)^{\circ J} ~\mapsfrom~ J^{\tl}~}}
\sO_-~ \simeq ~\sO_+^{\op}
\qquad
\text{ and }
\qquad
\un{\Hom}_{\Adj}(+,-)
\underset{\simeq}{\xla{~(\sR \circ \sL)^{\circ J} \circ \sR ~\mapsfrom~ J^{\tr}~}}
\sO_+ ~\simeq ~\sO_-^{\op}
~.
\]

\end{itemize}
In particular, the $(\infty,2)$-category $\Adj$ is gaunt, and its maximal $(\infty,1)$-category $\Adj_{\leq 1} \subset \Adj$ is the free $(\infty,1)$-category on the directed graph
\[\begin{tikzcd}
	{-} &&& {+}
	\arrow["{\sf L}", shift left=3, from=1-1, to=1-4]
	\arrow["{\sf R}", shift left=2, from=1-4, to=1-1]
\end{tikzcd}
~.
\]

\end{theorem}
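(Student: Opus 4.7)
The plan is to reduce the $(\infty,2)$-categorical assertion to the classical 2-categorical theorem of Schanuel--Street~\cite{schanuel.street}, and then to propagate this via a gauntness argument following Riehl--Verity~\cite{riehl.verity}. First I would observe that the hom-categories as described would make $\Adj$ gaunt as an $(\infty,2)$-category: the categories $\sO$, $\sO^{\op}$, $\sO_+$, and $\sO_-$ are discrete as $(\infty,1)$-categories (being categories of finite linearly ordered sets and order-preserving maps), so every 2-morphism space in $\Adj$ is discrete. For a gaunt $(\infty,2)$-category $\Adj$, the space of $(\infty,2)$-functors $\Adj \to \cC$ into any $(\infty,2)$-category $\cC$ coincides with the set of strict 2-functors into the homotopy 2-category of $\cC$. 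This reduces the universal property of $\Adj$ in the $(\infty,2)$-categorical setting to its 2-categorical counterpart.

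Second, I would construct a 2-category $\Adj'$ with the described objects and hom-categories, taking the composition maps to be induced by the join of linearly ordered sets, via the bimodule structures $\sO_- \racts \sO$ and $\sO \lacts \sO_+$. Designating a canonical pair $(\sL, \sR)$ of 1-morphisms together with a unit and counit as 2-cells then exhibits $\Adj'$ as containing an adjunction. To verify the universal property, I would invoke the normal form for 2-cells in a free adjunction: every string in $\sL, \sR, \eta, \epsilon$ reduces, via the triangle identities and naturality, to a string of the form $(\sR \circ \sL)^{\circ I}$ (respectively $(\sL \circ \sR)^{\circ I}$, or the mixed analogues) for a unique finite linearly ordered set $I$, with 2-morphisms corresponding to monotone maps of the indexing sets. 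This identifies $\Adj'$ as the walking 2-categorical adjunction.

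The identification of the maximal $(\infty,1)$-subcategory $\Adj_{\leq 1}$ is then a corollary: since the invertible 2-morphisms in each of $\sO$, $\sO^{\op}$, $\sO_\pm$ are only the identities, $\Adj_{\leq 1}$ consists of the 1-morphisms of $\Adj$ together with their composition, recovering the free $(\infty,1)$-category on the directed graph with arrows $\sL \colon - \to +$ and $\sR \colon + \to -$. The main obstacle of the proof is the normal-form argument, which requires a careful string-diagram analysis modulo the triangle identities; in the $(\infty,2)$-categorical setting, this must be supplemented by the nontrivial check that the space of coherent adjunctions in an $(\infty,2)$-category is discrete, which is the principal content of Riehl--Verity's theorem.
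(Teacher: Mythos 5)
The paper does not prove this theorem: it is stated with citations to Riehl--Verity and Schanuel--Street and used as a black box, so there is no internal proof to compare against. Your sketch is a reasonable outline of how those two references combine --- Schanuel--Street supplies the strict $2$-categorical normal-form computation, and Riehl--Verity supplies the homotopy-coherence statement needed to promote it to the $(\infty,2)$-categorical setting --- and your descriptions of the hom-categories, of the composition via joins, and of $\Adj_{\leq 1}$ are all correct.

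There is, however, a genuine gap in the reduction step. The claim that ``for a gaunt $(\infty,2)$-category $\Adj$, the space of $(\infty,2)$-functors $\Adj \to \cC$ into any $(\infty,2)$-category $\cC$ coincides with the set of strict $2$-functors into the homotopy $2$-category of $\cC$'' is false: gauntness of the \emph{source} says nothing about mapping spaces into an arbitrary target. The terminal $(\infty,2)$-category is gaunt, yet $\Map(\ast,\cC) \simeq \Obj(\cC)$ is generally not discrete and is not recoverable from the homotopy $2$-category; the same holds for the gaunt cells $c_1$ and $c_2$. If your claim were true, the Riehl--Verity coherence theorem would be trivial, which it is not. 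The correct (and hard) statement is the one you relegate to a closing remark: for the specific computad presentation of the Schanuel--Street $2$-category, the space of homotopy coherent diagrams of that shape extending a given unit--counit pair satisfying the triangle identities is contractible. That is the principal content of the cited result and cannot be obtained from a generic gauntness argument; as written, your proof uses the conclusion of that theorem to justify the reduction that is supposed to make the theorem unnecessary. To repair the sketch, you should either invoke Riehl--Verity's contractibility theorem directly as the bridge between the strict and coherent settings, or replace the gauntness reduction with an explicit cofibrancy (simplicial computad) argument for the presentation of $\Adj$.
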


The last statement of Theorem~\ref{t.Adj} supplies an identification between $(\infty,1)$-categories
\begin{equation}
\label{e.graph}
\ast
\underset{\Obj(\Adj} \coprod
\Adj_{\leq 1}
\simeq
\fB \FF_1\left( \{0,1\} \right)
~.
\end{equation}

\begin{observation}
\label{tAdjDual}
The universal properties of $\Dual$, and the walking adjunction $\Adj$, are such that they participate in a pushout diagram among $(\infty,2)$-categories:
\begin{equation}
\label{eAdjDual}
\xymatrix{
\Obj(\Adj)
\ar[rr]
\ar[d]
&&
\Adj
\ar[d]
\\
\ast
\ar[rr]
&&
\fB \Dual
~.
}
\end{equation}
In particular, 
the identification~(\ref{e.graph}) supplies a morphism between monoidal $(\infty,1)$-categories from the free monoidal category on the 2-element set $\{0,1\}$,
\[
\FF_1\left( \{0,1\} \right)
\longrightarrow
\Dual
~,
\]
that is essentially surjective.  
\end{observation}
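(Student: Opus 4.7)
The plan is to verify the asserted pushout by comparing universal properties after mapping into an arbitrary $(\infty,2)$-category $\cC$. A functor $\fB \Dual \to \cC$ is, by the universal property of categorical delooping together with the defining property of $\Dual$, the datum of a basepoint $p \in \cC$ together with an object in the monoidal $(\infty,1)$-category $\Omega_p \cC$ equipped with a right-dual. Meanwhile, a cocone under the square from $\cC$ consists of a basepoint $p \in \cC$ (from the lower-left leg) together with a functor $\Adj \to \cC$ whose restriction to $\Obj(\Adj)$ is constant at $p$. By Theorem~\ref{t.Adj}, the latter data is precisely that of an adjoint pair of 1-endomorphisms $L, R \in \Omega_p\cC$ with specified unit and counit 2-cells satisfying the triangle identities; equivalently, an object $L$ of $\Omega_p \cC$ together with a specified right-dual $R$. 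These two descriptions of mapping-out data agree, which exhibits the pushout.

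For the second assertion, I would apply the functor sending an $(\infty,2)$-category to its maximal sub-$(\infty,1)$-category to the pushout square. Using the presentation in Theorem~\ref{t.Adj} of $\Adj_{\leq 1}$ as the free $(\infty,1)$-category on the directed graph with two vertices $\pm$ and two edges $\sL, \sR$, the induced pushout of $(\infty,1)$-categories is exactly $\fB \FF_1(\{0,1\})$ by the identification~(\ref{e.graph}). Looping at the basepoint then delivers the monoidal morphism $\FF_1(\{0,1\}) \to \Dual$, carrying the two generators to the universal object $D$ and its right-dual $D^R$. Essential surjectivity follows because $\Dual$, being freely generated as a monoidal $(\infty,1)$-category by a single object with right-dual, has every object equivalent to a finite tensor product of $D$ and $D^R$; these iterated tensor products are precisely the essential image of the objects of $\FF_1(\{0,1\})$.

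The principal obstacle will be rigorously computing the pushout inside $(\infty,2)$-categories rather than merely at the level of $(\infty,1)$-categories: the higher cells encoding the triangle identities for the adjunction in $\Adj$ must match up coherently when the two objects of $\Adj$ are collapsed to a single point. This obstacle is addressable via Theorem~\ref{t.Adj}'s explicit presentation of $\Adj$ as a gaunt $(\infty,2)$-category whose hom categories are given by the strict combinatorial models $\sO$, $\sO^{\op}$, $\sO_\pm$, so that all coherence data is rigid and the verification reduces to the elementary unpacking described above.
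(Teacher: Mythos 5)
Your mapping-space verification of the pushout is the right mechanism, and is essentially what the paper's phrase ``the universal properties \ldots are such that'' is pointing at: maps out of $\fB\Dual$ are basepoints together with dualizable objects of the endomorphism monoidal category; cocones under the span $\ast \leftarrow \Obj(\Adj) \to \Adj$ are basepoints together with adjunctions in $\cC$ whose two objects are identified with that basepoint; and the bridge between the two is the equivalence between adjunctions internal to a delooping $\fB\cM$ and duality data in $\cM$. Note, though, that this last step silently uses that the space of duality data (unit, counit, triangle witnesses) lying over a fixed dualizable object is contractible --- equivalently, that restriction from $\Map(\Adj,\cC)$ to the space of left-adjoint $1$-morphisms is a monomorphism. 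That contractibility is the substantive input from \cite{riehl.verity}, beyond the cell structure quoted in Theorem~\ref{t.Adj}, and it is exactly what reconciles ``object equipped with a specified right-dual'' with the \emph{subspace} of $\Obj(\cR)$ that $\Dual$ corepresents; you should cite it explicitly.

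Two steps in your second paragraph need repair. First, $(-)_{\leq 1}\colon \Cat_{(\infty,2)} \to \Cat_{(\infty,1)}$ is a right adjoint to the inclusion and so does not preserve pushouts; ``applying it to the pushout square'' is not a legitimate operation, and if it were it would yield the much stronger conclusion $\Obj(\Dual)\simeq \FF_1(\{0,1\})$, which is not available here (it is downstream of Proposition~\ref{tDualBord}, whose proof uses the present observation). What you want is to form the pushout of the restricted span $\ast \leftarrow \Obj(\Adj) \to \Adj_{\leq 1}$, identify it with $\fB\FF_1(\{0,1\})$ via~(\ref{e.graph}), and take the map to $\fB\Dual = \ast\amalg_{\Obj(\Adj)}\Adj$ induced by $\Adj_{\leq 1}\hookrightarrow \Adj$; looping at the basepoint then produces the monoidal functor. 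Second, your justification of essential surjectivity --- every object of $\Dual$ is a tensor word in $D$ and $D^{R}$ ``because $\Dual$ is freely generated by an object with a right-dual'' --- restates the conclusion rather than deriving it. Two honest routes: (i) the $1$-morphisms of the pushout $\ast\amalg_{\Obj(\Adj)}\Adj$ are generated under composition by the images of the $1$-morphisms of the cofactors, and by Theorem~\ref{t.Adj} every $1$-morphism of $\Adj$ is a composite of $\sL$ and $\sR$, hence already in the image of $\Adj_{\leq 1}$; or (ii) the full monoidal subcategory $\Dual'\subseteq \Dual$ spanned by tensor words in $D$ and $D^{R}$ still contains a right-duality datum for $D$, so the universal property supplies $\Dual \to \Dual'$ with $D\mapsto D$, and the composite $\Dual \to \Dual' \hookrightarrow \Dual$ classifies the point $D$ and is therefore equivalent to the identity, forcing $\Dual'\hookrightarrow\Dual$ to be essentially surjective.
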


\begin{remark}
Therefore, in spite of $\fB \Dual$ fitting into a pushout involving gaunt $(\infty,2)$-categories, it may not be gaunt -- indeed, the gauntness condition on an $(\infty,2)$-category is not preserved under the formation of pushouts. 
This issue complicates direct ``object-morphism''-style approaches to proving Proposition~\ref{tDualBord}.
Gauntness will, however, follow from the gauntness of $\Bord^{[0,1]}$, implied by Proposition~\ref{prop.BBord.gaunt}, through Proposition~\ref{tDualBord}.

\end{remark}

\begin{definition}
\label{dd2}
Let $A$ be a set.
\begin{enumerate}
\item A \bit{word (in the alphabet $A$)} is a finite linearly ordered set $(I,\leq)$ together with a map between sets $I \xra{\omega} A$.  
When no confusion can arise, we often denote such a word $\left( \left( I, \leq \right) , \omega \right)$ simply as $(I,\omega)$, or as $\omega$.

\item
Say a word $(I,\omega)$ is the \bit{empty word} if $I = \emptyset$.

\item
Say a word $(I,\omega)$ is \bit{alternating} if, for each non-maximal element $i\in I \smallsetminus {\sf Max}(I)$, the two values $\omega(i) \neq \omega\left( {\sf suc}(i) \right)$ do not agree where ${\sf suc}(i)\in I$ is the successor of $i$ in $I$.

\item
For $J$ a finite linearly ordered set, and for $(I_j,\omega_j)_{j\in J}$ a $J$-indexed sequence of words, their \bit{($J$-fold) concatenation} is the word 
\[
\underset{j\in J} \bigstar \omega_j
~:=~
\Bigl(
\underset{j\in J} \bigstar I_j
~,~
\underset{j\in J} \coprod I_j 
\xra{ ( \omega_j)_{j\in J} }
A
\Bigr)
\]
whose underlying linearly ordered set is the $J$-fold join.
For $J = \{1<\dots<r\}$, this will often be denoted simply as
\[
\omega_r \cdots \omega_1
~.
\]

\item
An isomorphism between two words $(I,\omega) \cong (I' , \omega')$ is an isomorphism between linearly ordered sets $I \xra{\alpha}I'$ such that $\omega' \circ \alpha = \omega$.  

\item
The \bit{space of words (in the alphabet $A$)} is the groupoid of words (in the alphabet $A$) and isomorphisms between such.  

\end{enumerate}

\end{definition}

\begin{observation}
\label{s4}
Let $A$ be a set.
The space of words in the alphabet $A$ is a 0-type, and concatenation of words defines a monoidal structure on this space.
Consequently, the space of words in the alphabet $A$ is the free $\cE_1$-algebra on the space $A$.

\end{observation}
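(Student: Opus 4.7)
The plan is to verify the three claims of the observation in sequence. First, to show that the space of words is a 0-type: an isomorphism between words $(I,\omega) \cong (I',\omega')$ is, by definition, an order-isomorphism $\alpha\colon I \xra{\cong} I'$ satisfying $\omega'\circ \alpha = \omega$. Since a finite linearly ordered set has trivial automorphism group---it admits a unique order-isomorphism with the standard representative $\{0,1,\ldots,|I|-1\}$---any two words are either non-isomorphic or uniquely isomorphic. Hence each connected component of the space of words is contractible, so the space of words is a discrete set.

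Second, concatenation defines a monoid structure. The join of linearly ordered sets is strictly associative on the nose, and the empty linearly ordered set is a strict unit for join, so concatenation of words is associative and unital up to canonical isomorphism; since the space of words is a 0-type, these canonical isomorphisms are trivial, so the $\cE_1$-coherence data are automatic. This equips the space of words with the structure of an $\cE_1$-algebra in $\Spaces$ (equivalently, an ordinary monoid).

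Third, to identify this monoid as $\FF_1(A)$: by selecting the canonical representative $\{0,\ldots,n-1\}$ in each isomorphism class of finite linearly ordered sets, a word on the letter set $A$ is precisely a function $\{0,\ldots,n-1\} \to A$, so the space of words is in bijection with $\coprod_{n\geq 0} A^n$. For a space $X$, the free $\cE_1$-algebra in $\Spaces$ is computed as $\FF_1(X) \simeq \coprod_{n\geq 0} X^{\times n}$ with its concatenation multiplication; in the case $X = A$ a set, this is precisely the monoid of words. The only point requiring care is matching the two multiplications, which is immediate from the definition of the join of linearly ordered sets and the definition of the multiplication on $\coprod_n X^n$. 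I do not anticipate a significant obstacle beyond this direct identification.
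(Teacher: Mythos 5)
Your proof is correct, and the paper offers no argument for this Observation at all, treating it as self-evident; your verification (trivial automorphism groups of finite linear orders give discreteness, strict associativity of join gives the monoid structure, and the standard formula $\FF_1(X)\simeq\coprod_{n\geq 0}X^{\times n}$ for the free $\cE_1$-algebra in Cartesian $\Spaces$ gives the identification) is exactly the intended one.
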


\begin{observation}
\label{s2}
After Corollary~\ref{t.bord.obj}, the monoid of objects in $\Bord^{[0,1]}$ is that of words in the alphabet $\{0,1\}$.
\end{observation}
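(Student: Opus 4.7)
The proof is essentially a direct consequence of two earlier results. The plan is to combine Corollary~\ref{t.bord.obj}, which supplies an equivalence $\FF_1(\{0,1\}) \simeq \Obj(\Bord^{[0,1]})$ of monoidal spaces, with Observation~\ref{s4}, which identifies the free $\cE_1$-algebra $\FF_1(A)$ on a set $A$ with the monoid of words in the alphabet $A$ under concatenation. Composing yields the desired identification
\[
\Obj(\Bord^{[0,1]})
~\simeq~
\FF_1(\{0,1\})
~\simeq~
\bigl\{\text{words in the alphabet } \{0,1\}\bigr\}
~.
\]

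The only substantive point to verify is that the equivalence of Corollary~\ref{t.bord.obj} is one of $\cE_1$-algebras, not merely of underlying spaces. This is immediate from construction: the $\cE_1$-structure on $\Obj(\Bord^{[0,1]})$ is inherited from the stacking $\cE_{\n1}$-monoidal structure on $\Bord_1^{\fr}(\RR^1)$ (specialized to $n=2$, so the first $\cE_1$-direction), while $\FF_1(\{0,1\})$ carries its tautological free $\cE_1$-structure. Under the equivalence, a word $(I, \omega\colon I \to \{0,1\})$ corresponds to any labeled configuration $S \subset \RR^1$ together with a bijection $I \cong S$ of linearly ordered sets that intertwines $\omega$ with the labeling $S \to \{0,1\}$; this assignment is manifestly compatible with concatenation of words on one side and the monoidal structure given by side-by-side placement in $\RR^1$ on the other. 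No significant obstacle arises; this observation is a routine unpacking of the preceding corollary and the universal property of free $\cE_1$-algebras on sets.
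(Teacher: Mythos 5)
Your proposal is correct and matches the paper's intended (unwritten) argument exactly: the observation is the composite of Corollary~\ref{t.bord.obj}, which already identifies $\Obj(\Bord^{[0,1]})$ with the free \emph{monoidal} space $\FF_1(\{0,1\})$, and Observation~\ref{s4}, which identifies that free $\cE_1$-algebra with the monoid of words in the alphabet $\{0,1\}$. The extra care you take about the $\cE_1$-structure is already built into the statement of the corollary, so nothing further is needed.
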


\subsection{Proof of $\Dual \simeq \Bord^{[0,1]}$}

This subsection is devoted to a proof of the following.
\begin{prop}
\label{tDualBord}
The monoidal functor 
\[
\Dual \xra{~(\ref{eDualBord})~} \Bord^{[0,1]}
\]
is an equivalence between monoidal $(\infty,1)$-categories.

\end{prop}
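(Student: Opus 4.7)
The plan is to establish the functor $\Dual \to \Bord^{[0,1]}$ is essentially surjective and fully faithful, leveraging that both sides are honest $(1,1)$-categories so that the problem reduces to a bijection between sets of objects and sets of morphisms.

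For essential surjectivity: by the pushout~(\ref{eAdjDual}) together with Theorem~\ref{t.Adj}, the space of objects of $\Dual$ is identified with the free monoid on $\{0,1\}$ under the monoidal structure, where $0$ corresponds to $D$ and $1$ to its right dual $D^R$. Combining this with the essentially-surjective monoidal functor $\FF_1(\{0,1\}) \to \Dual$ from Observation~\ref{tAdjDual}, and comparing with Corollary~\ref{t.bord.obj}'s identification $\Obj(\Bord^{[0,1]}) \simeq \FF_1(\{0,1\})$, the functor~(\ref{eDualBord}) is seen to induce an isomorphism on monoids of objects (words in $\{0,1\}$ on both sides, see Observation~\ref{s2}).

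For fully faithfulness: first note that both sides have discrete mapping spaces. For $\Bord^{[0,1]}$ this is Proposition~\ref{prop.Bord.discrete.gaunt}. For $\Dual$ this follows from the gauntness of $\Adj$ (Theorem~\ref{t.Adj}) together with the pushout~(\ref{eAdjDual}), since $\ast \amalg_{\Obj(\Adj)} \Adj$ retains gauntness along the contracting map $\Obj(\Adj) \to \ast$. Given two words $\omega_-$ and $\omega_+$ in $\{0,1\}$, one computes $\Hom_{\Dual}(\omega_-,\omega_+)$ from the pushout description and the explicit combinatorics of Theorem~\ref{t.Adj} (the categories $\sO$, $\sO_+$, $\sO_-$): a morphism is represented by a sequence of applications of the unit and counit of the adjunction interleaved with identity morphisms, modulo the triangle identities. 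This yields the classical description in terms of planar pairings of the letters of $\omega_- \bigstar \omega_+^{\sf op}$.

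On the target side, the set $\Hom_{\Bord^{[0,1]}}(\omega_-, \omega_+)$ is the set of framed isotopy classes of framed 1-tangles in $\RR^1 \times \DD^1$ with boundary $\omega_- \sqcup \omega_+$, as in Corollary~\ref{cor.bord.is.tang}. Since the Gauss map of any embedded circle in $\RR^1 \times \DD^1$ has non-trivial winding number in $\pi_1 \Gr_1(2) \cong \ZZ$, every framed tangle is a disjoint union of arcs. A classical argument (using the contractibility of the space of Morse functions with distinct critical values on a planar tangle, i.e., Hatcher's theorem for $\DD^2$) provides a generators-and-relations presentation of this set: the generators are elementary cap and cup arcs and straight identity arcs, and the relations are precisely the zig-zag identities. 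The functor carries the unit/counit of the adjunction in $\Dual$ to the cup/cap tangles in $\Bord^{[0,1]}$, visibly matching generators and relations.

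The main obstacle is not the matching of generators, which is forced by the adjunction structure supplied by Lemma~\ref{t7}/Corollary~\ref{cor.Bord.has.duals}, but showing that the only relations among planar tangles up to isotopy are the zigzag identities --- i.e., that no stray two-dimensional phenomena (such as framing anomalies or non-trivial loops in the mapping class group of $\DD^1 \times \DD^1$) contribute additional relations. This is handled by the 2-dimensional smoothing theorems already cited in the paper (in particular the discreteness inputs underlying Proposition~\ref{prop.Bord.discrete.gaunt} and Proposition~\ref{prop.BBord.gaunt}), reducing the question to a finite combinatorial verification on words.
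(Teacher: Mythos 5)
Your overall strategy --- treat both sides as gaunt $(1,1)$-categories and match generators and relations --- founders at the step where you assert that $\Dual$ has discrete mapping spaces. You justify this by claiming that the pushout $\ast \amalg_{\Obj(\Adj)} \Adj \simeq \fB\Dual$ ``retains gauntness along the contracting map $\Obj(\Adj)\to\ast$.'' The paper explicitly warns (in the remark following Observation~\ref{tAdjDual}) that this is false: gauntness of $(\infty,2)$-categories is not preserved under pushouts, and this is precisely what ``complicates direct object--morphism-style approaches'' to the proposition. Identifying the two objects of $\Adj$ creates new free composites of $\sL$'s and $\sR$'s and, a priori, new $2$-morphisms and nontrivial homotopies among them; the hom-spaces of such a pushout are computed by a bar-type colimit whose output can acquire higher homotopy. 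For the same reason, your computation of $\Hom_{\Dual}(\omega_-,\omega_+)$ as ``sequences of units and counits modulo the triangle identities'' presupposes that the $\infty$-categorical pushout is presented by the classical $1$-categorical generators-and-relations answer (planar pairings); establishing that is essentially equivalent to the proposition itself. In the paper, gauntness of $\fB\Dual$ is a \emph{consequence} of Proposition~\ref{tDualBord} (via Proposition~\ref{prop.BBord.gaunt}), not an input to it.

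The paper circumvents this by never computing $\Dual$ directly: it constructs an explicit monoidal functor $L\colon \Bord^{[0,1]}\to\Dual$ splitting~(\ref{eDualBord}), working in the model of pointed $(\infty,2)$-categories as presheaves on $\bTheta_2$, i.e.\ as a map of right fibrations $(\bTheta_2)_{/\fB\Bord^{[0,1]}}\to(\bTheta_2)_{/\fB\Dual}$. The value of $L$ on a $2$-cell, i.e.\ a framed tangle $W\subset\DD^2$, is produced by choosing a system of arcs $A\subset \DD^2\smallsetminus W$ that cuts the boundary words into their coarsest alternating factorizations, routing the $2$-cell through an auxiliary freely generated flagged $(\infty,2)$-category $G(\lag W\rag)$, and lifting each alternating piece uniquely via Lemma~\ref{s.1}; the universal property of $\Dual$ then forces $L\circ(\ref{eDualBord})\simeq\id_{\Dual}$, while the construction gives $(\ref{eDualBord})\circ L\simeq \id$ on $\Bord^{[0,1]}$. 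Your analysis of the target side (circles excluded by the framing obstruction, only arcs, discreteness from Smale's theorem) agrees with the paper's Propositions~\ref{prop.Bord.discrete.gaunt} and~\ref{prop.BBord.gaunt}, but the content of the proposition lives on the $\Dual$ side, and that is where your argument has the gap.
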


After Observation~\ref{tAdjDual}, Corollary~\ref{t.bord.obj} yields the following.
\begin{cor}
\label{s20}
There is a canonical lift among $(\infty,2)$-categories:
\[
\xymatrix{
&&
\fB \Dual
\ar[d]^-{(\ref{eDualBord})}
\\
\fB \Obj\left( \Bord^{[0,1]} \right)
\ar[rr]^-{\rm inclusion}
\ar@{-->}[urr]
&&
\fB \Bord^{[0,1]}
.
}
\]

\end{cor}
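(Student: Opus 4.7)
The plan is to construct the dashed lift by exploiting the pushout presentation of $\fB\Dual$ given in Observation~\ref{tAdjDual}, together with the analogous pushout presentation of $\fB\Obj(\Bord^{[0,1]})$ involving only the underlying $(\infty,1)$-category of the walking adjunction.

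First I would invoke Corollary~\ref{t.bord.obj} to identify $\Obj(\Bord^{[0,1]}) \simeq \FF_1(\{0,1\})$ as $\cE_1$-spaces, so that $\fB\Obj(\Bord^{[0,1]}) \simeq \fB\FF_1(\{0,1\})$. By the identification~(\ref{e.graph}), which is a consequence of the last clause of Theorem~\ref{t.Adj}, this $(\infty,1)$-category is canonically exhibited as a pushout
\[
\fB\Obj(\Bord^{[0,1]}) ~\simeq~ \ast \underset{\Obj(\Adj)}\coprod \Adj_{\leq 1}~.
\]
Pairing this with the pushout square~(\ref{eAdjDual}) that describes $\fB\Dual$, the inclusion $\Adj_{\leq 1} \hookrightarrow \Adj$ of the maximal $(\infty,1)$-subcategory into the walking adjunction is a functor over the common space of objects $\Obj(\Adj) = \{-,+\}$. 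Applying $\ast\underset{\Obj(\Adj)}\coprod (-)$ to this inclusion produces the candidate dashed functor between $(\infty,2)$-categories:
\[
\fB\Obj(\Bord^{[0,1]}) ~\simeq~ \ast \underset{\Obj(\Adj)}\coprod \Adj_{\leq 1} ~\longrightarrow~ \ast \underset{\Obj(\Adj)}\coprod \Adj ~\simeq~ \fB\Dual~.
\]

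Finally I would verify that the triangle commutes. Both composite functors $\fB\Obj(\Bord^{[0,1]}) \rightrightarrows \fB\Bord^{[0,1]}$ have source the free $(\infty,1)$-category on the directed graph with one vertex and two loops, so it suffices to check agreement on the two generating loops; these correspond, under the equivalence of Theorem~\ref{t.Adj}, to the generating 1-morphisms $\sL, \sR$ of $\Adj_{\leq 1}$. By construction of~(\ref{eDualBord}), the universal object $D \in \Dual$ (which is the image of $\sL$ in $\fB\Dual$) is carried to $(\{0\},0) \in \Bord^{[0,1]}$, and hence its right-dual $D^R \in \Dual$ (the image of $\sR$) is carried to the right-dual of $(\{0\},0)$ in $\Bord^{[0,1]}$, namely $(\{0\},1)$. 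Under the direct inclusion $\fB\Obj(\Bord^{[0,1]}) \hookrightarrow \fB\Bord^{[0,1]}$, the same two loops are carried to the same two objects, via the identification of Corollary~\ref{t.bord.obj}. The main subtlety is keeping straight the two generators of $\FF_1(\{0,1\})$ and their correspondence with $\sL, \sR$ under~(\ref{e.graph}); however, this bookkeeping is pinned down by Theorem~\ref{t.Adj}, so the triangle commutes and the canonical lift is constructed.
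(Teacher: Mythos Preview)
Your proposal is correct and follows the same approach as the paper, which derives the corollary directly from Observation~\ref{tAdjDual} and Corollary~\ref{t.bord.obj}. You have simply unpacked the construction via the pushout presentations and added an explicit check of commutativity on generators, which the paper leaves implicit.
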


The next technical result supplies lifts of 1-morphisms along the monoidal functor $\Dual \xra{(\ref{eDualBord})} \Bord^{[0,1]}$.

\begin{lemma}
\label{s.1}
Consider a solid diagram $(\infty,1)$-categories,
\begin{equation}
\label{f100}
\begin{tikzcd}
	& {\Dual} \\
	{c_1} & {\Bord^{[0,1]}}
	\arrow[from=2-1, to=2-2]
	\arrow["{(\ref{eDualBord})}", from=1-2, to=2-2]
	\arrow[dashed, from=2-1, to=1-2]
\end{tikzcd}
~.
\end{equation}
Via Corollary~\ref{t.bord.obj}, each of the composite morphisms
\[
\omega_s
\colon
c_0
\xra{~s~}
c_1
\longrightarrow
\Bord^{[0,1]}
\qquad
\text{ and }
\qquad
\omega_t
\colon
c_0
\xra{~t~}
c_1
\longrightarrow
\Bord^{[0,1]}
\]
is a word in the alphabet $\{0,1\}$.
There is a canonical filler in~(\ref{f100}) provided both $\omega_s$ and $\omega_t$ are alternating (and possibly empty), and provided the following conditions are satisfied.
\begin{enumerate}

\item
If neither $\omega_s$ nor $\omega_t$ is empty, then their extremal letters agree.  

\item
If $\omega_s$ is empty and $\omega_t$ is not empty, then the first letter of $\omega_t$ is $0$ and the last letter of $\omega_t$ is $1$.

\item
If $\omega_t$ is empty and $\omega_s$ is not empty, then the first letter of $\omega_s$ is $1$ and the last letter of $\omega_s$ is $0$.

\end{enumerate}
\end{lemma}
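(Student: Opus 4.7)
The plan is induction on $N := |\omega_s| + |\omega_t|$, building the lift from the universal right-adjunction data in $\Dual = \Dual^{[0,1]}$.  Identifying the letters $0$ and $1$ with the generator $D$ and its right-dual $D^R$ (via Observation~\ref{tAdjDual}), the monoidal $(\infty,1)$-category $\Dual$ is generated under tensor and composition by the unit $\eta\colon \uno \to 01$ and the counit $\epsilon\colon 10 \to \uno$.  The base cases exhaust $N\leq 2$: $\id_\uno$ for $(\omega_s,\omega_t) = (\emptyset,\emptyset)$, $\eta$ for $(\emptyset,01)$, and $\epsilon$ for $(10,\emptyset)$; these are the only pairs of alternating words of total length at most $2$ compatible with~(1)--(3).

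For the inductive step, one trims a matching extremal letter.  If both $\omega_s,\omega_t$ are non-empty, condition~(1) forces common first and common last letters (hence common length parity, by alternation), and writing $\omega_s = a\cdot \omega_s'$ and $\omega_t = a\cdot \omega_t'$ one declares the lift to be $\id_a \otimes f$, where $f\colon \omega_s'\to\omega_t'$ is obtained recursively.  A finite case analysis on whether either trimmed word becomes empty and on the value of $a\in\{0,1\}$ shows that at least one of the two possible trims (leftmost or rightmost) preserves the full hypothesis: alternation is automatic, and if a trimmed word becomes empty, the extremal letters of the surviving word -- pinned down by alternation of the original pair -- satisfy~(2) or~(3) on the nose.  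If exactly one of $\omega_s,\omega_t$ is empty, condition~(2) or~(3) presents the non-empty word as having an initial $01$ or terminal $10$ segment, which one extracts via a pre- or post-composition with $\eta$ or $\epsilon$ (suitably tensored with identities), reducing the total length.

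Canonicity of the constructed lift, together with the verification that it maps to the prescribed morphism in $\Bord^{[0,1]}$, follows from gauntness of $\Bord^{[0,1]}$ (Proposition~\ref{prop.BBord.gaunt}): since morphism spaces in $\Bord^{[0,1]}$ are discrete, the space of functors $c_1\to\Dual$ lifting the given $c_1\to\Bord^{[0,1]}$ is either empty or contractible.  Our explicit recursive construction witnesses existence, hence canonicity; that its image in $\Bord^{[0,1]}$ agrees with the given morphism follows because both share the same source and target, and the underlying tangle built inductively from caps ($\eta$), cups ($\epsilon$), and identity strands is the standard planar pairing compatible with the boundary framings -- gauntness of $\Bord^{[0,1]}$ then forces strict equality.

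The main obstacle is the case analysis in the trimming step, particularly the delicate transition from condition~(1) to condition~(2) or~(3) when a trimmed word becomes empty: at this boundary, one must verify that the parity of the surviving word and its extremal letters fall into precisely the pattern demanded by~(2) or~(3).  The alternating hypothesis on both $\omega_s$ and $\omega_t$, together with the joint agreement of their extremal letters forced by~(1), is what makes this alignment work in every configuration.
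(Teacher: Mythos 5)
There is a genuine gap, and it is fatal to the argument as written. Your recursive construction takes as input only the pair of words $(\omega_s,\omega_t)$ and outputs a single morphism of $\Dual$ built from caps, cups, and identity strands. But the lemma asks you to lift an \emph{arbitrary given} morphism $c_1 \to \Bord^{[0,1]}$ with those endpoints, and the hom-sets of $\Bord^{[0,1]}$ between such words are not singletons. For instance, $\Hom_{\Bord^{[0,1]}}(010,010)$ contains both the identity and the snake composite $(\eta\ot\id_0)\circ(\id_0\ot\epsilon)\colon 010 \to 0 \to 010$; your trimming recursion applied to $(010,010)$ strips matching letters and returns $\id_{010}$, so it cannot lift the snake. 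The closing justification---that the constructed lift agrees with the given morphism because ``both share the same source and target, and gauntness forces strict equality''---misreads gauntness: Proposition~\ref{prop.BBord.gaunt} says the morphism \emph{spaces} are discrete (sets), not that there is at most one morphism between two objects. For the same reason, gauntness of $\Bord^{[0,1]}$ alone does not show that the space of lifts is empty or contractible; that would require control of the hom-spaces of $\Dual$, which is essentially the content of Proposition~\ref{tDualBord} and cannot be assumed here.

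The paper's proof supplies exactly the input you are missing. It first lifts the boundary $\partial c_2 \to \fB\Bord^{[0,1]}$ uniquely along $\Adj \to \fB\Dual \to \fB\Bord_1^{\fr}(\RR^1)$, using that the maximal $(\infty,1)$-subcategory $\Adj_{\leq 1}$ is free on the graph $-\rightleftarrows +$, so that an alternating word satisfying conditions (1)--(3) is precisely a composable string of the generators $\sL,\sR$ with matching endpoints. It then invokes the Riehl--Verity/Schanuel--Street description of the $2$-morphism categories of $\Adj$ (Theorem~\ref{t.Adj}: the walking monad $\sO$, its opposite, and $\sO_\pm$) and compares them with the explicit set of $2$-morphisms of $\fB\Bord_1^{\fr}(\RR^1)$ (planar framed tangles) to conclude that the given $2$-cell has a \emph{unique} preimage in $\Adj$, whatever it is. To repair your argument you would need an analogous identification of $\Hom_{\Dual}(\omega_s,\omega_t)$ with $\Hom_{\Bord^{[0,1]}}(\omega_s,\omega_t)$ for alternating words---that is, you would have to make the lift depend on the actual tangle being lifted, not merely on its boundary.
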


\begin{proof}

Observation~\ref{tAdjDual} supplies a composite functor
\begin{equation}
\label{f.1}
\Adj
\longrightarrow
\fB \Dual
\xra{~(\ref{eDualBord})~}
\fB \Bord_1^{\sf fr}(\RR^1)
~.
\end{equation}
Inspection of the maximal $(\infty,1)$-subcategory of $\Adj$ (Theorem~\ref{t.Adj}) supplies a unique lift of $\partial c_2 \hookrightarrow c_2 \to \fB \Bord_1^{\sf fr}(\RR^1)$ along (\ref{f.1}).  
Inspection of the space of 2-morphisms of $\fB \Bord_1^{\sf fr}(\RR^1)$ (Lemma~\ref{cor.obj.mor.Bord.free}) and the space of 2-morphisms of $\Adj$ (Theorem~\ref{t.Adj}) supplies a further unique lift of $c_2 \to \fB \Bord_1^{\sf fr}(\RR^1)$ along~(\ref{f.1}).  
This unique lift determines the canonical lift of the statement of the lemma.

\end{proof}

The remainder of this subsection is a proof of Proposition~\ref{tDualBord}.
We prove the result by constructing a filler in this diagram among monoidal $(\infty,1)$-categories:
\begin{equation}
\label{efiller}
\begin{tikzcd}
	& {\FF_1(\ast)} \\
	\Dual && \Dual \\
	\Bord^{[0,1]} && \Bord^{[0,1]}
	\arrow[from=1-2, to=2-3]
	\arrow["{(\ref{eDualBord})}", from=2-3, to=3-3]
	\arrow[from=1-2, to=2-1]
	\arrow["{(\ref{eDualBord})}", swap, from=2-1, to=3-1]
	\arrow["{=}", from=3-1, to=3-3]
	\arrow["L", dashed, from=3-1, to=2-3]
\end{tikzcd}
~.
\end{equation}
The universal property of $\Dual$ will imply that the composite morphism $\Dual \xra{ L \circ (\ref{eDualBord}) } \Dual$ is the identity provided that the functor $L$ is such that the upper square of this diagram is commutative.

Via~(\ref{e.deloop}), the diagram~(\ref{efiller}) among monoidal $(\infty,1)$-categories can be codified in terms of its deloop, as a diagram among pointed $(\infty,2)$-categories.
Via the presentation of $(\infty,2)$-categories as presheaves on $\bTheta_2$ established in \cite{rezk-n}, the diagram~(\ref{efiller}) can be codified as a diagram among presheaves on $\bTheta_2$.
Via the straightening-unstraightening equivalence ${\sf RFib}_{\bTheta_2} \simeq \PShv(\bTheta_2)$ of \S2 of \cite{HTT}, the diagram~(\ref{efiller}) can in turn be codified as the following diagram among right fibrations over $\bTheta_2$:
\begin{equation}
\label{ethetas}
\begin{tikzcd}
	& {(\bTheta_2)_{/\fB\FF_1(\ast)}} \\
	{(\bTheta_2)_{/\fB \Dual}} && {(\bTheta_2)_{/\fB \Dual}} \\
	{(\bTheta_2)_{/\fB \Bord^{[0,1]}}} && {(\bTheta_2)_{/\fB \Bord^{[0,1]}}}
	\arrow[from=1-2, to=2-3]
	\arrow["{(\bTheta_2)_{/\fB (\ref{eDualBord})}}", from=2-3, to=3-3]
	\arrow[from=1-2, to=2-1]
	\arrow["{(\bTheta_2)_{/\fB (\ref{eDualBord})}}"', from=2-1, to=3-1]
	\arrow["{=}"', from=3-1, to=3-3]
	\arrow["(\bTheta_2)_{/L}", dashed, from=3-1, to=2-3]
\end{tikzcd}
~.
\end{equation}

\subsubsection{$(\bTheta_2)_{/L}$ on 1-categories}
Consider the full $\infty$-subcategories $\bTheta_2 \subset \Cat_{(\oo,2)}\supset \Cat_{(\oo,1)}$.
Their intersection is the simplex category: $\bDelta \simeq \bTheta_2 \cap \Cat_{(\oo,1)}$.
In other words, an object $T\in \bTheta_2$ is a 1-category if and only if $T = [p]$ for some $p\geq 0$.
Consider, then, the full subcategory
$\bDelta_{/\fB \Bord^{[0,1]}} \subset (\bTheta_2)_{/\fB \Bord^{[0,1]}}$ consisting of those $\left(T \to  \fB \Bord^{[0,1]}\right)$ such that $T$ is a 1-category (i.e., $T=[p]$ for some $p\geq 0$).
We define the lift $(\bTheta_2)_{/L}$ over this full subcategory to be that given by Corollary~\ref{s20}.

\subsubsection{$(\bTheta_2)_{/L}$ on 2-cells}
\label{sec.2.cell}
We now explicitly construct the value of $(\bTheta_2)_{/L}$ on each object $\left( T \xra{\lag (W,\varphi) \rag} \fB \Bord^{[0,1]} \right)$ in which $T=c_2 = [1]([1])$ is the 2-cell.
So choose such an object $\left( c_2 \xra{\lag (W ,\varphi)\rag} \fB \Bord^{[0,1]} \right)$, which is a tangle $W \subset \DD^2$ in the hemispherically stratified 2-disk equipped with a $[0,1]$-framing.
(As we proceed, we omit the framing $\varphi$ from our notation.)

Let us overview our logic.  
Consider the composite morphism between flagged $(\infty,2)$-categories $\partial c_2 \to c_2 \xra{\lag W \rag} \fB \Bord^{[0,1]}$.
Through Corollary~\ref{t.bord.obj}, this selects a pair of words in the alphabet $\{0,1\}$.
Optimistically, these two words are alternating, for in that case Lemma~\ref{s.1} could be applied to achieve the value of $(\bTheta_2)_{/L}$ on this object $\left( c_2 \xra{\lag W \rag} \fB \Bord^{[0,1]} \right)$.
However, of course, these two words need not be alternating.
Our strategy, therefore, is to consider the minimal factorization of these two words by alternating words.
We then minimally factor the 2-cell, commensurately with this factorization of its boundary, as a 2-categorical composition of 2-cells.  
We then lift each of these constituent 2-cells via Corollary~\ref{t.bord.obj}, which supplies a lift of the original 2-cell.  
This logic can be organized as the existence of a flagged $(\infty,2)$-category $G(\lag W \rag)$ fitting into a diagram among flagged $(\infty,2)$-categories:
\begin{equation}
\label{f3}
\begin{tikzcd}
	&& {\fB \Dual} \\
	& {G\left( \left\lag W \right\rag \right)} \\
	c_2 && {\fB \Bord^{[0,1]}}
	\arrow["{(\ref{eDualBord})}", from=1-3, to=3-3]
	\arrow["q", from=3-1, to=2-2]
	\arrow[from=2-2, to=1-3]
	\arrow["{\lag W \rag}"', from=3-1, to=3-3]
	\arrow[from=2-2, to=3-3]
\end{tikzcd}
~.
\end{equation}
We then take the value of $(\bTheta_2)_{/L}$ on the object $\left( c_2 \xra{\lag W \rag} \fB \Bord^{[0,1]} \right) \in (\bTheta_2)_{/\fB \Bord^{[0,1]}}$ to be the composite diagonal morphism: $\left( T \to G(\lag W \rag) \to \fB \Dual \right) \in (\bTheta_2)_{/\fB \Dual}$.

Recall that the cellular realization of the 2-cell, $\lag c_2 \rag = \DD^2$, is the hemispherically stratified 2-disk, equipped with its solid 2-framing.

By definition of $\Bord^{[0,1]}$, the morphism $c_2 \xra{\lag W \rag} \fB \Bord^{[0,1]}$ is a $[0,1]$-framed tangle $W \subset \DD^2$.  
See Figure~\ref{fig9}.

\begin{figure}[H]
  \includegraphics[width=\linewidth, trim={0 {0in} {0in} {0in}}, clip]{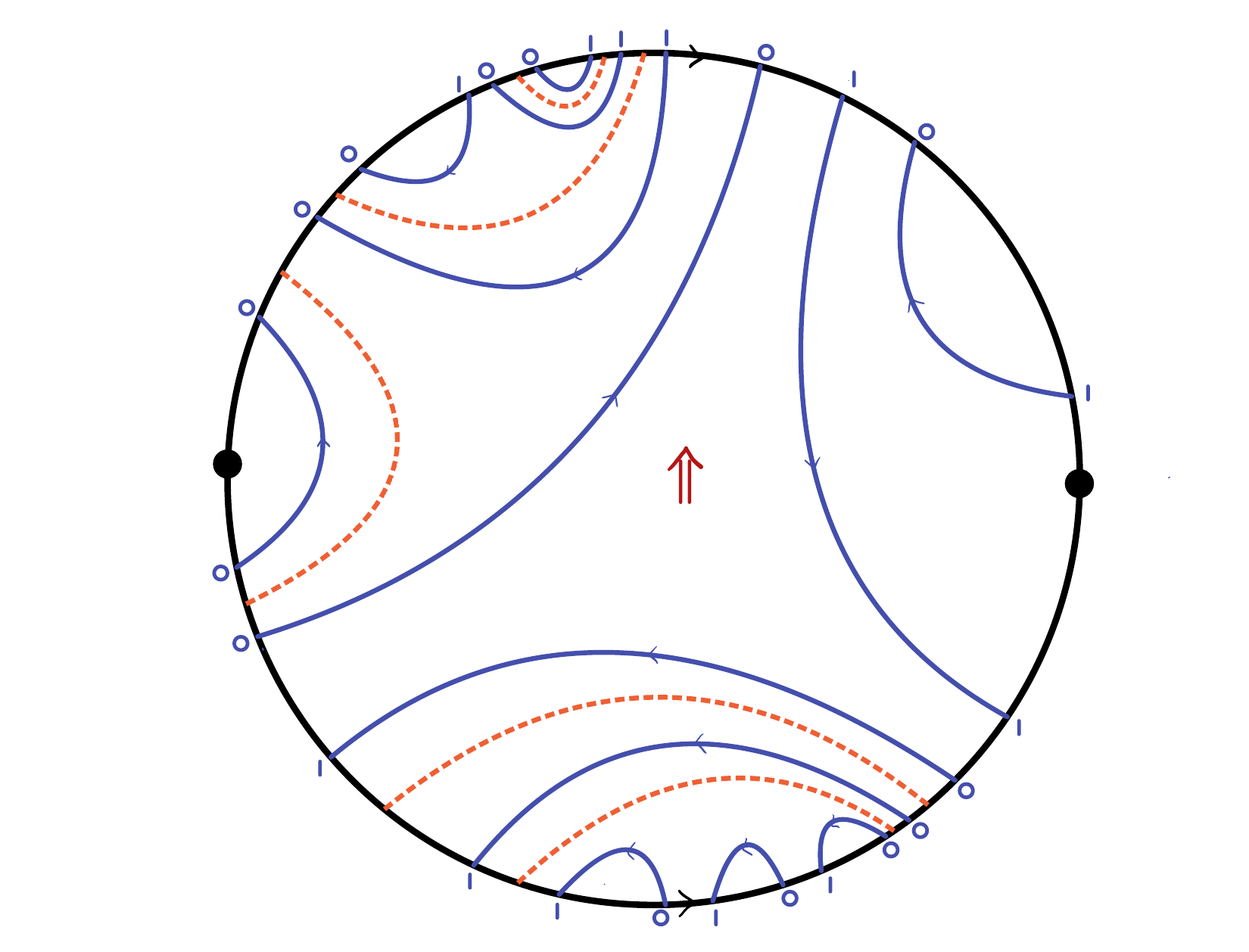}
  \caption{Depicted (in solid blue) is a $[0,1]$-framed tangle in the (solid black) hemispherically stratified 2-disk equipped with the (red) solid 2-framing.  This datum is a 2-cell $c_2 \xra{\lag W \rag} \fB \Bord^{[0,1]}$.  Also depicted is the (dashed orange) union $A$ of arcs.}
  \label{fig9}
\end{figure}

As {\bf Step 1}, we construct a finite disjoint union of arcs $A\subset \DD^2 \smallsetminus W$ with the following features.
\begin{itemize}
\item
The end-points of each arc in $A$ are contained in the boundary of $\DD^2$.

\item
Recall that the $[0,1]$-framing of $W \subset \DD^2$ determines two words $W \cap \partial_\pm \DD^2$ in the alphabet $\{0,1\}$.
The set of end-points of the arcs in $A$ minimally factors these words into alternating words.

\end{itemize}
Constructing $A$ involves several case-by-case analyses, 
ultimately exploiting that the closure of each connected component of $\DD^2 \smallsetminus W$ is a 2-disk.

The $[0,1]$-framing of $W\subset \DD^2$ determines an orientation on $W$ with the following property and implication.
\begin{itemize}
\item[($\dagger$)]
Let $W_\alpha \subset W$ be a connected component.
If either $W_\alpha \cap \partial_+ \DD^2 = \emptyset$ or $W_\alpha \cap \partial_- \DD^2 = \emptyset$, then $W_\alpha \subset \DD^2$ is oriented-isotopic relative to its boundary to a horizontal straight line oriented opposite to the first coordinate direction.

\item[($\dagger\dagger$)]
Each connected component of $W$ is a (closed) arc whose set of end-points is a subset of $(\DD^2)^{(1)}$, the 1-dimensional strata of the hemispherically stratified (closed) 2-disk.
In particular, each connected component of $W$ separates $\DD^2$.
Say a connected component $W_\alpha \subset W$ \bit{crosses} if $W_\alpha \cap \partial_- \DD^2 \neq \emptyset \neq  W_\alpha \cap \partial_+ \DD^2$, which is to say its end-points do not both belong to the same hemisphere of $\partial \DD^2$.

\end{itemize}
Let $C \subset \DD^2 \smallsetminus W$ be a connected component of the complement of this tangle.
Its closure $\ov{C} \subset \DD^2 \subset \RR^2$ is a compact subspace.
Consider the stratification of $\ov{C}$ such that, for each $0\leq i \leq 2$, each connected component of the intersection $C \cap (\DD^2)^{(i)}$ and each connected component of the intersection $\ov{C} \cap W \cap (\DD^2)^{(i)}$ is a stratum of $\ov{C}$, where $(\DD^2)^{(i)}$ is the $i$-dimensional strata of $\DD^2$.
This stratified space $\ov{C}$ is a compact 2-manifold with corners;
the closure of each 1-dimensional face of $\ov{C}$ is isomorphic with $\DD^1$, as a 1-manifold with boundary.
The closure of some of its 1-dimensional faces are connected components of $W$.
Also, $\partial \ov{C} \smallsetminus W = \partial C$ is the boundary of $C$, as a 2-manifold with corners.  
This boundary $\partial C \subset \partial \DD^2$ is an open subspace of the boundary of the 2-disk, as a 2-manifold with corners.  
Each 1-dimensional face in $\partial C$ is a connected open interval of the upper-hemisphere $\partial_+ \DD^2$ or lower-hemisphere $\partial_- \DD^2$ of the boundary $\partial \DD^2$.
Therefore, the finite set of 1-dimensional faces of $C$ admits a partition indexed by the set $\{+,-\}$ -- corresponding to which hemisphere a face belongs to -- and each partition inherits a linear order from the $1^{st}$ coordinate direction of the solid 2-framing of $\DD^2$.
Note that $\ov{C}$ is isomorphic, as a manifold with corners, with a convex subspace of $\RR^2$.

Now, say a connected component $C\subset \DD^2 \smallsetminus W$ is \bit{coherent} if there exists an orientation of the boundary topological 1-manifold $\partial \overline{C}$ that extends the orientation of $W \cap \partial \ov{C} \subset W$. 
Say such a $C$ is \bit{incoherent} if it is not coherent.  
Let $C \subset \DD^2 \smallsetminus W$ be an incoherent connected component.
Choose a (closed) arc $A_C \subset C$ with the following properties.  
See Figure~\ref{fig9}.

\begin{itemize}

\item[(A.1)]
The arc separates: The complement $C \smallsetminus A_C$ has exactly two connected components.
Necessarily, then, the set of end-points $\partial A_C \subset \partial \DD^2$ are in the boundary of the 2-disk.  

\item[(A.2)]
There is containment $S^0 \cap C \subset \partial A_C$.
In other words, each 0-dimensional stratum of the hemispherically stratified 2-disk $\DD^2$ that is contained in $C$ is an end-point of the arc $A_C$.

\item[(A.3)]
There exists an orientation on $\partial \ov{C} \smallsetminus A_C$, which is the boundary of $\ov{C}$ remove the set of end-points of the arc $A_C$, that extends the orientation of $W \cap \partial (\ov{C} \smallsetminus A_C) \subset W$.

\end{itemize}
We now argue that such an arc $A_C \subset C$ exists.
Note that $C$ incoherent implies $W \neq \emptyset$ is not empty.  
Fix an embedding $\ov{C} \xra{\varphi} \RR^2$ whose image in $\RR^2$ is a convex 2-submanifold with corners.
Isotope $\varphi$ as needed to ensure that the unique affine arc between two points each in distinct faces of $\ov{C}$ only intersects the boundary of $\ov{C}$ at its end-points.
Any such arc separates $C$.  
We now construct $A_C$ through cases.
In each case, the named $A_C$ will satisfy the desired property in light of ($\dagger$).\begin{itemize}
\item
Suppose $S^0 \cap C = S^0$.
Then take $A_C$ to be the preimage by $\varphi$ of the unique affine arc whose set of end-points is $\varphi(S^0)$.  

\item
Suppose $S^0 \cap C = \{e_1\}$, the singleton consisting of the first standard basis vector of $\RR^2$.
Then there is exactly one connected component $W_\alpha \subset \ov{C} \cap W$ that crosses.
Denote $\{p_\pm\} := W_\alpha \cap \partial_\pm \DD^2$.
\begin{itemize}
\item
Suppose $W_\alpha$ is oriented such that the orientation is inward at $p_-$ and outward at $p_+$.
In this case, select an element $x$ in the 1-dimensional face of $C \cap \partial_- \DD^2$ whose closure contains $p_+$.
Take $A_C$ to be the preimage by $\varphi$ of the unique affine arc whose set of end-points is $\varphi(\{x,e_1\})$.

\item
Suppose $W_\alpha$ is oriented such that the orientation is outward at $p_-$ and inward at $p_+$.
In this case, select an element $x$ in the 1-dimensional face of $C \cap \partial_- \DD^2$ whose closure contains $p_-$.
Take $A_C$ to be the preimage by $\varphi$ of the unique affine arc whose set of end-points is $\varphi(\{x,e_1\})$.  

\end{itemize}

\item
Suppose $S^0 \cap C = \{-e_1\}$, the singleton consisting of the negation of the first standard basis vector of $\RR^2$.
Then there is exactly one connected component $W_\alpha \subset \ov{C} \cap W$ that crosses.
Denote $\{p_\pm\} := W_\alpha \cap \partial_\pm \DD^2$.\begin{itemize}
\item
Suppose $W_\alpha$ is oriented such that the orientation is inward at $p_-$ and outward at $p_+$.
In this case, select an element $y$ in the 1-dimensional face of $C \cap \partial_- \DD^2$ whose closure contains $p_-$.
Take $A_C$ to be the preimage by $\varphi$ of the unique affine arc whose set of end-points is $\varphi(\{-e_1,y\})$.  

\item
Suppose $W_\alpha$ is oriented such that the orientation is outward at $p_-$ and inward at $p_+$.
In this case, select an element $y$ in the 1-dimensional face of $C \cap \partial_- \DD^2$ whose closure contains $p_+$.
Take $A_C$ to be the preimage by $\varphi$ of the unique affine arc whose set of end-points is $\varphi(\{-e_1,y\})$.  

\end{itemize}

\item
Suppose $S^0 \cap C = \emptyset$.
In this case, there are either exactly 0 or 2 connected components of $W \cap \ov{C}$ that cross.  
\begin{itemize}
\item
Suppose no connected component of $W \cap \ov{C}$ crosses.
Using the supposition that $S^0 \cap C = \emptyset$, then every connected component of $W \cap \ov{C}$ has end-points in the same hemisphere.  
Assume, without loss in generality, that this hemisphere is $\partial_+ \DD^2$.  
The supposition that $C$ is incoherent ensures the boundary $\partial C$ has more than one connected component.  
In the linear order on the set of connected components of the boundary $\partial C$, select an element $x$ in the minimal connected component and an element $y$ in the maximal connected component.  
Take $A_C$ to be the preimage by $\varphi$ of the unique affine arc whose set of end-points is $\varphi(\{x,y\})$.  

\item
Suppose exactly two connected components $W_\alpha , W_\beta \subset W \cap \ov{C}$ cross.
Denote $\{p_\pm\} := W_\alpha \cap \partial_\pm \DD^2$ and $\{q_\pm\}:= W_\beta \cap \partial \DD^2$.  
\begin{itemize}
\item
Suppose the orientation of $W_\alpha$ is inward at $p_-$ and the orientation of $W_\beta$ is inward at $q_-$.
In the linear order on the set of connected components of $\partial C \cap \partial_- \DD^2$, select an element $x$ in the maximal connected component.
In the linear order on the set of connected components of $\partial C \cap \partial_+ \DD^2$, select an element $y$ in the minimal connected component.
Take $A_C$ to be the preimage by $\varphi$ of the unique affine arc whose set of end-points is $\varphi(\{x,y\})$.  

\item
Suppose the orientation of $W_\alpha$ is inward at $p_-$ and the orientation of $W_\beta$ is inward at $q_+$.
In the linear order on the set of connected components of $\partial C \cap \partial_+ \DD^2$, select an element $x$ in the minimal connected component and an element $y$ in the maximal connected component.
Take $A_C$ to be the preimage by $\varphi$ of the unique affine arc whose set of end-points is $\varphi(\{x,y\})$.  

\item
Suppose the orientation of $W_\alpha$ is inward at $p_+$ and the orientation of $W_\beta$ is inward at $q_-$.
In the linear order on the set of connected components of $\partial C \cap \partial_- \DD^2$, select an element $x$ in the minimal connected component and an element $y$ in the maximal connected component.
Take $A_C$ to be the preimage by $\varphi$ of the unique affine arc whose set of end-points is $\varphi(\{x,y\})$.  

\item
Suppose the orientation of $W_\alpha$ is inward at $p_+$ and the orientation of $W_\beta$ is inward at $q_+$.
In the linear order on the set of connected components of $\partial C \cap \partial_- \DD^2$, select an element $x$ in the minimal connected component.
In the linear order on the set of connected components of $\partial C \cap \partial_+ \DD^2$, select an element $y$ in the maximal connected component.
Take $A_C$ to be the preimage by $\varphi$ of the unique affine arc whose set of end-points is $\varphi(\{x,y\})$.  

\end{itemize}

\end{itemize}

\end{itemize}
Using that the map $\sO(k) \xra{\simeq} \Diff(\DD^k)$ is an equivalence for $0\leq k \leq 2$ (\cite{smale.2.sphere}), the moduli space of such arcs $A_C \subset C$ is contractible.

The containment $A_C \subset C$ for each incoherent connected component of $\DD^2\smallsetminus W$ implies $A_C$ and $A_{C'}$ are disjoint if $C$ and $C'$ are distinct.
Take $A$ to be the disjoint union of arcs, one for each incoherent connected component of $\DD^2 \smallsetminus W$:
\[
A
~:=~
\underset{C ~{\rm incoherent}} \bigsqcup A_C 
~\subset~
\DD^2
~,
\]
which is a properly embedded 1-dimensional submanifold with boundary of $\DD^2$.
This concludes Step 1.

As {\bf Step 2}, we use the union of arcs $A \subset \DD^2 \smallsetminus W$ to construct the flagged $(\infty,2)$-category $G(\lag W \rag)$.
As {\bf Step 2a}, we construct an $(\infty,1)$-category the deloop of the free monoid generated by the factors in the alternating factorization of the words $W \cap \partial_\pm \DD^2$ in the alphabet $\{0,1\}$.
As {\bf Step 2b}, we define $G(\lag W \rag)$ by freely adjoining finitely many 2-cells to this $(\infty,1)$-category.

The complement $\partial_\pm \DD^2 \smallsetminus A \subset \partial_\pm \DD^2$ is a finite union of open arcs.  
Consider the finite sets
\[
E_\pm
~:=~
\pi_0\left(
\partial_\pm \DD^2 \smallsetminus A 
\right)
\qquad
\text{ and }
\qquad
E
~:=~
E_- \amalg E_+
\]
of such open arcs.  
Consider the $(\infty,1)$-category $\fB \FF_1(E)$, which is the deloop of the free monoid generated by the finite set $E$. 
This concludes Step 2a.

We now implement Step 2b.
The complement $\DD^2 \smallsetminus A$ is a finite disjoint union of contractible subspaces of $\DD^2$.
Consider the finite set of such:
\[
F
~:=~
\pi_0\left(
\DD^2 \smallsetminus A 
\right)
~.
\]

For each $D \in F$ consider the subsets
\[
s(D)
~:=~
\left\{
e\in E_- \mid e \subset D
\right\}
~\subset~
E_-
\qquad
\text{ and }
\qquad
t(D)
~:=~
\left\{
e\in E_+ \mid e \subset D
\right\}
~\subset~
E_+
~.
\]
The linear orders on each of $E_\pm$ inherited from the $1^{st}$-coordinate direction of the 2-framing on $\DD^2 \subset \RR^2$ determines a linear order on each of these subsets.
With these linear orders, we regard $s(D)$ as a word in the alphabet $E_-$ and $t(D)$ as a word in the alphabet $E_+$.
Invoking Observation~\ref{s4}, we then have two composite maps
\[
s
\colon
F
\longrightarrow
\FF_1(E_-)
\hookrightarrow
\FF_1(E)
\hookleftarrow
\FF_1(E_+)
\longleftarrow
F
\colon
t
\]
from $F$ to the underlying 0-type of the free associative algebra on the set $E$.  
Define the flagged $(\infty,2)$-category $G\left( \left\lag W \right\rag \right)$ as the pushout:
\[\begin{tikzcd}
\label{f1}
	{(\partial c_2)^{\amalg F}} & {(c_2)^{\amalg F}} \\
	\fB \FF_1(E) & G\left( \left\lag W \right\rag \right)	
	\arrow["{(s,t)}"', from=1-1, to=2-1]
	\arrow[from=2-1, to=2-2]
	\arrow[hook, from=1-1, to=1-2]
	\arrow[from=1-2, to=2-2]
	\arrow["\lrcorner"{anchor=center, pos=0.125, rotate=180}, draw=none, from=2-2, to=1-1]
\end{tikzcd}
~.
\]
Put differently, $G\left( \left\lag W \right\rag \right)$ is the flagged $(\infty,2)$-category in which: there is a unique object; its space of 1-endomorphisms is the 0-type of words in the alphabet $E$; for each $D \in F$, there is a (freely) generating 2-morphism from the word $s(D)$ to the word $t(D)$.
See Figure~\ref{fig10}.
This concludes Step 2b.

\begin{figure}[H]
  \includegraphics[width=\linewidth, trim={0 {0in} {0in} {0in}}, clip]{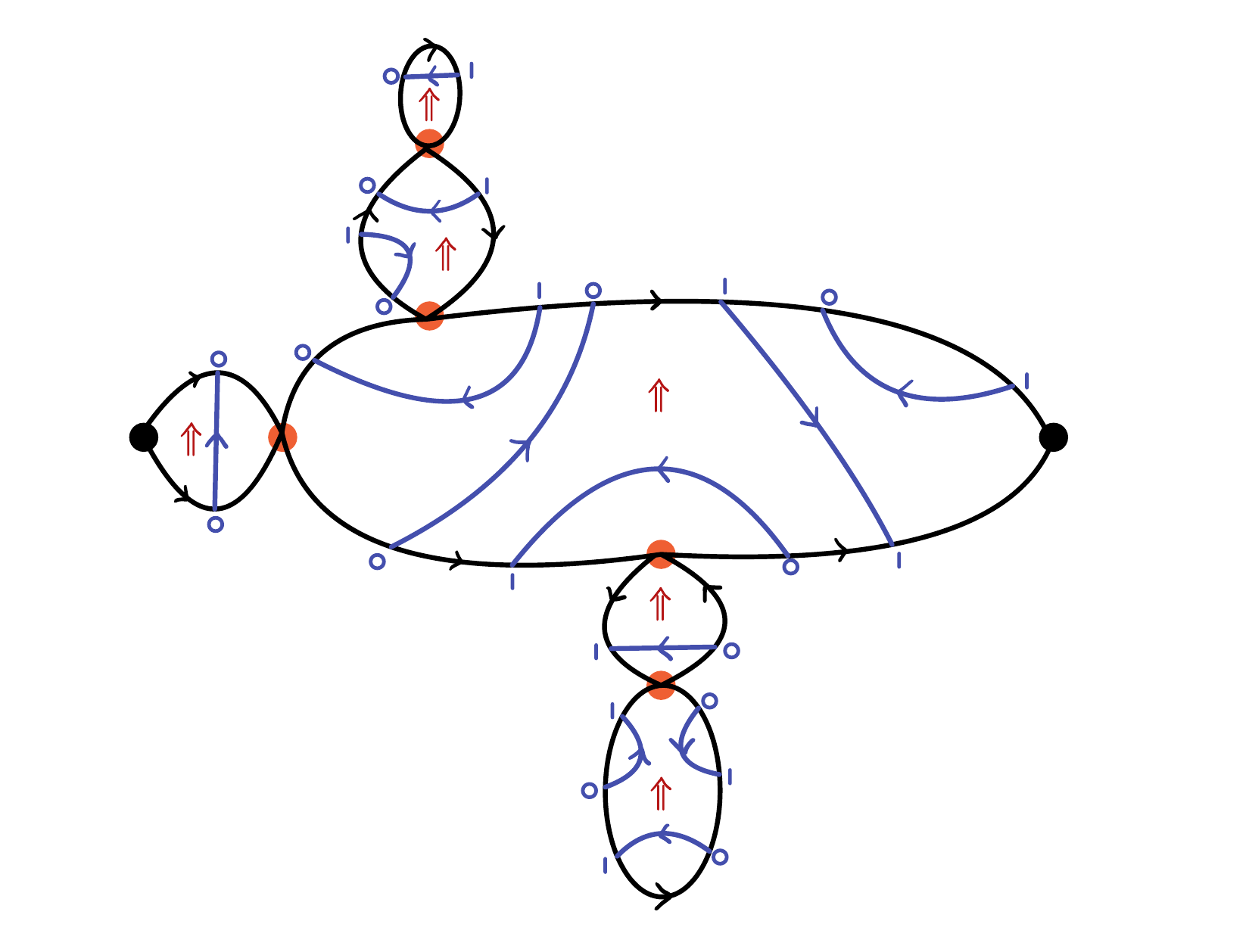}
  \caption{Depicted is the flagged $(\infty,2)$-category $G(\lag W \rag)$, equipped with its morphism $G(\lag W \rag) \to \fB \Bord^{[0,1]}$, associated to the 2-cell $c_1 \xra{\lag W \rag} \fB \Bord^{[0,1]}$ depicted in Figure~\ref{fig9}.  The flagged $(\infty,2)$-category $G(\lag W \rag)$ is that freely generated by the 0-, 1-, and 2-dimensional cells in this picture.  The picture is achieved from that of Figure~\ref{fig9} by collapsing each (dashed orange) arc.}
  \label{fig10}
\end{figure}

As {\bf Step 3}, we construct a morphism 
\begin{equation}
\label{s12}
c_2
\xra{~q~}
G(\lag W \rag)
\end{equation}
between flagged $(\infty,2)$-categories, as in the diagram~(\ref{f3}).
Informally, $q$ selects the maximal composite 2-cell in $G(\lag W \rag)$ (see Figure~\ref{fig10}).

Through Observation~\ref{s4}, the linear orders on the finite sets $E_-$ and $E_+$ respectively define words $s,t \in \FF_1(E)$.

We therefore have a morphism between flagged $(\infty,2)$-categories:
\[
(s,t)
\colon
\partial c_2
\longrightarrow
\fB \FF_1(E)
\longrightarrow
G(\lag W \rag)
~.
\]

We wish to fill the diagram among flagged $(\infty,2)$-categories:
\begin{equation}
\label{s27}
\begin{tikzcd}
	{\partial c_2} & {G(\lag W \rag)} \\
	{c_2}
	\arrow["q_{|\partial c_2}", from=1-1, to=1-2]
	\arrow[from=1-1, to=2-1]
	\arrow[dashed, swap, "q", from=2-1, to=1-2]
\end{tikzcd}
~.
\end{equation}
We do this by induction on the number of connected components of $A$, which is the number of incoherent connected components $C\subset \DD^2 \smallsetminus W$.  
Suppose $A = \emptyset$ is empty.
Then each of the finite sets $E_\pm = \ast$ is a singleton, so that $E$ has cardinality 2, and $F$ is a singleton, and so the canonical morphism $c_2 = (c_2)^{\amalg F} \to G(\lag W \rag)$ is the sought functor $q$.
Next, suppose $A$ has at least one connected component.
Select a connected component $A_C \subset A$.
Isotope the tangle $W\subset \DD^2$ and the arc $A_C\subset \DD^2$ to assume $A_C$ is an affine arc that is not parallel to the $2^{nd}$ coordinate direction of $\DD^2$.
Projecting the $1^{st}$ coordinate direction of $\DD^2$ onto $A_C$ therefore determines a framing of $A_C$.  
Consider the union $\Gamma := \partial \DD^2 \cup A_C$.
The framing of $A_C$, and the given framings of $\partial_{\pm} \DD^2$ also via the $1^{st}$-coordinate direction of the solid 2-framing of $\DD^2$, give $\Gamma$ the structure of a finite \emph{directed} graph: Its set of vertices is the union $S^0 \cup \partial A_C$, which has cardinality at least 2 and at most 4; its set of directed edges is $\{A_C\} \cup \pi_0( \partial \DD^2 \smallsetminus \left(S^0 \cup \partial A_C) \right)$.
Denote by $\fC(\Gamma)$ the quiver generated by this finite directed graph: It is the $(\infty,1)$-category freely generated by $\Gamma$.  
By design of the arc $A_C$, each edge of $\Gamma$ determines a word in the alphabet $E$, with the edge $A_C$ determining the empty word.  
We therefore have a morphism between flagged $(\infty,2)$-categories
\begin{equation}
\label{s23}
\fC(\Gamma)
\longrightarrow
\fB \FF_1(E)
\longrightarrow
G(\lag W \rag)
~.
\end{equation}

Now, using that the arc $A_C$ separates $\DD^2$, denote each connected component of the complement $H_- \sqcup H_+ := \DD^2 \smallsetminus A_C$.  
The closure $\ov{H}_\pm \subset \DD^2$ is a compact 2-manifold with corners.
Each face of the boundary $\partial \ov{H}_\pm$ has a given framing.
Further, $\ov{H}_\pm$ inherits a solid 2-framing from that of $\DD^2$, and along each face of the boundary $\partial \ov{H}_\pm$, the projection of the $1^{st}$ coordinate of this solid 2-framing agrees with the given framing.
Choose a piecewise smooth homeomorphism $\DD^2 \xra{\psi_\pm} \ov{H}_\pm$ that is a diffeomorphism away from the end-points of $A_C$, and that respects solid 2-framings and the framings of the faces.  
Denote $W_\pm := \psi^{-1}_\pm(W) \subset \DD^2$, which are tangles in $\DD^2$, and they inherit a $[0,1]$-framing from that of $W \subset \DD^2$.
Consequently, they are selected by morphisms $c_2 \xra{\lag W_\pm \rag} \fB \Bord^{[0,1]}$.
Furthermore, $A_\pm := \psi^{-1}_\pm(A \smallsetminus A_C)\subset \DD^2$ is a disjoint union of arcs, one for each incoherent connected component $\DD^2 \smallsetminus W_\pm$, each that satisfies the characteristic conditions~(A.1)-(A.3) above.  
By construction, there are strictly fewer connected components of each of $A_\pm$ than there are of $A$.
By induction on the number of connected components of $A$, 
we therefore have fillers
\[
\begin{tikzcd}
	{\partial c_2} & {G(\lag W_\pm \rag)} \\
	{c_2}
	\arrow["(q_\pm)_{|\partial c_2}", from=1-1, to=1-2]
	\arrow[from=1-1, to=2-1]
	\arrow[dashed, swap, "q_\pm", from=2-1, to=1-2]
\end{tikzcd}
~.
\]
The inclusions $\partial \ov{H}_\pm \hookrightarrow \Gamma$ and $\partial \DD^2 \hookrightarrow \Gamma$ determine morphisms between $(\infty,1)$-categories:
\[
(\partial c_2)^{\amalg \{\pm\}}
\xra{~h^0_\pm~}
\fC(\Gamma)
\xla{~f^0~}
\partial c_2
~.
\]
Inspecting the definition of each of $G(\lag W_\pm \rag)$ and $G(\lag W \rag)$ as a pushout, observe the canonical fillers in a commutative diagram among flagged $(\infty,2)$-categories
\begin{equation}
\label{s26}
\begin{tikzcd}
	{(\partial c_2)^{\amalg \{\pm\}}} && {\fC(\Gamma)} \\
	{(c_2)^{\amalg \{\pm\}}} && {G(\lag W_- \rag) \amalg G(\lag W_+ \rag)} && {G(\lag W \rag)}
	\arrow["h^0_\pm", from=1-1, to=1-3]
	\arrow[hook, from=1-1, to=2-1]
	\arrow["{(q_-)_{|\partial c_2} \amalg (q_+)_{|\partial c_2}}"{description}, from=1-1, to=2-3]
	\arrow["{(\ref{s23})}", from=1-3, to=2-5]
	\arrow[swap, "{q_- \amalg q_+}", from=2-1, to=2-3]
	\arrow[dashed, from=2-3, to=2-5]
\end{tikzcd}
~.
\end{equation}
Consider the solid diagram among flagged $(\infty,2)$-categories,
\begin{equation}
\label{s24}
\begin{tikzcd}
	{(\partial c_2)^{\amalg \{\pm\}}} & {\fC(\Gamma)} & {\partial c_2} \\
	{(c_2)^{\amalg \{\pm\}}} & \cC & {c_2}
	\arrow["{h^0_{\pm}}", from=1-1, to=1-2]
	\arrow[hook, from=1-1, to=2-1]
	\arrow[from=1-2, to=2-2]
	\arrow["{f^0}"', from=1-3, to=1-2]
	\arrow[hook, from=1-3, to=2-3]
	\arrow["{h_\pm}", from=2-1, to=2-2]
	\arrow["\lrcorner"{anchor=center, pos=0.125, rotate=180}, draw=none, from=2-2, to=1-1]
	\arrow["f"', dashed, from=2-3, to=2-2]
\end{tikzcd}
\end{equation}
in which $\cC$ defined as the pushout of the left square.
Being a pushout, the diagram~(\ref{s26}) supplies a canonical morphism between flagged $(\infty,2)$-categories
\begin{equation}
\label{s25}
\cC
\longrightarrow
G(\lag W \rag)
~.
\end{equation}
Observe that a 2-categorical composition of the two 2-cells $(c_2)^{\amalg \{\pm\}} \xra{h_\pm} \cC$ in $\cC$ supplies a filler in~(\ref{s24}).
Finally, define the sought filler~(\ref{s27}) to be the composition
\[
q
\colon
c_2
\xra{~f~}
\cC
\xra{~(\ref{s25})~}
G(\lag W \rag)
~.
\]
This concludes Step 3.

As {\bf Step 4}, we construct a factorization
\begin{equation}
\label{s7}
\begin{tikzcd}
	& {G\left( \left\lag W \right\rag \right)} \\
	c_2 && {\fB \Bord^{[0,1]}}
	\arrow["q", from=2-1, to=1-2]
	\arrow["{\lag W \rag}"', from=2-1, to=2-3]
	\arrow[dashed, from=1-2, to=2-3]
\end{tikzcd}
~.
\end{equation}

For each $e\in E$, the intersection $W \cap e \subset e$ is a finite set.  
The $1^{st}$-coordinate of the solid 2-framing on $\DD^2 \subset \RR^2$ determines an orientation on $e$, which thereafter induces a linear ordering on this finite set $W \cap e$.  
Furthermore, the $[0,1]$-framing of $W \subset \DD^2$ determines a map $W \cap e \to \{0,1\}$.  
In this way, through Observation~\ref{s4}, we have a map
\begin{equation}
\label{s6}
E
\longrightarrow
\FF_1(\{0,1\})
\underset{\rm Lem~\ref{t.bord.obj}}
\hookrightarrow
\Bord^{[0,1]}
~.
\end{equation}
By design, the map~(\ref{s6}) takes values in words in the alphabet $\{0,1\}$ that are \emph{alternating}.
Via the monoidal structure on the $(\infty,1)$-category $\Bord^{[0,1]}$, this map uniquely extends as a monoidal functor
\begin{equation}
\label{s11}
\FF_1(E)
\longrightarrow
\Bord^{[0,1]}
~.
\end{equation}
By definition of the categorical deloop, this is a morphism between flagged $(\infty,2)$-categories:
\begin{equation}
\label{s8}
\fB \FF_1(E)
\longrightarrow
\fB \Bord^{[0,1]}
~.
\end{equation}
Next, let $D\in F$.
Note that, by design, the map
\begin{equation}
\label{s10}
F
\amalg
F
\xra{~ \lag s,t \rag~}
\FF_1(E)
\xra{~(\ref{s6})~}
\FF_1(\{0,1\})
\end{equation}
takes values in words in the alphabet $\{0,1\}$ that are \emph{alternating}.
Consider the intersection of the closure of $D$ with the tangle $W$:
\[
W_D
~:=~
W \cap \ov{D}
~.
\]
Consider the quotient of the closure of $D$ by each arc contained in it:
\[
R_D
~:=~
\ov{D}_{/\sim}
~,
\]
where $x\sim y$ if $x=y$ or both $x$ and $y$ belong to the same connected component of $A \cap \ov{D}$.
As $\ov{D} \subset \RR^2$ is isomorphic with a polygon, then $R_D$ is isomorphic with a polygon collapse some (but not all) of its edges, and is therefore a compact surface with corners that refines a closed 2-disk.  
Furthermore, $R_D$ inherits a solid 2-framing from that of $\ov{D} \subset \DD^2$.
Using that the compact subspaces $W,A \subset \DD^2$ are disjoint, the quotient map $\ov{D} \to R_D$ restricts to $W_D \subset \ov{D}$ as an isomorphism onto its image, which we again denote as
\[
W_D
~\subset~
R_D
~.
\]
This subspace is a tangle in a solidly 2-framed stratified manifold with corners refining a closed 2-disk, and it inherits a $[0,1]$-framing from that of $W_D \subset \ov{D}$.  
By definition of $\Bord^{[0,1]}$, this tangle $W_D \subset R_D$ together with this $[0,1]$-framing define a morphism between flagged $(\infty,2)$-categories: 
\[
c_2
\xra{~\lag W_D \rag~}
\fB \Bord^{[0,1]}
~.
\]
By design, the resulting diagram among flagged $(\infty,2)$-categories
\[
\begin{tikzcd}
	{(\partial c_2)^{\amalg F}} & {(c_2)^{\amalg F}} \\
	{\fB \FF_1(E)} & {\fB \Bord^{[0,1]}}
	\arrow[hook, from=1-1, to=1-2]
	\arrow["{(\lag W_D \rag )_{D\in F}}", from=1-2, to=2-2]
	\arrow["{(s,t)}"', from=1-1, to=2-1]
	\arrow["{(\ref{s8})}"', from=2-1, to=2-2]
\end{tikzcd}
\]
commutes.  
By definition of the flagged $(\infty,2)$-category $G\left( \left\lag W \right\rag \right)$ as a pushout, this diagram is a morphism between flagged $(\infty,2)$-categories:
\begin{equation}
\label{s9}
G\left( \left\lag W \right\rag \right)
\longrightarrow
\fB \Bord^{[0,1]}
~.
\end{equation}
Using that $W = \underset{D\in F} \bigcup W_D \subset \underset{D\in F} \bigcup \ov{D} = \DD^2$, the diagram among flagged $(\infty,2)$-categories
\begin{equation}
\label{s21}
\begin{tikzcd}
	& {G(\lag W \rag)} \\
	c_2 && {\fB \Bord^{[0,1]}}
	\arrow["{\lag W \rag}"', from=2-1, to=2-3]
	\arrow["q", from=2-1, to=1-2]
	\arrow["{(\ref{s9})}", from=1-2, to=2-3]
\end{tikzcd}
\end{equation}
commutes.
This concludes Step 4.
Note how, by design, this factorization of $\lag W \rag$ has the property that its restriction to each of the 1-cells $c_1 \xra{s,t} c_2$ is the coarsest \emph{alternating} factorization of the words selected by the composite morphisms $c_1 \xra{s,t} c_2 \xra{\lag W \rag} \fB \Bord^{[0,1]}$.

As {\bf Step 5}, we construct a factorization
\begin{equation}
\label{f2}
\begin{tikzcd}
	& {\fB \Dual} \\
	{G(\lag W \rag)} \\
	& {\fB \Bord^{[0,1]}}
	\arrow["{(\ref{s9})}", from=2-1, to=3-2]
	\arrow["{(\ref{eDualBord})}", from=1-2, to=3-2]
	\arrow[dashed, from=2-1, to=1-2]
\end{tikzcd}
~.
\end{equation}
By definition of $G(\lag W \rag)$ as a pushout, this is to construct fillers (a) and (b) as in the diagram among flagged $(\infty,2)$-categories:
\[\begin{tikzcd}
	{(\partial c_2)^{\amalg F}} & {(c_2)^{\amalg F}} && {\fB \Dual} \\
	{\fB \FF_1(E)} & {G(\lag W \rag)} \\
	&&& {\fB \Bord^{[0,1]}}
	\arrow[hook, from=1-1, to=1-2]
	\arrow[from=1-2, to=2-2]
	\arrow["{(s,t)}"', from=1-1, to=2-1]
	\arrow[from=2-1, to=2-2]
	\arrow["{(\ref{eDualBord})}", from=1-4, to=3-4]
	\arrow["{(\ref{s9})}"{description}, from=2-2, to=3-4]
	\arrow["{{\rm (a)}}"{description}, dotted, from=2-1, to=1-4]
	\arrow["{{\rm (b)}}"{description}, dotted, from=1-2, to=1-4]
	\arrow["{\exists !}"{description}, dashed, from=2-2, to=1-4]
\end{tikzcd}
~.
\]
Using that the flagged $(\infty,2)$-category is, in fact, a flagged $(\infty,1)$-category, Lemma~\ref{s20} supplies a unique filler (a).
Using that the map~(\ref{s10}) takes values in alternating words, 
Lemma~\ref{s.1} supplies the filler (b).
This concludes Step 5.
In doing so we have constructed the diagram among $(\infty,2)$-categories~(\ref{f3}), and have constructed the value of $(\bTheta_2)_{/L}$ on 2-cells in $\fB \Bord^{[0,1]}$.

\subsubsection{$(\bTheta_2)_{/L}$ on general objects}
We now define the value of $(\bTheta_2)_{/L}$ on a general object $\left( T \xra{\lag W \rag} \fB \Bord^{[0,1]} \right) \in (\bTheta_2)_{/\fB \Bord^{[0,1]}}$.
Write $T= [p]([q_1],\dots,[q_p])$.
If $p=0$, then $T$ is a 1-category, which was considered in a previous subsection.
So suppose $p>0$.
The canonical diagram
\[
\xymatrix{
\{1\}()
\ar[rr]
\ar[d]
&&
\{1<\dots<p\}([q_2],\dots,[q_p])
\ar[d]
\\
\{0<1\}([q_1])
\ar[rr]
&&
[p]([q_1],\dots,[q_p])
}
\]
is a pushout in $\bTheta_2$, and therefore also defines a pushout in $(\bTheta_2)_{/\fB \Bord^{[0,1]}}$.
Therefore, the value of $(\bTheta_2)_{/\fB \Bord^{[0,1]}} \xra{(\bTheta_2)_{/L}} (\bTheta_2)_{/\fB \Dual}$ on $T$ can be defined as a pushout of the values of $(\bTheta_2)_{/L}$ on the other terms in this square. 
Via induction on $p$, we are reduced to the cases in which $T=[1]([q])$ or $T = c_0$.
Suppose $T=[1]([q])$.
If $q=0$, then $T$ is a 1-category, and it is an instance of a case considered in the previous subsection.
So suppose $q>0$.
The canonical diagram
\[
\xymatrix{
[1](\{1\})
\ar[rr]
\ar[d]
&&
[1](\{1<\dots<q\})
\ar[d]
\\
[1](\{0<1\})
\ar[rr]
&&
[1]([q])
}
\]
is a pushout in $\bTheta_2$, and therefore also defines a pushout in $(\bTheta_2)_{/\fB \Bord^{[0,1]}}$.
Therefore, the value of $(\bTheta_2)_{/\fB \Bord^{[0,1]}} \xra{(\bTheta_2)_{/L}} (\bTheta_2)_{/\fB \Dual}$ on $T$ can be defined as a pushout of the values of $(\bTheta_2)_{/L}$ on the other terms in this square.
Via induction on $q$, we are reduced to the cases in which $T=c_k$ for $0\leq k \leq 2$.
These cases were addressed in the previous subsections.  
In this way, we have defined the value of $(\bTheta_2)_{/\fB \Bord^{[0,1]}} \xra{(\bTheta_2)_{/L}} (\bTheta_2)_{/\fB \Dual}$ on each object $\left( T \xra{\lag W \rag} \fB \Bord^{[0,1]} \right)$.

\subsubsection{Functoriality of $(\bTheta_2)_{/L}$}
We now argue that the above-defined values of $(\bTheta_2)_{/L}$ assemble as a functor over $(\bTheta_2)_{/ \fB \Bord^{[0,1]}}$.

We first make some observations about the subspace $A \subset \DD^2 \smallsetminus W$ of subsection~\S\ref{sec.2.cell}.
Let $\left(T \xra{\lag W \rag} \fB \Bord^{[0,1]} \right)$ be an object in 
$(\bTheta_2)_{/\fB \Bord^{[0,1]}}$.
Write $T = [p]( [q_1], \dots, [q_p])$.  
For each $0<i\leq p$, and each $0<j\leq q_i$, the inclusion $c_2 \simeq \{i-1<i\}(\{j-1<j\}) \hookrightarrow T$ determines an embedding between stratified spaces $\DD^2 = \lag c_2 \rag \hookrightarrow \lag T \rag$ from the hemispherically stratified 2-disk. 
Denote the image of this embedding as $\DD^2_{i,j} \subset \lag T \rag$.
Similarly, for each $0<i\leq p$, and each $0\leq j\leq q_i$, the inclusion $c_1 \simeq \{i-1<i\}(\{j\}) \hookrightarrow T$ determines an embedding between stratified spaces $\DD^1 = \lag c_1 \rag \hookrightarrow \lag T \rag$ from the hemispherically stratified 1-disk. 
Denote the image of this embedding as $\DD^1_{i,j} \subset \lag T \rag$.
For each $0<i\leq p$, and each $0<j\leq q_i$, recognize the upper- and lower-hemispheres:
\[
\DD^1_{i,j-1}
~=~
\partial_- \DD^2_{i,j}
\qquad
\text{ and }
\qquad
\DD^1_{i,j}
~=~
\partial_+ \DD^2_{i,j}
~.
\]
Denote the intersection of the $[0,1]$-framed tangle $W\subset \lag T \rag$ with each of these subspaces as
\[
W_{i,j}
~:=~
W \cap \DD^2_{i,j}
~\subset~
\DD^2_{i,j}
~,
\]
which are, themselves, $[0,1]$-framed tangles.

Recall from the argument of subsection~\ref{sec.2.cell} that the factorization~(\ref{s21}) restricts along each of the generating 1-cells of $c_2$ as the coarsest alternating factorization of word in the alphabet $\{0,1\}$ selected by each such 1-cell.
As such, $\partial_+ \DD^2_{i,j} \cap A_{i,j} \subset \partial_+ \DD^2 = \DD^1_{i,j}$ is isotopic in $\partial_- \DD^2_{i,j+1} \cap A_{i,j+1} \subset \partial_- \DD^2  = \DD^1_{i,j}$.
Invoking the isotopy extension theorem, choose isotopies of each pair $(\partial A_{i,j} \subset A_{i,j}) \subset \left( \partial( \DD^2_{i,j} \smallsetminus W_{i,j}) \subset \DD^2_{i,j} \smallsetminus W \right)$
to ensure the following properties.
\begin{itemize}
\item
For each $0<i\leq p$ and each $0<j<q_i$, there is an equality between finite subsets of $\DD^1_{i,j}$:
\[
\partial_+ \DD^2_{i,j} \cap A_{i,j} 
~=~
\partial_- \DD^2_{i,j+1} \cap A_{i,j+1}
~.
\]

\item
For each $0<i\leq p$, the union of arcs $\underset{0<j\leq q_i} \bigcup A_{i,j} \subset 
\left \lag \{ i-1<i\}([q_i]) \right \rag$ is a smooth properly embedded 1-submanifold with boundary that is transverse to each stratum of $\left \lag \{ i-1<i\}([q_i]) \right \rag$.

\end{itemize}
In this way, we can choose a finite disjoint union of (closed) arcs $A \subset \lag T \rag \smallsetminus W$ that is transverse to the strata of $\lag T \rag$, each of whose end-points lies in the boundary of $\lag T \rag \subset \RR^2$, and such that, for each $0<i\leq p$ and $0<j\leq q_p$, the intersection $\DD^2_{i,j} \cap A$ is the disjoint union of arcs of the subsection~\S\ref{sec.2.cell}.

We now explain how the values of $(\bTheta_2)_{/L}$ established in the previous subsection canonically assemble as a functor.
Let $[k] \xra{ \lag T_0 \to \cdots \to T_k \to \fB \Bord^{[0,1]} \rag} (\bTheta_2)_{/ \fB \Bord^{[0,1]} }$ be a functor from an object $[k]\in \bDelta$.  
The values of $(\bTheta_2)_{/L}$ on object supplies lifts in the diagram among flagged $(\infty,2)$-categories
\begin{equation}
\label{s22}
\begin{tikzcd}
	&&& {\fB \Dual} \\
	{T_0} & \cdots & {T_k} & {\fB \Bord^{[0,1]}}
	\arrow[from=1-4, to=2-4]
	\arrow[dashed, from=2-1, to=1-4]
	\arrow[from=2-1, to=2-2]
	\arrow[from=2-2, to=2-3]
	\arrow["\cdots"{pos=0}, dashed, from=2-3, to=1-4]
	\arrow[from=2-3, to=2-4]
\end{tikzcd}
\end{equation}
such that each right-angled triangle therein commutes.
We must explain how this is a commutative diagram.
Through the functor $\bTheta_2^{\op} \xra{\lag - \rag} \cMfd_2^{\sf sfr}$, the functor $[k] \xra{ \lag T_\bullet \to \fB \Bord^{[0,1]} \rag} (\bTheta_2)_{/ \fB \Bord^{[0,1]} }$ is the data of a proper constructible bundle $X \to \Delta^k$ equipped with a fiberwise solid 2-framing, equipped with a compact subspace $W \subset X$ that is a fiberwise (over the standardly stratified simplex $\Delta^k$) tangle equipped with a fiberwise $[0,1]$-framing together with, for each $0\leq \ell \leq k$, an identification of the fiber over $\Delta^{\{\ell\}} \subset \Delta^k$,
\[
(W_{|\Delta^{\{\ell\}}} \subset X_{|\Delta^{\{\ell\}}}) 
~\cong~ 
(W_\ell \subset \lag T_\ell \rag )
\]
with the $[0,1]$-framed tangle selected by $T_\ell \to  \fB \Bord^{[0,1]}$.
Commutativity of~(\ref{s22}) follows upon observing that there exists a compact subspace $A \subset X \smallsetminus W$ whose projection $A \to \Delta^k$ is a constructible bundle, and that is fiberwise transverse to the strata of $X$, and such that, for each $0\leq \ell \leq k$, the intersection $A \cap \left( \lag T_\ell \rag \smallsetminus W \right)$ is a subspace isotopic (through properly embedded 1-submanifolds with boundary that are transverse to the strata of $\lag T_\ell \rag$) with the subspace from the paragraph above.

This completes the proof of Proposition~\ref{tDualBord}.

\subsection{Inductive step}
This section establishes that the diagrams
\[
\xymatrix{
\Bord^{[0,0]} \ar[r]^-{0\mapsto j}\ar[d]_-{0\mapsto 0}&\Bord^{[i,j]} \ar[d]^-{0\mapsto 0}\\
\Bord^{[0,1]} \ar[r]_-{0\mapsto j} & \Bord^{[i,j+1]} }
\]
and
\[
\xymatrix{
\Bord^{[0,0]} \ar[r]^-{0\mapsto i}\ar[d]_-{0\mapsto 0}&\Bord^{[i,j]} \ar[d]^-{0\mapsto 0}\\
\Bord^{[1,0]} \ar[r]_-{0\mapsto i} & \Bord^{[i+1,j]} }
\]
are pushouts of monoidal $(\oo,1)$-categories. See Lemma~\ref{lemma.main.basecase.n2}.

\begin{lemma}\label{lemma.monoid.pushout}
Let $A$ and $B$ in $\Alg(\Spaces)$ be monoids. Assume further that the unit maps $\ast\ra A$ and $\ast \ra B$ are monomorphisms, and let $\ov{A}=A\smallsetminus \ast$ and $\ov{B} = B\smallsetminus \ast$ denote the complements of the identity components. 
Then the underlying space of the coproduct of monoids is 
\[
A\star B
\simeq
\ast \amalg  \coprod_{k\geq 0} \ov{A} \times (\ov{B}\times \ov{A})^k 
\amalg 
\ov{B} \times (\ov{A}\times \ov{B})^k 
\amalg
(\ov{A}\times \ov{B})^{k+1}
\amalg
(\ov{B}\times \ov{A})^{k+1}~.
\]
\end{lemma}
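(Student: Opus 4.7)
The plan is to compute the coproduct via the reduced-Segal-space model of $\Alg(\Spaces)$ together with the Segalification functor $\Seg$ of Section~3. Under the equivalence between $\Alg(\Spaces)$ and the $(\oo,1)$-category of reduced Segal spaces---simplicial spaces $X_\bullet$ with $X_0 \simeq \ast$ satisfying the Segal condition---a monoid $A$ corresponds to its nerve $N(A)_\bullet$ with $N(A)_n = A^n$, and the coproduct $A \star B$ corresponds to the Segal completion of the wedge $N(A) \vee N(B)$ in pointed simplicial spaces. This latter wedge is the coproduct in pointed simplicial spaces but generically fails the Segal condition. The underlying space of $A\star B$ is the value at $[1]$ of this Segal completion.

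First, I would invoke Lemma~\ref{lemma.Seg.universal} to identify the Segal completion of $N(A)\vee N(B)$ with $\Seg(N(A)\vee N(B))$, provided the latter satisfies the Segal condition. Verifying this uses the monomorphism assumption on the units, which yields the levelwise decomposition
\[
(N(A)\vee N(B))_k ~\simeq~ \ast \amalg \bigl(A^k\smallsetminus \{\ast\}\bigr) \amalg \bigl(B^k\smallsetminus \{\ast\}\bigr) \qquad (k\geq 1),
\]
and permits componentwise verification through the explicit colimit formula of Observation~\ref{obs.Segal.basecase.contact}.

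Second, I would evaluate $\Seg(N(A)\vee N(B))[1]$ directly using Observation~\ref{obs.Segal.basecase.contact}: this value is a colimit, indexed by pairs $(\varphi \colon [b]\to [a'],\, g\colon [1]\to [a'])$ in $\bDelta$, of the iterated fiber products
\[
(N(A)\vee N(B))([0,\varphi(0)]^\circ) \underset{\ast}\times \cdots \underset{\ast}\times (N(A)\vee N(B))([\varphi(b),a']^\circ).
\]
Distributing the wedge decomposition through the product splits each such fiber product as a disjoint union indexed by $\{A,B,\ast\}$-labellings of the length-$\geq 1$ subintervals, yielding products of non-identity pieces $\ov{A^{n_i}}$ and $\ov{B^{n_j}}$. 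Taking the colimit over the indexing category collapses runs of consecutive same-letter labels into a single $\ov{A}$ or $\ov{B}$---via the universal multiplication $A^n \to A$ restricted to $\ov{A^n}\to \ov{A}\amalg \ast$, and analogously for $B$---leaving only alternating words. Organizing these by starting letter and length parity recovers the claimed formula.

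The main obstacle is the combinatorial bookkeeping of the third step: one must verify that the colimit over the indexing category, restricted to each $\{A,B,\ast\}$-labelled component of the fiber-product decomposition, evaluates to a single copy of an alternating product of $\ov{A}$'s and $\ov{B}$'s. The monomorphism assumption on the units is essential throughout, both to guarantee the levelwise decomposition of the wedge and to ensure that the collapse of same-letter runs $\ov{A^k}\to \ov{A}\amalg \ast$ correctly separates the non-identity complement from the identity contribution, with the identity contribution causing the corresponding run to vanish and successive non-identity pieces appearing as adjacent letters in the alternating word.
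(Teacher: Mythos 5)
Your route is genuinely different from the paper's. The paper's proof is a two-line reduction: the hypothesis that the units are monomorphisms says precisely that $A$ and $B$ lie in the image of the left adjoint $(-)_\ast\colon \Alg^{\sf nu}(\Spaces)\to\Alg(\Spaces)$ that freely adjoins a unit, so $A\simeq \ov{A}_\ast$ and $B\simeq\ov{B}_\ast$; since left adjoints preserve coproducts, $A\star B\simeq(\ov{A}\star\ov{B})_\ast$, and the coproduct of nonunital monoids is the standard alternating-words formula, which regroups into the displayed expression. Your plan instead runs the Segal machinery of Section~3 at the level of reduced Segal spaces --- the same strategy the paper deploys one categorical level up in Lemma~\ref{lemma.main.basecase.n2}. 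The outline is sound as far as it goes: the coproduct in the reflective subcategory of reduced Segal spaces is the reflection of the coproduct of reduced simplicial spaces (the levelwise wedge), and Lemma~\ref{lemma.Seg.universal} identifies that reflection with $\Seg\bigl(N(A)\vee N(B)\bigr)$ once one knows the latter satisfies the Segal condition; your levelwise decomposition of the wedge using the monomorphism hypothesis is also correct.

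The gap is your third step. The assertion that the colimit over $\TwAro(\bDelta^{\op})_{/[1]^\circ}$ ``collapses runs of consecutive same-letter labels\dots leaving only alternating words'' is the entire mathematical content of the lemma in your formulation, and nothing in the proposal justifies it. Observe that when the paper carries out the analogous colimit one level up, it does not attempt a direct combinatorial analysis of this index category: it first replaces it by $\TwAr(\sD_I)^{\op}_{/P}$ via the localization of Section~4 (Lemma~\ref{localize.D.Delta}), and then identifies the colimit with an explicit topological space by a Seifert--van Kampen/hypercover argument (Lemmas~\ref{lemma.claimB} and~\ref{lemma.claimC}), which requires a concrete topological model of the answer. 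To close your gap you would need either an analogous geometric model of the free product (e.g.\ labelled configurations in an interval) on which to run such a hypercover argument, or a cofinality/contractibility analysis of the subcategory of indices over which a fixed alternating word is visible; neither is routine and neither is sketched. Since the unitalization adjunction yields the statement immediately from the standard nonunital formula, that reduction is the argument to use.
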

\begin{proof}
Consider the adjunction $\Alg^{\sf nu}(\Spaces)\leftrightarrows \Alg(\Spaces)$, where the left adjoint adds a basepoint, sending a nonunital monoid $M$ to $M_\ast = M\amalg \ast$, and the right adjoint is the forgetful functor. The condition that the unit is a monomorphism implies that the monoids $A$ and $B$ are in the image of the left adjoint. That is, there are equivalences $A\simeq \ov{A}_\ast$ and $B\simeq \ov{B}_\ast$. Since left adjoints preserves coproducts, we have an equivalence $A\star B \simeq (\ov{A}\star \ov{B})_\ast$, where $\ov{A}\star \ov{B}$ is the coproduct in nonunital monoids.

The coproduct of nonunital monoids is computed by the standard formula in terms of words alternating in $\ov{A}$ and $\ov{B}$:
\[
\ov{A}\star \ov{B} = \ov{A}\amalg \ov{B} \amalg (\ov{A}\times \ov{B})\amalg (\ov{B}\times \ov{A})
\amalg  (\ov{A}\times \ov{B}\times \ov{A})\amalg  (\ov{B}\times \ov{A}\times \ov{B})\amalg \ldots
\]
the terms of which can be regrouped as:
\[
\ov{A}\star \ov{B}= \coprod_{k\geq 0} \ov{A} \times (\ov{B}\times \ov{A})^k 
\amalg 
\ov{B} \times (\ov{A}\times \ov{B})^k 
\amalg
(\ov{A}\times \ov{B})^{k+1}
\amalg
(\ov{B}\times \ov{A})^{k+1}~.
\]
The formula then follows by taking disjoint union with the identity element.
\end{proof}

\begin{lemma}\label{lemma.points.of.Bord01}
Consider the monomorphism of spaces
\[
\FF_1(\{0,1\}) \simeq\obj\bigl( \Bord^{[0,1]}\bigr) 
=
\Bord^{[0,1]}[0]
\longrightarrow 
 \Bord^{[0,1]}[p]
\]
and let $B^{(0,1]}_{[p]} \subset \Bord^{[0,1]}[p]$ be the submonoid of tangles $W\subset \RR^1\times\DD[p]$ for which no component of $W$ corresponds to the identity morphism of $\{0\}$. That is, $W$ has no component which is a horizontal tangle whose framing is given by the label $\{0\}$.
The natural map from the coproduct of monoids
\[
\Bord^{[0,0]}[p] \star B^{(0,1]}_{[p]}  \longrightarrow  \Bord^{[0,1]}[p]
\]
is an equivalence.
\end{lemma}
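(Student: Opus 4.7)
The plan is to construct a homotopy inverse to the multiplication map from the coproduct by assigning to each tangle $W\subset \RR^1\times\DD[p]$ a canonical alternating factorization. Partition the components of $W$ into $W^{(0,\mathrm{id})}$, the union of those components which are horizontal $\{0\}$-straight-line tangles, and $W^{\sf rest}$, the complementary union. Each component of $W^{(0,\mathrm{id})}$ occupies a full vertical slice $\{x\}\times \DD[p]$, so disjointness of components of $W$ forces the finite set $S \subset \RR^1$ of these positions to lie outside the projection $J := \pi(W^{\sf rest}) \subset \RR^1$; compactness of $W$ further ensures that $J$ is a finite disjoint union of compact intervals $J_1 < \cdots < J_k$.

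Sort the intervals $J_i$ and the points of $S$ into a single $\RR^1$-ordered list, and group maximal consecutive runs of like type into \emph{blocks}. This yields a canonical alternating sequence of $\ov{A}$-blocks (maximal runs of $S$-points, each presenting a pure-straight-line tangle in $\ov{A} := \Bord^{[0,0]}[p]\smallsetminus \ast$) and $\ov{B}$-blocks (maximal runs of $J_i$-intervals, each presenting a tangle in $\ov{B} := B^{(0,1]}_{[p]}\smallsetminus \ast$). Between two consecutive blocks of different type, the set of separating real numbers $r\in \RR^1$ for which $\{r\}\times \DD[p]$ is disjoint from $W$ is a nonempty open interval, hence contractible; choosing such cuts realizes $W$ as an alternating concatenation, which via the explicit coproduct description of Lemma~\ref{lemma.monoid.pushout} assembles into a continuous map $\Bord^{[0,1]}[p] \to \Bord^{[0,0]}[p]\star B^{(0,1]}_{[p]}$. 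Its composite with the coproduct multiplication map is the identity by construction, and the opposite composite is the identity up to contractible choices by uniqueness of the decomposition.

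The main obstacle is verifying continuity where the combinatorial type of the decomposition might appear to change: as two intervals $J_i,J_{i+1}$ collide and merge, or as a point $x\in S$ threatens to enter $J$. The first case simply combines two items into a single $\ov{B}$-block without altering the alternating word-type; the second is precluded by component-disjointness, since $\{x\}\times \DD[p]$ is entirely occupied by a single component of $W$ and cannot be met by $W^{\sf rest}$. Finally, the apparent overlap between $\Bord^{[0,0]}[p]$ and $B^{(0,1]}_{[p]}$ (each containing, e.g., tilted $\{0\}$-arcs) is resolved by the observation that $\Bord^{[0,0]}[p]$ deformation-retracts onto its subspace of pure-straight-line tangles, so identifying $\ov{A}$-blocks with pure-straight-line configurations incurs no homotopy cost.
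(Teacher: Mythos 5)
Your strategy---building an explicit homotopy inverse by reading off an alternating word from each tangle---is genuinely different from the paper's, which instead maps both sides to $\Bord^{[0,0]}[p]$ (by $W\mapsto W_0$, the union of the identity-$\{0\}$ components) and compares fibers: the fiber of $\Bord^{[0,1]}[p]$ over $W_0$ is the space of tangles in $\RR^1\times\DD[p]\smallsetminus W_0 \cong \coprod_{|\pi_0W_0|+1}\DD[p]\times\RR^1$ having no identity-$\{0\}$ component, i.e.\ $\prod_{|\pi_0 W_0|+1}B^{(0,1]}_{[p]}$, and this matches the fiber $(B^{(0,1]}_{[p]})^{|\pi_0W_0|+1}$ of $\Bord^{[0,0]}[p]\star B^{(0,1]}_{[p]}\to\Bord^{[0,0]}[p]$ computed from Lemma~\ref{lemma.monoid.pushout}.

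The gap in your version occurs at the first step. A component of $W$ ``corresponding to the identity morphism of $\{0\}$'' is an arc whose projection to $\DD[p]$ is a diffeomorphism and whose framing is the $0$-label; it is \emph{isotopic} to a vertical line but need not be one, so its image in $\RR^1$ is an interval, not a point. Hence the set $S$ you extract is not a finite set of points disjoint from $J=\pi(W^{\sf rest})$, and the block decomposition by vertical separating slices $\{r\}\times\DD[p]$ does not exist for a general point of $\Bord^{[0,1]}[p]$: take a single tilted identity-$\{0\}$ arc whose $\RR^1$-shadow is $[0,10]$ together with a cap sitting below it near $x=5$; no vertical line separates them. Your proposed inverse is therefore only defined on the subspace of tangles that are literally juxtapositions of alternating pieces---essentially the image of the map you are trying to invert---and verifying the two composites there says nothing about the rest of the space. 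The closing remark that $\Bord^{[0,0]}[p]$ deformation-retracts onto straight-line configurations concerns the homotopy type of that monoid in isolation; it does not produce a straightening of the identity components of $W$ relative to the remaining components, continuously in families. (If instead you take $W^{(0,\mathrm{id})}$ to consist only of \emph{literal} vertical lines, then $W^{\sf rest}$ may contain tilted identity-$\{0\}$ arcs and so fails to lie in $B^{(0,1]}_{[p]}$, and $W\mapsto W^{(0,\mathrm{id})}$ is discontinuous as an arc tilts.) The fiberwise argument avoids all of this by using the identity-$\{0\}$ arcs themselves, rather than vertical lines, as the separators.
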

\begin{proof}
Given a tangle $W\in  \Bord^{[0,1]}[p]\simeq \Tang^{[0,1]}(\RR^1\times\DD[p])$, we can decompose $W$ as $W \cong W_B\amalg W_0$, where $W_B$ is the maximal sub-tangle of $W$ contained in $B^{(0,1]}_{[p]} $, and $W_0$ is the complement of $W_B$, which is the maximal sub-tangle of $W$ contained in $\Bord^{[0,0]}[p]$. The assignment $W\mapsto W_0$ defines a map of spaces $ \Bord^{[0,1]}[p] \ra  \Bord^{[0,0]}[p]$. For each choice of a tangle $W_0 \in  \Bord^{[0,0]}[p]$, there is a fiber square
\[
\xymatrix{
B^{(0,1]}_{[p]} \cap \Tang^{[0,1]}(\RR^1\times\DD[p] \smallsetminus W_0)\ar[r]\ar[d]& \Bord^{[0,1]}[p]\ar[d]\\
\{W_0\}\ar[r]&  \Bord^{[0,0]}[p]}
\]
where $B^{(0,1]}_{[p]} \cap \Tang^{[0,1]}(\RR^1\times\DD[p] \smallsetminus W_0)$ is the space of tangles $W'\subset (\RR^1\times\DD[p] \smallsetminus W_0)$ no component of which is a horizontal tangle labeled by $\{0\}$.
There is a diffeomorphism
\[
\RR^1\times\DD[p] \smallsetminus W_0 \cong\coprod_{|\pi_0W_0|+1} \DD[p] \times\RR^1
\]
so there exists a equivalence of the fiber
\[
B^{(0,1]}_{[p]} \cap \Tang^{[0,1]}(\RR^1\times\DD[p] \smallsetminus W_0)
\simeq
\prod_{|\pi_0W_0|+1} B^{(0,1]}_{[p]} ~.
\]
Likewise, we can consider the fiber over an element $W_0 \in \Bord^{[0,0]}[p]$ of the canonical map
\[
\Bord^{[0,0]}[p] \star B^{(0,1]}_{[p]}  \longrightarrow  \Bord^{[0,0]}[p]
\]
defined by the monoid map $B^{(0,1]}_{[p]} \ra \ast$. 
By Definition~\ref{def.Bord.ij}, the space 
\[
\Bord^{[0,0]}[p]
\simeq
\coprod_{k\geq 0} \Conf_k(\RR^1) \underset{\Sigma_k}{\times}\{0\}^k
\simeq
\FF_1(\{0\})
\]
is the free monoid on a single element $\{0\}$. So we are considering, for $A = \FF_1(\{0\})$, the fiber of the map
\[
\FF_1(\{0\})\star B^{(0,1]}_{[p]}  \longrightarrow \FF_1(\{0\})
\]
over $W_0 \simeq \left( 0 , \dots , 0 \right)$, which is a length $k$ word in the alphabet $\{0\}$ for some $k\geq 0$. 
Since the unit element of $B^{(0,1]}_{[p]} $ (indexing the empty tangle) is a disjoint and contractible component, the fiber is $(B^{(0,1]}_{[p]})^{k+1}$.

Consequently, the fiber of the map $\Bord^{[0,0]}[p] \star B^{(0,1]}_{[p]}  \ra  \Bord^{[0,1]}[p]$ over each $W_0\in \Bord^{[0,0]}[p]$ induces an equivalence on fibers
\[
\bigl(B^{(0,1]}_{[p]}\bigr)^{k+1} \simeq \prod_{|\pi_0W_0|+1} B^{(0,1]}_{[p]} 
\]
for $k = |\pi_0W_0|$. This implies that the map $\Bord^{[0,0]}[p] \star B^{(0,1]}_{[p]}  \ra  \Bord^{[0,1]}[p]$ is an equivalence.
\end{proof}

\begin{cor}\label{cor.lemma.points.of.Bord01}
Recall the notation of Lemma~\ref{lemma.points.of.Bord01}, where $B^{(0,1]}_{[p]}  \subset \Bord^{[0,1]}[p]$ is again the subspace of tangles $W\subset \RR^1\times\DD[p]$ for which no component of $W$ is in the image of $\{0\}$ under the map $\FF_1(\{0,1\}) \simeq\obj\bigl( \Bord^{[0,1]}\bigr) \ra  \Bord^{[0,1]}[p]$. Then there is a pushout of monoids:
\[
\xymatrix{
\Bord^{[0,0]}[p] \ar[d]^-{0\mapsto j}\ar[r]_-{0\mapsto 0}&\Bord^{[0,1]}[p] \ar[d]^-{0\mapsto j}\\
\Bord^{[i,j]}[p] \ar[r]^-{0\mapsto 0} & \Bord^{[i,j]}[p]\star B^{(0,1]}_{[p]}   }
\]
where $\Bord^{[i,j]}[p]\star B^{(0,1]}_{[p]} $ is the coproduct of $B^{(0,1]}_{[p]} $ and $\Bord^{[i,j]}[p]$ in $\Alg(\Spaces)$.
\end{cor}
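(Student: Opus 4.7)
The plan is to reduce the claim to a formal application of the pasting lemma for pushouts, leveraging Lemma~\ref{lemma.points.of.Bord01} to recast the top horizontal map as the canonical inclusion into a coproduct of monoids. First, I would apply Lemma~\ref{lemma.points.of.Bord01} to identify $\Bord^{[0,1]}[p] \simeq \Bord^{[0,0]}[p] \star B^{(0,1]}_{[p]}$ as monoids. Under this equivalence, the top arrow $0 \mapsto 0$ is the canonical inclusion of the first factor, and the right vertical arrow $0 \mapsto j$ corresponds to applying the functor $(-) \star B^{(0,1]}_{[p]}$ to the monoid homomorphism $\Bord^{[0,0]}[p] \to \Bord^{[i,j]}[p]$. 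Thus the square of the corollary is identified with the naturality square of $(-) \star B^{(0,1]}_{[p]}$ applied to this morphism.

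Next, I would invoke the definition of the coproduct in $\Alg(\Spaces)$: for any two monoids $M$ and $N$, the coproduct $M \star N$ is the pushout of $M \leftarrow \ast \to N$ over the initial monoid $\ast$. Equivalently, for each $M$, the inclusion $M \hookrightarrow M \star N$ is the cobase change of $\ast \to N$ along $\ast \to M$. Stacking this observation for both $M = \Bord^{[0,0]}[p]$ and $M = \Bord^{[i,j]}[p]$ along the map $0 \mapsto j$ produces a two-square vertical diagram
\[
\xymatrix{
\ast \ar[r]\ar[d] & B^{(0,1]}_{[p]} \ar[d]\\
\Bord^{[0,0]}[p] \ar[r]\ar[d]_-{0 \mapsto j} & \Bord^{[0,0]}[p] \star B^{(0,1]}_{[p]} \ar[d]^-{0 \mapsto j}\\
\Bord^{[i,j]}[p] \ar[r] & \Bord^{[i,j]}[p] \star B^{(0,1]}_{[p]}
}
\]
in which the top square and the outer rectangle are both pushouts by the defining universal property of $\star$. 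The pasting lemma for pushouts then immediately yields that the bottom square is a pushout, which is the statement of the corollary.

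There is no genuine obstacle at this stage: the combinatorial content—showing that $\Bord^{[0,1]}[p]$ splits as a free product $\Bord^{[0,0]}[p] \star B^{(0,1]}_{[p]}$—has already been dispatched by Lemma~\ref{lemma.points.of.Bord01}, and what remains is a purely formal manipulation of coproducts and pushouts inside $\Alg(\Spaces)$. The only minor verification is that, under the identification of Lemma~\ref{lemma.points.of.Bord01}, the arrow labeled $0 \mapsto j$ in the corollary does match the functorial map induced on coproducts, which is immediate from the construction of the splitting (the label $0$ is carried entirely in the $\Bord^{[0,0]}[p]$ factor, and $B^{(0,1]}_{[p]}$ is left untouched).
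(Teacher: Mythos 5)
Your argument is correct and is essentially the paper's own proof: the paper likewise invokes Lemma~\ref{lemma.points.of.Bord01} to substitute $\Bord^{[0,1]}[p]\simeq \Bord^{[0,0]}[p]\star B^{(0,1]}_{[p]}$ and then computes $\Bord^{[i,j]}[p]\underset{\Bord^{[0,0]}[p]}\star\bigl(\Bord^{[0,0]}[p]\star B^{(0,1]}_{[p]}\bigr)\simeq \Bord^{[i,j]}[p]\star B^{(0,1]}_{[p]}$, which is exactly the cobase-change/pasting-of-pushouts step you spell out explicitly. No gap.
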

\begin{proof}
Using the equivalence $\Bord^{[0,0]}[p]\star B^{(0,1]}_{[p]}  \simeq  \Bord^{[0,1]}[p]$ of Lemma~\ref{lemma.points.of.Bord01}, we obtain
\[
\Bord^{[i,j]}[p] \underset{\Bord^{[0,0]}[p]}\star\Bord^{[0,1]}[p]
\simeq
\Bord^{[i,j]}[p] \underset{\Bord^{[0,0]}[p]}\star\bigl( \Bord^{[0,0]}[p]\star B^{(0,1]}_{[p]}  \bigr)\simeq
\Bord^{[i,j]}[p]\star B^{(0,1]}_{[p]} ~.
\]
\end{proof}

\begin{lemma}\label{lemma.main.basecase.n2}
The commutative diagram
\[
\xymatrix{
\Bord^{[0,0]} \ar[d]^-{0\mapsto j}\ar[r]_-{0\mapsto 0}&\Bord^{[0,1]} \ar[d]^-{0\mapsto j}\\
\Bord^{[i,j]} \ar[r]^-{0\mapsto 0} & \Bord^{[i,j+1]}  }
\]
is a pushout of monoidal $(\oo,1)$-categories.
\end{lemma}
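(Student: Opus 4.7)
The plan is to reduce the statement to a level-wise identification of simplicial monoids, then promote it to a pushout in monoidal $(\infty,1)$-categories using that each $\Bord^{[i,j]}$ is univalent-complete. The key input is Corollary~\ref{cor.lemma.points.of.Bord01}, which gives the analogous pushout of monoids at each simplicial level; what remains is to identify the right-hand vertex of that pushout with $\Bord^{[i,j+1]}[p]$.

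First I would extend Lemma~\ref{lemma.points.of.Bord01} from the base case $(i,j)=(0,0)$ to arbitrary $(i,j)$, establishing a canonical equivalence of monoids
\[
\Bord^{[i,j]}[p]\star B^{(0,1]}_{[p]}
~\xra{~\simeq~}~
\Bord^{[i,j+1]}[p]
\]
induced by the inclusion $\Bord^{[i,j]}[p] \hookrightarrow \Bord^{[i,j+1]}[p]$ and by the composite $B^{(0,1]}_{[p]}\hookrightarrow \Bord^{[0,1]}[p] \xra{0\mapsto j}\Bord^{[i,j+1]}[p]$. The argument is essentially that of Lemma~\ref{lemma.points.of.Bord01}: decompose a tangle $W\in\Bord^{[i,j+1]}[p]$ as $W\cong W_0\sqcup W_B$, where $W_0$ collects those connected components whose framings lie in $\{-i,\ldots,j\}$ and $W_B$ those components framed by $j+1$; then use the diffeomorphism $\RR^1\times\DD[p]\smallsetminus W_0 \cong \coprod_{|\pi_0W_0|+1}\RR^1\times\DD[p]$ to identify the fiber over $W_0\in \Bord^{[i,j]}[p]$ on both sides as $\prod_{|\pi_0W_0|+1} B^{(0,1]}_{[p]}$.

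Combining this identification with Corollary~\ref{cor.lemma.points.of.Bord01} shows that the square of the lemma, evaluated at each $[p]\in\bDelta$, is a pushout in $\Alg(\Spaces)$. Equivalently, the given square is a pointwise pushout in $\Alg(\Fun(\bDelta^{\op},\Spaces))\simeq \Fun(\bDelta^{\op},\Alg(\Spaces))$.

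To upgrade from a pointwise pushout to a pushout in $\Alg(\Cat_{(\infty,1)})$, I would use that the reflector $\Alg(\Fun(\bDelta^{\op},\Spaces))\to \Alg(\Cat_{(\infty,1)})$ is the composition of Segal completion and univalent completion, and pushouts in $\Alg(\Cat_{(\infty,1)})$ are computed by applying this reflector to pointwise pushouts. The pointwise pushout here is the simplicial space $\Bord^{[i,j+1]}[\bullet]$, which is already Segal (it presents a monoidal flagged $(\infty,1)$-category) and already univalent-complete: $\Bord^{[i,j+1]}$ is a monoidal full $\infty$-subcategory of $\Bord_1^{\fr}(\RR^1)$, which is gaunt by Proposition~\ref{prop.Bord.discrete.gaunt}. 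So both reflectors act as the identity, and the pointwise pushout is the pushout in $\Alg(\Cat_{(\infty,1)})$. The main obstacle is the level-wise identification in the first step; once one is comfortable repeating the fiber analysis of Lemma~\ref{lemma.points.of.Bord01} in the larger monoid $\Bord^{[i,j+1]}[p]$, the rest of the argument is formal.
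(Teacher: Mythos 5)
Your reduction to a level-wise statement is the right first move, and your final observation (Segal-completion and univalent-completion preserve colimits, so a pointwise pushout that is already a complete Segal object is the pushout in $\Alg(\Cat_{(\oo,1)})$) is sound in principle. The gap is in the first step: the claimed equivalence
\[
\Bord^{[i,j]}[p]\star B^{(0,1]}_{[p]}
\longrightarrow
\Bord^{[i,j+1]}[p]
\]
is false for general $(i,j)$, and with it the claim that the pointwise pushout is already Segal. Lemma~\ref{lemma.points.of.Bord01} works because every component of a tangle in $\Bord^{[0,0]}[p]$ is a \emph{horizontal} strand labeled $0$ running from the bottom to the top of $\RR^1\times\DD[p]$; each such strand separates the strip, so the complement is a disjoint union of $|\pi_0W_0|+1$ strips and the remaining tangle distributes among them, which is exactly the free-product structure. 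For general $(i,j)$, the monoid $\Bord^{[i,j]}[p]$ contains caps and cups, which do not separate the strip. A $\{j,j+1\}$-framed cap nested around an $[i,j]$-framed cap cannot be written as a horizontal juxtaposition of the two, so it lies in no term of the alternating-word decomposition of the free product; the map above is not even surjective on $\pi_0$. (Relatedly, your decomposition $W\cong W_0\sqcup W_B$ by framing labels does not respect the monoidal structure, since the two parts can be arbitrarily nested rather than alternating.)

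This failure is precisely why the paper's proof is long: the pointwise pushout of simplicial monoids is \emph{not} Segal, and one must apply the functor $\Seg$ of Definition~\ref{def.Seg}. The point is that a nested configuration, while horizontally indecomposable, arises as a \emph{vertical composite} of slices each of which does split horizontally into $[i,j]$-framed and $\{j,j+1\}$-framed blocks; this is the $j$-splitting condition of Definition~\ref{def.U}. The paper then shows, via the localization $\TwAr(\sD_{[0,1]})\to\TwAr(\bDelta^{\op})$ of Lemma~\ref{localize.D.Delta} and a Seifert--van Kampen hypercover argument (Lemmas~\ref{lemma.claimB}--\ref{lemma.claimCprime}), that $\Seg$ applied to the pointwise pushout recovers $\Bord^{[i,j+1]}_\bullet$, and concludes by Lemma~\ref{lemma.Seg.universal}. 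Your Corollary~\ref{cor.lemma.points.of.Bord01} and the free-product formula of Lemma~\ref{lemma.monoid.pushout} do enter the paper's argument, but only to identify the value of the pushout on the initial vertical slice (Lemma~\ref{lemma.claimCprime}), not the whole space of $p$-simplices.
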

\begin{proof}
This proof relies on Lemmas~\ref{lemma.claimB} and~\ref{lemma.claimC}, which are stated and proved below.

Let $\Bord^{[a,b]}_\bullet$ denote the simplicial monoidal space, given by the value of the forgetful functor
\[
\Alg(\Cat_{(\oo,1)}) \ra \Fun\bigl(\bDelta^{\op}, \Alg(\Spaces)\bigr)
\]
on the monoidal $(\oo,1)$-category $\Bord^{[a,b]}$. Consider the pushout
\[
\Bord^{[i,j]}_\bullet \underset{\Bord^{[0,0]}_\bullet}\star\Bord^{[0,1]}_\bullet
\]
in $\Fun(\bDelta^{\op}, \Alg(\Spaces))$, which takes values
\[
[p]\mapsto \Bord^{[i,j]}[p] \underset{\Bord^{[0,0]}[p]}\star\Bord^{[0,1]}[p]
~,
\]
the pushout in $\Alg(\Spaces)$ of the monoidal spaces of $[p]$-simplices. We prove that the natural morphism
\begin{equation}\label{Seg.to.Bord}
\Seg\Bigl(
\Bord^{[i,j]}_\bullet \underset{\Bord^{[0,0]}_\bullet}\star\Bord^{[0,1]}_\bullet
\Bigr)
\longrightarrow
\Bord^{[i,j+1]}_\bullet
\end{equation}
is an equivalence. By Lemma~\ref{lemma.Seg.universal}, this will imply that $\Bord^{[i,j+1]}_\bullet$ is the universal Segal space under the colimit of $\Bord^{[0,1]}_\bullet \la{\Bord^{[0,0]}_\bullet}\ra\Bord^{[i,j]}_\bullet$. Since Segal-completion preserves colimits, being a left adjoint, this will show that $\Bord^{[i,j+1]}$ is the colimit of $\Bord^{[0,1]} \la{\Bord^{[0,0]}}\ra\Bord^{[i,j]}$ in monoidal $(\oo,1)$-categories.

Consider the functor $F$ defined as the composite in the following diagram:
\begin{equation}
\label{e.F}
\xymatrix{
\TwAro(\bDelta^{\op})_{/[p]^\circ}\ar[d]\ar[rrrr]^-F&&&&\spaces\\
\TwAro(\bDelta^{\op})\ar[r]^-S&\Morita[\bDelta^{\op}]\ar[rr]^-{\Bord^{[i,j]} \underset{\Bord^{[0,0]}}\star\Bord^{[0,1]}}
&&\cM[\Spaces]\ar[r]^-\sL&\Morita[\Spaces]\ar[u]^-{\sf LIM}}
\end{equation}
To prove that the map~(\ref{Seg.to.Bord}) is an equivalence, we show that the colimit of the composite functor $F$ gives the $p$-simplices $\Bord^{[i,j+1]}[p]$. That is, by construction of the functor $\Seg$, it will imply that the map~(\ref{Seg.to.Bord}) is an equivalence if the map
\[
\colim\Bigl(
\TwAro(\bDelta^{\op})_{/[p]^\circ}\xra{F}\Spaces
\Bigr)
\longrightarrow
\Bord^{[i,j+1]}[p]
\]
is an equivalence. We will compute this colimit after restricting along a final functor. 
Consider a closed interval $I$, and fix an object $P\in \sD_{[0,1]}$ with $\pi_0([0,1]\smallsetminus P) = [p]$. From Lemma~\ref{localize.D.Delta}, the colimit of the functor $F$ can be computed after restriction along
\[
\TwAr(\sD_I)^{\op}_{/P} \ra \TwAr(\bDelta^{\op})^{\op}_{/[p]^\circ}~.
\]
We are thereby reduced to proving the map
\[
\colim\Bigl(
\TwAr(\sD_I)^{\op}_{/P}\xra{F}\Spaces
\Bigr)
\longrightarrow
\Bord^{[i,j+1]}[p]
\]
is an equivalence,
where $F$ is the restriction of $F$ to $\TwAr(\sD_I)^{\op}_{/P}$.

First, we make use of an explicit topological space $\Tang_1^{[i,j+1]}(\RR^1\times[0,1])_P$ for which there is a natural homotopy equivalence $\Bord^{[i,j+1]}[p]\simeq \Tang_1^{[i,j+1]}(\RR^1\times[0,1])_P$. See Definition~\ref{def.TangP}.

Second, in Definition~\ref{def.U} we construct a functor
\[
\TwAr(\sD_I)^{\op}_{/P}\overset{\fU}\longrightarrow\Opens\Bigl(\Tang_1^{[i,j+1]}(\RR^1\times[0,1])_P\Bigr)
\]
assigning to an object $P\la A\ra B$ an open subspace $\fU_{A\ra B}$ of $\Tang_1^{[i,j+1]}(\RR^1\times[0,1])_P$. We then prove:

\begin{itemize}
\item Lemma~\ref{lemma.claimB}: For any $W\in \Tang_1^{[i,j+1]}(\RR^1\times[0,1])_P$, then the subcategory
\[
\Bigl(\TwAro(\sD_{[0,1]})_{/P}\Bigr)_W
\subset
\TwAro(\sD_{[0,1]})_{/P}
\]
of $P\la A\ra B$ such that there is containment $W\in \fU_{A\ra B}$ has contractible classifying space. 
Consequently, applying the higher Seifert--van Kampen theorem of \cite{HA} (specifically Theorem~A.3.1), there is a natural homotopy equivalence
\[
\Tang_1^{[i,j+1]}(\RR^1\times[0,1])_P
\simeq
\colim\Bigl(
\TwAr(\sD_I)^{\op}_{/P}
\xra{\fU}
\Opens\bigl(\Tang_1^{[i,j+1]}(\RR^1\times[0,1])_P\bigr)
\xra{\rm forget}
\Spaces
\Bigr)~.
\]
\item Lemma~\ref{lemma.claimC}:
There is a natural equivalence
\[
\fU_{A\ra B} \simeq F(P\la A \ra B)
\]
as functors from $\TwAro(\sD_I)_{/P}$ to $\Spaces$.
\end{itemize}
The above results, Lemma~\ref{lemma.claimB} and Lemma~\ref{lemma.claimC}, then imply the equivalence
\[
\colim\Bigl(\TwAr(\sD_I)^{\op}_{/P}\xra{F}\Spaces\Bigr)\simeq \Tang_1^{[i,j+1]}(\RR^1\times[0,1])_P
\]
which completes the proof.
\end{proof}

The following definitions and results support the above proof of Lemma~\ref{lemma.main.basecase.n2}, starting with Definition~\ref{def.TangP} and leading to Lemma~\ref{lemma.claimCprime}.

\begin{definition}\label{def.TangP}
Let $P\subset [0,1]$ be an object of $\sD_{[0,1]}$, with $\iota P\subset P$ the components of $P$ which map to the interior of $[0,1]$. The subspace
\[
\Tang_1^{[i,j]}(\RR^1\times[0,1])_P
\subset
\Tang_1^{\fr}(\RR^1\times[0,1])
\]
consists of those tangles $W$ such that: 
\begin{itemize}
\item the critical values of the composite projection $W\subset \RR^1\times[0,1] \ra [0,1]$ are contained in $\iota P$; 
\item the intersection $W\cap \RR^1\times\{t\}$ is an object of $\Bord^{[i,j]}$ for $t\in \{0,1\}$.
\end{itemize}
\end{definition}

\begin{definition}\label{def.U}
The functor
\[
\xymatrix{
\TwAro(\sD_{[0,1]})_{/P}
\ar[r]^-\fU&
\Opens\Bigl(
\Tang_1^{[i,j+1]}(\RR^1\times[0,1])_P
\Bigr)
}
\]
sends an object $\bigl(P\la A\ra B\bigr)$ to the open subspace $\fU_{A\ra B}$, defined by:
\[
\fU_{A\ra B} := \Bigl\{(W\subset\RR^1\times[0,1])\in\Tang_1^{[i.j+1]}(\RR^1\times[0,1])_A\Big| \ j\text{-splitting relative } B  \Bigr\}
\]
where $W$ is a properly-embedded 1-dimensional submanifold with no closed components, 
the projection $W \to [0,1]$ is transverse to every point in the complement of $A$ and satisfying the following \bit{$j$-splitting condition} relative to $B$:
\begin{itemize}
\item for each pair of consecutive components of $[0,1]\smallsetminus B$, there exists elements $t, t'$ of these components such that the intersection
\[
W\cap \bigl( \RR^1\times[t, t']\bigr)
\]
has the property that there is a finite subset $S\subset \RR^1$ for which:
\begin{itemize}
\item $W\cap \bigl( \RR^1\times[t, t']\bigr)$ is disjoint from $S\times[0,1]$;
\item for any consecutive elements $s$, $s'$ in $S\cup\{-\oo,\oo\}$, the intersection 
$W \cap [s,s']\times[t,t']$ is either $[i,j]$-framed or a component of this intersection is $\{j+1\}$-framed if its end-points map surjectively onto $\{t,t'\}$.
\end{itemize}
\end{itemize}
\end{definition}
\begin{remark}
Intuitively, the $[i,j+1]$-framed tangle $W\subset \RR^1 \times [0,1]$ satisfies the $j$-splitting condition relative to $B$ if, over each connected component of $B$ in the interior of $[0,1]$, it can be split as a disjoint union of tangles each of which is either $[i,j]$-framed or $\{j,j+1\}$-framed.
See Figure~\ref{fig12}.
\end{remark}

\begin{figure}[H]
  \includegraphics[width=\linewidth, trim={0 {2.2in} {0in} {.7in}}, clip]{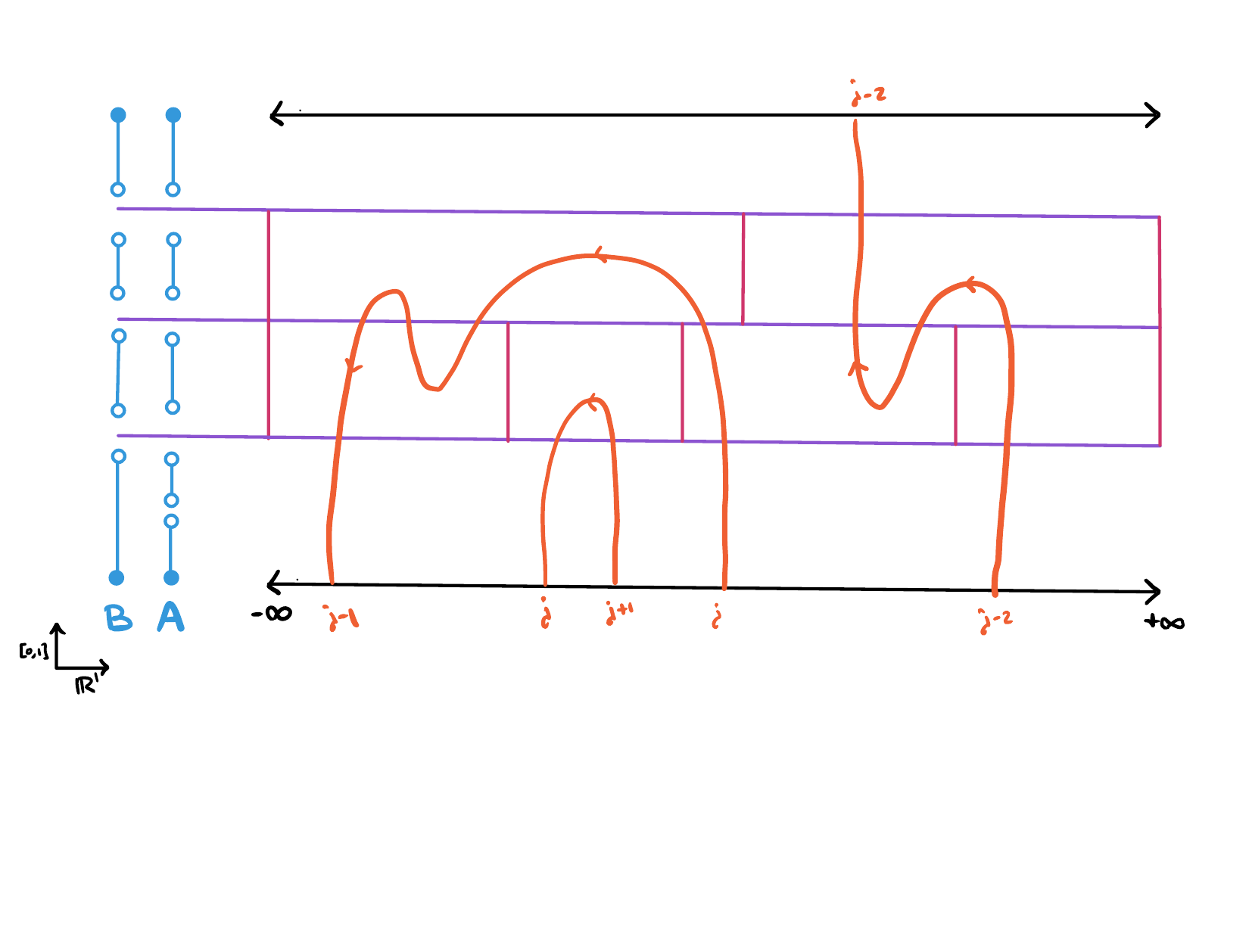}
  \caption{A $[i,j+1]$-framed tangle (in orange) $W \subset \RR^1 \times [0,1]$ such that the critical values of the projection $W \to [0,1]$ are contained in $A$ and that satisfies the $j$-splitting condition relative to $B$.
The (purple) horizontal lines depict representatives $t\in [0,1]\smallsetminus B$ of each connected component, as in Definition~\ref{def.TangP}.
The (red) vertical line segments depict a choice of $S \cup \{\pm \infty\} \subset \RR^1 \cup \{\pm \infty\}$ for each connected component of $B$.}
  \label{fig12}
\end{figure}

\begin{lemma}\label{lemma.claimB}
For any $W\in \Tang_1^{[i,j+1]}(\RR^1\times[0,1])_P$, the full subcategory
\[
\Bigl(\TwAro(\sD_{[0,1]})_{/P}\Bigr)_W
~\subset~
\TwAro(\sD_{[0,1]})_{/P}~,
\]
consisting of those $(P\la A\ra B)$ such that $\fU_{A\ra B}$ contains $W$, has contractible classifying space.

\end{lemma}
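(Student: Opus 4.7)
The category $\TwAro(\sD_{[0,1]})_{/P}$ is the poset whose objects are pairs $(A,B)$ with $A\subset P\subset B$ in $\sD_{[0,1]}$ and whose morphisms $(A,B)\to(A',B')$ are given by inclusions $A\subset A'$ and $B'\subset B$. The conditions defining $\fU_{A\to B}$ in Definition~\ref{def.U} decouple: transversality of $W\to[0,1]$ away from $A$ depends only on $A$, while the $j$-splitting condition depends only on $B$. Consequently,
\[
\bigl(\TwAro(\sD_{[0,1]})_{/P}\bigr)_W ~\simeq~ \cC_A \times \cC_B,
\]
the product of the sub-poset $\cC_A$ of valid $A$'s with the sub-poset $\cC_B$ of valid $B$'s. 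Since the classifying space preserves products, it suffices to show both factors are contractible.

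For $\cC_A$: by hypothesis, the critical values of $W$ lie in $\iota P$, so $A = P$ belongs to $\cC_A$. Given $A_1, A_2 \in \cC_A$, each critical value $c$ of $W$ lies in an interior component $C_i \subset A_i$, and the intersection $C_1 \cap C_2$ is an open interval disjoint from $\{0,1\}$, hence an interior component of $A_1 \cap A_2$ containing $c$. Therefore $A_1 \cap A_2 \in \cC_A$ and realizes the binary meet in the inclusion order. A non-empty poset with binary meets is cofiltered, hence has contractible classifying space.

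For $\cC_B$ the morphisms are the reverse-inclusions $B' \subset B$, and I will show $\cC_B$ is filtered: any two $B_1, B_2 \in \cC_B$ admit a common refinement $B \in \cC_B$ with $B \subset B_1$ and $B \subset B_2$. Non-emptiness is verified by constructing a valid $B$ whose complement $[0,1]\smallsetminus B$ is a finite set $\{p_1<\cdots<p_k\}$ chosen to avoid the critical heights of $W$ and arranged so that the intersection of $W$ with each strip $\RR^1\times[p_\ell, p_{\ell+1}]$ admits some finite $S_\ell\subset\RR^1$ decomposing it into $[i,j]$-framed pieces and individual $\{j+1\}$-crossings monotone in height. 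For the common refinement, the natural candidate is $B = B_1 \cap B_2$: the complement $[0,1]\smallsetminus(B_1\cap B_2) = ([0,1]\smallsetminus B_1)\cup ([0,1]\smallsetminus B_2)$ refines both complements of the $B_i$, and each pair of consecutive components in the refined complement inherits a splitting from $B_1$ or from $B_2$ (or a combination of both over adjacent strips) after restriction to appropriate sub-strips.

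The principal obstacle is verifying that the $j$-splitting condition survives restriction to a sub-strip. A $\{j+1\}$-framed arc of $W$ crossing $\RR^1\times[t,t']$ need not be monotone in the height coordinate, so its intersection with $\RR^1\times[t^*,t'^*]$ can decompose into multiple sub-arcs, not all of which satisfy the endpoint-surjectivity condition. To circumvent this, the slicing heights $t^*, t'^*$ are chosen strictly between consecutive critical heights of each such arc; since $W$ is compact with finitely many critical heights, all contained in $\iota P$, a generic choice achieves this. Each $\{j+1\}$-arc then restricts to a single monotone sub-arc with endpoints on $\{t^*, t'^*\}$, so the splitting data of the $B_i$ restricts to a valid splitting for $B_1 \cap B_2$, completing the proof.
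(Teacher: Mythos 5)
Your argument breaks at its first step: the category $\bigl(\TwAro(\sD_{[0,1]})_{/P}\bigr)_W$ is not a product $\cC_A\times\cC_B$. An object of $\TwAro(\sD_{[0,1]})_{/P}$ is a morphism $A\to B$ of $\sD_{[0,1]}$ --- that is, an inclusion $A\subset B$ --- together with an inclusion $A\subset P$; in particular there is no containment $P\subset B$ as you assert, and, more importantly, the two coordinates are linked by the constraint $A\subset B$. The membership conditions for $\fU_{A\to B}$ do decouple into a condition on $A$ alone and a condition on $B$ alone, but the underlying poset is the full sub-poset of $\cC_A\times\cC_B$ on pairs with $A\subset B$, which is not a product, so ``$\sB$ preserves products'' does not apply. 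The closure properties you establish (closure of the valid $A$'s and of the valid $B$'s under intersection, plus nonemptiness) are exactly the ingredients the paper uses, but they have to be assembled differently: the paper connects $(A,B)$ and $(A',B')$ by the zigzag through $(A\cap A',B)$ and $(A\cap A',B\cap B')$, and then cones an arbitrary diagram from a connected poset onto the single object $\bigl(\bigcap_x A_x,\bigcap_x B_x\bigr)$ by such cospans. (Alternatively one could observe that $\ev_s\colon(A,B)\mapsto A$ is a Cartesian fibration over $\cC_A$ with nonempty, intersection-closed fibers; but some argument of this kind is required, and your proposal supplies none.)

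Second, your resolution of the sub-strip problem fails. Choosing $t^*,t'^*$ away from the critical heights of a $\{j+1\}$-framed crossing arc only guarantees that each component of its intersection with $\RR^1\times[t^*,t'^*]$ is monotone; it does not guarantee that there is a \emph{single} such component. A non-monotone arc meets a regular level set in several points, and the extra monotone pieces of the restriction carry labels $j$ or $j+2$, so the slab genuinely fails the splitting condition --- ``each arc restricts to a single monotone sub-arc'' is false. Moreover, $t^*$ and $t'^*$ are constrained to lie in the two prescribed consecutive components of $[0,1]\smallsetminus(B_1\cap B_2)$, and the gap between those components in general \emph{does} contain critical heights of $W$ (the critical values lie in $A_1\cap A_2\subset B_1\cap B_2$), so the ``generic choice'' you invoke is not available. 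The way out is not to restrict a given splitting strip to a sub-strip at all, but to reuse the witnesses $t,t'$ already furnished by the splitting data for $B_1$ or $B_2$: consecutive components of $[0,1]\smallsetminus(B_1\cap B_2)$ contain appropriate consecutive components of $[0,1]\smallsetminus B_1$ or $[0,1]\smallsetminus B_2$, whose witnesses still lie in the required components. As written, both the global structure of your proof and this key closure step have gaps.
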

\begin{proof}
First, we prove the category $\bigl(\TwAro(\sD_{[0,1]})_{/P}\bigr)_W$ is nonempty. For $B\in \sD_{[0,1]}$, denote by $\iota B \subset B$ the components of $B$ contained in the interior of $[0,1]$. We claim that there exists $B\in \sD_{[0,1]}$ such that:
\begin{itemize}
\item $\iota B$ contains the critical values of the composite projection $W\subset \RR^1\times[0,1] \ra [0,1]$;
\item In any consecutive components of $[0,1]\smallsetminus B$ there exists elements $t$ and $t'$ and a finite set $S\subset \RR^1$ such that for any successive elements $s, s'$ of $S$, the intersection of $W$ with $[t,t'] \times [s,s']$ is either empty or connected.
\end{itemize}
Before proving that such a $B$ exists, we first observe that this existence implies that $\bigl(\TwAro(\sD_{[0,1]})_{/P}\bigr)_W$ is nonempty. Given such a $B$, define $A: = B\cap P$, and note that this defines an object of $\sD_{[0,1]/P}$, and thus that $P\la A \ra B$ then defines an object of $\TwAro(\sD_{[0,1]})_{/P}$. 
The subset $A$ contains the critical values of the projection $W\subset \RR^1\times[0,1] \ra [0,1]$, since both $P$ and $B$ do. 
The $j$-splitting condition for $W$ relative to $B$ follows immediately, because: 
every framed connected tangle is $\{k,k+1\}$-framed for some integer $k$, and therefore every every connected $[i,j+1]$-framed tangle satisfies the $j$-splitting condition since it is either $[i,j]$-framed or $\{j,j+1\}$-framed.

We now prove $B$ exists. For each element $x\in [0,1]$, there exists a closed neighborhood $D_x$ such that the connected components of $W\cap \RR^1 \times D_x$ are split: that is, there exists a finite subset $S\subset\RR^1$ such that for any successive elements $s, s'$ of $S$, the intersection of $W$ with $[s,s'] \times D_x$ is either empty or connected. By compactness of $[0,1]$, there exists a minimal finite subset $T \subset [0,1]$ for which, for each $x\in T$, the interiors $\overset{\circ}D_x$ cover the interval $(0,1)$.  For each pair of consecutive elements $x,x'$ of $T$, choose an element $y \in \overset{\circ} D_x\cap \overset{\circ} D_{x'}$ which is also a regular value of the composite projection $W\subset \RR^1\times[0,1] \ra [0,1]$, and set $Y$ to be this finite set of elements $y$. Setting $B$ to be the complement $B:= [0,1] \smallsetminus Y$, we see that $W$ satisfies splitting with respect to $B$, since any consecutive element $y,y'$ of $Y = [0,1]\smallsetminus B$ lie in a single open $\overset{\circ} D_x$ for some $x\in T$.

Second, we now prove $\bigl(\TwAro(\sD_{[0,1]})_{/P}\bigr)_W$ has contractible classifying space. We use the following condition for contractibility:
\begin{itemize}
\item Let $\cT$ be a connected category such that for every functor $G:\cK \ra \cT$ from a connected poset $\cK$, there exists a factorization
\[
\xymatrix{
\cK\ar[rd]\ar[rr]^-G&&\cT\\
&\ov{\cK}\ar@{-->}[ur]_-{\ov{G}}}
\]
where $\ov{\cK}$ is a category with contractible classifying space. Then $\cT$ has contractible classifying space.
\end{itemize}
We establish this factorization for $\cT = \bigl(\TwAro(\sD_{[0,1]})_{/P}\bigr)_W$ using the following construction: For any pair of objects $P\la A \ra B$ and $P\la A' \ra B'$ of $\bigl(\TwAro(\sD_{[0,1]})_{/P}\bigr)_W$, observe that both $P\la A\cap A'\ra B$ and $P\la A\cap A'\ra B\cap B'$ form objects of
$\TwAro(\sD_{[0,1]})_{/P}$. Further, $\iota (A\cap A')$ contains the critical values of $W \ra [0,1]$, and $W$ satisfies the $j$-splitting condition relative to $B\cap B'$.

We can thus form a pair of cospans in $\bigl(\TwAro(\sD_{[0,1]})_{/P}\bigr)_W$
\[
\xymatrix{
A\ar[d]&\ar[d]A\cap A'\ar[l]\ar[r]&A\cap A'\ar[d]&A\cap A'\ar[r]\ar[d]\ar[l]&A'\ar[d]\\
B\ar[r]&B&\ar[l] B\cap B'\ar[r]& B'&\ar[l]B'
}
\]
which shows that $\bigl(\TwAro(\sD_{[0,1]})_{/P}\bigr)_W$ is connected.

Now let $G:\cK \ra \bigl(\TwAro(\sD_{[0,1]})_{/P}\bigr)_W$ be any functor from a connected poset. We can construct a factorization:
\[
\xymatrix{
\cK\ar[rd]\ar[rr]^-G&&\bigl(\TwAro(\sD_{[0,1]})_{/P}\bigr)_W\\
&\ov{\cK}\ar@{-->}[ur]_-{\ov{G}}}
\]
with
\[
\ov{\cK} = \ast\underset{\cK\times \{-\}}\coprod\cK\times \{-\ra 0\la +\}
\]
where $\{-\ra 0\la +\}$ is the 3-object category which corepresents cospans, and the functor $\cK \ra \ov{\cK}$ is given by the inclusion of $\cK\times\{+\}$. The functor $\ov{G}$ is characterized by declaring, for each $k\in \cK$, its restriction along $\{k\},  \{-\ra 0\la +\} \ra \ov{\cK}$ to be the cospan
\[
\xymatrix{
\underset{x\in \cK}\bigcap \ev_s( G(x) ) 
\ar[d]&\ar[l]\underset{x\in \cK}\bigcap \ev_s(G(x)) 
\ar[d]\ar[r]&
\ev_s(G(k)) 
\ar[d]\\
\underset{x\in \cK}\bigcap 
\ev_t(G(x))\ar[r]& \ev_t(G(k)) &\ar[l] \ev_t(G(k))
.
}
\]
The classifying space of $\ov{\cK}$ is contractible, so this proves that the classifying space of $\bigl(\TwAro(\sD_{[0,1]})_{/P}\bigr)_W$ contractible.
\end{proof}

Recall the composite functor $F$ from~(\ref{e.F}).
\begin{lemma}\label{lemma.claimC}
Let $A \ra B$ be an object of $\TwAro(\sD_{[0,1]})$. There is a natural equivalence
$\fU_{A\ra B} \simeq F(\pi A \ra \pi B)$
between the open $\fU_{A\ra B}\subset \Tang_1^{[i,j+1]}(\RR^1\times[0,1])_A$ and the value on $A\ra B$ of the composite
\[
\TwAro(\sD_{[0,1]})\overset{\pi}\longrightarrow \TwAro(\bDelta^{\op})\overset{F}\longrightarrow\Spaces~.
\]
\end{lemma}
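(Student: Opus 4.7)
The plan is to identify both $\fU_{A\to B}$ and $F(\pi A \to \pi B)$ as moduli of $[i,j+1]$-framed tangles in $\RR^1\times[0,1]$ equipped with an essentially unique combinatorial decomposition, and then match these identifications naturally in $(A\to B)$. Writing $\pi(A\to B)=(\varphi^\circ\colon [a]^\circ\to[b]^\circ)$ in $\TwAro(\bDelta^{\op})$, Observation~\ref{obs.Segal.basecase.contact} presents the right-hand side as the iterated fiber product
\[
F(\pi A\to\pi B)\;\simeq\;\cC([0,\varphi(0)]^\circ)\underset{\cC(\{\varphi(0)\}^\circ)}\times \cC([\varphi(0),\varphi(1)]^\circ)\underset{\cC(\{\varphi(1)\}^\circ)}\times\cdots\underset{\cC(\{\varphi(b)\}^\circ)}\times \cC([\varphi(b),a]^\circ),
\]
where $\cC:=\Bord^{[i,j]}_\bullet\underset{\Bord^{[0,0]}_\bullet}\star\Bord^{[0,1]}_\bullet$. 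Geometrically, each factor $\cC([\varphi(k{-}1),\varphi(k)]^\circ)$ corresponds to a slab of $\RR^1\times[0,1]$ bounded by two consecutive components of $B$ and internally subdivided by intermediate components of $A$; the fibers over $\cC(\{\varphi(k)\}^\circ)\simeq\obj(\Bord^{[i,j+1]})$ encode the matching cross-section of the tangle at the $k$-th component of $B$. By Corollary~\ref{cor.lemma.points.of.Bord01}, each factor simplifies as the coproduct of monoids $\cC([p]^\circ)\simeq \Bord^{[i,j]}[p]\star B^{(0,1]}_{[p]}$, where $B^{(0,1]}_{[p]}$ sits inside $\Bord^{[i,j+1]}[p]$ via the map $\Bord^{[0,1]}\to\Bord^{[i,j+1]}$ that sends the label $0$ to $j$ and the label $1$ to $j+1$.

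I would then interpret this coproduct geometrically and match it with the $j$-splitting condition. By Lemma~\ref{lemma.monoid.pushout}, a point of $\Bord^{[i,j]}[p]\star B^{(0,1]}_{[p]}$ consists of a finite subset $S\subset\RR^1$ together with, for each pair of consecutive elements $s<s'$ of $S\cup\{\pm\infty\}$, a tangle in $[s,s']\times\DD[p]$ that is either $[i,j]$-framed or is $\{j,j+1\}$-framed with no horizontal $\{j\}$-framed connected component, the two types alternating along $S$, modulo the redundancy of merging two adjacent $[i,j]$-pieces into a single one. Horizontal concatenation realizes such data as an $[i,j+1]$-framed tangle in $\RR^1\times\DD[p]$. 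Comparing with Definition~\ref{def.U}, a point of $\fU_{A\to B}$ is precisely an $[i,j+1]$-framed tangle $W\subset\RR^1\times[0,1]$ with critical values in $A$ whose restriction to each slab (between two consecutive components of $B$) admits such a stack decomposition; the iterated fiber product then assembles these per-slab decompositions compatibly along each component of $B$. Uniqueness modulo the stated redundancy yields a pointwise equivalence $\fU_{A\to B}\simeq F(\pi A\to\pi B)$.

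The main obstacle will be verifying naturality in $(A\to B)\in\TwAro(\sD_{[0,1]})$. A morphism $(A_0\to B_0)\to(A_1\to B_1)$ induces, by Observation~\ref{obs.S.description} applied through $\pi$, restriction maps of the form $f_|\colon C^{\varphi_1}\to{\sf Hull}(f(C))^{\varphi_0}$ on each convex subinterval. I will decompose this geometrically into two basic moves: coarsening $B$ (which corresponds to monoid multiplication in each $\cC([p]^\circ)$-factor, merging two adjacent stacks across a dropped component of $B$) and shrinking $A$ (which corresponds to face maps in the simplicial direction of the factor at a newly-regular cross-section of the tangle). Compatibility of the proposed identification with these two moves reduces to the explicit formulas for $\{k{-}1<k\}^\varphi$ and $\{k\}^\varphi$ of Observation~\ref{obs.key}, which dictate exactly how slabs and cross-sections are tracked under refinement of $A$ or $B$.
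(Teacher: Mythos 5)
Your overall architecture is the same as the paper's: unwind $F(\pi A\to\pi B)$ via Observation~\ref{obs.Segal.basecase.contact} into an iterated fiber product indexed by the components of $B$, decompose $\fU_{A\to B}$ slab-by-slab to match, and identify each slab factor with $\Bord^{[i,j]}[p]\star B^{(0,1]}_{[p]}$ using Corollary~\ref{cor.lemma.points.of.Bord01} and the word formula of Lemma~\ref{lemma.monoid.pushout}. Your naturality discussion via Observations~\ref{obs.S.description} and~\ref{obs.key} is in fact more explicit than what the paper records. However, the crux of the lemma is the per-slab identification, and your sentence ``Uniqueness modulo the stated redundancy yields a pointwise equivalence'' is not a proof of it. A point of $\fU_{A\to B}$ is a tangle; the splitting set $S$ and the per-interval decomposition are auxiliary data not recorded by that point, and a set-level matching of (tangle, decomposition) pairs against reduced alternating words does not yield a homotopy equivalence between the moduli space of tangles (with its topology) and the disjoint union of products computed by Lemma~\ref{lemma.monoid.pushout}. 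You must show that forgetting the choice of $S$ loses nothing up to homotopy.

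The paper supplies exactly this in a separate base-case lemma for $\pi B=[1]^\circ$: the minimal cardinality $r$ of a splitting set is a locally constant invariant, so $\fU_{A\to B}$ splits as a disjoint union over $r$ of subspaces $\fU^r_{A\to B}$, and each $\fU^r_{A\to B}$ deformation retracts by a straight-line ambient isotopy onto the subspace $\fU^{\{1,\dots,r\}}_{A\to B}$ of tangles split by the fixed standard set $\{1,\dots,r\}$, which is visibly the corresponding alternating product of $\ov{\Bord^{[i,j]}[a]}$ and $\ov{B^{(0,1]}_{[a]}}$. Some version of this stratification-plus-retraction argument is needed; without it you have only a bijection of isotopy classes. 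A second, smaller omission: your slab-wise decomposition of $\fU_{A\to B}$ is a strict fiber product of topological spaces, whereas the iterated fiber product defining $F(\pi A\to\pi B)$ is a homotopy limit; these agree because the restriction maps to the overlap regions are fibrations by parameterized isotopy extension, and this should be said.
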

\begin{proof}
Denote the morphism $\pi A \to \pi B$ in $\bDelta^{\op}$ as $[a]^{\circ} \xra{g^{\circ}} [b]^{\circ}$.  
Here we reduce the proof for general morphisms $[b]\xra{g}[a]$ to the case $b=1$ and $g$ the unique active morphism. This special case is then proved in Lemma~\ref{lemma.claimCprime}.

By Observation~\ref{obs.Segal.basecase.contact}, the value $F([a]^\circ\xra{g^\circ}[b]^\circ)$ is given by the iterated fiber product:
\[
F([0,g(0)])
\underset{F(g(0))}\times
F([g(0),g(1)])
\underset{F(g(1))}\times
\ldots
\underset{F(g(b))}\times
F([g(b),a])
\]
where 
\[
F([k,l]) = \Bord^{[i,j]}[k,l]\underset{\Bord^{[0,0]}[k,l]}\amalg \Bord^{[0,1]}[k,l]
\]
and
\[
F(\{k\}) = \Bord^{[i,j]}\{k\}\underset{\Bord^{[0,0]}\{k\}}\amalg \Bord^{[0,1]}\{k\}~.
\]
Likewise, $\fU_{A\ra B}$ has a cover by smaller values of $\fU$: for $B\subset [0,1]$ with $b+1$ connected components, write
\[
B = \coprod_{0\leq k \leq b} B_k
\]
as the ordered disjoint union of connected components. Define 
\[
\widehat{B_0} = \bigl\{ t\in [0,1]\ \big| \ 0 \leq t \leq {\sf inf}(B_{1})\bigr\}
\]
\[
\widehat{B_k} = \bigl\{ t\in [0,1]\ \big| \ {\sf sup}(B_{{\mathit{k}\text{-}1}}) \leq t \leq {\sf inf}(B_{k+1})\bigr\}
\]
\[
\widehat{B_{b}} = \bigl\{ t\in [0,1]\ \big| \ {\sf sup}(B_{b-1}) \leq t \leq 1\bigr\}
\]
for the largest connected sub-interval of $[0,1]$ containing $B_k$ which does not intersect the adjacent components $B_{{\mathit{k}\text{-}1}}$ or $B_{k+1}$. Note that the $\widehat{B_k}$ form a closed cover of $[0,1]$. We can thus decompose $\Tang_1^{[i,j+1]}(\RR^1\times[0,1])_A$ as the fiber product
\[
\Tang_1^{[i,j+1]}(\RR^1 \times \widehat{B_0})_{A_0}
\underset{\Tang_1^{[i,j+1]}( \RR^1 \times \widehat{B_0}\cap\widehat{B_1} )_{\emptyset}}{\times}
\ldots
\underset{\Tang_1^{[i,j+1]}( \RR^1 \times \widehat{B_{b-1}}\cap\widehat{B_b} )_{\emptyset}}{\times}
\Tang_1^{[i,j+1]}( \RR^1 \times \widehat{B_n} )_{A_b}
\]
where $\Tang_1^{[i,j+1]}(\RR^1 \times \widehat{B_k} )_{A_k}$ consists of tangles in $\RR^1 \times\widehat{B_k}$ whose critical values are contained in $A_k := A \cap B_k$, and $\Tang_1^{[i,j+1]}(\RR^1 \times\widehat{B_{\n1}}\cap\widehat{B_n})_{\emptyset}$ consists of tangles in $\RR^1 \times\widehat{B_{\n1}}\cap\widehat{B_n}$ which have no critical points. Note that $\Tang_1^{[i,j+1]}(\RR^1 \times\widehat{B_{\n1}}\cap\widehat{B_n})_{\emptyset}$ is homotopy equivalent to the space of objects of $\Bord^{[i,j+1]}$.

Restricting to the subspace $\fU_{A\ra B} \subset   \Tang_1^{[i,j+1]}(\RR^1\times[0,1])_A$, it is immediate that a tangle $W$ satisfies the $j$-splitting condition relative to $B$ if and only if for each $k$ each restriction $W_{|\RR^1 \times \widehat{B_k} }$ satisfies the $j$-splitting condition relative to $B_k$. We can thus decompose $\fU_{A\ra B}$ as a fiber product
\[
\fU_{A_1\ra B_1}\underset{\fU_{\emptyset \to C_1}}\times\ldots \underset{\fU_{\emptyset \to C_b}}\times\fU_{A_b\ra B_b}~,
\]
where $C_k$ is the interior of $\widehat{B_{k\text{-}1}}\cap \widehat{B_k}$.
By the parameterized isotopy extension theorem, the restrictions maps in this fiber product are fibrations, so the point-set fiber product agrees with the homotopy fiber product.

To complete the proof, we can thereby reduce to the case where each $A_i\ra B_i$ corresponds to an active morphism $\pi A = [a]^\circ \ra [1]^\circ = \pi B$. This case is proved in Lemma~\ref{lemma.claimCprime}.
\end{proof}

We have reduced Lemma~\ref{lemma.claimC} to the simpler subordinate case in which the object $[a]^\circ\ra [b]^\circ$ is the active morphism $[a]^\circ\ra[1]^\circ$.
\begin{lemma}\label{lemma.claimCprime}
For $(A\ra B)\in \TwAr(\sD_{[0,1]})$ an object with $\pi B =\{0<a\}$ and $\pi A = [a]^\circ\xra{\act}\{0<a\}^\circ = \pi B$ the active morphism, the space $\fU_{A\ra B}$ is homotopy equivalent to the pushout in $\Alg(\Spaces)$
\[
\fU_{A\ra B}
\simeq
\Bord^{[i,j]}[a]\underset{\Bord^{[0,0]}[a]}\star \Bord^{[0,1]}[a]
\]
of the monoidal spaces of $[a]$-points.
\end{lemma}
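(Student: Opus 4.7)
The plan is to identify both sides via Lemma~\ref{lemma.monoid.pushout} together with a stratification of $\fU_{A\ra B}$ by the alternation pattern of a minimal $j$-splitting.

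First, I would apply Corollary~\ref{cor.lemma.points.of.Bord01} to rewrite the pushout as the simpler free product
\[
\Bord^{[i,j]}[a]\underset{\Bord^{[0,0]}[a]}\star \Bord^{[0,1]}[a] ~\simeq~ \Bord^{[i,j]}[a] \star B^{(0,1]}_{[a]}.
\]
Because the empty tangle is an isolated connected component of each factor (by compactness, no non-empty tangle can be close to the empty one in the weak Whitney topology), the unit maps of these monoids are monomorphisms, so Lemma~\ref{lemma.monoid.pushout} gives an explicit description of the free product in $\Alg(\Spaces)$: a disjoint union of the point together with products indexed by alternating words in the non-identity components of the two factors.

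Next, I would observe that $\fU_{A\ra B}$ itself carries a monoid structure under horizontal concatenation in $\RR^1$: given $W_1, W_2 \in \fU_{A \ra B}$, translating so that $W_1 \subset (-\infty,0)\times[0,1]$ and $W_2 \subset (0,\infty)\times[0,1]$ places the disjoint union in $\fU_{A \ra B}$, with $j$-splitting obtained by taking the union of the individual splittings together with $\{0\}$. Both factors include naturally into this monoid: $\Bord^{[i,j]}[a]$ via the trivial splitting $S = \emptyset$, and $B^{(0,1]}_{[a]}$ via the relabeling $\{0,1\} \ra \{j,j+1\}$, where the $j$-splitting condition is verified directly from the absence of horizontal $\{j\}$-strands. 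These inclusions agree on $\Bord^{[0,0]}[a]$, so the universal property of the free product yields a monoid map
\[
\Phi : \Bord^{[i,j]}[a] \star B^{(0,1]}_{[a]} \longrightarrow \fU_{A \ra B}.
\]

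To show $\Phi$ is a homotopy equivalence, I would stratify $\fU_{A\ra B}$ by alternation pattern. For each alternating word type $\omega$, the stratum $\fU_\omega \subset \fU_{A\ra B}$ consisting of tangles whose coarsest $j$-splitting realizes $\omega$ is equivalent to the corresponding product of $\Bord^{[i,j]}[a]$ and $B^{(0,1]}_{[a]}$ factors, because the positions of the splitting lines in $\RR^1$ form a contractible configuration space. Matching this stratification with the formula supplied by Lemma~\ref{lemma.monoid.pushout} shows that $\Phi$ is a homotopy equivalence on each stratum, hence globally.

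The hard part will be establishing that the coarsest $j$-splitting is well-defined and depends continuously on the tangle. This requires showing that for each $W \in \fU_{A\ra B}$ there is a canonical minimal valid $S_W^{\min} \subset \RR^1 \smallsetminus W$, produced by iteratively removing redundant split lines from any chosen valid splitting, and that the assignment $W \mapsto S_W^{\min}$ is continuous on each connected component. Openness of the $j$-splitting condition, together with the parametrized isotopy extension theorem applied in the spirit of the proof of Lemma~\ref{lemma.claimB}, supplies the required continuity and reduces the result to the componentwise comparison given by the stratification.
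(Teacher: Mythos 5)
Your overall strategy is the same as the paper's: reduce the pushout to the free product $\Bord^{[i,j]}[a]\star B^{(0,1]}_{[a]}$ via Corollary~\ref{cor.lemma.points.of.Bord01}, expand it by the alternating-word formula of Lemma~\ref{lemma.monoid.pushout}, and match the terms against a decomposition of $\fU_{A\ra B}$ by the alternation pattern of a minimal $j$-splitting. The final identification of each stratum with a product of copies of $\ov{\Bord^{[i,j]}[a]}$ and $\ov{B^{(0,1]}_{[a]}}$ is also what the paper does.

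The one place your plan goes wrong is exactly the step you flag as the hard part. There is no canonical minimal splitting set $S_W^{\min}\subset\RR^1$: a splitting wall can be slid continuously through any gap of the tangle, and ``iteratively removing redundant lines from any chosen valid splitting'' produces an answer depending on the initial choice and the order of removal, so no continuous (or even well-defined) assignment $W\mapsto S_W^{\min}$ exists. Trying to build the comparison on such an assignment would fail. The paper's resolution is to observe that you never need the splitting \emph{set} to be canonical --- only its minimal \emph{cardinality} $r$ (together with which letter the alternating word starts with), and this is an isotopy invariant, hence locally constant. So the subspace $\fU^{r}_{A\ra B}$ of tangles with minimal splitting cardinality $r$ is automatically a union of connected components of $\fU_{A\ra B}$, no continuity argument required. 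One then deformation-retracts $\fU^{r}_{A\ra B}$, by a straight-line ambient isotopy, onto the subspace $\fU^{\{1,\dots,r\}}_{A\ra B}$ of tangles split by the \emph{fixed} set $\{1,\dots,r\}$, which is visibly the product of the non-unit pieces demanded by Lemma~\ref{lemma.monoid.pushout}. If you replace your appeal to a continuous minimal splitting with this local-constancy-plus-fixed-splitting argument (which is consistent with your remark that the positions of the walls form a contractible space), the rest of your proof goes through.
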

\begin{proof}
In the case at hand, the space $\fU_{A\ra B}$ consists of tangles $W$ with critical values contained in $A$ such that there exists elements $t, t'$ (with $\iota A\subset \iota B\subset [t,t']$) of these components such that the intersection
\[
W\cap \bigl( \RR^1\times[t, t']\bigr)
\]
has the property that there is a finite subset $S\subset \RR^1$ for which  $W\cap \bigl( \RR^1\times[t, t']\bigr)$ is disjoint from $S\times[0,1]$, and for any successive elements $s$, $s'$ in $S$, the intersection of $W$ with $[s,s']\times[t,t']$ satisfies the $j$-splitting condition.

For each $r\geq 0$, consider the subspaces
\[
\fU^{\{1,\ldots, r\}}_{A\ra B}\subset \fU^r_{A\ra B} \subset \fU_{A\ra B}
\]
where:
\begin{itemize}
\item $\fU^r_{A\ra B}\subset \fU_{A\ra B}$ consists of those tangles $W$ such that there exists a cardinality $r$ subset $S\subset \RR^1$ with the  $j$-splitting property relative to $W$ (see Definition~\ref{def.U}), and any other subset $S' \subset \RR^1$ with the $j$-splitting property relative to $W$ has cardinality at least $r$;
\item $\fU^{\{1,\ldots, r\}}_{A\ra B}\subset \fU^r_{A\ra B}$ consists of those $W$ which have the $j$-splitting property relative to the fixed subset $S=\{1,\ldots,r\}\subset\RR^1$.
\end{itemize}
Note that $\fU^r_{A\ra B}$ is a union of components of $\fU_{A\ra B}$: A tangle $W\in \fU_{A\ra B}$ belongs to $\fU^r_{A\ra B}$ if and only if every isotopic tangle $W'\in \fU_{A\ra B}$ belongs to $\fU^r_{A\ra B}$. Consequently, there is a decomposition
\[
\fU_{A\ra B}
\simeq
\{\emptyset\}\amalg\coprod_{r\geq 0} \fU^r_{A\ra B}~.
\]
Further, via a straight-line ambient isotopy, there is a deformation retraction of $\fU^r_{A\ra B}$ onto $\fU^{\{1,\ldots, r\}}_{A\ra B}$. For $r=2k-1$ or $r=2k$, respectively, the space $\fU^{\{1,\ldots, r\}}_{A\ra B}$ is homotopy equivalent to 
\[
\Bigl(\ov{\Bord^{[i,j]}[a]}\times \ov{B^{(0,1]}_{[a]}}\Bigr)^{k}
\amalg
\Bigl(\ov{B^{(0,1]}_{[a]}}\times \ov{\Bord^{[i,j]}[a]}\Bigr)^{k}
\]
or, respectively,
\[
\ov{\Bord^{[i,j]}[a]} \times \Bigr(\ov{B^{(0,1]}_{[a]}}\times \ov{\Bord^{[i,j]}[a]}\Bigl)^k
\amalg \
\ov{B^{(0,1]}_{[a]}}\times \Bigl(\ov{\Bord^{[i,j]}[a]}\times \ov{B^{(0,1]}_{[p]}}\Bigr)^k 
\]
where $\ov{B^{(0,1]}_{[a]}}$ and  $\ov{\Bord^{[i,j]}[a]}$ denote the spaces $\ov{B^{(0,1]}_{[a]}}$ and $\ov{\Bord^{[i,j]}[a]}$ minus the unit (i.e., the 1-point component $\{\emptyset\}$ containing the empty tangle).

By Corollary~\ref{cor.lemma.points.of.Bord01}, we have an equivalence between spaces:
\[
\Bord^{[i,j]}[a]\underset{\Bord^{[0,0]}[a]}\star \Bord^{[0,1]}[a]\simeq \Bord^{[i,j]}[a]\star B^{(0,1]}_{[a]}~.
\]
The equivalence $\fU_{A\ra B}\simeq \Bord^{[i,j]}[a]\star B^{(0,1]}_{[a]}$ now follows by Lemma~\ref{lemma.monoid.pushout}, which implies the result.
\end{proof}

We can now prove the main theorem of this section, which identifies the rigid monoidal $(\oo,1)$-category $\Dual^{[\oo,\oo]}$ freely generated by a single object. For earlier work on freely generated monoidal $(1,1)$-categories, see~\cite{joyal.street1} and~\cite{joyal.street2}.

\begin{theorem}\label{thm.n2}
The natural functor
\[
\Dual^{[\oo,\oo]}\longrightarrow \Bord_1^{\fr}(\RR^1)
\]
is an equivalence. That is, $\Bord_1^{\fr}(\RR^1)$ is equivalent to the universal monoidal $(\oo,1)$-category containing an object which has all left-duals and all right-duals.
\end{theorem}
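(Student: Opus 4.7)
The plan is to prove this by a double induction on the numbers of left- and right-duals, bootstrapping from Proposition~\ref{tDualBord}. The key structural input is that $\Bord^{[i,j+1]}$ sits inside a pushout square over $\Bord^{[0,0]}$ with $\Bord^{[0,1]}$ and $\Bord^{[i,j]}$ (this is Lemma~\ref{lemma.main.basecase.n2}), together with its symmetric version for adjoining a left-dual (which follows from Lemma~\ref{lemma.main.basecase.n2} by the evident reflection symmetry of $\RR^1 \times \DD^1$). I would first verify the corresponding pushout squares on the $\Dual$-side:
\[
\Dual^{[i,j+1]} ~\simeq~ \Dual^{[i,j]} \underset{\Dual^{[0,0]}}{\star} \Dual^{[0,1]}
\qquad\text{and}\qquad
\Dual^{[i+1,j]} ~\simeq~ \Dual^{[i,j]} \underset{\Dual^{[0,0]}}{\star} \Dual^{[1,0]}
~.
\]
These are formal consequences of the corepresentability: a map from the right-hand side of the first equation into a monoidal $(\oo,1)$-category $\cR$ is the data of an object $V\in \cR$ with $i$ left-duals and $j$ right-duals, together with a right-dualizable object identified with the $j^{\rm th}$ right-dual $V^{R^j}$, which is exactly the data of an object of $\cR$ with $i$ left-duals and $j+1$ right-duals.

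With both pushout presentations in hand, the induction is straightforward. The base cases are:
\[
\Dual^{[0,0]} \simeq \FF_1(\ast) \simeq \Bord^{[0,0]}
\]
by Corollary~\ref{t.bord.obj}; the equivalence $\Dual^{[0,1]} \simeq \Bord^{[0,1]}$ by Proposition~\ref{tDualBord}; and $\Dual^{[1,0]} \simeq \Bord^{[1,0]}$ by the reflection-symmetric variant of Proposition~\ref{tDualBord} (obtained by applying $(-)^{\rm rev}$ to the monoidal structures on both sides). Inducting on $j$ via the first pushout, and then on $i$ via the second pushout, the map $\Dual^{[i,j]}\to\Bord^{[i,j]}$ is an equivalence for all finite $i,j\geq 0$.

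Finally, to reach $[\oo,\oo]$, I would observe that $\Dual^{[\oo,\oo]} \simeq \colim_{i,j} \Dual^{[i,j]}$ as a filtered colimit in $\Alg(\Cat_{(\oo,1)})$, by the universal property of iterated duals, and symmetrically $\Bord_1^{\fr}(\RR^1) \simeq \colim_{i,j} \Bord^{[i,j]}$, which holds because every object of $\Bord_1^{\fr}(\RR^1)$ (resp.\ every morphism) has image in some $\Bord^{[i,j]}$ for $i,j$ sufficiently large. Passing to the filtered colimit of the equivalences established above yields the theorem.

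The main obstacle in executing this outline is the verification of the two pushout presentations for $\Dual^{[i,j]}$. While morally clear from the universal property, making this precise requires some care: one must argue that the datum of a right-dual of $V^{R^j}$ is discrete (equivalently, contractible) relative to the datum $V$ has $j$ right-duals, so that the homotopy pushout genuinely corepresents the stated subspace of $\Obj(\cR)$. This rests on the standard fact that right-adjoints are unique up to contractible choice, applied iteratively, together with the fact that the forgetful map from the space of $(j+1)$-duals to the space of $j$-duals is a monomorphism of $\infty$-groupoids. Once this discreteness is in hand, the rest of the argument is formal manipulation of colimits of monoidal $(\oo,1)$-categories.
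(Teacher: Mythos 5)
Your proposal is correct and follows essentially the same route as the paper: base cases from Proposition~\ref{tDualBord} (and its reflected variant), the pushout presentation of $\Bord^{[i,j+1]}$ from Lemma~\ref{lemma.main.basecase.n2}, the corresponding pushout for $\Dual^{[i,j+1]}$ from its corepresenting universal property, and a filtered colimit over $(i,j)\in\NN\times\NN$ to reach $[\oo,\oo]$. The only difference is one of emphasis: you flag the discreteness of the space of duals as the delicate point in the $\Dual$-side pushout, whereas the paper takes this as immediate from the universal property.
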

\begin{proof}
By Proposition~\ref{tDualBord}, the natural functors
\[
\Dual^{[0,1]} \ra \Bord^{[0,1]}
\]
\[
\Dual^{[1,0]} \ra \Bord^{[1,0]}
\]
are equivalences.

By Lemma~\ref{lemma.main.basecase.n2}, the commutative diagram
\[
\xymatrix{
\Bord^{[0,0]} \ar[d]^-{0\mapsto j}\ar[r]_-{0\mapsto 0}&\Bord^{[0,1]} \ar[d]^-{0\mapsto j}\\
\Bord^{[i,j]} \ar[r]^-{0\mapsto 0} & \Bord^{[i,j+1]}  }
\]
is a pushout of monoidal $(\oo,1)$-categories. For $0\leq i,j< \oo$, the diagram
\[
\xymatrix{
\Dual^{[0,0]}\ar[r]^-{\langle D^{\sR^j}\rangle}\ar[d]^-{D}&\Dual^{[i,j]}\ar[d]^-{D}\\
\Dual^{[0,1]}\ar[r]^-{\langle D^{\sR^j}\rangle}&\Dual^{[i,j+1]}}
\]
is a pushout in the $(\oo,1)$-category $\Alg(\Cat_{(\oo,1)})$ of monoidal $(\oo,1)$-categories. This follows immediately from the universal property defining $\Dual^{[i,j]}$. Consequently, we obtain that the natural transformation 
\[
\Dual^{[\bullet,\bullet]}\longrightarrow\Bord^{[\bullet,\bullet]}
\]
is an equivalence in $\Fun(\NN\times\NN,\Alg(\Cat_{(\oo,1)}))$. Lastly, the natural functor
\[
\xymatrix{
\underset{(i,j)\in \NN\times\NN}\colim \Bord^{[i.j]}\ar[r]& \Bord_1^{\fr}(\RR^1)}
\]
is an equivalence by inspection: Each of the functors $\Bord^{[i,j]}\ra \Bord_1^{\fr}(\RR^1)$ is fully-faithful, and every object of $\Bord_1^{\fr}(\RR^1)$ belongs to $\Bord^{[i,j]}$ for some $i$ and $j$. That the likewise colimit
\[
\xymatrix{
\underset{(i,j)\in \NN\times\NN}\colim \Dual^{[i.j]}\ar[r]& \Dual^{[\oo,\oo]}}
\]
is an equivalence again follows immediately from the universal property of $\Dual^{[i,j]}$. This implies the stated equivalence.
\end{proof}

\section{Induction from $\cE_{\n1}$-algebras to $\cE_n$-algebras}

In this section, we construct a model for the induction functor
\[
\Ind_{\n1}^n: \Alg_{\n1}(\Spaces)\longrightarrow \Alg_n(\Spaces)
\]
defined as the left adjoint to the functor given by restriction along the standard morphism $\cE_{\n1}\ra \cE_n$. We then analyze the value of operadic induction on the spaces of tangles
\[
\Ind_{\n1}^n\Bigl(
\Bord_1^{\fr}(\RR^{\n1})[\bullet]
\Bigr)
\]
to deduce the $\cE_n$-monoidal Tangle Hypothesis for $\Bord_1^{\fr}(\RR^n)$ from the $\cE_{\n1}$-monoidal Tangle Hypothesis for $\Bord_1^{\fr}(\RR^{\n1})$.

\subsection{$\cE_n$-Induction}\label{sec.n.ind}

Recall the $(\oo,1)$-category $\Disk_k^{\fr}$ of framed $k$-dimensional Euclidean spaces: An object is a finite disjoint union of $\RR^k$, and a morphism is a framed embedding. See Definitions~2.1, 2.7, and 2.9 from \cite{oldfact}. For each $X\in \Disk_{\n1}^{\fr}$ and $Y\in \Disk_1^{\fr}$, consider the pullback
\[
\xymatrix{
\bigl(\Disk_{\n1/X}^{\fr}\times \Disk_{1/Y}^{\fr}\bigr)\underset{\Fin}\times \Ar^{\sf mono}(\Fin)\ar[d]\ar[rr]&&\Ar^{\sf mono}(\Fin)\ar[d]^-{\ev_t}\\
\Disk_{\n1/X}^{\fr}\times \Disk_{1/Y}^{\fr}\ar[r]^-{\pi_0\times\pi_0}&\Fin\times\Fin\ar[r]^-\times&\Fin}
\]
where $\Ar^{\sf mono}(\Fin)$ is the full subcategory of $\Ar(\Fin)$ whose objects are the injections of finite sets.
Note that an object of the pullback consists of a triple
\[
\bigl(U\in \Disk_{\n1/X}^{\fr}, V\in \Disk_{1/Y}^{\fr}, S\subset \pi_0U\times\pi_0V\bigr)~.
\]

\begin{definition}\label{def.P}
The $(\oo,1)$-category $\cP_{X,Y}$ is the $\oo$-subcategory of
\[
\bigl(\Disk_{\n1/X}^{\fr}\times \Disk_{1/Y}^{\fr}\bigr)\underset{\Fin}\times \Ar^{\sf mono}(\Fin)
\]
consisting of:
\begin{itemize}
\item those objects $(U\in \Disk_{\n1/X}^{\fr}, V\in \Disk_{1/Y}^{\fr}, S\subset \pi_0U\times\pi_0V)$ such that the composite maps $S\ra \pi_0U\times\pi_0V\ra \pi_0U$ and $S\ra \pi_0U\times\pi_0V\ra \pi_0V$ are surjective;
\item those morphisms $f=(U\ra U', V\ra V', S\ra S')$ such that for all $\alpha \in \pi_0U$, the restriction of $f$ to
\[
S\cap(\{\alpha\}\times\pi_0V)\xra{f_|} S'
\]
is injective.
\end{itemize}
\end{definition}
\begin{lemma}\label{lemma.functoriality.of.cP}
Given embeddings $X\hookrightarrow X'$ and $Y\hookrightarrow Y'$, the natural functor
\[
\bigl(\Disk_{\n1/X}^{\fr}\times \Disk_{1/Y}^{\fr}\bigr)\underset{\Fin}\times \Ar^{\sf mono}(\Fin)
\longrightarrow
\bigl(\Disk_{\n1/X'}^{\fr}\times \Disk_{1/Y'}^{\fr}\bigr)\underset{\Fin}\times \Ar^{\sf mono}(\Fin)
\]
restricts to a functor of $\oo$-subcategories $\cP_{X,Y}\ra \cP_{X',Y'}$. The values $\cP_{X,Y}$ thereby define a functor
\[
\Disk_{\n1}^{\fr}\times \Disk_{1}^{\fr}\longrightarrow \Cat
\]
assigning $(X,Y)\mapsto \cP_{X,Y}$.
\end{lemma}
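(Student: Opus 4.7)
The plan is to unpack both assertions directly from the definitions; there is no substantive obstacle, as the lemma is essentially a bookkeeping check that the cutting-out conditions defining $\cP_{X,Y}$ are preserved under the evident functoriality in $X$ and $Y$.

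First, I would verify that the natural functor on the fiber product restricts to the $\oo$-subcategories. The induced functor sends a triple $\bigl(U\to X,\ V\to Y,\ S\subset \pi_0U\times\pi_0V\bigr)$ to $\bigl(U\to X\to X',\ V\to Y\to Y',\ S\bigr)$, leaving the underlying framed disks $U,V$ and the finite set $S$ unchanged; only the structure maps to the ambient disks are post-composed. Hence the finite sets $\pi_0U$, $\pi_0V$, and the subset $S\subset \pi_0U\times \pi_0V$ are unchanged, and so the two surjectivity conditions of Definition~\ref{def.P} (that $S\twoheadrightarrow \pi_0U$ and $S\twoheadrightarrow \pi_0V$) continue to hold in $\cP_{X',Y'}$. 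Likewise, a morphism $(U\to U',\ V\to V',\ S\to S')$ is mapped to itself on the finite-set data, and its defining fiberwise-injectivity condition (injectivity of $S\cap(\{\alpha\}\times \pi_0V)\to S'$ for each $\alpha\in \pi_0U$) depends only on the combinatorial datum of $S\to S'$ over $\pi_0U\to \pi_0U'$, so it is preserved.

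Second, to assemble the assignment $(X,Y)\mapsto \cP_{X,Y}$ into a functor $\Disk^{\fr}_{\n1}\times \Disk^{\fr}_1\to \Cat$, I would invoke the standard fact that post-composition makes the slice assignment $X\mapsto \Disk^{\fr}_{\n1/X}$ into a functor $\Disk^{\fr}_{\n1}\to \Cat$ (likewise for $Y$). The projection to $\Fin$ via $\pi_0$ is natural in $(X,Y)$, so the pullback
\[
\bigl(\Disk^{\fr}_{\n1/X}\times \Disk^{\fr}_{1/Y}\bigr)\underset{\Fin}\times \Ar^{\sf mono}(\Fin)
\]
is functorial in $(X,Y)\in \Disk^{\fr}_{\n1}\times \Disk^{\fr}_1$. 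By the first paragraph, this functoriality restricts to the $\oo$-subcategories $\cP_{X,Y}$, yielding the desired functor.

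The only step requiring slight care is the verification that the $\oo$-subcategory inclusion $\cP_{X,Y}\hookrightarrow (\Disk^{\fr}_{\n1/X}\times \Disk^{\fr}_{1/Y})\underset{\Fin}\times \Ar^{\sf mono}(\Fin)$ is natural — this is immediate once one observes, as above, that its defining conditions are stated purely in terms of the maps of finite sets, which are intrinsic to the triple $(U,V,S)$ and independent of the ambient disks $X,Y$.
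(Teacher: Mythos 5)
Your proposal is correct and follows essentially the same route as the paper's proof: both simply observe that the defining conditions of $\cP_{X,Y}$ on objects and morphisms depend only on the combinatorial data $(\pi_0U,\pi_0V,S)$ and the maps of finite sets, which are untouched by post-composition with the ambient embeddings. Your additional remarks on the functoriality of the slice and fiber-product constructions are a reasonable elaboration of the second assertion, which the paper treats as immediate.
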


\begin{proof}
We observe that the objects and morphisms of Definition~\ref{def.P} are preserved under the functor induced by $X\hookrightarrow X'$ and $Y\hookrightarrow Y'$.

Given an object $(U\in \Disk_{\n1/X}^{\fr}, V\in \Disk_{1/Y}^{\fr}, S\subset \pi_0U\times\pi_0V)$ of $\cP_{X,Y}$, observe that the induced triple $(U\in \Disk_{\n1/X'}^{\fr}, V\in \Disk_{1/Y'}^{\fr}, S\subset \pi_0U\times\pi_0V)$ retains the property that the composite maps $S\ra \pi_0U\times\pi_0V\ra \pi_0U$ and $S\ra \pi_0U\times\pi_0V\ra \pi_0V$ are surjective, since this property is independent of the induced embeddings of $U$ and $V$ into $X'$ and $Y'$.

Likewise, given a morphism $(U\ra U', V\ra V', f:S\ra S')$ of $\cP_{X,Y}$, observe that the induced morphism in $\bigl(\Disk_{\n1/X'}^{\fr}\times \Disk_{1/Y'}^{\fr}\bigr)\underset{\Fin}\times \Ar^{\sf mono}(\Fin)$ again has the property that for every $\alpha \in \pi_0U$ the restriction $f_|: S\cap(\{\alpha\}\times\pi_0V)\ra S'$ is injective, since again this property is independent of the induced embeddings of $U$ and $V$ into $X'$ and $Y'$.
\end{proof}

Observe that there exists a functor
\[
 \bigl(\Disk_{\n1/X}^{\fr}\times \Disk_{1/Y}^{\fr}\bigr)\underset{\Fin}\times \Ar^{\sf mono}(\Fin)
 \longrightarrow
 \Disk^{\fr}_{\n1}
\]
sending a triple $(U,V,S)$ to the disjoint union
\[
\bigsqcup_{s\in S} U_{\pr_U(s)}
\]
where $U_{\pr_U(s)}\subset U$ is the component of $U$ indexed by the image of $s$ under the composite $S\ra \pi_0U\times\pi_0V\ra \pi_0U$.

For a functor $A:\Disk^{\fr}_{\n1}\ra \Spaces$ and for each $(X,Y)\in\Disk_{\n1}^{\fr}\times \Disk_{1}^{\fr}$, consider the composite functor
\[
\cP_{X,Y}\longrightarrow
\bigl(\Disk_{\n1/X}^{\fr}\times \Disk_{1/Y}^{\fr}\bigr)\underset{\Fin}\times \Ar^{\sf mono}(\Fin)
 \longrightarrow
 \Disk^{\fr}_{\n1}
 \xra{A} 
 \Spaces
\]
sending an object $(U,V,S)$ to the value $A\Bigl(\underset{s\in S}\coprod U_{\pr_U(s)}\Bigr)$. Note that if $A$ is monoidal (i.e., $A$ is an $\cE_{\n1}$-algebra), then the value of this composite functor on $(U,V,S)$ is
\[
\underset{s\in S}\prod A(U_{\pr_U(s)})= \prod_{\alpha\in \pi_0U}A(U_\alpha)^{S\cap (\{\alpha\}\times \pi_0V)}~.
\]

\begin{definition}\label{def.II}
For $A$ an $\cE_{\n1}$-algebra, the functor
\[
\II(A):\Disk_{\n1}^{\fr}\times \Disk_{1}^{\fr}\longrightarrow\Spaces
\]
sends $(X,Y)$ to the colimit
\[
\II(A)(X,Y) := \colim\Bigl(
\cP_{X,Y}\longrightarrow
\bigl(\Disk_{\n1/X}^{\fr}\times \Disk_{1/Y}^{\fr}\bigr)\underset{\Fin}\times \Ar^{\sf mono}(\Fin)
 \longrightarrow
 \Disk^{\fr}_{\n1}
 \xra{A} 
 \Spaces\Bigr)~.
\]
To a morphism $(X,Y)\ra (X',Y')$, $\II(A)$ assigns the map $\II(A)(X,Y) \ra \II(A)(X',Y')$ induced by the functor $\cP_{X,Y} \ra \cP_{X',Y'}$ of Lemma~\ref{lemma.functoriality.of.cP}.
\end{definition}

\begin{prop}
For $A$ an $\cE_{\n1}$-algebra, the functor $\II(A)$ obtains the structure of an $\cE_n$-algebra via a canonical factorization through the functor $\Disk_{\n1}^{\fr}\times \Disk_{1}^{\fr}\ra \Disk^{\fr}_n$. The assignment $A\mapsto \II(A)$ of Definition~\ref{def.II} thereby defines a functor
\[
\Alg_{\n1}(\Spaces)\xra{\II} \Alg_n(\Spaces)~.
\]
\end{prop}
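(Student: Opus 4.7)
My plan is to invoke Dunn--Lurie additivity $\cE_n \simeq \cE_{\n1} \otimes \cE_1$. Under this equivalence, equipping a functor $\Disk_{\n1}^{\fr} \times \Disk_1^{\fr} \to \Spaces$ with an $\cE_n$-algebra structure factoring through the product functor $P\colon \Disk_{\n1}^{\fr} \times \Disk_1^{\fr} \to \Disk_n^{\fr}$ reduces to verifying that it is separately an $\cE_{\n1}$-algebra in the first variable and an $\cE_1$-algebra in the second, together with the standard interchange between the two structures. The functor $P$ is the geometric realization, at the level of disk categories, of the tensor product of little disks operads.

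First I would verify the required Segal-type conditions in each variable. By Proposition~\ref{t8}, an $\cE_{\n1}$-algebra structure on $X \mapsto \II(A)(X, Y)$ for each fixed $Y$ amounts to the condition that disjoint unions $X_1 \sqcup X_2$ go to products. Given any object $(U, V, S) \in \cP_{X_1 \sqcup X_2, Y}$, the embedding $U \hookrightarrow X_1 \sqcup X_2$ decomposes canonically as $U_1 \sqcup U_2$ with $U_i \hookrightarrow X_i$, so $S$ decomposes as $S_1 \sqcup S_2$ with $S_i \subset \pi_0 U_i \times \pi_0 V$. Combined with the factorization
\[
A\Bigl(\bigsqcup_{s \in S} U_{\pr_U(s)}\Bigr) \simeq A\Bigl(\bigsqcup_{s_1 \in S_1}(U_1)_{\pr_U(s_1)}\Bigr) \times A\Bigl(\bigsqcup_{s_2 \in S_2}(U_2)_{\pr_U(s_2)}\Bigr)
\]
coming from $A$ being an $\cE_{\n1}$-algebra, this should identify $\II(A)(X_1 \sqcup X_2, Y)$ with $\II(A)(X_1, Y) \times \II(A)(X_2, Y)$ after a cofinality argument. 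The Segal condition in the $Y$-variable is analogous.

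Second I would verify the interchange law between these two Segal structures, which should follow from the symmetric treatment of the two variables in Definition~\ref{def.P} together with the bifunctoriality established in Lemma~\ref{lemma.functoriality.of.cP}. By Dunn--Lurie additivity, this packages the data into an $\cE_n$-algebra structure on $\II(A)$ whose underlying functor on $\Disk_n^{\fr}$ is well-defined by $(X,Y) \mapsto \II(A)(X,Y)$ because the two Segal conditions constrain the value to depend only on $X\times Y \in \Disk_n^{\fr}$ up to canonical equivalence. Functoriality of $A \mapsto \II(A)$ is immediate from functoriality of colimits.

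The main obstacle will be the cofinality argument: $\cP_{X_1 \sqcup X_2, Y}$ does not split literally as $\cP_{X_1, Y} \times \cP_{X_2, Y}$, because the surjectivity of $S \to \pi_0 V$ in $\cP_{X_1 \sqcup X_2, Y}$ is a joint condition on $S_1 \sqcup S_2$, whereas the product category imposes it separately on each factor for possibly distinct $V$'s. I expect the right fix is to exhibit a cofinal subcategory of $\cP_{X_1 \sqcup X_2, Y}$ in which $S$ is the full set $\pi_0 U \times \pi_0 V$ and $V$ is shared, and to match it against a common-refinement cofinal subcategory of $\cP_{X_1, Y} \times \cP_{X_2, Y}$, using the injectivity-on-fibers condition of Definition~\ref{def.P} to ensure that the comparison functor is an equivalence of indexing diagrams.
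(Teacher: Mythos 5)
Your outline coincides with the paper's own proof in its essentials: Dunn--Lurie additivity reduces the statement to showing that $\II(A)$ is monoidal separately in each variable, and the paper establishes this by decomposing the indexing category. Concretely, for $Y=\bigsqcup_{j}Y_j$ the paper asserts that the equivalence $\Disk^{\fr}_{1/Y}\simeq\prod_{j}\Disk^{\fr}_{1/Y_j}$ restricts to an equivalence $\cP_{X,Y}\simeq\prod_{j}\cP_{X,Y_j}$, commutes the colimit past the finite product using universality of colimits in $\Spaces$, treats the other variable ``by identical reasoning,'' and concludes by additivity; naturality in $A$ is immediate. So the obstacle you isolate --- that the indexing category does not literally split as a product because one of the two disk variables is shared and surjectivity of $S$ onto its $\pi_0$ is a joint condition --- is genuinely present in the paper's argument as well (there in the form of the shared $U$ rather than the shared $V$), and the paper passes over it without the cofinality discussion you anticipate. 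You are more candid about the issue than the text is.

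Where your proposal is weakest is the specific fix. The full subcategory of $\cP_{X_1\sqcup X_2,Y}$ on objects with $S=\pi_0U\times\pi_0V$ is not obviously final: by the fiberwise-injectivity clause of Definition~\ref{def.P}, a morphism \emph{out of} a full-$S$ object forces $\pi_0V\to\pi_0V'$ to be injective over every $\alpha\in\pi_0 U$, a strictly stronger condition than the one satisfied by an arbitrary morphism of $\cP_{X_1\sqcup X_2,Y}$, so the relevant under-categories are not visibly contractible; note also that enlarging $S$ changes the functor being colimited, from $\prod_{s\in S}A(U_{\pr_U(s)})$ to $\prod_{\alpha}A(U_\alpha)^{\pi_0V}$, so finality is the only thing that could save this and it is precisely what is in doubt. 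The comparison that actually needs to be controlled is the functor $\cP_{X_1\sqcup X_2,Y}\to\cP_{X_1,Y}\times\cP_{X_2,Y}$ obtained by splitting $U=U_1\sqcup U_2$ and $S=S_1\sqcup S_2$ and replacing $V$ in the $i$-th factor by the union of its components hit by $S_i$; one should show this induces an equivalence on colimits, for instance by the kind of shrinking/hypercover argument the paper runs for the posets $P_{X,Y}$ in Proposition~\ref{prop.hypercover.ind}. Alternatively, since the Segal conditions are stable under sifted colimits and $\II$ preserves them, it suffices to verify them on free $\cE_{\n1}$-algebras, where the identification of the colimit with a labeled configuration space settles both conditions at once.
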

\begin{proof}
First, we show that for each $\cE_{\n1}$-algebra $A$, the functor $\II(A): \Disk_{\n1}^{\fr}\times \Disk_{1}^{\fr}\ra \Spaces$ is monoidal separately in each variable. For a finite disjoint union of 1-disks $Y = \bigsqcup_{j\in J} Y_j\in \Disk^{\fr}_1$, we exhibit a canonical equivalence
\[
\II(A)(X,Y) \simeq \prod_{j\in J}\II(A)(X,Y_j)~.
\]
To do this, note the equivalence of $(\oo,1)$-categories
\[
\Disk^{\fr}_{1/Y} \simeq \prod_{j\in J} \Disk^{\fr}_{1/Y_j}
\]
which restricts to a equivalence
\[
\cP_{X,Y}\simeq \prod_{j\in J} \cP_{X,Y_j}~.
\]
This gives a canonical equivalence
\[
\II(A)(X,Y)\simeq \colim\Bigl(
\prod_{j\in J}\cP_{X,Y_j}\xra{A}\Spaces
\Bigr)
\simeq
\prod_{j\in J}\colim\Bigl(\cP_{X,Y_j}\xra{A}\Spaces\Bigr)
\simeq
\prod_{j\in J} \II(A)(X,Y_j)~.
\]
This implies that the functor $\II(A)(X,-)$ is monoidal for each $X$. The functor $\II(A)(-,Y)$ is monoidal for each $Y$, by identical reasoning. $\II(A)$ thus defines an $\cE_{\n1}$-algebra in $\cE_1$-algebras. By Dunn--Lurie additivity \cite{HA}, this establishes that $\II(A)$ is an $\cE_n$-algebra. The assignment $A\mapsto \II(A)$ is natural in $A$, which thus gives the stated functor $\II: \Alg_{\n1}(\Spaces)\ra \Alg_n(\Spaces)$.

\end{proof}

\begin{observation}
\label{t.100}
For $A$ an $\cE_{\n1}$-algebra, there exists a natural transformation
\[
\xymatrix{
\Disk_{\n1}^{\fr}\ar[ddr]_-{(-, \RR)}\ar[rrr]^-A&&&\Spaces\\
&\Downarrow
\\
&\Disk_{\n1}^{\fr}\times\Disk_1^{\fr}\ar[uurr]_-{\II(A)}
&
,
}
\]
given, for each $X \in \Disk^{\fr}_{\n1}$, by 
\[
A(X)
\longrightarrow
\II(A)(X,\RR)
~.
\]
\end{observation}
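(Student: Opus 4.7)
The plan is to realize the natural transformation as the inclusion of a canonical colimit summand, using a carefully chosen basepoint in each indexing category $\cP_{X, \RR}$. For each $X \in \Disk_{\n1}^{\fr}$, I would designate the object $e_X := (X \xra{\id} X,\; \RR \xra{\id} \RR,\; \pi_0 X \times \{\ast\}) \in \cP_{X, \RR}$; here the subset $\pi_0 X \times \{\ast\} \subset \pi_0 X \times \pi_0 \RR$ surjects onto both factors (as $\pi_0 \RR$ is a singleton), so $e_X$ satisfies the conditions of Definition~\ref{def.P}. The diagram $\cP_{X, \RR} \to \Spaces$ of Definition~\ref{def.II} takes the value $A(\bigsqcup_{s} X_{\pr_U(s)}) = A(X)$ at $e_X$, and the universal structure map of the colimit then yields the point-wise morphism $\eta_X \colon A(X) \to \II(A)(X, \RR)$.

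For naturality with respect to $f \colon X \to X'$, observe that the functor $f_\ast \colon \cP_{X, \RR} \to \cP_{X', \RR}$ of Lemma~\ref{lemma.functoriality.of.cP} sends $e_X$ to the triple $(X \xra{f} X',\; \RR \xra{\id} \RR,\; \pi_0 X \times \{\ast\})$. I would exhibit a canonical morphism in $\cP_{X', \RR}$ from this triple to $e_{X'}$ given by the embedding $f$ itself on the $\Disk_{\n1/X'}^{\fr}$-component, the identity on $\RR$, and the unique compatible map $\pi_0 X \to \pi_0 X'$ (forced by the square in $\Ar^{\sf mono}(\Fin)$ to equal $\pi_0 f$); the injectivity-on-fibers clause in Definition~\ref{def.P} is vacuous because each fiber $\{\alpha\} \times \pi_0 \RR$ is a singleton. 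The diagram assigns $A(f) \colon A(X) \to A(X')$ to this morphism, which produces commutativity of the naturality square
\[
\xymatrix{
A(X) \ar[r]^-{\eta_X} \ar[d]_-{A(f)} & \II(A)(X, \RR) \ar[d] \\
A(X') \ar[r]^-{\eta_{X'}} & \II(A)(X', \RR).
}
\]

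To promote these point-wise data into a bona fide natural transformation of $(\infty,1)$-functors, my plan is to construct a section $\sigma \colon \Disk_{\n1}^{\fr} \to \int_{\Disk_{\n1}^{\fr}} \cP_{-, \RR}$ of the coCartesian unstraightening of $X \mapsto \cP_{X, \RR}$, sending each $X$ to $e_X$ and each $f$ to the composite of the coCartesian lift of $f$ with the internal morphism $f_\ast e_X \to e_{X'}$ constructed above. The composition of $\sigma$ with the universal space-valued diagram on $\int \cP_{-, \RR}$ recovers the functor $A$, while the fiberwise colimit produces $\II(A)(-, \RR)$, so the universal structure maps of the colimit assemble the section $\sigma$ into the sought natural transformation $A \Rightarrow \II(A) \circ (-, \RR)$. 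The only substantive check is that $\sigma$ respects composition, which reduces to equating the canonical morphism $(f_2 f_1)_\ast e_{X_1} \to e_{X_3}$ with the appropriate composite in $\cP_{X_3, \RR}$; this is immediate from the uniqueness, on the $S$-component, of any morphism in $\cP$ with prescribed source and target.
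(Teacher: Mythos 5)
Your construction is correct and is exactly the (omitted) argument behind the paper's Observation: the map $A(X) \to \II(A)(X,\RR)$ is the colimit structure map at the tautological object $e_X = (X \xra{\id} X,\ \RR \xra{\id} \RR,\ \pi_0 X \times \pi_0\RR) \in \cP_{X,\RR}$, whose diagram value is $A(X)$, and naturality follows from the functoriality of $\cP_{-,\RR}$ together with the canonical morphism $f_\ast e_X \to e_{X'}$. One small streamlining for the coherence step: since $\id_X$ and $\id_\RR$ are terminal in $\Disk^{\fr}_{\n1/X}$ and $\Disk^{\fr}_{1/\RR}$, and the $S$-component of a morphism in $\cP$ is a property rather than data, the full subcategory of the unstraightening spanned by the objects $e_X$ projects equivalently onto $\Disk^{\fr}_{\n1}$, which yields your section $\sigma$ with all higher coherences at once rather than by checking composites.
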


By the universal property of induction, we then have the following.
\begin{cor}\label{cor.Ind.to.II}
There is a canonical natural transformation
\[
\Ind_{\n1}^{n} \ra \II\]
of functors $\Fun\bigl(\Alg_{\n1}(\Spaces),\Alg_{n}(\Spaces)\bigr)$. 
\end{cor}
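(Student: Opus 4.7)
The plan is to exploit the universal property of induction as a left adjoint. By adjunction, specifying a natural transformation $\Ind_{\n1}^n \to \II$ between functors $\Alg_{\n1}(\Spaces) \to \Alg_n(\Spaces)$ is equivalent to specifying a natural transformation of endofunctors $\id_{\Alg_{\n1}(\Spaces)} \to \res_{\n1}^n \circ \II$, where $\res_{\n1}^n : \Alg_n(\Spaces) \to \Alg_{\n1}(\Spaces)$ is restriction along the operad map $\cE_{\n1} \to \cE_n$ fixed by the linear inclusion $\RR^{\n1} \hookrightarrow \RR^n$ in Observation~\ref{t14}(1). So the goal is to produce, functorially in $A$, a morphism of $\cE_{\n1}$-algebras $A \to \res_{\n1}^n \II(A)$.

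First, I would identify $\res_{\n1}^n \II(A)$ explicitly. The $\cE_n$-structure on $\II(A)$ is constructed through Dunn--Lurie additivity by viewing $\II(A) : \Disk_{\n1}^{\fr} \times \Disk_1^{\fr} \to \Spaces$ as separately monoidal in each variable, so that $\II(A)$ is an $\cE_{\n1}$-algebra in $\cE_1$-algebras. Under this identification, restriction along $\cE_{\n1} \to \cE_n$ corresponds to the first $\cE_{\n1}$-factor, and therefore $\res_{\n1}^n \II(A)$ is canonically the $\cE_{\n1}$-algebra obtained by evaluating the second variable at the unit $\RR \in \Disk_1^{\fr}$, namely the functor $X \mapsto \II(A)(X,\RR)$.

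Next, Observation~\ref{t.100} produces, for each $\cE_{\n1}$-algebra $A$, a natural transformation of space-valued functors $A(-) \to \II(A)(-,\RR)$ on $\Disk_{\n1}^{\fr}$. I would upgrade this to a morphism of $\cE_{\n1}$-algebras by verifying compatibility with disjoint unions in $\Disk_{\n1}^{\fr}$: that is, for a framed embedding $U_1 \sqcup \cdots \sqcup U_k \hookrightarrow X$ in $\Disk_{\n1}^{\fr}$, the obvious square comparing the algebra multiplications in $A$ and $\II(A)(-,\RR)$ commutes. This follows from the construction of $\II(A)$ as a colimit indexed by $\cP_{X,\RR}$: the map $A(X) \to \II(A)(X,\RR)$ factors through the coproduct summand of $\cP_{X,\RR}$ indexed by the triple $(X, \RR, \pi_0 X \times \{\ast\})$, and restriction along the embedding $U_1 \sqcup \cdots \sqcup U_k \hookrightarrow X$ manifestly intertwines the two multiplications. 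Naturality in $A$ is immediate from the naturality of the colimit defining $\II(A)$. Adjointing the resulting morphism $A \to \res_{\n1}^n \II(A)$ supplies the desired natural transformation $\Ind_{\n1}^n \to \II$.

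The main subtle step is the identification $\res_{\n1}^n \II(A) \simeq \II(A)(-,\RR)$: this requires the Dunn--Lurie identification to be compatible with the particular operad map $\cE_{\n1} \to \cE_n$ of Observation~\ref{t14}(1). Everything else is formal, being either the adjunction or a direct check from the colimit formula defining $\II(A)$.
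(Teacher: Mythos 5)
Your proposal is correct and follows essentially the same route as the paper: the paper derives the corollary directly from Observation~\ref{t.100} by invoking the universal property of $\Ind_{\n1}^n$ as a left adjoint to restriction along $\cE_{\n1}\to\cE_n$, which is exactly your adjunction transposition. Your write-up merely makes explicit the steps the paper leaves implicit — the identification of $\res_{\n1}^n\II(A)$ with $\II(A)(-,\RR)$ via Dunn--Lurie additivity and the check that the map of Observation~\ref{t.100} is a map of $\cE_{\n1}$-algebras.
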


\begin{lemma}\label{lemma.II.sifted.colim}
The functor $\II: \Alg_{\n1}(\Spaces)\ra \Alg_{n}(\Spaces)$ preserves sifted colimits.
\end{lemma}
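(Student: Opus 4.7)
The plan is to reduce the assertion to the pointwise level and then to exploit the fact that sifted colimits of spaces commute with finite products. First, recall that sifted colimits in $\Alg_k(\Spaces)$ are computed on underlying spaces: the forgetful functor $\Alg_k(\Spaces) \to \Fun(\Disk_k^{\fr},\Spaces)$ preserves sifted colimits, since the monoidal structure on $\Spaces$ (which is Cartesian) distributes over sifted colimits separately in each variable. Hence it suffices to show that for any sifted diagram $A_\bullet \colon \cK \to \Alg_{\n1}(\Spaces)$ and any pair $(X,Y)\in \Disk_{\n1}^{\fr}\times \Disk_1^{\fr}$, the natural map
\[
\colim_{k\in \cK} \II(A_k)(X,Y)\longrightarrow \II\bigl(\colim_{k\in \cK} A_k\bigr)(X,Y)
\]
is an equivalence of spaces.

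Next, I would unwind the two sides using Definition~\ref{def.II}. The right-hand side is the colimit over $\cP_{X,Y}$ of the functor sending $(U,V,S)$ to $A(\bigsqcup_{s\in S}U_{\pr_U(s)})$. Because $A := \colim_k A_k$ is monoidal and each component $U_\alpha$ is connected, this value factors as the finite product
\[
A\Bigl(\bigsqcup_{s\in S}U_{\pr_U(s)}\Bigr)\simeq \prod_{\alpha\in \pi_0 U} A(U_\alpha)^{|S_\alpha|}, \qquad S_\alpha := S\cap(\{\alpha\}\times\pi_0 V).
\]
Since $\pi_0 U$ and each $S_\alpha$ are finite, this is a finite product. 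Using that $A(U_\alpha) \simeq \colim_k A_k(U_\alpha)$ on underlying spaces, and that sifted colimits in $\Spaces$ commute with finite products, one obtains
\[
\prod_{\alpha\in \pi_0 U} A(U_\alpha)^{|S_\alpha|} \simeq \colim_{k\in\cK} \prod_{\alpha\in \pi_0 U} A_k(U_\alpha)^{|S_\alpha|}.
\]
Finally, interchanging the colimit over $\cP_{X,Y}$ with the colimit over the sifted $\cK$ (which is permitted, as colimits commute with colimits) yields the desired identification with $\colim_k \II(A_k)(X,Y)$.

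The main subtlety to verify is the computation of sifted colimits in $\Alg_{\n1}(\Spaces)$ on the underlying space level; this is a standard consequence of Proposition~3.2.3.1 of~\cite{HA} applied to the Cartesian symmetric monoidal structure on $\Spaces$, but deserves explicit citation. A secondary point is the use of the monoidality of $A$ to split $A(\bigsqcup_s U_{\pr_U(s)})$ as a finite product, which is automatic from the definition of an $\cE_{\n1}$-algebra as a symmetric monoidal functor on $\Disk^{\fr}_{\n1}$ and uses that the components $U_\alpha$ are contractible (in particular, framed copies of $\RR^{\n1}$). Apart from these two points, the argument is a routine interchange of colimits, so no serious obstacle is anticipated.
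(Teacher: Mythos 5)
Your argument is correct and is essentially the paper's own proof, just written out in more detail: both reduce to the underlying spaces via the sifted-colimit-preserving forgetful functor, use that sifted colimits in $\Spaces$ commute with the finite products appearing in the values over $\cP_{X,Y}$, and then interchange the two colimits. No gaps.
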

\begin{proof}
The assigment $A \mapsto \underset{s\in S}\prod A(U_{\pr_U(s)})$ preserves sifted colimits for each $(U,V,S)$, since the forgetful functor $\Alg_{\n1}(\Spaces) \ra\Spaces$ preserves sifted colimits and products also preserve sifted colimits. The functor $\colim_{\cP_{X,Y}}$ then commutes with all colimits, and in particular sifted colimits, from which the result follows.
\end{proof}

The goal of this section is to show that this natural transformation of Corollary~\ref{cor.Ind.to.II} is an equivalence. 
Now, every $\cE_{\n1}$-algebra admits a free resolution, thereby witnessing it as a sifted colimit of free $\cE_{\n1}$-algebras.
So, by preservation of sifted colimits from Lemma~\ref{lemma.II.sifted.colim}, 
it suffices to prove
the natural transformation of Corollary~\ref{cor.Ind.to.II} is an equivalence when $A= \FF_{\n1}(Z)$ is a free $\cE_{\n1}$-algebra generated by a space $Z$. 
For this, we will use a hypercover argument. 
To implement this hypercover argument, we now replace the indexing $(\oo,1)$-category $\cP_{X,Y}$ with a suitable poset.
First, for $M$ a framed $k$-manifold, denote the full subposet
\[
\sD_{k/M}
~\subset~
\Opens(M)
\]
consisting of those open subsets of $M$ that are diffeomorphic with a finite disjoint union of Euclidean spaces.
Proposition~2.19 of \cite{oldfact} states the evident functor
\[
\sD_{k/M}
\longrightarrow
\Disk^{\fr}_{k/M}
\]
is a localization.
\begin{definition}
For any $(X,Y)\in \Disk^{\fr}_{\n1}\times \Disk^{\fr}_1$, $P_{X,Y}$ is the limit in the following diagram:
\[
\xymatrix{
P_{X,Y}\ar[r]\ar[d]&\cP_{X,Y}\ar[d]\\
\disk_{\n1/X}\times \disk_{1/Y}\ar[r]^-{\rm loc}&\Disk^{\fr}_{\n1/X}\times \Disk^{\fr}_{1/Y}~.}
\]
\end{definition}
\begin{observation} $P_{X,Y}$ can alternatively be defined as the subcategory of $\disk_{\n1/X}\times \disk_{1/Y}\underset{\Fin}\times \Ar^{\sf mono}(\Fin)$ satisfying the conditions of Definition~\ref{def.P}. Consequently $P_{X,Y}$ is a poset.
\end{observation}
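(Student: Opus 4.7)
The plan is to unwind the pullback defining $P_{X,Y}$ and verify two assertions: that $P_{X,Y}$ is naturally a subcategory of $\disk_{\n1/X}\times \disk_{1/Y}\underset{\Fin}\times \Ar^{\sf mono}(\Fin)$ cut out by the same conditions that define $\cP_{X,Y}$, and that this subcategory is in fact a poset.

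First I would use the associativity of pullbacks. By definition $\cP_{X,Y}$ is an $\infty$-subcategory of $\bigl(\Disk_{\n1/X}^{\fr}\times \Disk_{1/Y}^{\fr}\bigr)\underset{\Fin}\times \Ar^{\sf mono}(\Fin)$, and pullback preserves monomorphisms of $(\infty,1)$-categories, so $P_{X,Y}$ is an $\infty$-subcategory of the pullback
\[
\bigl( \disk_{\n1/X}\times \disk_{1/Y}\bigr)\underset{\Disk^{\fr}_{\n1/X}\times \Disk^{\fr}_{1/Y}}\times \Bigl(\bigl(\Disk_{\n1/X}^{\fr}\times \Disk_{1/Y}^{\fr}\bigr)\underset{\Fin}\times \Ar^{\sf mono}(\Fin)\Bigr)
\simeq \disk_{\n1/X}\times \disk_{1/Y}\underset{\Fin}\times \Ar^{\sf mono}(\Fin)~.
\]
Unwinding the conditions in Definition~\ref{def.P}, the objects of $P_{X,Y}$ are triples $(U,V,S)$ with $U\in \disk_{\n1/X}$, $V\in \disk_{1/Y}$, and $S\subseteq \pi_0U\times\pi_0V$ such that $S\to \pi_0U$ and $S\to \pi_0V$ are surjective; its morphisms are those $(U\subseteq U', V\subseteq V', S\to S')$ for which the restriction $S\cap(\{\alpha\}\times\pi_0V)\to S'$ is injective for each $\alpha\in\pi_0U$. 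Both conditions are stable under the localization $\disk\to \Disk^{\fr}$, because they only refer to sets of connected components.

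To show that $P_{X,Y}$ is a poset, I would verify that for any two objects there is at most a single morphism between them. Recall from~\cite{oldfact} (or directly from the definition) that $\disk_{\n1/X}\subseteq \Opens(X)$ and $\disk_{1/Y}\subseteq \Opens(Y)$ are posets, and so their product is a poset. Thus a morphism $(U,V,S)\to (U',V',S')$ in $P_{X,Y}$ consists of the data of an inclusion $U\subseteq U'$ and an inclusion $V\subseteq V'$, together with a morphism $S\to S'$ in $\Ar^{\sf mono}(\Fin)$ lying over the induced map $\pi_0U\times\pi_0V\to \pi_0U'\times\pi_0V'$. But because $S'\hookrightarrow \pi_0U'\times\pi_0V'$ is a monomorphism, such a lift $S\to S'$ exists if and only if the composite $S\hookrightarrow \pi_0U\times\pi_0V\to \pi_0U'\times\pi_0V'$ factors through $S'$, and in that case the factorization is unique. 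Hence the space of morphisms between any two fixed objects is either empty or contractible, so $P_{X,Y}$ is a poset.

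The proof is essentially bookkeeping; there is no genuine obstacle. The only subtle point is confirming that the morphism condition ``$f_|$ is injective on each slice $S\cap(\{\alpha\}\times\pi_0V)$'' continues to cut out a subcategory after pulling back along the localization, which follows immediately because this condition is formulated at the level of $\pi_0$ and is therefore preserved and reflected by the forgetful functor $\disk\to\Disk^{\fr}$.
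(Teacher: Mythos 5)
Your proof is correct, and it is exactly the intended justification: the paper states this as an Observation with no written proof, and the pasting law for pullbacks together with the fact that $\Ar^{\sf mono}(\Fin)\xra{\ev_t}\Fin$ is a right fibration with discrete fibers (so lifts of a morphism are unique when they exist, by the monomorphism $S'\hookrightarrow\pi_0U'\times\pi_0V'$) is precisely the bookkeeping being elided. The one point worth stating explicitly, which you do handle, is that the defining conditions of Definition~6.2 live entirely over $\Fin$ and are therefore unaffected by replacing $\Disk^{\fr}_{\n1/X}\times\Disk^{\fr}_{1/Y}$ by its localization source $\disk_{\n1/X}\times\disk_{1/Y}$.
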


The finality of the functor $P_{X,Y}\ra \cP_{X,Y}$ will be essential to our hypercover proof of the equivalence of $\Ind_{\n1}^n\ra \II$. This finality is a consequence of the following stronger localization property:

\begin{lemma} The functor $P_{X,Y}\ra\cP_{X,Y}$ is a localization.
\end{lemma}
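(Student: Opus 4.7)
The plan is to exploit that $P_{X,Y}$ is definitionally the pullback of $\cP_{X,Y}$ along the localization $\disk_{\n1/X}\times \disk_{1/Y}\xra{\rm loc} \Disk^{\fr}_{\n1/X}\times \Disk^{\fr}_{1/Y}$ (Proposition~2.19 of \cite{oldfact}, applied in each factor), and to combine this with a Cartesian fibration structure on $\cP_{X,Y}\to \Disk^{\fr}_{\n1/X}\times \Disk^{\fr}_{1/Y}$ to transfer the localization to the pullback.

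First, I would verify that $\cP_{X,Y}\to \Disk^{\fr}_{\n1/X}\times \Disk^{\fr}_{1/Y}$ is a Cartesian fibration. Given an object $(U',V',S')\in\cP_{X,Y}$ and a morphism $(U,V)\to (U',V')$ in the base, the Cartesian lift should have domain $(U,V,S)$, where $S\subset \pi_0 U\times \pi_0 V$ is extracted from $S'$ via the induced maps on $\pi_0$. The injectivity condition on morphisms in Definition~\ref{def.P} is precisely the condition needed for these lifts to be Cartesian; the surjectivity condition on objects will require restricting to the natural subcategory where the extracted $S$ continues to witness the required surjections, which one verifies is automatic given the corresponding property of $S'$.

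Next, I would invoke the Mazel-Gee criterion from \cite{mazel.gee.localization}, used in the proof of Lemma~\ref{localize.D.Delta}: for each $[p]\in \bDelta$, the map
\[
\Fun^\cJ([p],P_{X,Y})\longrightarrow \Map([p],\cP_{X,Y})
\]
should exhibit the target as the classifying space of the source, where $\cJ\subset P_{X,Y}$ consists of those morphisms whose $\disk$-components are isotopy equivalences and which are identities on $S$. Using the Cartesian fibration structure, a $p$-simplex of $\cP_{X,Y}$ is equivalent to a $p$-simplex in $\Disk^{\fr}_{\n1/X}\times \Disk^{\fr}_{1/Y}$ together with a compatible sequence of subsets indexed by the vertices; the criterion thus reduces fiberwise over $\Disk^{\fr}_{\n1/X}\times \Disk^{\fr}_{1/Y}$ to Proposition~2.19 of \cite{oldfact}. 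The main obstacle will be the bookkeeping around the surjectivity-and-injectivity conditions of Definition~\ref{def.P}, verifying both that the Cartesian lifts preserve these conditions and that the fiberwise localization is compatible with the restriction to the subcategory; once this is handled, the result follows formally from the pullback stability of localizations along Cartesian fibrations.
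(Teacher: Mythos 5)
There is a genuine gap at the first step: the functor $\cP_{X,Y}\to \Disk^{\fr}_{\n1/X}\times \Disk^{\fr}_{1/Y}$ is \emph{not} a Cartesian fibration, and the surjectivity of the extracted $S$ is not ``automatic'' as you assert. The candidate Cartesian lift of a base morphism $(U,V)\to(U',V')$ with target $(U',V',S')$ must have third coordinate the preimage $S:=(\pi_0U\times\pi_0V)\underset{\pi_0U'\times\pi_0V'}\times S'$, and this can fail to surject onto $\pi_0 U$ or $\pi_0 V$ even though $S'$ surjects onto $\pi_0U'$ and $\pi_0V'$. For instance, take $U'$ and $V'$ to each have two components, $S'=\{(\alpha_1',\beta_1'),(\alpha_2',\beta_2')\}$, and let $U$ and $V$ be single Euclidean spaces embedded in the components $\alpha_1'$ and $\beta_2'$ respectively: then $S=\emptyset$, so the would-be lift is not an object of $\cP_{X,Y}$ (indeed, there is no lift of this base morphism with this target at all). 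Separately, the injectivity condition on morphisms in Definition~\ref{def.P} has nothing to do with Cartesianness of lifts; in the unconstrained pullback, a morphism is Cartesian precisely when $S$ is the full preimage of $S'$.

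The strategy is salvageable, and the repair is essentially what the paper does: the Cartesian fibration to use is not $\cP_{X,Y}\to\Disk^{\fr}_{\n1/X}\times\Disk^{\fr}_{1/Y}$ but the unconstrained pullback $(\Disk_{\n1/X}^{\fr}\times \Disk_{1/Y}^{\fr})\underset{\Fin}\times \Ar^{\sf mono}(\Fin)\to\Disk^{\fr}_{\n1/X}\times\Disk^{\fr}_{1/Y}$, which is pulled back from $\ev_t\colon\Ar^{\sf mono}(\Fin)\to\Fin$; the latter admits Cartesian lifts because monomorphisms of finite sets are stable under base change. One then applies the Mazel-Gee criterion together with the stability of initiality under pullback along Cartesian fibrations (Corollary~5.15 of \cite{fibrations}) to these unconstrained pullbacks, with Proposition~2.19 of \cite{oldfact} supplying the required input on the base, and only \emph{afterwards} restricts to the subcategories $P_{X,Y}$ and $\cP_{X,Y}$. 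That final restriction requires one more observation your proposal omits: the subcategory of morphisms inverted by $P_{X,Y}\to\cP_{X,Y}$ is the \emph{full} subcategory, spanned by the objects of $P_{X,Y}$, of the corresponding subcategory of the unconstrained pullback, since a morphism covering an isomorphism automatically satisfies the injectivity condition of Definition~\ref{def.P}.
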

\begin{proof}
Let $\cI\subset P_{X,Y}$ be the subcategory of isotopy equivalences, which is the inverse image of the isomorphisms in $\cP_{X,Y}$ under the functor $P_{X,Y}\ra\cP_{X,Y}$.
By Theorem 3.8 of \cite{mazel.gee.localization}, it suffices to prove that, for each $[p]\in \bDelta$, the natural functor 
\begin{equation}
\label{e.101}
\Fun^\cI\bigl([p], P_{X,Y}\bigr) \longrightarrow \Map\bigl([p],\cP_{X,Y}\bigr)
\end{equation}
is a localization, where $\Fun^\cI([p], P_{X,Y})\subset \Fun([p], P_{X,Y})$ is the subcategory of natural transformations contained in $\cI$ as in Notation~\ref{d.fun.w}.
Consider the subcategory $\cI' \subset \disk_{\n1/X}\times \disk_{1/Y}$ that is the preimage of the isomorphisms by the functor 
$
\disk_{\n1/X}\times \disk_{1/Y}
\to
\Disk^{\fr}_{\n1/X}\times \Disk^{\fr}_{1/Y}
$.
By the proof of Proposition~2.19 of \cite{oldfact}, the functor
\[
\Fun^{\cI'}\bigl([p],\disk_{\n1/X}\times \disk_{1/Y}\bigr) \longrightarrow \Map\bigl([p],\Disk^{\fr}_{\n1/X}\times \Disk^{\fr}_{1/Y}\bigr)
\]
is a localization and in particular initial.
Now, consider the subcategory $\cI'' \subset \disk_{\n1/X}\times \disk_{1/Y}\underset{\Fin}\times \Ar^{\sf mono}(\Fin)$ that is the preimage of the isomorphisms by the functor 
\[
\disk_{\n1/X}\times \disk_{1/Y}\underset{\Fin}\times \Ar^{\sf mono}(\Fin)
\to
\Disk^{\fr}_{\n1/X}\times \Disk^{\fr}_{1/Y}\underset{\Fin}\times \Ar^{\sf mono}(\Fin)
~
\]
Because monomorphisms in $\Fin$ are preserved by base change, the functor
$\Ar^{\sf mono}(\Fin) \xra{\ev_t} \Fin$ is a right fibration, and in particular a Cartesian fibration.
By Corollary~5.15 of~\cite{fibrations}, the functor between pullbacks
\[
\Fun^{\cI''}\Bigl([p],\disk_{\n1/X}\times \disk_{1/Y}\underset{\Fin}\times \Ar^{\sf mono}(\Fin)\Bigr) \longrightarrow \Map\Bigl([p],\Disk^{\fr}_{\n1/X}\times \Disk^{\fr}_{1/Y}\underset{\Fin}\times \Ar^{\sf mono}(\Fin)\Bigr)
\]
is also initial.
Because the codomain is a space, this functor is a localization.
Observe the relations among subcategories of $\disk_{\n1/X}\times \disk_{1/Y}\underset{\Fin}\times \Ar^{\sf mono}(\Fin)$
\[
\cI
~=~
P_{X,Y}
\cap 
\cI''
~\subset~
\cI''
\]
in which the inclusion is full.
This implies the functor~(\ref{e.101}) is a localization.

\end{proof}

\begin{notation}
Let $Z$ be a topological space.
We denote by
\[
\Conf^Z(M) := \coprod_{r\geq 0} \Conf_r(M)\underset{\Sigma_r}\times Z^r
~=~
\left\{
M \supset T \xra{\ell} Z
\right\}
\]
the topological space of $Z$-labeled configurations of points in a topological space $M$, whose underlying set consists of finite subsets of $M$ equipped with a map to $Z$.
\end{notation}

In what follows, for $(U,V,S) \in P_{X,Y}$, we denote
\[
(U\times V)_S
~\subset~
U\times V
\]
for the union of those connected components of $U\times V$ that belong to $S \subset \pi_0(U) \times \pi_0(V)$.
\begin{prop}\label{prop.hypercover.ind}
Let $Z$ be a topological space.
The assignment
\[
(U,V,S)
\longmapsto
\Bigl\{
X\times Y\supset T\xra{\ell} Z
\Big| \
T\subset (U\times V)_S \ {\rm and} \
\forall \beta\in \pi_0V, \ T\cap(U\times V_\beta) \xra{\pr}U \ {\rm is \ injective}
\Bigr\}~,
\]
defines a functor
\[
P_{X,Y}\overset{F}\longrightarrow \Opens\Bigl(
\Conf^Z(X\times Y)
\Bigr)
~.
\]
Furthermore, the resulting map between spaces
\[
\colim\Bigl(
P_{X,Y}\xra{F} \Opens\bigl(
\Conf^Z(X\times Y)\bigr)\ra \Spaces
\Bigr)
\longrightarrow
\Conf^Z(X\times Y)
\]
is an equivalence.
\end{prop}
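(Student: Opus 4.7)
The strategy is to invoke the $\infty$-categorical Seifert--van Kampen theorem (Theorem~A.3.1 of~\cite{HA}). First I will verify that $F$ is a well-defined functor landing in open subspaces: the two conditions defining $F(U,V,S)$---namely the containment $T \subset (U \times V)_S$ and the fiberwise injectivity of $T \cap (U \times V_\beta) \to \pi_0 U$ for each $\beta \in \pi_0 V$---are preserved under the morphisms of $P_{X,Y}$, with preservation of the injectivity case being precisely the fiberwise injectivity axiom built into Definition~\ref{def.P}. Each value is visibly an open subspace of $\Conf^Z(X \times Y)$. The opens cover: for a labeled finite configuration $(T \xra{\ell} Z)$, write $T_X$ and $T_Y$ for its images in $X$ and $Y$, take pairwise disjoint Euclidean neighborhoods in $X$ (respectively $Y$) about each point of $T_X$ (respectively $T_Y$), and let $S$ be $T$ viewed as a subset of $T_X \times T_Y = \pi_0 U \times \pi_0 V$; this yields an object of $P_{X,Y}$ whose $F$-value contains the configuration.

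By Seifert--van Kampen, the proposition then reduces to showing that for each $w = (T \xra{\ell} Z)$ the full subposet $(P_{X,Y})_w \subset P_{X,Y}$ consisting of objects $(U,V,S)$ with $w \in F(U,V,S)$ has contractible classifying space. The plan is to prove $(P_{X,Y})_w$ is \emph{cofiltered}, which combined with the non-emptiness established above will give contractibility of its classifying space. Since existence of pairwise common lower bounds implies the existence of common lower bounds for any finite collection by iteration, it suffices to build, for any two objects $(U_1,V_1,S_1), (U_2,V_2,S_2) \in (P_{X,Y})_w$, a third object mapping to both. For each $x \in T_X$, let $\alpha_k(x) \in \pi_0 U_k$ denote the component of $U_k$ containing $x$, and choose a Euclidean neighborhood $\tilde U_x \subset (U_1)_{\alpha_1(x)} \cap (U_2)_{\alpha_2(x)}$ small enough to meet $T_X$ only at $x$. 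Take $\tilde U := \bigsqcup_{x \in T_X} \tilde U_x$, build $\tilde V$ analogously, and let $\tilde S$ be $T$ regarded as a subset of $T_X \times T_Y = \pi_0 \tilde U \times \pi_0 \tilde V$. The triple $(\tilde U, \tilde V, \tilde S)$ lies in $(P_{X,Y})_w$ automatically, and has a morphism to each $(U_k,V_k,S_k)$ sending a point $t \in \tilde S = T$ to the unique element of $S_k$ indexing the component of $(U_k \times V_k)_{S_k}$ that contains $t$.

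The main point requiring verification is that these morphisms $\tilde S \to S_k$ satisfy the fiberwise injectivity axiom of Definition~\ref{def.P}: if two points $(x,y), (x,y') \in T$ share the same first coordinate and are sent to the same element of $S_k$, then their $V_k$-components coincide in some $(V_k)_\beta$, so both points lie in $T \cap (U_k \times (V_k)_\beta)$ and project to the same component of $U_k$; the injectivity hypothesis defining $(U_k,V_k,S_k) \in (P_{X,Y})_w$ then forces $y = y'$. With this confirmation, $(P_{X,Y})_w$ is cofiltered, and the Seifert--van Kampen theorem yields the desired equivalence $\colim_{P_{X,Y}} F \xra{\simeq} \Conf^Z(X \times Y)$.
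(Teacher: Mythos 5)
Your proposal is correct and follows essentially the same route as the paper: verify functoriality of $F$ using the fiberwise injectivity axiom of Definition~\ref{def.P}, then apply the Seifert--van Kampen theorem after checking that each poset $(P_{X,Y})_{(T,\ell)}$ is nonempty and admits pairwise common lower bounds (hence is cofiltered, hence has contractible classifying space), with the same construction of the common refinement from disjoint small neighborhoods of the points of $T_X$ and $T_Y$ and $\tilde S = T$. The only cosmetic slip is the phrase ``project to the same component of $U_k$'' in the final injectivity check, which should read ``the same point of $U_k$''; since the two points share the first coordinate this is exactly what you have, and the argument goes through.
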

\begin{proof}
Since $\Opens\bigl(\Conf^Z(X\times Y)\bigr)$ is a poset, to establish the existence of the asserted functor $F$ it suffices to show that given a morphism $f=(U\subset U', V\subset V', S\ra S')$ of $P_{X,Y}$, 
there is containment $F(U,V,S) \subset F(U',V',S')$ between subsets of $\Conf^Z(X\times Y)$.

To show this, assume that we are given $X\times Y\supset T\xra{\ell} Z$ belonging to $F(U,V,S)$, and we are given containments $U\subset U'$, $V\subset V'$ compatibly with $f:S\ra S'$ such that for each $\alpha \in \pi_0U$, the map $S\cap (\{\alpha\} \times \pi_0V) \ra S'$ is injective. The containment $(U\times V)_S \subset (U'\times V')_{S'}$ immediately implies the first condition, that $T$ is contained in $(U'\times V')_{S'}$. We now show that for each element $\beta' \in \pi_0V'$, the subset $T\cap(U'\times V'_{\beta'})$ maps injectively to $U'$ under the projection map $\pr:U' \times V' \ra U'$. Suppose that there exists elements $w_1$ and $w_2$ of $T$ which map to to same element of $U$ under the composite projection $T\ra U\times V \ra U$, and denote by $\alpha$ the component of $U$ that contains this element. Since $T$ is assumed to belong to $F(U,V,S)$, then necessarily $w_1$ and $w_2$ belong to different components $V_1$ and $V_2$ of $V$, under the composite projection $T\ra U\times V \ra V$. Now, the map
\[
S\cap (\{\alpha\}\times \pi_0V) \ra S'
\]
was assumed to be injective, and therefore the map $\pi_0 V\ra  \pi_0 V'$ is injective when restricted to the subset $\pi_0(V_1\amalg V_2)$. Consequently the images of $w_1$ and $w_2$ belong to different components of $V'$ under the projection $T\ra U\times V\ra V$. This implies that $T$ belongs to $F(U',V',S')$, and thereby establishes the existence of the functor $F$.

We now show that $F$ is a hypercovering of the space $\Conf^Z(X\times Y)$. For $(T,\ell)\in \Conf^Z(X\times Y)$, denote by
\[
\bigl(P_{X,Y}\bigr)_{(T,\ell)}
\]
the subposet of $P_{X,Y}$ consisting of those $(U,V,S)$ for which the topological space $F(U,V,S)$ contains the point $(T,\ell)$. 
By the Siefert--van Kampen Theorem of~\cite{HA} (specifically Theorem~A.3.1), to check that the colimit of $F$ is $\Conf^Z(X\times Y)$, it suffices to show the following hypercovering criterion:

\begin{itemize}
\item For every $(T,\ell)\in \Conf^Z(X\times Y)$, the classifying space of the poset $(P_{X,Y})_{(T,\ell)}$ is contractible:
\[
\sB\Bigl(\bigr(P_{X,Y}\bigr)_{(T,\ell)}\Bigr) \simeq \ast~.
\]
\end{itemize}
We prove this in two steps. First, we prove that $(P_{X,Y})_{(T,\ell)}$ is nonempty for all $(T,\ell)\in \Conf^Z(X\times Y)$. That is, we construct $(U,V,S)\in P_{X,Y}$ such that: $T$ is contained in $(U\times V)_S$; and for each $\beta \in \pi_0V$, the projection $T\cap (U\times V_\beta)\ra U$ is injective. Let $\pr_X(T)$ and $\pr_Y(T)$ denote the images of $T$ in $X$ and $Y$, respectively. For each $x\in \pr_X(T)$ and $y\in \pr_Y(T)$, choose connected neighborhoods $U_x$ and $U_y$, respectively, such that $U_x$ is disjoint from $U_{x'}$ for distinct element $x,x' \in \pr_X(T)$ and $V_y$ is disjoint from $V_{y'}$ for distinct element $y,y' \in \pr_Y(T)$. We can then set
\[
U := \coprod_{x\in \pr_X(T)} U_x~, \ V := \coprod_{y\in \pr_Y(T)} V_y~, \ {\rm and} \ S := T ~.
\]
Second, we show that for two objects  $(U',V',S')$ and  $(U'',V'',S'')$ of $(P_{X,Y})_{(T,\ell)}$, there exists an object $(U,V,T)$ of $(P_{X,Y})_{(T,\ell)}$ with morphisms
\[
(U',V',S')\la (U,V,T)\ra (U'',V'',S'')
\]
in $(P_{X,Y})_{(T,\ell)}$. Since $(P_{X,Y})_{(T,\ell)}$ is a poset, these assertions imply that $(P_{X,Y})_{(T,\ell)}$ is cofiltered and thus that its classifying space is contractible.

Set $U= \coprod_{x\in \pr_X(T)} U_x$, $V= \coprod_{y\in \pr_Y(T)} V_y$, and $S = T$ as in the first step, and additionally shrink each $U_x$ and $V_y$ so as to satisfy the containments $U\subset U'\cap U''$ and $V\subset V'\cap V''$.

Note that by construction, $(U,V,T)$ defines an object of $(P_{X,Y})_{(T,\ell)}$. Further, there are containments $U\subset U'$ and $V\subset V'$. The containment $T\subset (U'\times V')_{S'}$ defines a map $T \ra S'$ lying over the map $\pi_0U\times\pi_0V \ra \pi_0U'\times\pi_0V'$.  To establish the existence of a morphism from $(U,V,T)$ to $(U',V',S')$, we must lastly check the condition that for each $\alpha \in  \pi_0 U$ the restriction
\[
T\cap(\{\alpha\}\times\pi_0V)\ra S'
\]
is injective. By construction, the intersection $T\cap(\{\alpha\}\times\pi_0V)$ consists of elements $\{w_1, \ldots, w_k\}$ whose image in $X$ are the same: $\pr_X(w_i) = \pr_X(w_j)$. Since for every $\beta' \in \pi_0 V'$, the projection $T\cap (U' \times V'_{\beta'}) \ra U'$ is injective, consequently at most one of the $w_i \in T\cap(\{\alpha\}\times\pi_0V)$ belong to $T\cap (U' \times V'_{\beta'})$ for any given connected component $V'_{\beta'}\subset V'$. Therefore the map $T\cap(\{\alpha\}\times\pi_0V) \ra \pi_0 U' \times \pi_0 V'$ is injective. As this map lifts to $S'$, therefore the map $T\cap(\{\alpha\}\times\pi_0V) \ra S'$ is injective, as required. $(U,V,T)$ consequently maps to $(U',V',S')$ in $(P_{X,Y})_{(T,\ell)}$ and, by symmetry,  $(U,V,T)$ also maps to $ (U'',V'',S'')$.

\end{proof}

\begin{lemma}\label{lemma.F.computes.II}
There exists a commutative diagram
\[
\xymatrix{
P_{X,Y}\ar[rr]^-F\ar[d]&&\Opens\Bigl( \Conf^Z(X\times Y)\Bigr)\ar[rd]^-{\rm forget}\\
\cP_{X,Y}\ar[r]&\bigl(\Disk_{\n1/X}^{\fr}\times \Disk_{1/Y}^{\fr}\bigr)\underset{\Fin}\times \Ar^{\sf mono}(\Fin)\ar[r]& \Disk^{\fr}_{\n1}\ar[r]^-{\FF_{\n1}(Z)}& \spaces}
\]
where the functor $F: P_{X,Y}\ra \Opens( \Conf^Z(X\times Y))$ is that defined in Proposition~\ref{prop.hypercover.ind}, and the composite functor $\cP_{X,Y}\ra \Spaces$ is that whose colimit defines $\II(\FF_{\n1}(Z))(X,Y)$ according to Definition~\ref{def.II}, .
\end{lemma}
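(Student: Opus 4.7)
The plan is to unpack both composites into $\Spaces$ and exhibit a natural projection that is pointwise a fibration with contractible fibers.

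First, I would compute the lower composite using the explicit formula for the free $\cE_{\n1}$-algebra on a space: evaluated on a finite disjoint union of framed Euclidean $(\n1)$-spaces $W = \bigsqcup_\alpha W_\alpha$, it is the labeled configuration space $\FF_{\n1}(Z)(W) \simeq \Conf^Z(W) \simeq \prod_\alpha \Conf^Z(W_\alpha)$. Thus the lower composite sends $(U,V,S) \in P_{X,Y}$ to $\prod_{s\in S} \Conf^Z(U_{\pr_U(s)})$.

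Second, I would analyze $F(U,V,S)$. A point is a labeled configuration $(T,\ell)$ with $T \subset (U\times V)_S = \bigsqcup_{s\in S} U_{\pr_U(s)} \times V_{\pr_V(s)}$, decomposing as $T = \bigsqcup_s T_s$ with $T_s \subset U_{\pr_U(s)} \times V_{\pr_V(s)}$. Because $S$ is a \emph{subset} of $\pi_0 U \times \pi_0 V$, distinct $s \neq s'$ sharing the same $V$-component automatically project to distinct $U$-components, so the injectivity constraint in the definition of $F(U,V,S)$ reduces to requiring each projection $T_s \to U_{\pr_U(s)}$ to be injective. This yields a natural projection map
\[
\Pi \colon F(U,V,S) ~\longrightarrow~ \prod_{s\in S} \Conf^Z(U_{\pr_U(s)})~,\qquad (T,\ell) \longmapsto \bigl(T_s \to U_{\pr_U(s)},\, \ell|_{T_s}\bigr)_{s\in S}~.
\]
Naturality in $(U,V,S) \in P_{X,Y}$ is then straightforward; the condition in Definition~\ref{def.P} that the restriction $S \cap (\{\alpha\}\times\pi_0 V) \to S'$ be injective for each $\alpha \in \pi_0 U$ is precisely what is needed to ensure that the induced map on products of configuration spaces remains injection-valued, so that the target of $\Pi$ is functorial as described.

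Finally, I would show that $\Pi$ is a weak equivalence by exhibiting it as a fibration with contractible fibers. The fiber over a tuple $(T'_s \subset U_{\pr_U(s)})_{s\in S}$ consists of choices of lifts of each point of $T'_s$ through the projection $U_{\pr_U(s)} \times V_{\pr_V(s)} \to U_{\pr_U(s)}$; this fiber is $\prod_{s\in S} V_{\pr_V(s)}^{T'_s}$, which is contractible because each $V_{\pr_V(s)}$ is a Euclidean space. The main (minor) obstacle will be verifying that $\Pi$ is locally trivial, so that the contractible-fibers argument yields a genuine weak homotopy equivalence; this follows from the explicit product description of $(U\times V)_S$ together with the openness of $F(U,V,S)\subset \Conf^Z(X\times Y)$ established in Proposition~\ref{prop.hypercover.ind}.
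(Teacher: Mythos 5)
Your proposal is correct and follows essentially the same route as the paper: both identify $F(U,V,S)$ with $\prod_{s\in S}\Conf^Z(U_{\pr_U(s)})$ by collapsing the contractible $V$-direction, the paper doing so by first treating the singleton case $S=\{s\}$ and then splitting over $S$, while you package the same idea as a single natural projection $\Pi$ with contractible fibers. Your observation that injectivity of $S\hookrightarrow\pi_0U\times\pi_0V$ lets the injectivity constraint in $F$ split componentwise over $S$ is exactly the content of the paper's second reduction.
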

\begin{proof}
The definitions of the two composite functors of $(\oo,1)$-categories $P_{X,Y}\ra \Spaces$ both come as functors of ordinary categories $P_{X,Y}\ra \Top$, where $\Top$ is the ordinary category of topological spaces. 
To construct a natural equivalence making the above diagram commute, it therefore suffices to construct a natural transformation of functors $P_{X,Y}\ra \Top$ whose value on every object of $P_{X,Y}$ is a homotopy equivalences of topological spaces.
First note that the composite functor
\[
{\rm forget} \circ F: P_{X,Y} \ra \Spaces
\]
sends isotopy equivalence to homotopy equivalences, and so factors through the localization $P_{X,Y}\ra \cP_{X,Y}$. 
It thereby suffices to show the natural equivalence
\[
F(U,V,S) 
\simeq \underset{s\in S}\prod  \FF_{\n1}(Z)(U_{\pr_U(s)})
\]
with the values of the functor $\FF_{\n1}(Z):\Disk^{\fr}_{\n1}\ra \Spaces$ given by the free $\cE_{\n1}$-algebra generated by the space $Z$.

To show this, it further reduces to first showing the special case for $S=\{s\}$ a singleton, of 
\[
F(U,V,\{s\}) \simeq \FF_{\n1}(Z)(U_{\pr_U(s)})
\]
and the second equivalence
\[
F(U,V,S) \simeq \underset{s\in S}\prod F(U_{\pr_U(s)}, V, \{s\})~.
\]
To see the first equivalence, recall from the definition of $F$ that
\[
F(U,V,\{s\}) = 
F(U_{\pr_U(s)}, V_{\pr_V(s)}, \{s\})
\]
\[
=
\Bigl\{
X\times Y\supset T\xra{\ell} Z
\Big| \
T\subset U_{\pr_U(s)}\times V_{\pr_V(s)} \ {\rm and} \ T \xra{\pr}U_{\pr_U(s)} \ {\rm is \ injective}
\Bigr\}~.
\]
Since $V_{\pr_V(s)}\cong \RR^1$ is contractible, the space of $T$ subsets of $U_{\pr_U(s)}\times V_{\pr_V(s)}$ whose projection to $U$ is injective is homotopy equivalent to the space of $T$ subset of $U_{\pr_U(s)}$. Thus:
\[
\Bigl\{
X\times Y\supset T\xra{\ell} Z
\Big| \
T\subset U_{\pr_U(s)}\times V_{\pr_V(s)} \ {\rm and} \ T \xra{\pr}U_{\pr_U(s)} \ {\rm is \ injective}
\Bigr\}
\simeq
\Bigl\{
U_{\pr_U(s)}\supset T\xra{\ell} Z
\Bigr\}
~.
\]
This righthand space is homotopy equivalent to the value $\FF_{\n1}(Z)(U_{\pr_U(s)})$, which establishes the first equivalence.

The second equivalence is immediate from the definition of $F$, that it splits as a product over the set $S$. That is, the projection $T\cap U\times V_\beta\ra U$ is injective for every $\beta \in V_\beta$ if and only if it is true for every component of $U$, that the projection $T\cap U_\alpha \times V_\beta \ra U_\alpha$ is injective. 
Since $T$ is contained in the components of $U \times V$ indexed by the elements of $S$,
this reformulation is true if and only if it is true for each element of $S$, that the projection $T \cap U_{\pr_U(s)}\times V_{\pr_V(s)} \ra U_{\pr_U(s)}$ is injective for each $s\in S$.
\end{proof}

We obtain the following corollary.

\begin{cor}\label{cor.II.free.case}
There natural equivalences:
\[
\FF_n(Z)(X\times Y)
\simeq
\colim\Bigl(P_{X,Y}\xra{F}\spaces\Bigr)
\simeq
\colim\Bigl(\cP_{X,Y}\xra{\FF_{\n1}(Z)}\spaces\Bigr)
=:
\II(\FF_{\n1}(Z))(X,Y)~.
\]
\end{cor}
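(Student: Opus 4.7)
The plan is to assemble this corollary directly from the two preceding results, Proposition~\ref{prop.hypercover.ind} and Lemma~\ref{lemma.F.computes.II}, together with the classical identification of the free $\cE_n$-algebra on $Z$ with $Z$-labeled configuration spaces.

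For the second equivalence, I would first observe that the functor $P_{X,Y} \to \cP_{X,Y}$ is a localization, as established in the excerpt, and in particular is a final functor. Since Lemma~\ref{lemma.F.computes.II} exhibits a natural equivalence between the composite $\mathrm{forget}\circ F \colon P_{X,Y} \to \Spaces$ and the pullback to $P_{X,Y}$ of the diagram $\cP_{X,Y}\xra{\FF_{\n1}(Z)} \Spaces$ whose colimit defines $\II(\FF_{\n1}(Z))(X,Y)$, finality of $P_{X,Y}\to\cP_{X,Y}$ immediately yields
\[
\colim\Bigl(P_{X,Y}\xra{F}\Spaces\Bigr)
~\simeq~
\colim\Bigl(\cP_{X,Y}\xra{\FF_{\n1}(Z)}\Spaces\Bigr)
~=:~
\II(\FF_{\n1}(Z))(X,Y)
~.
\]

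For the first equivalence, Proposition~\ref{prop.hypercover.ind} already computes the left colimit as $\Conf^Z(X\times Y)$. It then remains to identify this with $\FF_n(Z)(X\times Y)$. This is a standard consequence of the fact that the free $\cE_n$-algebra functor $\FF_n \colon \Spaces \to \Alg_n(\Spaces)$ is computed, on a framed $n$-manifold $M \in \Disk^{\fr}_n$, by the labeled configuration space $\FF_n(Z)(M) \simeq \Conf^Z(M)$. Applying this to $M = X\times Y$, which acquires a framing from the framings of $X$ and $Y$ via Dunn--Lurie additivity, gives the desired identification.

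Chained together, these two steps produce the asserted string of natural equivalences. There is no substantive obstacle here: the real content has been packaged into the hypercover argument of Proposition~\ref{prop.hypercover.ind} and the combinatorial matching of Lemma~\ref{lemma.F.computes.II}; the corollary is the formal consequence of combining those inputs with finality of $P_{X,Y}\to\cP_{X,Y}$ and the free-algebra/configuration-space formula.
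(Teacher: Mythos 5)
Your proposal is correct and follows essentially the same route as the paper: the second equivalence via finality of the localization $P_{X,Y}\to\cP_{X,Y}$ combined with the identification of Lemma~\ref{lemma.F.computes.II}, and the first via the hypercover computation of Proposition~\ref{prop.hypercover.ind} together with the standard identification of $\FF_n(Z)$ with $Z$-labeled configuration spaces. The only difference is that you make the $\Conf^Z(X\times Y)\simeq \FF_n(Z)(X\times Y)$ step explicit where the paper leaves it implicit.
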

\begin{proof}
The equivalence $\FF_n(Z)(X\times Y) \simeq \colim(P_{X,Y}\xra{F}\spaces)$ follows from the hypercover condition established in Proposition~\ref{prop.hypercover.ind}. The natural map
\[
\colim\Bigl(P_{X,Y}\xra{F}\spaces\Bigr)
\longrightarrow
\colim\Bigl(\cP_{X,Y}\xra{\FF_{\n1}(Z)}\spaces\Bigr)
\]
is an equivalence by the finality of $P_{X,Y} \ra \cP_{X,Y}$ and the identification
\[
F(U,V,S) 
\simeq \underset{s\in S}\prod  \FF_{\n1}(Z)(U_{\pr_U(s)})
\]
of Lemma~\ref{lemma.F.computes.II}.
\end{proof}

We now assemble the preceding results to establish our main technical result of this section.
\begin{lemma}\label{lemma.II.is.Ind}
The natural transformation $\Ind_{\n1}^n \ra \II$ is an equivalence. In particular, for any $A\in \Alg_{\n1}(\Spaces)$ and any $D\in \Disk^{\fr}_n$ identified as $D\cong X\times Y$ for $(X,Y)\in \Disk^{\fr}_{\n1}\times\Disk^{\fr}_1$, there are equivalences
\[
\Ind_{\n1}^n(A)(D) \simeq
\colim\Bigl(
\cP_{X,Y}\longrightarrow
\bigl(\Disk_{\n1/X}^{\fr}\times \Disk_{1/Y}^{\fr}\bigr)\underset{\Fin}\times \Ar^{\sf mono}(\Fin)
 \longrightarrow
 \Disk^{\fr}_{\n1}
 \xra{A} 
 \Spaces\Bigr)
\]
\[
\simeq
\colim\Bigl(
P_{X,Y}\longrightarrow
\bigl(\disk_{\n1/X}^{\fr}\times \disk_{1/Y}^{\fr}\bigr)\underset{\Fin}\times \Ar^{\sf mono}(\Fin)
 \longrightarrow
 \Disk^{\fr}_{\n1}
 \xra{A} 
 \Spaces\Bigr)
\]
for the value of the functor $\Ind_{\n1}^n(A): \Disk^{\fr}_n\ra \Spaces$ on the $n$-disk $D\cong X\times Y$.
\end{lemma}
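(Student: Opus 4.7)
The plan is to reduce to the case of free $\cE_{\n1}$-algebras via a sifted-colimit argument, and then identify both sides on free algebras by appealing to Corollary~\ref{cor.II.free.case}. First, I would note that $\Ind_{\n1}^n$, being a left adjoint, preserves all colimits, and that $\II$ preserves sifted colimits by Lemma~\ref{lemma.II.sifted.colim}. It is a standard fact that every $\cE_{\n1}$-algebra in $\Spaces$ is canonically a sifted colimit of free $\cE_{\n1}$-algebras (namely, it is the geometric realization of its bar resolution with respect to the free/forgetful adjunction between $\Spaces$ and $\Alg_{\n1}(\Spaces)$). Since the natural transformation $\Ind_{\n1}^n \to \II$ of Corollary~\ref{cor.Ind.to.II} is a morphism of functors that both preserve sifted colimits, it suffices to check that it is an equivalence when evaluated on every free $\cE_{\n1}$-algebra $\FF_{\n1}(Z)$, for $Z$ an arbitrary space.

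Next, I would fix $D\in \Disk^{\fr}_n$ and a decomposition $D\cong X\times Y$ with $(X,Y)\in \Disk^{\fr}_{\n1}\times \Disk^{\fr}_1$. Because induction from $\cE_{\n1}$ to $\cE_n$ is defined as the left Kan extension along the operad map $\cE_{\n1}\to \cE_n$, it commutes with the free algebra functors; concretely, $\Ind_{\n1}^n\bigl(\FF_{\n1}(Z)\bigr) \simeq \FF_n(Z)$, and so its value on $D$ is $\FF_n(Z)(X\times Y)$. On the other hand, Corollary~\ref{cor.II.free.case} supplies a canonical chain of equivalences
\[
\FF_n(Z)(X\times Y)
~\simeq~
\colim\bigl(P_{X,Y}\xra{F}\Spaces\bigr)
~\simeq~
\colim\bigl(\cP_{X,Y}\xra{\FF_{\n1}(Z)}\Spaces\bigr)
~=~
\II\bigl(\FF_{\n1}(Z)\bigr)(X,Y)
~.
\]
What remains is therefore to identify the comparison map $\Ind_{\n1}^n \to \II$, on a free algebra, with this chain of equivalences.

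To do this, I would trace through the definitions. The natural transformation of Observation~\ref{t.100} and Corollary~\ref{cor.Ind.to.II} arises, on generators, by restricting along the standard embedding $\Disk_{\n1}^{\fr}\hookrightarrow \Disk_n^{\fr}$ given by $X\mapsto X\times \RR$. The universal property of free algebras then produces the comparison $\FF_n(Z)(D) \to \II\bigl(\FF_{\n1}(Z)\bigr)(X,Y)$, and one checks it agrees up to homotopy with the inverse of the equivalence furnished by Corollary~\ref{cor.II.free.case} -- essentially because both are induced by the canonical map $\Conf^Z(X\times Y)\to \colim_{P_{X,Y}} F$, which is the hypercover presentation of Proposition~\ref{prop.hypercover.ind}. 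The main obstacle in the argument is this final compatibility check: making precise that the abstractly defined comparison map $\Ind_{\n1}^n \to \II$, when applied to $\FF_{\n1}(Z)$, is homotopic to the geometric equivalence produced by the hypercover $F$. Everything else -- sifted-colimit preservation and free-algebra identification -- is structural and follows immediately from the prior lemmas.
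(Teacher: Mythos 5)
Your proposal follows essentially the same route as the paper: both functors preserve sifted colimits (left adjoint for $\Ind_{\n1}^n$, Lemma~\ref{lemma.II.sifted.colim} for $\II$), every $\cE_{\n1}$-algebra is a sifted colimit of free algebras, and the free case is exactly Corollary~\ref{cor.II.free.case}. The compatibility check you flag as the "main obstacle" — identifying the abstract comparison map with the hypercover equivalence on free algebras — is a real subtlety, but the paper's proof simply asserts that Corollary~\ref{cor.II.free.case} establishes the natural map is an equivalence, so your treatment is if anything slightly more careful than the published one.
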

\begin{proof}
By Corollary~\ref{cor.II.free.case} the natural map is an equivalence for a free $\cE_{\n1}$-algebra $A = \FF_{\n1}(Z)$. The functor $\Ind^n_{\n1}$ preserves sifted colimits, being a left adjoint. The functor $\II$ preserves sifted colimits, by Lemma~\ref{lemma.II.sifted.colim}. Since any $\cE_{\n1}$-algebra is a sifted colimit of free algebras, the equivalence follows for general $A$.
\end{proof}

\section{Operadic induction and projection-immersion tangles}
In this section, we prove that the induction functor $\Alg_{\n1}(\Cat_{(\oo,1)}) \ra \Alg_n(\Cat_{(\oo,1)})$ sends $\Bord_1^{\fr}(\RR^{\n1})$ to $\Bord_1^{\fr}(\RR^n)$:
\[
\Ind_{\n1}^n\Bigl(\Bord_1^{\fr}(\RR^{\n1})^{\wedge}_{\sf unv}\Bigr)
\simeq \Bord_1^{\fr}(\RR^n)^{\wedge}_{\sf unv}~.
\]
We outline our argument:
\begin{enumerate}
\item Induction of spaces of $p$-simplices gives slice-wise projection-embeddings: Corollary~\ref{cor1}.
\item Segaling of slice-wise projection-embeddings gives projection-immersions: Lemma~\ref{lemma.proj.emb.to.proj.imm}.
\item Univalent-completion of projection-immersions gives that of the usual tangle category: Lemma~\ref{lemma.univ.cplt.of.proj.imm}.
\end{enumerate}

By universal properties, the following diagram commutes:
\[
\xymatrix{
\Alg_{\n1}(\Cat_{(\oo,1)})\ar@{^{(}->}[d]_-{\rm forget}\ar[rrrr]^{\Ind^n_{\n1}}&&&&\Alg_n(\Cat_{(\oo,1)})\\
\Alg_{\n1}(\Fun(\bDelta^{\op},\Spaces))\ar[rr]^{\Ind^n_{\n1}}_-{({\rm See~Cor}~\ref{cor1})}&&\Alg_n(\Fun(\bDelta^{\op},\Spaces))\ar[rr]^-{\rm Segaling}_-{({\rm See~Lem}~\ref{lemma.proj.emb.to.proj.imm})}&&\Alg_n(\fCat_{(\oo,1)})\ar[u]^-{(-)^{\wedge}_{\sf unv}}_-{({\rm See~Lem}~\ref{lemma.univ.cplt.of.proj.imm})}
}
\]

\subsection{Slice-wise projection-embedding tangles}

Fix $X\in \Disk_{\n1}^{\fr}$ and $Y\in \Disk_1^{\fr}$ with resulting product $X\times Y\in \Disk_n^{\fr}$. In the particular case of interest $X=\RR^{\n1}$ and $Y=\RR^1$, we further fix the usual identification $\RR^n \cong\RR^{\n1}\times\RR^{\{n\}}$.

\begin{definition}\label{def.a.proj.emb.tangle}
For $W$ a 1-manifold with boundary, $p\geq 0$, an embedding $W\hookrightarrow X\times Y\times\DD[p]$ is a \bit{slice-wise projection-embedding} if:
\begin{itemize}
\item the composite projection $W\ra X\times Y\times\DD[p]\ra \DD[p]$ is transverse to $[p]\subset\DD[p]$; 
\item 
for each connected component $Y_\alpha \subset Y$, 
there exists a finite subset $S\subset Y_\alpha$ for which:
\begin{itemize}
\item $X\times S\times \DD[p]$ is disjoint from $W$; and
\item for each pair of successive elements $s$ and $s'$ of $S\cup\{-\oo,\oo\}$, the projection
\[
W\cap\Bigl(X\times[s,s']\times\DD[p]\Bigr) \longrightarrow X\times\DD[p]
\]
is an embedding.
\end{itemize}
\end{itemize}
The topological space 
\[
\Emb(W, X\times Y\times\DD[p])^{\sf s.pr.emb}_{\pitchfork[p]}
~\subset~
\Emb(W,X\times Y\times\DD[p])
\]
is the topological subspace consisting of slice-wise projection-embedding.
\end{definition}

Observe that the slice-wise projection-embedding condition implies that the composite projection $W\ra X\times Y\times\DD[p] \ra X\times\DD[p]$ is an immersion. 
Therefore, projection onto $X \times \DD[p]$ defines a continuous map
\[
\Emb(W, X\times Y\times\DD[p])^{\sf s.pr.emb}_{\pitchfork[p]}\longrightarrow \Imm(W, X\times\DD[p])
\]
to the topological space of immersions.
This allows for the following notion of a framing on a tangle that satisfies this slice-wise embedding condition.

\begin{definition}
A \bit{framing} of a slice-wise projection-embedding $W\hookrightarrow X\times Y\times\DD[p]$ consists of a nullhomotopy $\varphi$ of the Gauss map ${\sf Gauss}_{(W\ra X\times\DD[p])}: W \ra \Gr_1(n)$ associated to the composite immersion
\[
W ~\hookrightarrow~ X\times Y\times\DD[p] \xra{~\pr~} X\times\DD[p]
\]
and the framing of $X\times\DD[p]$.
\end{definition}

We topologize the set of all slice-wise projection-embeddings in the following way.
\begin{definition}\label{def.proj.emb.tang}
The topological space of framed slice-wise projection-embedding tangles is
\[
\Tang_1^{\fr}(X\times Y\times\DD[p])_{\pitchfork[p]}^{\sf s.pr.emb}
:=
\coprod_{[W]}\Bigl( \Emb\bigl(W,X\times Y\times\DD[p]\bigr)^{\sf s.pr.emb}_{\pitchfork[p]}\underset{\Imm\bigl(W,X\times\DD[p]\bigr)} \times \Imm^{\fr}\bigl(W,X\times\DD[p]\bigr)\Bigr)_{\Aut(W)}
\]
where $\Imm^{\fr}(W,X\times\DD[p])$ is the space of immersions equipped with a trivialization of the Gauss map in the sense of Definition~\ref{def.framed.tangle}.
\end{definition}

The following lemma shows how the homotopy type of the space $\Tang_1^{\fr}(\RR^n\times\DD[p])_{\pitchfork[p]}^{\sf s.pr.emb}$ arises from induction.

\begin{lemma}\label{lemma.hypercover.tang.proj.emb}
There exists a functor
\[
P_{X,Y}\overset{F}\longrightarrow \Opens\Bigl(
\Tang_1^{\fr}\bigl(X\times Y \times\DD[p]\bigr)^{\sf s.pr.emb}_{\pitchfork[p]}
\Bigr)
\]
whose value on an object $(U,V,S)$ is the subspace of tangles $W\subset X\times Y\times\DD[p]$ for which:

\begin{enumerate}
\item $W\subset (U\times V)_S \times\DD[p]$; and
\item for each connected component $V_\beta\subset V$, the projection
\[
W\cap (U\times V_\beta\times\DD[p]) \ra  U \times\DD[p]
\]
is an embedding.
\end{enumerate}
Furthermore, the resulting map between spaces
\[
\colim\Bigl(
P_{X,Y}\xra{F} \Opens\bigl(
\Tang_1^{\fr}\bigl(X\times Y \times\DD[p]\bigr)^{\sf s.pr.emb}_{\pitchfork[p]}\bigr)\ra \Spaces
\Bigr)
\longrightarrow
\Tang_1^{\fr}\bigl(X\times Y \times\DD[p]\bigr)^{\sf s.pr.emb}_{\pitchfork[p]}
\]
is an equivalence.
\end{lemma}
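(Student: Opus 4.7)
The plan is to follow the template of Proposition~\ref{prop.hypercover.ind}, which established the analogous hypercover statement for labeled configurations in place of tangles. First, I would check that the prescribed assignment extends to a functor $F$. Since the target is a poset of opens, this amounts to verifying that $F(U,V,S) \subset F(U',V',S')$ for each morphism $(U,V,S) \to (U',V',S')$ in $P_{X,Y}$. The containment in condition~(1) is immediate from $(U\times V)_S \subset (U'\times V')_{S'}$, and verifying condition~(2) for the codomain follows the same argument as in Proposition~\ref{prop.hypercover.ind}: two distinct points of $W$ with a common projection to $U' \times \DD[p]$ that lie over a common component $V'_{\beta'}$ would lie in a common component $U_\alpha$ of $U$ (since $U \hookrightarrow U'$ is an open embedding), and then condition~(2) for $(U,V,S)$ together with the injectivity clause built into morphisms of $P_{X,Y}$ would force them to coincide.

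Next, I would apply the higher Seifert--van Kampen theorem (Theorem~A.3.1 of~\cite{HA}) to reduce the colimit equivalence to showing that for every framed slice-wise projection-embedding tangle $W \in \Tang_1^{\fr}(X\times Y\times\DD[p])^{\sf s.pr.emb}_{\pitchfork[p]}$, the subposet $(P_{X,Y})_W \subset P_{X,Y}$ consisting of those $(U,V,S)$ with $W \in F(U,V,S)$ has contractible classifying space. This will be accomplished by showing $(P_{X,Y})_W$ is cofiltered, hence has contractible nerve.

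For nonemptiness, I would use the definition of slice-wise projection-embedding directly. For each connected component $Y_\alpha$ of $Y$, Definition~\ref{def.a.proj.emb.tangle} provides a finite set of separators in $Y_\alpha$ whose complementary open intervals yield a candidate $V \subset Y$ such that $W \cap (X \times V_\beta \times \DD[p]) \to X \times \DD[p]$ is an embedding over each component $V_\beta$. Once such a $V$ is fixed, I would choose $U$ as a disjoint union of small Euclidean open neighborhoods in $X$ covering the projection of $W$ to $X$, with each $U_\alpha$ small enough that for each $V_\beta$ the intersection $W \cap (U_\alpha \times V_\beta \times \DD[p])$ is either empty or consists of a single sheet. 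Setting $S$ to be the set of pairs $(\alpha,\beta)$ for which this intersection is nonempty produces the required object of $(P_{X,Y})_W$.

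For cofilteredness, given $(U',V',S'),(U'',V'',S'') \in (P_{X,Y})_W$, I would construct a common refinement $(U,V,S)$ by running the construction above, with the additional constraint that each chosen component of $U$ and $V$ lies in the intersection of a component of the primed and double-primed data. The main obstacle is the injectivity condition in the definition of morphisms of $P_{X,Y}$: for each $\alpha \in \pi_0 U$ and $\beta' \in \pi_0 V'$, at most one component $V_\beta \subset V'_{\beta'}$ may contribute to $S$, and the symmetric statement must hold for $(U'',V'',S'')$. Here condition~(2) for $(U',V',S')$ is essential: since $W$ projects embeddingly over each $V'_{\beta'}$, distinct sheets over a given $U_\alpha$ correspond to distinct strips of $V'_{\beta'}$, and by shrinking $U_\alpha$ further one arranges that it meets at most one such sheet for each such strip. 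The analogous argument for the double-primed data completes the construction of a common refinement, establishes cofilteredness, and hence yields the lemma.
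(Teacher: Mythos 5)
Your architecture — functoriality of $F$ via the injectivity clause in the morphisms of $P_{X,Y}$, then a Seifert--van Kampen reduction to weak contractibility of the subposet $(P_{X,Y})_W$ of objects $(U,V,S)$ with $W\in F(U,V,S)$ — is exactly the paper's, which defers both steps to the configuration-space case (Proposition~\ref{prop.hypercover.ind}). Your functoriality check and your nonemptiness argument are fine. The gap is the cofilteredness claim: for configurations the membership condition is injectivity of $T\cap(U\times V_\beta)\to U$, so two points of $T$ in the same $V'$-band can never share an $X$-coordinate and the points over a small $U_\alpha$ are automatically distinguished by which component of $V'$ and of $V''$ they inhabit. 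For tangles the condition is that $W\cap(U\times V_\beta\times\DD[p])$ embeds into $U\times\DD[p]$, so two sheets in the same $V'$-band may share $X$-coordinates provided they are separated in the $\DD[p]$-direction; no shrinking of $U_\alpha$ separates them, and whether they must lie in distinct components of $V$ depends on which of the two given objects you are mapping to.

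Concretely, take $X=\RR^{\n1}$, $Y=\RR$, $p=1$, and let $W$ be a cap at height $y=1$ over $x\approx 0$ with $t\in[0,0.4]$ together with a cup at height $y=2$ over $x\approx 0$ with $t\in[0.6,1]$; their projections to $X\times\DD[1]$ are disjoint. Let $(U',V',S')$ have $U'=X$ and $V'$ a single interval containing both $y=1$ and $y=2$, and let $(U'',V'',S'')$ have $U''=X$ and $V''$ two intervals separating $y=1$ from $y=2$ (with $S'$, $S''$ the full products). Both lie in $(P_{X,Y})_W$. Any $(U,V,S)\in(P_{X,Y})_W$ must have a component $U_\alpha\ni 0$, and the containment $W\subset(U\times V)_S\times\DD[1]$ forces $(\alpha,\beta_1),(\alpha,\beta_2)\in S$ with $1\in V_{\beta_1}$, $2\in V_{\beta_2}$. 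A morphism to $(U'',V'',S'')$ forces $\beta_1\neq\beta_2$ (no component of $V''$ contains both $1$ and $2$), while a morphism to $(U',V',S')$ forces $\beta_1=\beta_2$, since otherwise both pairs map to the same element of $S'$, violating injectivity of $S\cap(\{\alpha\}\times\pi_0 V)\to S'$. So no common refinement exists and $(P_{X,Y})_W$ is not cofiltered. Weak contractibility must therefore be established by another route — e.g.\ a connectivity-plus-factorization criterion in the style of Lemma~\ref{lemma.claimB}, using that one can always coarsen $V$ toward the slice-wise splitting of $W$ before intersecting — rather than by exhibiting meets.
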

\begin{proof}
As in the proof of Lemma~\ref{prop.hypercover.ind}, since $\Opens\bigl(\Tang_1^{\fr}\bigl(X\times Y \times\DD[p]\bigr)^{\sf s.pr.emb}_{\pitchfork[p]}\bigr)$ is a poset, to establish the existence of the asserted functor $F$ it suffices to show that given a morphism $f=(U\subset U', V\subset V', S\ra S')$ of $P_{X,Y}$, the topological space $F(U,V,S)$ is contained in the topological space $F(U',V',S')$, as subspaces of $\Tang_1^{\fr}\bigl(X\times Y \times\DD[p]\bigr)^{\sf s.pr.emb}_{\pitchfork[p]}$. 
The argument here is now similar to that given in Proposition~\ref{prop.hypercover.ind}.

We now show that $F$ is a hypercovering of the space $\Tang_1^{\fr}\bigl(X\times Y \times\DD[p]\bigr)^{\sf s.pr.emb}_{\pitchfork[p]}$. For $(W,\varphi)\in \Tang_1^{\fr}\bigl(X\times Y \times\DD[p]\bigr)^{\sf s.pr.emb}_{\pitchfork[p]}$, denote by
\[
\bigl(P_{X,Y}\bigr)_{(W,\varphi)}
\]
the subposet of $P_{X,Y}$ consisting of those $(U,V,S)$ for which the topological space $F(U,V,S)$ contains the point $(W,\varphi)$. 
By the Siefert--van Kampen Theorem of~\cite{HA} (specifically Theorem~A.3.1), to check that the colimit of $F$ is $\Tang_1^{\fr}\bigl(X\times Y \times\DD[p]\bigr)^{\sf s.pr.emb}_{\pitchfork[p]}$, it suffices to show the following hypercovering criterion:

\begin{itemize}
\item For every $(W,\varphi)\in \Tang_1^{\fr}\bigl(X\times Y \times\DD[p]\bigr)^{\sf s.pr.emb}_{\pitchfork[p]}$, the classifying space of the poset $(P_{X,Y})_{(W,\varphi)}$ is contractible:
\[
\sB\Bigl(\bigr(P_{X,Y}\bigr)_{(W,\varphi)}\Bigr) \simeq \ast~.
\]
\end{itemize}
The argument here is again similar to that given in Proposition~\ref{prop.hypercover.ind}.
\end{proof}

\begin{lemma}\label{lemma.F.computes.Ind.Tang}
There exists a commutative diagram
\[
\xymatrix{
P_{X,Y}\ar[rr]^-F\ar[d]&&\Opens\Bigl( \Tang_1^{\fr}\bigl(X\times Y \times\DD[p]\bigr)^{\sf s.pr.emb}_{\pitchfork[p]}\Bigr)\ar[rd]^-{\rm forget}\\
\cP_{X,Y}\ar[r]&\bigl(\Disk_{\n1/X}^{\fr}\times \Disk_{1/Y}^{\fr}\bigr)\underset{\Fin}\times \Ar^{\sf mono}(\Fin)\ar[r]& \Disk^{\fr}_{\n1}\ar[r]_-{\Bord_1^{\fr}(\RR^{\n1})[p]}& \spaces}
\]
where the functor $F: P_{X,Y}\ra \Opens(\Tang_1^{\fr}\bigl(X\times Y \times\DD[p]\bigr)^{\sf s.pr.emb}_{\pitchfork[p]})$ is that defined in Lemma~\ref{lemma.hypercover.tang.proj.emb}, and the composite functor $\cP_{X,Y}\ra \Spaces$ is that whose colimit is
\[
\Ind_{\n1}^n\Bigl(\Bord_1^{\fr}(\RR^{\n1})[p]\Bigr)(X\times Y)
\]
according to Definition~\ref{def.II} and Lemma~\ref{lemma.II.is.Ind}.
\end{lemma}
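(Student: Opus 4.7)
The plan is to model the argument closely on Lemma~\ref{lemma.F.computes.II}, replacing the free $\cE_{\n1}$-algebra $\FF_{\n1}(Z)$ by the $\cE_{\n1}$-algebra $\Bord_1^{\fr}(\RR^{\n1})[p] \colon \Disk^{\fr}_{\n1} \to \Spaces$, and replacing configuration spaces by moduli of framed tangles. By Lemma~\ref{lemma.corep.tang} applied to the solidly $n$-framed $[\n1,n]$-manifold $U \times \DD[p]$, there is a natural homotopy equivalence
\[
\Bord_1^{\fr}(\RR^{\n1})[p]\bigl(U\bigr) \;\simeq\; \Tang_1^{\fr}\bigl(U\times\DD[p]\bigr)
\]
for each $U \in \Disk^{\fr}_{\n1}$, and monoidality identifies the composite functor along the bottom of the diagram as the functor
\[
(U,V,S) \;\longmapsto\; \prod_{s\in S} \Tang_1^{\fr}\bigl(U_{\pr_U(s)}\times\DD[p]\bigr)~,
\]
with transition maps induced by inclusion of disks and by restriction along morphisms of $\cP_{X,Y}$ (using the injectivity-on-fibers condition of Definition~\ref{def.P} to get well-defined maps into products).

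First I would observe that the composite $P_{X,Y} \xra{F} \Opens \to \Spaces$ sends isotopy equivalences in $P_{X,Y}$ to homotopy equivalences of topological spaces, so by the localization $P_{X,Y} \to \cP_{X,Y}$ the commutativity assertion reduces to exhibiting a natural homotopy equivalence
\[
F(U,V,S) \;\simeq\; \prod_{s\in S} \Tang_1^{\fr}\bigl(U_{\pr_U(s)}\times\DD[p]\bigr)
\]
compatibly with morphisms in $P_{X,Y}$. As in the proof of Lemma~\ref{lemma.F.computes.II}, I would split this into two subordinate claims:
\begin{enumerate}
\item[(a)] For $S=\{s\}$ a singleton, $F(U,V,\{s\}) \simeq \Tang_1^{\fr}\bigl(U_{\pr_U(s)}\times\DD[p]\bigr)$.
\item[(b)] $F(U,V,S) \simeq \prod_{s\in S} F\bigl(U_{\pr_U(s)}, V_{\pr_V(s)}, \{s\}\bigr)$.
\end{enumerate}
Claim (b) is essentially tautological from the definition of $F(U,V,S)$ in Lemma~\ref{lemma.hypercover.tang.proj.emb}: condition (1) forces $W$ to lie inside $(U\times V)_S \times \DD[p]$, so $W$ decomposes uniquely as a disjoint union $W = \coprod_{s\in S} W_s$ with $W_s \subset U_{\pr_U(s)}\times V_{\pr_V(s)}\times\DD[p]$; because the components $U_{\pr_U(s)}$ are pairwise disjoint and the injectivity-on-fibers condition of $P_{X,Y}$ guarantees that for each $\beta \in \pi_0 V$ the projections of the various $W_{s_i}$ (with $\pr_V(s_i) = \beta$) to $U\times\DD[p]$ have pairwise disjoint images, condition (2) for $W$ is equivalent to condition (2) holding separately for each $W_s$. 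The framings likewise split since the Gauss map of the composite immersion $W \to U\times\DD[p]$ factors through the decomposition.

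For claim (a), the key geometric input is the contractibility of each Euclidean component $V_{\pr_V(s)} \cong \RR$. Composite projection along $V_{\pr_V(s)}$ defines a continuous map
\[
F\bigl(U_{\pr_U(s)}, V_{\pr_V(s)}, \{s\}\bigr) \;\longrightarrow\; \Tang_1^{\fr}\bigl(U_{\pr_U(s)}\times\DD[p]\bigr)
\]
sending a framed slice-wise projection-embedding $W$ to its framed image under projection (which is a genuine tangle, since the projection is an embedding on $W$ by condition~(2), and the framing is defined as a nullhomotopy of the Gauss map of this projection). I would construct an explicit deformation retraction that straight-line contracts the $V_{\pr_V(s)}$-coordinate to zero; because condition~(2) is an open condition that is preserved along the straight-line contraction (the projection remains an embedding throughout since it is an embedding at the end), and because framings are pulled back compatibly, this exhibits the above map as a homotopy equivalence with an explicit section given by the zero-section inclusion $U_{\pr_U(s)}\times\{0\}\times\DD[p] \hookrightarrow U_{\pr_U(s)}\times V_{\pr_V(s)}\times\DD[p]$.

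The main potential obstacle is bookkeeping the functoriality in $P_{X,Y}$: one must verify that the equivalences in (a) and (b) are natural with respect to the morphisms of $P_{X,Y}$, and in particular that the restriction maps in $\Tang_1^{\fr}(U_{\pr_U(s)}\times\DD[p])$ induced by an inclusion $U \hookrightarrow U'$ correspond to the evident inclusion of slice-wise projection-embeddings. This follows from the strict compatibility of the deformation retraction with enlargement of $U$, $V$, and $S$, but must be checked at the level of natural transformations of functors $P_{X,Y} \to \Top$ as in Lemma~\ref{lemma.F.computes.II}, rather than just on objects.
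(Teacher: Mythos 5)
Your proposal is correct and follows essentially the same route as the paper: factor through the localization $P_{X,Y}\to\cP_{X,Y}$, split into the singleton case and the product decomposition over $S$, and for the singleton case exploit the contractibility of the $\RR^1$-coordinate (the paper phrases this as a fibration onto $\Emb(W,\RR^{\n1}\times\DD[p])_{\pitchfork[p]}$ with contractible fiber $\Map(W,\RR^1)$ rather than your straight-line retraction, but these are the same contractibility argument). One minor imprecision: in claim (b) the pairwise disjointness of the images in $U\times\DD[p]$ follows simply from $S$ being a subset of $\pi_0U\times\pi_0V$ (distinct $s$ over the same $V$-component have distinct $U$-components), not from the injectivity-on-fibers condition on morphisms of $P_{X,Y}$.
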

\begin{proof}
As in the proof of Lemma~\ref{lemma.F.computes.II}, the composite functor
\[
{\rm forget} \circ F: P_{X,Y} \ra \Spaces
\]
sends isotopy equivalence to homotopy equivalences, and so factors through the localization $P_{X,Y}\ra \cP_{X,Y}$. It thereby suffices to show the natural equivalence
\[
F(U,V,S) 
\simeq \underset{s\in S}\prod  \Bord_1^{\fr}(U_{\pr_U(s)})[p]
\]
with the values of the functor $\Bord_1^{\fr}(\RR^{\n1})[p]:\Disk^{\fr}_{\n1}\ra \Spaces$. Recall that the value of this functor on $U\in \Disk^{\fr}_{\n1}$ is
\[
\Bord_1^{\fr}(\RR^{\n1})[p](U):= \Bord_1^{\fr}(U)[p]\simeq \Tang^{\fr}_1(U\times\DD[p])_{\pitchfork[p]}~.
\]
Again as in the proof of Lemma~\ref{lemma.F.computes.II}, we first show the equivalence for $S=\{s\}$ a singleton, of 
\[
F(U,V,\{s\}) \simeq \Bord_1^{\fr}(U_{\pr_U(s)})[p]
\]
and the second equivalence
\[
F(U,V,S) \simeq \underset{s\in S}\prod F(U_{\pr_U(s)}, V, \{s\})~.
\]
To see the first equivalence, note from the definition of $F$ that
\[
F(U,V,\{s\}) = 
F(U_{\pr_U(s)}, V_{\pr_V(s)}, \{s\})~.
\]
Given the isomorphisms $U_{\pr_U(s)}\cong\RR^{\n1}$ and $ V_{\pr_V(s)}\cong\RR^1$, it thus suffices to show the equivalence
\[
F(\RR^{\n1},\RR^1, \{s\}) \simeq \Tang^{\fr}_1(\RR^{\n1}\times\DD[p])_{\pitchfork[p]}~.
\]
The value $F(\RR^{\n1},\RR^1, \{s\})$ can be expressed in terms of the topological subspace which we will briefly denote
\[
\Emb\bigl(W,\RR^{\n1}\times \RR^1\times\DD[p]\bigr)^{\sf t.pr.emb}_{\pitchfork[p]}
\subset
\Emb(W,\RR^{\n1}\times \RR^1\times\DD[p])_{\pitchfork[p]}
\]
of total (and {\it not} slice-wise) projection-embeddings: This consists of those embeddings $W\ra \RR^{\n1}\times \RR^1\times\DD[p]$ for which the composite $W\ra \RR^{\n1}\times \RR^1\times\DD[p] \ra \RR^{\n1}\times\DD[p]$ is again an embedding. Direct from the definition of $F$ and Definition~\ref{def.proj.emb.tang}, the value $F(\RR^{\n1},\RR^1, \{s\})$ is equivalent to
\[
\coprod_{[W]}\Bigl( \Emb\bigl(W,\RR^{\n1}\times \RR^1\times\DD[p]\bigr)^{\sf t.pr.emb}_{\pitchfork[p]}\underset{\Emb\bigl(W,\RR^{\n1}\times\DD[p]\bigr)} \times \Emb^{\fr}\bigl(W,\RR^{\n1}\times\DD[p]\bigr)\Bigr)_{\Aut(W)}~.
\]
We can now observe the existence of the fiber square among topological spaces,
\[
\xymatrix{
\Map(W,\RR^1)\ar[r]\ar[d]&\Emb\bigl(W,\RR^{\n1}\times \RR^1\times\DD[p]\bigr)^{\sf t.pr.emb}_{\pitchfork[p]}\ar[d]\\
\bigl\{W\overset{f}\hookrightarrow\RR^{\n1}\times\DD[p]\bigr\}\ar[r]&\Emb\bigl(W,\RR^{\n1}\times\DD[p]\bigr)_{\pitchfork[p]}}
~.
\]
Observe that the right vertical map is a fibration, and therefore this square is a homotopy pullback.
Since $\Map(W,\RR^1)$ is contractible, the the right vertical map is a weak homotopy equivalence. From this, obtain the further equivalences
\[
 \Emb\bigl(W,\RR^{\n1}\times \RR^1\times\DD[p]\bigr)^{\sf t.pr.emb}_{\pitchfork[p]}\underset{\Emb\bigl(W,\RR^{\n1}\times\DD[p]\bigr)} \times \Emb^{\fr}\bigl(W,\RR^{\n1}\times\DD[p]\bigr)
 \simeq
  \Emb^{\fr}\bigl(W,\RR^{\n1}\times\DD[p]\bigr)_{\pitchfork[p]}
\]
and thus
\[
F(\RR^{\n1},\RR^1, \{s\}) \simeq \Tang^{\fr}_1(\RR^{\n1}\times\DD[p])_{\pitchfork[p]}
\]
which proves the first equivalence.

The second equivalence is immediate from the definition of $F$, that it splits as a product over the set $S$. That is, the projection $W\cap U\times V_\beta\ra U$ is an embedding for every $\beta \in V_\beta$ if and only if is true for every component $U_\alpha \subset U$, that the projection $W\cap U_\alpha \times V_\beta \ra U_\alpha$ is an embedding. Since $W\subset U \times V$ is contained in those connected components that are indexed by elements in the subset $S$, this reformulation is true if and only if it is true for each element of $S$, that the projection $W \cap U_{\pr_U(s)}\times V_{\pr_V(s)} \ra U_{\pr_U(s)}$ is an embedding for each $s\in S$.
\end{proof}

Combined with the results of the previous section, Lemma~\ref{lemma.hypercover.tang.proj.emb} and Lemma~\ref{lemma.F.computes.Ind.Tang} have the following consequence.

\begin{cor}\label{cor1}
For each $[p]$, there is a natural equivalence
\[
\Ind_{\n1}^{n}\Bigl( \Bord_1^{\fr}(\RR^{\n1})[p]\Bigr)
\overset{\simeq}\longrightarrow
\Tang_1^{\fr,\pitchfork[p]}(\RR^n\times\DD[p])^{\sf s.pr.emb}
\]
where $\Bord_1^{\fr}(\RR^{\n1})[p]$ is the $\cE_{\n1}$-algebra in spaces given by the space of $p$-simplices of $\Bord_1^{\fr}(\RR^{\n1})$, and $\Ind_{\n1}^{n}$ is the induction functor $\Alg_{\n1}(\spaces)\ra\Alg_n(\spaces)$.
\end{cor}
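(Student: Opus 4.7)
The plan is to string together the three preceding results: Lemma~\ref{lemma.II.is.Ind} (which identifies operadic induction with the explicit colimit $\II$), Lemma~\ref{lemma.F.computes.Ind.Tang} (which identifies that colimit's integrand with the functor $F$), and Lemma~\ref{lemma.hypercover.tang.proj.emb} (which exhibits $F$ as a hypercover of the space of slice-wise projection-embedding framed tangles). First, I would fix the splitting $\RR^n\cong \RR^{\n1}\times \RR^{\{n\}}$ and set $X=\RR^{\n1}$, $Y=\RR^1$, so that we may evaluate the $\cE_n$-algebra $\Ind_{\n1}^n(\Bord_1^{\fr}(\RR^{\n1})[p])$ on $X\times Y\simeq \RR^n$.

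By Lemma~\ref{lemma.II.is.Ind}, this value is naturally equivalent to the colimit
\[
\colim\Bigl(P_{X,Y}\longrightarrow \bigl(\disk^{\fr}_{\n1/X}\times\disk^{\fr}_{1/Y}\bigr)\underset{\Fin}\times\Ar^{\sf mono}(\Fin)\longrightarrow \Disk^{\fr}_{\n1}\xra{\Bord_1^{\fr}(\RR^{\n1})[p]}\Spaces\Bigr).
\]
Lemma~\ref{lemma.F.computes.Ind.Tang} supplies a canonical equivalence between this composite and the functor $P_{X,Y}\xra{F}\Opens\bigl(\Tang_1^{\fr}(X\times Y\times\DD[p])^{\sf s.pr.emb}_{\pitchfork[p]}\bigr)$ followed by the forgetful functor to $\Spaces$. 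Substituting gives an equivalence from $\Ind_{\n1}^n(\Bord_1^{\fr}(\RR^{\n1})[p])(\RR^n)$ to the colimit over $P_{X,Y}$ of $F$.

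By Lemma~\ref{lemma.hypercover.tang.proj.emb}, this latter colimit maps by a canonical equivalence onto $\Tang_1^{\fr}(\RR^n\times\DD[p])^{\sf s.pr.emb}_{\pitchfork[p]}$. Composing the three equivalences yields the natural equivalence claimed in the corollary. Naturality in $[p]$ is automatic: each step is functorial in the simplicial variable, since the functor $F$ of Lemma~\ref{lemma.hypercover.tang.proj.emb} and its identification in Lemma~\ref{lemma.F.computes.Ind.Tang} are constructed uniformly in $[p]$ (the $\DD[p]$ factor enters only as a passive parameter in the open subsets defining $F$), and the formula for $\Ind_{\n1}^n$ is entirely functorial in the underlying $\cE_{\n1}$-algebra.

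There is essentially no obstacle here beyond bookkeeping: all the substantive work—namely, verifying the hypercover property and matching the integrand of the induction colimit with the tangle-open subspaces—has already been carried out in the two preceding lemmas, and Lemma~\ref{lemma.II.is.Ind} provides the abstract input. The only minor point to verify is that the composition of the three equivalences is indeed a map $\Ind_{\n1}^n(\Bord_1^{\fr}(\RR^{\n1})[p])\to \Tang_1^{\fr}(\RR^n\times\DD[p])^{\sf s.pr.emb}_{\pitchfork[p]}$ of $\cE_n$-algebras in $\Spaces$ (rather than only of spaces); this follows because $\Ind^n_{\n1}$ is left adjoint to restriction and the target inherits its $\cE_n$-structure from the slice-wise projection-embedding structure, compatibly with the $(X,Y)$-bivariance implicit in Definition~\ref{def.II}.
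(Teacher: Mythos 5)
Your proposal is correct and is exactly the paper's argument: the paper deduces Corollary~\ref{cor1} by combining Lemma~\ref{lemma.II.is.Ind}, Lemma~\ref{lemma.F.computes.Ind.Tang}, and Lemma~\ref{lemma.hypercover.tang.proj.emb} in precisely the order you describe, and leaves the composition of the three equivalences as immediate. Your additional remarks on naturality in $[p]$ and the $\cE_n$-structure are sound elaborations of points the paper treats as routine.
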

\qed

\begin{definition}\label{def.proj.emb.simplicial.space}
The $\cE_n$-algebra in simplicial spaces
\[
\Tang_1^{\fr}(\RR^n\times\DD[\bullet])^{\sf s.pr.emb}_{\pitchfork[p]}\in \Alg_n(\Fun(\bDelta^{\op},\Spaces))
\]
is the $\cE_n$-induction $\Ind_{\n1}^n
\Bigl(
\Bord_1^{\fr}(\RR^{\n1})
\Bigr)$.
By Corollary~\ref{cor1}, its space of $p$-simplices are the slice-wise projection-embedding tangles in $\RR^n\times\DD[p]$.
\end{definition}

\subsection{Projection-immersion tangles}

\begin{definition}\label{def.proj.imm.emb}
Let $W$ be a 1-manifold with boundary; let $X\in \Disk_{\n1}^{\fr}$ and $Y\in \Disk_1^{\fr}$. The topological space of \bit{projection-immersion embeddings} into $X\times Y\times\DD[p]$ is the topological subspace
\[
\Emb\bigl(W,X\times Y\times\DD[p]\bigr)^{\sf pr.imm}_{\pitchfork[p]} \subset
\Emb\bigl(W,X\times Y\times\DD[p]\bigr)_{\pitchfork[p]}
\]
of boundary-preserving embeddings $W\ra X\times Y\times\DD[p]$ transverse to $[p]\subset \DD[p]$ such that the composite projection
\[
W\ra X\times Y\times\DD[p]\ra X\times\DD[p]
\]
is an immersion.
\end{definition}

Compare the following definition with Definition~\ref{def.proj.emb.tang}.

\begin{definition}\label{def.proj.imm.tang}
Let $X\in \Disk_{\n1}^{\fr}$ and $Y\in \Disk_1^{\fr}$.
The topological space of framed projection-immersion tangles is
\[
\Tang_1^{\fr}(X\times Y\times\DD[p])_{\pitchfork[p]}^{\sf pr.imm}
:=
\coprod_{[W]}\Bigl( \Emb\bigl(W,X\times Y\times\DD[p]\bigr)^{\sf pr.imm}_{\pitchfork[p]}\underset{\Imm\bigl(W,X\times\DD[p]\bigr)} \times \Imm^{\fr}\bigl(W,X\times\DD[p]\bigr)\Bigr)_{\Aut(W)}
\]
where $\Imm^{\fr}(W,X\times\DD[p])$ is the space of immersions equipped with a trivialization of the Gauss map in the sense of Definition~\ref{def.framed.tangle}.
\end{definition}

\begin{observation}
\label{t.102}
Let $X\in \Disk_{\n1}^{\fr}$ and $Y\in \Disk_1^{\fr}$.
The spaces of framed projection-immersion tangles of Definition~\ref{def.proj.imm.tang} organize as a simplicial space that satisfies the Segal condition:
\[
\Tang_1^{\fr}(X\times Y\times\DD[\bullet])_{\pitchfork[\bullet]}^{\sf pr.imm}
~\in~
\fCat_{(\infty,1)}
~\subset~
\Fun(\bDelta^{\op} , \Spaces)
~.
\]

\end{observation}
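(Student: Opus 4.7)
The plan is to verify the Segal condition explicitly for each $p\geq 2$: the restriction maps along the interior slices $\{1,\dots,p-1\}\subset \DD[p]$ should induce a homotopy equivalence
\[
\Tang_1^{\fr}(X\times Y\times\DD[p])_{\pitchfork[p]}^{\sf pr.imm}
~\xrightarrow{~\simeq~}~
\Tang_1^{\fr}\bigl(X\times Y\times\DD[\{0{<}1\}]\bigr)_{\pitchfork}^{\sf pr.imm}
\underset{\obj}\times\cdots\underset{\obj}\times
\Tang_1^{\fr}\bigl(X\times Y\times\DD[\{p{-}1{<}p\}]\bigr)_{\pitchfork}^{\sf pr.imm}
~,
\]
where the fiber products are taken over the spaces of $0$-simplices $\Tang_0^{\fr}(X\times Y)$ (i.e., finite framed configurations in $X\times Y$). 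Following the definition of $\Tang_1^{\fr}(\cdot)^{\sf pr.imm}$ in Definition~\ref{def.proj.imm.tang} as a quotient of a fiber product of an embedding space over an immersion space paired with framed immersions, I would proceed in three steps.

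\textbf{Step 1 (transverse embedding gluing).} First I would check that the underlying simplicial space $\Tang_1^{\fr}(X\times Y\times\DD[\bullet])_{\pitchfork[\bullet]}$ of transverse tangles, disregarding the projection-immersion constraint, satisfies the Segal condition. The transversality of $W \subset X\times Y\times\DD[p]$ to each slice $X\times Y\times\{i\}$ ensures that each piece $W_i := W\cap (X\times Y\times\DD[\{i{-}1{<}i\}])$ is a compact smooth $1$-manifold with corners whose boundary agrees with $W_{i-1}$ along $X\times Y\times\{i\}$; conversely any collection of such transverse pieces with matching boundary data glues uniquely to a smooth transverse tangle in $X\times Y\times\DD[p]$. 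The corresponding statement at the level of embedding spaces is standard and follows from the parameterized isotopy extension theorem.

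\textbf{Step 2 (locality of projection-immersion).} Next I would check that the projection-immersion condition of Definition~\ref{def.proj.imm.emb} is pointwise on $W$: the composite $W\to X\times\DD[p]$ is an immersion if and only if the differential of the projection restricted to $T_w W$ is injective at every $w\in W$. This condition thus holds on $W$ if and only if it holds on each $W_i$. At an interior boundary point $w\in W\cap (X\times Y\times\{i\})$, transversality $\pitchfork[p]$ guarantees that $T_wW$ is not contained in the slice direction, so the immersion condition at $w$ is detected by its value on either adjacent slab, and no additional gluing data is required.

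\textbf{Step 3 (gluing the framing).} Finally I would address the framing. A framing is a nullhomotopy $\varphi$ of the Gauss map of the composite immersion $W\to X\times\DD[p]$, taking values in $\Gr_1(n)$. Since $\{W_i\}$ forms a closed cover of $W$ with pairwise intersections $W\cap (X\times Y\times\{i\})$, the sheaf property for $\Map(-,\Gr_1(n))$ and for the path space of that mapping space yields that such a nullhomotopy is equivalent to a compatible family of nullhomotopies on each $W_i$ that agree on intersections. Combined with the fiber-product structure of Definition~\ref{def.proj.imm.tang}, this produces the desired Segal decomposition.

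\textbf{Main obstacle.} The substantive issue is not the topological gluing, which is standard given transversality, but rather the fact that the projection-immersion condition is a \emph{closed} condition on embedding spaces, so one must ensure that compatible projection-immersion data on the pieces $W_i$ actually yields a projection-immersion at the shared boundary slices. Step~2 is precisely what allows this, and it rests crucially on the transversality assumption $\pitchfork[p]$; without it, the projection could fail to be an immersion at points where $W$ becomes tangent to a slice.
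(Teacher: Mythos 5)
The paper records this statement as an Observation with no proof, so there is no argument of the authors' to compare against; your direct verification is surely the intended justification, and its architecture is right. Steps 2 and 3 in particular are correct as stated: being an immersion is a pointwise condition on the injectivity of $d\pr$ restricted to $T_wW$, which does not depend on which slab one views $w$ as belonging to, and nullhomotopies of the Gauss map $W \to \Gr_1(n)$ glue along the closed cover $\{W_i\}$ because the intersections $W_i \cap W_{i+1}$ are finite and the inclusions $W_i\cap W_{i+1}\hookrightarrow W_i$ are cofibrations, so the relevant mapping/path spaces satisfy descent.

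The one claim that is not literally true is in Step 1: a tuple of smooth transverse tangles in the slabs whose intersections with the interior slices agree does \emph{not} ``glue uniquely to a smooth transverse tangle.'' The pieces may meet a slice with different tangent directions, producing a corner, so the Segal (restriction) map is not even surjective onto the strict fiber product. What you need instead is that restriction is a \emph{weak equivalence} onto the strict fiber product, which in turn computes the homotopy fiber product because the face maps to the slice data are fibrations (parameterized isotopy extension). The standard repair uses transversality twice more: near each interior slice a transverse tangle is a graph over the $\DD[p]$-coordinate, so one can canonically isotope it to be cylindrical (a product with the collar coordinate) in a small collar of each slice; the subspace of cylindrical-near-the-slices tuples is a deformation retract of the full strict fiber product, such tuples do glue smoothly and uniquely, and the same retraction applied to global tangles identifies the two sides up to weak homotopy equivalence. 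This straightening step is routine, but it is where the content of the Observation lives, so it should be stated as a weak-equivalence argument rather than asserted as a unique smooth gluing. With that correction your proof is complete.
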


\begin{lemma}\label{lemma.proj.emb.to.proj.imm}
The canonical map
\[
\Seg
\Bigl(
\Tang_1^{\fr}\bigl(\RR^n \times\DD[\bullet]\bigr)^{\sf s.pr.emb}_{\pitchfork[\bullet]}
\Bigr)[p]
\longrightarrow
\Tang_1^{\fr}(\RR^n\times\DD[p])_{\pitchfork[p]}^{\sf pr.imm}
\]
is an equivalence, 
and moreover identifies
$\Tang_1^{\fr}(\RR^n\times\DD[p])_{\pitchfork[\bullet]}^{\sf pr.imm}$
as the Segal-completion of 
$
\Tang_1^{\fr}\bigl(\RR^n \times\DD[\bullet]\bigr)^{\sf s.pr.emb}_{\pitchfork[\bullet]}
$.
Here $\Tang_1^{\fr}\bigl(\RR^n \times\DD[\bullet]\bigr)^{\sf s.pr.emb}_{\pitchfork[\bullet]}$ is the $\cE_n$-algebra in $\Fun(\bDelta^{\op},\Spaces)$ whose space of $p$-simplices is the space $\Tang_1^{\fr}\bigl(\RR^n \times\DD[p]\bigr)^{\sf s.pr.emb}_{\pitchfork[p]}$ as in Definition~\ref{def.proj.emb.simplicial.space}; $\Seg$ is as in Definition~\ref{def.Seg}.

\end{lemma}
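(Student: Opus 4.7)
The plan is to closely parallel the proof of Lemma~\ref{lemma.main.basecase.n2}, using Lemma~\ref{lemma.Seg.universal} as the organizing principle. Since $\Tang_1^{\fr}(\RR^n\times\DD[\bullet])_{\pitchfork[\bullet]}^{\sf pr.imm}$ is a Segal simplicial space (Observation~\ref{t.102}), Lemma~\ref{lemma.Seg.universal} guarantees that, once the stated map is an equivalence on $p$-simplices, it automatically identifies the target as the universal Segal object under $\Tang_1^{\fr}(\RR^n\times\DD[\bullet])_{\pitchfork[\bullet]}^{\sf s.pr.emb}$. Thus the content reduces to exhibiting a natural weak equivalence between $\Seg\bigl(\Tang_1^{\fr}(\RR^n\times\DD[\bullet])_{\pitchfork[\bullet]}^{\sf s.pr.emb}\bigr)[p]$ and $\Tang_1^{\fr}(\RR^n\times\DD[p])_{\pitchfork[p]}^{\sf pr.imm}$.

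To compute the left-hand side I would unravel $\Seg$ using Observation~\ref{obs.Segal.basecase.contact}: the value at $[p]^{\circ}$ is a colimit over $\TwAro(\bDelta^{\op})_{[p]^\circ/}$, whose terms are iterated fiber products of slice-wise projection-embedding tangles in $\RR^n \times \DD[a_i]$ for sub-intervals of $\DD[p]$. Exactly as in the proof of Lemma~\ref{lemma.main.basecase.n2}, Lemma~\ref{localize.D.Delta} lets me reindex this colimit along the localization $\TwAr(\sD_I)^{\op}_{/P} \to \TwAr(\bDelta^{\op})^{\op}_{/[p]^\circ}$, for $I$ a closed interval and $P \in \sD_I$ with $\pi_0(I\smallsetminus P)=[p]$; after the $\sL$ and ${\sf LIM}$ operations the functor sends $(P \la A \ra B)$ to the fiber product, over the components of $B$, of spaces $\Tang_1^{\fr}(\RR^n\times \widehat B_k)_{\pitchfork A_k}^{\sf s.pr.emb}$.

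The geometric heart is then a hypercover argument. I would define a functor $\fU\colon \TwAro(\sD_I)_{/P} \to \Opens\bigl(\Tang_1^{\fr}(\RR^n\times \DD[p])_{\pitchfork[p]}^{\sf pr.imm}\bigr)$ by sending $(P \la A \ra B)$ to the open subspace of $(W,\varphi)$ such that the critical values of $W \to [0,1]$ lie in $\iota A$ and, over each component of $B$, the tangle $W$ admits a finite subset $S\subset \RR^{\{n\}}$ away from which each slab projects to an embedding into $\RR^{\n1}\times \DD[p]$ (i.e., $W$ is \emph{slice-wise projection-embedding} on that component, in the sense of Definition~\ref{def.a.proj.emb.tangle}). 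The key geometric lemma is that every framed projection-immersion tangle $W$ admits such a decomposition: because the composite $W \to \RR^{\n1}\times\DD[p]$ is a proper immersion, compactness of $W$ produces finitely many $\RR^{\{n\}}$-coordinates achieving self-intersections in projection, and any finite refinement of $I$ that separates these values yields a slicing. Moreover any two such slicings admit a common refinement, giving the category $\bigl(\TwAro(\sD_I)_{/P}\bigr)_{(W,\varphi)}$ the cofiltered-style structure used in Lemma~\ref{lemma.claimB}, so its classifying space is contractible. By Theorem~A.3.1 of~\cite{HA}, this identifies $\Tang_1^{\fr}(\RR^n\times\DD[p])^{\sf pr.imm}_{\pitchfork[p]}$ with the colimit of $\fU$.

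Finally, on each $(P \la A\ra B)$ the value $\fU_{A\ra B}$ must be matched, by an argument parallel to Lemma~\ref{lemma.claimC} and Lemma~\ref{lemma.claimCprime}, with the iterated fiber product appearing in the $\Seg$-formula: the connected components of $B$ give a decomposition of $\fU_{A\ra B}$ as a fiber product over the codimension-one walls, and on each component the slicing datum precisely realizes a slice-wise projection-embedding tangle in the corresponding slab. Compiling these identifications produces the asserted equivalence. The main obstacle I anticipate is the bookkeeping around framings and isotopy extension to promote the pointwise existence of slicings into an open hypercover, together with verifying that the parameterized fibrations $\Emb(\cdot)^{\sf s.pr.emb}_{\pitchfork} \hookrightarrow \Imm(\cdot)$ behave well enough that the point-set fiber products computing $\fU_{A\ra B}$ coincide with the homotopy fiber products appearing on the $\Seg$ side.
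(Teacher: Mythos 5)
Your proposal follows essentially the same route as the paper's proof: reduce to an equivalence on $p$-simplices via Observation~\ref{t.102} and Lemma~\ref{lemma.Seg.universal}, reindex the $\Seg$-colimit along the localization of Lemma~\ref{localize.D.Delta}, define an open cover of $\Tang_1^{\fr}(\RR^n\times\DD[p])^{\sf pr.imm}_{\pitchfork[p]}$ indexed by $\TwAro(\sD_I)_{/P}$ whose value on $(P\la A\ra B)$ records tangles that become slice-wise projection-embedding over each component of $B$, verify contractibility of the cover categories by a compactness/refinement argument and apply Seifert--van Kampen, then match the cover's values with the iterated fiber products in the $\Seg$-formula using parametrized isotopy extension. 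This is exactly the paper's argument (its functor $\fZ$ and Lemmas~\ref{lemma.hypercover.proj.imm.by.proj.emb} and~\ref{lemma.U.is.F}), so no further comparison is needed.
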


\begin{proof}
This proof relies on Lemmas~\ref{lemma.hypercover.proj.imm.by.proj.emb} and~\ref{lemma.U.is.F}, which are stated and proved below.

Consider the functor $F$ defined as the composite in the following diagram:
\[
\xymatrix{
\TwAro(\bDelta^{\op})_{/[p]^\circ}\ar[d]\ar[rrrrr]^-F&&&&&\spaces\\
\TwAro(\bDelta^{\op})\ar[r]^-S&\Morita[\bDelta^{\op}]\ar[rrr]^-{\Tang_1^{\fr}\bigl(\RR^n \times\DD[\bullet]\bigr)^{\sf s.pr.emb}_{\pitchfork[\bullet]}}
&&&\cM[\Spaces]\ar[r]^-\sL&\Morita[\Spaces]\ar[u]^-{\sf LIM}
.
}
\]
To prove the claimed equivalence, we show that the colimit of the composite functor $F$ gives the $p$-simplices $\Tang_1^{\fr}\bigl(\RR^n \times\DD[p]\bigr)^{\sf pr.imm}_{\pitchfork[p]}$. That is, by construction of the functor $\Seg$, it will imply the claimed equivalence if the map
\[
\colim\Bigl(
\TwAro(\bDelta^{\op})_{/[p]^\circ}\xra{F}\Spaces
\Bigr)
\longrightarrow
\Tang_1^{\fr}\bigl(\RR^n \times\DD[p]\bigr)^{\sf pr.imm}_{\pitchfork[p]}
\]
is an equivalence: 
By Observation~\ref{t.102}, this result implies that $\Seg\bigl(\Tang_1^{\fr}\bigl(\RR^n \times\DD[\bullet]\bigr)^{\sf s.pr.emb}_{\pitchfork[\bullet]}\bigr)$ satisfies the Segal condition, and then by Lemma~\ref{lemma.Seg.universal} we conclude that $\Seg\bigl(\Tang_1^{\fr}\bigl(\RR^n \times\DD[\bullet]\bigr)^{\sf s.pr.emb}_{\pitchfork[\bullet]}\bigr)$ is the Segal-completion of the simplicial space $\Tang_1^{\fr}\bigl(\RR^n \times\DD[\bullet]\bigr)^{\sf s.pr.emb}_{\pitchfork[\bullet]}$.
We will compute this colimit after restricting along a final functor. 
Consider a closed interval $I$, and fix an object $P\in \sD_{\DD[p]}$ with $\pi_0(\DD[p]\smallsetminus P) = [p]$. Let 
\[
\Tang_1^{\fr}\bigl(\RR^n \times\DD[p]\bigr)^{\sf pr.imm}_{P}
~\subset~
\Tang_1^{\fr}\bigl(\RR^n \times\DD[p]\bigr)^{\sf pr.imm}
\]
denote the topological subspace of framed tangles $W \subset \RR^n \times\DD[p]$ such that the critical values of the projection $W\subset \RR^n\times\DD[p]\ra \DD[p]$ are contained in $P$.
Note the immediate consequence of isotopy extension that	
\[
\Tang_1^{\fr}\bigl(\RR^n \times\DD[p]\bigr)^{\sf pr.imm}_{P}
\simeq 
\Tang_1^{\fr}\bigl(\RR^n \times\DD[p]\bigr)^{\sf pr.imm}_{\pitchfork[p]}~.
\]
From Lemma~\ref{localize.D.Delta}, the colimit of the functor $F$ can be computed after restriction along
\[
\TwAr(\sD_{\DD[p]})^{\op}_{/P} \ra \TwAr(\bDelta^{\op})^{\op}_{/[p]^\circ}~.
\]
We are thereby reduced to proving the map
\[
\colim\Bigl(
\TwAr(\sD_{\DD[p]})^{\op}_{/P}\xra{F}\Spaces
\Bigr)
\longrightarrow
\Tang_1^{\fr}\bigl(\RR^n \times\DD[p]\bigr)^{\sf pr.imm}_{P}
\]
is an equivalence,
where $F$ is the restriction of $F$ to $\TwAr(\sD_{\DD[p]})^{\op}_{/P}$.

We construct a functor
\[
\fZ 
\colon
\TwAr(\sD_{\DD[p]})^{\op}_{/P}
\longrightarrow
\Opens\Bigl(
\Tang_1^{\fr}\bigl(\RR^n \times\DD[p]\bigr)^{\sf pr.imm}_{P}
\Bigr)
\]
assigning to an object $P\la A\ra B$ an open subspace $\fZ_{A\ra B}$ of $\Tang_1^{\fr}\bigl(\RR^n \times\DD[p]\bigr)^{\sf pr.imm}_{P}$ of those $W$ such that:
\begin{itemize}
\item The critical values of the composite projection $W\ra \DD[p]$ are contained in $A$. That is, $W$ is contained in the subspace
\[
\Tang_1^{\fr}\bigl(\RR^n \times\DD[p]\bigr)^{\sf pr.imm}_{A}
\subset
\Tang_1^{\fr}\bigl(\RR^n \times\DD[p]\bigr)^{\sf pr.imm}_{P}~.
\]
\item 
For each pair of consecutive components of $[0,1]\smallsetminus B$, there exists elements $t, t'$ of these components
such that the intersection
\[
W\cap \bigl( \RR^n\times [t,t']\bigr)
\]
is a slice-wise projection-embedding tangle in $\RR^n\times[t,t']$. 

\end{itemize}

We now prove:

\begin{enumerate}
\item Lemma~\ref{lemma.hypercover.proj.imm.by.proj.emb}: For any $W\in \Tang_1^{\fr}\bigl(\RR^n \times\DD[p]\bigr)^{\sf pr.imm}_{P}$, the subcategory
\[
\Bigl(\TwAro(\sD_{[0,1]})_{/P}\Bigr)_W
\subset
\TwAro(\sD_{[0,1]})_{/P}
\]
of $P\la A\ra B$ for which there is containment $W\in \fZ_{A\ra B}$ has contractible classifying space. Consequently, applying the higher Seifert--van Kampen theorem of \cite{HA} (specifically Theorem~A.3.1), there is a natural homotopy equivalence
\[
\Tang_1^{\fr}\bigl(\RR^n \times\DD[p]\bigr)^{\sf pr.imm}_{P}\simeq
\colim\Bigl(
\TwAr(\sD_{\DD[p]})^{\op}_{/P}
\xra{\fZ}
\Opens\bigl(\Tang_1^{\fr}\bigl(\RR^n \times\DD[p]\bigr)^{\sf pr.imm}_{P}\bigr)
\xra{\rm forget}
\Spaces
\Bigr)
\]
\item 
Lemma~\ref{lemma.U.is.F}: There is a natural equivalence
\[
\fZ_{A\ra B} \simeq F(P\la A \ra B)
~.
\]
 \end{enumerate}

The above results then imply the equivalence
\[
\colim\Bigl(\TwAr(\sD_{\DD[p]})^{\op}_{/P}\xra{F}\Spaces\Bigr)\simeq \Tang_1^{\fr}\bigl(\RR^n \times\DD[p]\bigr)^{\sf pr.imm}_{\pitchfork[p]}
\]
which completes the proof.
\end{proof}

The following lemmas were used in the preceding proof, of Lemma~\ref{lemma.proj.emb.to.proj.imm}.

\begin{lemma}\label{lemma.hypercover.proj.imm.by.proj.emb}

The canonical map between spaces
\[
\colim\Bigl(
\TwAr(\sD_{\DD[p]})^{\op}_{/P}
\xra{\fZ}
\Opens\bigl(\Tang_1^{\fr}\bigl(\RR^n \times\DD[p]\bigr)^{\sf pr.imm}_{P}\bigr)
\xra{\rm forget}
\Spaces
\Bigr)
\xra{~\simeq~}
\Tang_1^{\fr}\bigl(\RR^n \times\DD[p]\bigr)^{\sf pr.imm}_{P}
\]
is an equivalence.
\end{lemma}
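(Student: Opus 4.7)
The plan is to apply Lurie's higher Seifert--van Kampen theorem (Theorem~A.3.1 of~\cite{HA}) to reduce the claim to a pointwise assertion: for each framed projection-immersion tangle $W \in \Tang_1^{\fr}(\RR^n \times \DD[p])^{\sf pr.imm}_P$, the full subcategory
\[
\Bigl(\TwAro(\sD_{\DD[p]})_{/P}\Bigr)_W ~\subset~ \TwAro(\sD_{\DD[p]})_{/P}
\]
consisting of objects $(P \leftarrow A \to B)$ with $W \in \fZ_{A \to B}$ has contractible classifying space. The structure of the argument will follow Lemma~\ref{lemma.claimB} closely; the essential new ingredient is that the projection-immersion hypothesis supplies local slice-wise projection-embedding decompositions of $W$.

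First I would verify that $\fZ$ is well-defined as a functor, which amounts to the monotonicity statement: for each morphism $(A \to B) \to (A' \to B')$ in $\TwAro(\sD_{\DD[p]})_{/P}$ (equivalently, $A \subset A' \subset P$ and $B' \subset B$) one has $\fZ_{A \to B} \subset \fZ_{A' \to B'}$. The critical-values condition passes from $A$ to $A' \supset A$. For the slicing condition, given consecutive components $C_1', C_2'$ of $\DD[p] \setminus B'$, I would select the last component $C_1 \subset C_1'$ of $\DD[p] \setminus B$ and the first $C_2 \subset C_2'$; these are consecutive in $\DD[p] \setminus B$, so the $B$-slicing condition produces $t \in C_1 \subset C_1'$ and $t' \in C_2 \subset C_2'$ realizing $W \cap \RR^n \times [t,t']$ as a slice-wise projection-embedding tangle, as required.

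Next, for nonemptyness: since the composite $W \to \RR^{n-1} \times \DD[p]$ is an immersion between equidimensional smooth manifolds, the local form of immersions together with compactness of $W$ gives, for each $x \in \DD[p]$, a closed interval neighborhood $D_x \subset \DD[p]$ and a finite $S_x \subset \RR^{\{n\}}$ such that for every successive pair $s, s' \in S_x \cup \{\pm \infty\}$ the projection $W \cap \RR^{n-1} \times [s,s'] \times D_x \to \RR^{n-1} \times D_x$ is an embedding; in particular $W \cap \RR^n \times D_x$ is slice-wise projection-embedding over $D_x$. By compactness, select a finite subcover $\{\overset{\circ} D_{x_1}, \ldots, \overset{\circ} D_{x_m}\}$ of $\DD[p]$, and by Sard pick a regular value $y_i$ of $W \to \DD[p]$ in each consecutive overlap $\overset{\circ} D_{x_i} \cap \overset{\circ} D_{x_{i+1}}$. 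Setting $B := \DD[p] \setminus \{y_1, \ldots, y_{m-1}\}$ and $A \subset P$ any element of $\sD_{\DD[p]}$ containing the critical values of $W \to \DD[p]$ produces an object $(P \leftarrow A \to B) \in \bigl(\TwAro(\sD_{\DD[p]})_{/P}\bigr)_W$.

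Finally, contractibility follows exactly as in Lemma~\ref{lemma.claimB}. Given two objects $(P \leftarrow A \to B)$ and $(P \leftarrow A' \to B')$ in $\bigl(\TwAro(\sD_{\DD[p]})_{/P}\bigr)_W$, the apex $(P \leftarrow A \cap A' \to B \cap B')$ also lies therein -- critical values lie in $A \cap A'$, and by the variance step the slicing condition for $B$ implies that for $B \cap B' \subset B$ -- and is connected to each by a zigzag of morphisms. More generally, any functor $G \colon \cK \to \bigl(\TwAro(\sD_{\DD[p]})_{/P}\bigr)_W$ from a connected poset factors through $\ov{\cK} := \ast \underset{\cK \times \{-\}}\coprod \cK \times \{- \to 0 \leftarrow +\}$ (which has contractible classifying space) by sending $(k,0)$ to $\bigl(P \leftarrow \underset{x \in \cK}\bigcap \ev_s(G(x)) \to \underset{x \in \cK}\bigcap \ev_t(G(x))\bigr)$. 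The main obstacle is the nonemptyness step, where the projection-immersion hypothesis enters through the local form of immersions; the remaining verifications are formal analogues of the corresponding steps in Lemma~\ref{lemma.claimB}.
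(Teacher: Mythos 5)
Your proof follows the paper's argument for this lemma essentially verbatim: the same reduction via the higher Seifert--van Kampen theorem to the contractibility of the classifying space of $\bigl(\TwAro(\sD_{\DD[p]})_{/P}\bigr)_W$, the same nonemptiness construction using the local structure of the immersion $W\to\RR^{\n1}\times\DD[p]$ together with compactness of $W$ and $\DD[p]$ and a choice of regular values in the overlaps, and the same contractibility argument via intersections $A\cap A'$, $B\cap B'$ and the factorization through $\ov{\cK}$. The only addition is your explicit verification that $\fZ$ is monotone, which the paper takes as given at this point (having constructed $\fZ$ in the proof of Lemma~\ref{lemma.proj.emb.to.proj.imm}); this supplement is reasonable and does not alter the substance of the argument.
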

\begin{proof}
The statement is a direct application of the higher Seifert--van Kampen theorem of \cite{HA} (specifically Theorem~A.3.1).
For this theorem to apply, we must verify the following.
\begin{itemize}
\item[]
For any $W\in \Tang_1^{\fr}\bigl(\RR^n \times\DD[p]\bigr)^{\sf pr.imm}_{P}$, the full subcategory
\[
\Bigl(\TwAro(\sD_{[0,1]})_{/P}\Bigr)_W
\subset
\TwAro(\sD_{[0,1]})_{/P}
~,
\]
consisting of those $P\la A\ra B$ for which there is containment $W\in \fZ_{A\ra B}$, has contractible classifying space. 
\end{itemize}

Fix a linear identification $\DD[p] = [0,p]$. 
First, we prove that for any such $W$, the category $\bigl(\TwAro(\sD_{[0,p]})_{/P}\bigr)_W$ is nonempty. To do so, it suffices to show that there exists $B\in \sD_{[0,p]}$ such that:
\begin{itemize}
\item $B$ contains the critical values of the projection $W \ra [0,p]$.
\item For any two points $t$ and $t'$ in consecutive components of $[0,p] \smallsetminus B$ and such that $[t,t']$ contains $B_0$, there exists a finite subset $S\subset \RR^{\{n\}}$ for which:
\begin{itemize}
\item $\RR^{\n1}\times S\times [t,t']$ is disjoint from $W$; and
\item for every pair of successive elements $s$ and $s'$ of $S\cup\{-\oo,\oo\}$, the projection
\[
W\cap\Bigl(\RR^{\n1}\times[s,s']\times  [t,t']\Bigr) \longrightarrow \RR^{\n1}\times  [t,t']
\]
is an embedding.
\end{itemize}
\end{itemize}

Since the projection $W \subset \RR^n \times \DD[p] \to \RR^{\n1} \times \DD[p]$ is assumed to be an immersion,
compactness of $W$ ensures,
for each element $x\in [0,p]$, there exists a closed neighborhood $D'_x$ such that the immersion
\[
W\cap(\RR^n\times D'_x) \longrightarrow \RR^{\n1}\times D'_x
\]
is an embedding when restricted to each connected component of $W\cap(\RR^n\times D'_x)$. Likewise, for each element $x\in [0,p]$, there exists a closed neighborhood $D''_x$ such that the connected components of $W\cap(\RR^n\times D''_x)$ are split in the $n$th coordinate: That is, there exists $S\subset \RR^{\{n\}}$ such that $W$ is disjoint from $\RR^{\n1}\times S\times D''_x$ and for each pair of successive elements $s$ and $s'$ of $S$, the intersection of $W$ with $\RR^{\n1}\times[s,s']\times D''_x$ is either empty or connected. We now set $D_x := D'_x \cap D''_x$.

By compactness of $[0,p]$, there exists a minimal finite set $T$ of $x\in [0,p]$ for which the interiors $D_x^\circ$ cover the interval $(0,p)$.  For each pair of consecutive elements $x,x'$ of $T$, choose an element $y \in D_x\cap D_{x'}$ which is also a regular value of the composite projection $W\subset \RR^n\times[0,p] \ra [0,p]$, and set $Q$ to be this finite set of elements $y$. Define $B$ to be the complement, $B:= [0,p] \smallsetminus Q$. By construction $B$ satisfies the bulleted conditions, which implies that $(\TwAro(\sD_{[0,p]})_{/P})_W$ is nonempty.

Second, we now prove $\bigl(\TwAro(\sD_{[0,p]})_{/P}\bigr)_W$ has contractible classifying space. We use the following essential property: For any pair of objects $P\la A \ra B$ and $P\la A' \ra B'$ of $\bigl(\TwAro(\sD_{[0,p]})_{/P}\bigr)_W$, observe that both $P\la A\cap A'\ra B$ and $P\ra A\cap A'\ra B\cap B'$ form objects of
$\bigl(\TwAro(\sD_{[0,p]})_{/P}\bigr)_W$. The verification that $\bigl(\TwAro(\sD_{[0,p]})_{/P}\bigr)_W$ has contractible classifying space is now identical to the proof in Lemma~\ref{lemma.claimB}, by checking the condition that for every functor $G:\cK \ra \bigl(\TwAro(\sD_{[0,p]})_{/P}\bigr)_W$ from a poset $\cK$, there exists a factorization
\[
\xymatrix{
\cK\ar[rd]\ar[rr]^-G&&\bigl(\TwAro(\sD_{[0,p]})_{/P}\bigr)_W\\
&\ov{\cK}\ar@{-->}[ur]_-{\ov{G}}
&
.
}
\]
%

\end{proof}

\begin{lemma}\label{lemma.U.is.F}
There is a natural equivalence
\[
\fZ_{A\ra B} \simeq F(P\la A \ra B)
~.
\]
\end{lemma}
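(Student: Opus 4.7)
The plan is to mirror the two-step structure of the proof of Lemma~\ref{lemma.claimC}, with projection-immersion and slice-wise projection-embedding tangles replacing the framings of that argument.

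First I would decompose both sides along the connected components of $B$. Writing $B = \coprod_{0\leq k\leq b} B_k$ as the ordered disjoint union of connected components and setting $\widehat{B_k}$ to be the largest connected sub-interval of $[0,1]$ containing $B_k$ which does not intersect the adjacent components $B_{k-1}$ or $B_{k+1}$, exactly as in the proof of Lemma~\ref{lemma.claimC}, the restrictions of a projection-immersion tangle $W \in \fZ_{A\to B}$ to the various $\RR^n \times \widehat{B_k}$ yield a fiber-product decomposition
\[
\fZ_{A\to B}\;\simeq\;\fZ_{A_0\to B_0}\underset{\fZ_{\emptyset\to C_1}}\times\cdots\underset{\fZ_{\emptyset\to C_b}}\times \fZ_{A_b\to B_b}~,
\]
where $A_k := A \cap B_k$ and $C_k$ is the interior of $\widehat{B_{k-1}} \cap \widehat{B_k}$. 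The parameterized isotopy extension theorem ensures the restriction maps are fibrations, so point-set and homotopy fiber products agree. On the $F$-side, by Observation~\ref{obs.Segal.basecase.contact}, $F(\pi A \to \pi B)$ is already presented as the iterated fiber product of values of $\cC = \Tang_1^{\fr}(\RR^n\times\DD[\bullet])^{\sf s.pr.emb}_{\pitchfork[\bullet]}$ over the subintervals cut out by $\varphi\colon \pi B \to \pi A$. These decompositions are compatible, reducing the claim to the case where $\pi A \to \pi B$ is the active morphism $[a]^\circ \to [1]^\circ$.

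In this reduced case, the fiber product defining $F(\pi A \to \pi B)$ collapses (since $\varphi(0)=0$ and $\varphi(1)=a$) to $\cC([0,a]^\circ) = \Tang_1^{\fr}(\RR^n\times\DD[a])^{\sf s.pr.emb}_{\pitchfork[a]}$, while $\fZ_{A\to B}$ consists of projection-immersion tangles in $\RR^n\times\DD[p]$ with critical values in $A$, all lying in the unique interior component of $B$, which are slice-wise projection-embedding on some interval $[t,t']$ spanning the two components of $[0,1]\smallsetminus B$. I would construct the equivalence via restriction of such a $W$ to $\RR^n\times[t,t']$, with $[t,t']$ identified with $\DD[a]$ via the stratification inherited from the interior components of $A$. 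The inverse is given by extending a slice-wise projection-embedding tangle on $\DD[a]$ to all of $\DD[p]$ by a straight-line continuation using the boundary framed points; contractibility of the space of such extensions follows from the parameterized isotopy extension theorem applied to the no-critical-values regions outside $[t,t']$.

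The main obstacle lies in the first step: verifying that the slice-wise projection-embedding condition decomposes cleanly along the closed cover $\{\widehat{B_k}\}$. One must show that if $W$ restricted to each $\RR^n\times\widehat{B_k}$ is slice-wise projection-embedding with respect to some finite subset $S_k\subset\RR^{\{n\}}$, then the $S_k$ can be chosen coherently so that they agree on overlaps $C_k$ (where the slab $\RR^{n-1}\times S_k\times C_k$ must be disjoint from $W$), and conversely that a global witnessing subset restricts to local ones. This is a routine but delicate bookkeeping of how the finite subsets witnessing slice-wise splitting interact across adjacent pieces of the cover. Once this is established, the two decompositions match termwise by construction, and the active case reduces to the deformation-retraction argument sketched above.
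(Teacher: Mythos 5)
Your proposal matches the paper's proof in all essentials: both decompose $\fZ_{A\ra B}$ as an iterated homotopy fiber product over the components of $B$ exactly as in Lemma~\ref{lemma.claimC} (with the parametrized isotopy extension theorem supplying the fibrations), compare this termwise with the iterated fiber product for $F$ given by Observation~\ref{obs.Segal.basecase.contact}, and identify the active case with $\Tang_1^{\fr}(\RR^n\times[0,a])^{\sf s.pr.emb}_{\pitchfork[0,a]}$; the order of the two steps is immaterial. The one remark worth making is that the ``main obstacle'' you flag is not one: the splitting condition defining $\fZ_{A\ra B}$ is imposed separately for each pair of consecutive components of the complement of $B$, with independent witnessing data $(t,t',S)$, so no coherence of the subsets $S_k$ across the overlaps $C_k$ is required and the decomposition is immediate.
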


\begin{proof}
First, note that for $(A\ra B)\in \TwAr(\sD_{[0,p]})$ an object with $\pi B =\{0<a\}$ and $\pi A = [a]^\circ\xra{\act}\{0<a\}^\circ = \pi B$ the active morphism, the space $\fZ_{A\ra B}$ is homotopy equivalent to 
\[
\fZ_{A\ra B}\simeq \Tang_1^{\fr}\bigl(\RR^n \times[0,a]\bigr)^{\sf s.pr.emb}_{\pitchfork\{0,a\}}  = F([0,a]) 
\]
by the condition that for any $W\in \fZ_{A\ra B}$ there exists $t,t'$ in $[0,p]\smallsetminus B$ for which the intersection
\[
W\cap \bigl( \RR^n\times [t,t']\bigr)
\]
is a slice-wise projection-embedding tangle.

For general $A\ra B$ we decompose $\fZ_{A\ra B}$ as a fiber product
\[
\fZ_{A_1\ra B_1}
\xra{~\cong~}
\underset{\fZ_{\emptyset \to C_1}}\times\ldots \underset{\fZ_{ \emptyset \to C_b}}\times\fZ_{A_b\ra B_b}
\]
as in the proof of Lemma~\ref{lemma.claimC}. 
Again, the restriction maps in the fiber product are fibrations by the parametrized isotopy extension theorem.
Therefore, this iterated pullback among topological spaces is an iterated homotopy pullback.

By Observation~\ref{obs.Segal.basecase.contact}, the value $F([a]^\circ\xra{g^\circ}[b]^\circ)$ is given by the iterated fiber product:
\[
F([0,g(0)])
\underset{F(g(0))}\times
F([g(0),g(1)])
\underset{F(g(1))}\times
\ldots
\underset{F(g(b))}\times
F([g(b),a])
\]
where 
\[
F([k,l]) = \Tang_1^{\fr}\bigl(\RR^n \times[k,l]\bigr)^{\sf s.pr.emb}_{\pitchfork[k,l]}
\]
and
\[
F(\{k\}) = \Tang_1^{\fr}\bigl(\RR^n \times\{k\}\bigr)^{\sf s.pr.emb}~.
\]
The general equivalence $\fZ_{A\ra B}\simeq F(A \ra B)$ follows.
\end{proof}

\begin{definition}\label{def.Bord.pr.imm}
The $\cE_n$-monoidal flagged $(\oo,1)$-category
\[
\Bord_1^{\fr}(\RR^n)^{\sf pr.imm}\in \Alg_n(\fCat_{(\oo,1)})
\]
is defined to be $\Seg
\Bigl(
\Tang_1^{\fr}\bigl(\RR^n \times\DD[\bullet]\bigr)^{\sf s.pr.emb}_{\pitchfork[\bullet]}
\Bigr)$, the Segal-completion of the slice-wise projection-embedding tangle spaces.
By Lemma~\ref{lemma.proj.emb.to.proj.imm}, its space of $p$-composable morphisms is the space
\[
\Bord_1^{\fr}(\RR^n)^{\sf pr.imm}[p]
\simeq
\Tang_1^{\fr}(\RR^n\times\DD[p])^{\sf pr.imm}_{\pitchfork[p]}
\]
from Definition~\ref{def.proj.imm.tang}.
\end{definition}

\subsection{Univalent-completion of projection-immersion tangles}

We now prove that the projection-immersion tangle category is equivalent to the usual tangle category after univalent-completion.
\begin{lemma}\label{lemma.univ.cplt.of.proj.imm}
The functor in $\Alg_n(\fCat_{(\oo,1)})$
\[
\Bord_1^{\fr}(\RR^n)^{\sf pr.imm}
\longrightarrow
\Bord_1^{\fr}(\RR^n)
\]
is an equivalence after univalent-completion.
\end{lemma}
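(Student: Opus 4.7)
The plan is to show that the functor $\Bord_1^{\fr}(\RR^n)^{\sf pr.imm} \to \Bord_1^{\fr}(\RR^n)$ is a levelwise weak equivalence of the underlying simplicial spaces (both are Segal spaces, the former by Definition~\ref{def.Bord.pr.imm} and the latter by construction from Definition~\ref{def.bord}). From such a levelwise equivalence in $\Alg_n(\fCat_{(\oo,1)})$, the statement about univalent-completions follows because $(-)^{\wedge}_{\sf unv}$ is a left adjoint and so preserves equivalences. The comparison at simplicial level $0$ is immediate: both spaces are $\Tang_0^{\fr}(\RR^n)$, since the projection-immersion condition is vacuous on a $0$-manifold (any smooth map out of a $0$-manifold is trivially an immersion).

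For $p \geq 1$, Lemma~\ref{lemma.proj.emb.to.proj.imm} identifies the space of $p$-simplices of $\Bord_1^{\fr}(\RR^n)^{\sf pr.imm}$ as $\Tang_1^{\fr}(\RR^n \times \DD[p])^{\sf pr.imm}_{\pitchfork[p]}$, and the evident generalization of Corollary~\ref{cor.obj.mor.Bord.free} identifies the space of $p$-simplices of $\Bord_1^{\fr}(\RR^n)$ as $\Tang_1^{\fr}(\RR^n \times \DD[p])_{\pitchfork[p]}$. On $p$-simplices, the functor is the forgetful inclusion
\[
\Tang_1^{\fr}(\RR^n \times \DD[p])^{\sf pr.imm}_{\pitchfork[p]}
~\hookrightarrow~
\Tang_1^{\fr}(\RR^n \times \DD[p])_{\pitchfork[p]}.
\]
The main step, which will also be the main obstacle, is to prove that this inclusion is a weak homotopy equivalence. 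This will be supplied by the relative h-principle for embedding-immersions to be developed in Section~8. Heuristically, the projection-immersion condition fails precisely where the tangent line to $W$ aligns with the $\RR^{\{n\}}$-direction, a codimension-$(n-1)$ locus in the Grassmannian of tangent lines; for $n \geq 2$ this is of positive codimension, and so can be avoided by generic small ambient isotopies of the tangle. Section~8 upgrades this pointwise genericity to a weak equivalence in families: working first with fixed boundary data in $\Tang_0^{\fr}(\RR^n \times \partial \DD[p])$ and then integrating over boundary (via the fibration structure given by the evaluation of endpoints), one obtains the desired weak equivalence of total spaces.

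Granting the levelwise weak equivalence, the functor $\Bord_1^{\fr}(\RR^n)^{\sf pr.imm} \to \Bord_1^{\fr}(\RR^n)$ is an equivalence of Segal spaces, hence an equivalence in $\Alg_n(\fCat_{(\oo,1)})$. Applying $(-)^{\wedge}_{\sf unv}$ then yields the lemma.
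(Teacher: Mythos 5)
There is a genuine gap: the functor is \emph{not} a levelwise equivalence of simplicial spaces, and in particular your claim that the level-$0$ comparison is ``immediate'' is false. The two sides carry different framing data. An object of $\Bord_1^{\fr}(\RR^n)^{\sf pr.imm}$ is a finite subset of $\RR^n$ together with a nullhomotopy of a Gauss map valued in $\Gr_1(n)$ (coming from the projection to $\RR^{\n1}$), whereas an object of $\Bord_1^{\fr}(\RR^n)$ carries a nullhomotopy of a Gauss map valued in $\Gr_1(n+1)$; so the object spaces are $\FF_n(\Omega\Gr_1(n))$ and $\FF_n(\Omega\Gr_1(n+1))$ respectively, and the map between them is only $\pi_0$-surjective (via surjectivity of $\pi_1\Gr_1(n)\to\pi_1\Gr_1(n+1)$ for $n\geq 2$), not an equivalence. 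For the same reason the map on total spaces of $p$-simplices is not an equivalence, and the map on $p$-simplices is not a ``forgetful inclusion'' of a subspace. This is exactly why the statement is only about univalent-completions; a levelwise argument cannot work.

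The viable route is the one the paper takes: use the criterion that a functor of flagged $(\infty,1)$-categories becomes an equivalence after univalent-completion if and only if it is $\pi_0$-surjective on objects and induces equivalences on all hom-spaces between fixed pairs of objects. You correctly identify the h-principle of Section~8 as the key input for the hom-space comparison, but your sketch treats it as a purely unframed general-position statement about avoiding vertical tangencies. The actual content is twofold: first, Corollary~\ref{cor.hprinciple.framed.embimm} replaces framed embedding-immersions by framed embedding-\emph{formal}-immersions (relative to boundary conditions); second, one must reconcile the two different framing notions, which works because a trivialization of the $\Gr_1(n+1)$-valued Gauss map of an embedding $W\hookrightarrow\RR^n\times\DD^1$ is equivalent to the data of a lift of that Gauss map to $\Gr_1(n)$ (i.e., a formal immersion structure on the projection) together with a trivialization of the lift. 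Without this second step the fully-faithfulness does not follow, and without restricting to hom-spaces (rather than whole simplicial levels) the statement you are trying to prove is false.
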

\begin{proof}
This proof relies on Corollary~\ref{cor.hprinciple.framed.embimm}, which is stated and proved in \S\ref{sec.h.prin}.
By the model for univalent-completion given in Proposition~2.4 of \cite{flagged}, it suffices to show that the map on spaces of objects is $\pi_0$-surjective, and all maps on hom-spaces are equivalences.

Recall that the data defining each object $W \in \Bord_1^{\fr}(\RR^n)^{\sf pr.imm}$ consists of: a finite set $W\subset \RR^n$, a trivialization of the Gauss map $W_i \ra \Gr_1(n)$ associated to the Gauss map of the composite immersion $W_i\ra \RR^n\ra\RR^{\n1}$ and the standard solid $n$-framing of $\RR^{\n1}$.
So $\Obj \left( \Bord_1^{\fr}(\RR^n)^{\sf pr.imm} \right) \simeq \FF_{\cE_n}(\Omega \Gr_1(n))$.
Meanwhile, an object $W \in \Bord_1^{\fr}(\RR^n)$ consists of: a finite set $W\subset \RR^n$, a trivialization of the Gauss map $W_i \ra \Gr_1(n+1)$ associated to the Gauss map of the composite immersion $W_i\ra \RR^n$ and the standard solid $(n+1)$-framing of $\RR^{n}$.
So $\Obj \left( \Bord_1^{\fr}(\RR^n) \right) \simeq \FF_{\cE_n}(\Omega \Gr_1(n+1))$.
The condition of $\pi_0$-surjectivity on spaces of objects then follows from surjectivity of the homomorphism $\pi_1( \Gr_1(n)) \to \pi_1(\Gr_1(n+1))$ for $n\geq 2$.

We now show that all maps on hom-space are equivalences.
Let $W_0,W_1 \in \obj(\Bord_1^{\fr}(\RR^n)^{\sf pr.imm}$.
Denote their images $W_0',W_1' \in \obj(\Bord_1^{\fr}(\RR^n)$.
We must show the map
\[
\Map_{\Bord_1^{\fr}(\RR^n)^{\sf pr.imm}}(W_0,W_1) \ra
\Map_{\Bord_1^{\fr}(\RR^n)}(W_0',W_1') 
\]
is an equivalence.

From the topological models of these tangle spaces (see Definition~\ref{def.Bord.pr.imm} and Lemma~\ref{lemma.corep.tang}) this equivalence will follow from showing that, for every $W$ a compact 1-manifold with boundary, the horizontal maps
\[
\xymatrix{
\EmbImm^{\fr}(W, \RR^n\times\DD^1\ra \RR^{\n1}\times\DD^1)\ar[d]
\ar[r]&\Emb^{\fr}(W, \RR^n\times\DD^1)\ar[d]\\
\EmbImm^{\fr}(\partial W, \RR^n\times\partial\DD^1\ra \RR^{\n1}\times\partial\DD^1)\ar[r]&
\Emb^{\fr}(\partial W, \RR^n\times\partial \DD^1)}
\]
induce a weak homotopy equivalence on vertical homotopy fibers. Here, we use the notation for projection-immersions of Definition~\ref{def.emb.imm}: Note that the identification $\EmbImm(W, \RR^n\times\DD^1\ra \RR^{\n1}\times\DD^1)= \Emb\bigl(W,\RR^{\n1}\times \RR^1\times\DD^1\bigr)^{\sf pr.imm}$ is by definition.

 By the h-principle of Corollary~\ref{cor.hprinciple.framed.embimm}, we can replace the spaces of framed embedding-immersions with those of framed embedding-formal-immersions. We thus are reduced to showing that the diagram
\[
\xymatrix{
\EmbfImm^{\fr}(W, \RR^n\times\DD^1\ra \RR^{\n1}\times\DD^1)\ar[d]
\ar[r]&\Emb^{\fr}(W, \RR^n\times\DD^1)\ar[d]\\
\EmbfImm^{\fr}(\partial W, \RR^n\times\partial\DD^1\ra \RR^{\n1}\times\partial\DD^1)\ar[r]&
\Emb^{\fr}(\partial W, \RR^n\times\partial \DD^1)}
\]
induces a weak homotopy equivalence on vertical homotopy fibers. 
Let $g_0 \in \EmbfImm^{\fr}(\partial W, \RR^n\times\partial\DD^1\ra \RR^{\n1}\times\partial\DD^1)$.
Denote the map between vertical homotopy fibers over $g_0$ as
\begin{equation}
\label{e.fibs}
{\sf Fib}_1
\longrightarrow
{\sf Fib}_2
~.
\end{equation}
Forgetting framing defines maps to the space of embeddings of $W$ into $\RR^n \times \DD^1$, relative to the embedding $g_0$ of its boundary:
\begin{equation}
\label{e.fibs.2}
\xymatrix{
{\sf Fib}_1
\ar[rr]^-{(\ref{e.fibs})}
\ar[dr]
&&
{\sf Fib}_2
\ar[dl]
\\
&
\Emb_{g_0}(W, \RR^n\times\DD^1)
&
.
}
\end{equation}
The map~(\ref{e.fibs}) is an equivalence if the map between fibers of~(\ref{e.fibs.2}) is an equivalence.  
So choose an embedding $g:W\hookrightarrow \RR^n\times\DD^1$ extending the embedding $g_0$ of the boundary.  
By definition of framed embeddings, the fiber of ${\sf Fib}_2 \to \Emb_{g_0}(W, \RR^n\times\DD^1)$ over $g$ is the space of lifts of the Gauss map:
\[
\xymatrix{
W\ar@{-->}[rrd]\ar[rr]^-{{\sf Gauss}(g)}&&\Gr_1(n+1)
&
\\
\partial W\ar[u]\ar[rr]&&\ast 
\ar[u]
&
.
}
\]
By definition of framed embedding-formal-immersions, the fiber of ${\sf Fib}_1 \to \Emb_{g_0}(W, \RR^n\times\DD^1)$ over $g$ is the space of pairs of lifts:
\[
\xymatrix{
W\ar@{-->}[rrdd]\ar@{-->}[rrd]\ar[rr]^-{{\sf Gauss}(g)}&&\Gr_1(n+1)\\
&&\Gr_1(n)\ar[u]
&
\\
\partial W\ar[uu]\ar[rr]&&\ast
\ar[u]
&
.
}
\]
Since a trivialization of the Gauss map to $\Gr_1(n+1)$ is equivalent to first lifting the Gauss map to $\Gr_1(n)$ and then trivializing this lift, therefore these spaces of factorizations are homotopy equivalent.

\end{proof}

We now prove the main theorem of this paper.

\begin{theorem}[Tangle Hypothesis]\label{theorem.main}
Let $\cR$ be a rigid $\cE_{\n1}$-monoidal $(\infty,1)$-category, with $n\geq 2$. Let $\Bord_1^{\fr}(\RR^{\n1})$ be the $\cE_{\n1}$-monoidal flagged $(\infty,1)$-category of framed tangles of Definition~\ref{def.bord}.
Evaluation at the object $\ast\in \Bord_1^{\fr}(\RR^{\n1})$ defines an equivalence between spaces
\[
\ev_\ast
\colon
\Map_{\Alg_{\n1}(\fCat_{(\oo,1)})} \bigl(
\Bord_1^{\fr}(\RR^{\n1})
,
\cR
\bigr)
\xra{~\simeq~}
\Obj(\cR)
~,\qquad
Z
\longmapsto
Z(\ast)
~.
\]
between the space of objects of $\cR$ and the space of $\cR$-valued $\cE_{\n1}$-monoidal functors from $\Bord_1^{\fr}(\RR^{\n1})$.
\end{theorem}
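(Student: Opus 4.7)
The plan is to prove this by induction on $n \geq 2$. The base case $n=2$ is supplied by Theorem~\ref{thm.n2}: $\Bord_1^{\fr}(\RR^1) \simeq \Dual^{[\oo,\oo]}$ is the rigid monoidal $(\oo,1)$-category freely generated by a single object, and it is already a 1-category by Proposition~\ref{prop.Bord.discrete.gaunt} (hence univalent-complete), so the universal property transfers directly to the mapping-space statement at $n=2$, using Observation~\ref{t22} to reduce $\cE_{n\text{-}1}$-rigidity on $\cR$ to $\cE_1$-rigidity of any underlying monoidal structure.

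For the inductive step, I would prove the equivalence
\[
\Bord_1^{\fr}(\RR^n)^{\wedge}_{\sf unv} \;\simeq\; \Ind_{\n1}^n\bigl(\Bord_1^{\fr}(\RR^{\n1})^{\wedge}_{\sf unv}\bigr)^{\wedge}_{\sf unv}
\]
in $\Alg_n^{\sf rig}(\Cat_{(\oo,1)})$. Combined with the inductive hypothesis and the identity $\Free_n^{\sf rig} \simeq \Ind_{\n1}^n \circ \Free_{\n1}^{\sf rig}$ (both sides being left adjoint to restriction along $\cE_{\n1} \hookrightarrow \cE_n$ on the rigid side), this yields the theorem. The proof of this equivalence traces the composition laid out in the paper's introduction through the forget--induce--Segal-complete--univalent-complete diagram
\[
\Alg_{\n1}(\fCat_{(\oo,1)}) \hookrightarrow \Alg_{\n1}\bigl(\Fun(\bDelta^{\op},\Spaces)\bigr) \xra{\Ind_{\n1}^n} \Alg_n\bigl(\Fun(\bDelta^{\op},\Spaces)\bigr) \xra{\Seg} \Alg_n(\fCat_{(\oo,1)}) \xra{(-)^{\wedge}_{\sf unv}} \Alg_n(\Cat_{(\oo,1)}).
\]
Applied sequentially to $\Bord_1^{\fr}(\RR^{\n1})$, the three identifications are already in hand: (a) Corollary~\ref{cor1} gives $\Ind_{\n1}^n\bigl(\Bord_1^{\fr}(\RR^{\n1})[p]\bigr) \simeq \Tang_1^{\fr}(\RR^n \times \DD[p])^{\sf s.pr.emb}_{\pitchfork[p]}$; (b) Lemma~\ref{lemma.proj.emb.to.proj.imm} identifies the Segal completion of the resulting simplicial $\cE_n$-algebra with the flagged $(\oo,1)$-category $\Bord_1^{\fr}(\RR^n)^{\sf pr.imm}$ of projection-immersion tangles; (c) Lemma~\ref{lemma.univ.cplt.of.proj.imm} identifies its univalent-completion with $\Bord_1^{\fr}(\RR^n)^{\wedge}_{\sf unv}$.

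The main obstacle is the third step, which depends on the relative h-principle for framed embedding-immersions (Section~8) used inside Lemma~\ref{lemma.univ.cplt.of.proj.imm}: only this lets one exchange presentations of framed links by stacks of projection-immersions for honest framed embedded tangles at the level of homotopy types. A subsidiary technical point is verifying that the three intermediate identifications are compatible with the $\cE_n$-monoidal structures coming from the operadic induction in Section~6; this is already built into the constructions of Sections~6--7, since each identification is carried out at the level of $\cE_n$-algebras in the relevant enrichment. Once steps (a)--(c) are composed, the result is a morphism in $\Alg_n(\fCat_{(\oo,1)})$ whose univalent-completion is the desired equivalence of rigid $\cE_n$-monoidal $(\oo,1)$-categories. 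Mapping into an arbitrary rigid $\cR \in \Alg_{\n1}^{\sf rig}(\Cat_{(\oo,1)})$ and invoking the universal property of $\Ind_{\n1}^n\circ\Free_{\n1}^{\sf rig}$ then yields the evaluation equivalence $\ev_\ast$ asserted in the statement.
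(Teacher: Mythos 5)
Your proposal is correct and follows essentially the same route as the paper: induction on $n$ with base case Theorem~\ref{thm.n2}, and an inductive step assembled from Corollary~\ref{cor1}, Lemma~\ref{lemma.proj.emb.to.proj.imm}, Lemma~\ref{lemma.univ.cplt.of.proj.imm}, and the adjunction property of operadic induction. The only presentational difference is that the paper runs the whole chain at the level of mapping spaces $\Map(-,\cR)$ into the univalent-complete target $\cR$, which sidesteps having to commute $\Ind_{\n1}^n$ past univalent-completion and Segal-completion as your formulation of the inductive step implicitly requires.
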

\begin{proof}
We prove the result by induction on $n$. 
First, note that the object $\ast \in \Bord_1^{\fr}(\RR^{\n1})$ is in the image of the composite functor 
\[
\Bord_1^{\fr}(\RR^{n \text{-} 2}) 
\to 
\Ind_{{\mathit{n}\text{-}2}}^{\n1}\bigl(\Bord_1^{\fr}(\RR^{{\mathit{n}\text{-}2}})\bigr)
\to
\Bord_1^{\fr}(\RR^{\n1})^{\sf pr.imm}
\to
\Bord_1^{\fr}(\RR^{\n1})
~,
\]
in which the first is $\cE_{n\text{-}2}$-monoidal, and the others are $\cE_{\n1}$-monoidal.
The base case of $n=2$ is Theorem~\ref{thm.n2}.
We now have the following chain of equivalence:
\[
\Map_{\Alg_{\n1}(\fCat_{(\oo,1)})} \bigl(
\Bord_1^{\fr}(\RR^{\n1})
,
\cR
\bigr)
\simeq
\Map_{\Alg_{\n1}(\fCat_{(\oo,1)})} \bigl(
\Bord_1^{\fr}(\RR^{\n1})^{\sf pr.imm}
,
\cR
\bigr)
\]
\[
\simeq
\Map_{\Alg_{\n1}(\fCat_{(\oo,1)})} \Bigl(
\Ind_{{\mathit{n}\text{-}2}}^{\n1}\bigl(\Bord_1^{\fr}(\RR^{{\mathit{n}\text{-}2}})\bigr)
,
\cR
\Bigr)
\simeq
\Map_{\Alg_{{\mathit{n}\text{-}2}}(\fCat_{(\oo,1)})} \bigl(
\Bord_1^{\fr}(\RR^{{\mathit{n}\text{-}2}})
,
\cR
\bigr)
\underset{\ev_\ast}{\xra{\simeq} }
\obj(\cR)~.
\]
The first equivalence follows from Lemma~\ref{lemma.univ.cplt.of.proj.imm}. The second equivalence follows from Definition~\ref{def.Bord.pr.imm}, after Lemma~\ref{lemma.proj.emb.to.proj.imm} and Definition~\ref{def.proj.emb.simplicial.space}. The penultimate equivalence is the adjunction property of induction, and the last equivalence is the inductive hypothesis.
\end{proof}

\section{A relative h-principle for embedding-immersions}\label{sec.h.prin}

We now establish an h-principle for embedding-immersions, which was used in the previous section in proving Lemma~\ref{lemma.univ.cplt.of.proj.imm}, that the univalent-completion of projection-immersion tangles gives the usual tangle category.

\subsection{Relatives sheaves and h-principles}

\begin{definition}\label{def.rel.sheaf}
Let $M$ be a topological space.
For $\cF\ra \cE$ a morphism of presheaves on $\Opens(M)$, then $\cF$ is a relative sheaf if for every covering sieve $\cU \subset \Opens(M)_{/U} = \Opens(U)$ the diagram
\[
\xymatrix{
\cF(U) \ar[r]\ar[d]& \underset{V \in \cU}{\sf lim}\ \cF(V)\ar[d]\\
\cE(U)\ar[r] &  \underset{V \in \cU}{\sf lim}\ \cE(V)}
\]
is a pullback.
\end{definition}
\begin{remark}
Note that if $\cE$ and $\cF$ are both sheaves, then $\cF$ is automatically a relative sheaf since the horizontal morphisms in Definition~\ref{def.rel.sheaf} are both equivalences.
\end{remark}

\begin{definition}
Let $\cF:\Mfld_n^{\partial,\op} \ra \Top$ be a continuous functor of topologically-enriched categories. The h-sheafification of $\cF$ is
\[
\sD\cF := \iota_\ast\iota^\ast \cF
\]
the homotopy right Kan extension of the restriction of $\cF$ along $\iota: \Disk_n^{\partial, \leq 1,\op}\hookrightarrow \Mfld_n^{\partial,\op}$, the inclusion of the full topologically-enriched subcategory consisting of the two objects $\RR^n$ and $\RR^{\n1} \times \RR_{\geq 0}$.
Then $\cF$ adheres to the \bit{h-principle} if the map $\cF \ra \sD\cF$ is a weak homotopy equivalence.
\end{definition}
\begin{remark}
For $M$ an $n$-manifold without boundary, consider the diagonal quotient (i.e., the Borel construction)
\[
M_\cF :={\sf Fr}(M)\underset{\sO(n)}\times \cF(\RR^n)
\]
where ${\sf Fr}(M)$ is the frame bundle of $M$. This can alternatively be constructed as the unstraightening of the functor
\[
M\simeq \Disk^{\leq 1, \op}_{n/M} \xra\cF \Top \ra \Spaces~.
\]
There is then a natural homotopy equivalence with the space of sections
\[
\sD\cF(M) \simeq \Gamma\bigl(M_\cF \ra M\bigr)~.
\]
Given a choice of Riemannian metric on $M$, the natural map $\cF(M) \ra \sD\cF(M)$ is then equivalent to the scanning map of Segal \cite{segal.scanning}.
\end{remark}

\begin{definition}\label{def.rel.h.principle}
For $\cF\ra \cE$ a relative sheaf in the sense of Definition~\ref{def.rel.sheaf}, then $\cF\ra \cE$ adheres to the \bit{relative h-principle} if the diagram
\[
\xymatrix{
\cF \ar[r]\ar[d]& \sD \cF\ar[d]\\
\cE\ar[r] &  \sD\cE}
\]
is a homotopy pullback.
\end{definition}

Recall the following standard notion, flexibility.
\begin{definition}
A presheaf $\cF$ of topological spaces on $M$ is \bit{flexible} if the restriction map $\cF(V) \to \cF(U)$ is a Serre fibration for every pair of $U \subset V$ of subsets of $M$ that satisfy the following condition:
\begin{itemize}
\item[]
Both $U \subset M$ and $V\subset M$ are compact codimension-0 submanifolds of $M$ with boundary.
Furthermore, $U$ is contained in the point-set interior of $V$.

\end{itemize}
\end{definition}

\begin{example}
Let $N$ be a manifold with boundary.
\begin{itemize}
\item The functor of boundary-preserving smooth maps $\Map(-, N)$, endowed with the compact-open smooth topology (also known as the weak $C^\infty$ Whitney topology), is a flexible sheaf.
\item The functor of boundary-preserving embeddings $\Emb(-, N)$ is a flexible presheaf, by the parametrized isotopy extension theorem.
\item The functor of boundary-preserving immersions $\Imm(-, N)$ is a flexible sheaf on open manifolds. This is the main theorem of Smale--Hirsch immersion theory.
\end{itemize}
\end{example}

The essential importance of flexibility for sheaves is reflected in the following standard proposition, which effectively dates to Smale--Hirsch's h-principle for immersions, if not before.

\begin{prop}\label{prop.flex.sheaves.hprin}
For $\cF$ a flexible sheaf on $\Mfld_n^{\partial}$, then the natural map $\cF \ra \sD \cF$ is an equivalence. That is, a flexible sheaf adheres to the h-principle.
\end{prop}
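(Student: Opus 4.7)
The plan is to show that the natural map $\cF(M) \to \sD\cF(M)$ is a weak homotopy equivalence for every $M \in \Mfld_n^\partial$, by establishing that both sides are homotopy sheaves on $\Mfld_n^\partial$ and that the map is an equivalence on the two basics $\RR^n$ and $\RR^{\n1}\times\RR_{\geq 0}$, then inducting along a good cover.

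First, I would upgrade the strict sheaf condition on $\cF$ to a homotopy sheaf condition. For any pair $U \subset V$ of compact codimension-$0$ submanifolds with $U$ in the interior of $V$, flexibility asserts that $\cF(V) \to \cF(U)$ is a Serre fibration. Standard manipulations (replacing a Čech nerve by cofinal diagrams of such inclusions, and exhausting a general open by compact pieces) then convert the strict limit appearing in the sheaf condition into a homotopy limit. Meanwhile, $\sD\cF = \iota_\ast \iota^\ast \cF$ is a homotopy sheaf on $\Mfld_n^\partial$ by construction: a homotopy right Kan extension of any presheaf defines a presheaf satisfying descent along any Grothendieck topology generated by basics on which the restricted presheaf is left unchanged, since the formula $\sD\cF(M) = \holim_{\iota \downarrow M} \cF$ commutes with homotopy limits in $M$.

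Next, I would verify that the unit map $\cF \to \sD\cF$ is an equivalence on the two basic objects $D \in \{\RR^n, \RR^{\n1}\times \RR_{\geq 0}\}$. This is formal: the undercategory $(\Disk_n^{\partial, \leq 1})_{D/}$ contains $\mathrm{id}_D$ as an object, and by standard cofinality arguments for right Kan extensions along fully faithful inclusions, the homotopy limit computing $\sD\cF(D)$ reduces to $\cF(D)$, with the resulting equivalence being precisely the unit.

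Finally, I would propagate this pointwise equivalence from basics to all of $\Mfld_n^\partial$ via a Mayer--Vietoris induction on a countable good cover by basics (every manifold with boundary admits such a cover). For $M = U \cup V$ with $U$, $V$, and $U \cap V$ each a disjoint union of basics (or, inductively, each admitting a shorter good cover), both $\cF$ and $\sD\cF$ carry this cover to a homotopy pullback square by Step~1's homotopy sheaf condition, so $\pi_\ast$ of the map $\cF(M) \to \sD\cF(M)$ is determined by its values on $U$, $V$, and $U \cap V$, where it is already known to be an equivalence. The main obstacle here is executing this induction cleanly, since the good cover is only countable and one must combine the Mayer--Vietoris step with a colimit argument to handle non-compact $M$; this uses flexibility a second time, to ensure the colimit of a sequence of fibrations presents the correct homotopy colimit, so that exhausting $M$ by nested compact submanifolds gives a weak equivalence in the limit.
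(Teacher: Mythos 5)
Your proposal is correct, and its overall strategy (local-to-global propagation using flexibility to convert strict limits into homotopy limits, plus the formal check on basics) is the same as the paper's; the difference is in how the decomposition is organized. The paper's sketch first proves the equivalence on $S^k\times\RR^{\nk}$ and $S^k\times\RR_{\geq 0}^{\nk}$ by induction on $k$, and then inducts over a handle presentation of $M$ — the sphere-times-Euclidean spaces appear because they are the attaching regions of handles, exactly as in the paper's detailed proof of the relative statement (Lemma~\ref{lemma.flex.rel.hprin} and Theorem~\ref{theorem.relative.Gromov.hprinciple}). You instead induct over a good cover by basics together with an exhaustion by compacta; this buys you a decomposition whose multifold intersections are already disjoint unions of basics, so you never need to name the intermediate spaces $S^k\times\RR^{\nk}$, at the cost of having to justify the existence of good covers adapted to the boundary and to run the countable-cover/exhaustion bookkeeping. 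Two soft points to tighten. First, in the exhaustion step you write ``colimit of a sequence of fibrations''; the relevant object is the \emph{inverse limit} of the tower $\cF(K_{i+1})\to\cF(K_i)$, which computes the homotopy limit because flexibility makes these maps Serre fibrations. Second, your justification that $\sD\cF$ satisfies descent (``the formula $\sD\cF(M)=\holim_{\iota\downarrow M}\cF$ commutes with homotopy limits in $M$'') is not quite right as stated, since the over-categories $\Disk^{\partial,\leq 1}_{n/M}$ do not decompose along a cover $M=U\cup V$ in the naive way (a basic in $M$ need not lie in $U$ or $V$); the clean argument is the one the paper records in the remark following the definition of $\sD$, namely that $\sD\cF(M)$ is the space of sections of a bundle $M_{\cF}\to M$, and section spaces manifestly satisfy descent.
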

\begin{proof}[Proof sketch.]
By induction on $0\leq k\leq n$, the maps $\cF(S^k\times\RR^{\nk})\ra \sD\cF(S^k\times\RR^{\nk})$ and $\cF(S^k\times\RR_{\geq 0}^{\nk})\ra \sD\cF(S^k\times\RR_{\geq 0}^{\nk})$ are weak homotopy equivalences. By induction on the index of a handle presentation of $M$, the map $\cF(M)\ra \sD\cF(M)$ is then a weak homotopy equivalence, for all $M$.
\end{proof}

\begin{observation}
Let $\cE$ be a flexible presheaf on $n$-manifolds with boundary. If $\cF\ra \cF'$ is a map of presheaves over $\cE$, and both $\cF$ and $\cF'$ adhere to the relative h-principle, then $\cF\ra \cF'$ is a weak homotopy equivalence if and only if the values $\cF(\RR^n) \ra \cF'(\RR^n)$ and $\cF(\RR^n_{\geq 0}) \ra \cF'(\RR^n_{\geq 0})$ are equivalences.

\end{observation}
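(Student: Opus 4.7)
The plan is straightforward. The ``only if'' direction is immediate: evaluating the supposed weak equivalence $\cF \to \cF'$ at the objects $\RR^n$ and $\RR^n_{\geq 0} = \RR^{\n1} \times \RR_{\geq 0}$ yields the stated weak homotopy equivalences of values.

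For the ``if'' direction, fix an $n$-manifold with boundary $M$, and apply the relative h-principle (Definition~\ref{def.rel.h.principle}) to each of $\cF \to \cE$ and $\cF' \to \cE$ to obtain homotopy pullback squares of spaces:
\[
\begin{tikzcd}[column sep=small]
\cF(M) \ar[r] \ar[d] & \sD\cF(M) \ar[d] \\
\cE(M) \ar[r] & \sD\cE(M)
\end{tikzcd}
\qquad
\begin{tikzcd}[column sep=small]
\cF'(M) \ar[r] \ar[d] & \sD\cF'(M) \ar[d] \\
\cE(M) \ar[r] & \sD\cE(M)
\end{tikzcd}
\]
These share their lower row, and the map $\cF \to \cF'$ over $\cE$ induces a morphism between them which is the identity on that lower row. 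By the standard functoriality of homotopy pullbacks in the upper-right input, the induced map $\cF(M) \to \cF'(M)$ is a weak homotopy equivalence provided the induced map $\sD\cF(M) \to \sD\cF'(M)$ is.

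It therefore remains to verify that $\sD\cF \to \sD\cF'$ is a level-wise weak equivalence of presheaves on $\Mfld_n^\partial$. By construction, $\sD\cF = \iota_\ast \iota^\ast \cF$ is the homotopy right Kan extension along the inclusion $\iota \colon \Disk_n^{\partial,\leq 1, \op} \hookrightarrow \Mfld_n^{\partial, \op}$, whose domain is the full topological subcategory spanned by just the two objects $\RR^n$ and $\RR^{\n1} \times \RR_{\geq 0}$. Hence for each $M$ the value $\sD\cF(M)$ is a homotopy limit of a diagram whose entries are copies of $\cF(\RR^n)$ and $\cF(\RR^{\n1} \times \RR_{\geq 0})$, and whose indexing shape depends only on $M$ (namely, the undercategory of $M$ in the two-object source). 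The analogous statement holds for $\sD\cF'(M)$ with the same indexing diagram. The hypothesis that $\cF \to \cF'$ is a weak equivalence on these two values then implies that the induced map of homotopy limit diagrams is a level-wise weak equivalence, whence $\sD\cF(M) \to \sD\cF'(M)$ is a weak equivalence, as required. No real obstacle arises in this plan: the content of the argument is the reduction, via the relative h-principle, of a question about presheaves on all manifolds to a question about two local models, where it holds by hypothesis.
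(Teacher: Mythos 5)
Your proposal is correct. The paper states this as an Observation and supplies no proof, and your argument is exactly the intended one: the ``only if'' direction is evaluation at the two local models, and the ``if'' direction reduces, via the defining homotopy-pullback squares of the relative h-principle (which share the bottom row $\cE \to \sD\cE$), to checking that $\sD\cF(M)\to\sD\cF'(M)$ is an equivalence, which holds because both are homotopy limits over the same indexing shape of diagrams valued in $\cF(\RR^n)$, $\cF(\RR^n_{\geq 0})$ and their primed counterparts.
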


\begin{lemma}\label{lemma.flex.rel.hprin}
Let $\cF\ra \cE$ be a relative sheaf, where $\cF$ and $\cE$ are flexible presheaves on $M$. Then $\cF\ra \cE$ adheres to the relative h-principle on $M$.
\end{lemma}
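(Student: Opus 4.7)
The plan is to deduce the relative h-principle from the absolute version (\Cref{prop.flex.sheaves.hprin}) applied to fiber presheaves of $\cF\to\cE$. For each section $e\in\cE(M)$ I would form the homotopy-fiber presheaf $\cF^e$ on $\Opens(M)$, given on $V\subset M$ by $\cF(V)\times^h_{\cE(V)}\{e|_V\}$, and identify it as a flexible sheaf on $M$ so that \Cref{prop.flex.sheaves.hprin} applies.

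The sheaf condition for $\cF^e$ falls out of the relative sheaf condition: given a covering sieve $\cU$ of $U\subset M$, the relative sheaf condition places $\cF(U)$ as the homotopy pullback $\cE(U)\times^h_{\lim_{\cU}\cE}\lim_{\cU}\cF$, and taking homotopy fibers over $e|_U\in\cE(U)$ collapses this to the sheaf identity $\cF^e(U)\simeq \lim_{\cU}\cF^e$. Flexibility of $\cF^e$ would follow from flexibility of $\cF$ and $\cE$ by modelling the homotopy fiber via the mapping-path-space construction $\cF(V)\times_{\cE(V)}\cE(V)^I\times_{\cE(V)}\{e|_V\}$, for which the restriction map associated to a compact-in-interior inclusion $U\subset V$ is assembled from pullbacks of Serre fibrations along Serre fibrations and so is itself a Serre fibration. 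Applying \Cref{prop.flex.sheaves.hprin} to this flexible sheaf $\cF^e$ yields $\cF^e(M)\xra{\simeq}\sD\cF^e(M)$.

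The final step identifies $\sD\cF^e$ with the homotopy fiber of $\sD\cF\to\sD\cE$ over $e$. Since $\sD=\iota_\ast\iota^\ast$ is a homotopy right Kan extension, it commutes with all homotopy limits, in particular with the homotopy fiber defining $\cF^e$. Chaining the resulting equivalences,
\[
\cF(M)\times^h_{\cE(M)}\{e\}\;\simeq\;\cF^e(M)\;\simeq\;\sD\cF^e(M)\;\simeq\;\sD\cF(M)\times^h_{\sD\cE(M)}\{e\},
\]
and since this fiberwise equivalence holds for every $e\in\cE(M)$, the square of \Cref{def.rel.h.principle} is a homotopy pullback.

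The main obstacle I expect is verifying the strict Serre-fibration property required for flexibility of $\cF^e$. The naive strict-fiber model does not obviously work, because the strict fibers of the restriction $\cE(V)\to\cE(U)$ need not be contractible, which would be needed to homotope a lift back into the strict fiber over $e|_V$. The mapping-path-space replacement resolves this, but one must then verify that the subsequent identifications (the sheaf condition and the commutation with $\sD$) are unaffected by the replacement --- formally, that the strict fiber and the mapping-path-space fiber define weakly equivalent homotopy-fiber presheaves, and that the relative sheaf condition transports through the replacement.
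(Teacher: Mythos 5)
Your reduction to the absolute h-principle for fiber presheaves is a legitimate reorganization of the argument, and several of your steps are sound: $\sD=\iota_\ast\iota^\ast$ is a homotopy right Kan extension and so commutes with the homotopy fiber, and checking the fiberwise equivalence for every $e\in\cE(M)$ does establish the homotopy-pullback square. In fact the flexibility of the mapping-path-space model $\cF^e(V)=\cF(V)\times_{\cE(V)}\cE(V)^I\times_{\cE(V)}\{e|_V\}$, which you single out as the main obstacle, does go through: given a homotopy in $\cF^e(U)$ with a lift at time $0$, first lift the $\cF$-component using flexibility of $\cF$, and then the remaining lifting problem for the paths is a Serre lifting problem for $\cE(V)\to\cE(U)$ along $D^n\times\bigl(I\times\partial I\cup\{0\}\times I\bigr)\hookrightarrow D^n\times I\times I$, which is solvable because the pair $\bigl(I\times I,\ I\times\partial I\cup\{0\}\times I\bigr)$ is homeomorphic to $\bigl(I\times I,\ I\times\{0\}\bigr)$.

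The genuine gap is the descent step for $\cF^e$. The relative sheaf condition of Definition~\ref{def.rel.sheaf} is a \emph{strict} pullback of topological spaces, and $\cE$ is only a flexible \emph{presheaf}, not a sheaf (in the intended application $\cE=\Emb(-,E)$ does not satisfy descent). Taking strict fibers of that square over $e|_U$ does produce a strict sheaf, but the strict fiber is not the homotopy fiber ($\cF(V)\to\cE(V)$ is not assumed to be a fibration), and, as you note, the strict-fiber presheaf is not flexible. Conversely, the mapping-path-space model is flexible but does not satisfy descent formally: a compatible family of paths $(\gamma_V)_{V\in\cU}$ ending at $e|_V$ assembles only to a path in $\lim_{V\in\cU}\cE(V)$, and there is no way to glue it to a path in $\cE(U)$ without $\cE$ being a sheaf. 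What one can prove is the two-piece Mayer--Vietoris property $\cF^e(U\cup V)\simeq\cF^e(U)\times^h_{\cF^e(U\cap V)}\cF^e(V)$ for compact codimension-$0$ pieces; but unwinding it, this is exactly the assertion that the square comparing $\cF(U\cup V)\to\cE(U\cup V)$ with the Mayer--Vietoris homotopy pullbacks is a homotopy pullback, i.e., that the strict relative-sheaf pullback can be upgraded to a homotopy pullback using the Serre fibrations supplied by flexibility. That upgrade is the substantive content of the paper's proof (the diagram~(\ref{e.shf})), and your proposal treats it as formal; the sentence ``taking homotopy fibers collapses this to the sheaf identity'' is exactly where strict and homotopy limits are conflated. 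Your approach can be completed by importing that computation (and then running the handle induction of Proposition~\ref{prop.flex.sheaves.hprin} with Mayer--Vietoris in place of full descent), but as written it does not close.
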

\begin{proof}
Let $\sD_{\cE}\cF$ denote the functor assigning to $U\subset M$ the homotopy fiber product of the diagram $\cE(U) \ra \sD\cE(U)\la \sD\cF(U)$. Note that we can reformulate the relative h-principle as the condition that the natural transformation $\cF \ra \sD_\cE\cF$ is an equivalence. Note that $\sD_\cE\cF$ has the following homotopy sheaf property relative to $\cE$: For any codimension-0 submanifolds $U,V\subset M$ with boundary, the diagram
\[
\xymatrix{
\sD_\cE\cF(U\cup V)\ar[r]\ar[d]&\sD_\cE\cF(U)\underset{\sD_\cE\cF(U\cap V)}\times\sD_\cE\cF(V)\ar[d]\\
\cE(U\cup V)\ar[r]&\cE(U)\underset{\cE(U\cap V)}\times\cE(V)}
\]
is a homotopy pullback. To show that that any map $\cF\ra \sD_\cE\cF$ is a weak equivalence, it therefore suffices to show
\begin{enumerate}
\item $\cF(D) \ra \sD_\cE\cF(D)$ is a weak equivalence for $D\in\Disk_{n/M}^{\partial, \leq 1}$;
\item For all $U,V \in \Mfld_{n/M}^\partial$, the diagram
\[
\xymatrix{
\cF(U\cup V)\ar[r]\ar[d]&\cF(U)\underset{\cF(U\cap V)}\times\cF(V)\ar[d]\\
\cE(U\cup V)\ar[r]&\cE(U)\underset{\cE(U\cap V)}\times\cE(V)}
\]
is a homotopy pullback.
\end{enumerate}

The first equivalence is immediate by construction of $\sD$, as homotopy right Kan extension from the full subcategory $\Disk_{n/M}^{\partial, \leq 1}$. We now show the second condition. It suffices to show that for every $U$ and $V$ compact submanifolds of $M$ with boundary, then $\cF(U\cup V)$, which is the limit of the diagram of topological spaces
\[
\xymatrix{
\cF(U)\ar[d]\ar[dr]&\ar[dl]\cE(U\cup V)\ar[dr]&\cF(V)\ar[d]\ar[dl]\\
\cE(U)\ar[dr]&\cF(U\cap V)\ar[d]&\cE(V)\ar[dl]\\
&\cE(U\cap V)~,}
\]
agrees with the homotopy limit of this diagram. The homotopy limit is equivalent to the ordinary limit of
\begin{equation}
\label{e.shf}
\xymatrix{
\cF(U)\underset{\cE(U)}\times\cE(U)^{[0,1]}\ar[d]\ar[dr]&\ar[dl]\cE(U\cup V)\ar[dr]&\cF(V)\underset{\cE(V)}\times\cE(V)^{[0,1]}\ar[d]\ar[dl]\\
\cE(U)\ar[dr]&\cF(U\cap V)\underset{\cE(U\cap V)}\times\cE(U\cap V)^{[0,1]}\ar[d]&\cE(V)\ar[dl]\\
&\cE(U\cap V)}
\end{equation}
since every map in this diagram is a Serre fibration. 
Using that $\cF \to \cE$ is a relative sheaf, the canonical map
\[
\cF(U\cup V) \cong \cE(U\cup V)\underset{\cE(U)\underset{\cE(U\cap V)}\times\cE(V)}\times \Bigl(\cF(U)\underset{\cF(U\cap V)}\times\cF(V)\Bigr)~.
\]
is a homeomorphism.
Therefore, the ordinary limit of~(\ref{e.shf}) is then homeomorphic to the ordinary limit of the diagram
\[
\xymatrix{
\cF(U\cup V)\ar[rd]&& \cE(U)^{[0,1]}\underset{\cE(U\cap V)^{[0,1]}}\times\cE(V)^{[0,1]}\ar[dl]\\
&\cE(U)\underset{\cE(U\cap V)}\times\cE(V)
&
~.
}
\]
The map $ \cE(U)^{[0,1]}\underset{\cE(U\cap V)^{[0,1]}}\times\cE(V)^{[0,1]}\ra \cE(U)\underset{\cE(U\cap V)}\times\cE(V)$ is an acyclic fibration, so therefore this limit is homotopy equivalent to $\cF(U\cup V)$.
\end{proof}

Gromov's Theorem, that a diffeomorphism-invariant microflexible sheaf on open manifolds adheres to the h-principle \cite{gromov}, admits a relative formulation.
\begin{theorem}\label{theorem.relative.Gromov.hprinciple}
Let $E$ be a manifold with boundary.
Consider the presheaf $\Map(-,E) \colon \Mfld_n^{\partial , \op} \to \Top$ of smooth boundary-preserving maps to $E$.
Assume the following:
\begin{itemize}
\item $\cE\subset \Map(-, E)$ is an open subpresheaf which is flexible on open $n$-manifolds.
\item $\cF \subset \cE$ is an open relative sheaf.
\end{itemize}
Then $\cF$ is a flexible presheaf on open manifolds, and $\cF\ra \cE$ adheres to the relative h-principle on open $n$-manifolds.
\end{theorem}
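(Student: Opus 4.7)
The plan is to first establish flexibility of $\cF$ on open $n$-manifolds, and then to deduce the relative h-principle from Lemma~\ref{lemma.flex.rel.hprin} applied to $\cF \to \cE$. Since $\cE$ is already assumed flexible on open $n$-manifolds, and $\cF \to \cE$ is a relative sheaf by hypothesis, the relative h-principle follows immediately from that lemma once flexibility of $\cF$ is in hand. So the entire content of the theorem reduces to proving flexibility of $\cF$.

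To establish flexibility of $\cF$, I would adapt Gromov's classical strategy of upgrading microflexibility to flexibility for diffeomorphism-invariant sheaves on open manifolds. The first step is to verify that $\cF$ is microflexible: given compact codimension-0 submanifolds $U \subset V$ with $U$ in the interior of $V$, a compact CW pair $(K, L)$, a map $K \to \cF(V)$, and a homotopy $K \times I \to \cF(U)$ extending its restriction, I need a short-time lift $K \times [0,\varepsilon] \to \cF(V)$ near $L$. Using flexibility of $\cE$, lift the composite $K \times I \to \cE(U)$ to a homotopy $K \times I \to \cE(V)$ starting from the given section. The subspace $\cF(V) \subset \cE(V)$ is open, and the initial section lies in $\cF(V)$; by continuity, the lifted homotopy remains in $\cF(V)$ on some neighborhood of $K \times \{0\}$, and in particular extends $L$ for short times. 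This is precisely microflexibility.

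The main obstacle will be promoting microflexibility to full flexibility, which is where the openness of $M$ and diffeomorphism-invariance of the construction are essential. I would proceed via a handle-by-handle induction on a decomposition of $V \setminus \mathrm{int}(U)$ relative to $\partial U$, following Gromov's scheme: over each handle, microflexibility provides a short-time extension, and the openness of $M$ allows us to use compactly supported diffeomorphisms away from $U$ to reparametrize and glue these short-time extensions into full-time ones. The key subtlety in the relative setting is gluing the local extensions into a global section of $\cF$ over $V$: for this, assemble the pieces first as a section of $\cE(V)$ via the sheaf property of $\cE$, and then invoke the relative sheaf property of $\cF \to \cE$ (Definition~\ref{def.rel.sheaf}) to promote the result to a section of $\cF(V)$, using that membership in $\cF$ has been established locally on each handle. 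Functoriality of $\cF \to \cE$ under diffeomorphisms of $M$ ensures that all the reparametrizations used in the time-contraction step are compatible with the map to $\cE$.

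Having constructed the required lift for arbitrary compact CW pairs, we conclude that each restriction map $\cF(V) \to \cF(U)$ is a Serre fibration, i.e., $\cF$ is flexible on open $n$-manifolds. Lemma~\ref{lemma.flex.rel.hprin} then applies verbatim to the relative sheaf $\cF \to \cE$, yielding the relative h-principle in the sense of Definition~\ref{def.rel.h.principle}. The principal technical difficulty will lie in the handle induction of the second step, but because the relative sheaf axiom is exactly the compatibility needed to detect ``being a section of $\cF$'' from local data over $\cE$, this step should go through by a direct translation of Gromov's argument into the relative framework.
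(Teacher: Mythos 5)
Your proposal is correct and follows essentially the same route as the paper: reduce everything to flexibility of $\cF$ via Lemma~\ref{lemma.flex.rel.hprin}, then prove flexibility by a handle induction on $V \smallsetminus \mathrm{Int}(U)$ (with all handle indices $<n$ because $M$ is open), using the relative sheaf property to glue the restriction maps into a Serre fibration --- this is exactly the paper's cube-is-a-limit argument. The only organizational difference is that the paper concentrates the analytic content in the single-handle restriction $\cF(\DD^{k}\times\DD^{n\text{-}k}) \to \cF(S^{k\text{-}1}\times\DD^1\times\DD^{n\text{-}k})$ and cites Haefliger for it, whereas you unpack that step as microflexibility (from openness of $\cF\subset\cE$ plus flexibility of $\cE$) followed by Gromov's time-contraction; these are the same argument.
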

\begin{proof}
By Lemma~\ref{lemma.flex.rel.hprin}, it suffices to show that $\cF$ is a flexible presheaf on open manifolds. 
Let $U \hookrightarrow V$ be an inclusion of compact codimension-0 submanifolds of $M$  with boundary, such that $U$ is contained in the point-set interior of $V$.
Let $C := V \smallsetminus {\sf Int}(U)$ be the complement in $V$ of the point-set interior of $U$.
Regard $C$ as an $n$-dimensional cobordism from $\partial U$ to $\partial V$.
Choose a finite handle-presentation of $C$.
Using the assumption that $M$ is an open $n$-manifold, 
we can assume all handles in this handle-presentation of $C$ have handle-index strictly less than $n$.
Choose a linear order on the finite set of handles.  
Denote by $D_i$ the $i^{th}$ handle of this handle-presentation of $C$, and by $A_i \subset D_i$ its site of handle-attachment.  
So, if $D_i$ has index $k$, there are isomorphisms $D_i \cong \DD^{k} \times \DD^{n\text{-}k}$ and $A_i \cong ( S^{k\text{-}1} \times \DD^1) \times \DD^{n\text{-}k}$.
Consider the filtration $U = H_0 \subset H_1 \subset \dots \subset H_r = V$ where $H_i$ is the union of $U$ with the first $i$ handles of the handle presentation of $C$.
In other words, $H_i = H_{i-1} \cup D_i$ with $H_{i-1} \cap D_i = A_i$.

By induction on $0\leq i \leq r$, we prove that the restriction map $\cF(H_i) \to \cF(U)$ is a Serre fibration for all $0\leq i \leq r$.  
The case $i=r$ establishes that the restriction map $\cF(V) \to \cF(U)$ is a Serre fibration, as desired.
Note the factorization of the restriction map through other restriction maps:
\[
\cF(H_i)
\longrightarrow
\cF(H_{i-1})
\longrightarrow
\dots
\longrightarrow
\cF(H_1)
\longrightarrow
\cF(U)
~.
\]
Using that the composition of Serre fibrations is again a Serre fibration, we are reduced to proving the restriction map $\cF(H_i) \to \cF(H_{i-1})$ is a Serre fibration.
Recall the identities $H_i = H_{i-1} \cup D_i$ with $H_{i-1} \cap D_i = A_i$.
The restriction map $\cF(H_i) \to \cF(H_{i-1})$ fits into a cube among spaces:
\[
\begin{tikzcd}
	& {\cF(H_i)} \\
	{\cF(H_{i-1})} & {\cE(H_i)} & {\cF(D_i)} \\
	{\cE(H_{i-1})} & {\cF(A_i)} & {\cE(D_i)} \\
	& {\cE(A_i)}
	\arrow[from=1-2, to=2-1]
	\arrow[from=1-2, to=2-2]
	\arrow[from=1-2, to=2-3]
	\arrow[from=2-1, to=3-1]
	\arrow[from=2-1, to=3-2]
	\arrow[from=2-2, to=3-1]
	\arrow[from=2-2, to=3-3]
	\arrow[from=2-3, to=3-2]
	\arrow[from=2-3, to=3-3]
	\arrow[from=3-1, to=4-2]
	\arrow[from=3-2, to=4-2]
	\arrow[from=3-3, to=4-2]
\end{tikzcd}
~.
\]
The assumption that $\cE$ is a flexible presheaf ensures each of the maps in the lower square are Serre fibrations.
The assumption that $\cF \to \cE$ is a relative sheaf implies this cube is a limit.
Consequently, the restriction map $\cF(H_i) \to \cF(H_{i-1})$ is a Serre fibration provided the restriction map $\cF(D_i) \to \cF(A_i)$ is a Serre fibration.
Through the identifications $D_i \cong \DD^{k} \times \DD^{n\text{-}k}$ and $A_i \cong ( S^{k\text{-}1} \times \DD^1) \times \DD^{n\text{-}k}$, we are therefore reduced to showing the restriction map
\[
\cF(\DD^{k} \times \DD^{n\text{-}k})
\longrightarrow
\cF( ( S^{k\text{-}1} \times \DD^1) \times \DD^{n\text{-}k})
\]
is a Serre fibration for all $0\leq k <n$.  This is exactly as in Proposition~3 \cite{haefliger} (which is further elucidated in Figure~3.5 in the proof of Proposition~3.14 \cite{geiges}). This gives that the maps
\[
\cF(\DD^k\times \DD^{\nk}) \longrightarrow \cF(S^{{\mathit{k}\text{-}1}}\times\DD^1\times\DD^{\nk})
\]
are Serre fibrations for $k<n$.
\end{proof}

\subsection{Embedding-immersions}

In this section, we fix a smooth fiber bundle $\pi: E\ra N$, where $E$ and $N$ are manifolds with boundary and $\pi$ preserves boundary.

\begin{definition}\label{def.emb.imm}
For $M$ a smooth manifold, an \bit{embedding-immersion} of $M$ into $E\ra N$ is a boundary-preserving embedding $M\hookrightarrow E$ with the property that the composite map $M\ra E\ra N$ is an immersion. The topological space of embedding-immersions of $M$ into $E\ra N$ is the subspace
\[
\EmbImm\bigl(M, E\ra N\bigr) \subset \Emb(M,E)
\]
consisting of embedding-immersions. An \bit{immersion-immersion} of $M$ into $E\ra N$ is a boundary-preserving immersion $M\hookrightarrow E$ with the property that the composite map $M\ra E\ra N$ is an immersion. The topological space of immersion-immersions of $M$ into $E\ra N$ is the subspace
\[
\ImmImm\bigl(M, E\ra N\bigr) \subset \Imm(M,E)
\]
consisting of immersion-immersions. 
\end{definition}

\begin{definition}
Let $M$ be a manifold with boundary. The space of \bit{formal immersions} of $M$ into $N$ is the pullback
\[
\xymatrix{
\fImm(M,N)\ar[r]\ar[d]&{\sf Vect}^{\sf inj}(\sT M ,\sT N)\ar[d]\\
{\sf Vect}^{\sf inj}(\sT\partial M ,\sT\partial N)\ar[r]&{\sf Vect}^{\sf inj}(\sT M_{|\partial M} ,\sT N_{|\partial N})~.}
\]
\end{definition}

\begin{definition}
Let $M$ be a manifold with boundary. The space of formal immersions of $M$ into $E\ra N$ is the subspace
\[
\fImm(M, E\ra N) \subset \fImm(M,E)
\]
consisting of those $\sT M \ra \sT E$ for which the both of the composites $\sT M \ra \sT E \ra \sT N$ and $\sT \partial M \to \sT \partial E \to \sT \partial N$ are injective.  
\end{definition}

\begin{definition}
Let $\pi: E\ra N$ be a smooth fiber bundle, where $E$ and $N$ are manifolds with boundary, and $\pi$ is a boundary-preserving map. For $M$ a smooth manifold, the space of embedding-formal-immersions of $M$ into $E\ra N$ is the homotopy pullback
\[
\xymatrix{
\EmbfImm(M, E\ra N)\ar[d]\ar[r]&\fImm(M, E \ra N)\ar[d]\\
\Emb(M,E)\ar[r]&\fImm(M, E)~.}
\]
The space of immersion-formal-immersions of $M$ into $E\ra N$ is the homotopy pullback
\[
\xymatrix{
\ImmfImm(M, E\ra N)\ar[d]\ar[r]&\fImm(M, E \ra N)\ar[d]\\
\Imm(M,E)\ar[r]&\fImm(M, E)~.}
\]
\end{definition}

\begin{prop}\label{prop.deriv.of.embimm}
The natural maps $\Emb(-, -)\ra \Imm(-,-)\ra \fImm(-,-)$ induce equivalences
\[
\sD\Imm(-,N) \simeq \fImm(-,N)~,
\]
\[
\sD\Emb(-,E)\simeq \fImm(-,E)~, 
\]
\[
\sD\EmbfImm(-,E\ra N)\simeq \fImm(-,E\ra N)~. 
\]
\end{prop}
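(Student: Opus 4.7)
The plan is to reduce all three equivalences to two ingredients: first, that $\fImm(-,N)$, $\fImm(-,E)$, and $\fImm(-,E\ra N)$ are all already sheaves (being spaces of sections of fiber bundles over the source), so they each coincide with their own h-sheafifications; and second, that on Euclidean disks and half-disks the natural maps $\Emb \to \Imm \to \fImm$ are all equivalences. The first point follows directly from the definition of $\fImm$ as a homotopy pullback of mapping spaces of vector bundle injections, which is manifestly a homotopy sheaf in the source manifold, and coincides with its restriction--right-Kan-extension from $\Disk_n^{\partial,\leq 1}$.

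First I would prove the disk equivalences. For $Z$ a smooth manifold with boundary and $D \in \{\RR^n, \RR^{n\text{-}1}\times\RR_{\geq 0}\}$, evaluation at the origin defines compatible maps
\[
\Emb(D, Z) ~\longrightarrow~ \Imm(D, Z) ~\longrightarrow~ \fImm(D, Z) ~\longrightarrow~ {\sf Vect}^{\sf inj}(\sT_0 D, \sT Z)
~,
\]
and each of the three maps is a homotopy equivalence: the first because an embedded (respectively immersed) disk deformation-retracts onto its $\infty$-jet at $0$ by rescaling, and the last because $D$ is contractible and boundary-preservation is recorded by the injection condition. Specializing to $Z = N$ and $Z = E$ proves the first two equivalences of the proposition: since $\Emb(-,E)\to \fImm(-,E)$ and $\Imm(-,N) \to \fImm(-,N)$ are equivalences after restriction along $\iota\colon\Disk_n^{\partial,\leq 1}\hookrightarrow\Mfld_n^\partial$, applying $\iota_\ast$ and invoking that the targets are already sheaves (so $\iota_\ast\iota^\ast \fImm \simeq \fImm$) gives the claimed equivalences.

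Second, for the third equivalence I would use that $\sD = \iota_\ast \iota^\ast$ preserves finite limits, being a composite of restriction with a right Kan extension. Applying $\sD$ to the defining homotopy pullback square of $\EmbfImm$ and inserting the first two equivalences yields
\[
\sD\EmbfImm(M, E\ra N)
~\simeq~
\sD\Emb(M,E) \underset{\sD\fImm(M,E)}\times \sD\fImm(M, E\ra N)
~\simeq~
\fImm(M,E) \underset{\fImm(M,E)}\times \fImm(M,E\ra N)
~\simeq~
\fImm(M, E\ra N)
~,
\]
as desired.

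The only nontrivial step is the first one, and the hard part will be handling the half-disk $D = \RR^{\mathit{n}\text{-}1}\times\RR_{\geq 0}$ carefully: one must verify that boundary-preserving embeddings (respectively immersions, formal immersions) of $D$ into a manifold with boundary retract onto the $\infty$-jet at the origin on the boundary, which reduces to the parametrized openness of the embedding condition and the existence of collaring. Once that case is established, the rest of the argument is formal, and the relative h-principle apparatus developed earlier in the section is not needed for this proposition: the three equivalences follow purely from the disk calculation together with the fact that $\fImm$ is a sheaf.
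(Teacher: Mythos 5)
Your proposal is correct and matches the paper's own argument: both rest on the two observations that $\fImm$ is already a homotopy sheaf (hence agrees with its h-sheafification) and that $\Emb \ra \Imm \ra \fImm$ are equivalences on $\RR^n$ and $\RR^n_{\geq 0}$. Your handling of the third equivalence by applying $\sD$ to the defining pullback square and using that $\sD$ preserves finite limits is a harmless cosmetic variant of the paper's uniform criterion applied directly to $\EmbfImm(-,E\ra N)\ra\fImm(-,E\ra N)$.
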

\begin{proof}
It is sufficient to observe the following criterion for h-sheafification, for $\cF$ a presheaf of topological spaces on $\Mfld_n^{\partial}$: A natural transformation $\cF \ra \cG$ exhibits $\cG$ as the h-sheafification $\sD\cF$ if: first, $\cG$ is a homotopy sheaf; and second, values $\cF(\RR^n)\ra \cG(\RR^n)$ and $\cF(\RR^n_{\geq 0}) \ra \cG(\RR^n_{\geq 0}$ are homotopy equivalences. The result then follows from the observation first that $\fImm(-,E)$ is a homotopy sheaf for all $E$, and second the natural equivalences $\Emb(\RR^n,-)\simeq \Imm(\RR^n,-)\simeq \fImm(\RR^n,-)$ and $\Emb(\RR_{\geq 0}^n,-)\simeq \Imm(\RR_{\geq 0}^n,-)\simeq \fImm(\RR_{\geq 0}^n,-)$.
\end{proof}

\begin{cor}
Let $\pi: E\ra N$ a smooth fiber bundle, where $E$ and $N$ are manifolds with boundary, and $\pi$ is a boundary-preserving map. 
Then the presheaf of embedding-formal-immersions $\EmbfImm(-,E\ra N)$ adheres to the h-principle relative to the presheaf of embeddings $\Emb(-,E)$, in the sense of Definition~\ref{def.rel.h.principle}.\end{cor}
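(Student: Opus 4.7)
The plan is to deduce the corollary directly from Proposition~\ref{prop.deriv.of.embimm}, by recognizing the relative h-principle square as the very homotopy pullback that defines $\EmbfImm(-, E\ra N)$.

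First, I would unpack the relative h-principle (Definition~\ref{def.rel.h.principle}) in the present setting. Adherence of $\EmbfImm(-,E\ra N)$ to the h-principle relative to $\Emb(-,E)$ is, by definition, the statement that the square
\[
\xymatrix{
\EmbfImm(-, E\ra N) \ar[r]\ar[d]& \sD\EmbfImm(-, E\ra N)\ar[d]\\
\Emb(-, E)\ar[r] & \sD\Emb(-, E)
}
\]
is a homotopy pullback of presheaves on $\Mfld_n^{\partial}$, where the horizontal arrows are the h-sheafification units and the left vertical arrow is the canonical map arising from the defining homotopy pullback of $\EmbfImm$.

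Next, I would invoke Proposition~\ref{prop.deriv.of.embimm} to substitute $\sD\Emb(-,E) \simeq \fImm(-,E)$ and $\sD\EmbfImm(-,E\ra N) \simeq \fImm(-,E\ra N)$. Crucially, these equivalences are produced by factoring the evident forgetful maps $\Emb(-, E) \to \fImm(-, E)$ and $\EmbfImm(-, E\ra N) \to \fImm(-, E\ra N)$ through the right Kan extension from $\Disk_n^{\partial, \leq 1, \op}$, so under the identifications the h-sheafification units get identified with these forgetful maps to the spaces of formal derivatives. After this substitution, the square above becomes
\[
\xymatrix{
\EmbfImm(-, E\ra N) \ar[r]\ar[d]& \fImm(-, E\ra N)\ar[d]\\
\Emb(-, E)\ar[r] & \fImm(-, E)~,
}
\]
which is precisely the homotopy pullback square that defines $\EmbfImm(-, E\ra N)$. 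The relative h-principle follows.

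The only point requiring care is the compatibility asserted in the middle paragraph: that the h-sheafification unit $\cF \to \sD\cF$ is identified, under the equivalence $\sD\cF \simeq \fImm(-, \cdot)$ of Proposition~\ref{prop.deriv.of.embimm}, with the natural forgetful map into formal immersions. This is genuinely a naturality check rather than a computation, since each of the three equivalences in Proposition~\ref{prop.deriv.of.embimm} is witnessed by a specific natural transformation out of $\cF$ factoring its forgetful map to formal derivatives through $\iota_\ast \iota^\ast \cF$; tracing this factorization gives the needed identification, and no further hard input (no additional appeal to Theorem~\ref{theorem.relative.Gromov.hprinciple} or to flexibility arguments) is required beyond what Proposition~\ref{prop.deriv.of.embimm} already supplies.
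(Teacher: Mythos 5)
Your proposal is correct and follows essentially the same route as the paper: the paper's proof likewise observes that, after applying Proposition~\ref{prop.deriv.of.embimm} to identify the h-sheafifications with the formal-immersion presheaves, the relative h-principle square is exactly the defining homotopy pullback of $\EmbfImm(-,E\ra N)$. Your explicit attention to the naturality of the identification of the h-sheafification units with the forgetful maps is a detail the paper leaves implicit, but it is the same argument.
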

\begin{proof}
We must prove that for every smooth manifold with boundary $M$, the diagram
\[
\xymatrix{
\EmbfImm(M,E\ra N)\ar[r]\ar[d]&\sD\EmbfImm(M,E\ra N)\ar[d]\\
\Emb(M,E)\ar[r]&\sD\Emb(M,E)}
\]
is a homotopy pullback. This follows immediately from the definition and Proposition~\ref{prop.deriv.of.embimm}.
\end{proof}

The following is our application of the relative form of Gromov's h-principle.

\begin{theorem}\label{theorem.hprinciple.embimm}
Let $\pi: E\ra N$ be a smooth fiber bundle, where $E$ and $N$ are manifolds with boundary, and $\pi$ is a boundary-preserving map.
Let $M$ be an open manifold. The natural map
\[
\EmbImm(M, E\ra N) \longrightarrow \EmbfImm(M,E\ra N)
\]
is a weak homotopy equivalence.
\end{theorem}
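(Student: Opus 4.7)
The plan is to directly apply the relative Gromov h-principle of Theorem~\ref{theorem.relative.Gromov.hprinciple} to the inclusion $\EmbImm(-, E \to N) \subset \Emb(-, E) \subset \Map(-, E)$, then identify the resulting homotopy pullback square with the defining pullback of $\EmbfImm(M, E\to N)$.

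First, I would verify the two hypotheses of Theorem~\ref{theorem.relative.Gromov.hprinciple}. Take $\cE := \Emb(-, E)$ and $\cF := \EmbImm(-, E\to N)$. Then $\cE \subset \Map(-, E)$ is an open subpresheaf (embeddings form an open subset of smooth maps in the weak Whitney topology) and is flexible on open $n$-manifolds by the parametrized isotopy extension theorem. The inclusion $\cF \subset \cE$ is open: the condition on $f\in \Emb(U,E)$ that $\pi\circ f$ is an immersion is the pointwise nonvanishing of an open condition on derivatives. It is also a relative sheaf: given $f\in \Emb(U,E)$ and a cover $\cU$ of $U$ with $f_{|V} \in \EmbImm(V, E\to N)$ for each $V\in \cU$, the condition that $\pi\circ f$ is an immersion is pointwise (hence local on $U$), so $f\in \EmbImm(U, E\to N)$. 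Theorem~\ref{theorem.relative.Gromov.hprinciple} then implies that for $M$ open,
\[
\xymatrix{
\EmbImm(M, E\to N) \ar[r]\ar[d] & \sD\EmbImm(M, E\to N)\ar[d] \\
\Emb(M,E) \ar[r] & \sD\Emb(M,E)
}
\]
is a homotopy pullback.

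Next, I would identify the two h-sheafifications on the right. Proposition~\ref{prop.deriv.of.embimm} supplies the equivalence $\sD\Emb(M,E)\simeq \fImm(M,E)$. For the other vertex, note that $\fImm(-, E\to N)$ is already a homotopy sheaf on $\Mfld_n^\partial$, as it is the homotopy pullback of two fiberwise (hence sheaf-theoretic) tangent-level conditions. Therefore $\sD\EmbImm(M, E\to N) \simeq \fImm(M, E\to N)$ will follow provided the natural map $\EmbImm(D, E\to N) \to \fImm(D, E\to N)$ is a weak equivalence for each $D\in \Disk_n^{\partial,\leq 1}$. On a disk $D\cong \RR^n$ or $\RR^{\n1}\times \RR_{\geq 0}$, contracting the domain to a single (boundary) point deformation-retracts $\EmbImm(D, E\to N)$ onto its space of $1$-jets at this point, which is exactly a point in the fiber of $\fImm(D, E\to N)$ over the corresponding point of $D$; this is a standard direct check analogous to the one underlying Proposition~\ref{prop.deriv.of.embimm}.

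Combining the two identifications, the homotopy pullback square supplied by the relative h-principle agrees with the defining homotopy pullback of $\EmbfImm(M, E\to N) = \Emb(M,E)\underset{\fImm(M,E)}\times^h \fImm(M, E\to N)$. This identification is compatible with the comparison map from the excerpt, so $\EmbImm(M, E\to N) \to \EmbfImm(M, E\to N)$ is a weak homotopy equivalence. The main technical obstacle will be executing the check that $\sD\EmbImm \simeq \fImm(-, E\to N)$: while the computation on $\RR^n$ is routine, one must be careful on the half-space $\RR^{\n1}\times \RR_{\geq 0}$ to keep track of boundary-preservation and the fact that $\pi$ is required to be boundary-preserving, so that the $1$-jet data includes the relevant boundary tangent subspaces.
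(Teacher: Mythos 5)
Your proposal is correct and follows essentially the same route as the paper: the paper's proof also applies Theorem~\ref{theorem.relative.Gromov.hprinciple} with $\cE=\Emb(-,E)$ and $\cF=\EmbImm(-,E\to N)$, verifying openness, flexibility via isotopy extension, and the relative sheaf condition via locality of the immersion condition. The remaining identifications of the h-sheafifications and of the two pullback squares, which you spell out, are exactly what the paper leaves implicit via Proposition~\ref{prop.deriv.of.embimm} and the observation that two presheaves adhering to the relative h-principle over a flexible presheaf agree once they agree on $\RR^n$ and $\RR^n_{\geq 0}$.
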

\begin{proof}
We apply Theorem~\ref{theorem.relative.Gromov.hprinciple} to the case $\cE = \Emb(-, E)$ and $\cF = \EmbImm(-, E\ra N)$. Note that the conditions are satisfied: $\Emb(M,E)$ is open in $\Map(M,E)$ and is flexible by the parametrized isotopy extension theorem. Likewise, $ \EmbImm(M, E\ra N)$ is open in $\Emb(M,E)$, and $\EmbImm(-,E\ra N)$ is a sheaf relative to $\Emb(-,E)$, since an embedding $M\hookrightarrow E$ being an embedding-immersion is a local condition on $M$.
\end{proof}

The Smale--Hirsch h-principle for immersions $\Imm(M, N)$ holds both if the source $M$ is an open manifold, and also if the dimension of the target $N$ is strictly greater than that of the source. The same is true for embedding-immersions, as we show in the next proposition, and the proof is identical.
\begin{prop}\label{prop.emb.imm.n.open}
Let $W$ be a $k$-manifold with boundary, and let $E \xra{\pi} N$ be a boundary-preserving smooth fiber bundle in which $N$ is an $n$-manifold with boundary. Assume further that the inequality $k<n$ holds.
 The map
\[
\EmbImm(W, E\ra N) \longrightarrow \EmbfImm(W,E\ra N)
\]
is a weak homotopy equivalence.

\end{prop}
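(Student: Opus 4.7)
The plan is to mimic the proof of Theorem~\ref{theorem.hprinciple.embimm} almost verbatim, replacing the openness hypothesis on the source with the dimension inequality $k<n$. As in that theorem, I would apply the relative Gromov h-principle to the pair of presheaves
\[
\cF = \EmbImm(-, E\ra N) ~\subset~ \cE = \Emb(-, E)
\]
on the site $\Mfld_k^{\partial}$ of $k$-manifolds with boundary. Both $\cE \subset \Map(-, E)$ and $\cF \subset \cE$ are open subpresheaves, and $\cF \subset \cE$ is a relative sheaf since the condition that $\pi \circ (-)$ be an immersion is open and local on the source. Flexibility of $\cE$ on open $k$-manifolds is the parametrized isotopy extension theorem. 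Once $\cF$ is shown to be flexible on the manifolds of interest, Lemma~\ref{lemma.flex.rel.hprin} yields the relative h-principle, and Proposition~\ref{prop.deriv.of.embimm} identifies $\sD\EmbImm(-, E\ra N)$ with $\EmbfImm(-, E\ra N)$, giving the desired weak equivalence.

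The new input, compared with Theorem~\ref{theorem.hprinciple.embimm}, is that $W$ is no longer assumed open, so the handle-decomposition argument inside Theorem~\ref{theorem.relative.Gromov.hprinciple} must now be run for handles of \emph{every} index $0\leq j\leq k$, including the top index $j=k$. For handle indices $j<k$, the microflexibility argument cited there (Proposition~3 of~\cite{haefliger}, see also Figure~3.5 of~\cite{geiges}) applies without change. The entire point of Proposition~\ref{prop.emb.imm.n.open} is that the strict inequality $k<n$ supplies the missing fibration for $j=k$: namely
\[
\EmbImm(\DD^k, E\ra N)\longrightarrow \EmbImm(S^{k\text{-}1}\times\DD^1, E\ra N)~.
\]
This is the one step where the argument genuinely diverges from that of Theorem~\ref{theorem.hprinciple.embimm}, and it will be the main obstacle.

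To establish this Serre fibration, I would argue by a parametrized extension in two stages. First, compose with $\pi$ to obtain a family of immersions $S^{k\text{-}1}\times\DD^1\to N$; since $\dim N = n > k$, the Smale--Hirsch microflexibility for immersions into a strictly higher-dimensional target allows such a family to be extended across a $k$-handle in $N$, producing a family of immersions $\DD^k\to N$ extending the given one on the collar. Second, lift this family back to $E$ using a fiberwise tubular neighborhood for $\pi\colon E\ra N$: the given family of embeddings of the collar $S^{k\text{-}1}\times\DD^1$ into $E$ determines sections of the pulled-back bundle over the corresponding family of immersions, and extending these sections over the filled-in $\DD^k$ is unobstructed since fibers of $\pi$ are manifolds. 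Finally, the injectivity of the resulting maps $\DD^k\ra E$ is an open and generic condition in the parameter, again because $\dim E\geq n > k$, so a parametric general-position perturbation restores the embedding condition without disturbing the data on the collar. This completes the flexibility of $\cF$ on closed $k$-manifolds with boundary, and hence the proposition.
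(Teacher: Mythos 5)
Your reduction to the flexibility of $\cF = \EmbImm(-,E\ra N)$ over a top-index handle is the right diagnosis of where the difficulty lives, but the argument you give for the key Serre fibration
\[
\EmbImm(\DD^k, E\ra N)\longrightarrow \EmbImm(S^{k\text{-}1}\times\DD^1, E\ra N)
\]
does not go through, and this is a genuine gap rather than a missing detail. First, the section-extension step is obstructed: in the Serre lifting problem one must extend a section of the pulled-back bundle from $(\partial D^q\times \DD^k)\cup (D^q\times S^{k\text{-}1}\times\DD^1)$ to $D^q\times\DD^k$; after trivializing over the contractible base this is a map-extension problem whose obstruction lies in $\pi_{q+k-1}$ of the fiber of $\pi$, and "the fibers are manifolds" does not kill it. Second, the closing general-position step needs $2k<\dim E$ for generic maps $\DD^k\ra E$ to be injective, and that inequality does not follow from $k<n$; moreover the perturbation must be performed rel the collar and rel $\partial D^q$ while preserving the immersion condition downstairs, which is not a generic-position statement. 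Absent this fibration, the handle induction inside Theorem~\ref{theorem.relative.Gromov.hprinciple} cannot be pushed past index $k$, so the whole strategy stalls exactly where you flagged the "main obstacle."

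The paper sidesteps the top-handle problem entirely. Given $f\in\EmbfImm(W,E\ra N)$, it forms the normal bundle $V$ of the (formal) immersion $W\ra N$, a rank-$(\nk)$ bundle whose total space is an $n$-manifold that is open in the sense required by Theorem~\ref{theorem.hprinciple.embimm}; that theorem then applies to $V$ with no new flexibility input. The comparison between $W$ and $V$ is done in two steps: surjectivity on $\pi_0$ via a tubular-neighborhood construction extending an embedding-formal-immersion of $W$ to one of $V$, and an equivalence on homotopy fibers obtained by passing to $\ImmImm_f(V,E\ra N)\ra\ImmfImm_f(V,E\ra N)$ (Smale--Hirsch) together with a rescaling argument: a compact family of immersions of $V$ extending the fixed embedding $f$ of the zero-section can be shrunk fiberwise, using the inverse function theorem and the tube lemma, until it becomes a family of embeddings. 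If you want to salvage a direct handle-by-handle proof you would need an independent argument that the restriction to top-index handles is a fibration; otherwise the thickening-to-the-normal-bundle route is the one to follow.
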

\begin{proof}
Observe the commutative diagram among topological spaces
\begin{equation}
\label{e.end}
\begin{tikzcd}
	{\underset{[V]} \coprod \EmbImm(V,E\ra N)} & {\underset{[V]} \coprod \EmbfImm(V,E \ra N)} \\
	{\EmbImm(W,E\ra N)} & {\EmbfImm(W, E \ra N)}
	\arrow["{( \sD_V)_{[V]}}", from=1-1, to=1-2]
	\arrow[from=1-1, to=2-1]
	\arrow[from=1-2, to=2-2]
	\arrow["{\sD_W}"', from=2-1, to=2-2]
\end{tikzcd}
\end{equation}
in which the coproducts are indexed by isomorphism classes of rank-$(\nk)$ vector bundles over $W$, and the downward maps are restrictions along the zero-sections.  
Theorem~\ref{theorem.hprinciple.embimm} implies the top horizontal map is a weak homotopy equivalence.
Therefore, to prove $\sD_W$ is a weak homotopy equivalence, it suffices to prove the following two assertions.
\begin{enumerate}
\item
The map $\sD_W$ is surjective on path-components.

\item
For each $(W \xra{f} E)\in \EmbImm(W,E \to N)$, and each isomorphism class $[V]$ of rank-$(\nk)$ vector bundles over $W$, the map $\sD_V$ induces a weak homotopy equivalence between vertical homotopy fibers over $f$.  

\end{enumerate}
To prove~(1), 
we first fix $(W \xra{f} E) \in \EmbfImm(W,E \to N)$.
Consider the normal bundle of the formal immersion $W \xra{\pi f} N$: It is a rank-$(\nk)$ vector bundle $V$ over $W$ fitting into a short exact sequence of vector bundles over $W$:
\[
0
\longrightarrow
\sT W
\longrightarrow
(\pi f)^\ast \sT N
\longrightarrow
V
\longrightarrow
0
~.
\]
Choose a splitting of this short exact sequence, along with a splitting of the short exact sequence $0 \ra \Ker(\sD\pi)\ra \sT E\ra\pi^\ast\sT N\ra 0$.
These choices determine a linear embedding of $V$ into the normal bundle of the embedding $W \xra{f} E$.
Applying the tubular neighborhood theorem to $f$, we have an embedding $V \xra{\w{f}} E$.
By construction, the composite $V \xra{\pi \w{f}} N$ is an immersion extending $W \xra{\pi f} N$ and witnessing the normal bundle of $\pi f$. 
We conclude the right vertical map is surjective on path-components.
Theorem~\ref{theorem.hprinciple.embimm} implies the top horizontal map is surjective on path-components.
From the 2-of-3 property of surjections among sets, we conclude that the bottom horizontal map is surjective on path-components, as desired.

We now prove~(2).
Fix $(W \xra{f} E) \in \EmbImm(W,E \to N)$.
Let $[V]$ be a rank-$(\nk)$ vector bundle over $W$.
If $V\to W$ is not the normal bundle of the immersion $W \xra{\pi f} N$, then the homotopy fiber of the restriction map $\EmbImm(V , E \to N) \to \EmbImm(W,E \to N)$ is empty.
So take $V \to W$ to be the normal bundle of $\pi f$.
Denote the respective vertical fibers of~(\ref{e.end}) over $f$ as
\begin{equation}
\label{e.end2}
\EmbImm_f(V,E \to N)
\longrightarrow
\EmbfImm_f(V,E\to N)
~.
\end{equation}
It follows from the parametrized isotopy extension theorem that the vertical maps in~(\ref{e.end}) are Serre fibrations.
So to prove~(2), it is sufficient to prove the map~(\ref{e.end2}) is a weak homotopy equivalence.
Consider the topological subspaces
\[
\ImmImm_f(V,E \to N)
~:=~
\left\{
V \xra{\w{f}} E \in \ImmImm(V,E \to N)
\mid \w{f}_{|W} = f
\right\}
~\subset~
\ImmImm(V,E \to N)
\]
and
\[
\ImmfImm_f(V,E \to N)
~:=~
\left\{
V \xra{\w{f}} E \in \ImmfImm(V,E \to N)
\mid \w{f}_{|W} = f
\right\}
~\subset~
\ImmfImm(V,E \to N)
~.
\]
Observe the inclusions among topological spaces:
\begin{equation}
\label{e.end3}
\xymatrix{
\EmbImm_f(V,E \to N)
\ar[d]
\ar[rr]^-{(\ref{e.end2})}
&&
\EmbfImm_f(V,E \to N)
\ar[d]
\\
\ImmImm_f(V,E \to N)
\ar[rr]
&&
\ImmfImm_f(V,E \to N)
.
}
\end{equation}
to prove the top horizontal map is a weak homotopy equivalence, we prove the three other maps in this diagram are weak homotopy equivalences.
An application of h-principle for immersions of Smale--Hirsch gives that the bottom horizontal map in~(\ref{e.end3}) is a weak homotopy equivalence.
Our arguments proving the vertical maps in~(\ref{e.end3}) are weak homotopy equivalences are identical, so we only provide that for the left vertical map.

Once and for all, choose a smooth map $\RR_{\geq 0} \times (0,1] \xra{\rho} \RR_{\geq 0}$ with the following properties.
\begin{itemize}
\item
For each $t\in (0,1]$, the map $\RR_{\geq 0} \xra{\rho_t} \RR_{\geq 0}$ is an open embedding.

\item
The map $\RR_{\geq 0} \xra{\rho_1} \RR_{\geq 0}$ is the identity map.

\item
For each $0<t < t' \leq 1$, there is containment of the closure $\overline{{\sf Image}(\rho_t)} \subset {\sf Image}(\rho_{t'})$.  

\item
For each open neighborhood $\{0\} \subset O \subset \RR_{\geq 0}$, there exists $t\in (0,1]$ such that ${\sf Image}(\rho_t) \subset O$.

\item
$\rho$ restricts as the projection map near $\{0\} \times (0,1] \subset \RR_{\geq 0} \times (0,1]$.

\end{itemize}
Once and for all, choose an inner product on the vector bundle $V \to W$. 
Consider the smooth map
\[
\varphi
\colon
V \times (0,1] 
\longrightarrow
V
~,\qquad
(v,t)
\longmapsto
\varphi_t(v)
:=
\rho_t(\lVert v \rVert^2) \cdot v
~,
\]
given by scaling by $\rho$.

Let $K$ be a compact manifold with boundary.
Consider a pair of horizontal maps fitting into a solid diagram among topological spaces:
\begin{equation}
\label{e.fin}
\xymatrix{
\partial K 
\ar[d]
\ar[rr]^-{F_0}
&&
\EmbImm_f(V,E\to N)
\ar[d]
\\
K 
\ar@{-->}[urr]_-{\w{F}}
\ar[rr]^-{F}
&&
\ImmImm_f(V,E\to N)
.
}
\end{equation}
To prove the right vertical map in this diagram is a weak homotopy equivalence it is sufficient to construct a filler, up to homotopy, in this diagram.
By smooth approximation, we can homotope $F$ so that it is adjoint to a smooth map $K \times V \xra{F} E$ with the property that, for each $k\in K$, the restriction $V \xra{ F_{|\{k\} \times V} } E$ belongs to $\ImmImm_f(V,E \to N)$, and, for each $k\in \partial K$, the restriction $V \xra{ F_{|\{k\} \times V} } E$ belongs to $\EmbImm_f(V,E \to N)$.
Now, the inverse function theorem implies there exists an open neighborhood of $K \times W \subset K \times V$ on which $F$ is an embedding.
Using that $K$ is compact, the tube lemma ensures the existence of $\epsilon \in (0,1]$ such that the composite map $K \times V \xra{ K \times \varphi_\epsilon} K \times V \xra{F} E$ is an embedding.  
Observe the following facts about this composite $F \circ (K \times \varphi_\epsilon)$, 
all of which follow from the construction of $\varphi_t$.
\begin{itemize}
\item
There is membership $F \circ (K\times \varphi_\epsilon) \in \EmbImm_f(V,E \to N)$.

\item
The family $F \circ (K \times \varphi_t)$ defines a path in $\ImmImm_f(V,E \to N)$ between $F \circ (K\times \varphi_\epsilon)$ and $F$.  

\item
The family $F \circ (\partial K \times \varphi_t)$ defines a path in $\EmbImm_f(V,E \to N)$ between $F \circ (\partial K\times \varphi_\epsilon)$ and $F_0$.  

\end{itemize}
The adjoint of the map $K \times V \xra{F \circ (K \times \varphi_\epsilon)} E$ supplies the desired homotopy filler $\w{F}$ in~(\ref{e.fin}).

\end{proof}

In the proof of the Tangle Hypothesis, we make use of a modification of the preceding result in which framings (in particular, trivializations of a Gauss map) are added. Let $W$ be a $k$-manifold with boundary.
Let $E \xra{\pi} N$ be a boundary-preserving smooth fiber bundle in which $N$ is a framed $n$-manifold with boundary.
The framing of $N$ determines map
\[
\EmbImm(W,E\ra N)
\longrightarrow
\Map(W , \Gr_k(n))
~,\qquad
(W \xra{f} E)
\longmapsto 
\left( W \xra{~{\sf Gauss}_{\pi f}~} \Gr_k(n) \right)
~.
\]
Denote the homotopy fiber of this map over the constant map $W \xra{\ast} \Gr_1(n)$ as
\[
\EmbImm^{\fr}(W, E\ra N)
~.
\]
Note that a point in $\EmbImm^{\fr}(W, E\ra N)$ is an embedding-immersion $W \xra{f} E \xra{\pi} N$ together with a nullhomotopy of the Gauss map $W \xra{{\sf Gauss}_{\pi f}} \Gr_k(n)$. 
Likewise, the framing of $N$ determines map
\[
\EmbfImm(W,E\ra N)
\longrightarrow
\Map(W , \Gr_k(n))
~,\qquad
(W \xra{f} E)
\longmapsto 
\left( W \xra{~{\sf Gauss}_{\pi f}~} \Gr_k(n) \right)
~.
\]
Denote the homotopy fiber of this map over the constant map $W \xra{\ast} \Gr_1(n)$ as
\[
\EmbfImm^{\fr}(W, E\ra N)
~.
\]
We have the following corollary of Proposition~\ref{prop.emb.imm.n.open}. 
\begin{cor}\label{cor.hprinciple.framed.embimm}
Let $\pi: E\ra N$ be a smooth fiber bundle, where $E$ and $N$ are manifolds with boundary, $\pi$ is a boundary-preserving map, and $N$ is framed $n$-manifold.
Let $W$ be a compact $k$-dimensional manifold with boundary, with $k<n$. The natural map
\[
\EmbImm^{\fr}(W, E\ra N) \longrightarrow \EmbfImm^{\fr}(W,E\ra N)
\]
is a weak homotopy equivalence.
\end{cor}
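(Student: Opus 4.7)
The plan is to deduce this corollary directly from Proposition~\ref{prop.emb.imm.n.open} by exhibiting both framed spaces as homotopy fibers of a common Gauss-map fibration. First, I would unpack the definitions: both $\EmbImm^{\fr}(W, E\to N)$ and $\EmbfImm^{\fr}(W, E\to N)$ are, by construction, the homotopy fibers over the constant map $W \xra{\ast} \Gr_k(n)$ of the Gauss maps
\[
\EmbImm(W,E\to N) \longrightarrow \Map(W,\Gr_k(n))
\qquad \text{and} \qquad
\EmbfImm(W,E\to N) \longrightarrow \Map(W,\Gr_k(n))
~,
\]
where the target uses the framing of $N$ to trivialize $\sT N$. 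The natural forgetful map $\EmbImm \to \EmbfImm$ intertwines these two Gauss maps, because the formal immersion associated to a genuine embedding-immersion is its actual derivative; hence the composite injection $\sT W \to \sT N$, and therefore the resulting classifying map to $\Gr_k(n)$, agree on the nose.

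Next, I would assemble these compatibilities into the commutative diagram
\[
\xymatrix{
\EmbImm^{\fr}(W,E\to N)\ar[r]\ar[d] & \EmbImm(W,E\to N)\ar[r]\ar[d] & \Map(W,\Gr_k(n))\ar@{=}[d]\\
\EmbfImm^{\fr}(W,E\to N)\ar[r] & \EmbfImm(W,E\to N)\ar[r] & \Map(W,\Gr_k(n))
}
\]
in which the left-hand squares are homotopy pullbacks by the very definition of the framed spaces as homotopy fibers over the constant Gauss map. Proposition~\ref{prop.emb.imm.n.open}, valid precisely under the hypotheses $k < n$ and $W$ compact, asserts that the middle vertical arrow is a weak homotopy equivalence; the right-hand vertical arrow is the identity. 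The pasting lemma for homotopy pullbacks therefore implies that the left-hand vertical arrow is a weak homotopy equivalence, which is the desired statement.

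This argument is mostly bookkeeping, and the essential content is isolated in Proposition~\ref{prop.emb.imm.n.open}. The one point requiring genuine attention is the compatibility of the two Gauss maps under $\EmbImm \to \EmbfImm$, but this is immediate from the definitions, so I do not anticipate any real obstacle.
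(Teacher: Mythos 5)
Your proposal is correct and is essentially the paper's own proof: the paper also exhibits both framed spaces as homotopy fibers of the Gauss maps over $\Map(W,\Gr_k(n))$ and deduces the equivalence on fibers from Proposition~\ref{prop.emb.imm.n.open} applied to the map of total spaces. The only cosmetic difference is that the paper draws the two fiber sequences over a common base rather than phrasing the conclusion via pasting of homotopy pullback squares; the content is identical.
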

\begin{proof}
Consider the map of homotopy fiber sequences with base $\Map(W, \Gr_k(n))$:
\[
\xymatrix{
\EmbImm^{\fr}(W, E\ra N)\ar[d] \ar[rr]&& \EmbfImm^{\fr}(W,E\ra N)\ar[d]\\
\EmbImm(W, E\ra N)\ar[rr]\ar[dr]&& \EmbfImm(W,E\ra N)\ar[dl]\\
&\Map(W, \Gr_k(n))~.}
\]
Since the bottom horizontal map is a weak homotopy equivalence by Proposition~\ref{prop.emb.imm.n.open}, the top horizontal map is likewise a weak homotopy equivalence.
\end{proof}

\section{The 1-dimensional Cobordism Hypothesis}

Taking products with $\RR$ defines a functor
\[
\RR\times - \colon \bcD_{[\n1,n]}^{\sfr} \longrightarrow \bcD_{[n,n+1]}^{\sfr}
~,\qquad
D \mapsto \RR\times D
~.
\]
This functor carries $\RR^{\n1}$ to $\RR^{n}$, and carries closed covers to closed covers.
This functor determines a natural transformation between representable functors:
\[\begin{tikzcd}
	{\bcD_{[\n1,n]}^{\sfr}} \\
	&& \Spaces \\
	{\bcD_{[n,n+1]}^{\sfr}}
	\arrow["{\Map(\RR^{\n1},-)}", from=1-1, to=2-3]
	\arrow["\Downarrow"{marking, allow upside down}, shift right=5, draw=none, from=1-1, to=2-3]
	\arrow["{\RR \times -}"', from=1-1, to=3-1]
	\arrow["{\Map(\RR^n,-)}"', from=3-1, to=2-3]
\end{tikzcd}
~.
\]
From Definition~\ref{def.bord} of the tangle $(\infty,1)$-categories, this natural transformation defines a $\cE_{\n1}$-monoidal functor between flagged $(\infty,1)$-categories:
\begin{equation}
\label{e.bords}
\Bord_1^{\fr}(\RR^{\n1})
\longrightarrow
\Bord_1^{\fr}(\RR^{n})
~.
\end{equation}
Explicitly, this functor is given on spaces of objects by
\[
\left(W \subset \RR^{\n1} \right)
\longmapsto
\left(\{0\} \times W\subset \RR^n \right)
\]
and on spaces of morphisms by
\[
\left(W \subset \RR^{\n1} \times \DD^1 \right)
\longmapsto
\left(\{0\} \times W  \subset \RR^n \times \DD^1 \right)
~.
\]
Ignoring monoidal structures, the functors~(\ref{e.bords}) assemble as a diagram among flagged $(\infty,1)$-categories:
\begin{equation}
\label{e.seq.bords}
\Bord_1^{\fr}(\RR^{0})
\longrightarrow
\Bord_1^{\fr}(\RR^{1})
\longrightarrow
\Bord_1^{\fr}(\RR^{2})
\longrightarrow
\dots
~.
\end{equation}
The contractibility of $\Emb(W,\RR^{\oo})$ justifies the following definition.
\begin{definition}\label{def.bord.infty}
The \emph{flagged $(\oo,1)$-category of $1$-framed cobordisms} is the sequential colimit
\[
\Bord_1^{\fr}:= \underset{n \geq 0}\colim \Bord_1^{\fr}(\RR^{\n1})
~.
\]
\end{definition}

\begin{prop}
\label{t.bord.desc}
The flagged $(\infty,1)$-category $\Bord_1^{\fr}$ admits the following description.
\begin{enumerate}

\item
The space of objects
\[
\Obj(\Bord_1^{\fr})
~\simeq~
\left| \left\{ (S,\sigma) \right\} \right|
~\simeq~
\underset{r_+,r_- \geq 0} \coprod \sB \Sigma_{r_+} \times \sB \Sigma_{r_-}
\]
is the moduli space of signed finite sets (i.e., finite sets $S$ equipped with a map $S \xra{\sigma} \{\pm \}$).

\item
The space of morphisms
\[
\Mor(\Bord_1^{\fr})
~\simeq~
\Bigr|\Bigl\{ W \to \DD^1~,~ \varphi  \Bigr\}\Bigl|
~,
\]
is the moduli space of framed 1-dimensional cobordisms, which is the space of the following data:
\begin{itemize}
\item a compact smooth $1$-manifold $W$ with boundary;

\item a smooth map $W\to \DD^1$ such that the boundary of $W$ is the preimage of the boundary of $\DD^1$;

\item a framing $\sT W \overset{\varphi}\cong \epsilon^1_W$ of $W$.

\end{itemize}

\item
The source and target maps are given by:
\[
\Mor(\Bord_1^{\fr})
\xra{~s~}
\Obj(\Bord_1^{\fr})
~,\qquad
(W \to \DD^1 ,\varphi)
\longmapsto
(W_{|\partial_- \DD^1} , \varphi_- )
~,
\]
where, for $w \in W_{|\partial_- \DD^1}$ the sign $\varphi_-(w)$ is $+$ if and only if the inward-pointing vector at $w$ aligns with the framing of $W$ at $w$;
\[
\Mor(\Bord_1^{\fr})
\xra{~t~}
\Obj(\Bord_1^{\fr})
~,\qquad
(W \to \DD^1 ,\varphi)
\longmapsto
(W_{|\partial_+ \DD^1} , \varphi_+ )
~,
\]
where, for $w \in W_{|\partial_+ \DD^1}$ the sign $\varphi_+(w)$ is $+$ if and only if the outward-pointing vector at $w$ aligns with the framing of $W$ at $w$.

\item
The composition map
\[
\Mor(\Bord_1^{\fr})
\underset{\Obj(\Bord_1^{\fr})} \times
\Mor(\Bord_1^{\fr})
\longrightarrow
\Mor(\Bord_1^{\fr})
~,\qquad
\left(
(W \to \DD^1 ,\varphi)
,
(W' \to \DD^1 ,\varphi')
\right)
\longmapsto
( W \cup W' , \varphi \cup \varphi' )
~,
\]
is given by concatenation of framed cobordisms.

\end{enumerate}

\end{prop}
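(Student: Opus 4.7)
The plan is to compute the filtered colimit defining $\Bord_1^{\fr}$ at the level of underlying simplicial spaces and then interpret the result as the claimed moduli of framed $1$-cobordisms. Since $\fCat_{(\infty,1)}$ sits inside $\Fun(\bDelta^{\op}, \Spaces)$ as the full subcategory on Segal simplicial spaces, and the Segal condition is preserved by filtered colimits of spaces, the underlying simplicial space of $\Bord_1^{\fr}$ is the pointwise colimit of those of the $\Bord_1^{\fr}(\RR^{\n1})$; the source, target, and composition structure is then induced from each finite stage.

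For assertion~(1), I would invoke Corollary~\ref{cor.obj.mor.Bord.free} to identify $\Obj\bigl(\Bord_1^{\fr}(\RR^{\n1})\bigr)\simeq \FF_{\n1}(\Omega\RR\PP^{\n1})$. Passing to the colimit uses two standard facts: $\Conf_r(\RR^\infty)$ is contractible with free $\Sigma_r$-action so $\Conf_r(\RR^\infty)_{h\Sigma_r}\simeq \sB\Sigma_r$, and $\Omega\RR\PP^\infty\simeq \ZZ/2 = \{+,-\}$. The $\Sigma_r$-orbits on $\{+,-\}^r$ are indexed by pairs $(r_+,r_-)$ with $r_+ + r_- = r$ and stabilizer $\Sigma_{r_+}\times\Sigma_{r_-}$, giving $\coprod_{r_\pm}\sB\Sigma_{r_+}\times\sB\Sigma_{r_-}$, the moduli of signed finite sets.

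For assertion~(2), the same corollary gives $\Mor\bigl(\Bord_1^{\fr}(\RR^{\n1})\bigr) \simeq \Tang_1^{\fr}(\RR^{\n1}\times\DD^1)$, which by Definition~\ref{def.framed.tangle} decomposes as $\coprod_{[W]}\bigl(\Emb(W, \RR^{\n1}\times\DD^1)\times_{\Map(W,\Gr_1(n))}\ast\bigr)_{/\Diff(W)}$ over diffeomorphism classes of compact $1$-manifolds with boundary. In the colimit I would establish two identifications: (a) $\Emb(W, \RR^\infty\times\DD^1)$ is homotopy equivalent to the space of smooth boundary-preserving maps $W\to \DD^1$, since the $\RR^\infty$-factor of the embedding is a contractible choice; and (b) a nullhomotopy of the Gauss map $W\to \Gr_1(\infty)\simeq \sB\sO(1)$, which classifies the tangent line bundle, is equivalent to a framing $\varphi\colon \sT W \cong \epsilon^1_W$. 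Combining (a) and (b) and quotienting by $\Diff(W)$ yields the moduli described in~(2).

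Assertions~(3) and~(4) then follow by tracing these identifications through the geometric source, target, and composition at each finite stage. Source and target are restriction to the two components of $\partial\DD^1$; at a boundary point $w$ the sign records whether the framing vector of $\sT_w W$ aligns with the inward or outward boundary direction, matching the $\ZZ/2$-label of~(1) under the identification $\pi_1\RR\PP^\infty\cong\ZZ/2$. Composition is concatenation of tangles along their compatible boundaries, which passes in the colimit to concatenation of compatibly framed cobordisms. The main subtlety lies in step~(b): carefully verifying that a stable nullhomotopy of the Gauss map recovers an honest framing of the tangent line bundle, and that this identification is compatible with the $\pm$-decoration at $\partial W$ used for source and target. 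This should amount to the observation that for a $1$-manifold, whose tangent bundle is a line bundle, a nullhomotopy of the stable classifying map $W\to\sB\sO(1)$ is equivalent to a global frame of $\sT W$, with the sign at each boundary point determined by comparison with the boundary normal.
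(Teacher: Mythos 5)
Your proposal is correct and follows essentially the same route as the paper: both compute the colimit levelwise in $\Fun(\bDelta^{\op},\Spaces)$ (using that sequential colimits commute with the finite limits in the Segal condition), identify each $\Bord_1^{\fr}(\RR^{\n1})[p]$ as a framed tangle space via Lemma~\ref{lemma.corep.tang}/Corollary~\ref{cor.obj.mor.Bord.free}, and then stabilize — projecting away the contractible $\RR^\infty$-directions and converting the stable nullhomotopy of the Gauss map into a framing $\sT W \cong \epsilon^1_W$. Your extra detail on assertion (1) (computing $\FF_\infty(\ZZ/2)$ as the moduli of signed finite sets) is just an explicit unwinding of what the paper leaves to "inspection."
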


\begin{proof}
Sequential colimits in the $(\infty,1)$-category $\Spaces$ commute with finite limits.
Consequently, the fully-faithful inclusion $\fCat_{(\infty,1)} \hookrightarrow \Fun(\bDelta^{\op} , \Spaces)$ preserves sequential colimits.  
Using that colimits in functor $(\infty,1)$-categories are computed value-wise, we have that the canonical maps between spaces,
\[
\underset{n\geq 0} \colim \Obj(\Bord_1^{\fr}(\RR^{\n1}))
=
\underset{n\geq 0} \colim \Bord_1^{\fr}(\RR^{\n1})[0]
\longrightarrow
\Bord_1^{\fr}[0]
=
\Obj(\Bord_1^{\fr})
\]
and
\[
\underset{n\geq 0} \colim \Mor(\Bord_1^{\fr}(\RR^{\n1}))
=
\underset{n\geq 0} \colim \Bord_1^{\fr}(\RR^{\n1})[1]
\longrightarrow
\Bord_1^{\fr}[1]
=
\Mor(\Bord_1^{\fr})
~,
\]
are equivalences.
There are similar identifications for spaces of $p$-simplices.

Consequently, for $p \geq 0$, we have an identification:
\begin{eqnarray}
\nonumber
\Bord_1^{\fr}[p]
&
:=
&
\underset{n\geq 0} \colim~ \Bord_1^{\fr}(\RR^{\n1})[p]
\\
\nonumber
&
:=
&
\underset{n\geq 0} \colim ~\Map_{\bcD_{[\n1,n]}^{\sfr}}\bigl(\RR^{\n1}, \RR^{\n1}\times\DD[p]\bigr)
\\
\nonumber
&
\underset{\rm Obs~\ref{rem.T.pt.BBord}}{~\simeq~}
&
\underset{n\geq 0} \colim ~   \Bigl| \Bigl\{W\overset{\rm framed}{\underset{{\rm codim\text{-}}(\n1)}\hookrightarrow} \RR^{\n1}\times \DD[p]  \Bigr\} \Bigr|
\\
\nonumber
&
\underset{\rm project}{\xra{~\simeq~}}
&
\Bigr|\Bigl\{ W \to \DD[p]~,~ \varphi  \Bigr\}\Bigl|~.
\end{eqnarray}
Here, the final term is a moduli space of the following data:
\begin{itemize}
\item for $p>0$, a compact smooth 1-manifold $W$ with boundary; for $p=0$, a compact smooth 0-manifold;

\item a smooth boundary-preserving map $W\to \DD[p]$ that is transverse to $[p] \subset \DD[p]$;

\item an injection $\varphi\colon \sT W \hookrightarrow \epsilon^1_W$ of the tangent bundle of $W$ into the trivial rank-$1$ vector bundle over $W$.

\end{itemize}
The result follows by inspecting these identifications.

\end{proof}

\begin{prop}
\label{not.flagged}
The flagged $(\infty,1)$-category $\Bord_1^{\fr}$ is univalent-complete.
In other words, $\Bord_1^{\fr}$ is an $(\infty,1)$-category.

\end{prop}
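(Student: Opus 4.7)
The plan is to verify directly that the canonical map
\[
\obj(\Bord_1^{\fr}) \longrightarrow (\Bord_1^{\fr})^{\sim}
\]
from the space of objects to the space of isomorphisms is an equivalence between spaces; by the characterization of univalent-completion in~\cite{flagged}, this is exactly the condition that $\Bord_1^{\fr}$ is an $(\infty,1)$-category.

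First, I will characterize the space of isomorphisms. A morphism $[W\to\DD^1,\varphi]$ in $\Bord_1^{\fr}$ is an isomorphism if and only if each connected component of $W$ is a smooth arc with exactly one endpoint in $\partial_-\DD^1$ and one endpoint in $\partial_+\DD^1$, with framing compatible with the signs at the boundary. The reverse implication is direct: such a ``connecting'' framed cobordism has an inverse obtained by reversing the $\DD^1$-direction. For the forward implication, I would construct a symmetric monoidal functor of $(\infty,1)$-categories
\[
\Bord_1^{\fr}\longrightarrow \Corr(\Fin)~,\qquad (S,\sigma)\longmapsto S~,\qquad [W,\varphi]\longmapsto\bigl(\partial_-W\la\pi_0W\ra\partial_+W\bigr)~,
\]
sending a framed cobordism to the span of its connected components. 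Since a span in $\Corr(\Fin)$ is invertible only if both legs are bijections, any isomorphism in $\Bord_1^{\fr}$ must have $\pi_0W$ bijecting with each of $\partial_\pm W$, which rules out circle components (no boundary) and ``U''-arcs (both endpoints in one hemisphere).

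Second, I will show that the space of connecting framed cobordisms deformation retracts onto $\obj(\Bord_1^{\fr})$. For each sign-preserving bijection $\alpha\colon (S,\sigma)\xra{\cong}(S',\sigma')$, the space of connecting framed cobordisms realizing $\alpha$ is computed as the colimit
\[
\underset{n}{\colim~}\Emb^{\fr}_{\partial}\bigl(S\times\DD^1,\RR^{\n1}\times\DD^1\bigr)
\]
of framed embeddings with prescribed boundary values. This colimit is contractible: the underlying embedding space becomes $(n-2)$-connected for large $n$ and in the limit agrees with $\Map_\partial(S\times\DD^1,\RR^\infty\times\DD^1)\simeq\ast$, while the space of compatible framings (a section space with prescribed boundary values over $\Omega\Gr_1(n)$) is contractible as $\Omega\Gr_1(\infty)\simeq\ast$. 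The straight-line cobordisms $S\times\DD^1$ provide a canonical section. Combining with the decomposition from Step~1 then identifies $(\Bord_1^{\fr})^{\sim}\simeq\coprod_{r_+,r_-\geq 0}\sB\Sigma_{r_+}\times\sB\Sigma_{r_-}$, matching $\obj(\Bord_1^{\fr})$ as computed in Proposition~\ref{t.bord.desc}(1).

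The main obstacle will be the rigorous construction of the symmetric monoidal functor $\Bord_1^{\fr}\to\Corr(\Fin)$ as a map of $(\infty,1)$-categories, including coherence of the $\pi_0$-assignment with composition of cobordisms at all simplicial degrees. This parallels the functor $\bcD_{[k]}^{\sfr}\xra{\pi_0}\Corr(\Fin)$ appearing in the proof of Proposition~\ref{t8}, now adapted to tangles and verified on the Segal model of $\Bord_1^{\fr}$. The ancillary contractibility of framed embedding spaces into $\RR^\infty\times\DD^1$ is standard, and follows the pattern of the stable-range and h-principle arguments developed in Section~\ref{sec.h.prin}.
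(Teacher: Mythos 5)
Your proposal is correct and follows essentially the same route as the paper: both reduce to showing that the map $\Obj(\Bord_1^{\fr})\to\Iso(\Bord_1^{\fr})$ is an equivalence, first identifying the invertible morphisms as the framed cobordisms all of whose components are arcs joining the two hemispheres, and then showing the moduli of such is $\coprod_{r_+,r_-}\sB\Sigma_{r_+}\times\sB\Sigma_{r_-}$. The paper handles the first step by citing the classification of compact $1$-manifolds with boundary, and the second by identifying $\Diff^{\fr}\bigl((\DD^1)^{\sqcup(S,\sigma)}\bigr)$ as a wreath product $(\Sigma_{S_+}\times\Sigma_{S_-})\wr\Diff^{\fr}(\DD^1)$ with contractible kernel, rather than via your functor to correspondences of finite sets and your stable embedding-space computation; these are interchangeable, and your first step arguably makes precise what the paper leaves terse.

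Two local slips to fix. First, the assignment $W\mapsto(\partial_-W\la\pi_0W\ra\partial_+W)$ is not well defined: there is no natural map $\pi_0W\to\partial_\pm W$ (a circle component has empty boundary). The natural datum is the \emph{cospan} $\partial_-W\to\pi_0W\la\partial_+W$, and your argument goes through verbatim since an invertible cospan of finite sets has both legs bijective, which rules out circles and \emph{U}-arcs exactly as you intend. Second, $\Omega\Gr_1(\infty)=\Omega\RR\PP^{\infty}\simeq\ZZ/2$ is not contractible; the correct statement is that the space of framings of an arc rel prescribed boundary framings is a torsor over $\Map_{\partial}(\DD^1,\Omega\RR\PP^{\infty})\simeq\Omega^2\RR\PP^{\infty}\simeq\ast$ when the boundary signs agree, and is empty otherwise --- which is precisely what restricts you to sign-preserving bijections and produces the two signs appearing in $\Obj(\Bord_1^{\fr})$.
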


\begin{proof}
We must show the canonical map from the space of objects to the space of isomorphisms
\begin{equation}
\label{e.univ.map}
\Obj(\Bord_1^{\fr}) 
\longrightarrow
\Iso(\Bord_1^{\fr})
\end{equation}
is an equivalence.
It follows from Proposition~\ref{t.bord.desc} that the subspace
\[
\Iso(\Bord_1^{\fr})
~\subset~
\Mor(\Bord_1^{\fr})
\]
consists of framed 1-dimensional cobordisms that admit a both a left and a right inverse with respect to concatenation of framed cobordisms.
From the classification of compact 1-manifolds with boundary, any such 1-dimensional cobordism $W$ is isomorphic with a finite disjoint union of trivial cobordisms: $W \cong (\DD^1)^{\sqcup S}$ for some finite set $S$.
Therefore, for $W$ a framed 1-dimensional cobordism that admits a left and right inverse with respect to concatenation of framed cobordisms, there is an isomorphism between framed cobordisms $W \cong (\DD^1)^{\sqcup (S,\sigma)}$ for some signed finite set $S \xra{\sigma} \{\pm \}$.
We conclude that the map~(\ref{e.univ.map}) is surjective on path-components.
The result is then proved by showing, for each point $(S,\sigma)\in \Obj(\Bord_1^{\fr})$, the map between spaces of based loops 
\begin{equation}
\label{e.loops}
\Omega_{(S,\sigma)} \Obj(\Bord_1^{\fr})
\longrightarrow
\Omega_{(\DD^1)^{\sqcup (S,\sigma)}} \Iso(\Bord_1^{\fr})
\end{equation}
is an equivalence.

As $\Obj(\Bord_1^{\fr})$ is the moduli space of signed finite sets, and as $\Mor(\Bord_1^{\fr})$ is the moduli space of framed 1-dimensional cobordisms (Proposition~\ref{t.bord.desc}), the map~(\ref{e.loops}) can be identified as the canonical map
\begin{equation}
\label{e.sigma}
\Sigma_{S_+} \times \Sigma_{S_-}
=
\Diff^{\fr}( S,\sigma )
\xra{~ - \times \DD^1~}
\Diff^{\fr}( (\DD^1)^{\sqcup (S,\sigma)} )
~,
\end{equation}
where $S_+ := \sigma^{-1} \{+\}$ and $S_- := \sigma^{-1} \{-\} $.
Every framed diffeomorphism of $(\DD^1)^{\sqcup (S,\sigma)}$ determines a permutation of its path-components $\pi_0\left( (\DD^1)^{\sqcup (S,\sigma)}  \right) = S$, which preserves the partition $S = S_+ \sqcup S_-$.  
In this way, taking path-components defines a retraction of the map~(\ref{e.sigma}), which yields an identification between groups under $\Sigma_{S_+} \times \Sigma_{S_-}$ as a wreath product:
\[
\Diff^{\fr}( (\DD^1)^{\sqcup (S,\sigma)} )
~\simeq~
\left(
\Sigma_{S_+} \times \Sigma_{S_-}
\right)
\wr
\Diff^{\fr}(\DD^1)
~.
\]
To finish, the group of framed diffeomorphisms of $\DD^1$ is contractible.
Therefore, the  map~(\ref{e.sigma}) is an equivalence, as desired.

\end{proof}

\subsection{Symmetric monoidal structure of $\Bord_1^{\fr}$}\label{sec.deloop}
We present a formalism for constructing symmetric monoidal $(\infty,1)$-categories from sequences of compatible $\cE_k$-monoidal $(\infty,1)$-categories.
This is much like constructing an $\infty$-loop space from a pre-spectrum.
We then apply this construction to endow the $(\infty,1)$-category $\Bord_1^{\fr}$ with a symmetric monoidal structure.

For each dimension $k$ consider the symmetric monoidal $(\infty,1)$-category $\Disk_k^{\fr}$ of finite disjoint unions of framed $k$-dimensional vector spaces and framed open embeddings among them, with symmetric monoidal structure given by disjoint union.
Taking products with Euclidean spaces assembles these symmetric monoidal $(\infty,1)$-categories into a functor $\Disk_\bullet^{\fr}\colon \NN \to \Alg_{\sf Com}(\fCat_{(\infty,1)})$.
The colimit of this functor is canonically identified as the symmetric monoidal envelope of the commutative $\infty$-operad.  
Therefore, for each symmetric monoidal $(\infty,1)$-category $\cV$, applying $\Fun^\ot(-,\cV)$ defines a functor
$
\Alg_{-}(\cV)\colon \NN^{\op} \to \fCat_{(\infty,1)}
$
whose limit is $\Alg_{\sf Com}(\cV)$.  
Consider the Cartesian fibration
\[
\Alg_\bullet(\cV) \longrightarrow \NN
\]
which is the unstraightening of this functor.
There is a fully-faithful  functor to the $(\infty,1)$-category of sections of this Cartesian fibration
\begin{equation}\label{as-Carts}
\Alg_{\sf Com}(\cV) \longrightarrow \Gamma\Bigl(\Alg_\bullet(\cV) \longrightarrow \NN\Bigr)
\end{equation}
whose image consists of those sections that carry morphisms to Cartesian morphisms.  
Explicitly, an object in the righthand $(\infty,1)$-category is the data of an $\cE_k$-algebra $A_k$ for each $k\geq 0$ together with a map of $\cE_k$-algebras $A_k \to (A_{k'})_{|\cE_k}$ for each $k\leq k'$, coherently compatibly; while an object in the image of~(\ref{as-Carts}) is one for which each $A_k\to (A_{k'})_{|\cE_k}$ is an equivalence.

\begin{lemma}\label{spectra}
Let $\cV$ be a Cartesian presentable $(\infty,1)$-category, which we regard as a symmetric monoidal $(\infty,1)$-category via the Cartesian product.
The functor~(\ref{as-Carts}) admits a left adjoint that implements a localization.
This left adjoint evaluates on a section $(k\mapsto A_k)$ as the Cartesian section
\[
k~\mapsto~ \underset{\ell \geq k} \colim (A_\ell)_{|k}~: = ~\colim \Bigl( A_k \to (A_{k+1})_{|\cE_{k}} \to (A_{k+2})_{|\cE_k} \to  \dots \Bigr)~.  
\]
\end{lemma}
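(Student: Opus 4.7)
The plan is to construct the candidate left adjoint $L$ directly via the stated colimit formula, verify it lands in Cartesian sections, and then check the universal property. Throughout, two technical inputs will be essential: first, that each restriction functor $(-)_{|\cE_k}\colon \Alg_{k+1}(\cV) \to \Alg_k(\cV)$ preserves filtered colimits, which itself follows from the fact that the forgetful functors $\Alg_m(\cV) \to \cV$ preserve sifted colimits (a standard consequence of presentability) together with conservativity; and second, that each tail inclusion $\{\ell \geq k+1\} \hookrightarrow \{\ell \geq k\}$ of filtered diagrams is cofinal.

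First I would define $L$ on objects by $L(A_\bullet)_k := \colim_{\ell \geq k}(A_\ell)_{|\cE_k}$, with transition maps in the colimit obtained by applying $(-)_{|\cE_k}$ to the structure maps of $A_\bullet$. To check that $L(A_\bullet)$ is a Cartesian section, I would compute the structure map $L(A_\bullet)_k \to (L(A_\bullet)_{k+1})_{|\cE_k}$ via the chain
\begin{equation*}
(L(A_\bullet)_{k+1})_{|\cE_k} = \bigl(\colim_{\ell \geq k+1}(A_\ell)_{|\cE_{k+1}}\bigr)_{|\cE_k} \simeq \colim_{\ell \geq k+1}(A_\ell)_{|\cE_k},
\end{equation*}
so that cofinality of the inclusion of indexing sets identifies the target with $L(A_\bullet)_k$, showing the structure map is an equivalence. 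The unit $\eta\colon \mathrm{id} \to R \circ L$ at level $k$ would be taken to be the canonical map $A_k \to \colim_{\ell \geq k}(A_\ell)_{|\cE_k}$ into the colimit at the initial index $\ell = k$, with compatibility with structure maps immediate. Note that when $A_\bullet$ is itself Cartesian, every transition map $(A_\ell)_{|\cE_k} \to (A_{\ell+1})_{|\cE_k}$ is an equivalence, so the filtered colimit collapses and $\eta_A$ is an equivalence; once the adjunction is established, this will show the right adjoint $R$ to be fully faithful and the adjunction to be a Bousfield localization.

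The heart of the proof will then be verifying the universal property: for every Cartesian section $B_\bullet$, precomposition with $\eta_A$ induces an equivalence $\Map_\Gamma(L(A_\bullet), B_\bullet) \xra{\simeq} \Map_\Gamma(A_\bullet, B_\bullet)$. My approach would be to unpack a morphism $A_\bullet \to B_\bullet$ as a compatible family of maps $f_\ell\colon A_\ell \to B_\ell$ in $\Alg_\ell(\cV)$, then use Cartesianness of $B_\bullet$ to identify $(B_\ell)_{|\cE_k} \simeq B_k$ for all $\ell \geq k$ and thereby interpret the restrictions $(f_\ell)_{|\cE_k}\colon (A_\ell)_{|\cE_k} \to B_k$ as the $\ell$-th legs of cones on the diagrams defining $L(A_\bullet)_k$; levelwise, these cones assemble into a morphism $L(A_\bullet) \to B_\bullet$. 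A symmetric unpacking in the reverse direction would exhibit this assignment as inverse to precomposition with $\eta_A$, up to contractible choice.

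The main obstacle will be this last step, which requires making rigorous the precise identification of mapping spaces of sections of the Cartesian fibration $\Alg_\bullet(\cV) \to \NN$ and executing the interchange of the colimit defining $L$ with the limit computing those mapping spaces. The cleanest framework is to recognize $\Gamma$ as the lax limit of the straightening $\NN^{\op} \to \Cat_{(\infty,1)}$ of the Cartesian fibration $\Alg_\bullet(\cV) \to \NN$, with $\Alg_{\sf Com}(\cV)$ the ordinary limit, whereupon the lemma becomes an instance of the general fact that, for a sequential diagram of presentable $\infty$-categories whose transition functors preserve filtered colimits, the inclusion of the genuine limit into the lax limit admits a left adjoint computed termwise by a filtered colimit along the indexing $\NN$.
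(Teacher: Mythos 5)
Your proposal is correct, and its computational core — the filtered colimit formula, the need for the restriction functors $(-)_{|\cE_k}$ to preserve filtered colimits (deduced from the forgetful functors to $\cV$), the cofinality of the tail inclusions $\NN_{\geq k+1}\hookrightarrow \NN_{\geq k}$, and the collapse of the colimit on Cartesian sections — coincides with what the paper uses. Where you differ is in how the coherence of $L$ and the adjunction are established. The paper never argues at the level of individual sections or mapping spaces: it identifies $\NN \simeq \colim_k \NN_{\geq k}$ and $\Alg_k(\cV) \simeq \colim\bigl(\NN_{\geq k}^{\op} \to \fCat_{(\infty,1)}\bigr)$ (via finality of the terminal object), so that $L$ is assembled as a strictly natural morphism of towers of $(\infty,1)$-categories whose limit is the desired functor $\Gamma \to \Alg_{\sf Com}(\cV)$; the localization is then verified by the unit criterion, using contractibility of the classifying space of $\NN$ for $L\circ R \simeq \id$ and filteredness of $\NN$ for the unit being an equivalence on Cartesian sections. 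Your route instead defers the coherence to the general statement that, for an inverse sequential diagram of presentable $(\infty,1)$-categories with filtered-colimit-preserving transition functors, the inclusion of the limit into the lax limit (= sections of the unstraightened Cartesian fibration) admits a left adjoint computed termwise by the tail colimit. That general fact is true, and your identification of sections with the lax limit is the right way to make your third paragraph rigorous — but note that its proof is essentially the paper's argument in disguise, so you are not so much avoiding the work as relocating it. Two minor caveats: the preservation of sifted/filtered colimits by $\Alg_m(\cV)\to\cV$ is not a consequence of presentability alone but requires the Cartesian product of $\cV$ to be compatible with such colimits (the paper is equally terse on this point, and it holds for the relevant $\cV = \fCat_{(\infty,1)}$); and to conclude a localization you must also check that $L$ applied to the unit is an equivalence (interchange of the two tail colimits), which your "verify the universal property" step subsumes but should be made explicit.
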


\begin{proof}
The colimit of the functor $\NN^{-/}\colon \NN^{\op} \to \fCat_{(\infty,1)}$, given by $k\mapsto \NN_{\geq k}$, is canonically identified as $\NN$.
Therefore, to construct the alleged left adjoint it is enough to construct a downward morphism between functors $\NN^{\op} \to \fCat_{(\infty,1)}$
\[
\Small
\xymatrix{
\dots  \ar[r]
&
\Gamma\Bigl(\Alg_{\bullet> k}(\cV) \longrightarrow \NN_{>k}\Bigr)  \ar[d]  \ar[r]
&
\Gamma\Bigl(\Alg_{\bullet\geq k}(\cV) \longrightarrow \NN_{\geq k}\Bigr)  \ar[d]  \ar[r]
&
\dots
\\
\dots  \ar[r]
&
\Alg_{k+1}(\cV)  \ar[r]
&
\Alg_k(\cV)   \ar[r]
&
\dots .
}
\]
and then argue its properties.
Because, for each $k\in \NN$ the object $(k=k)$ is final in $(\NN^{k/})^{\op}$, the colimit of the composite functor $\NN_{\geq k}^{\op} \to \NN^{\op} \xra{\Alg_{-}(\cV)} \fCat_{(\infty,1)}$
is canonically identified as $\Alg_k(\cV)$.  
There results a functor between $(\infty,1)$-categories
$\Gamma\Bigl(\Alg_{\bullet\geq k}(\cV) \longrightarrow \NN_{\geq k}\Bigr)
\xra{(-)_{|\cE_k}}
\Fun\bigl(\NN_{\geq k}^{\op}, \Alg_k(\cV)\bigr).
$  
Postcomposing this functor with the colimit functor $\Fun\bigl(\NN_{\geq k}^{\op}, \Alg_k(\cV)\bigr) \xra{\colim} \Alg_k(\cV)$ defines the functor we seek for each given $k\in \NN$.  
Since the forgetful functor $\Alg_k(\cV) \to \cV$ preserves and creates filtered colimits, this colimit functor indeed exists.  
This also implies that each square in the diagram displayed above canonically commutes.  
We conclude a functor between limit $(\infty,1)$-categories 
$\Gamma\Bigl(\Alg_\bullet(\cV ) \longrightarrow \NN\Bigr)
\to
\Alg_{\sf Com}(\cV)$.

The functor $(-)_{|\cE_k}$ above carries Cartesian sections to constant functors.  
Because $\NN$ has contractible classifying space, the composite functor
$\Alg_{\sf Com}(\cV) \to \Gamma\Bigl(\Alg_\bullet(\cV ) \longrightarrow \NN\Bigr) \to \Alg_{\sf Com}(\cV)$ is canonically identified as the identity functor.  
Constructed by way of a colimit, there is a unit transformation from the composite functor
$\Gamma\Bigl(\Alg_\bullet(\cV ) \longrightarrow \NN\Bigr) \to \Alg_{\sf Com}(\cV) \to \Gamma\Bigl(\Alg_\bullet(\cV ) \longrightarrow \NN\Bigr)$ 
to the identity functor.
Because $\NN$ is filtered, the restriction of this unit transformation to the Cartesian sections is a natural equivalence.  
This completes the proof of the lemma.

\end{proof}

The diagram~(\ref{e.seq.bords}) defines a section:
\begin{equation}\label{tang-sec}
\NN \longrightarrow \Alg_\bullet({\fCat_{(\infty,1)}})~,\qquad k\mapsto \Bord_1^{\fr}(\RR^k)
~.
\end{equation}

\begin{definition}\label{def.Bord-B}
The symmetric monoidal $(\infty,1)$-category of $1$-framed cobordisms
\[
\Bord_1^{\fr}~:=~\underset{k\geq 0} \colim ~\Bord_1^{\fr}(\RR^k)
\]
is the value of the left adjoint functor of Lemma~\ref{spectra} on the section~(\ref{tang-sec}).  

\end{definition}

\subsection{Proof of the Cobordism Hypothesis}
We now state and prove the main result of this section, the 1-dimensional Cobordism Hypothesis, first proved by Hopkins--Lurie~\cite{cobordism}.

\begin{theorem}[1-dimensional Cobordism Hypothesis]\label{B-bord}
Let $\cR$ be a rigid symmetric monoidal $(\infty,1)$-category.
Evaluation at the object $\ast\in \Bord_1^{\fr}$ defines an equivalence between spaces
\[
\ev_\ast
\colon
\Map_{\Alg_{\sf Com}(\fCat_{(\oo,1)})} \bigl(
\Bord_1^{\fr}
,
\cR
\bigr)
\xra{~\simeq~}
\Obj(\cR)
~,\qquad
Z
\longmapsto
Z(\ast)
\]
between $\cR$-valued symmetric monoidal functors and the space of objects of $\cR$.
\end{theorem}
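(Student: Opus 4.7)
The plan is to deduce the statement as the $n \to \infty$ limit of the Tangle Hypothesis (Theorem~\ref{theorem.main}), using the presentation of $\Bord_1^{\fr}$ as the value of the left adjoint from Lemma~\ref{spectra} applied to the sequence $(\Bord_1^{\fr}(\RR^k))_{k\geq 0}$. In broad outline, $\Map_{\Alg_{\sf Com}(\fCat_{(\infty,1)})}(\Bord_1^{\fr},\cR)$ will be rewritten as a limit of mapping spaces $\Map_{\Alg_k(\fCat_{(\infty,1)})}(\Bord_1^{\fr}(\RR^k),\cR_{|\cE_k})$, each of which, for $k\geq 1$, the Tangle Hypothesis identifies with $\Obj(\cR)$ via evaluation at $\ast$; the transition maps all respect the basepoint $\ast$, so the limit collapses to $\Obj(\cR)$.

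More concretely, first observe that a rigid symmetric monoidal $(\infty,1)$-category $\cR$ has, for each $k\geq 1$, an underlying rigid $\cE_k$-monoidal structure $\cR_{|\cE_k}$: since all the $\cE_k$-multiplications on $\cR$ come from the underlying $\cE_\infty$-structure, they are canonically pairwise equivalent, and Observation~\ref{t22} ensures rigidity descends. Then, since $\cR$ corresponds (via~(\ref{as-Carts})) to the Cartesian section $k\mapsto \cR_{|\cE_k}$ of $\Alg_\bullet(\fCat_{(\infty,1)})\to \NN$, and $\Bord_1^{\fr}$ is by Definition~\ref{def.Bord-B} the image of the section $k\mapsto \Bord_1^{\fr}(\RR^k)$ under the left adjoint of Lemma~\ref{spectra}, the adjunction yields
\[
\Map_{\Alg_{\sf Com}(\fCat_{(\infty,1)})}(\Bord_1^{\fr},\cR)
~\simeq~
\Map_{\Gamma(\Alg_\bullet)}\bigl((\Bord_1^{\fr}(\RR^\bullet)),(\cR_{|\cE_\bullet})\bigr)
~\simeq~
\lim_{k\in \NN^{\op}} \Map_{\Alg_k(\fCat_{(\infty,1)})}\bigl(\Bord_1^{\fr}(\RR^k),\cR_{|\cE_k}\bigr).
\]

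Next, for every $k\geq 1$, Theorem~\ref{theorem.main} applied to $n=k+1\geq 2$ gives an equivalence $\ev_\ast\colon \Map_{\Alg_k(\fCat_{(\infty,1)})}(\Bord_1^{\fr}(\RR^k),\cR_{|\cE_k})\xra{\simeq}\Obj(\cR)$. The inclusion $\NN_{\geq 1}^{\op}\hookrightarrow \NN^{\op}$ is cofinal (the over-category at $0$ is cofiltered with contractible classifying space), so restricting the limit to $k\geq 1$ does not change its value. The transition morphisms $\Bord_1^{\fr}(\RR^k)\to \Bord_1^{\fr}(\RR^{k+1})$ send $\ast\mapsto\ast$, so under the Tangle Hypothesis equivalences the induced maps $\Obj(\cR)\to \Obj(\cR)$ are identities; hence $\lim_{k\geq 1}\Obj(\cR)\simeq \Obj(\cR)$ and evaluation at $\ast$ on $\Bord_1^{\fr}$ realizes this identification.

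The expected main obstacle is of a bookkeeping rather than geometric nature: carefully verifying, first, that the universal Cartesian-section/left-adjoint formalism of Lemma~\ref{spectra} genuinely computes $\Map_{\Alg_{\sf Com}}(\Bord_1^{\fr},\cR)$ as the claimed limit (in particular that the Cartesian section presenting $\cR$ and the section $k\mapsto \Bord_1^{\fr}(\RR^k)$ are matched correctly on transition morphisms), and second, that the restriction-along-$\cE_k\hookrightarrow \cE_{k+1}$ of the Tangle Hypothesis equivalences intertwines with the maps induced by $\Bord_1^{\fr}(\RR^k)\to \Bord_1^{\fr}(\RR^{k+1})$; both are formal consequences of the fact that the entire construction is natural in the ambient Euclidean space and that the basepoint $\ast$ is stable under the stabilization maps $\RR\times-$.
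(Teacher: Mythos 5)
Your proposal is correct and follows essentially the same route as the paper: both deduce the statement by rewriting $\Map_{\Alg_{\sf Com}(\fCat_{(\oo,1)})}(\Bord_1^{\fr},\cR)$ as the limit over $k$ of $\Map_{\Alg_k(\fCat_{(\oo,1)})}(\Bord_1^{\fr}(\RR^k),\cR)$ via the adjunction of Lemma~\ref{spectra}, applying the Tangle Hypothesis termwise, and observing that the resulting tower is constant with value $\Obj(\cR)$. Your treatment is in fact somewhat more careful than the paper's (which elides the $k=0$ term and the compatibility of the equivalences with transition maps), but the argument is the same.
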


\begin{proof}
We use the definition of $\Bord_n^{\fr}$ as a colimit of tangle categories, and then apply the Tangle Hypothesis given by Theorem~\ref{theorem.main}:
\begin{eqnarray}
\nonumber
\Map_{\Alg_{\sf Com}(\fCat_{(\infty,1)})} \bigl(\Bord_1^{\fr},\cR\bigr)
&
\simeq
&
\underset{k\to \infty}\limit\Map_{\Alg_k(\fCat_{(\infty,1)})}\Bigl(\Bord_1^{\fr}(\RR^k), \cR\Bigr)
\\
\nonumber
&
\simeq
&
\underset{k\to \infty}\limit \obj(\cR)
\\
\nonumber
&
\simeq
&
\obj(\cR)~.
\end{eqnarray}
The result follows, since we obtain a constant sequential limit with value $\obj(\fX)$.

\end{proof}

\section{Invertible field theories and classifying spaces}
We recover the expected classification of \emph{invertible} framed topological quantum field theories.
This is phrased in terms of the groupoid completion of the framed cobordism category.

The fully-faithful  inclusion $\Alg_{\n1}(\Spaces)\hookrightarrow \Alg_{\n1}(\fCat_{(\oo,1)})$ has a left adjoint localization, which is the usual classifying space functor:
\begin{equation}\label{B.def}
\sB \colon  \Alg_{\n1}(\fCat_{(\oo,1)})~\rightleftarrows~ \Alg_{\n1}(\Spaces)~.
\end{equation}

\begin{cor}\label{B.Tang}
There is a canonical identification as $\cE_{\n1}$-spaces,
\[
\sB\bigl(\Bord_1^{\fr}(\RR^{\n1})\bigr)~\simeq~ \Omega^{\n1}S^{\n1}
~,
\]
between the classifying space of the tangle category and the $(\n1)$-fold loop space of the $(\n1)$-sphere. In the $n\mapsto \oo$ limit, there is a canonical identification as $\cE_{\oo}$-spaces,
\[
\sB\bigl(\Bord_1^{\fr}\bigr)~\simeq~ \Omega^{\oo}\Sigma^{\oo}S^0
~,
\]
between the classifying space of the bordism category and the zeroth space of the sphere spectrum. 
\end{cor}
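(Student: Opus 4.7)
The plan is to combine the Tangle Hypothesis (Theorem~\ref{theorem.main}) with the adjunction $\sB \dashv \mathrm{incl.}$ of~(\ref{B.def}), and then invoke May's recognition principle to identify the resulting universal object. First, I would observe that any grouplike $\cE_{\n1}$-space $Y$, regarded as an $\cE_{\n1}$-monoidal flagged $(\oo,1)$-category via the inclusion, is automatically rigid: because $Y$ is an $\oo$-groupoid, left-tensoring by $y$ is an equivalence of spaces precisely when $y$ is invertible, which the grouplike condition supplies; equivalences of spaces admit all adjoints, so rigidity holds. The Tangle Hypothesis then gives
\[
\Map_{\Alg_{\n1}(\Spaces)}\bigl(\sB\Bord_1^{\fr}(\RR^{\n1}),Y\bigr)
~\simeq~
\Map_{\Alg_{\n1}(\fCat_{(\oo,1)})}\bigl(\Bord_1^{\fr}(\RR^{\n1}),Y\bigr)
~\simeq~
\Obj(Y)
~=~
Y~.
\]
Further, $\sB\Bord_1^{\fr}(\RR^{\n1})$ is itself grouplike, since every object of $\Bord_1^{\fr}(\RR^{\n1})^{\wedge}_{\sf unv}$ admits a dual by Corollary~\ref{cor.Bord.has.duals} and $\sB$ carries dualizability to invertibility. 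Hence $\sB\Bord_1^{\fr}(\RR^{\n1})$ corepresents the forgetful functor from grouplike $\cE_{\n1}$-spaces to spaces, i.e., it is the free grouplike $\cE_{\n1}$-space on one generator.

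To identify this universal object, I would invoke May's recognition principle, which provides an equivalence between grouplike $\cE_{\n1}$-spaces and pointed $(n\text{-}2)$-connected spaces via the iterated delooping adjunction $(B^{\n1},\Omega^{\n1})$. Under this correspondence, the free functor from pointed spaces to grouplike $\cE_{\n1}$-spaces transports to $(\n1)$-fold suspension $\Sigma^{\n1}$. Since the free grouplike $\cE_{\n1}$-space on an unpointed generator equals the free one on $S^0$ (the monoidal unit serving as disjoint basepoint), this yields
\[
\sB\Bord_1^{\fr}(\RR^{\n1})
~\simeq~
\Omega^{\n1}\Sigma^{\n1}S^0
~\simeq~
\Omega^{\n1}S^{\n1}
\]
as $\cE_{\n1}$-spaces.

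For the $n\mapsto\oo$ statement, I would observe that $\sB$ is a left adjoint and so commutes with the colimit defining $\Bord_1^{\fr}$ in Definition~\ref{def.Bord-B}, giving $\sB(\Bord_1^{\fr})\simeq\underset{k}\colim\,\Omega^k S^k\simeq\Omega^\infty\Sigma^\infty S^0$; alternatively, I could apply the Cobordism Hypothesis (Theorem~\ref{B-bord}) directly to realize $\sB(\Bord_1^{\fr})$ as the free grouplike $\cE_\infty$-space on a point, recovered as $\Omega^\infty\Sigma^\infty S^0$ by Barratt--Priddy--Quillen. The hardest part will not be a real obstruction but rather careful bookkeeping: one must verify that the $\cE_{\n1}$-monoidal structure transported onto $\Omega^{\n1}S^{\n1}$ via the universal property agrees with the classical one coming from May's delooping. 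This follows because the equivalence of grouplike $\cE_{\n1}$-spaces with pointed $(n\text{-}2)$-connected spaces is itself an equivalence of $(\oo,1)$-categories over $\Alg_{\n1}(\Spaces)$, so left adjoints on each side are identified, and no further work is needed beyond the classical delooping theory already invoked.
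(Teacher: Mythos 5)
Your proof is correct and follows essentially the same route as the paper's: both combine the Tangle Hypothesis with the $\sB \dashv {\rm inclusion}$ adjunction of~(\ref{B.def}) to show that $\sB\bigl(\Bord_1^{\fr}(\RR^{\n1})\bigr)$ corepresents the forgetful functor on grouplike $\cE_{\n1}$-spaces, and then identify it via the Yoneda lemma with the free grouplike $\cE_{\n1}$-space on a point, namely $\Omega^{\n1}S^{\n1}$. Your additional care in verifying that the classifying space is grouplike (via rigidity of the univalent-completion) and in justifying the identification of the free grouplike $\cE_{\n1}$-space through the recognition principle only makes explicit what the paper leaves implicit.
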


\begin{proof}
Let $\cG$ be a grouplike $\cE_{\n1}$-space.
Via the fully-faithful inclusion $\Spaces \to \fCat_{(\infty,1)}$, which preserves finite products, $\cG$ can be regarded as an $\cE_{\n1}$-monoidal flagged $(\oo,1)$-category.
Regarded as so, $\cG$ is univalent-complete and rigid, with $\Obj(\cG) = \cG$.
We have the following sequence of canonical identifications:
\begin{eqnarray}
\nonumber
\Map_{ \Alg_{\n1}(\Spaces)}\Bigl(\sB\bigl(\Bord_1^{\fr}(\RR^{\n1})\bigr), \cG \Bigr)
&
\simeq
&
\Map_{ \Alg_{\n1}(\fCat_{(\oo,1)})}(\Bord_1^{\fr}(\RR^{\n1}), \cG)
\\
\nonumber
&
\simeq
&
\obj(\cG) = \cG
\end{eqnarray}
The first equivalence follows from the localization~(\ref{B.def}).
The second equivalence follows from Theorem~\ref{theorem.main}, the Tangle Hypothesis.
Evidently, both of these equivalences are functorial in the argument $\cG$.
In this way, we see that $\Bord_1^{\fr}(\RR^{\n1})$ corepresents the forgetful functor from $\Alg_{
\cE_{\n1}}(\Spaces)$ to $\Spaces$. However, by universal properties the forgetful functor is also corepresented by the free grouplike $\cE_{\n1}$-algebra generated by a point, which is $\Omega^{\n1}S^{\n1}$.
The desired canonical equivalence between $\cE_{\n1}$-spaces then follows from the Yoneda lemma. The equivalence $\sB\bigl(\Bord_1^{\fr}\bigr)~\simeq~ \Omega^{\oo}\Sigma^{\oo}S^0$ follows by identical reasoning.

\end{proof}

An immediate consequence of Corollary~\ref{B.Tang} is a classification of invertible 1-dimensional framed topological quantum field theories.
We first introduce the following notation.
Let $0< k \leq \infty$.
For $\cR$ an $\cE_k$-monoidal $(\infty,1)$-category, denote by
\[
{\sf Pic}(\cR) 
~\subset~ 
\Obj(\cR) 
\]
the subspace consisting of those objects that are invertible with respect to the monoidal structure.
\begin{cor}\label{invertible.theories}
\begin{enumerate}
\item[]

\item
Let $\cR$ be an $\cE_{\n1}$-monoidal $(\infty,1)$-category.
There is a monomorphism between spaces,
\[
{\sf Pic}(\cR)~\hookrightarrow~ \Map_{\Alg_{\n1}(\fCat_{(\infty,1)})}\bigl(\Bord_1^{\fr}(\RR^{\n1}),\cR\bigr)
~.
\]
The image consists of those symmetric monoidal functors that carry each morphism in $\Bord_1^{\fr}(\RR^{\n1})$ to an isomorphism in $\cR$.

\item
Let $\cR$ be a symmetric monoidal $(\infty,1)$-category.
There is a monomorphism between spaces,
\[
{\sf Pic}(\cR)~\hookrightarrow~ \Map_{\Alg_{\sf Com}(\Cat_{(\infty,1)})}\bigl(\Bord_1^{\fr} ,\cR\bigr)
~.
\]
The image consists of those symmetric monoidal functors that carry each morphism in $\Bord_1^{\fr}$ to an isomorphism in $\cR$.

\end{enumerate}

\end{cor}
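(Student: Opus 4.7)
The plan is to apply Theorem~\ref{theorem.main} (respectively Theorem~\ref{B-bord}) to ${\sf Pic}(\cR)$ itself, viewed as a rigid $\cE_{\n1}$-monoidal (respectively rigid symmetric monoidal) $(\oo,1)$-category. First I would observe that ${\sf Pic}(\cR)$ inherits from $\cR$ a grouplike $\cE_{\n1}$-space structure, and hence, via the finite-product-preserving inclusion $\Spaces \hookrightarrow \fCat_{(\oo,1)}$, an $\cE_{\n1}$-monoidal $(\oo,1)$-category structure. This structure is automatically rigid: every $V\in {\sf Pic}(\cR)$ has a two-sided tensor inverse $V^{-1}$, which in particular serves as both a left- and right-dual. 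Moreover, the inclusion $\iota\colon {\sf Pic}(\cR)\hookrightarrow \cR$ is a fully-faithful $\cE_{\n1}$-monoidal functor, so in particular a monomorphism in $\Alg_{\n1}(\fCat_{(\oo,1)})$.

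Postcomposition with $\iota$ then defines a monomorphism between spaces,
\[
\iota_\ast \colon \Map_{\Alg_{\n1}(\fCat_{(\oo,1)})}\bigl(\Bord_1^{\fr}(\RR^{\n1}), {\sf Pic}(\cR)\bigr)
\hookrightarrow
\Map_{\Alg_{\n1}(\fCat_{(\oo,1)})}\bigl(\Bord_1^{\fr}(\RR^{\n1}), \cR\bigr)~.
\]
By Theorem~\ref{theorem.main} applied to the rigid $\cE_{\n1}$-monoidal $(\oo,1)$-category ${\sf Pic}(\cR)$, evaluation at the generating object identifies the domain of $\iota_\ast$ with $\obj({\sf Pic}(\cR)) = {\sf Pic}(\cR)$, yielding the asserted monomorphism.

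It remains to characterize the image of $\iota_\ast$. Since ${\sf Pic}(\cR)$ is an $\oo$-groupoid, any $\cE_{\n1}$-monoidal functor factoring through ${\sf Pic}(\cR)$ carries every morphism in $\Bord_1^{\fr}(\RR^{\n1})$ to an isomorphism in $\cR$. Conversely, suppose $Z$ carries every morphism to an isomorphism. Then $Z$ factors through the maximal $\oo$-subgroupoid $\cR^{\sim}\subset \cR$. Further, the rigidity of $\Bord_1^{\fr}(\RR^{\n1})^{\wedge}_{\sf unv}$ supplies a right-dual $\ast^R$ of the generating object $\ast$ together with unit and counit morphisms whose images under $Z$ are isomorphisms, exhibiting $Z(\ast^R)$ as an inverse of $Z(\ast)$ and hence placing $Z(\ast)$ in ${\sf Pic}(\cR)$. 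Since every object of $\Bord_1^{\fr}(\RR^{\n1})$ is (up to isomorphism) a monoidal combination of $\ast$ and its iterated duals, the entire image of $Z$ lies in ${\sf Pic}(\cR)$, so $Z$ factors through ${\sf Pic}(\cR)$. The argument for (2) is identical, with Theorem~\ref{B-bord} substituted for Theorem~\ref{theorem.main}.

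The principal technical point to verify carefully is that the factorization $Z\colon \Bord_1^{\fr}(\RR^{\n1})\to {\sf Pic}(\cR)$ promotes to an $\cE_{\n1}$-monoidal factorization, not merely a factorization of underlying $(\oo,1)$-categories. This should follow because ${\sf Pic}(\cR)\hookrightarrow \cR$ is a fully-faithful inclusion closed under the tensor product, so the coherence data making $Z$ monoidal restrict to coherence data for the factored functor. A self-contained alternative route, which bypasses the Tangle Hypothesis applied to ${\sf Pic}(\cR)$, would use Corollary~\ref{B.Tang}: functors $\Bord_1^{\fr}(\RR^{\n1}) \to \cR$ sending all morphisms to isomorphisms are equivalent to $\cE_{\n1}$-space maps $\sB\Bord_1^{\fr}(\RR^{\n1}) \simeq \Omega^{\n1}S^{\n1} \to {\sf Pic}(\cR)$, and by the universal property of $\Omega^{\n1}S^{\n1}$ as the free grouplike $\cE_{\n1}$-space on one point, such maps are classified by elements of ${\sf Pic}(\cR)$.
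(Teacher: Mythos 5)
Your argument is correct in substance but takes a different route from the paper's. The paper does not apply the Tangle Hypothesis to $\Pic(\cR)$ directly; instead it first identifies the subspace of morphism-inverting functors with $\Map_{\Alg_{\n1}(\Spaces)}\bigl(\sB\Bord_1^{\fr}(\RR^{\n1}),\Obj(\cR)\bigr)$ via the localization adjunction defining $\sB$, then uses rigidity to show such functors land in $\Pic(\cR)$, and finally invokes Corollary~\ref{B.Tang} ($\sB\Bord_1^{\fr}(\RR^{\n1})\simeq\Omega^{\n1}S^{\n1}$) together with the universal property of $\Omega^{\n1}S^{\n1}$ as the free grouplike $\cE_{\n1}$-space on a point. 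That is precisely the ``self-contained alternative route'' you sketch in your last paragraph. Your primary route---applying Theorem~\ref{theorem.main} to the rigid $\cE_{\n1}$-monoidal $(\infty,1)$-category $\Pic(\cR)$ and then pushing forward along the inclusion---is equally valid and arguably more economical, since it bypasses the computation of the classifying space entirely; what it loses is the explicit identification of invertible theories with maps out of $\Omega^{\n1}S^{\n1}$, which the paper's route makes visible. The image characterization (rigidity of the source plus invertibility of the images of unit and counit forces $Z(\ast)\in\Pic(\cR)$) is the same in both arguments.

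One small correction: the inclusion $\Pic(\cR)\hookrightarrow\cR$ is \emph{not} fully faithful, since $\Map_{\Pic(\cR)}(x,y)$ is the space of equivalences $x\simeq y$ in $\cR$, which is generally a proper subspace of $\Map_{\cR}(x,y)$. It is, however, a replete subcategory inclusion (objects the invertible ones, morphisms the equivalences), hence a monomorphism in $\fCat_{(\oo,1)}$---here one uses that $\cR$ is univalent-complete, so that the degeneracy $\Obj(\cR)\to\Mor(\cR)$ is itself a monomorphism---and this monomorphism is compatible with the $\cE_{\n1}$-monoidal structures because $\Pic(\cR)$ contains the unit and is closed under tensor product. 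That is all your argument actually needs, both for $\iota_\ast$ being a monomorphism of mapping spaces and for the factorization of $Z$ through $\Pic(\cR)$ being a property rather than extra structure; but the justification should be stated in terms of monomorphisms rather than full faithfulness.
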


\begin{proof}
We prove the first statement; a proof of the second statement is logically identical.

Consider the subspace
\[
\Map_{\Alg_{\n1}(\fCat_{(\infty,1)})}\bigl(\Bord_1^{\fr}(\RR^{\n1}),\cR\bigr)^{\sf invbl}
~\subset~
\Map_{\Alg_{\n1}(\fCat_{(\infty,1)})}\bigl(\Bord_1^{\fr}(\RR^{\n1}),\cR\bigr)
\]
consisting of those symmetric monoidal functors that carry each morphism in $\Bord_1^{\fr}(\RR^{\n1})$ to an isomorphism in $\cR$.
By definition of the classifying space functor as a left adjoint in~(\ref{B.def}), we identify this subspace:
\[
\Map_{\Alg_{\n1}(\fCat_{(\infty,1)})}\bigl(\Bord_1^{\fr}(\RR^{\n1}),\cR\bigr)^{\sf invbl}
~\simeq~
\Map_{\Alg_{\n1}(\Spaces)}\bigl(\sB \Bord_1^{\fr}(\RR^{\n1}), \Obj(\cR) \bigr)
~.
\]
Now, let $(\Bord_1^{\fr}(\RR^{\n1}) \xra{Z} \cR) \in \Map_{\Alg_{\n1}(\fCat_{(\infty,1)})}\bigl(\Bord_1^{\fr}(\RR^{\n1}),\cR\bigr)^{\sf invbl}$.
Let $( S \subset \RR^{\n1}) \in \Bord_1^{\fr}(\RR^{\n1})$ be an object.
Using that $\Bord_1^{\fr}(\RR^{\n1})$ is rigid, the value $Z(S) \in \cR$ is dualizable.
Furthermore, for $S^\vee \in \Bord_1^{\fr}(\RR^{\n1})$ the dual of this object, the value of $Z$ on the unit and counit in $\Bord_1^{\fr}(\RR^{\n1})$ 
witness $Z(S^\vee)$ as the dual in $\cR$ of $Z(S)$.
Because $Z$ is assumed to carry all morphisms in $\Bord_1^{\fr}(\RR^{\n1})$ to isomorphisms in $\cR$, the value of $Z$ on this unit and counit are isomorphisms in $\cR$.
Therefore, these dualizing data witness the value $Z(S)\in \cR$ as invertible with respect to the monoidal structure of $\cR$.
Therefore, the $\cE_{\n1}$-monoidal functor $Z$ factors:
\[
\Bord_1^{\fr}(\RR^{\n1})
\xra{~Z~}
\Pic(\cR)
~\subset~
\Obj(\cR)
~.
\]
We conclude a further identification:
\[
\Map_{\Alg_{\n1}(\fCat_{(\infty,1)})}\bigl(\Bord_1^{\fr}(\RR^{\n1}),\cR\bigr)^{\sf invbl}
~\simeq~
\Map_{\Alg_{\n1}(\Spaces)}\bigl(\sB \Bord_1^{\fr}(\RR^{\n1}), {\sf Pic}(\cR) \bigr)
~.
\]
By construction, the $\cE_{\n1}$-monoidal space ${\sf Pic}(\cR)$ is grouplike.
So, from Corollary~\ref{B.Tang} we achieve a further identification:
\[
\Map_{\Alg_{\n1}(\fCat_{(\infty,1)})}\bigl(\Bord_1^{\fr}(\RR^{\n1}),\cR\bigr)^{\sf invbl}
~\simeq~
\Map_{\Alg_{\n1}(\Spaces)}\bigl( \Omega^{\n1} S^{\n1} , {\sf Pic}(\cR) \bigr)
~\simeq~
{\sf Pic}(\cR)
~,
\]
in which the final equivalence uses that the grouplike $\cE_{\n1}$-space $\Omega^{\n1} S^{\n1}$ is the free grouplike $\cE_{\n1}$-space generated by a point.
The result follows.

\end{proof}


\begin{thebibliography}{99}

\bibitem[AF1]{oldfact} Ayala, David; Francis, John. Factorization homology of topological manifolds.  J. Topol. 8 (2015), no. 4, 1045--1084. 

\bibitem[AF2]{bord} Ayala, David; Francis, John. The cobordism hypothesis. Preprint, 2017.

\bibitem[AF3]{flagged} Ayala, David; Francis, John. Flagged higher categories. Topology and quantum theory in interaction, 137–173, Contemp. Math., 718, Amer. Math. Soc., Providence, RI, 2018.
 
 \bibitem[AF4]{pkd} Ayala, David; Francis, John. Poincar\'e/Koszul duality. Comm. Math. Phys. 365 (2019), no. 3, 847--933.


\bibitem[AF5]{fibrations} Ayala, David; Francis, John. Fibrations of $\oo$-categories. High. Struct. 4 (2020), no. 1, 168–265.

\bibitem[AF6]{zp} Ayala, David; Francis, John. Zero-pointed manifolds. J. Inst. Math. Jussieu. Volume 20, Issue 3, May 2021, 785--858.

\bibitem[AF7]{trace} Ayala, David; Francis, John. Traces for factorization homology in dimension 1. Preprint, 2021.

\bibitem[AF8]{thomaction} Ayala, David; Francis, John. Symmetries of a rigid braided category. Preprint, 2022.


\bibitem[AFR1]{fact1} Ayala, David; Francis, John; Rozenblyum, Nick. Factorization homology I: Higher categories. Adv. Math. 333 (2018), 1042--1177.

\bibitem[AFR2]{strat} Ayala, David; Francis, John; Rozenblyum, Nick. A stratified homotopy hypothesis. J. Eur. Math. Soc. (JEMS) 21 (2019), no. 4, 1071–1178.

\bibitem[AFR3]{corrigendum} Ayala, David; Francis, John; Rozenblyum, Nick. Corrigendum to ``Factorization homology I: Higher categories'' [Adv. Math. 333 (2018) 1042–1177]. Adv. Math. 370 (2020), 107217, 7 pp.

\bibitem[AFT1]{aft1} Ayala, David; Francis, John; Tanaka, Hiro Lee. Local structures on stratified spaces. Adv. Math. 307 (2017), 903--1028.

\bibitem[AFT2]{aft2} Ayala, David; Francis, John; Tanaka, Hiro Lee.  Factorization homology of stratified spaces. Selecta Math. (N.S.) 23 (2017), no. 1, 293--362.

\bibitem[BH]{normed} Bachmann, Tom; Hoyois, Marc. Norms in motivic homotopy theory. To appear, Ast\'erisque.

\bibitem[BaDo]{baezdolan} Baez, John; Dolan, James.
Higher-dimensional algebra and topological quantum field theory.
J. Math. Phys. 36 (1995), no. 11, 6073--6105. 

\bibitem[BSP]{barwick.schommer.pries} Barwick, Clark; Schommer-Pries, Christopher. On the unicity of the theory of higher categories. J. Amer. Math. Soc. 34 (2021), 1011--1058.

\bibitem[BeDr]{bd} Beilinson, Alexander; Drinfeld, Vladimir. Chiral algebras. American Mathematical Society Colloquium Publications, 51. American Mathematical Society, Providence, RI, 2004.

\bibitem[BV]{bv} Boardman, Michael; Vogt, Rainer. Homotopy invariant algebraic structures on topological spaces, Lecture Notes in Mathematics, Vol. 347, Springer--Verlag, (1973).

\bibitem[Bu]{budney} Budney, Ryan. Topology of knot spaces in dimension 3. Proc. Lond. Math. Soc. (3) 101 (2010), no.2, 477–496.

\bibitem[FY]{freyd.yetter} Freyd, Peter J.; Yetter, David N.
Braided compact closed categories with applications to low-dimensional topology.
Adv. Math.  77 (1989), no. 2, 156–182.

\bibitem[Gr]{gromov} Gromov, Mikhael. Partial differential relations. Ergeb. Math. Grenzgeb. (3), 9. Springer-Verlag, Berlin, 1986. x+363 pp.

\bibitem[Ge]{geiges} Geiges, Hansjorg. H-principles and flexibility in geometry. Mem. Amer. Math. Soc. 164 (2003), no.779, viii+58 pp.


\bibitem[Hae]{haefliger} Haefliger, Andr\'e. Lectures on the theorem of Gromov. Proceedings of Liverpool Singularities Symposium, II (1969/1970), pp. 128–141. Lecture Notes in Math., Vol. 209. Springer-Verlag, Berlin-New York, 1971

\bibitem[Har]{harpaz} Harpaz, Yonatan. The Cobordism Hypothesis in dimension 1. Unpublished work. Available at arXiv:1210.0229

\bibitem[Hat1]{hatcher} Hatcher, Allen. A proof of the Smale conjecture, ${\rm Diff}(S^3)\simeq {\rm O}(4)$. Ann. of Math. (2) 177 (1983) 553--607.

\bibitem[Hat2]{hatcher.knot} Hatcher, Allen. Spaces of Knots. Unpublished work. Available at arXiv:math/9909095

\bibitem[Hau1]{rune.morita1} Haugseng, Rune.
The higher Morita category of $E_n$-algebras.
Geom. Topol. 21 (2017), no.3, 1631–1730.

\bibitem[Hau2]{rune.morita2} Haugseng, Rune. Some remarks on higher Morita categories. Available at https://arxiv.org/pdf/2309.09761


\bibitem[Jon1]{jones1} Jones, Vaughan F. R.
A polynomial invariant for knots via von Neumann algebras.
Bull. Amer. Math. Soc. (N.S.)   12 (1985), no. 1, 103–111.

\bibitem[Jon2]{jones2} Jones, Vaughan F. R.
Hecke algebra representations of braid groups and link polynomials.
Ann. of Math. (2)   126 (1987), no. 2, 335–388.

\bibitem[Joy]{joyal1} Joyal, Andr\'e. Quasi-categories and Kan complexes. Special volume celebrating the 70th birthday of Professor Max Kelly. J. Pure Appl. Algebra 175 (2002), no. 1-3, 207--222.

\bibitem[JS1]{joyal.street1} Joyal, Andr\'e; Street, Ross. The geometry of tensor calculus. I.
Adv. Math. 88 (1991), no.1, 55–112.

\bibitem[JS2]{joyal.street2} Joyal, Andr\'e; Street, Ross. The geometry of tensor calculus. II. Unpublished work.

\bibitem[Ko]{kontsevich}
Kontsevich, Maxim. Vassiliev's knot invariants. I. M. Gelfand Seminar, 137–150.
Adv. Soviet Math., 16, Part 2 American Mathematical Society, Providence, RI, 1993.

\bibitem[LTV]{LTV}
Lambrechts, Pascal; Turchin, Victor; Voli\'c, Ismar.
The rational homology of spaces of long knots in codimension $>2$.
Geom. Topol.14 (2010), no.4, 2151–2187.


\bibitem[Lu1]{HTT} Lurie, Jacob. Higher topos theory. Annals of Mathematics Studies, 170. Princeton University Press, Princeton, NJ, 2009. xviii+925 pp.

\bibitem[Lu2]{HA} Lurie, Jacob. Higher algebra. Preprint, 2017. Available at https://www.math.ias.edu/~lurie/papers/HA.pdf

\bibitem[Lu3]{cobordism} Lurie, Jacob. On the classification of topological field theories. Current developments in mathematics, 2008, 129--280, Int. Press, Somerville, MA, 2009.

\bibitem[MG]{mazel.gee.localization} 
Mazel-Gee, Aaron. The universality of the Rezk nerve. Algebr. Geom. Topol.19(2019), no.7, 3217–3260.


\bibitem[RT]{RT} Reshetikhin, Nikolai; Turaev, Vladimir. Ribbon graphs and their invariants derived from quantum groups. Comm. Math. Phys. 127 (1990), no.1, 1–26.


\bibitem[Re1]{rezk}  Rezk, Charles. A model for the homotopy theory of homotopy theory. Trans. Amer. Math. Soc. 353 (2001), no. 3, 973--1007.

\bibitem[Re2]{rezk-n} Rezk, Charles. A Cartesian presentation of weak $n$-categories. Geom. Topol. 14 (2010), no. 1, 521--571.

\bibitem[RV]{riehl.verity} Riehl, Emily; Verity, Dominic. Homotopy coherent adjunctions and the formal theory of monads. Adv. Math. 286 (2016), 802–888.

\bibitem[SS]{schanuel.street} Schanuel, Stephen; Street, Ross. The free adjunction. Cahiers de Topologie et G\'eom\'etrie Diff\'erentielle
Cat\'egoriques, 27:81–83, 1986.

\bibitem[Se]{segal.scanning} Segal, Graeme. The topology of spaces of rational functions. Acta Math., 143 (1-2): 39–72, 1979.

\bibitem[Si1]{sinha1} Sinha, Dev.
Operads and knot spaces.
J. Amer. Math. Soc.19 (2006), no.2, 461–486.

\bibitem[Si2]{sinha2} Sinha, Dev.
The topology of spaces of knots: cosimplicial models. Amer. J. Math.131(2009), no.4, 945–980.

\bibitem[Sh]{shum} Shum, Mei Chee. Tortile tensor categories. J. Pure Appl. Algebra 93 (1994), no.1, 57–110.

\bibitem[Sm]{smale.2.sphere} Smale, Stephen. Diffeomorphisms of the 2-sphere, Proc. Amer. Math. Soc. 10 (1959), 621–626.

\bibitem[Tu]{turaev} Turaev, Vladimir. The Yang-Baxter equation and invariants of links. Invent. Math. 92 (1988), no.3, 527–553.

\bibitem[Va]{vassiliev} Vassiliev, Victor. Cohomology of knot spaces. Theory of singularities and its applications, 23–69.
Adv. Soviet Math., 1. American Mathematical Society, Providence, RI, 1990.


\end{thebibliography}
\end{document}